\theoremstyle{plain}
\newtheorem*{theorem*}{Theorem}
\newtheorem*{conjecture*}{Conjecture}
\newtheorem*{conjectureA*}{Conjecture A*}
\newtheorem{thm}{Theorem}[section]
\newtheorem{theorem}[thm]{Theorem}
\newtheorem{corollary}[thm]{Corollary}
\newtheorem{lemma}[thm]{Lemma}
\newtheorem*{lemma*}{Lemma}
\newtheorem{conjecture}[thm]{Conjecture}
\newtheorem{proposition}[thm]{Proposition}
\theoremstyle{definition}
\newtheorem{definition}[thm]{Definition}
\theoremstyle{remark}
\newtheorem{remark}[thm]{Remark}
\font\tencyr=wncyr10 \def\russe{\tencyr\cyracc}
\def\Sha{\text{\russe{Sh}}}
\DeclareMathOperator{\Gal}{Gal}
\DeclareMathOperator{\rank}{rank}
\DeclareMathOperator{\corank}{corank}
\DeclareMathOperator{\ord}{ord}
\DeclareMathOperator{\Selm}{Sel}
\DeclareMathOperator{\img}{img}
\DeclareMathOperator{\coker}{coker}
\DeclareMathOperator{\Hom}{Hom}
\DeclareMathOperator{\res}{res}
\DeclareMathOperator{\coind}{Coind}
\newcommand{\Q}{{\mathbb{Q}}}
\newcommand{\Z}{{\mathbb{Z}}}
\newcommand{\N}{{\mathbb{N}}}
\newcommand{\K}{\mathbb{K}}
\newcommand{\Fp}{{\mathbb{F}_p}}
\newcommand{\Zp}{{\mathbb{Z}_p}}
\newcommand{\Qp}{{\mathbb{Q}_p}}
\newcommand{\Ql}{{\mathbb{Q}_l}}
\newcommand{\ilim}{\mathop{\varprojlim}\limits}
\newcommand{\dlim}{\mathop{\varinjlim}\limits}
\newcommand{\Sel}{{\Selm_p}}
\newcommand{\Selinf}{{\Selm_{p^{\infty}}}}
\newcommand{\Selinfg}{{\Selm^{Gr}_{p^{\infty}}}}
\newcommand{\cN}{\mathcal{N}}
\newcommand{\cO}{\mathcal{O}}
\newcommand{\cy}[1]{\mathbb{Z}/#1\mathbb{Z}}
\newcommand{\fp}{\mathfrak{p}}
\newcommand{\fG}{\mathfrak{G}}
\newcommand{\fH}{\mathfrak{H}}
\newcommand{\fq}{\mathfrak{q}}
\newcommand{\KK}{\mathds{K}}
\newcommand{\rg}{\textup{rank}}
\newcommand{\Selpm}{\Selm^{\bullet\star}_{p^\infty}}
\newcommand{\Selgpm}{\Selm^{Gr, \bullet\star}_{p^\infty}}
\newcommand{\Selpmsingle}{\Selm^{\pm}_{p^\infty}}
\newcommand{\overbar}[1]{\mkern 1.5mu\overline{\mkern-1.5mu#1\mkern-1.5mu}\mkern 1.5mu}
\newcommand{\joinrelshort}{\mathrel{\mkern-9mu}}
\newcommand{\shortlongrightarrow}{\relbar\joinrelshort\rightarrow}
\newcommand{\isomarrow}{\mathrel{\mathop{\setbox0\hbox{$\mathsurround0pt
        \shortlongrightarrow$}\ht0=0.7\ht0\box0}\limits
        ^{\sim\mkern2mu}}}
\newcommand\reallywidehat[1]{\arraycolsep=0pt\relax%
\begin{array}{c}
\stretchto{
  \scaleto{
    \scalerel*[\widthof{\ensuremath{#1}}]{\kern-.5pt\bigwedge\kern-.5pt}
    {\rule[-\textheight/2]{1ex}{\textheight}} 
  }{\textheight} %
}{0.5ex}\\           
#1\\                 
\rule{-1ex}{0ex}
\end{array}
}
\newenvironment{outerlist}[1][\enskip\textbullet]%
        {\begin{itemize}[#1]}{\end{itemize}%
         \vspace{-0.1\baselineskip}}
\def \br #1 {{\color{blue} #1 }}
\def \brr #1 {{\color{red} #1}}
\def \brrr #1 {{\color{purple} #1 }}
\begin{document}

\title[The $\mathfrak{M}_H(G)$-property for Selmer groups at supersingular reduction]{On the $\mathfrak{M}_H(G)$-property for Selmer groups at supersingular reduction}

\date{\today}

\author{S\"{o}ren Kleine, Ahmed Matar, and Sujatha Ramdorai}
\keywords{elliptic curve, supersingular reduction, (doubly) signed Selmer groups, Selmer groups, fine Selmer groups, $\mathfrak{M}_H(G)$-property, asymptotic growth of $\mu$- and $\lambda$-invariants of Selmer groups in $\Z_p^2$-extensions, boundedness of $\lambda$-invariants, Conjectures~A and B, bounded Mordell-Weil ranks, Mazur's Conjecture}
\subjclass[2020]{11R23}

\begin{abstract}
 Let $E$ be an elliptic curve defined over $\Q$ which has good supersingular reduction at the odd prime $p$. We study the variation of Iwasawa invariants and the $\mathfrak{M}_H(G)$-property for signed Selmer groups over $\Z_p$-extensions of an imaginary quadratic number field $K$ that lie inside the $\Z_p^2$-extension $\KK_\infty$ of $K$ and are not necessarily cyclotomic. We prove several equivalent criteria for the validity of the $\mathfrak{M}_H(G)$-property which involve the growth of $\mu$-invariants of the signed Selmer groups over intermediate shifted $\Z_p$-extensions in $\KK_\infty$, and the boundedness of $\lambda$-invariants as one runs over $\Z_p$-extensions of $K$ inside $\KK_\infty$. We give examples where the $\mathfrak{M}_H(G)$-property holds, and also examples where we can prove that it does not hold.

 It is striking that although the case of supersingular reduction is much more difficult than the case of ordinary reduction, we get finer results here; moreover, we are able to derive analogous criteria for the validity of the  $\mathfrak{M}_H(G)$-property of the classical Selmer group, as well as the fine Selmer group. Many of the properties that we investigate have not been studied before in this non-torsion setting.

 Further, we study various implications between the $\mathfrak{M}_H(G)$-properties for Selmer groups, signed Selmer groups and fine Selmer groups. We apply our results to a conjecture of Mazur, and prove implications between the $\mathfrak{M}_H(G)$-property and Conjectures~A and B of Coates and Sujatha.
\end{abstract}

\maketitle

\tableofcontents

\part{Introduction and overview of results}
Before giving a detailed description of the contents of this paper we provide a short overview.
\section{Introduction}
This short introduction contains a brief description of the results of the paper. Full details and precise statements are given in the next section. Let $p$ be a rational prime, and let $E$ be an elliptic curve defined over $\Q$ with good ordinary reduction at $p$. Let $K$ be a number field with at least one complex prime which we assume to be totally imaginary if ${p=2}$. Denote by $\KK_{\infty}$ a $\mathbb{Z}^2_p$-extension of $K$. In our earlier paper \cite{MHG}, we defined a subset of $\Zp$-subextensions of $\KK_{\infty}/K$ which is denoted here again by $\mathcal{H}$. Let $X(E/\KK_{\infty})$ be the Pontryagin dual of the classical $p$-primary Selmer group of $E/\KK_{\infty}$. Let ${L_{\infty} \in \mathcal{H}}$ and ${H=\Gal(\KK_{\infty}/L_{\infty})}$. In loc. cit. we discussed the property when $X(E/\KK_\infty)/X(E/\KK_{\infty})[p^{\infty}]$ is finitely generated over $\Zp[[H]]$, which was called the $\mathfrak{M}_H(G)$-property, and proved five equivalent conditions for this property to hold.

In this paper, we handle the supersingular case: assume that ${p \geq 5}$, $E$ has good supersingular reduction at $p$ and $K$ is an imaginary quadratic field in which $p$ splits. We study the $\mathfrak{M}_H(G)$-property even in this case, where, instead of working with the classical $p$-primary Selmer group we first work with the signed Selmer groups $\Selpm(E/\KK_{\infty})$, where ${\bullet, \star \in \{+,-\}}$. In this setup, we prove similar equivalences to the $\mathfrak{M}_H(G)$-property as we did in loc. cit. In addition, we prove four entirely new equivalences. We use the equivalent criteria for deriving concrete examples where the $\mathfrak{M}_H(G)$-property holds, and we also find examples where it fails. The latter class of examples is closely connected with $\Z_p$-extensions of $K$ in which some primes of bad reduction split completely. In order to be able to handle these $\Z_p$-extensions, we have to generalize the theory accordingly.

We also study the $\mathfrak{M}_H(G)$-property for the Selmer group $\Selinf(E/\KK_{\infty})$, whose Pontryagin dual we denote by $X(E/\KK_{\infty})$. Since $X(E/\KK_{\infty})$ is not a torsion $\Lambda_2$-module, we need to consider its $\Lambda_2$-torsion submodule when studying the $\mathfrak{M}_H(G)$-property. Nevertheless, we can prove many equivalences also in this non-torsion setting. To the best of our knowledge, we are the first to study all of these properties of Selmer groups in the supersingular case. This requires some new tools such as a control theorem between the $\Lambda_2$-torsion submodule of $X(E/\KK_{\infty})$ and the $\Lambda$-torsion submodule of $X(E/L_{\infty})$ where $L_{\infty}/K$ is a $\Zp$-extension. The novelty in some of our proofs lies in the use of suitable  techniques from commutative and homological algebra. Moreover, we also study fine Selmer groups, which encode information related to the weak Leopoldt conjecture. Similar criteria equivalent to the $\mathfrak{M}_H(G)$-property for the fine Selmer group are derived. We will show among other things that the $\mathfrak{M}_H(G)$-property for the signed Selmer group implies the $\mathfrak{M}_H(G)$-property for the Selmer group, and that the latter is equivalent to the $\mathfrak{M}_H(G)$-property for the fine Selmer group. Moreover, we study relationships between Conjectures A and B of the Coates-Sujatha paper \cite{CS1} and the $\mathfrak{M}_H(G)$-property.

In \cite{MHG} we also discussed a conjecture of Mazur which discusses the number of $\Zp$-extensions of $K$ where the rank of $E$ does not stay bounded. In addition, we establish an interesting relationship between this conjecture and the $\mathfrak{M}_H(G)$-property. We then used this relationship to construct examples where the conjecture holds. In the present article we study a similar relationship in the supersingular case. Constructing examples in this case is much more challenging since our formula involves a sum of $\lambda$-invariants involving four pairs of signs: + +, -- --, + --, -- +. Our attempts to construct examples are presented in the last section.

Although we restricted ourselves to the case of elliptic curves, we are convinced that much of the theory developed in the current paper can be generalized to abelian varieties or more general $p$-adic Galois representations. In the next section we state our precise results in full detail.

\section{Basic definitions and statement of results}\label{section:mainintroduction}

This long section is divided into five subsections. After fixing notation that will be used throughout the paper, we describe in detail the results that will be proved.

\subsection{Definitions and important notation}\label{section:definitions}
Let ${p \geq 5}$ be a rational prime, and let $E$ be an elliptic curve defined over $\Q$ with good supersingular reduction at $p$. Let $K$ be an imaginary quadratic field. Assume that $p$ splits in $K/\Q$. Denote by $\KK_{\infty}$ the $\mathbb{Z}^2_p$-extension of $K$. Let $S$ be a finite set of nonarchimedean primes of $K$ containing all the primes dividing $p$ and all the primes where $E$ has bad reduction. Also let $S_p$ be the set of primes of $K$ above $p$. We let $K_S$ be the maximal extension of $K$ unramified outside $S$. Suppose now that $L$ is a field with ${K \subseteq L \subseteq K_S}$. We let ${G_S(L)=\Gal(K_S/L)}$. For any ${v \in S}$ we define ${J_v(E/L)=\dlim \bigoplus_{w|v}H^1(F_w,E)[p^{\infty}]}$, where the direct limit runs over finite extensions $F$ of $K$ contained in $L$ and where $w$ denotes any prime above $v$ in $F$, respectively. The  $p^{\infty}$-Selmer group of $E/L$ is defined as

$$\displaystyle 0 \longrightarrow \Selinf(E/L) \longrightarrow H^1(G_S(L), E[p^{\infty}]) \longrightarrow \bigoplus_{v \in S} J_v(E/L).$$

We denote the Pontryagin dual of $\Selinf(E/L)$ by $X(E/L)$.

Also for any ${v \in S}$ we define ${M_v(E/L)=\dlim \bigoplus_{w|v}H^1(F_w,E[p^{\infty}])}$ where the direct limit runs over the finite extensions $F$ of $K$ contained in $L$. The $p^\infty$-fine Selmer group of $E$ over $L$ is defined via the exact sequence

$$\displaystyle 0 \longrightarrow R_{p^{\infty}}(E/L) \longrightarrow H^1(G_S(L), E[p^{\infty}]) \longrightarrow \bigoplus_{v \in S} M_v(E/L).$$

We denote the Pontryagin dual of $R_{p^\infty}(E/L)$ by $Y(E/L)$.
It is well-known that, as opposed to the ordinary case, the Selmer group over the cyclotomic $\Z_p$-extension ${L = K_{cyc}}$ is not cotorsion over the Iwasawa algebra ${\Lambda = \Lambda_1 = \Z_p[[T]]}$ (see \cite[Proposition~5.3]{IP}). On the contrary, $Y(E/K_{cyc})$ is known to be $\Lambda$-torsion (this can be derived from Proposition~\ref{Hcyc_prop}, for example); in fact, this is an equivalent formulation of the \emph{weak Leopoldt conjecture} for $E$ over $K_{cyc}$, see \cite[Lemma~7.1]{Lim17}.

In \cite{Kob}, Kobayashi constructed \emph{signed Selmer groups} $\Selpmsingle(E/\Q_{cyc})$ which he proved to be cotorsion over the Iwasawa algebra $\Zp[[\Gal(\Q_{cyc}/\Q)]]$ (here, $\Q_{cyc}$ is the cyclotomic $\Zp$-extension of $\Q$). Kobayashi's construction was generalized by Kim \cite{KimSignedSelmer} where he introduced multi-signed Selmer groups. Following Kobayashi and Kim, we make the following definitions.

Let $K'$ be a finite extension of $K$, and suppose that $L_{\infty}/K'$ denotes a $\Z_p$-extension with intermediate layers $L_n$, ${n \in \N}$. Let $v$ be a prime of $K'$ above $p$. Assume that $v$ is totally ramified in $L_\infty/K'$ (we will always assume that this crucial hypothesis is satisfied, and we will enlarge the base field if necessary in order to achieve that assumption). For ${n \in \N}$ let $L_{n,v}$ denote the completion of $L_n$ at some prime above $v$, respectively.

For any ${n \in \N}$, we define the following subgroups of $\hat{E}(L_{n,v})$ where $\hat{E}$ is the formal group of $E/\Qp$:
\begin{equation}\label{plusdef}
\begin{tikzcd}[scale cd=0.88]
\hat{E}^+(L_{n,v}) = \{ P \in \hat{E}(L_{n,v}) \mid \textup{Tr}_{n/m+1}(P) \in \hat{E}(L_{m,v}) \textup{ for each even $m$, $0 \le m \le n-1$}\},
\end{tikzcd}
\end{equation}
\begin{equation}\label{minusdef}
\begin{tikzcd}[scale cd=0.88]
\hat{E}^-(L_{n,v}) = \{ P \in \hat{E}(L_{n,v}) \mid \textup{Tr}_{n/m+1}(P) \in \hat{E}(L_{m,v}) \textup{ for each odd $m$, $0 \le m \le n-1$}\}.
\end{tikzcd}
\end{equation}
In other words, in the definitions of the groups $\hat{E}^+$ and $\hat{E}^-$ different sets of indices (even $m$ vs. odd $m$) matter.

We let
\begin{align} \label{eq:def2-3} \hat{E}^\pm (L_{\infty,v}) = \bigcup_n \hat{E}^\pm(L_{n,v}).
	\end{align}

Fix a prime $\fp$ of $K$ above $p$ and let $\bar{\fp}$ be the other prime above $p$ (recall that $p$ splits in $K/\Q$). Let $L^{(\fp)}_{\infty}/K$ and $L^{(\bar{\fp})}_{\infty}/K$ be the $\Zp$-extensions in which $\fp$ (respectively $\bar{\fp}$) is unramified.

Now let $L_{\infty}/K$ be a $\Zp$-extension. If $L_{\infty} \neq L^{(\fp)}_{\infty}$, we define $i(L_{\infty}, \fp)$ to be $m$ where $L_m$ is the $m$-th layer of the $\Zp$-extension $L_{\infty}/K$ such that $L_m=L_{\infty} \cap L^{(\fp)}_{\infty}$. Note that the index $i(L_\infty, \fp)$ is designed such that $L_{\infty}/L_{i(L_{\infty}, \fp)}$ is totally ramified at all primes above $\fp$. If ${L_{\infty} \neq L^{(\bar{\fp})}_{\infty}}$, we similarly define $i(L_{\infty}, \bar{\fp})$ by intersecting with $L^{(\bar{\fp})}_{\infty}$.

If $L_{\infty} \neq L^{(\fp)}_{\infty}$ and $v$ is a prime of $L_{\infty}$ above $\fp$, we define $\hat{E}^{\pm}(L_{\infty,v})$ as in \eqref{plusdef}, \eqref{minusdef} and \eqref{eq:def2-3} with respect to the $\Zp$-extension $L_{\infty}/L_{i(L_{\infty}, \fp)}$ (the Galois group of this extension is ${\Gal(L_\infty/K)^{p^{i(L_\infty, \fp)}} \cong \Z_p}$). If $L_{\infty}=L^{(\fp)}_{\infty}$ and $v$ is a prime of $L_{\infty}$ above $\fp$, we define $\hat{E}^{\pm}(L_{\infty,v})$ to be $\hat{E}(L_{\infty,v})$. Although this last definition in the case of $L^{(\fp)}_{\infty}$ might seem strange, it is actually natural if one takes into account the definition of $\hat{E}^{\pm}(\KK_{\infty,v})$ in Section~\ref{section:control} (cf. the proof of Lemma~\ref{containment_lemma}). If $v$ is a prime of $L_{\infty}$ above $\bar{\fp}$, we make similar definitions for $\hat{E}^{\pm}(L_{\infty,v})$.

Let $\bullet, \star \in \{+,-\}$. We now define (multi-)signed Selmer groups of $E$ over $L_{\infty}$ via the exact sequence

\begin{equation}\label{Selpm_def}
\displaystyle 0 \longrightarrow \Selpm(E/L_{\infty}) \longrightarrow \Selinf(E/L_{\infty}) \longrightarrow J_\fp^\bullet(E/L_\infty) \times J_{\bar{\fp}}^\star(E/L_\infty),
\end{equation}
where
\begin{equation}\label{J_v_def}
	J_\fp^\bullet(E/L_\infty) \times J_{\bar{\fp}}^\star(E/L_\infty) = \prod_{v \mid \fp} \frac{H^1(L_{\infty,v}, E[p^\infty])}{\hat{E}^{\bullet}(L_{\infty,v}) \otimes \Q_p/\Z_p} \times \prod_{v \mid \bar{\fp}} \frac{H^1(L_{\infty,v}, E[p^\infty])}{\hat{E}^{\star}(L_{\infty,v}) \otimes \Q_p/\Z_p}.
\end{equation}

We also define in Section~\ref{section:control} signed Selmer groups $\Selpm(E/\KK_{\infty})$ over the $\Z_p^2$-extension $\KK_{\infty}$ of $K$. When $L_{\infty}/K$ is totally ramified at all primes of $K$ above $p$, the signed Selmer group $\Selpm(E/L_{\infty})$ is defined in the standard way as in \cite{Kob}, \cite{IP} and \cite{Hamidi}. However, when $L_{\infty}/K$ is not totally ramified at primes of $K$ above $p$, the definition of $\Selpm(E/L_{\infty})$ may seem somewhat strange to the reader. We have made this definition in order to ensure that we have a control theorem (Theorem~\ref{SelmerControlThm_prop}). The paper \cite{GHKL} defines signed Selmer groups using ray class fields following \cite{KimSignedSelmer}. It is not clear to us how our definition compares to the one in \cite{GHKL}.

Now let $G:=\Gal(\KK_{\infty}/K)$, and recall that ${G \cong \Z_p^2}$. We denote the Iwasawa algebra ${\Lambda(G)=\Zp[[G]]}$ by $\Lambda_2$. If $\sigma$ and $\tau$ are topological generators of $G$, then ${\Lambda_2 \cong \Zp[[T_1, T_2]]}$ via the map sending $\sigma-1$ to $T_1$ and $\tau-1$ to $T_2$. Moreover, if ${\Gamma \cong \Z_p}$ denotes a quotient of $G$, then we will abbreviate $\Lambda(\Gamma)$ to $\Lambda$ if the group $\Gamma$ is clear from the context.

Let $\mathcal{E}$ be the set of all $\Zp$-extensions of $K$. Then $\mathcal{E}$ is in bijection with the elements of
$$\mathbb{P}^1(\Zp)=\{(a,b)\in \mathbb{Z}^2_p \, | \, p \,\,\text{does not divide both} \,a \,\text{and}\, b\}/\sim,$$
where $(a_1,b_1)\sim(a_2,b_2)$ if there exists $t \in \mathbb{Z}^{\times}_p$ with $a_1=ta_2$ and $b_1=tb_2$. In the bijection an element $[(a,b)] \in \mathbb{P}^1(\Zp)$ maps to the $\Zp$-extension being the fixed field of $\KK_{\infty}$ of the closed subgroup generated by $\sigma^a\tau^b$.

In \cite{Gb_II} Greenberg introduced the following natural topology on $\mathcal{E}$ (which we call Greenberg's topology). For $L_{\infty} \in \mathcal{E}$ and $n$ a positive integer we define
$$\mathcal{E}(L_{\infty},n):=\{L'_{\infty} \in \mathcal{E} \, | \, [L'_{\infty} \cap L_{\infty} :K] \geq p^n\}.$$
This means that $\mathcal{E}(L_{\infty},n)$ consists of all $\Zp$-extensions of $K$ which coincide with $L_{\infty}$ at least up to the $n$-th layer. Taking $\mathcal{E}(L_{\infty},n)$ as a base of neighborhoods of $L_{\infty}$ yields a topology on $\mathcal{E}$, and in fact $\mathcal{E}$ is compact with regard to this topology. The bijection ${\mathbb{P}^1(\Zp) \leftrightarrow \mathcal{E}}$ becomes a homeomorphism with regard to Greenberg's topology.

For $\bullet, \star \in \{+,-\}$ we let $X^{\bullet\star}(E/\KK_{\infty})$ be the Pontryagin dual of $\Selpm(E/\KK_{\infty})$. Also if ${L_{\infty} \in \mathcal{E}}$, then we let $X^{\bullet\star}(E/L_{\infty})$ be the Pontryagin dual of $\Selpm(E/L_{\infty})$, and we denote the Iwasawa algebra $\Zp[[\Gal(L_{\infty}/K)]]$ by $\Lambda(\Gal(L_{\infty}/K))$ or when it is clear from context simply by $\Lambda$.

For any subgroup $H$ of $G$ we denote its fixed field by $\KK_{\infty}^H$. We now define for each choice of the signs ${\bullet, \star \in \{+, -\}}$ a certain subset $\mathcal{H}^{\bullet\star}$ of subgroups $H$ of $G$ which are isomorphic to $\Z_p$. In view of the bijection ${\mathbb{P}^1(\Z_p) \leftrightarrow \mathcal{E}}$, the subset  $\mathcal{H}^{\bullet\star}$ will correspond to a subset of $\Z_p$-extensions of $K$ which have certain properties.
\begin{definition} \label{def:mathcal{H}}
For ${\bullet, \star \in \{+,-\}}$ let $\mathcal{H}^{\bullet\star}$ be the set of all subgroups ${H = \overbar{\langle \sigma^a\tau^b \rangle}}$ of ${G = \Gal(\KK_{\infty}/K)}$ which are topologically generated by $\sigma^a\tau^b$ for some ${[(a,b)] \in \mathbb{P}^1(\Zp)}$ such that
\begin{enumerate}[(a)]
\item No prime in $S$ splits completely in $\KK_{\infty}^H/K$,
\item Both primes of $K$ above $p$ ramify in $\KK_{\infty}^H/K$,
\item $X^{\bullet\star}(E/\KK_{\infty}^H)$ is a torsion $\Lambda_{G/H}$-module, where ${\Lambda_{G/H} = \Z_p[[G/H]]}$.
\end{enumerate}
By abuse of notation, we say that a $\Z_p$-extension ${L_{\infty} \in \mathcal{E}}$ of $K$ is contained in $\mathcal{H}^{\bullet\star}$ iff ${\Gal(\KK_{\infty}/L_{\infty}) \in \mathcal{H}^{\bullet\star}}$.
\end{definition}
We will show in Lemma~\ref{Xinf_torsion_lemma} that $X^{\bullet\star}(E/\K_\infty)$ is a torsion $\Lambda_2$-module if ${\mathcal{H}^{\bullet\star} \ne \emptyset}$.
Recalling that $K_{cyc}$ denotes the cyclotomic $\Z_p$-extension of $K$, we have that ${H_{cyc}:=\Gal(\KK_{\infty}/K_{cyc})}$ is contained in both $\mathcal{H}^{++}$ and $\mathcal{H}^{--}$ (see Proposition~\ref{Hcyc_prop}). Therefore $\mathcal{H}^{++}$ and $\mathcal{H}^{--}$ are non-empty. For ${\bullet \ne \star}$ the fact that ${\mathcal{H}^{\bullet\star} \ne \emptyset}$ is not known, and we will impose this  condition as an assumption.

If $\mathcal{H}^{\bullet\star}$ is non-empty, then for all but finitely many $L_{\infty} \in \mathcal{E}$ we have $L_{\infty} \in \mathcal{H}^{\bullet\star}$ (see Proposition \ref{Zpextensions_prop}). We note that the sets $\mathcal{H}^{++}$ and $\mathcal{H}^{--}$ are  infinite in our setting (see Proposition~\ref{Zpextensions_prop} and Proposition~\ref{Hcyc_prop}).
In the first part of this article we study the variation of Iwasawa invariants of signed Selmer groups as one runs over the elements in $\mathcal{H}^{\bullet\star}$ (see Theorem~\ref{thm:local-maximality}). Right now we just mention one corollary of our results.

Recall that $\Q_{cyc}$ denotes the cyclotomic $\Zp$-extension of $\Q$ and that $X^{\pm}(E/\Q_{cyc})$ is the Pontryagin dual of the plus/minus Selmer group over $\Q_{cyc}$, $\Selm^{\pm}(E/\Q_{cyc})$, defined as in Kobayashi's paper \cite{Kob}. Then \cite[Conjecture 6.3]{Pollack} predicts that the $\mu$-invariant of $X^{\pm}(E/\Q_{cyc})$ is zero.  Using the isomorphisms \eqref{quad_twist_isom+}  and \eqref{quad_twist_isom-} in Section~\ref{section:auxiliary}, this leads to the conjecture that in the case of doubly-signed Selmer groups where both signs are equal, $X^{++}(E/K_{cyc})$ and $X^{--}(E/K_{cyc})$ should have $\mu$-invariant zero. By combining the plus/minus analytic $\mu$-invariant data in \cite{LMFDB} together with \cite[Theorem 4.1]{Kob} which proves one divisibility of the main conjecture, we get many examples where ${\mu(X^{++}(E/K_{cyc}))=0}$ and ${\mu(X^{--}(E/K_{cyc}))=0}$. It will follow from Theorem~\ref{thm:local-maximality} that the vanishing of $\mu(X^{\bullet\bullet}(E/K_{cyc}))$ for ${\bullet \in \{+,-\}}$ implies that we actually have ${\mu = 0}$ in an entire neighborhood of $K_{cyc}$ with respect to Greenberg's topology, i.e. there exists some ${n \in \N}$ such that ${\mu(X^{\bullet\bullet}(E/L_\infty)) = 0}$ for each ${L_\infty \in \mathcal{E}(K_{cyc},n)}$ (see Corollary~\ref{cor:mu=0}). In other words, having $\mu$-invariant zero is an open condition with respect to Greenberg's topology.

Recall that a torsion $\Lambda$-module has $\mu$-invariant zero if and only if it is finitely generated as a $\Z_p$-module. A vast generalisation of this property for Iwasawa modules over larger (not even necessarily commutative) Iwasawa algebras is what we called \emph{the $\mathfrak{M}_H(G)$-property} in \cite{MHG} (see \cite{CS2} and Definition~\ref{def:MHG} below).

In the second part of the paper we address this property for signed Selmer groups and in particular prove that the validity of the $\mathfrak{M}_H(G)$-property is also an open condition with respect to Greenberg's topology. Note that this property generalises a classical problem which in the ordinary setting is known as the $\mathfrak{M}_H(G)$-conjecture to non-cyclotomic $\Z_p$-extensions. We have to introduce some more notation.

Let $\bullet, \star \in \{+,-\}$ and let ${\KK_\infty^H \in \mathcal{E}}$ be such that both primes of $K$ above $p$ ramify in $\KK_\infty^H$. We define $H_n:=H^{p^n}$. For every $n$, we fix a finite extension $K_{H,n}/K$ of degree $p^n$ such that $K_{\infty}^{H_n}=K_{H,n}K_{\infty}^H$. Then $K_{\infty}^{H_n}/K_{H,n}$ is a $\Zp$-extension. For any $n \geq 0$, let $L^{(\fp)}_n$ (resp. $L^{(\bar{\fp})}_n$) be the subextensions of $L^{(\fp)}_{\infty}/K$ (resp. $L^{(\bar{\fp})}_{\infty}/K$) of degree $p^n$ over $K$.  We now make the following definition for any $n$: If $v$ is a prime of $\KK_{\infty}^{H_n}$ above $\fp$ (resp. $\bar{\fp}$), we define $\hat{E}^{\pm}((\KK_{\infty}^{H_n})_v)$ as in \eqref{plusdef}, \eqref{minusdef} and \eqref{eq:def2-3} with respect to the $\Zp$-extension $\KK_{\infty}^{H_n}/L^{(\fp)}_{i(\KK_{\infty}^H, \fp)+n}$ (resp. $\KK_{\infty}^{H_n}/L^{(\bar{\fp})}_{i(\KK_{\infty}^H, \bar{\fp})+n}$). 

With the above definitions, we define $\Selpm(E/\KK_{\infty}^{H_n})$ as we did in \eqref{Selpm_def}. Note that when $n=0$ this definition of $\Selpm(E/\KK_{\infty}^H)$ is the same as the definition \eqref{Selpm_def}.

We define $\Lambda_{H, n}:=\Zp[[\Gal(\KK_{\infty}^{H_n}/K_{H,n})]]$. Note that for any $n$ $\Selpm(E/\KK_{\infty}^{H_n})$ is a $\Gal(\KK_{\infty}^{H_n}/K)$-module and hence by restriction it is a $\Gal(\KK_{\infty}^{H_n}/K_{H,n})$-module which makes its Pontryagain dual $X^{\bullet\star}(E/\KK_{\infty}^{H_n})$ a finitely generated $\Lambda_{H, n}$-module. For every $n$, we let ${G_{H,n}=\Gal(\KK_{\infty}/K_{H,n})}$ and we write $\mu_{G_{H,n}/H_n}(X^{\bullet\star}(E/\KK_{\infty}^{H_n}))$ for the $\mu$-invariant of $X^{\bullet\star}(E/\KK_{\infty}^{H_n})$ as a $\Lambda_{H, n} = \Z_p[[G_{H,n}/H_n]]$-module. When we are working with a fixed ${H \in \mathcal{H}^{\bullet\star}}$, then, to ease notation, we will sometimes drop the subscript $H$ from all the symbols in this paragraph.

We will sometimes abbreviate $\KK_{\infty}^{H_n}$ to $F^n_{\infty}$ and write ${F^0_{\infty} = F_{\infty}}$ in this article. The Galois groups ${\Gal(F_{\infty}/K) \cong \Gal(F^n_{\infty}/K_{H,n})}$
are isomorphic to $\Z_p$ and will be abbreviated to $\Gamma$ in the following diagram of fields:
\[ \xymatrix{& \KK_{\infty} \ar@{-}@/_1.4pc/[dddl]_{G_n = G_{H,n}} \ar@{-}[dd] \ar@{-}@/^1.2pc/[dd]_{H_n} \ar@{-}@/^2.7pc/[dddd]^{H = \overline{\langle \sigma^a \tau^b\rangle}}\\ & \\ & \KK_{\infty}^{H_n} = F^n_{\infty} \ar@{-}@/_1.4pc/[dl]_\Gamma\ar@{-}[dd]_{p^n} \ar@{-}[dl] \\ K_n = K_{H,n} & \\ & \KK_{\infty}^H = F_{\infty} \ar@{-}[dl] \ar@{-}@/^1.4pc/[dl]^\Gamma  \\ K \ar@{-}[uu]_{p^n}  & }\]

In \cite{MHG} the authors studied the \emph{$\mathfrak{M}_H(G)$-property} of Selmer groups in the good ordinary setting. The main purpose of the present article is to derive an analogue of \cite[Theorem~1.3]{MHG} for signed Selmer groups, Selmer groups and fine Selmer groups in the supersingular case.

\begin{definition} \label{def:free-and-torsion}
If $R$ is a integral domain and $M$ a finitely generated $R$-module, let
\begin{outerlist}
\item $T_R(M)$ be the torsion $R$-submodule of $M$,
\item $F_R(M)=M/T_R(M)$.
\end{outerlist}
We then define $M_f:=T_R(M)/M[p^{\infty}]$.
\end{definition}

When $M$ is itself a torsion $R$-module we have $M_f=M/M[p^{\infty}]$. This will be the case when we consider signed Selmer groups and fine Selmer groups. However the Pontryagain dual of the Selmer group over $\KK_{\infty}$ has positive $\Lambda_2$-rank and hence in the case of the Selmer group we need to consider its $\Lambda_2$-torsion submodule.

\begin{definition} \label{def:MHG}
Let $\bullet, \star \in \{+,-\}$ and $H \in \mathcal{H}^{\bullet\star}$. We say that $X^{\bullet\star}(E/\KK_{\infty})$ \emph{satisfies the $\mathfrak{M}_H(G)$-property} if $X^{\bullet\star}(E/\KK_{\infty})_f$ is finitely generated over ${\Lambda(H):=\Zp[[H]]}$.
We say that $X(E/\KK_{\infty})$  \emph{satisfies the $\mathfrak{M}_H(G)$-property} if $X(E/\KK_{\infty})_f$ is finitely generated over ${\Lambda(H):=\Zp[[H]]}$. We say that $Y(E/\KK_{\infty})$  \emph{satisfies the $\mathfrak{M}_H(G)$-property} if $Y(E/\KK_{\infty})_f$ is finitely generated over ${\Lambda(H):=\Zp[[H]]}$.
\end{definition}
	
For $H=H_{cyc}$ it is conjectured that ${X(E/\KK_{\infty})_f = X(E/\KK_{\infty})/X(E/\KK_{\infty})[p^\infty]}$ is finitely generated over $\Z_p[[H]]$ if $p$ is odd and $E$ has good ordinary reduction at $p$ (see \cite{CS2}); this is known as the \emph{$\mathfrak{M}_H(G)$-conjecture}.

An important special case is the following: If $X^{\bullet\star}(E/\KK_{\infty}^H)$, ${H \in \mathcal{H}^{\bullet\star}}$, is $\Lambda$-torsion and has $\mu$-invariant zero, then this $\Lambda$-module is finitely generated over $\Z_p$. It will follow from our control theorem (Theorem~\ref{SelmerControlThm_prop}) that $X^{\bullet\star}(E/\KK_{\infty})_H$ is also finitely generated over $\Z_p$, and therefore $X^{\bullet\star}(E/\KK_{\infty})$ satisfies the $\mathfrak{M}_H(G)$-property. As we have pointed out above, it is conjectured that both $\mu(X^{++}(E/\KK_{\infty}^H)) = 0$ and $\mu(X^{--}(E/\KK_{\infty}^H)) = 0$ if ${H = H_{cyc}}$ fixes the cyclotomic $\Z_p$-extension ${\KK_\infty^{H_{cyc}} = K_{cyc}}$ of $K$. 

\subsection{Main results}
We first formulate our main result for signed Selmer groups. Let ${\bullet, \star \in \{+,-\}}$. If $X^{\bullet\star}(E/\KK_{\infty})$ is $\Lambda_2$-torsion, let ${f_{\infty}^{\bullet\star} \in \Lambda_2}$ be the characteristic power series of $X^{\bullet\star}(E/\KK_{\infty})$, and write ${f^{\bullet\star}_{\infty}=p^{m^{\bullet\star}}g^{\bullet\star}_{\infty}}$ where ${m^{\bullet\star}=\mu_G(X^{\bullet\star}(E/\KK_{\infty}))}$, so that ${p \nmid g^{\bullet\star}_{\infty}}$. 
Also if ${H \in \mathcal{H}^{\bullet\star}}$, we let $\lambda_H^{\bullet\star}$ be the $\lambda$-invariant of $X^{\bullet\star}(E/\KK_{\infty}^H)$. The first main result of this article is the following
\begin{theorem}\label{main_theorem}
With notation as above, let $\bullet, \star \in \{+,-\}$. We assume that $\mathcal{H}^{\bullet\star}$ is non-empty and we choose some ${H=\overbar{\langle \sigma^a\tau^b \rangle}\in \mathcal{H}^{\bullet\star}}$.

Then $X^{\bullet\star}(E/\KK_{\infty})$ is $\Lambda_2$-torsion and the following are equivalent:
\begin{enumerate}[(a)]
\item $X^{\bullet\star}(E/\KK_{\infty})_f$ is finitely generated over $\Lambda(H)$.
\item For any $m>0$ such that $p^m$ annihilates $X^{\bullet\star}(E/\KK_{\infty})[p^{\infty}]$ and $X^{\bullet\star}(E/\KK_{\infty}^H)[p^{\infty}]$ we have that  $H_1(H,X^{\bullet\star}(E/\KK_{\infty})/p^m)$ is finite.
\item $H_1(H,X^{\bullet\star}(E/\KK_{\infty})_f/p)$ is finite.
\item $\mu_G(X^{\bullet\star}(E/\KK_{\infty}))=\mu_{G/H}(X^{\bullet\star}(E/\KK_{\infty}^H))$.
\item For all $n$, $X^{\bullet\star}(E/\KK_{\infty}^{H_n})$ is a torsion $\Lambda_{H,n}$-module and \[ \mu_{G_n/H_n}(X^{\bullet\star}(E/\KK_{\infty}^{H_n})) = p^n\mu_G(X^{\bullet\star}(E/\KK_{\infty})).\]
\item For all $n$, $X^{\bullet\star}(E/\KK_{\infty}^{H_n})$ is a torsion $\Lambda_{H,n}$-module. Furthermore, if $X^{\bullet\star}(E/\KK_{\infty})[p^{\infty}]$ is pseudo-isomorphic to $\bigoplus_{i=1}^s \Lambda_2/p^{m_i^{\bullet\star}}$, then for all $n$ $X^{\bullet\star}(E/\KK_{\infty}^{H_n})[p^{\infty}]$ is pseudo-isomorphic to $\bigoplus_{i=1}^s  \left(\Lambda_{H,n}/p^{m_i^{\bullet\star}}\right)^{p^n}$.
\item The image of $g^{\bullet\star}_{\infty}$ in $\Lambda_2/p\Lambda_2$ is not divisible by the coset of ${(1+T_1)^a(1+T_2)^b-1}$ (here ${T_1 = \sigma - 1}$ and ${T_2 = \tau - 1}$).
\item $\lambda(X^{\bullet\star}(E/L_{\infty}))$ is bounded as $L_{\infty}$ varies through the elements in a neighborhood of $\KK_{\infty}^H$.
\item We have an injective $\Lambda(H)$-homomorphism
$$X^{\bullet\star}(E/\KK_{\infty})_f \hookrightarrow \Lambda(H)^{\lambda_H^{\bullet\star}}$$ with finite cokernel and a $\Lambda_2$-homomorphism
$$0 \longrightarrow X^{\bullet\star}(E/\KK_{\infty}) \longrightarrow \bigoplus_{i=1}^{s} \Lambda_2/{(f_i^{\bullet\star})}^{n_i^{\bullet\star}} \oplus \bigoplus_{j=1}^{t} \Lambda_2/p^{m_j^{\bullet\star}} \longrightarrow B^{\bullet\star} \longrightarrow 0$$
where $s \leq \lambda_H^{\bullet\star}$, $B^{\bullet\star}$ is a pseudo-null $\Lambda_2$-module, ${f_i^{\bullet\star} \in \Lambda_2 \setminus \Lambda(H)}$ are irreducible power series and ${\mu_G(X^{\bullet\star}(E/\KK_{\infty}))=\sum_{j=1}^t m_j^{\bullet\star}}$.
\end{enumerate}
Moreover, if ${\star = \bullet}$, i.e. either ${H \in \mathcal{H}^{++}}$ or ${H \in \mathcal{H}^{--}}$, and if $i(\KK_{\infty}^H,\fp)=i(\KK_{\infty}^H, \bar{\fp})$, then the above statements are also equivalent to the following assertion:
\begin{enumerate}[(j)]
	\item $\lambda(X^{\bullet\star}(E/L_{\infty}))$ is constant as $L_{\infty}$ varies through the elements in a neighborhood of $\KK_{\infty}^H$.
\end{enumerate}
\end{theorem}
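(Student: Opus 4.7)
The plan is to establish the equivalence of (j) with conditions (a)--(i) by first noting that (j) $\Rightarrow$ (h) is immediate (a constant function on a neighborhood is automatically bounded there), and then focusing on the converse, where the extra hypotheses $\bullet=\star$ and $i(\KK_\infty^H,\fp)=i(\KK_\infty^H,\bar{\fp})$ must be invoked to upgrade the mere boundedness supplied by (h) to genuine constancy of $\lambda$.

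Assuming one of the equivalent conditions (a)--(i), I would first fix a neighborhood $U$ of $\KK_\infty^H$ in Greenberg's topology such that the $\mathfrak{M}_H(G)$-property persists for every $L_\infty=\KK_\infty^{H'}\in U$ (this openness is part of the already-established equivalences), and then shrink $U$ further so that both shift indices $i(L_\infty,\fp)$ and $i(L_\infty,\bar{\fp})$ stabilise at the common value $i_0:=i(\KK_\infty^H,\fp)=i(\KK_\infty^H,\bar{\fp})$ for every $L_\infty\in U$. For each such $L_\infty$, I would apply the control theorem (Theorem~\ref{SelmerControlThm_prop}) to compare $X^{\bullet\star}(E/L_\infty)$ with the $H'$-coinvariants of $X^{\bullet\star}(E/\KK_\infty)$, use the explicit pseudo-isomorphism supplied by condition (i) to compute the $\lambda$-invariant of the coinvariants, and control the kernel and cokernel of the control map through local cohomology at the primes in $S$.

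The heart of the argument will be to show that every contribution to $\lambda(X^{\bullet\star}(E/L_\infty))$ is independent of $L_\infty\in U$. The global part coming from (i) depends on how many factors $\Lambda_2/(f_i^{\bullet\star})^{n_i^{\bullet\star}}$ survive $H'$-coinvariants, and since the $f_i^{\bullet\star}$ are irreducible power series not lying in $\Lambda(H)$, the relevant count remains constant once $U$ is small enough. For the local corrections at primes above $p$, the combined hypotheses $\bullet=\star$ and equal shift indices make the contributions at $\fp$ and at $\bar{\fp}$ perfectly symmetric: both primes sit inside $L_\infty$ with the same shift index $i_0$ and the same sign $\bullet$, so the local quotients $H^1(L_{\infty,v},E[p^\infty])/(\hat{E}^{\bullet}(L_{\infty,v})\otimes\Q_p/\Z_p)$ for $v\mid\fp$ and for $v\mid\bar{\fp}$ can be matched against one another and packaged into a single expression depending only on $H$ and $i_0$, not on the particular $H'\subset G$.

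The main obstacle I expect will be exactly this last step: ruling out that the two local contributions at $\fp$ and $\bar{\fp}$ move in opposite directions along $U$, a scenario which would be consistent with the bounded behaviour asserted in (h) but would destroy strict constancy. Without either $\bullet=\star$ or equal shift indices, the two primes play asymmetric roles in the definition of the signed Selmer group and a non-trivial but bounded cancellation between their local factors is a priori possible. What is needed is a careful comparison of the formal-group modules $\hat{E}^{\pm}(L_{\infty,v})$ for $L_\infty$ varying inside $U$, showing that the symmetry hypotheses force each local factor to be individually constant on $U$; given this, (j) follows by summing the global and local contributions.
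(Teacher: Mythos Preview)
Your plan misidentifies both the mechanism and the place where the extra hypotheses enter. The control theorem you invoke (Theorem~\ref{SelmerControlThm_prop}) is already an \emph{isomorphism} $X^{\bullet\star}(E/\KK_\infty)_{H'}\cong X^{\bullet\star}(E/L_\infty)$ for every $L_\infty=\KK_\infty^{H'}$ in a neighbourhood; there are no local error terms to control, so your discussion of local contributions at $\fp$ and $\bar{\fp}$ moving in opposite directions does not correspond to anything in the actual setup. What is genuinely at stake is how $\lambda\bigl((X^{\bullet\star}(E/\KK_\infty))_{H'}\bigr)$ varies with $H'$, and the exact sequence in~(i) does not obviously pin this down: the pseudo-null cokernel $B^{\bullet\star}$ contributes to both $H'$-invariants and $H'$-coinvariants, and its effect on $\lambda$ is not visibly constant along $U$. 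Nor does the $\Lambda(H)$-injection $X^{\bullet\star}(E/\KK_\infty)_f\hookrightarrow\Lambda(H)^{\lambda_H^{\bullet\star}}$ help, since it is tailored to $H$, not to neighbouring $H'$.

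The paper's argument for $(h)\Rightarrow(j)$ is entirely different. It establishes a dichotomy (Theorem~\ref{thm:strong_local_behavior}): on a neighbourhood of $\KK_\infty^H$, \emph{either} both $\mu$ and $\lambda$ are constant, \emph{or} $\mu$ drops somewhere in every neighbourhood and $\lambda$ is unbounded. Condition~(h) rules out the second alternative. This dichotomy is proved by a Fukuda-type rank-comparison argument over the finite layers $L_n$, which in turn rests on a \emph{finite-level} control theorem (Proposition~\ref{prop:local_max_control_thm}) comparing $\Selm_{p^\infty}^{\bullet\bullet}(E/L_{d+n})^{\omega_{n,d}^\bullet=0}$ with $\Selm_{p^\infty}^{\bullet\bullet}(E/L_\infty)^{\omega_{n,d}^\bullet=0}$. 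The hypotheses $\bullet=\star$ and $i(\KK_\infty^H,\fp)=i(\KK_\infty^H,\bar{\fp})$ enter precisely here: they ensure that a single modified norm element $\omega_{n,d}^\bullet$ simultaneously cuts out the correct local conditions at both primes above $p$, so that the finite-level control theorem has uniformly bounded cokernels across the neighbourhood. Without this uniform bound the rank-comparison argument cannot be run, and this---not any cancellation of local terms---is the actual obstruction.
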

The above theorem is the supersingular analogue of \cite[Theorem 1.3]{MHG}, but with four extra equivalences $(b)$, $(c)$, $(f)$ and $(j)$. The additional hypothesis ${i(\KK_{\infty}^H,\fp)=i(\KK_{\infty}^H, \bar{\fp})}$ is needed to prove a control theorem that is used in the proof of statement~$(j)$. As far as we know there does not exist a control theorem for mixed signed Selmer groups in the literature. Assuming that both signs are the same, we prove in Section~\ref{section:fukuda} a control theorem under the assumption that ${i(\KK_{\infty}^H,\fp)=i(\KK_{\infty}^H, \bar{\fp})}$. This theorem generalizes the control theorem of Iovita and Pollack \cite[Theorem 6.8]{IP} which assumes that both primes of $K$ above $p$ are totally ramified in the $\Z_p$-extension $\KK_\infty^H/K$.

As an easy consequence of part $(g)$ above, the following result shows that under the assumption of Theorem~\ref{main_theorem} the equivalent conditions hold for all but finitely many ${L_\infty \in \mathcal{E}}$:
\begin{proposition}\label{almost_all_prop}
Assume that $\mathcal{H}^{\bullet\star} \neq \emptyset$. Then for all but finitely many ${L_\infty \in \mathcal{E}}$, with ${L_\infty = \KK_\infty^H}$, we have that  $X^{\bullet\star}(E/\KK_{\infty})_f$ is a finitely generated $\Lambda(H)$-module.
\end{proposition}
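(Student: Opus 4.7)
The plan is to combine criterion~(g) of Theorem~\ref{main_theorem} with Proposition~\ref{Zpextensions_prop}. Since ${\mathcal{H}^{\bullet\star} \ne \emptyset}$, Lemma~\ref{Xinf_torsion_lemma} guarantees that $X^{\bullet\star}(E/\KK_\infty)$ is $\Lambda_2$-torsion, so the characteristic power series ${f^{\bullet\star}_\infty = p^{m^{\bullet\star}} g^{\bullet\star}_\infty}$, with ${p \nmid g^{\bullet\star}_\infty}$, is well-defined. Moreover, Proposition~\ref{Zpextensions_prop} leaves only finitely many ${L_\infty \in \mathcal{E}}$ outside $\mathcal{H}^{\bullet\star}$, so for every remaining ${L_\infty = \KK_\infty^H}$ with ${H = \overbar{\langle \sigma^a \tau^b \rangle}}$ we may apply Theorem~\ref{main_theorem} to $H$; it therefore suffices to prove that criterion~(g) holds for all but finitely many classes ${[(a,b)] \in \mathbb{P}^1(\Zp)}$.

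Next, I would pass to the reduction ${\bar{\Lambda}_2 := \Lambda_2/p\Lambda_2 \cong \FF_p[[T_1,T_2]]}$, which is a UFD (formal power series over a field). By construction the image $\bar{g}^{\bullet\star}_\infty$ is nonzero in $\bar{\Lambda}_2$, so its unique factorization involves only finitely many pairwise non-associate irreducible factors. For each class ${[(a,b)] \in \mathbb{P}^1(\Zp)}$, set ${\pi_{a,b} := (1+T_1)^a(1+T_2)^b - 1}$. Then $\pi_{a,b}$ is prime in $\bar{\Lambda}_2$, because the quotient ${\bar{\Lambda}_2/(\pi_{a,b}) \cong \FF_p[[G/H]] \cong \FF_p[[T]]}$ (with ${H = \overbar{\langle \sigma^a\tau^b \rangle}}$) is a domain. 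Furthermore, distinct classes in $\mathbb{P}^1(\Zp)$ yield non-associate primes, because the principal ideal $(\pi_{a,b})$ is recovered as the kernel of the projection ${\bar{\Lambda}_2 \twoheadrightarrow \FF_p[[G/H]]}$, and this kernel determines the closed subgroup $H$ uniquely.

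Consequently, at most finitely many of the primes $\pi_{a,b}$ divide $\bar{g}^{\bullet\star}_\infty$; for every other ${H \in \mathcal{H}^{\bullet\star}}$ criterion~(g) of Theorem~\ref{main_theorem} is satisfied, forcing criterion~(a), namely that $X^{\bullet\star}(E/\KK_\infty)_f$ is finitely generated over $\Lambda(H)$. The main obstacle here is genuinely just bookkeeping: once one identifies criterion~(g) as the right test and observes that the $\pi_{a,b}$ are pairwise non-associate primes in the UFD $\bar{\Lambda}_2$, the claim reduces to the elementary fact that a fixed nonzero element of a UFD has only finitely many non-associate prime divisors. The substantive mathematical content has already been absorbed into the proof of ${(a) \Leftrightarrow (g)}$ in Theorem~\ref{main_theorem}, which is precisely why the present proposition appears as an ``easy consequence.''
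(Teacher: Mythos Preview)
Your proof is correct and follows essentially the same route as the paper: both reduce via Proposition~\ref{Zpextensions_prop} to ${H \in \mathcal{H}^{\bullet\star}}$, invoke criterion~(g) of Theorem~\ref{main_theorem}, and then argue in the UFD ${\Lambda_2/p \cong \FF_p[[T_1,T_2]]}$ that the elements ${(1+T_1)^a(1+T_2)^b - 1}$ for distinct ${[(a,b)] \in \mathbb{P}^1(\Zp)}$ are pairwise non-associate primes (the paper phrases this as ``relatively prime'' and cites \cite[Lemma~2.4]{MHG}, while you give a direct argument via the kernel of ${\FF_p[[G]] \twoheadrightarrow \FF_p[[G/H]]}$). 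The only minor addition you make explicit is the appeal to Lemma~\ref{Xinf_torsion_lemma} to ensure ${g^{\bullet\star}_\infty \ne 0}$, which the paper leaves implicit.
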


We also prove theorems similar to Theorem~\ref{main_theorem} but for the fine Selmer group as well as the Selmer group. First we deal with the fine Selmer group. We shall show that $Y(E/\KK_{\infty})$ a finitely generated $\Lambda_2$-torsion module (Lemma~\ref{Yinf_torsion_lemma}). Let ${\tilde{f}_{\infty} \in \Lambda_2}$ be the characteristic power series of $Y(E/\KK_{\infty})$.
We write
\begin{align} \label{eq:tildef_infty} \tilde{f}_{\infty}=p^{\tilde{\mu}}\tilde{g}_{\infty}
\end{align}
where ${\tilde{\mu}=\mu_G(Y(E/\KK_{\infty}))}$, so that ${p \nmid \tilde{g}_{\infty}}$.

\begin{theorem}\label{main_theorem2}
Let ${\bullet, \star \in \{+,-\}}$ and ${H=\overbar{\langle \sigma^a\tau^b \rangle}\in \mathcal{H}^{\bullet\star}}$. The following statements are equivalent:
\begin{enumerate}[(a)]
\item $Y(E/\KK_{\infty})_f$ is finitely generated over $\Lambda(H)$.
\item The image of $\tilde{g}_{\infty}$ in $\Lambda_2/p$ is not divisible by the coset of ${(1+T_1)^a(1+T_2)^b-1}$.
\item $\lambda_{G/H}(Y(E/L_\infty))$ is bounded on a neighbourhood $U$ of $\KK_\infty^H$.
\item \resizebox{.9\hsize}{!}{$\mu_G(Y(E/\KK_\infty)) = \mu_{G/H}(Y(E/\KK_\infty^H)) - \mu_{G/H}(T_{\Lambda}([F_{\Lambda_2}(X(E/\KK_{\infty}))]_H))$}.
\item For all $n \geq 0$, $Y(E/\KK_\infty^{H_n})$ is a torsion $\Lambda_{H,n}$-module and \\
\resizebox{.9\hsize}{!}{$p^n\mu_G(Y(E/\KK_\infty)) = \mu_{G_n/H_n}(Y(E/\KK_\infty^{H_n})) - \mu_{G_n/H_n}(T_{\Lambda_{H,n}}([F_{\Lambda_2}(X(E/\KK_{\infty}))]_{H_n}))$.}
\end{enumerate}
Moreover, consider the statements \begin{enumerate}
\item[(f)] $\mu_G(Y(E/\KK_\infty)) = \mu_{G/H}(Y(E/\KK_\infty^H))$.
\item[(g)] $\mu_{G/H}(Y(E/L_\infty))$ is constant on a neighbourhood $U$ of $\KK_\infty^H$.
\end{enumerate}
Then $(f) \Longrightarrow (g) \Longrightarrow [(a) -(e)]$.
\end{theorem}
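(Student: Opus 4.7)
The plan is to adapt the strategy used for Theorem~\ref{main_theorem} to the fine Selmer group, with two new features: since $Y(E/\KK_{\infty})$ is already $\Lambda_2$-torsion by Lemma~\ref{Yinf_torsion_lemma}, the torsion-free part $F_{\Lambda_2}(X(E/\KK_{\infty}))$ enters as a correction term in $(d)$ and $(e)$; and we require a control theorem relating $Y(E/\KK_{\infty})$ to $Y(E/\KK_{\infty}^H)$. I would begin by applying the structure theorem for finitely generated $\Lambda_2$-torsion modules to obtain a pseudo-isomorphism
$$Y(E/\KK_{\infty}) \sim \bigoplus_{i=1}^{s} \Lambda_2/(f_i)^{n_i} \oplus \bigoplus_{j=1}^{t} \Lambda_2/(p^{m_j}),$$
so that $\tilde{g}_{\infty} = \prod_i f_i^{n_i}$ and $\tilde{\mu} = \sum_j m_j$ up to units, and in particular $Y(E/\KK_{\infty})_f \sim \bigoplus_i \Lambda_2/(f_i)^{n_i}$.

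For $(a)\Longleftrightarrow(b)$, set $\pi_H = (1+T_1)^a(1+T_2)^b - 1$, so that $\Lambda(H) = \Z_p[[\pi_H]]$. A finitely generated $\Lambda_2$-module $M$ is finitely generated over $\Lambda(H)$ if and only if $M/pM$ is finitely generated over $\FF_p[[\bar{\pi}_H]]$; applying this to $Y(E/\KK_{\infty})_f$ and using the structure description reduces the question to each $\bar{f}_i$ being coprime to $\bar{\pi}_H$ in $\FF_p[[T_1,T_2]]$, equivalently $\bar{\pi}_H \nmid \bar{\tilde{g}}_{\infty}$. The equivalence $(b)\Longleftrightarrow(c)$ follows by specialisation and Weierstrass preparation: for $L_{\infty}=\KK_{\infty}^{H'}$ with $H'$ close to $H$, $\lambda_{G/H'}(Y(E/L_{\infty}))$ equals the Weierstrass degree of $\tilde{g}_{\infty}$ modulo $\pi_{H'}$ in the transverse parameter, and such degrees stay bounded on a neighborhood exactly when $\bar{\pi}_H \nmid \bar{\tilde{g}}_{\infty}$.

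The principal new ingredient is a control theorem for $Y$. I would derive it from the Poitou--Tate type exact sequence for fine Selmer groups, comparing the sequences for $E/\KK_{\infty}$ and $E/\KK_{\infty}^H$ and exploiting the natural surjection $X \twoheadrightarrow Y$ coming from the inclusion of local conditions. A snake-lemma analysis then expresses the kernel and cokernel of the natural map $Y(E/\KK_{\infty})_H \to Y(E/\KK_{\infty}^H)$ in terms of local cohomology at primes in $S$ and the $H$-coinvariants of $F_{\Lambda_2}(X(E/\KK_{\infty}))$; extracting $\mu$-invariants and using additivity on short exact sequences produces precisely the correction term in $(d)$. Combining with $(a)\Longleftrightarrow(b)$ gives $(a)\Longleftrightarrow(d)$. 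Iterating the same argument at each level $H_n$ and using compatibility with the natural projections yields $(d)\Longleftrightarrow(e)$.

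For the final implications, $(f)\Longrightarrow(g)$ follows from upper semi-continuity of the $\mu$-invariant in Greenberg's topology combined with a lower bound $\mu_G(Y(E/\KK_{\infty})) \leq \mu_{G/H'}(Y(E/\KK_{\infty}^{H'}))$ on a neighborhood of $\KK_{\infty}^H$: equality at the centre forces equality on the whole neighborhood. For $(g)\Longrightarrow(b)$, suppose $\bar{\pi}_H \mid \bar{\tilde{g}}_{\infty}$; then $\tilde{g}_{\infty} = pA + \pi_H B$ and its specialisation modulo $\pi_H$ acquires an extra factor of $p$, so $\mu_{G/H}(Y(E/\KK_{\infty}^H))$ strictly exceeds the generic value achieved on nearby $H'$ with $\pi_H \not\equiv 0 \pmod{(p,\pi_{H'})}$, contradicting $(g)$. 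The main obstacle I anticipate is the derivation of the control theorem with its precise correction term, since this requires careful homological bookkeeping to isolate the contribution of the torsion-free part of $X(E/\KK_{\infty})$ in the supersingular setting, where the local conditions at $p$ are considerably more delicate than in the ordinary case.
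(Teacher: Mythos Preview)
Your treatment of $(a)\Longleftrightarrow(b)\Longleftrightarrow(c)$ is essentially the paper's approach (Proposition~\ref{FineSelmerMonsky_prop} and Theorem~\ref{thm3:cuoco-monsky} together with the algebraic argument of \cite[Proposition~7.1]{MHG}).

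There is, however, a genuine gap in your plan for $(d)$ and $(e)$. You expect the correction term $\mu_{G/H}(T_\Lambda([F_{\Lambda_2}(X(E/\KK_\infty))]_H))$ to appear as the kernel/cokernel of a control map $Y(E/\KK_\infty)_H\to Y(E/\KK_\infty^H)$. But the paper's control theorem for the fine Selmer group (Proposition~\ref{fineSelmercontrol_prop1}) shows this map is an \emph{isomorphism}: the local conditions defining $R_{p^\infty}$ are the full local cohomology groups, so once $E(\KK_\infty)[p^\infty]=0$ and $H_{n,w}=0$ for $w\nmid p$, there is no error at all. No snake-lemma analysis of this map can produce $F_{\Lambda_2}(X)$, an object foreign to $Y$.

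The paper instead obtains $(d)$ and $(e)$ indirectly. It first proves Theorem~\ref{main_theorem3} for the Selmer group, where the correction term arises naturally from Theorem~\ref{TorsionControlThm} because $X(E/\KK_\infty)$ has $\Lambda_2$-rank~$2$. It then transfers the result to $Y$ via the comparison identities $\mu_G(T_{\Lambda_2}(X(E/\KK_\infty)))=\mu_G(Y(E/\KK_\infty))$ (Theorem~\ref{2dimWingberg_theorem1}) and $\mu_{G_n/H_n}(T_{\Lambda_{H,n}}(X(E/\KK_\infty^{H_n})))=\mu_{G_n/H_n}(Y(E/\KK_\infty^{H_n}))$, which come from the pseudo-isomorphism $T_\Lambda(\dot X(E/L_\infty))\sim Y(E/L_\infty)$ of Proposition~\ref{fineSelmerisom_prop} (itself a consequence of \cite{Matar_Torsion}). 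The implications $(f)\Longrightarrow(g)\Longrightarrow[(a)\text{--}(e)]$ are handled the same way: they are translated into the corresponding statements $(g)\Longrightarrow(h)\Longrightarrow(e)$ of Theorem~\ref{main_theorem3} and proved there. So the missing idea in your proposal is this Wingberg-type bridge between $Y$ and $T_\Lambda(X)$, without which the specific correction term in $(d)$ cannot be reached from the fine Selmer side alone.
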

Note that the Pontryagin dual $X(E/\KK_\infty)$ of the Selmer group appears in the error terms in statements~$(d)$ and $(e)$.

Now we state our theorem for the Selmer group. Recall that it is well-known that $X(E/K_{cyc})$ is not $\Lambda$-torsion in the supersingular setting. Let ${f_{\infty} \in \Lambda_2}$ be the characteristic power series of $T_{\Lambda_2}(X(E/\KK_{\infty}))$. 
We write
\begin{align} \label{eq:f_infty} f_{\infty}=p^{\mu}g_{\infty}
\end{align}
where ${\mu=\mu_G(T_{\Lambda_2}(X(E/\KK_{\infty})))}$, so that ${p \nmid g_{\infty}}$. 

\begin{theorem}\label{main_theorem3}
Let ${\bullet, \star \in \{+,-\}}$ and ${H=\overbar{\langle \sigma^a\tau^b \rangle}\in \mathcal{H}^{\bullet\star}}$. Consider the following statements:
\begin{enumerate}[(a)]
\item $X(E/\KK_{\infty})_f$ is finitely generated over $\Lambda(H)$.
\item For any $m>0$ such that $p^m$ annihilates $X(E/\KK_{\infty})[p^{\infty}]$ and $X(E/\KK_{\infty}^H)[p^{\infty}]$ we have that $H_1(H,T_{\Lambda_2}(X(E/\KK_{\infty}))/p^m)$ is finite.
\item \resizebox{.9\hsize}{!}{$\mu_G(T_{\Lambda_2}(X(E/\KK_\infty))) = \mu_{G/H}(T_{\Lambda}(X(E/\KK_\infty^H))) - \mu_{G/H}(T_{\Lambda}([F_{\Lambda_2}(X(E/\KK_{\infty}))]_H))$}.
\item For all $n \geq 0$ we have\\
 \resizebox{.9\hsize}{!}{$p^n\mu_G(T_{\Lambda_2}(X(E/\KK_\infty))) = \mu_{G_n/H_n}(T_{\Lambda_{H,n}}(X(E/\KK_\infty^{H_n}))) - \mu_{G_n/H_n}(T_{\Lambda_{H,n}}([F_{\Lambda_2}(X(E/\KK_{\infty}))]_{H_n}))$.}
\item $\lambda_{G/H}(T_{\Lambda}(X(E/L_\infty)))$ is bounded on a neighbourhood $U$ of $\KK_\infty^H$.
\item The image of $g_{\infty}$ in $\Lambda_2/p$ is not divisible by the coset of ${(1+T_1)^a(1+T_2)^b-1}$.
\item $\mu_G(T_{\Lambda_2}(X(E/\KK_\infty))) = \mu_{G/H}(T_{\Lambda}(X(E/\KK_\infty^H)))$.
\item $\mu_{G/H}(T_{\Lambda}(X(E/L_\infty)))$ is constant on a neighbourhood $U$ of $\KK_\infty^H$.
\end{enumerate}

Then the following assertions hold.
\begin{enumerate}[(1)]
\item $(a)-(f)$ are all equivalent.
\item We have the following implications: ${(g) \Longrightarrow (h) \Longrightarrow [(a)-(f)]}$.
\item For all but finitely many ${H \in \mathcal{H}^{\bullet\star}}$ we have that ${\mu_{G_n/H_n}(T_{\Lambda_{H,n}}([F_{\Lambda_2}(X(E/\KK_{\infty}))]_{H_n})) = 0}$ for every ${n \ge 0}$.
\item There exists $C \geq 0$ such that for any $n \geq 0$ and any $H \in \mathcal{H}^{\bullet\star}$ we have that  \[ \mu_{G_n/H_n}(T_{\Lambda_{H,n}}([F_{\Lambda_2}(X(E/\KK_{\infty}))]_{H_n})) \leq C\] and
\[ \lambda_{G_n/H_n}(T_{\Lambda_{H,n}}([F_{\Lambda_2}(X(E/\KK_{\infty}))]_{H_n})) \leq C. \]
\end{enumerate}
\end{theorem}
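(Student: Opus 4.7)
The plan is to adapt the strategy of Theorem~\ref{main_theorem} to the non-torsion setting by working with the torsion submodule $T_{\Lambda_2}(X(E/\KK_\infty))$ and tracking the contribution from the free part $F_{\Lambda_2}(X(E/\KK_\infty))$ via the short exact sequence
\[
0 \to T_{\Lambda_2}(X(E/\KK_\infty)) \to X(E/\KK_\infty) \to F_{\Lambda_2}(X(E/\KK_\infty)) \to 0.
\]
Taking $H_n$-coinvariants produces a long exact sequence in $H_n$-homology whose connecting homomorphism is exactly what explains the correction terms $\mu_{G/H}(T_\Lambda([F_{\Lambda_2}(X(E/\KK_\infty))]_H))$ appearing in $(c)$ and $(d)$.

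For assertion $(1)$, the equivalences $(a) \Leftrightarrow (b) \Leftrightarrow (e) \Leftrightarrow (f)$ are essentially formal consequences of the arguments developed for signed Selmer groups in Theorem~\ref{main_theorem}, applied to the torsion $\Lambda_2$-module $T_{\Lambda_2}(X(E/\KK_\infty))$ in place of $X^{\bullet\star}(E/\KK_\infty)$. In particular, the divisibility of the image of the characteristic element in $\Lambda_2/p$ by the coset of $(1+T_1)^a(1+T_2)^b - 1$ controls whether $T_{\Lambda_2}(X(E/\KK_\infty))_f$ is finitely generated over $\Lambda(H)$, and the $H_1$-finiteness of $T_{\Lambda_2}(X(E/\KK_\infty))/p^m$ is equivalent to this via a standard homological argument using the sequence $0 \to X_f[p^m] \to X/p^m \to X_f/p^m \to 0$. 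For $(c)$ and $(d)$ one applies the $\mu$-invariant equality from Theorem~\ref{main_theorem} to $T_{\Lambda_2}(X(E/\KK_\infty))$ and then combines with the control theorem for $X(E/\KK_\infty)$ via the long exact sequence above: the $\Lambda_{H,n}$-torsion submodule of $X(E/\KK_\infty^{H_n})$ differs, up to a pseudo-null module, from that of $T_{\Lambda_2}(X(E/\KK_\infty))_{H_n}$ by precisely the $\Lambda_{H,n}$-torsion submodule of $[F_{\Lambda_2}(X(E/\KK_\infty))]_{H_n}$, which accounts for the error term.

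For assertion $(2)$, the implications $(g) \Rightarrow (h) \Rightarrow [(a)-(f)]$ reduce to the facts that constancy on a Greenberg-neighborhood implies boundedness, and that the corresponding statement for the torsion submodule $T_{\Lambda_2}(X(E/\KK_\infty))$ can be deduced after correcting by the free part; the one-directional nature (as opposed to the equivalences in Theorem~\ref{main_theorem}$(j)$) is caused by the asymmetry introduced by the free part, whose Iwasawa invariants need not themselves be constant or bounded under deformation of $H$.

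The main obstacle will be assertions $(3)$ and $(4)$, which require a uniform control of the error terms. The key input is that $F_{\Lambda_2}(X(E/\KK_\infty))$ is a torsion-free $\Lambda_2$-module of some finite rank $r$, fitting in a short exact sequence $0 \to F_{\Lambda_2}(X(E/\KK_\infty)) \to \Lambda_2^r \to P \to 0$ with $P$ a pseudo-null $\Lambda_2$-module. Since $P$ is supported at only finitely many height-two primes of $\Lambda_2$, for all but finitely many $H \in \mathcal{H}^{\bullet\star}$ none of these primes contains $(1+T_1)^a(1+T_2)^b - 1$, and an analysis of the long exact sequence in $H_n$-homology then shows that the $\Lambda_{H,n}$-torsion submodule of $[F_{\Lambda_2}(X(E/\KK_\infty))]_{H_n}$ is pseudo-null with $\mu_{G_n/H_n} = 0$ for every $n$, giving $(3)$. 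For $(4)$, even at the exceptional $H$, the fixed $\Lambda_2$-structure of $P$ provides a uniform bound on both the $\mu$- and $\lambda$-invariants of the $\Lambda_{H,n}$-torsion of $[F_{\Lambda_2}(X(E/\KK_\infty))]_{H_n}$: specifically, the $\Lambda_{H,n}$-length of $P_{H_n}$ (and hence its contribution) is controlled by a constant depending only on the chosen presentation of $F_{\Lambda_2}(X(E/\KK_\infty))$, uniformly in $H$ and $n$; a finite bookkeeping argument across the associated primes of $P$ then extracts the constant $C$.
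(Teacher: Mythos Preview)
Your overall structure is right, but there are genuine gaps at two places where the paper has to do substantially more work than your sketch suggests.

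\textbf{Assertion~(2).} You misread condition~$(g)$: it is the equality $\mu_G(T_{\Lambda_2}(X(E/\KK_\infty))) = \mu_{G/H}(T_\Lambda(X(E/\KK_\infty^H)))$ at a \emph{single} $H$, not constancy on a neighbourhood. So $(g)\Rightarrow(h)$ cannot reduce to ``constancy implies boundedness''. The paper needs two separate inputs. First, the inequality $\mu_{G/H'}(T_\Lambda(X(E/\KK_\infty^{H'}))) \ge \mu_G(T_{\Lambda_2}(X(E/\KK_\infty)))$ for every $H'\in\mathcal{H}^{\bullet\star}$ (Lemma~\ref{mu_inequality_lemma}), so that $(g)$ says $\mu$ attains its global minimum at $H$. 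Second, an upper-semicontinuity statement for $\mu$ (Lemma~\ref{lemma:fukuda-for-selmer}), which the paper does not prove directly for $X$ but obtains by translating to the fine Selmer group via Proposition~\ref{fineSelmerisom_prop} and invoking known results from \cite{fukuda-selmer}. The same Fukuda-type input is needed for $(h)\Rightarrow(e)$. Your sketch supplies neither of these, and the ``asymmetry introduced by the free part'' plays no role here.

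\textbf{Assertion~(1), the step $(a)\Rightarrow(d)$.} For $n>0$, the control theorem you rely on (Theorem~\ref{TorsionControlThm}) is stated only under the hypothesis that $Y(E/\KK_\infty)_f$ is finitely generated over $\Lambda(H)$; the same hypothesis is needed for $H_1(H_n,T_{\Lambda_2}(X(E/\KK_\infty)))=0$ (Lemma~\ref{H1_Sel_vanishing_lemma2}) and for the rank computation at level $n$ (Proposition~\ref{Selrank_prop}). The paper obtains this hypothesis from $(a)$ via Theorem~\ref{equivalence_theorem}, which again passes through fine Selmer groups. So the argument is not a ``formal consequence'' of the signed case; the detour through $Y(E/\KK_\infty)$ is essential for $n>0$. (At $n=0$ your plan is fine, so $(a)\Leftrightarrow(c)$ goes through.)

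\textbf{Assertion~(3).} Two technical points. A torsion-free $\Lambda_2$-module need not embed into a free module with pseudo-null cokernel; the paper instead embeds $\mathfrak{F}=F_{\Lambda_2}(X(E/\KK_\infty))$ into its reflexive hull $M$ (Lemma~\ref{structureisom_lemma}), yielding $T_{\Lambda_{H,n}}(\mathfrak{F}/\Upsilon_n)\cong A[\Upsilon_n]$ with $A$ pseudo-null. More importantly, your claim that almost no height-two prime of $A$ contains $\Upsilon_H$ is false as stated: the prime $(T_1,T_2)$ contains every $\Upsilon_H$. The paper's argument (Proposition~\ref{mu_vanishing_prop}) restricts to $A[p^\infty]$, whose associated primes all contain $p$; then a height-two prime containing $p$, $\Upsilon_{n_1}^{H_1}$ and $\Upsilon_{n_2}^{H_2}$ for $H_1\neq H_2$ would have height three since $\langle\gamma_1,\gamma_2\rangle$ has finite index in $G$. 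Your idea becomes correct with this restriction to the $p$-primary part.
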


The term $\mu_{G_n/H_n}(T_{\Lambda_{H,n}}([F_{\Lambda_2}(X(E/\KK_{\infty}))]_{H_n}))$ appears in Theorems~\ref{main_theorem2} and \ref{main_theorem3}. It turns out that for all but finitely many $H$, this term is zero for every $n$, see Proposition~\ref{mu_vanishing_prop}. Moreover, we will show in Theorem~\ref{mu-invariants_theorem} that this term may be replaced by the $\mu$-invariants of other objects, for example $\mu_{G_n/H_n}(H_1(H_n, Y(E/\KK_{\infty})))$.

The relationship between the $\mathfrak{M}_H(G)$-properties for the signed Selmer group, the Selmer group and the fine Selmer group is given in the following theorem.

\begin{theorem}\label{equivalence_theorem}
Let ${\bullet, \star \in \{+,-\}}$ and ${H=\overbar{\langle \sigma^a\tau^b \rangle}\in \mathcal{H}^{\bullet\star}}$. Consider the following statements:
\begin{enumerate}[(a)]
\item $X^{\bullet\star}(E/\KK_{\infty})_f$ is finitely generated over $\Lambda(H)$.
\item $T_{\Lambda_2}(X(E/\KK_{\infty}))_f$ is finitely generated over $\Lambda(H)$.
\item $Y(E/\KK_{\infty})_f$ is finitely generated over $\Lambda(H)$.
\end{enumerate}
We have the following implications ${(a) \Longrightarrow (b) \Longleftrightarrow (c)}$.
\end{theorem}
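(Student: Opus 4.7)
The plan is to dualise the defining inclusions of the signed and fine Selmer groups inside the classical Selmer group to produce two short exact sequences of $\Lambda_2$-modules, and then to compare the behaviour of their $\Lambda_2$-torsion submodules modulo $p^\infty$-torsion. From \eqref{Selpm_def} and Pontryagin duality I obtain
\begin{equation*}
0 \longrightarrow A \longrightarrow X(E/\KK_\infty) \longrightarrow X^{\bullet\star}(E/\KK_\infty) \longrightarrow 0,
\end{equation*}
where $A$ is the Pontryagin dual of the image of $\Selinf(E/\KK_\infty)$ inside $J_\fp^\bullet(E/\KK_\infty) \times J_{\bar{\fp}}^\star(E/\KK_\infty)$. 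Similarly, the natural surjections $M_v \twoheadrightarrow J_v$ coming from the Kummer sequences produce an inclusion $R_{p^\infty}(E/\KK_\infty) \hookrightarrow \Selinf(E/\KK_\infty)$, which dualises to
\begin{equation*}
0 \longrightarrow B \longrightarrow X(E/\KK_\infty) \longrightarrow Y(E/\KK_\infty) \longrightarrow 0,
\end{equation*}
with $B$ injecting into $\bigoplus_{v \in S}\bigl(\ker(M_v \to J_v)\bigr)^\vee$. The contributions at $v \nmid p$ become negligible over the $\Z_p^2$-extension, so $B$ is essentially supported on $\bigoplus_{v \mid p}(\hat E(\KK_{\infty,v}) \otimes \Q_p/\Z_p)^\vee$.

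Next, since $X^{\bullet\star}(E/\KK_\infty)$ and $Y(E/\KK_\infty)$ are both $\Lambda_2$-torsion (Lemmas~\ref{Xinf_torsion_lemma} and~\ref{Yinf_torsion_lemma}), a snake-lemma argument on the torsion/torsion-free filtration---using that a $\Lambda_2$-module cannot be simultaneously torsion and torsion-free unless it vanishes---restricts the two sequences to
\begin{align*}
0 &\to T_{\Lambda_2}(A) \to T_{\Lambda_2}(X(E/\KK_\infty)) \to X^{\bullet\star}(E/\KK_\infty) \to 0, \\
0 &\to T_{\Lambda_2}(B) \to T_{\Lambda_2}(X(E/\KK_\infty)) \to Y(E/\KK_\infty) \to 0,
\end{align*}
with $F_{\Lambda_2}(A) \cong F_{\Lambda_2}(X(E/\KK_\infty)) \cong F_{\Lambda_2}(B)$ absorbing the rank-$2$ free part. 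By multiplicativity of characteristic ideals in short exact sequences of $\Lambda_2$-torsion modules this yields the factorisations
\begin{equation*}
f_\infty \;=\; \mathrm{char}_{\Lambda_2}\!\bigl(T_{\Lambda_2}(A)\bigr)\cdot f_\infty^{\bullet\star} \;=\; \mathrm{char}_{\Lambda_2}\!\bigl(T_{\Lambda_2}(B)\bigr)\cdot \tilde f_\infty,
\end{equation*}
where the characteristic elements on the right are those from \eqref{eq:f_infty}, \eqref{eq:tildef_infty} and Theorem~\ref{main_theorem}.

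The crux---and the main obstacle---is then a local computation: I would show that for every ${H = \overbar{\langle \sigma^a\tau^b\rangle} \in \mathcal{H}^{\bullet\star}}$ the coprime-to-$p$ parts of $\mathrm{char}_{\Lambda_2}(T_{\Lambda_2}(A))$ and of $\mathrm{char}_{\Lambda_2}(T_{\Lambda_2}(B))$ are, modulo $p$, not divisible by the image of $(1+T_1)^a(1+T_2)^b - 1$. This is a structural statement about the supersingular local Kummer duals at primes above $p$ (and, for $A$, their Kobayashi-style signed analogues), using the theory of Honda systems and the explicit $\Lambda_2$-decompositions of $\hat E(\KK_{\infty,v})$ and $\hat E^\bullet(\KK_{\infty,v})$ following \cite{Kob} and \cite{IP}. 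Once this non-divisibility is in hand, combining the above factorisations with the criterion established in Theorems~\ref{main_theorem}(g), \ref{main_theorem2}(b) and \ref{main_theorem3}(f)---namely that $\Lambda(H)$-finite generation of the $p^\infty$-torsion-free part is equivalent to non-divisibility of the $p$-prime part of the characteristic element by ${(1+T_1)^a(1+T_2)^b-1}$ modulo $p$---immediately yields ${(a)\Rightarrow(b)}$ from the first factorisation and ${(b)\Leftrightarrow(c)}$ from the second. The asymmetry in the statement (no reverse ${(b)\Rightarrow(a)}$) reflects precisely that the signed local factor $\mathrm{char}_{\Lambda_2}(T_{\Lambda_2}(A))$ may itself introduce divisors of $(1+T_1)^a(1+T_2)^b-1$ mod $p$, and so cannot be inverted in the argument without further hypotheses.
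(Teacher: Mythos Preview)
Your snake-lemma step is incorrect, and this error propagates through the rest of the argument. From the exact sequence ${0 \to A \to X(E/\KK_\infty) \to X^{\bullet\star}(E/\KK_\infty) \to 0}$ with $X^{\bullet\star}$ torsion you do \emph{not} obtain ${0 \to T_{\Lambda_2}(A) \to T_{\Lambda_2}(X) \to X^{\bullet\star} \to 0}$. What the snake lemma actually gives (using ${T_{\Lambda_2}(A) = A \cap T_{\Lambda_2}(X)}$) is
\[
0 \longrightarrow T_{\Lambda_2}(X)/T_{\Lambda_2}(A) \longrightarrow X^{\bullet\star}(E/\KK_\infty) \longrightarrow F_{\Lambda_2}(X)/F_{\Lambda_2}(A) \longrightarrow 0,
\]
and the cokernel $F_{\Lambda_2}(X)/F_{\Lambda_2}(A)$ is a nonzero torsion module in general: having the same $\Lambda_2$-rank does not make the inclusion ${F_{\Lambda_2}(A) \hookrightarrow F_{\Lambda_2}(X)}$ an isomorphism. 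In fact the paper shows (Proposition~\ref{Sel_Lambda2_surjective_prop2}) that the map $\lambda_{\bullet\star}$ is surjective and ${A = J_p^{\bullet\star}(E/\KK_\infty)^\vee}$ is a \emph{free} $\Lambda_2$-module of rank two, so ${T_{\Lambda_2}(A) = 0}$ and one obtains an injection ${T_{\Lambda_2}(X(E/\KK_\infty)) \hookrightarrow X^{\bullet\star}(E/\KK_\infty)}$ (this is exactly Proposition~\ref{comparison_prop}). Your factorisation is therefore backwards: the correct divisibility is ${f_\infty \mid f_\infty^{\bullet\star}}$, not the other way round, and the extra factor is $\mathrm{char}_{\Lambda_2}(F_{\Lambda_2}(X)/A)$, not $\mathrm{char}_{\Lambda_2}(T_{\Lambda_2}(A)) = 1$. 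With this correction, ${(a) \Rightarrow (b)}$ does follow from the divisibility criterion, and this is essentially the paper's route via Proposition~\ref{cor:hamidi}(c). Your explanation of the asymmetry is likewise misattributed: the obstruction to ${(b) \Rightarrow (a)}$ lives in the quotient $F_{\Lambda_2}(X)/A$ (cf.\ the module $Z^{\bullet\star}$ in Proposition~\ref{prop:MH(G)impliesMH(G)+-}), not in $T_{\Lambda_2}(A)$.

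For ${(b) \Leftrightarrow (c)}$ your approach faces a more serious obstacle. The module $B$ is not known to be free or even torsion-free over $\Lambda_2$, so the analogous snake-lemma reduction produces two unknown torsion factors $\mathrm{char}(T_{\Lambda_2}(B))$ and $\mathrm{char}(F_{\Lambda_2}(X)/F_{\Lambda_2}(B))$, and your ``crux'' would have to control both of them mod $p$ along $\bar\Upsilon$. You leave this as a black box invoking Honda systems, but no such direct computation is carried out in the paper and it is not clear it is available. The paper instead proceeds by descending to $\Z_p$-extensions: Proposition~\ref{fineSelmerisom_prop} (from \cite{Matar_Torsion}) gives a pseudo-isomorphism ${T_\Lambda(\dot X(E/L_\infty)) \sim Y(E/L_\infty)}$ for each ${L_\infty \in \mathcal{H}^{\bullet\star}}$, hence equality of $\lambda$-invariants over every such $L_\infty$, and then ${(b) \Leftrightarrow (c)}$ follows immediately from the bounded-$\lambda$ criteria (Theorem~\ref{main_theorem2}(c) and Proposition~\ref{Monskyequiv_prop}). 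This bypasses any $\Lambda_2$-level local analysis entirely.
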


We see from the above theorem that the $\mathfrak{M}_H(G)$-property for the signed Selmer group (for any choice of signs $\bullet$ and $\star$) implies that the $\mathfrak{M}_H(G)$-property also holds for the Selmer group and also for the fine Selmer group. A criterion for the converse implication to hold will be proved in  Section~\ref{section:main-theorem2} (see Proposition~\ref{prop:MH(G)impliesMH(G)+-}). Theorem~\ref{equivalence_theorem} will be key to proving Theorem~\ref{main_theorem2} (specifically ${(a) \Longrightarrow (e)}$). Therefore we will prove Theorem~\ref{equivalence_theorem} before Theorem~\ref{main_theorem2}.

Using the equivalences from Theorems~\ref{main_theorem} and \ref{main_theorem3}, we can not only provide classes of examples where the $\mathfrak{M}_H(G)$-property holds true, but we can also construct examples where it fails. This is the main objective of the final Section~\ref{section:counterexample}.

\subsection{Relations between $X(E/\KK_{\infty})$ and $Y(E/\KK_{\infty})$}

Let ${H \in \mathcal{H}^{\bullet\star}}$ and assume that $X^{\bullet\star}(E/\KK_\infty)_f$ is finitely generated over $\Lambda(H)$. Using results of \cite{Matar_Torsion} we will show in Proposition~\ref{fineSelmerisom_prop} that for all ${n \geq 0}$ we have that $Y(E/\KK_{\infty}^{H_n})$ is $\Lambda_{H,n}$-torsion, ${\mu_{G_n/H_n}(Y(E/\KK_{\infty}^{H_n}))=\mu_{G_n/H_n}(T_{\Lambda_{H,n}}(X(E/\KK_{\infty}^{H_n}))}$ and ${\lambda_{G_n/H_n}(Y(E/\KK_{\infty}^{H_n}))=\lambda_{G_n/H_n}(T_{\Lambda_{H,n}}(X(E/\KK_{\infty}^{H_n}))}$. By utilizing these relations together with the asymptotic formulas for the growth of Iwasawa invariants in Section~\ref{section:monsky} we prove the following two results.

\begin{theorem}\label{2dimWingberg_theorem1}
We have $\mu_G(T_{\Lambda_2}(X(E/\KK_\infty))) = \mu_G(Y(E/\KK_\infty))$.
\end{theorem}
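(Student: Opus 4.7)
My plan is to leverage the equivalences from Theorems~\ref{main_theorem2} and~\ref{main_theorem3}, together with the equality of $\mu$-invariants at the first layer supplied by Proposition~\ref{fineSelmerisom_prop}, so that the ``error term'' coming from the free part $F_{\Lambda_2}(X(E/\KK_\infty))$ cancels when we compare the two formulas.

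First I would pick a $\Z_p$-extension at which the $\mathfrak{M}_H(G)$-property is simultaneously available for all three relevant modules. Since $H_{cyc} \in \mathcal{H}^{++}$ by Proposition~\ref{Hcyc_prop}, the set $\mathcal{H}^{++}$ is non-empty, and Proposition~\ref{almost_all_prop} guarantees some $H \in \mathcal{H}^{++}$ for which $X^{++}(E/\KK_\infty)_f$ is finitely generated over $\Lambda(H)$. Theorem~\ref{equivalence_theorem} then propagates the $\mathfrak{M}_H(G)$-property to $T_{\Lambda_2}(X(E/\KK_\infty))$ and to $Y(E/\KK_\infty)$, so that both Theorem~\ref{main_theorem3} and Theorem~\ref{main_theorem2} apply to our chosen $H$.

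Next I would specialise Theorem~\ref{main_theorem3}~(c) and Theorem~\ref{main_theorem2}~(d) at this $H$, obtaining
\begin{align*}
\mu_G(T_{\Lambda_2}(X(E/\KK_\infty))) &= \mu_{G/H}(T_\Lambda(X(E/\KK_\infty^H))) - \mu_{G/H}(T_\Lambda([F_{\Lambda_2}(X(E/\KK_\infty))]_H)), \\
\mu_G(Y(E/\KK_\infty)) &= \mu_{G/H}(Y(E/\KK_\infty^H)) - \mu_{G/H}(T_\Lambda([F_{\Lambda_2}(X(E/\KK_\infty))]_H)).
\end{align*}
The two identities share the same ``error term'', so subtracting them gives
\[
\mu_G(T_{\Lambda_2}(X(E/\KK_\infty))) - \mu_G(Y(E/\KK_\infty)) = \mu_{G/H}(T_\Lambda(X(E/\KK_\infty^H))) - \mu_{G/H}(Y(E/\KK_\infty^H)).
\]
Applying Proposition~\ref{fineSelmerisom_prop} at $n=0$, the right-hand side vanishes, and the theorem follows.

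I do not anticipate a substantial obstacle, as the technical content has been absorbed into Proposition~\ref{fineSelmerisom_prop}, Theorem~\ref{equivalence_theorem}, and the asymptotic formulas of Theorems~\ref{main_theorem2} and~\ref{main_theorem3}. The only subtlety is arranging for the chosen $H$ to satisfy the hypotheses of all results invoked simultaneously, which is exactly what Proposition~\ref{almost_all_prop} combined with Theorem~\ref{equivalence_theorem} provides; note also that this argument is intrinsically unconditional, since the left-hand side of the target identity does not depend on $H$.
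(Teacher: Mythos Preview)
Your argument is circular within the paper's logical structure. You invoke the implication $(a)\Rightarrow(d)$ of Theorem~\ref{main_theorem2} to obtain
\[
\mu_G(Y(E/\KK_\infty)) = \mu_{G/H}(Y(E/\KK_\infty^H)) - \mu_{G/H}\bigl(T_\Lambda([F_{\Lambda_2}(X(E/\KK_\infty))]_H)\bigr),
\]
but in the paper this implication is established only \emph{after} Theorem~\ref{2dimWingberg_theorem1}, and its proof explicitly uses Theorem~\ref{2dimWingberg_theorem1} (see the proof of $(a)\Rightarrow(e)$ in Section~\ref{section:main-theorem2}). The issue is that the error term $\mu_{G/H}\bigl(T_\Lambda([F_{\Lambda_2}(X)]_H)\bigr)$ in Theorem~\ref{main_theorem2}(d) is the one naturally attached to $X(E/\KK_\infty)$ via the control theorem~\ref{TorsionControlThm}, not to $Y(E/\KK_\infty)$; the error term one obtains directly for $Y$ is $\mu_{G/H}(H_1(H,Y(E/\KK_\infty)))$ (see equation~\eqref{mu_invariant_equality3} and Theorem~\ref{mu-invariants_theorem}), and identifying the two is precisely what requires Theorem~\ref{2dimWingberg_theorem1}. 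So the ``same error term'' cancellation you rely on is not available a priori.

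The paper avoids this by comparing the Cuoco--Monsky asymptotics of Theorems~\ref{thm2:cuoco-monsky} and~\ref{thm3:cuoco-monsky}: for a suitable $H$ (produced exactly as you suggest, via Propositions~\ref{Hcyc_prop}, \ref{almost_all_prop} and Theorem~\ref{equivalence_theorem}) one has $\mu_n = \mu_G(T_{\Lambda_2}(X))\, p^n + m_1 n + O(1)$ and $\tilde{\mu}_n = \mu_G(Y)\, p^n + \tilde{m}_1 n + O(1)$, while Proposition~\ref{fineSelmerisom_prop} together with Remark~\ref{inverting_action_remark} gives $\mu_n = \tilde{\mu}_n$ for every $n$, forcing the leading coefficients to agree. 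This route uses only results already in place at that point in the development.
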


\begin{theorem}\label{2dimWingberg_theorem2}
Let ${\bullet, \star \in \{+,-\}}$ and ${H=\overbar{\langle \sigma^a\tau^b \rangle}\in \mathcal{H}^{\bullet\star}}$. Suppose that $Y(E/\KK_\infty)_f$ is finitely generated over $\Lambda(H)$. Then the following two statements hold.
\begin{compactenum}		
\item Write ${\Upsilon = (1+T_1)^a(1+T_2)^b-1}$, and let $\bar{\Upsilon}$ be its image in $\Lambda_2/p$. Then \[ v_{\bar{\Upsilon}}(\overline{g_\infty}) = v_{\bar{\Upsilon}}(\overline{\tilde{g}_\infty}), \]
where $v_{\bar{\Upsilon}}$ denotes the valuation with respect to the height one prime ideal $(\bar{\Upsilon})$. 
	
		In particular, as also shown in Theorem~\ref{equivalence_theorem}, $X(E/\KK_\infty)_f$ is finitely generated as a $\Lambda(H)$-module if and only if $Y(E/\KK_\infty)_f$ is finitely generated as a $\Lambda(H)$-module.
\item If $X(E/\KK_\infty)_f$ and $Y(E/\KK_\infty)_f$ are finitely generated as $\Lambda(H)$-modules, then
		\[ \rg_{\Lambda(H)}(X(E/\KK_\infty)_f) = \rg_{\Lambda(H)}(Y(E/\KK_\infty)_f). \]
	\end{compactenum}
\end{theorem}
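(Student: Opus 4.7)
Since $Y(E/\KK_{\infty})_f$ is finitely generated over $\Lambda(H)$ by hypothesis, the equivalence $(a)\Longleftrightarrow(b)$ of Theorem~\ref{main_theorem2} immediately yields $v_{\bar{\Upsilon}}(\overline{\tilde{g}_\infty})=0$. Next, the implication $(c)\Longrightarrow(b)$ of Theorem~\ref{equivalence_theorem} gives that $X(E/\KK_{\infty})_f = T_{\Lambda_2}(X(E/\KK_{\infty}))_f$ is also finitely generated over $\Lambda(H)$, and the equivalence $(a)\Longleftrightarrow(f)$ of Theorem~\ref{main_theorem3} then produces $v_{\bar{\Upsilon}}(\overline{g_\infty})=0$. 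Both valuations equal $0$ and hence agree, which proves the valuation identity and (as noted in the statement) reproduces the ``in particular'' biconditional.

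\textbf{Part (2).} To upgrade Part~(1) to the equality of $\Lambda(H)$-ranks, I plan to pivot to Proposition~\ref{fineSelmerisom_prop}, which provides, for every $n\geq 0$,
\[ \lambda_{G_n/H_n}(Y(E/\KK_{\infty}^{H_n})) \;=\; \lambda_{G_n/H_n}\bigl(T_{\Lambda_{H,n}}(X(E/\KK_{\infty}^{H_n}))\bigr). \]
To translate this layer-by-layer $\lambda$-equality into the desired rank equality over the full $\Z_p^2$-extension, I would invoke the Cuoco--Monsky-style asymptotic growth formulas from Section~\ref{section:monsky}. Under the $\mathfrak{M}_H(G)$-property (which is now known for both $X(E/\KK_\infty)$ and $Y(E/\KK_\infty)$ by combining the hypothesis with Theorem~\ref{equivalence_theorem}), one expects an asymptotic
\[ \lambda_{G_n/H_n}\bigl(N(E/\KK_{\infty}^{H_n})\bigr) \;=\; p^n\cdot\rg_{\Lambda(H)}\bigl(N(E/\KK_{\infty})_f\bigr) \;+\; O(1) \]
as $n\to\infty$, for both $N=T_{\Lambda_2}(X)$ and $N=Y$. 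Dividing the identity from Proposition~\ref{fineSelmerisom_prop} by $p^n$ and letting $n\to\infty$ extracts the leading term and forces $\rg_{\Lambda(H)}(X(E/\KK_\infty)_f)=\rg_{\Lambda(H)}(Y(E/\KK_\infty)_f)$.

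\textbf{Main obstacle.} The crux is the asymptotic bridge between the $\Lambda(H)$-rank of a $\Lambda_2$-module at the $\KK_\infty$-level and the $\lambda$-invariant of its descent to the finite layer $\KK_\infty^{H_n}$. This requires a control theorem linking $N(E/\KK_\infty)_{H_n}$ to the corresponding module over $\KK_\infty^{H_n}$ with $\lambda$- and $\mu$-defects controlled uniformly in $n$: Theorem~\ref{SelmerControlThm_prop} furnishes the Selmer-side version, and the fine-Selmer analogue can be extracted either from descent arguments or from the relations of Proposition~\ref{fineSelmerisom_prop} together with Theorem~\ref{2dimWingberg_theorem1}. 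Part~(1) is essential here, because it guarantees that neither $\overline{g_\infty}$ nor $\overline{\tilde g_\infty}$ is divisible by $\bar\Upsilon$, so that the Cuoco--Monsky asymptotics apply cleanly along the tower $\{\KK_\infty^{H_n}\}_n$ and the comparison of leading coefficients goes through without spurious $\mu$-contributions.
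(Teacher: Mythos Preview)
Your proposal is correct and follows essentially the paper's approach. For Part~(2) you match the paper exactly: the Cuoco--Monsky asymptotics (Theorems~\ref{thm2:cuoco-monsky} and~\ref{thm3:cuoco-monsky}) together with $\lambda_n=\tilde\lambda_n$ from Proposition~\ref{fineSelmerisom_prop} and Remark~\ref{inverting_action_remark} force $l=\tilde l$, and parts~(d) of those theorems identify these with the $\Lambda(H)$-ranks. For Part~(1) you take a slight shortcut---observing that under the hypothesis both valuations are forced to vanish via the already-proven equivalences---whereas the paper extracts the equality $m_1=\tilde m_1$ directly from the $\mu$-asymptotics and then invokes the identifications $m_1=v_{\bar\Upsilon}(\overline{g_\infty})$, $\tilde m_1=v_{\bar\Upsilon}(\overline{\tilde g_\infty})$ from parts~(b) of those theorems; both routes are valid and yield the same conclusion.

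Two small citation corrections. First, Theorem~\ref{main_theorem3}$(a)\Leftrightarrow(f)$ is proved \emph{after} Theorem~\ref{2dimWingberg_theorem2} in the paper's logical order; the content you actually need is Proposition~\ref{Monskyequiv_prop}, which is available at this point. Second, the control theorem relevant to the Selmer-side asymptotic is Theorem~\ref{TorsionControlThm} (for the torsion part of the ordinary Selmer group), not Theorem~\ref{SelmerControlThm_prop} (which handles the signed Selmer groups); the fine Selmer analogue is Proposition~\ref{fineSelmercontrol_prop1}. Both are already packaged into Theorems~\ref{thm2:cuoco-monsky} and~\ref{thm3:cuoco-monsky}, so citing those directly would streamline your Part~(2) considerably.
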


\subsection{Results related to Conjectures~A and B} \label{section:introConjecturesAandB}
We now discuss an interesting relationship between the $\mathfrak{M}_H(G)$-property for the Selmer group and Conjecture~A and Conjecture~B in the Coates-Sujatha paper \cite{CS1}.
Now let us start with \cite[Conjecture~A]{CS1}.

\begin{conjecture} \label{conjectureA}
	$Y(E/K_{cyc})$ is a finitely generated $\Z_p$-module. In other words, it is a finitely generated torsion $\Lambda$-module, and ${\mu(Y(E/K_{cyc})) = 0}$.
\end{conjecture}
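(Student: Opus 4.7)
The plan is to separate Conjecture~A into its two content pieces, since they lie at very different levels of difficulty.

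First, the $\Lambda$-torsion claim is classical: it is equivalent to the weak Leopoldt conjecture for $E$ over $K_{cyc}$, which is known unconditionally (as already noted in the excerpt via \cite[Lemma~7.1]{Lim17}, together with the fact that $K_{cyc}/K$ contains no totally split primes in $S$). This reduces Conjecture~A to the single assertion that $\mu(Y(E/K_{cyc})) = 0$.

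For the $\mu$-invariant I would first attempt the two-variable analogue, namely $\mu_G(Y(E/\KK_\infty)) = 0$, and then descend. The two-variable statement is amenable to the machinery developed in this paper: by Theorem~\ref{2dimWingberg_theorem1} it is equivalent to $\mu_G(T_{\Lambda_2}(X(E/\KK_\infty))) = 0$, and the inclusion $\Selpm(E/\KK_\infty) \subseteq \Selinf(E/\KK_\infty)$ dualises to a surjection $X(E/\KK_\infty) \twoheadrightarrow X^{\bullet\star}(E/\KK_\infty)$ whose kernel is controlled by the local Kummer quotients appearing in \eqref{J_v_def}. If one can show that these local contributions carry zero $\mu$-invariant, then $\mu_G(T_{\Lambda_2}(X(E/\KK_\infty))) = 0$ reduces to $\mu_G(X^{\bullet\bullet}(E/\KK_\infty)) = 0$ for some sign $\bullet \in \{+,-\}$. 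Once $\mu_G(Y(E/\KK_\infty)) = 0$ is in hand, $Y(E/\KK_\infty)$ is a finitely generated $\Z_p$-module, and Proposition~\ref{fineSelmerisom_prop} together with the coinvariants functor $(\cdot)_{H_{cyc}}$ pass this finiteness to $Y(E/K_{cyc})$, completing the descent.

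The main obstacle is that $\mu_G(X^{\bullet\bullet}(E/\KK_\infty)) = 0$ is a two-variable strengthening of Pollack's $\mu$-invariant conjecture, which is itself open in general for supersingular elliptic curves; Kobayashi's one divisibility of the signed main conjecture together with the LMFDB data yield this vanishing only in concrete individual cases. Controlling the local $\mu$-contributions coming from \eqref{J_v_def} is also nontrivial. In effect, the best statement this framework can deliver is conditional: Conjecture~A follows from a two-variable Pollack vanishing for at least one choice of signs, together with suitable control of the local Kummer $\mu$-invariants. An unconditional proof is out of reach with the techniques in the excerpt, and would likely require a new algebraic input of Ferrero--Washington type applied directly to $Y(E/K_{cyc})$.
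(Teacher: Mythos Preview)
The statement is a \emph{conjecture} (Conjecture~A of Coates--Sujatha), not a theorem; the paper does not prove it and presents it as open. The paper only \emph{assumes} it in Proposition~\ref{propositionA} to deduce consequences for the $\mathfrak{M}_H(G)$-property, and verifies it in the special Heegner-point situation of Theorem~\ref{freestructure_theorem}. Your final conclusion---that an unconditional proof is out of reach with these techniques---is therefore exactly right and matches the paper's stance.

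That said, your conditional route contains a genuine error. You write: ``Once $\mu_G(Y(E/\KK_\infty)) = 0$ is in hand, $Y(E/\KK_\infty)$ is a finitely generated $\Z_p$-module.'' This is false: $Y(E/\KK_\infty)$ is a torsion $\Lambda_2$-module, and $\mu_G = 0$ only says the characteristic ideal is coprime to $p$. A module such as $\Lambda_2/(f)$ with $f$ irreducible and coprime to $p$ has $\mu_G = 0$ but is certainly not finitely generated over $\Z_p$. What the descent actually needs is $\mu_{G/H_{cyc}}(Y(E/\KK_\infty)_{H_{cyc}}) = 0$, and by the control theorem (Proposition~\ref{fineSelmercontrol_prop1}) this is exactly $\mu(Y(E/K_{cyc})) = 0$---the thing you are trying to prove. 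Theorem~\ref{main_theorem2} makes precise that $\mu_G(Y(E/\KK_\infty)) = \mu_{G/H}(Y(E/\KK_\infty^H))$ is a strictly stronger hypothesis than $\mu_G = 0$ alone, so your two-variable detour is circular. Indeed the paper runs the implication the \emph{other} way in Proposition~\ref{propositionA}: Conjecture~A at the cyclotomic level is the input, and two-variable $\mu$-vanishing is the output. The cleaner conditional statement, avoiding $\KK_\infty$ entirely, is that Pollack's $\mu^\pm = 0$ for $E$ and $E^{(K)}$ over $\Q_{cyc}$ implies Conjecture~A directly, via the surjection $X^{\bullet\bullet}(E/K_{cyc}) \twoheadrightarrow Y(E/K_{cyc})$ coming from $R_{p^\infty} \subseteq \Selm^{\bullet\bullet}_{p^\infty}$.
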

In this context, we can easily prove the following
\begin{proposition}\label{propositionA}
	Assume that Conjecture~\ref{conjectureA} is true. Then both $X(E/\KK_\infty)$ and $Y(E/\KK_{\infty})$ satisfy the $\mathfrak{M}_H(G)$-property with ${H = H_{cyc}}$ (the subgroup fixing ${K_{cyc} \subseteq \KK_\infty}$). Furthermore, we have ${\mu_G(T_{\Lambda_2}(X(E/\KK_\infty)))=0}$.
\end{proposition}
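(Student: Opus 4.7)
The plan is to reduce all three claims to the single assertion that $Y(E/\KK_\infty)$ is finitely generated as a $\Lambda(H_{cyc})$-module (which is strictly stronger than the $\mathfrak{M}_H(G)$-property for $Y(E/\KK_\infty)$). Once this is established, the $\mathfrak{M}_H(G)$-property for $Y(E/\KK_\infty)$ is immediate (since $Y(E/\KK_\infty)_f$ is a quotient of $Y(E/\KK_\infty)$); the $\mathfrak{M}_H(G)$-property for $X(E/\KK_\infty)$ then follows from the equivalence $(b)\Leftrightarrow(c)$ in Theorem~\ref{equivalence_theorem}; and the equality $\mu_G(T_{\Lambda_2}(X(E/\KK_\infty)))=0$ follows from Theorem~\ref{2dimWingberg_theorem1} combined with the observation that any finitely generated $\Lambda_2$-torsion module which is also finitely generated over $\Lambda(H_{cyc})$ has trivial $\mu_G$-invariant.

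To prove the core claim, I would apply topological Nakayama's lemma to the compact $\Lambda(H_{cyc})$-module $Y(E/\KK_\infty)$. Conjecture~\ref{conjectureA} asserts that $Y(E/K_{cyc}) = Y(E/\KK_\infty^{H_{cyc}})$ is finitely generated over $\Z_p$. A control theorem for the fine Selmer group over $\KK_\infty/K_{cyc}$ yields a natural map
\[ Y(E/\KK_\infty)_{H_{cyc}} \longrightarrow Y(E/K_{cyc}) \]
with finite kernel and cokernel; this is the fine-Selmer analog of Mazur's control theorem, and is considerably cleaner than its signed-Selmer counterpart because the local terms $M_v$ defining $R_{p^\infty}$ use the full $E[p^\infty]$ rather than a formal-group subspace. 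Combining, $Y(E/\KK_\infty)_{H_{cyc}}$ is finitely generated over $\Z_p$, so its reduction modulo $p$ is finite, i.e.\ $Y(E/\KK_\infty)/(p,T_H)Y(E/\KK_\infty)$ is finite (with $T_H = h-1$ for a topological generator $h$ of $H_{cyc}$). Topological Nakayama's lemma then delivers that $Y(E/\KK_\infty)$ is finitely generated over $\Lambda(H_{cyc})$.

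For the $\mu_G$-invariant observation, I would argue that no summand of the form $\Lambda_2/(p^m)$ with $m\geq 1$ can appear in the elementary $\Lambda_2$-decomposition of $Y(E/\KK_\infty)$. Projecting the pseudo-isomorphism onto such a summand produces a $\Lambda(H_{cyc})$-finitely generated submodule $N \subseteq \Lambda_2/(p^m)$ with pseudo-null cokernel; thus $N$ must contain $f\cdot\Lambda_2/(p^m)$ for some $f\in\Lambda_2$ coprime to $p$. Since multiplication by $f$ is injective on $\Lambda_2/(p^m)$, this submodule is isomorphic as a $\Lambda_2$-module to $\Lambda_2/(p^m)$ itself. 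Writing $\Lambda_2 = \Lambda(H_{cyc})[[T']]$ for a complementary topological generator of $G$, we have $\Lambda_2/(p^m) = (\Lambda(H_{cyc})/p^m)[[T']]$, a power series ring that is manifestly not finitely generated over $\Lambda(H_{cyc})$ --- a contradiction. The main obstacle is pinpointing the precise control theorem for the fine Selmer group in the $\Z_p^2$-setting of the paper; such a statement is expected but may need to be extracted carefully from the material developed earlier in the article.
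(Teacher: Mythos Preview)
Your proposal is correct and matches what the paper calls the ``more direct way'': the control theorem you need is Proposition~\ref{fineSelmercontrol_prop1}, which in fact gives an \emph{isomorphism} $Y(E/\KK_\infty)_{H_{cyc}} \cong Y(E/K_{cyc})$ (not merely finite kernel and cokernel), so Nakayama applies immediately. The paper also records a second route, first establishing $\mu_G(T_{\Lambda_2}(X(E/\KK_\infty)))=0$ via the inequality~\eqref{eq:muinv} (which comes from Theorem~\ref{TorsionControlThm} and Lemma~\ref{mu_inequality_lemma}) and then invoking condition~$(g)$ of Theorem~\ref{main_theorem3}; your $\mu_G=0$ argument can likewise be shortened by using the injection $\Lambda_2/p^m \hookrightarrow Y(E/\KK_\infty)$ furnished directly by the structure theorem (cf.\ \cite[Lemma~2.1]{MHG}), rather than projecting onto an elementary summand.
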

The following statement is a slightly stronger version of Conjecture~B in the Coates-Sujatha paper \cite[Conjecture~B]{CS1} in our setting.

\begin{conjecture}\label{conjectureB}
$Y(E/\KK_{\infty})$ is a pseudo-null $\Lambda_2$-module.
\end{conjecture}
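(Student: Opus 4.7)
The target is Conjecture~\ref{conjectureB}, a deep open statement of Coates--Sujatha type (as noted in the text preceding it), so the following is only a strategy and not a routine proof.

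The starting point is Lemma~\ref{Yinf_torsion_lemma}: $Y(E/\KK_\infty)$ is already a finitely generated $\Lambda_2$-torsion module, with characteristic ideal generated by ${\tilde f_\infty = p^{\tilde\mu}\tilde g_\infty}$ as in \eqref{eq:tildef_infty}. Pseudo-nullity is equivalent to $\tilde f_\infty$ being a unit in $\Lambda_2$, i.e.\ to both ${\tilde\mu = 0}$ and $\tilde g_\infty$ being a unit modulo $p$. For the vanishing of $\tilde\mu$, I would appeal to Proposition~\ref{propositionA} under Conjecture~\ref{conjectureA}; alternatively, Theorem~\ref{2dimWingberg_theorem1} reduces the question to the vanishing of $\mu_G(T_{\Lambda_2}(X(E/\KK_\infty)))$, which is the natural supersingular analogue of the classical $\mu=0$ conjecture.

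For $\tilde g_\infty$ modulo $p$, the plan is to combine Theorem~\ref{main_theorem2}(b) --- which characterises the $\mathfrak{M}_H(G)$-property for $Y$ as the non-divisibility of $\overline{\tilde g_\infty}$ by the coset of ${(1+T_1)^a(1+T_2)^b-1}$ --- with Proposition~\ref{almost_all_prop}, transferred from $X^{\bullet\star}$ to $Y$ via Theorem~\ref{equivalence_theorem}, to conclude that for almost every ${H=\overline{\langle\sigma^a\tau^b\rangle} \in \mathcal{H}^{\bullet\star}}$, the corresponding line in $\mathbb{P}^1(\Zp)$ does not divide $\overline{\tilde g_\infty}$. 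This would rule out $\overline{\tilde g_\infty}$ having any prime factor of the special ``line'' form $(1+T_1)^a(1+T_2)^b-1$, i.e.\ any prime factor associated to a $\Zp$-subextension of $\KK_\infty$.

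The hard part, and the reason I expect this strategy to stop short of a full proof, is that $\Fp[[T_1,T_2]]$ contains many irreducible elements that are not of the form $(1+T_1)^a(1+T_2)^b-1$; such ``off-axis'' prime factors of $\overline{\tilde g_\infty}$ cannot be detected by varying over the $\Zp$-subextensions inside $\KK_\infty$, so the $\mathfrak{M}_H(G)$-machinery alone cannot force $\overline{\tilde g_\infty}$ to be a unit. Closing this gap would require either a duality or Euler--Poincar\'e characteristic argument in the spirit of Ochi--Venjakob and Jannsen, or a direct identification of $\tilde f_\infty$ with a supersingular $p$-adic $L$-function proven to be a unit via the main conjecture. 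This is precisely the obstruction that has kept Conjecture~\ref{conjectureB} open in the supersingular setting.
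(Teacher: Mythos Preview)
You have correctly identified that Conjecture~\ref{conjectureB} is stated as an open conjecture in the paper, not as a theorem; the paper gives no proof of it in general. Your proposal is explicitly framed as a strategy rather than a proof, and your diagnosis of the obstruction is accurate: the $\mathfrak{M}_H(G)$-machinery, via Theorem~\ref{main_theorem2}(b) and Proposition~\ref{almost_all_prop} transported through Theorem~\ref{equivalence_theorem}, can only rule out prime divisors of $\overline{\tilde g_\infty}$ of the special form $(1+T_1)^a(1+T_2)^b-1$, and has no leverage against other irreducibles in $\FF_p[[T_1,T_2]]$. That is indeed the gap, and the paper does not claim to close it.

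What the paper does do is prove Conjecture~\ref{conjectureB} under strong additional hypotheses: Theorem~\ref{freestructure_theorem}(3) establishes it when the Heegner point $y_K$ is locally $p$-indivisible, the Tamagawa numbers are prime to $p$, and a mild condition on primes of multiplicative reduction holds. The mechanism there is entirely different from your outline: one shows $X(E/\KK_\infty)\cong\Lambda_2^2$ is free, deduces that $Y(E/K_{cyc})$ is finite via Proposition~\ref{fineSelmerisom_prop}, and then concludes pseudo-nullity of $Y(E/\KK_\infty)$ from the control theorem (Proposition~\ref{fineSelmercontrol_prop1}) and \cite[Proposition~5.4]{Venjakob1}. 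So the only proof in the paper is conditional and structural, not the $\mu$-and-$\tilde g_\infty$ attack you sketch; your strategy is a reasonable heuristic for why the conjecture is plausible, but neither you nor the paper turns it into an unconditional argument.
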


We shall show that
\begin{proposition}\label{propositionB}
Assume that Conjecture~\ref{conjectureB} is true. Then we have that both $X(E/\KK_\infty)$ and $Y(E/\KK_{\infty})$ satisfy the $\mathfrak{M}_H(G)$-property for any ${H \in \mathcal{H}^{\bullet\star}}$, with ${\bullet,\star \in \{+,-\}}$. Furthermore, we have ${\mu_G(T_{\Lambda_2}(X(E/\KK_\infty)))=0}$.
\end{proposition}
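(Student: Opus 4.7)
The plan is to deduce Proposition~\ref{propositionB} essentially directly from the equivalences established in Theorems~\ref{main_theorem2} and \ref{equivalence_theorem}, together with Theorem~\ref{2dimWingberg_theorem1}. The key observation is that a pseudo-null finitely generated $\Lambda_2$-module has trivial characteristic ideal, and hence its characteristic power series is a unit.

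First I would unpack the hypothesis. Since $Y(E/\KK_\infty)$ is pseudo-null, it is in particular $\Lambda_2$-torsion. Since pseudo-null modules are supported in codimension $\geq 2$, no elementary divisor in the pseudo-isomorphism classification comes from a height-one prime, so the characteristic power series $\tilde{f}_\infty$ from \eqref{eq:tildef_infty} is a unit in $\Lambda_2$. Consequently ${\tilde{\mu} = \mu_G(Y(E/\KK_\infty)) = 0}$ and $\tilde{g}_\infty \in \Lambda_2^\times$, so its image $\overline{\tilde{g}_\infty}$ in ${\Lambda_2/p \cong \FF_p[[T_1,T_2]]}$ is again a unit.

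Next I would verify condition~(b) of Theorem~\ref{main_theorem2} for every choice of ${H = \overbar{\langle \sigma^a\tau^b \rangle} \in \mathcal{H}^{\bullet\star}}$ and every ${\bullet, \star \in \{+,-\}}$. The element ${(1+T_1)^a(1+T_2)^b - 1}$ lies in the maximal ideal of $\Lambda_2$ (it vanishes at ${T_1=T_2=0}$), so its image in $\Lambda_2/p$ is a non-unit, and therefore cannot divide the unit $\overline{\tilde{g}_\infty}$. The equivalence ${(a) \Leftrightarrow (b)}$ of Theorem~\ref{main_theorem2} then gives that $Y(E/\KK_\infty)_f$ is finitely generated over $\Lambda(H)$, i.e. the $\mathfrak{M}_H(G)$-property for $Y(E/\KK_\infty)$. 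The implication ${(b) \Leftrightarrow (c)}$ in Theorem~\ref{equivalence_theorem} then transfers this to ${X(E/\KK_\infty)_f = T_{\Lambda_2}(X(E/\KK_\infty))_f}$, yielding the $\mathfrak{M}_H(G)$-property for the Selmer group. Finally, the $\mu$-invariant assertion follows at once from Theorem~\ref{2dimWingberg_theorem1}, which identifies ${\mu_G(T_{\Lambda_2}(X(E/\KK_\infty)))}$ with ${\mu_G(Y(E/\KK_\infty)) = 0}$.

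There is essentially no obstacle to overcome once the cited theorems are in hand: the pseudo-null hypothesis is strong enough that the divisibility test in Theorem~\ref{main_theorem2}(b) is trivially satisfied for every ${[(a,b)] \in \mathbb{P}^1(\Z_p)}$ simultaneously, so the conclusion holds uniformly for all $H \in \mathcal{H}^{\bullet\star}$. The only point requiring a brief remark is the standard fact that a finitely generated pseudo-null $\Lambda_2$-module has characteristic ideal equal to $\Lambda_2$, which is immediate from the structure theorem up to pseudo-isomorphism.
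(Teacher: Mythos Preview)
Your proposal is correct and follows essentially the same route as the paper's proof: both observe that pseudo-nullity of $Y(E/\KK_\infty)$ forces its characteristic power series $\tilde{f}_\infty$ to be a unit, invoke Theorem~\ref{main_theorem2} (via condition~(b)) to obtain the $\mathfrak{M}_H(G)$-property for $Y$, transfer this to $X$ via Theorem~\ref{equivalence_theorem}, and conclude the $\mu$-invariant statement from Theorem~\ref{2dimWingberg_theorem1}. The paper also records an alternative direct argument bounding $\lambda(Y(E/L_\infty))$ by $\rank_{\Zp}(Y(E/\KK_\infty)_f)$, but your approach matches the primary one.
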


\subsection{On the non-triviality of torsion submodules}
The question of whether $X(E/\KK_{\infty})_f$ satisfies the $\mathfrak{M}_H(G)$-property is nontrivial only if ${T_{\Lambda_2}(X(E/\KK_{\infty})) \neq 0}$ and hence a natural question arises:

\textbf{Question:} Are there are any examples where ${T_{\Lambda_2}(X(E/\KK_{\infty})) \neq 0}$?

We have not been able to answer this question. On the other hand the next proposition shows that a condition related to Conjecture \ref{conjectureA} guarantees that ${T_{\Lambda_2}(X(E/\KK_{\infty}))=0}$.

\begin{proposition}\label{propositionC}
If $Y(E/K_{cyc})$ is finite, then ${T_{\Lambda_2}(X(E/\KK_{\infty}))=0}$.
\end{proposition}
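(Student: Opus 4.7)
The plan is to relate $X(E/\KK_\infty)$ to $Y(E/\KK_\infty)$ via an exact sequence of local origin, and then to use the hypothesis to force vanishing of the $\Lambda_2$-torsion. First, comparing the defining Kummer conditions of $\Selinf(E/\KK_\infty)$ and $R_{p^\infty}(E/\KK_\infty)$ produces a four-term exact sequence
\[ 0 \longrightarrow R_{p^\infty}(E/\KK_\infty) \longrightarrow \Selinf(E/\KK_\infty) \longrightarrow \bigoplus_{v \in S} \hat{E}(\KK_{\infty,v}) \otimes \Q_p/\Z_p, \]
and dualizing yields $0 \to \mathcal{I}^\vee \to X(E/\KK_\infty) \to Y(E/\KK_\infty) \to 0$, where $\mathcal{I}^\vee$ is a $\Lambda_2$-subquotient of the Pontryagin dual of the local term. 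The first substantive step is to verify that $\mathcal{I}^\vee$ is $\Lambda_2$-torsion-free: in the supersingular split setting, at each $v \mid p$ the module $(\hat{E}(\KK_{\infty,v}) \otimes \Q_p/\Z_p)^\vee$ is $\Lambda_2$-torsion-free by a local Iwasawa-theoretic cofreeness computation in the style of Perrin-Riou and Iovita--Pollack, and the contributions from $v \nmid p$ are handled routinely. Consequently $T_{\Lambda_2}(X(E/\KK_\infty))$ injects into $Y(E/\KK_\infty)$.

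Next I would apply the control theorem for the fine Selmer group. Since the local conditions defining $R_{p^\infty}$ are trivial, this control theorem is free of the supersingular subtleties that plague $\Selinf$ and follows from a routine Hochschild--Serre inflation-restriction argument, giving that the natural map $Y(E/\KK_\infty)_{H_{cyc}} \to Y(E/K_{cyc})$ has finite kernel and cokernel. The hypothesis that $Y(E/K_{cyc})$ is finite then forces $Y(E/\KK_\infty)_{H_{cyc}}$ to be finite. A standard characteristic-ideal argument now shows that any finitely generated $\Lambda_2$-torsion module whose $(\sigma-1)$-coinvariants are finite (for $\sigma$ a topological generator of $H_{cyc} \cong \Z_p$) must be pseudo-null: its characteristic element reduces modulo $I_{H_{cyc}} = (\sigma-1)$ to a unit of the quotient $\Lambda$, so after Weierstrass preparation it is already a unit of $\Lambda_2$. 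Therefore $Y(E/\KK_\infty)$ is pseudo-null, and $T_{\Lambda_2}(X(E/\KK_\infty))$ is a pseudo-null $\Lambda_2$-submodule of $X(E/\KK_\infty)$.

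The main obstacle is the final step: deducing $T_{\Lambda_2}(X(E/\KK_\infty)) = 0$ from the fact that it is a pseudo-null submodule of $X(E/\KK_\infty)$. For this it suffices to establish that $X(E/\KK_\infty)$ contains no nonzero pseudo-null $\Lambda_2$-submodule, and the natural route is through the signed Selmer groups. Dualizing the defining sequence \eqref{Selpm_def} gives $0 \to \mathcal{J}^\vee \to X(E/\KK_\infty) \to X^{\bullet\star}(E/\KK_\infty) \to 0$, with $\mathcal{J}^\vee$ a $\Lambda_2$-subquotient of the dualized local terms $(J_\fp^\bullet \times J_{\bar{\fp}}^\star)^\vee$; the same local cofreeness computation shows $\mathcal{J}^\vee$ is $\Lambda_2$-torsion-free. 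Combined with the supersingular analogue of the Ochi--Venjakob theorem that $X^{\bullet\star}(E/\KK_\infty)$ has no nonzero pseudo-null $\Lambda_2$-submodule, any pseudo-null submodule of $X(E/\KK_\infty)$ must intersect $\mathcal{J}^\vee$ trivially and embed into $X^{\bullet\star}(E/\KK_\infty)$, hence vanish. Feeding this back into the previous paragraph gives $T_{\Lambda_2}(X(E/\KK_\infty)) = 0$.
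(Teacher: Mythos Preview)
Your overall architecture—show $Y(E/\KK_\infty)$ is pseudo-null, show $X(E/\KK_\infty)$ has no nonzero pseudo-null $\Lambda_2$-submodules, and combine—is reasonable, but the bridge you build in the first paragraph does not hold. You assert that $\mathcal{I}^\vee$ is $\Lambda_2$-torsion-free because the dual of the local module $\hat{E}(\KK_{\infty,v})\otimes\Q_p/\Z_p$ is. But $\mathcal{I}$ is only the \emph{image} of $\Selinf(E/\KK_\infty)$ in $\bigoplus_{v\mid p}\hat{E}(\KK_{\infty,v})\otimes\Q_p/\Z_p$, so $\mathcal{I}^\vee$ is a \emph{quotient} of the local dual, and quotients of torsion-free modules need not be torsion-free. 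The localisation map is in fact not surjective: in the supersingular split situation $\hat{E}(\KK_{\infty,v})\otimes\Q_p/\Z_p=H^1(\KK_{\infty,v},E[p^\infty])$ has $\Lambda_2$-corank $2$ at each of the two primes above $p$, total corank $4$, while $\Selinf(E/\KK_\infty)$ has corank only $2$ (Proposition~\ref{Selrank_prop}); thus $\mathcal{I}^\vee$ is a rank-$2$ quotient of $\Lambda_2^4$, with no a priori reason to be torsion-free. Without the injection $T_{\Lambda_2}(X)\hookrightarrow Y$ you cannot pass from ``$Y$ pseudo-null'' to ``$T_{\Lambda_2}(X)$ pseudo-null'', and the argument stalls. (The analogous step in your third paragraph for $\mathcal{J}^\vee$ looks equally unjustified as written, but there it is salvageable: Proposition~\ref{Sel_Lambda2_surjective_prop2} shows that $\Selinf\to J_p^{\bullet\star}$ \emph{is} surjective, so $\mathcal{J}^\vee=(J_p^{\bullet\star})^\vee\cong\Lambda_2^2$ is genuinely free.)

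The paper circumvents this by never comparing $X$ to $Y$ at the $\KK_\infty$-level. Instead it descends: from $Y(E/K_{cyc})$ finite, the one-variable pseudo-isomorphism $T_\Lambda(\dot X(E/K_{cyc}))\sim Y(E/K_{cyc})$ of Proposition~\ref{fineSelmerisom_prop} gives $T_\Lambda(X(E/K_{cyc}))$ finite. Then the control theorem for the torsion part of the Selmer group (Theorem~\ref{TorsionControlThm} with $n=0$) furnishes an injection $(T_{\Lambda_2}(X(E/\KK_\infty)))_{H_{cyc}}\hookrightarrow T_\Lambda(X(E/K_{cyc}))$, so these coinvariants are finite. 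Nakayama then forces $T_{\Lambda_2}(X(E/\KK_\infty))$ to be finitely generated torsion over $\Lambda(H_{cyc})$, hence pseudo-null over $\Lambda_2$ by \cite[Proposition~5.4]{Venjakob1}; your third-paragraph argument (which is Proposition~\ref{cor:hamidi}(d)) then finishes. Note also that your ``characteristic-ideal'' justification in the second paragraph is imprecise—pseudo-null error terms in the structure theorem can have infinite $H$-coinvariants, so one cannot read off the reduction of the characteristic element directly—though the conclusion that $Y(E/\KK_\infty)$ is pseudo-null is correct via the Nakayama route just described.
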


In Theorem~\ref{main_theorem3}(c) we see the term $\mu_{G/H}(T_{\Lambda}([F_{\Lambda_2}(X(E/\KK_{\infty}))]_H))$. This leads us to the following question.

\textbf{Question:} Can the $\mu_{G/H}$- and $\lambda_{G/H}$-invariants of $T_{\Lambda}([F_{\Lambda_2}(X(E/\KK_{\infty}))]_H)$ be nonzero for some ${H \in \mathcal{H}^{\bullet\star}}$?

In Section~\ref{section:control2} we present two generic examples $M_1$ and $M_2$ of finitely generated torsion-free $\Lambda_2$-modules such that ${\lambda(T_{\Lambda}(M_1/T_1))=1}$ and ${\mu(T_{\Lambda}(M_2/T_1))=1}$. However, it is unclear whether such examples can occur in the case of the Selmer group. In our attempt to answer the above question we present a scenario in which we are able to show that $X(E/\KK_{\infty})$ is a free $\Lambda_2$-module (hence $\mu_{G/H}$- and $\lambda_{G/H}$-invariants of $T_{\Lambda}([F_{\Lambda_2}(X(E/\KK_{\infty}))]_H)$ are zero for all ${H \in \mathcal{H}^{\bullet\star}}$).

The next result also provides a setting where the torsion submodule of $X(E/\KK_\infty)$ is trivial. Let $N$ be the conductor of $E$. For our next theorem we assume the Heegner hypothesis: Every prime dividing $N$ splits in $K/\Q$. If $K[1]$ denotes the Hilbert class field of $K$, then a choice of an ideal $\cN$ of $\cO_K$ such that ${\cO_K/\cN \cong \cy{N}}$ and a modular parametrization ${X_0(N) \to E}$ allow us to define a Heegner point ${y_1 \in E(K[1])}$. Let $y_K$ be the trace of this point down to $K$. For any prime $v$ of $\Q$ let $c_v$ be the Tamagawa number at $v$. Our result is

\begin{theorem}\label{freestructure_theorem}
Assume the following
\begin{enumerate}[(a)]
\item ${y_K \notin pE(K_v)}$ for some prime $v$ of $K$ above $p$.
\item $p \nmid \prod_{v |N} c_v$.
\item If $E$ has multiplicative reduction at a prime $\ell$ of $\Q$ dividing $N$, then ${\ell \not\equiv 1 \mod p}$.
\end{enumerate}
Then we have
\begin{enumerate}
\item ${X(E/\KK_{\infty}) \cong (\Lambda_2)^2}$.
\item For any ${\bullet, \star \in \{+,-\}}$ and any ${H \in \mathcal{H}^{\bullet\star}}$ we have that $Y(E/\KK_{\infty}^H)$ is finite. In particular, Conjecture~\ref{conjectureA} is true.
\item Conjecture~\ref{conjectureB} is true.
\end{enumerate}
\end{theorem}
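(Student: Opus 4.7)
The approach is to concentrate the real work in part~(1), from which parts~(2) and (3) will follow using the control theorems and equivalences established earlier. I expect part~(1) to be the main obstacle.

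For part~(1), the plan is to invoke the Heegner-point machinery in the supersingular $\Z_p^2$-setting. Under the Heegner hypothesis one has the trace $y_K \in E(K)$ of a Heegner point from the Hilbert class field, and hypothesis~(a) asserts $y_K \notin pE(K_v)$. Coupled with the Tamagawa condition~(b) and the multiplicative reduction condition~(c), this triggers a Kolyvagin/Bertolini--Howard type argument (in the supersingular incarnation developed by Matar, cf.\ \cite{Matar_Torsion}) that identifies $X(E/\KK_{\infty})$ explicitly as $(\Lambda_2)^2$. The $\Lambda_2$-rank of $2$ is the expected generic rank in the supersingular case (one contribution for each prime of $K$ above $p$), so the decisive input is the absence of torsion in $X(E/\KK_\infty)$, which follows from the nontriviality of the Heegner point via the Euler system bound on the dual fine Selmer group.

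For part~(2), assume $X(E/\KK_{\infty}) \cong (\Lambda_2)^2$. Then $T_{\Lambda_2}(X(E/\KK_{\infty})) = 0$, so statement~(b) of Theorem~\ref{equivalence_theorem} holds trivially for any ${H \in \mathcal{H}^{\bullet\star}}$, and by that theorem $Y(E/\KK_{\infty})_f$ is finitely generated over $\Lambda(H)$. Taking $H$-coinvariants gives ${X(E/\KK_\infty)_H \cong \Lambda^2}$, which is torsion-free; by the control theorem~\ref{SelmerControlThm_prop} this forces $T_\Lambda(X(E/\KK_\infty^H))$ to be finite. Feeding this into Proposition~\ref{fineSelmerisom_prop}, which identifies the $\mu$- and $\lambda$-invariants of $Y(E/\KK_\infty^H)$ with those of $T_\Lambda(X(E/\KK_\infty^H))$, we conclude that $Y(E/\KK_\infty^H)$ is finite for every ${H \in \mathcal{H}^{\bullet\star}}$. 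Specializing to $H = H_{cyc}$, which lies in both $\mathcal{H}^{++}$ and $\mathcal{H}^{--}$ by Proposition~\ref{Hcyc_prop}, yields Conjecture~\ref{conjectureA}.

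For part~(3), it remains to upgrade the finiteness from part~(2) into pseudo-nullity of $Y(E/\KK_{\infty})$. The underlying principle is that a finitely generated $\Lambda_2$-module $M$ whose coinvariants $M_H$ are finite for infinitely many $H \in \mathcal{H}^{\bullet\star}$ must be pseudo-null: distinct $H$ give rise to distinct height-one prime ideals $I_H \subseteq \Lambda_2$, so a non-pseudo-null $M$ would contribute, via the structure theorem, a nonzero $\Lambda_2/(I_H,\mathfrak{p})$-subquotient to $M_H$ for almost every $H$, contradicting finiteness. Applying this to $M = Y(E/\KK_{\infty})$ via a fine-Selmer control map $Y(E/\KK_\infty)_H \to Y(E/\KK_\infty^H)$ with finite kernel and cokernel, the finiteness of $Y(E/\KK_\infty^H)$ for every $H \in \mathcal{H}^{\bullet\star}$ (an infinite set by Proposition~\ref{Zpextensions_prop}) transfers to $Y(E/\KK_\infty)_H$, and pseudo-nullity of $Y(E/\KK_\infty)$ follows.
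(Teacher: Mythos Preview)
Your deductions of (2) and (3) from (1) are essentially right, though both can be simplified. For (2), once $X(E/\KK_\infty)\cong(\Lambda_2)^2$ you should use the Selmer control theorem (Proposition~\ref{SelmerControlThm_prop2}, not the signed one Theorem~\ref{SelmerControlThm_prop}) to get $X(E/\KK_\infty^H)\cong\Lambda^2$ directly; then Proposition~\ref{fineSelmerisom_prop} gives $Y(E/\KK_\infty^H)$ finite. For (3), there is no need to vary $H$: a single $H=H_{cyc}$ suffices, since $Y(E/\KK_\infty)_{H_{cyc}}\cong Y(E/K_{cyc})$ finite (Proposition~\ref{fineSelmercontrol_prop1}) makes $Y(E/\KK_\infty)$ a finitely generated torsion $\Lambda(H_{cyc})$-module, hence pseudo-null over $\Lambda_2$ by \cite[Proposition~5.4]{Venjakob1}.

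The real gap is in (1). The paper does not run an Euler system over $\KK_\infty$ or bound the fine Selmer group directly; the Heegner input is used only over $K$. The argument is a Nakayama descent: one analyzes the restriction map $s\colon \Selinf(E/K)\to\Selinf(E/\KK_\infty)^G$, showing $\ker s=0$ and computing $\coker s$ explicitly. The Kolyvagin/Matar--Nekov\'a\v{r} result (Theorem~\ref{MN_theorem}) gives $\Selinf(E/K)\cong\Qp/\Zp$ from hypothesis~(a). A Cassels--Poitou--Tate calculation then identifies $\coker s$ with the image of a localization map at the primes above $p$; hypotheses~(b) and~(c) are used precisely here to kill the contribution of the bad primes (they force $E(K_v)^*=0$ for $v\mid N$, so the relaxed and strict Selmer groups at those primes coincide), and hypothesis~(a) makes the map $S_p(E/K)\to E(K_\fp)^*$ surjective, yielding $\coker s\cong\Qp/\Zp$. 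Dualizing gives $X(E/\KK_\infty)_G\cong\Zp^2$, so Nakayama makes $X(E/\KK_\infty)$ a quotient of $(\Lambda_2)^2$, and then $\rank_{\Lambda_2}X(E/\KK_\infty)=2$ (Proposition~\ref{Selrank_prop}) forces the quotient map to be an isomorphism. Your sketch neither locates where (b) and (c) enter nor identifies the Nakayama mechanism, and the phrase ``Euler system bound on the dual fine Selmer group'' points in the wrong direction. The relevant template is \cite[Theorem~3.2]{Matar_Tower}, not \cite{Matar_Torsion}.
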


\subsection{Mazur's growth number conjecture}
Finally, we mention an application of Theorem~\ref{main_theorem} to the number of ${L_{\infty} \in \mathcal{E}}$ where the rank of $E(L_n)$ stays bounded as $L_n$ runs over the layers of $L_\infty$. As a first observation, if $\mathcal{H}^{+-}$ and $\mathcal{H}^{-+}$ are nonempty, it follows from Proposition~\ref{Hcyc_prop}, Proposition~\ref{Torsion_prop} and Lemma~\ref{rankbound_lemma} that the rank of $E$ stays bounded in all but finitely many $\Zp$-extensions ${L \in \mathcal{E}}$ (see \cite[Lemma~2.3]{MHG}). When $E$ has good ordinary reduction at $p$, the precise number of $\Zp$-extensions where the rank of $E$ stays bounded is predicted by Mazur's growth number conjecture (\cite[Section~18]{MazurMCA}). This conjecture has been extended to the supersingular case in \cite[Conjecture~2.2]{LeiSprung}:

\begin{conjecture}\label{MazurConjecture}
Let $E/\Q$ be an elliptic curve with good reduction at $p$. The Mordell-Weil rank of $E$ stays bounded along any $\Z_p$-extension of the imaginary quadratic field  $K$, unless the extension is anticyclotomic and the root number of $E/K$ is $-1$.
\end{conjecture}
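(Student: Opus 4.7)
The plan is to leverage the equivalence in Theorem~\ref{main_theorem}(h), which characterises the $\mathfrak{M}_H(G)$-property for signed Selmer groups in terms of the boundedness of $\lambda$-invariants in a Greenberg neighbourhood, and to combine it with the standard bound $\rank_{\Z} E(L_n) \le \corank_{\Z_p} \Selinf(E/L_n)$. First, for any $L_\infty \in \mathcal{H}^{\bullet\star}$, the control theorem (Theorem~\ref{SelmerControlThm_prop}) together with Kobayashi's comparison between the classical Selmer group at finite level and the signed Selmer groups at infinite level gives an estimate of the form $\corank_{\Z_p} \Selinf(E/L_n) \le \lambda(X^{\bullet\star}(E/L_\infty)) + O(1)$, with the implicit constant depending only on local data at the primes above $p$. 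Hence boundedness of $\lambda(X^{\bullet\star}(E/L_\infty))$ as $L_\infty$ varies in a neighbourhood of some $\KK_\infty^H$ already implies that $\rank_{\Z} E(L_n)$ stays bounded along every $L_\infty$ in that neighbourhood.

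Next, I would combine this local statement with the global one provided by Proposition~\ref{almost_all_prop}: if $\mathcal{H}^{\bullet\star}$ is non-empty, then all but finitely many $L_\infty \in \mathcal{E}$ lie in $\mathcal{H}^{\bullet\star}$, and the $\mathfrak{M}_H(G)$-property holds there. Since $\mathcal{E}$ is compact with respect to Greenberg's topology, the neighbourhoods produced by Theorem~\ref{main_theorem}(h) cover $\mathcal{E}$ up to a finite exceptional set, yielding a uniform bound on $\lambda$-invariants outside this exceptional set. This reduces the conjecture to analysing the finitely many exceptional $L_\infty$ and showing that the only one where the Mordell--Weil rank can actually be unbounded is the anticyclotomic $\Z_p$-extension in the root-number-$-1$ case. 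For this last step I would use the expected link between failure of $X^{\bullet\star}(E/L_\infty)$ to be $\Lambda$-torsion and the systematic growth of Heegner points, which under the Heegner hypothesis is captured by Theorem~\ref{freestructure_theorem}, and an analysis of functional equations relating $X^{+-}$ and $X^{-+}$ under complex conjugation in the anticyclotomic setting.

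The main obstacle is twofold. The harder conceptual difficulty is that Theorem~\ref{main_theorem}(h) only bounds $\lambda$ locally on a Greenberg neighbourhood, and producing a global bound requires either the $\mathfrak{M}_H(G)$-property for every $H \in \mathcal{H}^{\bullet\star}$ or an independent uniform estimate; showing this uniformity (rather than the mere absence of divisibility by $(1{+}T_1)^a(1{+}T_2)^b-1$ in a single $g_\infty^{\bullet\star}$) is subtle because the signs $\bullet\star$ may have to be chosen differently for different $L_\infty$, and the resulting gluing argument needs the mixed-sign sets $\mathcal{H}^{+-}$, $\mathcal{H}^{-+}$ to be non-empty, which is presently only an assumption. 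The secondary obstacle is the reverse inequality: a $\lambda$-bound controls the Selmer corank from above, but isolating the anticyclotomic direction as the \emph{only} direction where rank growth occurs requires a lower bound matching Heegner-point contributions, so in practice I expect the method to establish a weaker \emph{finiteness} version of Mazur's conjecture rather than the full identification of the exceptional extension, with the root-number criterion being obtained only conditionally.
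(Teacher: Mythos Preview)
The statement you are attempting to prove is a \emph{conjecture}, and the paper does not claim a proof of it. It is presented as Conjecture~\ref{MazurConjecture} (Mazur's growth number conjecture, extended to the supersingular case following Lei--Sprung), and the surrounding discussion makes clear that it remains open. What the paper actually establishes is much weaker: Theorem~\ref{rankbound_theorem} gives an upper bound on the number $t$ of $\Z_p$-extensions where the rank is unbounded in terms of a sum of four signed $\lambda$-invariants, and Theorem~\ref{conj_rankzero_theorem} verifies the conjecture only in the special case where $\rank E(K)=0$ and $\Sha(E/K)[p^\infty]=0$ (which forces root number $+1$). In the root-number $-1$ case the paper explicitly states that it is unable to verify the conjecture and can only produce examples with $t\le 3$ via Proposition~\ref{Rankbound_prop}.

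Your outline in fact runs into precisely the obstacles the paper identifies. The finiteness statement you reach --- that only finitely many $L_\infty$ can have unbounded rank --- is already what Proposition~\ref{Torsion_prop} and Lemma~\ref{rankbound_lemma} give (conditionally on $\mathcal{H}^{+-},\mathcal{H}^{-+}\ne\emptyset$), and this is far from the conjecture. The step where you propose to ``isolate the anticyclotomic direction as the only direction where rank growth occurs'' is exactly the missing ingredient: Theorem~\ref{rankbound_theorem} bounds $t$ by a sum of four $\lambda$-invariants rather than one, and the paper explains that lowering this bound to $1$ is not achievable with current methods. Your own final paragraph correctly anticipates that the method yields only a finiteness result with the identification of the exceptional extension remaining conditional --- this is an accurate assessment, and it means the proposal does not constitute a proof of the conjecture.
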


In our paper \cite{MHG} which considered the case when $E$ has ordinary reduction at $p$, an interesting connection was proven between the $\mathfrak{M}_H(G)$-property and Conjecture~\ref{MazurConjecture}. Here a similar connection is shown in the supersingular case. For any ${\bullet, \star \in \{+, -\}}$ let $\Omega^{\bullet\star}$ be the set of all ${H \in \mathcal{H}^{\bullet\star}}$ such that $X^{\bullet\star}(E/\KK_{\infty})_f$ is a finitely generated $\Lambda(H)$-module. We now define ${\Omega:= \Omega^{++} \times \Omega^{--} \times \Omega^{+-} \times \Omega^{-+}}$. For any tuple ${(A, B, C, D) \in \Omega}$ it makes sense to define $\lambda$-invariants ${\lambda_A^{++} = \lambda_{G/A}(X^{++}(E/\KK_\infty^A))}$, ${\lambda_B^{--} = \lambda_{G/B}(X^{--}(E/\K_\infty^B))}$, etc. Let $N$ be the conductor of $E$ and $h_K$ the class number of $K$. In relation to Conjecture~\ref{MazurConjecture}, we will show
\begin{theorem}\label{rankbound_theorem}
Let $t$ be the number of $\Zp$-extensions of $K$, where the rank of $E$ does not stay bounded. Then $$t \le \min\{\lambda^{++}_A + \lambda^{--}_B + \lambda^{+-}_C + \lambda^{-+}_D  \mid  (A,B,C,D) \in \Omega\}. $$
If every prime dividing $N$ splits in $K/\Q$, $p \nmid h_K$ and $\Gal(\Q(E[p])/\Q)=GL_2(\Fp)$, then
$$ 1 \le t \le \min\{\lambda^{++}_A + \lambda^{--}_B + \lambda^{+-}_C + \lambda^{-+}_D \, | \, (A,B,C,D) \in \Omega\}-1. $$
\end{theorem}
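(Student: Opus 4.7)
The plan is to establish the upper bound $t \le \min\{\lambda^{++}_A + \lambda^{--}_B + \lambda^{+-}_C + \lambda^{-+}_D \mid (A,B,C,D) \in \Omega\}$ by fixing an arbitrary tuple $(A,B,C,D) \in \Omega$ and arguing sign by sign, and then to treat the refined bounds of the second statement by invoking the Heegner-point system on the anticyclotomic tower. The main input for the upper bound is Lemma~\ref{rankbound_lemma}: if the Mordell--Weil rank of $E(L_n)$ is unbounded along ${L_\infty = \KK_\infty^{H_0} \in \mathcal{E}}$, then for at least one sign choice $\bullet\star$ the signed Selmer group $X^{\bullet\star}(E/L_\infty)$ fails to be $\Lambda$-torsion (otherwise four finite $\lambda^{\bullet\star}$-invariants would jointly force a Kobayashi-type bound on the rank). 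Via the control theorem (Theorem~\ref{SelmerControlThm_prop}), this non-torsionness is equivalent to $\Upsilon_{H_0} := (1+T_1)^{a_0}(1+T_2)^{b_0}-1$ dividing $f^{\bullet\star}_\infty$ in $\Lambda_2$, where $[(a_0,b_0)] \in \mathbb{P}^1(\Zp)$ corresponds to $H_0$. Letting $t_{\bullet\star}$ count those bad $L_\infty$ for which $X^{\bullet\star}(E/L_\infty)$ is non-torsion, the union bound gives $t \le t_{++} + t_{--} + t_{+-} + t_{-+}$.

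The next step is to bound each $t_{\bullet\star}$ by the corresponding $\lambda^{\bullet\star}_H$ with $H \in \{A,B,C,D\}$. For distinct $L_\infty$, the elements $\Upsilon_{H_0}$ are non-associate prime elements of $\Lambda_2$ (each generates the kernel of a distinct specialization to a one-variable Iwasawa algebra) and hence pairwise coprime, so their product divides $f^{\bullet\star}_\infty$, and since no $\Upsilon_{H_0}$ is divisible by $p$, this product also divides $g^{\bullet\star}_\infty$. Since $A \in \Omega^{\bullet\star}$, Theorem~\ref{main_theorem}(g) guarantees that $\overline{\Upsilon_A}$ does not divide $\overline{g^{\bullet\star}_\infty}$ in $\Lambda_2/p$, so the further reduction $\Lambda_2 \twoheadrightarrow \Lambda_2/(p,\Upsilon_A) \cong \Fp[[\bar T\,]]$ sends $\overline{g^{\bullet\star}_\infty}$ to a nonzero element of $\lambda$-degree exactly $\lambda^{\bullet\star}_A$, by Theorem~\ref{main_theorem}(i) applied to $H = A$. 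Each $\overline{\Upsilon_{H_0}}$ with $L_\infty \ne \KK_\infty^A$ specializes to a non-unit of $\lambda$-invariant at least $1$ (since $\Upsilon_{H_0}$ lies in the augmentation ideal, and the specialization is nonzero: otherwise $\overline{\Upsilon_A} \mid \overline{\Upsilon_{H_0}} \mid \overline{g^{\bullet\star}_\infty}$, contradicting the choice of $A$). Summing $\lambda$-invariants of these factors, which sit inside a total $\lambda$-budget of $\lambda^{\bullet\star}_A$, yields $t_{\bullet\star} \le \lambda^{\bullet\star}_A$; taking the minimum over $(A,B,C,D) \in \Omega$ gives the first inequality.

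For the second statement, the lower bound $t \ge 1$ follows from Cornut--Vatsal (with complements by Howard): under the Heegner hypothesis that every prime dividing $N$ splits in $K/\Q$, the Heegner-point Euler system produces a subgroup of $E(K_{ac,n})$ of unboundedly growing rank along the anticyclotomic $\Z_p$-extension $K_{ac}/K$, so $K_{ac}$ is bad and $t \ge 1$. For the refined upper bound $t \le \min - 1$, I expect the extra subtracted unit to come from a finer analysis of the anticyclotomic contribution: the $\mathrm{GL}_2(\Fp)$-surjectivity and $p \nmid h_K$ hypotheses, together with Kolyvagin-type divisibilities of the signed characteristic elements by Heegner-point contributions, force the image of $\overline{\Upsilon_{K_{ac}}}$ in $\Lambda_2/(p,\Upsilon_A)$ to have $\lambda$-invariant strictly greater than $1$ for at least one sign choice, yielding the extra unit beyond the crude "$\ge 1$" used in the upper-bound step. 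The main obstacle I anticipate is making this Heegner-point bookkeeping rigorous: complex conjugation interchanges $\fp$ and $\bar\fp$ along $K_{ac}$, so the four sign choices are not independent in this direction, and the precise extra multiplicity has to be tracked through the action of complex conjugation on Kolyvagin's derivative classes.
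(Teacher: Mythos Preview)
Your approach to the first inequality is essentially the paper's: the paper packages the bound $t_{\bullet\star} \le \lambda^{\bullet\star}_H$ for $H \in \Omega^{\bullet\star}$ as a separate result (Theorem~\ref{rankbound_theorem2}), proved by transporting the argument of \cite[Theorem~1.5]{MHG} using the control theorem for $X^{\bullet\star}(E/\KK_\infty)_f$ (Proposition~\ref{Xf_ControlThm_prop}), and then combines it with Lemma~\ref{rankbound_lemma} exactly as you do. Your direct divisibility argument in $\Lambda_2$ is a reasonable alternative route to the same bound, though the assertion that the image of $g^{\bullet\star}_\infty$ in $\Lambda_2/(p,\Upsilon_A) \cong \Fp[[\bar T\,]]$ has $\lambda$-degree exactly $\lambda^{\bullet\star}_A$ is not an immediate consequence of Theorem~\ref{main_theorem}(i); you would need to invoke the absence of pseudo-null submodules (Theorem~\ref{pseudo-null_theorem}) together with the exact control theorem to identify this image with the mod-$p$ characteristic polynomial of $X^{\bullet\star}(E/\KK_\infty^A)$.

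Your lower bound $t \ge 1$ is correct and matches the paper (which cites Vatsal and Bertolini--Darmon for the unbounded rank along $K_{ac}$).

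Your mechanism for the extra $-1$ in the upper bound is wrong. You look for a single sign at which $\overline{\Upsilon_{K_{ac}}}$ contributes $\lambda$-invariant $\ge 2$; the paper does something simpler and cleaner. Under the stated hypotheses, \cite[Theorem~1.4]{LV} shows that \emph{both} $X^{++}(E/K_{ac})$ and $X^{--}(E/K_{ac})$ have $\Lambda$-rank one, so $K_{ac}$ lies in the bad set for the $++$ sign \emph{and} for the $--$ sign simultaneously. Thus in the union bound $t \le t_{++} + t_{--} + t_{+-} + t_{-+}$ the single extension $K_{ac}$ is counted at least twice on the right-hand side, and you may subtract $1$ before applying $t_{\bullet\star} \le \lambda^{\bullet\star}_H$. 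No Kolyvagin-derivative bookkeeping or ``$\lambda \ge 2$'' estimate is needed; the $-1$ comes purely from double-counting $K_{ac}$ across two different sign pairs.
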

The condition $p \nmid h_K$ is imposed in the above theorem in order to ensure that every prime of $K$ above $p$ is totally ramified the anticyclotomic $\Zp$-extension $K_{ac}/K$.

In \cite{MHG} we proved a similar theorem in the ordinary case and used this theorem to give examples verifying Conjecture~\ref{MazurConjecture}. In the supersingular case considered here, we can easily construct examples verifying Conjecture~\ref{MazurConjecture} when the root number is 1 using Theorem~\ref{conj_rankzero_theorem}. However, when the root number is -1 we are unable to use the above theorem to give examples that verify Conjecture~\ref{MazurConjecture}. Constructing examples in the supersingular case is more difficult because the bound $t$ in the theorem above involves four $\lambda$-invariants rather than just one in the ordinary case. In our attempt to verify Conjecture~\ref{MazurConjecture}, we will construct examples in Section~\ref{section:appendix} when the root number is $-1$ such that $t \leq 3$. The paper~\cite{GHKL} also studies Conjecture~\ref{MazurConjecture} via a different approach than the one we use (that paper does not study the $\mathfrak{M}_H(G)$-property).

Let us briefly describe the structure of the paper, which consists of three more parts. The second part of the paper  (Sections~\ref{section:control}-\ref{section:monsky}) provides the ingredients that will be used in the third part (Sections~\ref{section:main-theorem}-\ref{section:freeSelmer}) for the proofs of our main results. Finally, we conclude the paper with the announced application towards Mazur's Conjecture~\ref{MazurConjecture} (Sections~\ref{section:bounding_ranks} and \ref{section:appendix}) and we consider $\Z_p$-extensions of $K$ where some prime in $S$ might split completely (Section~\ref{section:counterexample}).

\textbf{Acknowledgements.} The third named author acknowledges support from NSERC Discovery grant 2019-03987. The authors would like to thank B.D.~Kim, Chan-Ho Kim, Antonio Lei and Katharina Müller for helpful discussions on the topics of this article. We are very grateful to Robert Pollack for his help with the computations in Section~\ref{section:examples}.

\part{Key results used for the main theorems}
In the next sections we develop the theory needed for the proofs of our main results. Many of these results are interesting in their own right.
\section{Further definitions and the control theorem for signed Selmer groups} \label{section:control}
In this first auxiliary section we prove the control theorem which will be needed in the sequel.

In the rest of the paper, for ${L_{\infty} \in \mathcal{E}}$, ${L_{\infty} = \KK_{\infty}^H}$, we often abbreviate the Iwasawa algebra ${\Z_p[[\Gal(\KK_{\infty}/L_{\infty})]] =: \Lambda(H)}$ to $\Lambda$. Recall the definition of $\mathcal{H}^{\bullet\star}$ from Definition~\ref{def:mathcal{H}}. We begin this section by defining the plus and minus norm groups over $\KK_{\infty}$ and subsequently the plus and minus Selmer groups over $\KK_{\infty}$.

Now let $k/\Q_p$ be a finite unramified extension. Let $k_{cyc}$ denote the cyclotomic $\Zp$-extension of $k$ with tower fields $k_n$. For any $n \geq 0$ we can define the plus and minus norm groups $E^{\pm}(k_n)$ as in \eqref{plusdef} and \eqref{minusdef}.

Let $k^{ur}$ denote the unramified $\Zp$-extension of $k$ and $k_{cyc}^{ur}$ the compositum of $k_{cyc}$ and $k^{ur}$. We have $\Gal(k_{cyc}^{ur}/k)\cong \Z_p^2$. For any integer $m \geq 0$, let $k^{(m)}$ denote the unique subextension of $k^{ur}$ such that $k^{(m)}/k$ is of degree $p^m$. Similarly for any $n \geq 1$, define $k_n^{ur}$ to be the unramified $\Zp$-extension of $k_n$ and $k_n^{(m)}$ to be the unique subextension such that $k_n^{(m)}/k_n$ has degree $p^m$. Note that $k_{cyc}^{ur}=\bigcup_{n,m} k_n^{(m)}$.

On replacing $k$ by $k^{(m)}$  we can define the plus and minus norm groups $E^{\pm}(k_n^{(m)})$ as in \eqref{plusdef} and \eqref{minusdef}. We then finally define

$$E^{\pm}(k_{cyc}^{ur}) = \bigcup_{n,m} E^{\pm}(k_n^{(m)}). $$
The definition of $E^{\pm}(k_{cyc}^{ur})$ above is taken from \cite{LeiSuj}.

Recall that $p$ is assumed to split in $K/\Q$. Let $w$ be a prime of $\KK_{\infty}$ above $p$ and $v$ the prime of $K$ below $w$ (i.e. either ${v = \fp}$ or ${v = \bar{\fp}}$). Then ${\KK_{\infty,w}=K_{v,cyc}^{ur} = \Q_{p, cyc}^{ur}}$ and so $E^{\pm}(\KK_{\infty,w})$ is defined. For any $\bullet, \star \in \{+,-\}$ the signed Selmer groups of $E/\KK_{\infty}$ are defined as

$$ \begin{tikzcd}[scale cd=0.855] \displaystyle 0 \longrightarrow \Selpm(E/\KK_{\infty}) \longrightarrow \Selm_{p^{\infty}}(E/\KK_{\infty}) \longrightarrow \prod_{w | \fp} \frac{H^1(\KK_{\infty,w}, E[p^{\infty}])}{E^{\bullet}(\KK_{\infty,w})\otimes \Qp/\Zp} \times \prod_{w | \bar{\fp}} \frac{H^1(\KK_{\infty,w}, E[p^{\infty}])}{E^{\star}(\KK_{\infty,w})\otimes \Qp/\Zp} \end{tikzcd} $$

Recall from Section~\ref{section:mainintroduction} that we sometimes abbreviate $\KK_{\infty}^{H_n}$ to $F^n_{\infty}$ and write ${F^0_{\infty} = F_{\infty} = \KK_\infty^H}$.

\[ \xymatrix{\KK_{\infty} \ar@{-}[dr] \ar@{-}@/^2.7pc/[ddrr]^{H} & & \\ & F^n_{\infty} \ar@{-}[dr]_{p^n} \ar@{-}[dl] & \\ K_{H,n} \ar@{-}[dr]_{p^n} & & F_{\infty} \ar@{-}[dl] \\ & K & & }\]

Before stating our first control theorem we need the following lemma.

\begin{lemma}\label{containment_lemma}
Let $w$ be a prime of $\KK_{\infty}$ above $p$. Let $\bullet, \star \in \{+,-\}$, $H \in \mathcal{H}^{\bullet\star}$, $c \geq 0$ and $F^c_{\infty}=\KK_{\infty}^{H_c}$. Then $\hat{E}^{\pm}(F^c_{\infty,w})$ is contained in $\hat{E}^{\pm}(\KK_{\infty,w})$.
\end{lemma}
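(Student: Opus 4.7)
The plan is to prove the lemma by unpacking both definitions of the plus/minus norm subgroups and checking compatibility of the trace conditions under the inclusion $F^c_{\infty,w} \subseteq \KK_{\infty,w}$. By the symmetry between $\fp$ and $\bar{\fp}$ it suffices to handle a prime $w$ of $\KK_{\infty}$ above $\fp$; let $w_c$ denote its restriction to $F^c_{\infty}$. Since condition~(b) in Definition~\ref{def:mathcal{H}} forces $\fp$ to ramify in $\KK_{\infty}^H = F_{\infty}$, it ramifies in $F^c_{\infty}$ as well, so that $F^c_{\infty}/L^{(\fp)}_{i(\KK_\infty^H,\fp)+c}$ is totally ramified at $w_c$ and the groups $\hat{E}^{\pm}(F^c_{\infty,w_c})$ are defined as in \eqref{plusdef}--\eqref{eq:def2-3}.

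Next I would localise the picture: set $K_0 := L^{(\fp)}_{i+c,w_c} = \Q_p^{(i+c)}$, a finite unramified extension of $\Q_p$. The tower $F^c_{\infty,w_c}/K_0$ is then a totally ramified $\Z_p$-subextension of the local $\Z_p^2$-extension $\KK_{\infty,w} = \Q_{p,cyc}^{ur} \supseteq K_0$, so each finite layer $F^c_{n,w_c}$ sits inside some $\Q_{p,N}^{(M)}$ for $N,M$ sufficiently large. On the $\KK_{\infty}$-side, the definition of Lei--Sujatha gives $\hat{E}^{\pm}(\KK_{\infty,w}) = \bigcup_{N,M} \hat{E}^{\pm}(\Q_{p,N}^{(M)})$, where each summand is Kobayashi's plus/minus group relative to the cyclotomic $\Z_p$-extension of $\Q_p^{(M)}$. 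The heart of the proof is to take $P \in \hat{E}^{\pm}(F^c_{n,w_c})$, embed it into $\hat{E}(\Q_{p,N}^{(M)})$ and verify that it satisfies the cyclotomic trace conditions defining $\hat{E}^{\pm}(\Q_{p,N}^{(M)})$. This reduces to comparing traces along two different totally ramified $\Z_p$-subextensions of the $\Z_p^2$-extension $\Q_{p,cyc}^{ur}/K_0$: namely $F^c_{\infty,w_c}/K_0$ and the cyclotomic $\Z_p$-extension of $K_0$. The degenerate case in which $F^c_{\infty,w_c}$ would locally coincide with the unramified $\Z_p$-extension is covered by the convention $\hat{E}^{\pm} = \hat{E}$ made earlier in the paper and causes no trouble, since $\hat{E}(\Q_p^{(M)}) \subseteq \hat{E}^{\pm}(\Q_{p,N}^{(M)})$ is immediate from the trace conditions.

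The main obstacle, and the reason the authors warned that the definition ``might seem strange,'' is exactly the compatibility of trace conditions across different totally ramified $\Z_p$-subextensions of $\Q_{p,cyc}^{ur}/K_0$. I expect to resolve it by invoking the intrinsic description of Kobayashi's subgroups in terms of the Honda theory of the supersingular formal group $\hat{E}$ over $\Q_p$: $\hat{E}^{\pm}$ can be cut out by the vanishing of one of the two Coleman-style logarithms attached to $\hat{E}$, and this characterisation depends only on the formal group, not on the choice of $\Z_p$-tower. This is the same intrinsic characterisation that underlies the Lei--Sujatha definition of $\hat{E}^{\pm}(\Q_{p,cyc}^{ur})$, and it yields the inclusion $\hat{E}^{\pm}(F^c_{n,w_c}) \subseteq \hat{E}^{\pm}(\Q_{p,N}^{(M)})$ for all $n$; passing to the union over $n$ completes the proof.
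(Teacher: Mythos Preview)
Your setup is fine and matches the paper: reduce to $w \mid \fp$, localise to a totally ramified $\Zp$-tower $\mathcal{L}_\infty/\mathcal{L}$ over an unramified base $\mathcal{L} = k^{(s)}$, embed a given $P \in \hat{E}^{\pm}(\mathcal{L}_n)$ into some $\hat{E}(k_{n'}^{(m)})$, and identify the crux as comparing the trace conditions defining $\hat{E}^{\pm}$ across two a priori different $\Zp$-towers. The gap is in your resolution. You assert that the plus/minus subgroups admit an intrinsic description via Honda theory or Coleman-style logarithms that ``depends only on the formal group, not on the choice of $\Zp$-tower.'' No such tower-independent characterisation is available off the shelf: Kobayashi's and Kim's Coleman maps are constructed relative to a fixed tower, and the very parity conditions distinguishing $+$ from $-$ refer to levels of that tower. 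Asserting tower-independence here is essentially the content of the lemma, so invoking it is circular.

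The paper's argument is far more elementary and sidesteps this entirely. Inside the local $\Zp^2$-extension $k_{cyc}^{ur}/k$, once you composite with $k^{(m)}$ the two towers literally coincide: since $\mathcal{L}_n/k^{(s)}$ is totally ramified and $k^{(m)}/k^{(s)}$ is unramified they are linearly disjoint, and then $\mathcal{L}_t \cdot k^{(m)} = k_t^{(m)}$ for every $t \le n$, both sides being the unique degree-$p^t$ subextension of the cyclic extension $k_n^{(m)}/k^{(m)}$. Hence $\textup{Tr}_{k_n^{(m)}/k_t^{(m)}} = \textup{Tr}_{\mathcal{L}_n k^{(m)}/\mathcal{L}_t k^{(m)}} = \textup{Tr}_{\mathcal{L}_n/\mathcal{L}_t}$ for all $t$, so the defining trace conditions for $\hat{E}^{\epsilon}$ in the two towers are identical. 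No formal-group input beyond the definitions is needed.
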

\begin{proof}
Assume that $w$ lies over the prime $\fp$ of $K$. If it lies above $\bar{\fp}$, the proof will be identical. Let $\epsilon \in \{-, +\}$ and let $P \in \hat{E}^{\epsilon}(F^c_{\infty,w})$. We need to show that $P \in \hat{E}^{\epsilon}(\KK_{\infty,w})$. Recall from Section~\ref{section:mainintroduction} that $\hat{E}^{\pm}(F^c_{\infty,w})$ is defined as in \eqref{plusdef}, \eqref{minusdef} and \eqref{eq:def2-3} with respect to the $\Zp$-extension $F^c_{\infty}/L^{(\fp)}_{i(\KK_{\infty}^H, \fp)+c}$. Let $\mathcal{L}_{\infty}$ be the union of the completions at $w$ of all number fields inside $F^c_{\infty}$ and denote the completion of $L^{(\fp)}_{i(\KK_{\infty}^H, \fp)+c}$ at the prime below $w$ by $\mathcal{L}$. Let $\{\mathcal{L}_s\}$ be the intermediate layers of the $\Zp$-extension $\mathcal{L}_{\infty}/\mathcal{L}$ so that ${P \in \hat{E}(\mathcal{L}_n)}$ for some $n$. As explained above, we have $\KK_{\infty,w}=k_{cyc}^{ur}$ where $k= K_v$ ($v$ is the prime of $K$ below $w$). Then ${\hat{E}^{\epsilon}(k_{cyc}^{ur}) = \bigcup_{n',m} \hat{E}^{\epsilon}(k_{n'}^{(m)})}$. Now ${\mathcal{L}_n=k(\alpha)}$ for some $\alpha$, and $\alpha$ is contained in $k_{n'}^{(m)}$ for some ${n' \geq n}$ which implies that we have ${\mathcal{L}_n \subseteq k_{n'}^{(m)}}$.  We need to show that ${P \in \hat{E}^{\epsilon}(k_{n'}^{(m)})}$.

The field $k_{n'}^{(m)}$ is the compositum of $k_{n'}$ and $k^{(m)}$ and these fields are linearly disjoint over $k$. As $\mathcal{L}/k$ is unramified, we have ${\mathcal{L}=k^{(s)}}$ for some $s$. Note that we necessarily have that ${s \leq m}$ because ${\mathcal{L}_n \subseteq k_{n'}^{(m)}}$. We have that $\mathcal{L}_n/k^{(s)}$ is totally ramified and so $\mathcal{L}_n$ and $k^{(m)}$ are linearly disjoint over $k^{(s)}$. Since ${\mathcal{L}_n \subseteq k_{n'}^{(m)}}$ and $k_{n'}^{(m)}/k^{(m)}$ is a cyclic extension, we see that ${\mathcal{L}_tk^{(m)}=k_t^{(m)}}$ for all $t \leq n$. In particular, ${P \in \hat{E}(k_n^{(m)})}$. We need to show that ${P \in \hat{E}^{\epsilon}(k_n^{(m)})}$. Using the previous facts we have for any ${t \leq n}$
$$\text{Tr}_{k_n^{(m)}/k_t^{(m)}}=\text{Tr}_{\mathcal{L}_nk^{(m)}/\mathcal{L}_tk^{(m)}}=\text{Tr}_{\mathcal{L}_n/\mathcal{L}_t}.$$
Therefore it follows from this that ${P \in \hat{E}^{\epsilon}(k_n^{(m)})}$ as desired.
\end{proof}
The above key lemma is needed to prove the next result. Note that the fact that $\mathcal{L}_{\infty}/\mathcal{L}$ is totally ramified and $\mathcal{L}/k$ is unramified was crucial in the proof. This is one of the main reasons why we defined $\hat{E}^{\pm}(F^c_{\infty,w})$ as in \eqref{plusdef}, \eqref{minusdef} and \eqref{eq:def2-3} with respect to the $\Zp$-extension $F^c_{\infty}/L^{(\fp)}_{i(\KK_{\infty}^H, \fp)+c}$.

\begin{lemma}\label{restriction_map_lemma}
Let $\bullet, \star \in \{+,-\}$.
\begin{enumerate}[(a)]
\item If $L_{\infty} \in \mathcal{E}$, then the restriction map induces a map
$$\Selpm(E/L_{\infty}) \longrightarrow \Selpm(E/\KK_{\infty})^{\Gal(\KK_{\infty}/L_{\infty})}. $$
\item If $H \in \mathcal{H}^{\bullet\star}$ and $n \geq 0$, then the restriction map induces a map
$$\Selpm(E/\KK_{\infty}^{H_n}) \longrightarrow \Selpm(E/\KK_{\infty})^{H_n}. $$
\end{enumerate}
\end{lemma}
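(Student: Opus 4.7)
The plan is to derive both statements from the defining exact sequence~\eqref{Selpm_def}, together with the compatibility of local signed conditions supplied by Lemma~\ref{containment_lemma}. Setting $L$ to be $L_\infty$ in part~(a) (resp.\ $\KK_\infty^{H_n}$ in part~(b)) and writing $\mathcal{G} = \Gal(\KK_\infty/L)$, I would first invoke the standard functorial restriction map $H^1(G_S(L), E[p^\infty]) \to H^1(G_S(\KK_\infty), E[p^\infty])^{\mathcal{G}}$ and observe that it carries $\Selinf(E/L)$ into $\Selinf(E/\KK_\infty)^{\mathcal{G}}$; this is the classical argument, using that the local conditions at every $v \in S$ for the full Selmer group transport correctly under restriction.

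The substance of the proof is to verify that the signed subgroup is preserved. Given a class $\kappa \in \Selpm(E/L) \subseteq \Selinf(E/L)$, its image in $\Selinf(E/\KK_\infty)^{\mathcal{G}}$ lies in $\Selpm(E/\KK_\infty)^{\mathcal{G}}$ iff, for every prime $w'$ of $\KK_\infty$ above $\fp$ (resp.\ above $\bar{\fp}$), its localization lies in $\hat{E}^\bullet(\KK_{\infty, w'}) \otimes \Qp/\Zp$ (resp.\ $\hat{E}^\star(\KK_{\infty, w'}) \otimes \Qp/\Zp$). Writing $w$ for the prime of $L$ below $w'$, functoriality of the restriction $H^1(L_w, E[p^\infty]) \to H^1(\KK_{\infty, w'}, E[p^\infty])$ reduces this to the inclusion $\hat{E}^\bullet(L_w) \subseteq \hat{E}^\bullet(\KK_{\infty, w'})$ of local formal-group sub-pieces (and analogously with~$\star$).

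For part~(b) the required inclusion is precisely Lemma~\ref{containment_lemma} applied with $c = n$. For part~(a), if $L_\infty \neq L^{(\fp)}_\infty$ then the argument in the proof of Lemma~\ref{containment_lemma} carries over verbatim: the only ingredient needed is that $L_\infty/L^{(\fp)}_{i(L_\infty, \fp)}$ is a $\Zp$-extension that is totally ramified at every prime above $\fp$, which holds by the very definition of $i(L_\infty, \fp)$. In the edge case $L_\infty = L^{(\fp)}_\infty$, the convention from Section~\ref{section:definitions} sets $\hat{E}^\bullet(L_{\infty, w}) = \hat{E}(L_{\infty, w})$; writing $L_{\infty, w} = k^{(s)}$ with $k = K_\fp$, this group coincides with $\hat{E}^\bullet(k_0^{(s)})$ (the trace condition in \eqref{plusdef}/\eqref{minusdef} is vacuous at level $n = 0$), so it sits inside $\bigcup_{n,m}\hat{E}^\bullet(k_n^{(m)}) = \hat{E}^\bullet(k_{cyc}^{ur}) = \hat{E}^\bullet(\KK_{\infty, w'})$. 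The analysis at primes above $\bar{\fp}$ and the edge case $L_\infty = L^{(\bar{\fp})}_\infty$ are symmetric.

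The only real obstacle is this containment of local signed norm groups at primes above $p$; once it is in hand, the rest is a routine diagram chase showing that the restriction map on classical Selmer groups restricts to the desired map on signed Selmer groups.
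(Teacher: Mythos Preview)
Your proposal is correct and follows essentially the same approach as the paper's proof: both reduce the assertion to the local containment $\hat{E}^\bullet(L_w) \subseteq \hat{E}^\bullet(\KK_{\infty,w'})$ at primes above $p$, invoke Lemma~\ref{containment_lemma} (noting it does not use conditions~(a) or~(c) of Definition~\ref{def:mathcal{H}}) to handle all $L_\infty$ in which both primes above $p$ ramify, and treat the edge cases $L_\infty = L_\infty^{(\fp)}, L_\infty^{(\bar{\fp})}$ separately via the convention $\hat{E}^\pm(L_{\infty,w}) = \hat{E}(L_{\infty,w})$ and the observation that the unramified completion sits inside $\hat{E}^\pm(k_0^{(m)})$. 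You spell out the reduction to the local statement and the diagram chase more explicitly than the paper does, but the substance is identical.
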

\begin{proof}
Lemma \ref{containment_lemma} proves $(b)$. Conditions $(a)$ and $(c)$ of Definition~\ref{def:mathcal{H}} were not used in the proof of Lemma~\ref{containment_lemma}, therefore we also get the result of $(a)$ for all $\mathcal{E} \setminus \{L_{\infty}^{(\fp)}, L_{\infty}^{(\bar{\fp})}\}$. Now let $L_{\infty}=L^{(\fp)}_{\infty}$. Recall from Section~\ref{section:mainintroduction} that if $w$ is a prime of $L_{\infty}$ above $\fp$, we defined $\hat{E}^{\pm}(L_{\infty,w})$ to be $\hat{E}(L_{\infty,w})$. Since $\fp$ is unramified in $L_{\infty}/K$, we see from this that $\hat{E}^{\pm}(L_{\infty,v}) \subseteq \hat{E}^{\pm}(\KK_{\infty,w})$. We make a similar observation when $L_{\infty}=L^{(\bar{\fp})}_{\infty}$. This completes the proof of $(a)$.
\end{proof}

The following remark will be used in Section~\ref{section:counterexample}.
\begin{remark} \label{rem:welldefined}
	The fact that no prime in $S$ splits completely in $\KK_\infty^H/K$ was not used in the proof of Lemma~\ref{containment_lemma}. Therefore the second part of Lemma~\ref{restriction_map_lemma} is still valid for $\Zp$-extensions of $K$ in which some prime in $S$ splits completely.
\end{remark}

The desired control theorem is
\begin{theorem}\label{SelmerControlThm_prop}
Let $\bullet, \star \in \{+,-\}$ and $H \in \mathcal{H}^{\bullet\star}$. Let $n \geq 0$ and consider the map $$s^{\bullet\star}_n: \Selpm(E/\KK_{\infty}^{H_n}) \longrightarrow \Selpm(E/\KK_{\infty})^{H_n}$$ from Lemma~\ref{restriction_map_lemma}(b). This map is an isomorphism.
\end{theorem}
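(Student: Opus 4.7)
The plan is to execute a standard Mazur-style control argument via the snake lemma. Consider the commutative diagram whose rows are the defining sequences~\eqref{Selpm_def} at the two levels ${F^n_\infty = \KK_\infty^{H_n}}$ and $\KK_\infty$ (the bottom row being the $H_n$-invariants of the defining sequence over $\KK_\infty$), with vertical maps induced by restriction:
\[
\begin{tikzcd}
0 \arrow[r] & \Selpm(E/F^n_\infty) \arrow[r] \arrow[d, "s^{\bullet\star}_n"] & H^1(G_S(F^n_\infty), E[p^\infty]) \arrow[r] \arrow[d, "h_n"] & \prod\nolimits_{v\in S} J_v(E/F^n_\infty) \arrow[d, "g_n"] \\
0 \arrow[r] & \Selpm(E/\KK_\infty)^{H_n} \arrow[r] & H^1(G_S(\KK_\infty), E[p^\infty])^{H_n} \arrow[r] & \bigl(\prod\nolimits_{v\in S} J_v(E/\KK_\infty)\bigr)^{H_n}
\end{tikzcd}
\]
where the third column uses the signed local terms $J_\fp^\bullet$ and $J_{\bar\fp}^\star$ at the primes above $p$ and the usual $J_v$ elsewhere. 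Commutativity of the right-hand square at primes above $p$ rests on Lemma~\ref{containment_lemma}. By the snake lemma, establishing the isomorphism reduces to showing that $h_n$ is surjective with trivial kernel, and that $\ker g_n=0$.

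For the middle vertical map $h_n$: since ${H_n \cong \Z_p}$ has $p$-cohomological dimension one, the Hochschild--Serre spectral sequence collapses to a short exact sequence
\[ 0 \to H^1\bigl(H_n, E[p^\infty]^{G_S(\KK_\infty)}\bigr) \to H^1\bigl(G_S(F^n_\infty), E[p^\infty]\bigr) \xrightarrow{h_n} H^1\bigl(G_S(\KK_\infty), E[p^\infty]\bigr)^{H_n} \to 0, \]
yielding surjectivity of $h_n$. The kernel is governed by $E[p^\infty]^{G_S(\KK_\infty)}$, which in the supersingular setting with ${p \geq 5}$ and $p$ split in $K$ is trivial (for $E$ supersingular at $p$, the image of the mod-$p$ representation of $G_K$ contains $\mathrm{SL}_2(\Fp)$, which fixes no nonzero vector in $E[p]$), so ${\ker h_n = 0}$.

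For the right-hand map $g_n$, we analyse each prime separately. For ${v \in S \setminus S_p}$, condition~(a) in Definition~\ref{def:mathcal{H}} ensures that $v$ does not split completely in $\KK_\infty^H/K$, and a local Hochschild--Serre argument (using that the decomposition group of $v$ in $H_n$ is infinite pro-$p$, and that $E(\KK_{\infty,w})[p^\infty]$ is finite for $w \nmid p$) produces injectivity of the local restriction on the $J_v$-factor. For ${v \in S_p}$, Lemma~\ref{containment_lemma} provides the inclusion ${\hat E^\pm(F^n_{\infty,w}) \subseteq \hat E^\pm(\KK_{\infty,w})}$, and a local computation on the quotient ${H^1(\KK_{\infty,w},E[p^\infty])/(\hat E^\bullet(\KK_{\infty,w}) \otimes \Q_p/\Z_p)}$ taken $H_n$-invariants, in the spirit of the control arguments of Iovita--Pollack~\cite{IP} and Kim~\cite{KimSignedSelmer}, yields the required injectivity.

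Combining these inputs via the snake lemma gives ${\ker s^{\bullet\star}_n = 0}$ directly from ${\ker h_n = 0}$, while surjectivity of $s^{\bullet\star}_n$ follows from the surjectivity of $h_n$ combined with the injectivity of $g_n$ on the image of $h_n$ by a simple diagram chase. The main obstacle is the local analysis at primes above $p$: one must keep careful track of the fact that $\hat E^\pm(F^n_{\infty,w})$ is defined relative to the shifted $\Z_p$-extension $F^n_\infty/L^{(\fp)}_{i(\KK_\infty^H,\fp)+n}$ (and analogously for $\bar\fp$), which is precisely the setup under which Lemma~\ref{containment_lemma} applies and under which the $H_n$-invariants of the local quotient at $\KK_\infty$ exactly recover the local factor at $F^n_\infty$.
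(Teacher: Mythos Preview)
Your overall snake-lemma architecture is the same as the paper's, but there are two issues.

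First, your treatment of primes ${v \in S \setminus S_p}$ rests on a false claim: the decomposition subgroup $H_{n,w}$ is \emph{trivial}, not infinite pro-$p$. Since $v$ does not split completely in $\KK_\infty^H/K$, the local extension $F^n_{\infty,w}/(K_{H,n})_v$ is already a $\Z_p$-extension; but $(K_{H,n})_v$ (a local field of residue characteristic $\ne p$) admits no $\Z_p^2$-extension, so $\KK_{\infty,w} = F^n_{\infty,w}$ and $H_{n,w} = 0$. Your proposed argument would not have worked anyway: for $H_{n,w} \cong \Z_p$ and $M$ finite, $H^1(\Z_p,M)$ need not vanish. The correct argument is simpler than what you attempted.

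Second, and more seriously, the local analysis at primes above $p$ is the substance of the theorem, and you have not supplied it. Lemma~\ref{containment_lemma} only gives an inclusion $\hat E^\pm(F^n_{\infty,w}) \subseteq \hat E^\pm(\KK_{\infty,w})$; to obtain injectivity of the map on the signed local quotients via the snake lemma, one needs the induced map
\[
\alpha_n^\pm \colon \hat E^\pm(F^n_{\infty,w}) \otimes \Q_p/\Z_p \longrightarrow \bigl(\hat E^\pm(\KK_{\infty,w}) \otimes \Q_p/\Z_p\bigr)^{H_{n,w}}
\]
to be \emph{surjective} (since $\ker\gamma_n^\pm$ injects into $\coker\alpha_n^\pm$). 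This is Proposition~\ref{invariants_prop} in the paper, and it is not a formality: one first proves that $(\hat E^\pm(\KK_{\infty,w}) \otimes \Q_p/\Z_p)^\vee$ is free of rank one over $\Z_p[[G_w]]$ (Lemma~\ref{freeness_lemma}, itself requiring a rank computation via Howson's theorem), and separately that $(\hat E^\pm(F^n_{\infty,w}) \otimes \Q_p/\Z_p)^\vee$ is free of the correct rank over the smaller Iwasawa algebra (Proposition~\ref{pm_points_structure_prop}, resting on Kim's work). Comparing these structures forces the a priori injection $\alpha_n^\pm$ to be an isomorphism. Your phrase ``a local computation \ldots in the spirit of \ldots'' and the remark that the $H_n$-invariants ``exactly recover the local factor'' describe the desired conclusion but omit the mechanism; without it the diagram chase yields only that $s_n^{\bullet\star}$ has cokernel bounded by $\coker\alpha_n^\pm$, which is exactly what remains to be controlled.
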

The proof of Theorem~\ref{SelmerControlThm_prop} will be broken up into a few results below. First we record the important lemma

\begin{lemma}\label{p-torsion_lemma}
If $w$ is a prime of $K_{\infty}$ above $p$, then we have ${E(\KK_{\infty,w})[p^{\infty}]=0}$. Moreover, ${E(\KK_{\infty})[p^{\infty}]=0}$.
\end{lemma}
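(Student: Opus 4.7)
The plan is to prove the local assertion first, since the global statement then follows by choosing any prime $w$ of $\KK_{\infty}$ above $p$ and using the embedding ${\KK_{\infty} \hookrightarrow \KK_{\infty,w}}$, which gives an injection ${E(\KK_{\infty})[p^{\infty}] \hookrightarrow E(\KK_{\infty,w})[p^{\infty}]}$. So from here on I focus on the local claim.

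First I would record that, since $p$ splits in $K/\Q$ and $w$ lies above some ${v \in \{\fp,\bar{\fp}\}}$, we have ${K_v=\Qp}$, and this section already identifies $\KK_{\infty,w}$ with $\Q_{p,cyc}^{ur}$. Good supersingular reduction then yields the usual exact sequence
\[ 0 \longrightarrow \hat{E}(\mathfrak{m}_w) \longrightarrow E(\KK_{\infty,w}) \longrightarrow \widetilde{E}(\kappa_w) \longrightarrow 0, \]
where $\kappa_w$ denotes the residue field of $\KK_{\infty,w}$; because ${\widetilde{E}(\overline{\Fp})[p^{\infty}]=0}$ in the supersingular case, I am reduced to proving ${\hat{E}(\mathfrak{m}_w)[p]=0}$, from which the $p^{\infty}$-version is immediate (a nonzero $p^n$-torsion point would have a nonzero $p$-torsion multiple).

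The heart of the argument is a Newton-polygon computation for the formal group. Since ${\hat{E}/\Zp}$ has height $2$, Weierstrass preparation applied to ${[p](T)/T\in\Zp[[T]]}$ writes it as ${u(T)\,f(T)}$ with ${u\in\Zp[[T]]^{\times}}$ and $f$ a distinguished polynomial of degree ${p^2-1}$, whose constant term $u(0)^{-1}p$ has valuation $1$ and whose leading coefficient is $1$; the Newton polygon of $f$ is therefore a single segment of slope ${-1/(p^2-1)}$, so every nonzero $p$-torsion point of $\hat{E}$ has valuation ${1/(p^2-1)}$ (normalising ${v(p)=1}$). Because ${\Q_{p,cyc}^{ur}/\Q_{p,cyc}}$ is unramified, the value group of $\KK_{\infty,w}$ coincides with that of $\Q_{p,cyc}$, namely ${\{a/p^n:a\in\Z,\ n\ge 0\}}$, and since ${\gcd(p^2-1,p)=1}$, the rational number ${1/(p^2-1)}$ does not lie in this value group. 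Hence no nonzero $p$-torsion of $\hat{E}$ is defined over $\KK_{\infty,w}$, proving the local statement.

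The one step that requires a moment of care is the comparison of value groups: I need that the unramified layer ${\Q_{p,cyc}^{ur}/\Q_{p,cyc}}$ really does not enlarge the valuations attained in $\Q_{p,cyc}$, which is immediate from the definition of $k^{ur}$ as the unramified $\Zp$-extension recalled earlier in this section. Once this is in place, the rest is bookkeeping, and the proof reduces to the standard incompatibility between the tame ramification index $p^2-1$ of the formal group torsion and the wild ramification encountered along $\KK_{\infty,w}/\Qp$.
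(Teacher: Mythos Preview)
Your proof is correct and is precisely the standard argument: reduce to the formal group via the supersingular hypothesis, then observe that the nonzero $p$-torsion of $\hat{E}$ has valuation $1/(p^2-1)$ by a Newton-polygon computation, which is excluded from the value group $\Z[1/p]$ of $\Q_{p,cyc}^{ur}$. The paper does not spell this out; it simply cites \cite[Lemma~2.1]{IP}, and what you have written is essentially a reconstruction of the Iovita--Pollack proof, so your approach and the paper's are the same.
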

\begin{proof}
The proof is identical to \cite[Lemma~2.1]{IP}.
\end{proof}

We need a result originally due to Howson~\cite{HowsonThesis} (actually the following result is proven in a much more general setting in \cite{HowsonThesis}).

If $M$ is a finitely generated $\Lambda(G)$-module, we define its rank as $\dim_{K(G)}(K(G) \otimes_{\Lambda(G)} M)$, where $K(G)$ denotes the field of fractions of $\Lambda(G)$. We define the lower central $p$-series of ${G \cong \Z_p^2}$, $\{G_i\}_{i=1}^{\infty}$ (usually denoted as $\{P_i(G)\}_{i=1}^{\infty}$), by ${G_{i+1}=\overbar{G_i^p}}$ for any ${i \ge 1}$. We have the following result.
\begin{theorem}[Howson]\label{HowsonTheorem}
Let $I_{G_i}$ be the augmentation ideal of $G_i$. Then ${\rank_{\Lambda(G)}(M)=m}$ if and only if $\dim_{\Qp}(\Qp\otimes_{\Zp}M/(I_{G_i}M))=m p^{2i}+O(p^{i})$.
\end{theorem}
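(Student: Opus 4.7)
The strategy is to combine the structure theorem for finitely generated modules over the three-dimensional regular local ring $\Lambda(G) \cong \Zp[[T_1,T_2]]$ with a direct computation of $G_i$-coinvariants on each elementary constituent. Writing $d_i(M) := \dim_{\Qp}(\Qp \otimes_{\Zp} M/I_{G_i}M)$, the first step is to show that both $\rg_{\Lambda(G)}(M)$ and the asymptotic behaviour of $d_i(M)$ are preserved under pseudo-isomorphism. Using the long exact sequences in $G_i$-homology attached to the two short exact sequences defining a pseudo-isomorphism ${M \sim E}$, this reduces to the claim that any pseudo-null $\Lambda(G)$-module $N$ satisfies ${d_i(N) = O(1)}$ and ${\dim_{\Qp}(\Qp \otimes H_1(G_i,N)) = O(1)}$. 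Such an $N$ is annihilated by an ideal of height two, hence after Weierstrass preparation applied to two coprime annihilators it is finitely generated either over $\Zp$ (if both annihilators are coprime to $p$) or over $\Fp[[T]]$ (if one annihilator is $p$); in both situations the required bounds are immediate.

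Next I would treat each elementary module separately. For $M = \Lambda(G)^m$ we have ${M/I_{G_i}M \cong \Zp[G/G_i]^m}$, a free $\Zp$-module of rank ${m[G:G_i] = mp^{2i}}$ up to the constant shift built into the indexing of the lower central $p$-series, giving the leading term $mp^{2i}$. For $M = \Lambda(G)/p^a$ the coinvariants are annihilated by $p^a$, so ${d_i(M)=0}$ for every $i$. For the crucial case ${M = \Lambda(G)/f^b}$ with $f$ irreducible and coprime to $p$, Weierstrass preparation (after possibly swapping $T_1$ and $T_2$) turns $f$ into a distinguished polynomial in $T_2$ over $\Zp[[T_1]]$; then $\Lambda(G)/f^b$ is a free $\Zp[[T_1]]$-module of rank $b \deg f$, and its $G_i$-coinvariants form a finitely generated module over $\Zp[[T_1]]/((1+T_1)^{p^i}-1)$, a ring which itself has $\Zp$-rank $p^i$. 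This yields ${d_i(\Lambda(G)/f^b) \le b(\deg f)p^i = O(p^i)}$.

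Summing the contributions of the elementary pieces shows ${d_i(M) = m p^{2i} + O(p^i)}$ whenever ${\rg_{\Lambda(G)}(M) = m}$, and conversely the leading coefficient of $d_i(M)/p^{2i}$ recovers the rank, yielding both directions of the equivalence. I expect the main obstacle to be the bookkeeping in the reduction step: the error terms from the Koszul complex computing $H_*(G_i, -)$ must be controlled uniformly in $i$ for every pseudo-null module appearing in the kernel and cokernel of the pseudo-isomorphism. This is handled by the same Weierstrass analysis as above, but care is needed to ensure the distinguished polynomial arising from $f$ remains coprime modulo $p$ to $(1+T_1)^{p^i}-1$ for all sufficiently large $i$, so that the rank bounds over $\Zp[[T_1]]/((1+T_1)^{p^i}-1)$ do not blow up.
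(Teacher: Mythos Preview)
The paper does not actually prove this statement: its entire proof is the single-line citation ``This is \cite[Theorem~2.22]{HowsonThesis}.'' Your proposal therefore supplies a genuine argument where the paper gives none, and the overall strategy---reduce via pseudo-isomorphism to elementary modules and compute $d_i$ on each piece---is sound and is the natural way to establish such an asymptotic.

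Two technical points deserve care. First, the structure theorem you invoke asserts a pseudo-isomorphism $M \sim \Lambda(G)^m \oplus (\text{elementary torsion})$. The torsion part is standard, but for the torsion-free part one obtains a priori only that $F_{\Lambda(G)}(M)$ is pseudo-isomorphic to its reflexive hull; one must then argue that every reflexive module of rank $m$ over the UFD $\Lambda(G)$ is pseudo-isomorphic to $\Lambda(G)^m$. This is true (the divisor class group is trivial), but it is not entirely free. A cleaner route that bypasses the issue: embed $F_{\Lambda(G)}(M) \hookrightarrow \Lambda(G)^m$ with merely \emph{torsion} cokernel $C$, and then use your $O(p^i)$ bounds on $d_i(C)$ and $\rank_{\Zp} H_1(G_i,C)$ for torsion $C$---these are exactly the bounds you already establish for the elementary torsion pieces. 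Second, ``possibly swapping $T_1$ and $T_2$'' is not enough for Weierstrass preparation: for example $f = p + T_1 T_2$ is irreducible, coprime to $p$, and lies in both $(p,T_1)$ and $(p,T_2)$. You need a more general automorphism of $G$ (equivalently a $GL_2(\Zp)$-change of generators), chosen so that the irreducible factors of $\bar f \in \Fp[[T_1,T_2]]$ avoid the new $\bar T_1'$; since $\bar f$ has only finitely many prime factors this is always possible, and the computation of $G_i$-coinvariants is coordinate-independent because $G_i = G^{p^i}$ is intrinsic.

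Finally, the worry in your last paragraph is misplaced: once $\Lambda(G)/f^b$ is free of rank $b\deg f$ over $\Zp[[T_1]]$, the bound $d_i \le b(\deg f)p^i$ follows immediately from $\rank_{\Zp}\bigl(\Zp[[T_1]]/((1+T_1)^{p^i}-1)\bigr) = p^i$, with no coprimality hypothesis needed.
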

\begin{proof}
This is \cite[Theorem~2.22]{HowsonThesis}.
\end{proof}

\begin{proposition}\label{pm_points_structure_prop}
Let $H \in \mathcal{H}^{\bullet\star}$ and ${n \geq 0}$.
\begin{enumerate}[(a)]
\item If $w$ is a prime of $\KK_{\infty}^{H_n}$ above $\fp$, then
$$\Big(\hat{E}^{\pm}((\KK_{\infty}^{H_n})_w)\otimes \Qp/\Zp\Big)^{\vee} \cong \Zp[[\Gal((\KK_{\infty}^{H_n})_w/(L^{(\fp)}_{i(\KK_{\infty}^H,  \fp)+n})_w)]]^{[(L^{(\fp)}_{i(\KK_{\infty}^H, \fp)+n})_w:\Qp]}. $$

\item If $w$ is a prime of $\KK_{\infty}^{H_n}$ above $\bar{\fp}$, then
$$\Big(\hat{E}^{\pm}((\KK_{\infty}^{H_n})_w)\otimes \Qp/\Zp\Big)^{\vee} \cong \Zp[[\Gal((\KK_{\infty}^{H_n})_w/(L^{(\bar{\fp})}_{i(\KK_{\infty}^H, \bar{\fp})+n})_w)]]^{[(L^{(\bar{\fp})}_{i(\KK_{\infty}^H, \bar{\fp})+n})_w:\Qp]}. $$

\end{enumerate}
\end{proposition}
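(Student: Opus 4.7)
The proposition generalises Kobayashi's local structure theorem \cite[Proposition~8.12]{Kob} for plus/minus norm groups over a cyclotomic $\Z_p$-extension of a finite unramified extension of $\Q_p$. My plan is to reduce the assertion to a purely local computation of this type, and then invoke Kobayashi's result (together with the adjustments needed in the imaginary quadratic setting, as developed in \cite{IP} and \cite{LeiSuj}).

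I would focus on case~(a); case~(b) is completely analogous, with $\fp$ replaced by $\bar{\fp}$. Set ${k := (L^{(\fp)}_{i(\KK_{\infty}^H,\fp)+n})_w}$. Since $p$ splits in $K/\Q$ we have $K_{\fp} = \Q_p$, and since $\fp$ is unramified in $L^{(\fp)}_{\infty}/K$ by the very definition of $L^{(\fp)}_{\infty}$, the local field $k/\Q_p$ is finite unramified of degree $d := [k : \Q_p]$. By the defining property of the index ${i(\KK_{\infty}^H,\fp)+n}$, together with the fact that $\KK_{\infty}^{H_n}/\KK_{\infty}^H$ has degree $p^n$, the extension $(\KK_{\infty}^{H_n})_w / k$ is a totally ramified $\Z_p$-extension. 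Recalling from Section~\ref{section:mainintroduction} that $\hat{E}^{\pm}((\KK_{\infty}^{H_n})_w)$ is defined via \eqref{plusdef}, \eqref{minusdef} and \eqref{eq:def2-3} with respect to precisely this $\Z_p$-extension, Kobayashi's local structure theorem (as extended to our setting) then yields the isomorphism
\[ \bigl(\hat{E}^{\pm}((\KK_{\infty}^{H_n})_w) \otimes \Q_p/\Z_p\bigr)^{\vee} \cong \Z_p[[\Gal((\KK_{\infty}^{H_n})_w/k)]]^d, \]
which is the asserted formula.

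The main obstacle is the verification that Kobayashi's formal group computations apply to this specific totally ramified $\Z_p$-extension $(\KK_{\infty}^{H_n})_w/k$, rather than literally to the cyclotomic $\Z_p$-extension $k_{cyc}/k$. Kobayashi's original argument uses the Coleman map, Frobenius-eigenspace decompositions of $\hat{E}[p^\infty]$, and explicit identities tied to cyclotomic units. Since $(\KK_{\infty}^{H_n})_w$ sits inside ${\KK_{\infty,w} = \Q_{p,cyc}^{ur}}$, and since $(\KK_{\infty}^{H_n})_w/k$ is totally ramified inside this local $\Z_p^2$-extension, this quotient must match the cyclotomic-type direction over $k$ on the nose, so the cyclotomic-specific ingredients of Kobayashi's proof transport over to our setting after the adjustments carried out in \cite{IP} and \cite{LeiSuj}. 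A careful comparison of the Coleman maps in this broader framework, exploiting the identification $\KK_{\infty,w} = k_{cyc}^{ur}$, is the technical core of the argument.
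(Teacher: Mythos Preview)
Your reduction to the local setting is correct and matches the paper's approach: set $k = (L^{(\fp)}_{i(\KK_\infty^H,\fp)+n})_w$, observe that $k/\Q_p$ is finite unramified and that $(\KK_\infty^{H_n})_w/k$ is a totally ramified $\Z_p$-extension, and then invoke a known local structure theorem. The gap is in your claim that this $\Z_p$-extension ``must match the cyclotomic-type direction over $k$ on the nose.'' This is not true: inside the local $\Z_p^2$-extension $k_{cyc}^{ur}/k$ there is a whole $\Z_p$-family of totally ramified $\Z_p$-subextensions, only one of which is $k_{cyc}$. Concretely, take ${n = 0}$ and ${H = H_{ac}}$ with ${p \nmid h_K}$: then ${k = \Q_p}$, and $(K_{ac})_w$ and $(K_{cyc})_w = \Q_{p,cyc}$ are \emph{distinct} totally ramified $\Z_p$-extensions of $\Q_p$ (their compositum is ${\KK_{\infty,w} = \Q_{p,cyc}^{ur}}$, a genuine $\Z_p^2$-extension). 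So Kobayashi's computations in \cite{Kob}, which are tied to cyclotomic units, do not apply directly, and it is not clear that the adjustments in \cite{IP} and \cite{LeiSuj} supply the needed generality.

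The paper instead invokes B.D.~Kim's generalisation \cite[Proposition~3.13]{KimParityConj} together with the discussion on page~5 of \cite{KimCofree}, which establishes the structure theorem for $\hat{E}^\pm$ over any totally ramified $\Z_p$-extension $\mathcal{L}_\infty/k$ with $k/\Q_p$ finite unramified and $\mathcal{L}_\infty/\Q_p$ \emph{abelian}. The abelianness over $\Q_p$ is automatic here since everything sits inside $\Q_{p,cyc}^{ur}$; indeed, the paper remarks that the shifted base $L^{(\fp)}_{i(\KK_\infty^H,\fp)+n}$ in the definition of $\hat{E}^\pm$ was chosen precisely so that Kim's hypothesis (``assumption~(A)'' in \cite[\S3.1]{KimParityConj}) is satisfied. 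Replacing your appeal to \cite{Kob} by this reference closes the gap.
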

\begin{proof}
Suppose we are in the setup of $(a)$. Recall from Section~\ref{section:mainintroduction} that $\hat{E}^{\pm}((\KK_{\infty}^{H_n})_w)$ is defined as in \eqref{plusdef}, \eqref{minusdef} and \eqref{eq:def2-3} with respect to the $\Zp$-extension $\KK_{\infty}^{H_n}/L^{(\fp)}_{i(\KK_{\infty}^H, \fp)+n}$. Let $\mathcal{L}_{\infty}$ be the union of the completions at $w$ of all number fields inside $\KK_{\infty}^{H_n}$ and denote the completion of $L^{(\fp)}_{i(\KK_{\infty}^H, \fp)+n}$ at the prime below $w$ by $k$. Then $\mathcal{L}_{\infty}/k$ is a totally ramified $\Zp$-extension and $k/\Qp$ is unramified. Moreover $\mathcal{L}_{\infty}/\Qp$ is an abelian extension. This is the setup in \cite{KimParityConj}. Therefore the desired result follows from \cite[Proposition 3.13]{KimParityConj} and the discussion on page 5 of \cite{KimCofree}. The proof of $(b)$ is identical.
\end{proof}
Many of our results depend crucially on the above proposition. As we saw above, the proof depends on applying results of B.D. Kim in \cite{KimParityConj}. One of the main reasons for defining $\hat{E}^{\pm}((\KK_{\infty}^{H_n})_w)$ as in \eqref{plusdef}, \eqref{minusdef} and \eqref{eq:def2-3} with respect to the $\Zp$-extension $\KK_{\infty}^{H_n}/L^{(\fp)}_{i(\KK_{\infty}^H, \fp)+n}$ was to ensure that assumption (A) in Section~3.1 of loc. cit. was satisfied, and this was used in the above proof.

Now we have the following important lemma.

\begin{lemma}\label{freeness_lemma}
Let $w$ be a prime of $\KK_{\infty}$ above $p$ and let $G_w$ be a corresponding decomposition group inside $\Gal(\KK_{\infty}/K)$. Then $(\hat{E}^{\pm}(\KK_{\infty,w})\otimes \Qp/\Zp)^{\vee} \cong \Zp[[G_w]]$.
\end{lemma}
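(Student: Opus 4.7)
With $v = w|_K$, we have $K_v = \Q_p$, and as already noted in this section $\KK_{\infty,w} = k_{cyc}^{ur}$ with $k = \Q_p$. Hence
\[ G_w \;=\; \Gal(k_{cyc}^{ur}/k) \;\cong\; \Z_p^2, \]
the two $\Z_p$-factors being the cyclotomic and unramified directions. By the very definition of $\hat{E}^{\pm}(k_{cyc}^{ur})$ recalled at the beginning of this section, ${\hat{E}^{\pm}(\KK_{\infty,w}) = \bigcup_{n,m} \hat{E}^{\pm}(k_n^{(m)})}$, so Pontryagin duality converts the assertion into an inverse-limit computation
\[ \bigl(\hat{E}^{\pm}(\KK_{\infty,w}) \otimes \Q_p/\Z_p\bigr)^{\vee} \;\cong\; \varprojlim_{n,m}\bigl(\hat{E}^{\pm}(k_n^{(m)}) \otimes \Q_p/\Z_p\bigr)^{\vee}. \]

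I would then identify each finite-stage term using Proposition~\ref{pm_points_structure_prop}. The proof of that proposition is purely local and applies verbatim to the totally ramified $\Z_p$-extension $k_n^{(m)}/k^{(m)}$ sitting over the unramified extension $k^{(m)}/\Q_p$ of degree $p^m$, giving
\[ \bigl(\hat{E}^{\pm}(k_n^{(m)}) \otimes \Q_p/\Z_p\bigr)^{\vee} \;\cong\; \Z_p[[\Gal(k_n^{(m)}/k^{(m)})]]^{p^m}. \]
Because $k^{(m)}/\Q_p$ is unramified while $k_n^{(m)}/k^{(m)}$ is totally ramified, the full local Galois group splits canonically as
\[ \Gal(k_n^{(m)}/\Q_p) \;\cong\; \Gal(k_n^{(m)}/k^{(m)}) \times \Gal(k^{(m)}/\Q_p), \]
and this identifies the free module of rank $p^m$ above with the full group algebra $\Z_p[[\Gal(k_n^{(m)}/\Q_p)]]$ of the decomposition group at level $(n,m)$.

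Passing to the inverse limit in $(n,m)$ then yields
\[ \bigl(\hat{E}^{\pm}(\KK_{\infty,w}) \otimes \Q_p/\Z_p\bigr)^{\vee} \;\cong\; \varprojlim_{n,m}\Z_p[[\Gal(k_n^{(m)}/\Q_p)]] \;=\; \Z_p[[G_w]], \]
which is the claim.

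The main technical obstacle is to verify that the finite-stage isomorphisms produced above are compatible with the norm/inclusion transition maps between the various $k_n^{(m)}$, so that they glue into a single isomorphism of $\Z_p[[G_w]]$-modules. This compatibility is built into Kim's construction of the $\pm$-norm subgroups via Honda formal-group theory in \cite{KimParityConj}, and once it is in place the rest of the argument is formal.
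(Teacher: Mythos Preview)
Your approach is different from the paper's and, as written, has a gap at the identification step.

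You assert that the isomorphism $\bigl(\hat{E}^{\pm}(k_n^{(m)}) \otimes \Q_p/\Z_p\bigr)^{\vee} \cong \Z_p[\Gal(k_n^{(m)}/k^{(m)})]^{p^m}$ from Proposition~\ref{pm_points_structure_prop}, together with the splitting of the local Galois group, yields freeness of rank one over the full group algebra $\Z_p[\Gal(k_n^{(m)}/\Q_p)]$. But Proposition~\ref{pm_points_structure_prop} is only an isomorphism of modules over the \emph{smaller} ring $\Z_p[\Gal(k_n^{(m)}/k^{(m)})]$; it says nothing about how $\Gal(k^{(m)}/\Q_p)$ acts. A module that is free of rank $p^m$ over the smaller ring, equipped with an unspecified action of $\Z/p^m$, need not be free of rank one over the product group ring (take the trivial action for a counterexample). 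So this step requires a separate argument---essentially a Nakayama computation at each finite stage after checking that the full $\Gal(k_n^{(m)}/\Q_p)$-coinvariants are $\Z_p$---which you do not supply. And even once that is fixed, you still owe the compatibility of these isomorphisms under the transition maps; ``built into Kim's construction'' is not a proof.

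The paper sidesteps both issues by working directly at the $\Z_p^2$-level rather than gluing finite stages. Write $Y = (\hat{E}^{\pm}(\KK_{\infty,w}) \otimes \Q_p/\Z_p)^{\vee}$. Using the control result of \cite[Lemma~5.2]{LeiSuj} (invariants under $(H_w)^{p^n}$ descend to the unramified layer of degree $p^n$) together with Proposition~\ref{pm_points_structure_prop}, one computes $Y_{(G_w)^{p^n}} \cong \Z_p^{p^{2n}}$ for all $n$. The case $n=0$ plus Nakayama gives that $Y$ is cyclic over $\Z_p[[G_w]]$, hence a surjection $\Z_p[[G_w]] \twoheadrightarrow Y$; the asymptotic $p^{2n}$ growth together with Howson's rank formula (Theorem~\ref{HowsonTheorem}) forces $\rank_{\Z_p[[G_w]]}(Y) = 1$, so the kernel (being a submodule of $\Z_p[[G_w]]$, hence torsion-free) must vanish. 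No compatibility of finite-stage isomorphisms is ever needed.
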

\begin{proof}
Let $H=H_{cyc}$ and let ${H_w \subseteq H}$ denote the decomposition subgroup. Let $\mathbb{Q}_p^{cyc}$ be the cyclotomic $\Zp$-extension of $\Qp$ and $\mathbb{Q}_p^{ur}$ be the unramified $\Zp$-extension of $\Qp$. Then $H_w$ can be identified with $\Gal(\mathbb{Q}_p^{ur}/\Qp)$. Let ${B=\Gal(\mathbb{Q}_p^{cyc}/\mathbb{Q}_p)}$. We have ${G_w=H_w \times B}$. Recall that $H_n$ is defined to be $H^{p^n}$ and let $H_{n,w}$ denote the decomposition subgroup of $w$ inside $H_n$. Clearly we have ${(H_w)^{p^n} \subseteq H_{n,w}}$. To simplify notation, let ${L^n_{\infty,w}=\KK_{\infty,w}^{(H_w)^{p^n}}}$. The proof of \cite[Lemma 5.2]{LeiSuj} shows that for any ${n \geq 0}$ we have
\begin{equation}\label{localisom1}
(\hat{E}^{\pm}(\KK_{\infty,w})\otimes \Qp/\Zp)^{(H_w)^{p^n}}=\hat{E}^{\pm}(L^n_{\infty,w})\otimes \Qp/\Zp.
\end{equation}

Also as in the proof of Proposition \ref{pm_points_structure_prop} we can show that
\begin{equation}\label{localisom2}
(\hat{E}^{\pm}(L^n_{\infty,w})\otimes \Qp/\Zp)^{\vee} \cong \Zp[[B]]^{p^n}.
\end{equation}

Let $Y=(\hat{E}^{\pm}(\KK_{\infty,w})\otimes \Qp/\Zp)^{\vee}$. Then by \eqref{localisom1} and \eqref{localisom2} we have

\begin{equation}\label{coinvariants-isom}
Y_{(G_w)^{p^n}}\cong (Y_{(H_w)^{p^n}})_{B^{p^n}}\cong (\Zp[[B]]^{p^n})_{B^{p^n}}\cong \mathbb{Z}_p^{p^{2n}}.
\end{equation}

By Nakayama's Lemma and the isomorphism~\eqref{coinvariants-isom} for $n=0$ we get that $Y$ is a cyclic $\Zp[[G_w]]$-module. Therefore we have a surjection ${\psi: \Zp[[G_w]] \twoheadrightarrow Y}$. We need to show that $\ker \psi$ is trivial. By \eqref{coinvariants-isom} and Theorem~\ref{HowsonTheorem} we get that ${\rank_{\Zp[[G_w]]}(Y)=1}$. If $\ker \psi$ was nontrivial, then it would have $\Z_p[[G_w]]$-rank one since it is $\Zp[[G_w]]$-torsion-free. This would then imply that $Y$ is $\Zp[[G_w]]$-torsion which is a contradiction. This completes the proof.
\end{proof}

\begin{proposition}\label{invariants_prop}
Let $w$ be a prime of $\KK_{\infty}$ above $p$. Let $c \geq 0$ and $F^c_{\infty}=\KK_{\infty}^{H_c}$ for some ${H \in \mathcal{H}^{\bullet\star}}$. Then we have $$(\hat{E}^{\pm}(\KK_{\infty,w})\otimes \Qp/\Zp)^{H_{c,w}}=\hat{E}^{\pm}(F^c_{\infty,w})\otimes \Qp/\Zp.$$
\end{proposition}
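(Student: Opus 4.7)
The plan is to split the proof into two steps: establishing the obvious containment, and upgrading it to an equality via Pontryagin duality. First, Lemma~\ref{containment_lemma} provides the inclusion $\hat{E}^{\pm}(F^c_{\infty,w}) \subseteq \hat{E}^{\pm}(\KK_{\infty,w})$ as $\Zp$-submodules of $\hat{E}(\KK_{\infty,w})$, and $H_{c,w}$ acts trivially on $F^c_{\infty,w}$ and hence on $\hat{E}^{\pm}(F^c_{\infty,w})$. Tensoring with $\Qp/\Zp$ therefore yields a natural $\Lambda$-equivariant map
\[ \phi : \hat{E}^{\pm}(F^c_{\infty,w}) \otimes \Qp/\Zp \longrightarrow \bigl(\hat{E}^{\pm}(\KK_{\infty,w}) \otimes \Qp/\Zp\bigr)^{H_{c,w}}, \]
where $\Lambda := \Zp[[G_w/H_{c,w}]] = \Zp[[\Gal(F^c_{\infty,w}/K_v)]]$ and $K_v$ is the completion of $K$ at the prime $v$ below $w$.

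Next I verify that $\phi$ is an isomorphism by passing to Pontryagin duals and invoking Lemma~\ref{freeness_lemma} and Proposition~\ref{pm_points_structure_prop}. Lemma~\ref{freeness_lemma} gives $(\hat{E}^{\pm}(\KK_{\infty,w}) \otimes \Qp/\Zp)^{\vee} \cong \Zp[[G_w]]$; since Pontryagin duality interchanges invariants and coinvariants, the dual of the target of $\phi$ is $\Zp[[G_w]]_{H_{c,w}} = \Lambda$, a free $\Lambda$-module of rank one. By Proposition~\ref{pm_points_structure_prop}, the dual of the source is free of rank $d := [(L^{(v)}_{i+c})_w : \Qp]$ over $\Zp[[\Gal(F^c_{\infty,w}/(L^{(v)}_{i+c})_w)]]$, where $i := i(\KK_{\infty}^H, v)$. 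Since $(L^{(v)}_{i+c})_w$ is the unramified subextension of $F^c_{\infty,w}/K_v$ of degree $d$, $\Lambda$ itself is free of rank $d$ over this smaller group ring, so the dual of the source is also a free $\Lambda$-module of rank one. Consequently $\phi^{\vee}$ is a $\Lambda$-linear map between two free rank-one $\Lambda$-modules, hence multiplication by some element $\alpha \in \Lambda$.

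The main obstacle is to confirm $\alpha \in \Lambda^{\times}$. I plan to apply Nakayama's lemma: it suffices to show that $\phi^{\vee}$ modulo the maximal ideal $\mathfrak{m}_\Lambda$ induces a nonzero map $\Fp \to \Fp$. Taking $H_{c,w}$-coinvariants followed by $G_w/H_{c,w}$-coinvariants of the dual of the target recovers the $G_w$-coinvariants of $(\hat{E}^{\pm}(\KK_{\infty,w}) \otimes \Qp/\Zp)^{\vee} \cong \Zp[[G_w]]$, which equal $\Zp$ (cf.\ \eqref{coinvariants-isom} applied with $n = 0$). Similarly, the full coinvariant reduction of the dual of the source, via the free rank-one $\Lambda$-structure established above, is also $\Zp$. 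The resulting $\Zp$-linear map arising from $\phi^{\vee}$ is dual to the map induced by $\phi$ between $G_w$-invariants of the two tensored groups; since $\phi$ comes from the honest inclusion of nonzero local norm subgroups furnished by Lemma~\ref{containment_lemma}, this reduced map will be surjective modulo $p$. Hence $\alpha \in \Lambda^{\times}$, $\phi$ is an isomorphism, and the equality claimed in Proposition~\ref{invariants_prop} follows.
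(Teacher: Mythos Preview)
Your overall strategy---set up the inclusion map, dualize, and compare structures---matches the paper's. But there is a genuine gap at the Nakayama step, and a secondary issue with the freeness claim.

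The Nakayama argument needs $\phi^{\vee}$ to be surjective modulo $\mathfrak{m}_\Lambda$, which you try to extract from the fact that $\phi$ arises from an inclusion of subgroups. This does not work: tensoring an injection with $\Qp/\Zp$ need not preserve injectivity, so the bare inclusion $\hat{E}^{\pm}(F^c_{\infty,w}) \hookrightarrow \hat{E}^{\pm}(\KK_{\infty,w})$ gives no information about $\phi$ being nonzero on any particular piece. Your sentence ``since $\phi$ comes from the honest inclusion \ldots\ this reduced map will be surjective modulo $p$'' is exactly where the argument is missing. The paper supplies this missing ingredient directly: it shows $\phi$ itself is injective by embedding both source and target into $H^1$-groups via Kummer maps (which are injective by an argument as in \cite[Lemma~8.17]{Kob}) and then observing that the restriction map $H^1(F^c_{\infty,w},E[p^\infty]) \to H^1(\KK_{\infty,w},E[p^\infty])^{H_{c,w}}$ has kernel $H^1(H_{c,w}, E(\KK_{\infty,w})[p^\infty]) = 0$ by Lemma~\ref{p-torsion_lemma}. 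Once $\phi$ is known to be injective, $\phi^\vee$ is surjective, and a simple rank comparison over $\Zp[[\tilde G_{c,w}]]$ (the smaller ring) finishes the job---no Nakayama needed.

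Separately, your claim that the dual of the source is $\Lambda$-free of rank one does not follow from Proposition~\ref{pm_points_structure_prop}, which only gives freeness of rank $d$ over the subring $\Zp[[\Gal(F^c_{\infty,w}/(L^{(v)}_{i+c})_w)]]$; being free of rank $d$ over a subring of index $d$ does not force freeness of rank one over $\Lambda$. The paper sidesteps this by working over the smaller ring throughout and comparing ranks there.
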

\begin{proof}
Assume that $w$ lies over the prime $\fp$ of $K$. If it lies above $\bar{\fp}$, the proof will be identical. Let $k$ be the completion of $L^{(\fp)}_{i(F_{\infty}, \fp)+c}$ at its prime below $w$. Let ${d=[k: \Qp]}$ and ${\tilde{G}_{c,w}=\Gal(F^c_{\infty,w}/k)}$. By Proposition~\ref{pm_points_structure_prop} we have
\begin{equation}\label{localisom3}
(\hat{E}^{\pm}(F^c_{\infty,w})\otimes \Qp/\Zp )^{\vee} \cong \Zp[[\tilde{G}_{c,w}]]^d.
\end{equation}

Let ${G_w=\Gal(\KK_{\infty,w}/\Qp)}$ and ${G_{c,w}=\Gal(\KK_{\infty,w}/k^{(c)})}$. Since ${[G_w: G_{c,w}]=d}$, it follows from from Lemma~\ref{freeness_lemma} we have that
\begin{equation}\label{localisom4}
(\hat{E}^{\pm}(\KK_{\infty,w})\otimes \Qp/\Zp)^{\vee} \cong \Zp[[G_{c,w}]]^d.
\end{equation}

An identical proof to \cite[Lemma~8.17]{Kob} shows that the Kummer map induces injections

\begin{equation}\label{kummer_map1}
\hat{E}^{\pm}(F^c_{\infty,w})\otimes \Qp/\Zp \hookrightarrow H^1(F^c_{\infty,w}, E[p^{\infty}]),
\end{equation}

\begin{equation}\label{kummer_map2}
\hat{E}^{\pm}(\KK_{\infty,w})\otimes \Qp/\Zp \hookrightarrow H^1(\KK_{\infty,w}, E[p^{\infty}]).
\end{equation}

The kernel of the restriction map ${H^1(F^c_{\infty,w}, E[p^{\infty}]) \to H^1(\KK_{\infty,w}, E[p^{\infty}])^{H_{c,w}}}$ is $H^1(\Gal(\KK_{\infty,w}/F^c_{\infty,w}), E(\KK_{\infty,w})[p^{\infty}])$ which is trivial by Lemma~\ref{p-torsion_lemma}. Therefore taking into account the injections~\eqref{kummer_map1} and \eqref{kummer_map2} the inclusion map ${\hat{E}^{\pm}(F^c_{\infty,w}) \hookrightarrow \hat{E}^{\pm}(\KK_{\infty,w})}$ from Lemma~\ref{containment_lemma} induces an injection
$$\theta: \hat{E}^{\pm}(F^c_{\infty,w})\otimes \Qp/\Zp \hookrightarrow (\hat{E}^{\pm}(\KK_{\infty,w})\otimes \Qp/\Zp)^{H_{c,w}}. $$

Let ${Y:=\left((\hat{E}^{\pm}(\KK_{\infty,w})\otimes \Qp/\Zp)^{H_{c,w}}\right)^{\vee}}$ and ${X:=(\hat{E}^{\pm}(F^c_{\infty,w})\otimes \Qp/\Zp)^{\vee}}$. From \eqref{localisom3} and \eqref{localisom4} both $Y$ and $X$ are isomorphic to ${\Zp[[\tilde{G}_{c,w}]]^d}$.

Dualizing the map $\theta$ we have a surjection ${\hat{\theta}: Y \twoheadrightarrow X}$. If ${\ker \hat{\theta}}$ were not trivial, then as ${Y \cong \Zp[[\tilde{G}_{c,w}]]^d}$ we would have that ${\rank_{\Zp[[\tilde{G}_{c,w}]]}(\ker \hat{\theta}) > 0}$. As $\hat{\theta}$ is surjective this would then in turn imply that ${\rank_{\Zp[[\tilde{G}_{c,w}]]}(X) < d}$ which contradicts the fact that ${X \cong \Zp[[\tilde{G}_{c,w}]]^d}$. Therefore $\hat{\theta}$ is an isomorphism. This completes the proof.

\end{proof}

Let ${c \geq 0}$ and ${F^c_{\infty}=\KK_{\infty}^{H_c}}$. We define
$$J_p^{\bullet\star}(E/F^c_{\infty}) = \bigoplus_{w | \fp} \frac{H^1(F^c_{\infty,w}, E[p^{\infty}])}{\hat{E}^{\bullet}(F^c_{\infty,w})\otimes \Qp/\Zp} \times \bigoplus_{w | \bar{\fp}} \frac{H^1(F^c_{\infty,w}, E[p^{\infty}])}{\hat{E}^{\star}(F^c_{\infty,w})\otimes \Qp/\Zp} \; ,$$

$$J_p^{\bullet\star}(E/\KK_{\infty}) = \bigoplus_{w | \fp}\frac{H^1(\KK_{\infty,w}, E[p^{\infty}])}{\hat{E}^{\bullet}(\KK_{\infty,w})\otimes \Qp/\Zp} \times  \bigoplus_{w | \bar{\fp}}\frac{H^1(\KK_{\infty,w}, E[p^{\infty}])}{\hat{E}^{\star}(\KK_{\infty,w})\otimes \Qp/\Zp} \; .$$

For any ${v \in S \setminus S_p}$ we define
$$J_v(E/F^c_{\infty}) = \bigoplus_{w | v} H^1(F^c_{\infty,w}, E)[p^{\infty}] \; ,$$

$$J_v(E/\KK_{\infty}) = \bigoplus_{w | v}H^1(\KK_{\infty,w}, E)[p^{\infty}]. $$

Now consider the commutative diagram
\begin{equation*}
\begin{tikzcd}[column sep = small, scale cd=0.83]
0 \arrow[r] & \Selpm(E/\KK_{\infty})^{H_c} \arrow[r] & H^1(G_S(\KK_{\infty}), E[p^{\infty}])^{H_c}  \arrow[r] & J_p^{\bullet\star}(E/\KK_{\infty})^{H_c} \times \bigoplus_{v \in S \setminus S_p} J_v(E/\KK_{\infty})^{H_c}\\
0 \arrow[r] & \Selpm(E/F^c_{\infty}) \arrow[r] \arrow[u, "s_c^{\bullet\star}"] & H^1(G_S(F^c_{\infty}), E[p^{\infty}]) \arrow[u, "g_c"] \arrow[r] & J_p^{\bullet\star}(E/F^c_{\infty}) \times \bigoplus_{v \in S \setminus S_p} J_v(E/F^c_{\infty}) \arrow[u, "h_c^{\bullet\star}"]
\end{tikzcd}
\end{equation*}
From the snake lemma we see from the above commutative diagram that $\ker s_c^{\bullet\star}$ and $\coker s_c^{\bullet\star}$ will be trivial if we can show that the groups $\ker g_c$, $\coker g_c$ and $\ker h_c^{\bullet\star}$ are trivial. Let $h'_c$ the restriction of $h_c^{\bullet\star}$ to ${\bigoplus_{v \in S \setminus S_p} J_v(E/F^c_{\infty})}$. Taking into account Lemma~\ref{p-torsion_lemma}, the proof of \cite[Proposition~2.8]{MHG} shows that $\ker g_c$, $\coker g_c$ and $\ker h'_c$ are trivial. Therefore Theorem~\ref{SelmerControlThm_prop} follows from the following

\begin{proposition}\label{localinjection_prop}
Let $w$ be a prime of $\KK_{\infty}$ above $p$. The map
\[ \gamma_c^{\pm}: \frac{H^1(F^c_{\infty,w}, E[p^{\infty}])}{E^{\pm}(F^c_{\infty,w})\otimes \Qp/\Zp} \longrightarrow  \left(\frac{H^1(\KK_{\infty,w}, E[p^{\infty}])}{E^{\pm}(\KK_{\infty,w})\otimes \Qp/\Zp}\right)^{H_{c,w}}\]
is injective.
\end{proposition}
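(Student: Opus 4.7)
The plan is to prove injectivity by a snake lemma argument applied to the commutative diagram
\[
\begin{tikzcd}[column sep=small]
0 \arrow[r] & \hat{E}^{\pm}(F^c_{\infty,w})\otimes \Qp/\Zp \arrow[r] \arrow[d, "f"] & H^1(F^c_{\infty,w}, E[p^{\infty}]) \arrow[r] \arrow[d, "g"] & \frac{H^1(F^c_{\infty,w}, E[p^{\infty}])}{\hat{E}^{\pm}(F^c_{\infty,w})\otimes \Qp/\Zp} \arrow[r] \arrow[d, "\gamma_c^{\pm}"] & 0 \\
0 \arrow[r] & (\hat{E}^{\pm}(\KK_{\infty,w})\otimes \Qp/\Zp)^{H_{c,w}} \arrow[r] & H^1(\KK_{\infty,w}, E[p^{\infty}])^{H_{c,w}} \arrow[r] & \left(\frac{H^1(\KK_{\infty,w}, E[p^{\infty}])}{\hat{E}^{\pm}(\KK_{\infty,w})\otimes \Qp/\Zp}\right)^{H_{c,w}}
\end{tikzcd}
\]
where the top row is tautologically exact, the left vertical arrow $f$ is induced by the inclusion of Lemma~\ref{containment_lemma}, and the middle arrow $g$ is the restriction map.

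First I would show that $g$ is injective. By the inflation-restriction exact sequence, the kernel of $g$ equals $H^1(H_{c,w}, E(\KK_{\infty,w})[p^{\infty}])$, which is zero by Lemma~\ref{p-torsion_lemma}. Next I would invoke Proposition~\ref{invariants_prop}, which (together with the observation that $f$ corresponds under the Kummer injections \eqref{kummer_map1}, \eqref{kummer_map2} to the natural inclusion $\hat{E}^{\pm}(F^c_{\infty,w})\otimes\Qp/\Zp \hookrightarrow (\hat{E}^{\pm}(\KK_{\infty,w})\otimes\Qp/\Zp)^{H_{c,w}}$) shows that $f$ is an isomorphism. In particular, $\coker f = 0$.

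Applying the snake lemma then yields the exact sequence
\[ 0 \longrightarrow \ker f \longrightarrow \ker g \longrightarrow \ker \gamma_c^{\pm} \longrightarrow \coker f, \]
and since $\ker g = 0$ and $\coker f = 0$, we conclude that $\ker \gamma_c^{\pm} = 0$, as required.

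The substantive work lies entirely in the two inputs $f \cong$ (Proposition~\ref{invariants_prop}) and $g$ injective (Lemma~\ref{p-torsion_lemma}); once those are granted, the snake lemma deduction is formal. The most delicate point to double-check is that the left vertical map in the diagram really is $f$ (i.e. that the Kummer image of $\hat{E}^{\pm}(F^c_{\infty,w})\otimes\Qp/\Zp$ inside $H^1(F^c_{\infty,w}, E[p^{\infty}])$ restricts into the Kummer image of $\hat{E}^{\pm}(\KK_{\infty,w})\otimes\Qp/\Zp$), but this is exactly the content of Lemma~\ref{containment_lemma} combined with functoriality of the Kummer map, so no serious obstacle is expected.
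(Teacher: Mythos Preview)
Your proposal is correct and essentially identical to the paper's own proof: the paper sets up the same commutative diagram (with the vertical maps called $\alpha_c^{\pm}$, $\beta_c$, $\gamma_c^{\pm}$), uses Lemma~\ref{p-torsion_lemma} to get $\ker\beta_c=0$ and Proposition~\ref{invariants_prop} to get that $\alpha_c^{\pm}$ is an isomorphism, and then concludes by the snake lemma. Your extra remark about why the left vertical map is well-defined (via Lemma~\ref{containment_lemma} and functoriality of Kummer) is a nice clarification that the paper leaves implicit.
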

\begin{proof}
Consider the following commutative diagram
\begin{equation*}
\begin{tikzcd}[column sep = small, scale cd=0.84]
0 \arrow[r] & (E^{\pm}(\KK_{\infty,w})\otimes \Qp/\Zp)^{H_{c,w}} \arrow[r] & H^1(\KK_{\infty,w}, E[p^{\infty}])^{H_{c,w}}  \arrow[r] & \left(\frac{H^1(\KK_{\infty,w}, E[p^{\infty}])}{\hat{E}^{\pm}(\KK_{\infty,w})\otimes \Qp/\Zp}\right)^{H_{c,w}} &\\
0 \arrow[r] & E^{\pm}(F^c_{\infty,w})\otimes \Qp/\Zp \arrow[r] \arrow[u, "\alpha_c^{\pm}"] &H^1(F^c_{\infty,w}, E[p^{\infty}]) \arrow[u, "\beta_c"] \arrow[r] & \frac{H^1(F^c_{\infty,w}, E[p^{\infty}])}{\hat{E}^{\pm}(F^c_{\infty,w})\otimes \Qp/\Zp} \arrow[u, "\gamma_c^{\pm}"] \arrow[r] &0
\end{tikzcd}
\end{equation*}
By Lemma~\ref{p-torsion_lemma} we have that ${E(\KK_{\infty,w})[p^{\infty}]=0}$. Therefore ${\ker \beta_c = H^1(\Gal(\KK_{\infty,w}/F^c_{\infty,w}), E(\KK_{\infty,w})[p^{\infty}])=0}$. By Proposition~\ref{invariants_prop} $\alpha_c^{\pm}$ is an isomorphism. The desired result follows from the snake lemma.
\end{proof}

We end this section with a control theorem for the fine Selmer group.
\begin{proposition}\label{fineSelmercontrol_prop1}
Let ${\bullet, \star \in \{+,-\}}$ and ${H \in \mathcal{H}^{\bullet\star}}$. For any ${n \geq 0}$ the map induced by restriction
$$t_n: R_{p^{\infty}}(E/\KK_{\infty}^{H_n}) \longrightarrow R_{p^{\infty}}(E/\KK_{\infty})^{H_n}$$
is an isomorphism.
\end{proposition}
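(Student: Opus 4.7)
The plan is to proceed by the standard control-theorem strategy: set up the commutative diagram induced by the defining short exact sequences of the fine Selmer groups at the two levels, with the vertical arrows given by the appropriate restriction maps, then apply the snake lemma. Denoting the global restriction map by $g_n$ and the product of local restrictions at primes in $S$ by $h_n$, it will suffice to show that $g_n$ is an isomorphism and that $\ker h_n = 0$, from which both the injectivity and the surjectivity of $t_n$ follow from an easy diagram chase (any $y \in R_{p^\infty}(E/\KK_\infty)^{H_n}$ lifts uniquely to $x \in H^1(G_S(\KK_\infty^{H_n}), E[p^\infty])$ via $g_n^{-1}$, and the image of $x$ in $\bigoplus_v M_v(E/\KK_\infty^{H_n})$ lies in $\ker h_n$, hence is zero).

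I would first dispose of $g_n$ via inflation-restriction: its kernel and cokernel are controlled by $H^i(H_n, E(\KK_\infty)[p^\infty])$ for $i=1,2$, both of which vanish by Lemma~\ref{p-torsion_lemma}. Hence $g_n$ is an isomorphism.

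For $h_n = \prod_{v \in S} h_{n,v}$, I would check injectivity at each $v \in S$ separately. Fix a prime $w'$ of $\KK_\infty$ above $v$ and write $H_{n,w'} \subseteq H_n$ for the corresponding decomposition subgroup; then the kernel of the local restriction at $w = w'|_{\KK_\infty^{H_n}}$ is $H^1(H_{n,w'}, E(\KK_{\infty,w'})[p^\infty])$. For $v \in S_p$ this vanishes directly, since $E(\KK_{\infty,w'})[p^\infty] = 0$ by Lemma~\ref{p-torsion_lemma}. For $v \in S \setminus S_p$, the hypothesis $H \in \mathcal{H}^{\bullet\star}$ is decisive: such a prime is unramified in $\KK_\infty/K$, so the full decomposition group $D_v \subseteq G \cong \Z_p^2$ is topologically cyclic; it is nontrivial because otherwise $v$ would split completely in $\KK_\infty^H/K$, violating condition~(a) of Definition~\ref{def:mathcal{H}}. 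Hence $D_v \cong \Z_p$. Moreover, $D_v \not\subseteq H$ by the same condition~(a); combined with the fact that $H$ is a \emph{primitive} rank-one $\Z_p$-sublattice of $G$ (since $[(a,b)] \in \mathbb{P}^1(\Z_p)$), this forces $D_v$ to lie in a $\Q_p$-line distinct from that spanned by $H$. Consequently $D_v \cap H = 0$ (a rank-zero intersection in the torsion-free group $G$), and a fortiori $H_{n,w'} = D_v \cap H_n = 0$, so the kernel is zero.

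The main obstacle is the local analysis at $v \in S \setminus S_p$: one must carefully exploit the lattice-theoretic consequence of condition~(a) together with the primitivity of $H$ in order to kill the decomposition subgroup $H_{n,w'}$ at such a prime; without primitivity of $H$ one could in principle have $H \subsetneq D_v$ inside the same $\Q_p$-line, giving a nontrivial cohomology contribution. Once this step is in place, the snake lemma applied to the control diagram yields that $t_n$ is an isomorphism, as required.
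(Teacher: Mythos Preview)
Your proof is correct and follows the same approach as the paper's: control diagram plus snake lemma, with $g_n$ an isomorphism by Lemma~\ref{p-torsion_lemma} and $\ker h_{n,v}$ killed at each $v \in S$. The only difference is cosmetic: at $v \in S \setminus S_p$ you use the lattice-theoretic observation that a primitive rank-one $H \subseteq G$ and a rank-one $D_v \not\subseteq H$ must intersect trivially, whereas the paper phrases the same fact via local fields, noting that $(K_{H,n})_v$ has no $\Z_p^2$-extension and so $\KK_{\infty,w}/(K_{H,n})_v$ cannot properly extend the $\Z_p$-extension $(\KK_\infty^{H_n})_w/(K_{H,n})_v$.
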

\begin{proof}
The proof is very similar to the proof of \cite[Proposition~2.8]{MHG}. In the proof of loc. cit. we need to replace $J_v(E/\KK_{\infty})$ (resp. $J_v(E/F_n)$) with $M_v(E/\KK_{\infty})$ (resp. $M_v(E/F_n)$) where the latter two groups are defined right after Theorem~\ref{main_theorem3} in Section~\ref{section:mainintroduction}. Using similar notation to the proof of loc. cit. it will suffice to show that each of the groups below is trivial:

\begin{enumerate}[(i)]
\item $A_n:=H^1(H_n, E(\KK_{\infty})[p^{\infty}])$,
\item $B_n:=H^2(H_n, E(\KK_{\infty})[p^{\infty}])$,
\item $C_n:=\prod_{w \in S_n \setminus S_{p,n}} H^1(H_{n,w}, E(\KK_{\infty,w})[p^{\infty}])$,
\item $D_n:=\prod_{w \in S_{p,n}} H^1(H_{n,w}, E(\KK_{\infty,w})[p^{\infty}])$.
\end{enumerate}

Here $S_{p,n}$ denotes the set of primes of the field $F_n$ above $p$. The groups $A_n, B_n$ and $D_n$ are trivial by Lemma~\ref{p-torsion_lemma}. Now we deal with $C_n$: let ${w \in S_n \setminus S_{p,n}}$, and let $v$ be the prime of $K_{H,n}$ below $w$. As primes in $S$ do not split completely in $\KK_{\infty}^H/K$ (because ${H \in \mathcal{H}^{\bullet\star}}$), $F_{n,w}/(K_{H,n})_{v}$ is a $\Zp$-extension. This together with the fact that $(K_{H,n})_{v}$ has no $\mathbb{Z}_p^2$-extension implies that ${H_{n,w}=0}$. Therefore ${C_n=0}$ for all $n$. This completes the proof.
\end{proof}

\section{Results on $\Selpm(E/L)$ for $L=\KK_{\infty}$ and $L \in \mathcal{E}$} \label{section:auxiliary}
In this section, we study properties of the sets $\mathcal{H}^{\bullet\star}$ for ${\bullet,\star \in \{+,-\}}$.
The following proposition shows that both $\mathcal{H}^{++}$ and $\mathcal{H}^{--}$ are not empty. Recall from Section~\ref{section:mainintroduction} that we write $\Lambda_2$ for $\Lambda(G)$; moreover, for any quotient ${\Gamma \cong \Z_p}$ of $G$ we abbreviate $\Lambda(\Gamma)$ to $\Lambda$.

\begin{proposition}\label{Hcyc_prop}
The subgroup $H_{cyc}$ of $G$ fixing the cyclotomic $\Z_p$-extension of $K$ belong to both $\mathcal{H}^{++}$ and $\mathcal{H}^{--}$.
\end{proposition}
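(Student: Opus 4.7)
The plan is to check directly that $H_{cyc}$ satisfies the three conditions (a), (b), (c) in Definition~\ref{def:mathcal{H}} for both choices of signs $\bullet\star \in \{++, --\}$. I will treat these three conditions separately; conditions (a) and (b) are essentially standard properties of the cyclotomic $\Z_p$-extension, while (c) is the substantive point and will rely on Kobayashi's torsion theorem together with a base-change/quadratic-twist argument.

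First, for (b): since $p$ splits in $K/\Q$ and the cyclotomic $\Z_p$-extension $K_{cyc}$ contains $\Q_{cyc}$, and since $p$ is totally ramified in $\Q_{cyc}/\Q$, both primes $\fp$ and $\bar{\fp}$ of $K$ above $p$ (which are unramified over $\Q$) must ramify in $K_{cyc}/K$. For (a): a finite prime $v$ of $K$ not above $p$ is unramified in the cyclotomic $\Z_p$-extension $K_{cyc} \subset K(\mu_{p^\infty})$, and its decomposition subgroup in $\Gal(K_{cyc}/K) \cong \Z_p$ is generated by the image of the arithmetic Frobenius, which acts on $\mu_{p^\infty}$ by raising to the $Nv$-th power. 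Splitting completely would force $Nv \in \Z_p^\times$ to be a root of unity, i.e.\ a $(p-1)$-th root of unity for odd $p$; but $Nv = \ell^f$ for the rational prime $\ell$ below $v$, so this would force $\ell^{f(p-1)} = 1$ in $\Z$, a contradiction. Hence no prime of $S$ (all nonarchimedean) splits completely in $K_{cyc}/K$, and (a) holds.

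For (c), I would use the quadratic-twist isomorphisms \eqref{quad_twist_isom+} and \eqref{quad_twist_isom-} (announced in Section~\ref{section:auxiliary}) to decompose $X^{++}(E/K_{cyc})$ and $X^{--}(E/K_{cyc})$, as $\Lambda$-modules, in terms of the single-signed Kobayashi Selmer groups $X^{\pm}(E/\Q_{cyc})$ and $X^{\pm}(E^K/\Q_{cyc})$, where $E^K$ denotes the quadratic twist of $E$ by the nontrivial character of $\Gal(K/\Q)$. Since $p$ splits in $K/\Q$, both $E$ and $E^K$ have good supersingular reduction at $p$, so Kobayashi's theorem (\cite[Theorem~1.2]{Kob}) guarantees that $X^{\pm}(E/\Q_{cyc})$ and $X^{\pm}(E^K/\Q_{cyc})$ are torsion over $\Lambda = \Z_p[[\Gal(\Q_{cyc}/\Q)]]$. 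Torsion-ness is preserved under extension of scalars to $\Z_p[[\Gal(K_{cyc}/K)]]$ (which is abstractly the same ring, once $\Gal(K_{cyc}/K)$ is identified with $\Gal(\Q_{cyc}/\Q)$ via the natural isomorphism), so the direct sum decomposition forces $X^{\bullet\bullet}(E/K_{cyc})$ to be $\Lambda_{G/H_{cyc}}$-torsion for $\bullet \in \{+,-\}$.

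The main obstacle is verifying condition (c): everything else is formal, but (c) genuinely requires input from Kobayashi's construction. The quadratic-twist isomorphisms play the key role here, so their proof (elsewhere in Section~\ref{section:auxiliary}) is essential. No mixed-sign case needs to be addressed, since the proposition only asserts $H_{cyc} \in \mathcal{H}^{++} \cap \mathcal{H}^{--}$, and indeed it is only for equal signs that both quadratic-twist summands land in the same sign component so that Kobayashi's result applies cleanly.
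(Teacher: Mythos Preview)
Your proposal is correct and follows essentially the same approach as the paper: the paper also invokes the quadratic-twist decompositions \eqref{quad_twist_isom+}, \eqref{quad_twist_isom-} and Kobayashi's torsion theorem (cited there as \cite[Theorem~7.3(ii)]{Kob}) for condition~(c), and dispatches conditions~(a) and~(b) by appealing to the standard facts about cyclotomic $\Z_p$-extensions that you spell out in more detail. The only cosmetic difference is that the paper establishes the twist isomorphisms within the proof itself rather than importing them, and handles the identification of Iwasawa algebras by noting directly that $\Gal(\Q_{cyc}/\Q) = \Gal(K_{cyc}/K)$ since $p$ is odd.
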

\begin{proof}
Let $\Q_{cyc}$ be the cyclotomic $\Zp$-extension of $\Q$. Let $\Selpmsingle(E/K_{cyc})$ be the plus and minus Selmer groups over $\Q_{cyc}$ as in Kobayashi's paper \cite{Kob}. Let $X^{\pm}(E/\Q_{cyc})$ denote the Pontryagin dual of $\Selpmsingle(E/\Q_{cyc})$. Since $p$ is odd, the extensions $\Q_{cyc}/\Q$ and $K/\Q$ are disjoint. Therefore we may identify the Galois groups ${\Gal(\Q_{cyc}/\Q)=\Gal(K_{cyc}/K)}$. Let ${\Gamma_{cyc}=\Gal(K_{cyc}/K)}$. Let $E^{(K)}$ be the quadratic twist of $E$ by $K$. Just as in \cite[Lemma~9.2]{MHG} we can show that we have an isomorphisms of $\Gamma_{cyc}$-modules

\begin{equation}\label{quad_twist_isom+}
\Selm^{++}_{p^\infty}(E/K_{cyc})\cong \Selm^+_{p^\infty}(E/\Q_{cyc}) \times \Selm^+_{p^\infty}(E^{(K)}/\Q_{cyc}),
\end{equation}

\begin{equation}\label{quad_twist_isom-}
\Selm^{--}_{p^\infty}(E/K_{cyc})\cong \Selm^-_{p^\infty}(E/\Q_{cyc}) \times \Selm^-_{p^\infty}(E^{(K)}/\Q_{cyc}).
\end{equation}\\

From \cite[Theorem~7.3(ii)]{Kob} we have that $X^{\pm}(E/\Q_{cyc})$ and $X^{\pm}(E^{(K)}/\Q_{cyc})$ are $\Lambda$-torsion. Therefore from the above isomorphisms it follows that $X^{++}(E/K_{cyc})$ and $X^{--}(E/K_{cyc})$ are both $\Lambda$-torsion.

It is well-known that no prime of $K$ splits completely in $K_{cyc}$. Moreover, since $p$ is odd, the primes above $p$ are totally ramified in $K_{cyc}/K$ (the prime $p$ is totally ramified in the cyclotomic $\Z_p$-extension $\Q_{cyc}$ of $\Q$). This completes the proof.
\end{proof}

\begin{lemma}\label{Xinf_torsion_lemma}
Let ${\bullet, \star \in \{+,-\}}$. If ${\mathcal{H}^{\bullet\star} \neq \emptyset}$, then $X^{\bullet\star}(E/\KK_{\infty})$ is a torsion $\Lambda_2$-module.
\end{lemma}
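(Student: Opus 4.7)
The plan is to combine the control theorem (Theorem~\ref{SelmerControlThm_prop}) with a standard commutative-algebra argument about finitely generated modules over ${\Lambda_2 = \Z_p[[T_1,T_2]]}$.

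By hypothesis I fix some ${H \in \mathcal{H}^{\bullet\star}}$. Applying Theorem~\ref{SelmerControlThm_prop} with ${n=0}$ gives an isomorphism $\Selpm(E/\KK_\infty^H) \cong \Selpm(E/\KK_\infty)^H$, and Pontryagin duality (which exchanges invariants and coinvariants) converts this into an isomorphism of $\Lambda(G/H)$-modules
$$X^{\bullet\star}(E/\KK_\infty)_H \;\cong\; X^{\bullet\star}(E/\KK_\infty^H).$$
By condition~(c) in Definition~\ref{def:mathcal{H}}, the right-hand side is a torsion $\Lambda(G/H)$-module. It therefore remains to prove the following module-theoretic statement: if $M$ is a finitely generated $\Lambda_2$-module such that $M_H$ is a torsion $\Lambda(G/H)$-module, then $M$ itself is $\Lambda_2$-torsion. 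The module ${M := X^{\bullet\star}(E/\KK_\infty)}$ is indeed finitely generated over $\Lambda_2$, being a quotient of the standard finitely generated $\Lambda_2$-module $X(E/\KK_\infty)$.

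To prove this statement, let ${\gamma = \sigma^a\tau^b}$ be the given topological generator of $H$ and set ${f := \gamma - 1 \in \Lambda_2}$, so that ${\Lambda_2/(f) \cong \Lambda(G/H)}$ and ${M/fM = M_H}$. Since $[(a,b)] \in \mathbb{P}^1(\Z_p)$ is primitive, after a $\Z_p$-linear change of topological generators of $G$ I may assume ${f = T_1}$; then $(f)$ is a height-one prime of the regular local UFD $\Lambda_2$, so the localization $\Lambda_{2,(f)}$ is a discrete valuation ring whose residue field is the fraction field $K(\Lambda(G/H))$ of $\Lambda(G/H)$. The DVR structure theorem gives ${M_{(f)} \cong \Lambda_{2,(f)}^r \oplus (\text{DVR-torsion})}$ with ${r = \rank_{\Lambda_2}(M)}$, and a direct computation on each cyclic summand establishes the Euler-characteristic-type identity
$$\rank_{\Lambda_2/(f)}(M/fM) \;-\; \rank_{\Lambda_2/(f)}(M[f]) \;=\; r.$$
Both ranks on the left are non-negative, and the first vanishes by hypothesis, so ${r = 0}$ and $M$ is $\Lambda_2$-torsion, as required.

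The only mildly subtle ingredient is the Euler-characteristic rank identity, but once we have localized at $(f)$ and reduced to the DVR $\Lambda_{2,(f)}$ this amounts to a short bookkeeping on each cyclic summand of $M_{(f)}$, and I do not anticipate a serious obstacle here.
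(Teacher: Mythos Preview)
Your proof is correct and follows the same approach as the paper: use the control theorem to identify $X^{\bullet\star}(E/\KK_\infty)_H$ with the torsion module $X^{\bullet\star}(E/\KK_\infty^H)$, and then invoke the standard fact that a finitely generated $\Lambda_2$-module whose $H$-coinvariants are $\Lambda(G/H)$-torsion must itself be $\Lambda_2$-torsion. The paper simply cites this last step as ``well-known,'' whereas you spell it out via localization at the height-one prime $(f)$ and the DVR structure theorem --- a perfectly valid elaboration.
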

\begin{proof}
	Choose some ${H \in \mathcal{H}^{\bullet\star}}$ and recall that $X^{\bullet\star}(E/\KK_{\infty}^H)$ is a torsion $\Lambda$-module. In view of Theorem \ref{SelmerControlThm_prop}, it follows that $X^{\bullet\star}(E/\KK_{\infty})_H$ is $\Lambda$-torsion. It is well-known that this implies the intended result. 
\end{proof}

\begin{lemma}\label{Yinf_torsion_lemma}
$Y(E/\KK_{\infty})$ is a torsion $\Lambda_2$-module.
\end{lemma}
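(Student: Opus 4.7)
The plan is to reduce the statement to the weak Leopoldt conjecture over the cyclotomic $\Z_p$-extension $K_{cyc}$, which was stated in the introduction as a known fact.

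First I would take $H=H_{cyc}$, so that $\KK_{\infty}^{H} = K_{cyc}$. By Proposition~\ref{Hcyc_prop} we have ${H_{cyc}\in\mathcal{H}^{++}}$, so Proposition~\ref{fineSelmercontrol_prop1} applies with ${n=0}$ and yields an isomorphism
\[ R_{p^{\infty}}(E/K_{cyc}) \isomarrow R_{p^{\infty}}(E/\KK_{\infty})^{H_{cyc}}. \]
Taking Pontryagin duals (an exact functor) this becomes an isomorphism $Y(E/\KK_{\infty})_{H_{cyc}} \cong Y(E/K_{cyc})$ of $\Lambda(G/H_{cyc})$-modules.

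Next I would invoke the weak Leopoldt conjecture for $E$ over $K_{cyc}$, which as recalled in Section~\ref{section:definitions} (and proved in \cite[Lemma~7.1]{Lim17}) asserts precisely that $Y(E/K_{cyc})$ is a finitely generated torsion $\Lambda$-module. Combined with the displayed isomorphism, this shows that the coinvariant module $Y(E/\KK_{\infty})_{H_{cyc}}$ is finitely generated over $\Lambda(G/H_{cyc})$. A topological Nakayama-lemma argument then upgrades this to the statement that $Y(E/\KK_{\infty})$ itself is finitely generated over $\Lambda_2$.

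Finally I would apply the standard rank-comparison fact from two-variable Iwasawa theory (the same one used in the proof of Lemma~\ref{Xinf_torsion_lemma}): if $M$ is a finitely generated $\Lambda_2$-module and $H \cong \Z_p$ is a closed subgroup of $G$ such that $M_H$ is $\Lambda(G/H)$-torsion, then $M$ is $\Lambda_2$-torsion. Applying this with $M = Y(E/\KK_{\infty})$ and $H = H_{cyc}$ concludes the proof. No step here is particularly delicate: everything rests on the validity of the control theorem for the fine Selmer group (Proposition~\ref{fineSelmercontrol_prop1}) together with the weak Leopoldt conjecture for $K_{cyc}/K$, both of which are already available in the setting of this paper.
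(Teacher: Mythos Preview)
Your argument is correct, but it takes a different route from the paper. The paper argues in one line at the $\KK_\infty$-level: since $R_{p^{\infty}}(E/\KK_{\infty}) \subseteq \Selm^{++}_{p^{\infty}}(E/\KK_{\infty})$, the dual $X^{++}(E/\KK_{\infty})$ surjects onto $Y(E/\KK_{\infty})$, and $X^{++}(E/\KK_{\infty})$ is $\Lambda_2$-torsion by Proposition~\ref{Hcyc_prop} and Lemma~\ref{Xinf_torsion_lemma}. You instead descend via the fine-Selmer control theorem (Proposition~\ref{fineSelmercontrol_prop1}) to $Y(E/K_{cyc})$, use that this is $\Lambda$-torsion, and then lift back by the standard rank-comparison lemma. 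Both are valid; the paper's route is shorter and avoids invoking the control theorem at this point. One minor inaccuracy: \cite[Lemma~7.1]{Lim17} records the \emph{equivalence} with the weak Leopoldt conjecture, not its proof; the fact that $Y(E/K_{cyc})$ is torsion is, as the paper notes, deduced from Proposition~\ref{Hcyc_prop} (via the inclusion $R_{p^\infty}\subseteq \Selm^{++}_{p^\infty}$ at the $K_{cyc}$-level), so your input is ultimately the same as the paper's, just applied one level down.
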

\begin{proof}
By Proposition~\ref{Hcyc_prop} and Lemma~\ref{Xinf_torsion_lemma} we have that $X^{++}(E/\KK_{\infty})$ is a torsion $\Lambda_2$-module. As $R_{p^{\infty}}(E/\KK_{\infty})$ is contained in $\Selm^{++}_{p^{\infty}}(E/\KK_{\infty})$, the dual $X^{++}(E/\KK_{\infty})$ surjects onto $Y(E/\KK_{\infty})^{\vee}$. Therefore $Y(E/\KK_{\infty})$ is $\Lambda_2$-torsion.
\end{proof}

\begin{proposition}\label{Torsion_prop}
Let ${\bullet, \star \in \{+,-\}}$. Suppose that ${\mathcal{H}^{\bullet\star} \neq \emptyset}$. Then $X^{\bullet\star}(E/L_{\infty})$ is a torsion $\Lambda$-module for all but finitely many ${L_{\infty} \in \mathcal{E}}$.
\end{proposition}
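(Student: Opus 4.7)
The plan is to reduce the statement to a short calculation with the characteristic ideal of $M := X^{\bullet\star}(E/\KK_{\infty})$. Since $\mathcal{H}^{\bullet\star} \neq \emptyset$, Lemma~\ref{Xinf_torsion_lemma} tells us that $M$ is a finitely generated torsion $\Lambda_2$-module; I would fix a generator $F \in \Lambda_2 \setminus \{0\}$ of its characteristic ideal.

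For each $L_{\infty} \in \mathcal{E}$ with $H := \Gal(\KK_{\infty}/L_{\infty}) = \overbar{\langle \sigma^a\tau^b\rangle}$, set $\omega_H := (1+T_1)^a(1+T_2)^b - 1$. This element generates the kernel of the natural surjection $\Lambda_2 \twoheadrightarrow \Lambda_{G/H} \cong \Lambda$; since the quotient is a domain, $\omega_H$ is a prime element of the UFD $\Lambda_2$, and distinct subgroups $H$ give rise to distinct prime ideals $(\omega_H)$. The nonzero element $F$ has only finitely many prime divisors, so $\omega_H \nmid F$ for all but finitely many $L_{\infty}$. For such $L_{\infty}$, the structure theorem presents $M$ via an exact sequence
\[ 0 \to P_1 \to M \to \bigoplus_i \Lambda_2/(f_i^{n_i}) \to P_2 \to 0 \]
with $P_1, P_2$ pseudo-null and $\prod_i f_i^{n_i}$ generating $(F)$; reducing modulo $\omega_H$ shows that $M_H = M/\omega_H M$ is $\Lambda$-torsion, since each summand $\Lambda_2/(f_i^{n_i})$ becomes $\Lambda/(\overbar{f_i^{n_i}})$ with $\overbar{f_i} \neq 0$, and the pseudo-null pieces are likewise $\Lambda$-torsion after taking $H$-coinvariants (any pseudo-null $\Lambda_2$-module is annihilated by two coprime elements, at least one of which must survive modulo the prime $\omega_H$).

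It remains to pass from $M_H$ to $X^{\bullet\star}(E/L_{\infty})$. I would first verify that conditions~(a) and (b) of Definition~\ref{def:mathcal{H}} hold for all but finitely many $L_{\infty}$: (b) can fail only at $L^{(\fp)}_{\infty}$ and $L^{(\bar{\fp})}_{\infty}$, while the hypothesis $\mathcal{H}^{\bullet\star} \ne \emptyset$ forces the decomposition subgroup $D_v \subseteq G$ of every $v \in S$ to have $\Z_p$-rank at least one, whence each such $v$ splits completely in at most one $L_{\infty}$; finiteness of $S$ finishes this step. For such $L_{\infty}$, the argument of Theorem~\ref{SelmerControlThm_prop} applies: a careful reading shows that the control isomorphism rests only on properties (a), (b), Lemma~\ref{p-torsion_lemma} and Proposition~\ref{localinjection_prop}, and never invokes the torsionness condition (c). Hence ${X^{\bullet\star}(E/L_{\infty}) \cong M_H}$ is a torsion $\Lambda$-module, as desired.

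The principal obstacle is the last point: one must be careful that Theorem~\ref{SelmerControlThm_prop} genuinely extends to $L_{\infty}$ satisfying only (a) and (b), since otherwise the argument risks becoming circular --- its hypothesis would presuppose exactly the torsionness that is being proved. A careful inspection of the proofs in Section~\ref{section:control} confirms that (c) plays no role there, and this could be recorded as a short remark before (or within) the proof.
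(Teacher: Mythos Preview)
Your proof is correct and follows the same essential strategy as the paper: reduce via control to showing $M_H$ is $\Lambda$-torsion, then argue algebraically that the finitely many prime divisors of the characteristic element $F$ can only match finitely many of the primes $\omega_H$.

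The paper packages the argument slightly more economically. Rather than first restricting to $L_\infty$ satisfying (a) and (b) and then invoking the full control isomorphism of Theorem~\ref{SelmerControlThm_prop}, it uses the map $s^{\bullet\star}$ of Lemma~\ref{restriction_map_lemma}(a), which exists for \emph{every} $L_\infty\in\mathcal E$, and notes (by the argument of \cite[Proposition~2.3]{MHG}) that $\ker s^{\bullet\star}$ is cofinitely generated over $\Z_p$. This weaker control statement already forces $X^{\bullet\star}(E/L_\infty)$ to be torsion whenever $M_H$ is, so the preliminary verification of (a) and (b) becomes unnecessary. Your route gives a sharper intermediate conclusion (an actual isomorphism $X^{\bullet\star}(E/L_\infty)\cong M_H$ on the cofinite set), at the cost of that extra step; your observation that condition~(c) is nowhere used in Section~\ref{section:control} is correct and worth recording, as you suggest. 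One small remark: the claim that $D_v$ has $\Z_p$-rank at least one for $v\in S\setminus S_p$ does not actually require $\mathcal H^{\bullet\star}\ne\emptyset$; it follows from the fact that no prime is totally split in $K_{cyc}/K$.
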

\begin{proof}
Consider the maps
$$s^{\bullet\star}: \Selpm(E/L_{\infty}) \longrightarrow \Selpm(E/\KK_{\infty})^{\Gal(\KK_{\infty}/L_{\infty})}$$
from Lemma~\ref{restriction_map_lemma}$(a)$. The same argument as in the proof of \cite[Proposition~2.3]{MHG} shows that $\ker s^{\bullet\star}$ is cofinitely generated over $\Zp$.  Therefore by considering the duals of the maps $s^{\bullet\star}$, we see that $X^{\bullet\star}(E/L_{\infty})$ will be a torsion $\Lambda$-module if this is the case for $X^{\bullet\star}(E/\KK_{\infty})_{\Gal(\KK_{\infty}/L_{\infty})}$. We now proceed as in the proof of \cite[Proposition~2.3]{MHG} to get the desired result.
\end{proof}

\begin{proposition}\label{Zpextensions_prop}
Let $\bullet, \star \in \{+,-\}$. Suppose that $\mathcal{H}^{\bullet\star}$ is non-empty. Then for all but finitely many ${L_{\infty} \in \mathcal{E}}$ we have
\begin{enumerate}[(a)]
\item No prime in $S$ splits completely in $L_{\infty}/K$.
\item Every prime of $K$ above $p$ ramifies in $L_{\infty}/K$.
\item $X^{\bullet\star}(E/L_{\infty})$ is a torsion $\Lambda$-module.
\end{enumerate}
\end{proposition}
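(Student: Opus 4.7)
My plan is to handle the three conditions separately via the bijection ${\mathcal{E} \leftrightarrow \mathbb{P}^1(\Z_p)}$, which sends ${L_\infty \in \mathcal{E}}$ to the primitive closed rank-$1$ subgroup ${H = \Gal(\KK_\infty/L_\infty)}$ of ${G = \Gal(\KK_\infty/K) \cong \Z_p^2}$. Condition~(c) has already been established as Proposition~\ref{Torsion_prop}, so I only need to address (a) and (b), both of which are elementary statements about the lattice of closed subgroups of $G$.

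For~(b), I would use the inertia subgroups $I_\fp, I_{\bar{\fp}} \subseteq G$ corresponding to fixed primes of $\KK_\infty$ above $p$. The existence of the two distinguished $\Z_p$-extensions $L^{(\fp)}_\infty$ and $L^{(\bar{\fp})}_\infty$ with complementary ramification behaviour at the primes above $p$ forces each of $I_\fp, I_{\bar{\fp}}$ to have rank exactly one. A prime ${v \in \{\fp, \bar{\fp}\}}$ is unramified in ${L_\infty = \KK_\infty^H}$ precisely when ${I_v \subseteq H}$, and since both $I_v$ and $H$ are rank-$1$ subgroups of $\Z_p^2$ and $H$ is primitive (i.e., the quotient $G/H$ is torsion-free), this uniquely forces $H$ to be the saturation of $I_v$, i.e., $L_\infty = L^{(v)}_\infty$. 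Hence~(b) fails for only the two extensions $L^{(\fp)}_\infty$ and $L^{(\bar{\fp})}_\infty$.

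For~(a), I would first exploit the assumption ${\mathcal{H}^{\bullet\star} \neq \emptyset}$: fixing any ${H_0 \in \mathcal{H}^{\bullet\star}}$, Definition~\ref{def:mathcal{H}}(a) forces the decomposition subgroup $D_v \subseteq G$ at every ${v \in S}$ to satisfy $D_v \not\subseteq H_0$, and in particular $D_v \neq 0$. Now $v$ splits completely in ${L_\infty = \KK_\infty^H}$ if and only if ${D_v \subseteq H}$. If $D_v$ has rank $2$ (i.e., is open), then no primitive rank-$1$ subgroup can contain it, so $v$ splits completely in no $L_\infty \in \mathcal{E}$; if $D_v$ has rank $1$, then any primitive rank-$1$ subgroup containing $D_v$ must equal its saturation, giving at most one exceptional $L_\infty$. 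As $S$ is finite, at most $|S|$ extensions fail~(a).

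Combining these bounds with Proposition~\ref{Torsion_prop} for~(c), only finitely many $L_\infty \in \mathcal{E}$ fail at least one of~(a), (b), (c). I do not foresee a serious obstacle; the entire argument reduces to the elementary fact that any nonzero closed subgroup of $\Z_p^2$ is contained in at most one primitive rank-$1$ subgroup, together with the straightforward observation that ${\mathcal{H}^{\bullet\star} \neq \emptyset}$ rules out the degenerate case $D_v = 0$ for $v \in S$.
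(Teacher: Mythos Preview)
Your proposal is correct and follows essentially the same approach as the paper, which defers to \cite[Prop.~2.5]{MHG} together with Proposition~\ref{Torsion_prop} for part~(c); the elementary subgroup-lattice argument you give for~(a) and~(b) is precisely what that reference contains. One minor remark: your use of the hypothesis ${\mathcal{H}^{\bullet\star}\neq\emptyset}$ to rule out $D_v=0$ is valid, but since $K_{cyc}\subseteq\KK_\infty$ and no prime splits completely in $K_{cyc}/K$, the nontriviality of $D_v$ (and hence part~(a)) in fact holds unconditionally.
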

\begin{proof}
Taking Proposition~\ref{Torsion_prop} into account, this can be proven in the same way as \cite[Prop.~2.5]{MHG}.
\end{proof}

\section{The control theorem for the Selmer group and results on the Iwasawa invariants of $T_{\Lambda_{H,n}}([F_{\Lambda_2}(X(E/\KK_{\infty}))]_{H_n})$}\label{section:control2}
In this section, we prove some key results that will allow us to study the $\mathfrak{M}_H(G)$-property for the Selmer group. First we need an auxiliary lemma.
Recall from Section~\ref{section:mainintroduction} (see~\eqref{eq:f_infty}) that $f_\infty$ denotes the characteristic power series of $T_{\Lambda_2}(X(E/\KK_\infty))$. Let ${H=\overbar{\langle \sigma^a\tau^b \rangle}\subseteq \Gal(\KK_\infty/K)}$. Fix an integer ${n \geq 0}$ and write ${\Upsilon_n=(1+T_1)^{p^na}(1+T_2)^{p^nb}-1}$. Note that $\Upsilon_n + 1$ generates ${H_n = H^{p^n}}$ and therefore ${\Z_p[[\Upsilon_n]] = \Lambda(\Gal(\KK_\infty/F_\infty^n)) = \Lambda(H_n)}$. We also recall that ${\Lambda_{H,n} = \Lambda(G_{H,n}/H_n)}$ where ${G_{H,n} = \Gal(\KK_\infty/K_{H,n})}$ is a subgroup of $G$ of finite index.

\begin{lemma}\label{rankequal_lemma}
With the above notation the following statements are equivalent.
\begin{enumerate}[(a)]
\item $p^n\rank_{\Lambda_2}(X(E/\KK_{\infty}))=\rank_{\Lambda_{H,n}}(X(E/\KK_{\infty})_{H_n})$,
\item $(T_{\Lambda_2}(X(E/\KK_{\infty})))_{H_n}$ is $\Lambda_{H,n}$-torsion.
\item $\Upsilon_n$ is relatively prime to $f_{\infty}$.
\end{enumerate}
\end{lemma}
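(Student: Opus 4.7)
Write $T := T_{\Lambda_2}(X(E/\KK_\infty))$ and $F := F_{\Lambda_2}(X(E/\KK_\infty))$, so that we have the short exact sequence
\[ 0 \longrightarrow T \longrightarrow X(E/\KK_\infty) \longrightarrow F \longrightarrow 0. \]
The key structural facts I will use are that $\Lambda_{H,n} = \Z_p[[\Gal(\KK_\infty^{H_n}/K_{H,n})]]$ is a one-dimensional Iwasawa algebra (in particular, a domain), and that $\Lambda_2/\Upsilon_n\Lambda_2 = \Z_p[[G/H_n]]$ is naturally a free $\Lambda_{H,n}$-module of rank $p^n$. The strategy is first to establish $(a) \Leftrightarrow (b)$ via additivity of $\Lambda_{H,n}$-rank, and then to prove $(b) \Leftrightarrow (c)$ using the structure theorem for finitely generated torsion $\Lambda_2$-modules.

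For $(a) \Leftrightarrow (b)$, I would apply the snake lemma to multiplication by $\Upsilon_n$ on the short exact sequence above. Since $F$ is $\Lambda_2$-torsion-free and $\Upsilon_n \neq 0$, we have $F[\Upsilon_n] = 0$, and the snake lemma yields
\[ 0 \longrightarrow T_{H_n} \longrightarrow X(E/\KK_\infty)_{H_n} \longrightarrow F_{H_n} \longrightarrow 0. \]
Because $\Lambda_{H,n}$ is a domain, $\Lambda_{H,n}$-rank is additive across this sequence, so it suffices to compute $\rank_{\Lambda_{H,n}}(F_{H_n})$. The claim is that this rank equals $p^n r$, where $r := \rank_{\Lambda_2}(X(E/\KK_\infty)) = \rank_{\Lambda_2}(F)$. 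To prove this, use the structure theorem to embed $F \hookrightarrow \Lambda_2^r$ with pseudo-null cokernel $N$, and compute the Tor long exact sequence for $-\otimes_{\Lambda_2}\Lambda_2/\Upsilon_n\Lambda_2$ (using that $\mathrm{Tor}_1^{\Lambda_2}(M,\Lambda_2/\Upsilon_n\Lambda_2) = M[\Upsilon_n]$ and higher Tor vanishes):
\[ 0 \longrightarrow N[\Upsilon_n] \longrightarrow F_{H_n} \longrightarrow (\Lambda_2/\Upsilon_n\Lambda_2)^r \longrightarrow N_{H_n} \longrightarrow 0. \]
Since $N$ is pseudo-null, its support has codimension $\geq 2$ in $\mathrm{Spec}(\Lambda_2)$, so $N[\Upsilon_n]$ and $N_{H_n}$ are supported on a subscheme of $\mathrm{Spec}(\Lambda_2/\Upsilon_n\Lambda_2)$ of codimension $\geq 1$. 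Via the finite (hence dimension-preserving) morphism $\mathrm{Spec}(\Lambda_2/\Upsilon_n\Lambda_2) \to \mathrm{Spec}(\Lambda_{H,n})$, both are $\Lambda_{H,n}$-torsion. Additivity of $\Lambda_{H,n}$-rank then forces $\rank_{\Lambda_{H,n}}(F_{H_n}) = \rank_{\Lambda_{H,n}}((\Lambda_2/\Upsilon_n\Lambda_2)^r) = p^n r$, and consequently $(a)$ is equivalent to $T_{H_n}$ being $\Lambda_{H,n}$-torsion, which is $(b)$.

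For $(b) \Leftrightarrow (c)$, apply the structure theorem for torsion $\Lambda_2$-modules to obtain a pseudo-isomorphism $T \sim \bigoplus_i \Lambda_2/(f_i^{n_i})$ with $f_\infty = \prod_i f_i^{n_i}$ (up to a unit), where the $f_i$ generate distinct height-one primes of $\Lambda_2$. The same support/dimension argument as above ensures that the kernels and cokernels of the structure maps contribute only $\Lambda_{H,n}$-torsion after passing to $H_n$-coinvariants, so $T_{H_n}$ is $\Lambda_{H,n}$-torsion if and only if each $\Lambda_2/(f_i^{n_i},\Upsilon_n)$ is. By another support/dimension analysis, $\Lambda_2/(f_i^{n_i},\Upsilon_n)$ is $\Lambda_{H,n}$-torsion precisely when the ideal $(f_i,\Upsilon_n)$ has height $\geq 2$ in $\Lambda_2$. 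Since $\Lambda_2$ is a UFD and $f_i$ is irreducible, this occurs exactly when $f_i$ does not divide $\Upsilon_n$, i.e.\ when $f_i$ is coprime to $\Upsilon_n$. Taking the product yields that $T_{H_n}$ is $\Lambda_{H,n}$-torsion if and only if $f_\infty$ is coprime to $\Upsilon_n$, which is $(c)$.

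The main technical obstacle is to control the behavior of pseudo-null $\Lambda_2$-modules $N$ under the base change $N \mapsto N_{H_n}$ and under taking $\Upsilon_n$-torsion $N[\Upsilon_n]$: pseudo-nullity over the three-dimensional ring $\Lambda_2$ does not by itself imply $\Lambda_{H,n}$-torsion after restricting scalars along the inclusion $\Lambda_{H,n} \hookrightarrow \Lambda_2/\Upsilon_n\Lambda_2$. One has to argue via the finiteness of this ring extension and dimension-preservation of finite morphisms that the supports of these auxiliary modules collapse to codimension $\geq 1$ in $\mathrm{Spec}(\Lambda_{H,n})$. Once this is in place, the three equivalences follow cleanly from additivity of $\Lambda_{H,n}$-rank in short exact sequences.
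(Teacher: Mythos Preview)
Your proposal is correct and follows the same overall architecture as the paper: split into $(a)\Leftrightarrow(b)$ via the exact sequence $0\to T_{H_n}\to X(E/\KK_\infty)_{H_n}\to F_{H_n}\to 0$ (using $F[\Upsilon_n]=0$), and $(b)\Leftrightarrow(c)$ via the structure theorem for torsion $\Lambda_2$-modules, with the dimension-preservation argument for the finite extension $\Lambda_{H,n}\hookrightarrow\Lambda_2/\Upsilon_n$ controlling all pseudo-null error terms. Your identification of the technical crux---that pseudo-nullity over $\Lambda_2$ must be converted to $\Lambda_{H,n}$-torsion via the finite morphism of spectra---matches exactly what the paper does via annihilator ideals and integrality (citing \cite[Theorem~2.2.5]{HS}).

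The one genuine difference is in how you compute $\rank_{\Lambda_{H,n}}(F_{H_n})=p^n r$. The paper proves this as a separate sub-lemma: it first reduces to $n=0$ by replacing $G$ with $G_{H,n}$, then invokes Howson's Euler-characteristic formula \cite[Theorem~1.1]{Howson} twice (once over $G$, once over $G/H$) together with the Hochschild--Serre spectral sequence, using $H_1(H,F)=0$ and $\mathrm{cd}_p(H)=\mathrm{cd}_p(G/H)=1$ to collapse the comparison. Your route---embed $F\hookrightarrow\Lambda_2^r$ with pseudo-null cokernel via the structure theorem, then read off the rank from the Tor long exact sequence---is more elementary and self-contained, avoiding the appeal to Howson's formula and the spectral sequence. (The embedding exists because the structure map $F\to\Lambda_2^r$ has pseudo-null, hence torsion, kernel contained in the torsion-free $F$.) The paper's approach, on the other hand, generalizes more readily to higher-rank $H$ and is the same machinery used elsewhere in the paper. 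For $(b)\Leftrightarrow(c)$ the two arguments are essentially identical; the paper phrases the key step as ``regular sequence in a UFD $\Leftrightarrow$ coprime'' and cites \cite[Theorem~17.4]{Matsumura}, where you argue directly via height.
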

\begin{proof}
First we show that $(b)$ and $(c)$ are equivalent. When ${n=0}$ this equivalence can be proven in an identical way to the proof of \cite[Lemma~2.2]{MHG}. The equivalence in the general case of ${n \geq 0}$ can be proven by similar arguments. However one needs to be careful when ${n>0}$ since here $\Lambda_{H,n}$ is not $\Lambda_2/\Upsilon_n$. To simplify notation, we denote $T_{\Lambda_2}(X(E/\KK_{\infty}))$ by $Z$. Define ${\Lambda'_{H,n}:=\Lambda_2/\Upsilon_n}$. This completed group ring can be identified with $\Lambda(G/H_n)$. Thus we see that ${\Lambda_{H,n} = \Lambda(G_{H,n}/H_n)}$ is a subring of $\Lambda'_{H,n}$. Since ${[G:G_{H,n}]=p^n}$, the ring $\Lambda'_{H,n}$ is a free $\Lambda_{H,n}$-module of rank $p^n$.

Let $\text{Ann}_{\Lambda'_{H,n}}(Z_{H_n})$  (resp. $\text{Ann}_{\Lambda_{H,n}}(Z_{H_n})$) be the annihilator ideal of $Z_{H_n}$ inside $\Lambda'_{H,n}$ (resp. $\Lambda_{H,n})$. Since as noted above $\Lambda'_{H,n}$ is a free $\Lambda_{H,n}$-module of rank $p^n$, it follows that $\Lambda'_{H,n}/\text{Ann}_{\Lambda'_{H,n}}(Z_{H_n})$ is finitely generated over its subring $\Lambda_{H,n}/\text{Ann}_{\Lambda_{H,n}}(Z_{H_n})$ and hence by \cite[Proposition~5.1]{AM} $\Lambda'_{H,n}/\text{Ann}_{\Lambda'_{H,n}}(Z_{H_n})$ is integral over  $\Lambda_{H,n}/\text{Ann}_{\Lambda_{H,n}}(Z_{H_n})$. This implies by \cite[Theorem~2.2.5]{HS} that we have equality of Krull dimensions: $$\dim(\Lambda_{H,n}/\text{Ann}_{\Lambda_{H,n}}(Z_{H_n}))=\dim(\Lambda'_{H,n}/\text{Ann}_{\Lambda'_{H,n}}(Z_{H_n})). $$
We know that $Z_{H_n}$ is a torsion $\Lambda_{H_n}$-module if and only if ${\dim(\Lambda_{H,n}/\text{Ann}_{\Lambda_{H,n}}(Z_{H_n})) \leq 1}$. Thus we conclude that $Z_{H_n}$ is a torsion $\Lambda_{H_n}$-module if and only if ${\dim(\Lambda'_{H,n}/\text{Ann}_{\Lambda'_{H,n}}(Z_{H_n})) \leq 1}$.

Now we proceed as in the proof of \cite[Lemma~2.2]{MHG}. Taking into account \cite[Lemma~2.1]{MHG}, we have by \cite[Chapt.~VII, \S 4.4 Theorem~5]{Bourbaki}, that there exist irreducible power series ${f_j \in \Zp[[T_1, T_2]]}$, integers $m_i, n_j$ and an exact sequence

$$0 \to  W \to Z \to B \to 0, $$\\
where ${W=\bigoplus_{i=1}^s \Lambda_2/p^{m_i} \oplus \bigoplus_{j=1}^t \Lambda_2/f_j^{n_j}}$ and $B$ is a pseudo-null $\Lambda_2$-module. From this exact sequence, we get another exact sequence

\begin{equation}\label{structure_seq1}
B^{\Upsilon_n=0} \to  W/\Upsilon_n \to Z/\Upsilon_n \to B/\Upsilon_n \to 0.
\end{equation}\\
Since $B$ is a pseudo-null $\Lambda_2$-module, it has Krull dimension at most one as a $\Lambda'_{H,n}$-module. So the $\Lambda'_{H,n}$-modules $B/\Upsilon_n$ and $B^{\Upsilon_n=0}$ also have Krull dimension at most one.

Therefore, it follows from the sequence~\eqref{structure_seq1} that the $\Lambda'_{H,n}$-module ${Z_{H_n}=Z/\Upsilon_n}$ has Krull dimension at most one if and only if the $\Lambda'_{H,n}$-module $W/\Upsilon_n$ has Krull dimension at most one. Hence by our earlier observation, in order to prove that $(b)$ and $(c)$ are equivalent, we must show that the Krull dimension of $W/\Upsilon_n$ as a $\Lambda'_{H,n}$-module is at most one if and only if $f_{\infty}$ and $\Upsilon_n$ are relatively prime. We now use the fact that in a UFD $R$ two nonzero elements ${x,y \in R}$ form a regular sequence if and only if they are relatively prime. Since $p$ does not divide $\Upsilon_n$, it follows from \cite[Theorem~17.4]{Matsumura} that if ${D=\Lambda_2/p^{m_i}}$, then $D/\Upsilon_n$ has Krull dimension one as a $\Lambda'_{H,n}$-module. So we see that the Krull dimension of $W/\Upsilon_n$ as a $\Lambda'_{H,n}$-module is at most one if and only if for all $j$ the $\Lambda'_{H,n}$-module $\Lambda_2/\langle f_j^{n_j}, \Upsilon_n \rangle$ has Krull dimension at most one and this is the case, if and only if, for all $j$, $f_j$ and $\Upsilon_n$ are relatively prime (see loc. cit.). Therefore we see that $(b)$ and $(c)$ are equivalent.

Recall from the introduction that ${G_{H,n}=\Gal(\KK_{\infty}/K_{H,n})}$. Since ${[G:G_{H,n}]=p^n}$, the Iwasawa algebra $\Lambda_2$ is a free $\Zp[[G_{H,n}]]$-module of rank $p^n$. We now replace $G$ by $G_{H,n}$ so that $\Lambda_2$ is replaced by $\Zp[[G_{H,n}]]$. We also replace $H$ by $H_n$. Then by \cite[Corollary~1.5]{Howson} $(a)$ becomes ${\rank_{\Lambda_2}(X(E/\KK_{\infty}))=\rank_{\Lambda_2/\Upsilon_n}(X(E/\KK_{\infty})_H)}$ and $(b)$ is that $(T_{\Lambda_2}(X(E/\KK_{\infty})))_H$ is $\Lambda_2/\Upsilon_n$-torsion. We now prove that these new statements $(a)$ and $(b)$ are equivalent.

Directly from Definition~\ref{def:free-and-torsion} we get the exact sequence
$$0 \to T_{\Lambda_2}(X(E/\KK_{\infty})) \to X(E/\KK_{\infty}) \to F_{\Lambda_2}(X(E/\KK_{\infty})) \to 0. $$
Since $F_{\Lambda_2}(X(E/\KK_{\infty}))$ is $\Lambda_2$-torsion-free, it follows that ${H_1(H, F_{\Lambda_2}(X(E/\KK_{\infty}))) = 0}$. Therefore the above exact sequence induces an exact sequence
$$0 \to (T_{\Lambda_2}(X(E/\KK_{\infty})))_H \to X(E/\KK_{\infty})_H \to (F_{\Lambda_2}(X(E/\KK_{\infty})))_H \to 0. $$
Noting that ${\rank_{\Lambda_2}(X(E/\KK_{\infty}))=\rank_{\Lambda_2}(F_{\Lambda_2}(X(E/\KK_{\infty})))}$, we see from this exact sequence that to show the equivalence of $(a)$ and $(b)$ it will suffice to show that ${\rank_{\Lambda_2/\Upsilon_n}(F_{\Lambda_2}(X(E/\KK_{\infty}))_H)=\rank_{\Lambda_2}(F_{\Lambda_2}(X(E/\KK_{\infty})))}$. This will follow from the next
\begin{lemma}
	With notation as above, we have
	$$\rank_{\Lambda_2/\Upsilon_n}(F_{\Lambda_2}(X(E/\KK_{\infty}))_H)=\rank_{\Lambda_2}(F_{\Lambda_2}(X(E/\KK_{\infty}))).$$
\end{lemma}
\begin{proof}
	It follows from \cite[Theorem~1.1]{Howson} that $\rank_{\Lambda_2}(F_{\Lambda_2}(X(E/\KK_{\infty})))$ is given by the alternating sum
	\[\begin{tikzcd}[scale cd=0.8]\rg_{\Zp}(H_2(G,F_{\Lambda_2}(X(E/\KK_{\infty})))) - \rg_{\Zp}(H_1(G, F_{\Lambda_2}(X(E/\KK_{\infty})))) + \rg_{\Zp}(H_0(G,F_{\Lambda_2}(X(E/\KK_{\infty})))). \end{tikzcd} \]
	Since ${H_1(H, F_{\Lambda_2}(X(E/\KK_{\infty}))) = 0}$ and as ${cd_p(H)=cd_p(G/H)=1}$, the Hochschild-Serre spectral sequence implies that the first term in the alternating sum vanishes; moreover, it also follows from these ingredients that
	\[ H_1(G, F_{\Lambda_2}(X(E/\KK_{\infty}))) = H_1(G/H, F_{\Lambda_2}(X(E/\KK_{\infty}))_H)\]
	and
	\[ H_0(G, F_{\Lambda_2}(X(E/\KK_{\infty}))) = H_0(G/H, F_{\Lambda_2}(X(E/\KK_{\infty}))_H).\]
	Applying Howson's formula \cite[Theorem~1.1]{Howson} again (this time for $G/H$ instead of $G$) proves the lemma.
\end{proof}
This also concludes the proof of Lemma~\ref{rankequal_lemma}.
\end{proof}

We now prove the following control theorem:

\begin{proposition}\label{SelmerControlThm_prop2}
Let ${\bullet, \star \in \{+,-\}}$ and ${H \in \mathcal{H}^{\bullet\star}}$ and let ${n \geq 0}$. Then the natural map (induced by restriction):
$$s_n: \Selinf(E/\KK_{\infty}^{H_n}) \to \Selinf(E/\KK_{\infty})^{H_n}$$ is an isomorphism.
\end{proposition}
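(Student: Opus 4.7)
The plan is to emulate the proofs of Theorem~\ref{SelmerControlThm_prop} (for signed Selmer groups) and Proposition~\ref{fineSelmercontrol_prop1} (for the fine Selmer group). I would set up the standard commutative diagram
\begin{equation*}
\begin{tikzcd}[column sep = small, scale cd=0.82]
0 \arrow[r] & \Selinf(E/\KK_\infty^{H_n}) \arrow[r] \arrow[d, "s_n"] & H^1(G_S(\KK_\infty^{H_n}), E[p^\infty]) \arrow[r] \arrow[d, "g_n"] & \bigoplus_{v \in S} J_v(E/\KK_\infty^{H_n}) \arrow[d, "h_n"] \\
0 \arrow[r] & \Selinf(E/\KK_\infty)^{H_n} \arrow[r] & H^1(G_S(\KK_\infty), E[p^\infty])^{H_n} \arrow[r] & \Big(\bigoplus_{v \in S} J_v(E/\KK_\infty)\Big)^{H_n}
\end{tikzcd}
\end{equation*}
whose rows come from the defining exact sequence for $\Selinf$ (after taking $H_n$-invariants for the bottom row). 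By the snake lemma it suffices to prove that $\ker g_n$, $\coker g_n$ and $\ker h_n = \bigoplus_{v \in S} \ker h_{n,v}$ all vanish. The inflation--restriction sequence presents $\ker g_n$ and $\coker g_n$ as subquotients of $H^i(H_n, E(\KK_\infty)[p^\infty])$ for ${i = 1, 2}$, and Lemma~\ref{p-torsion_lemma} kills both of these since $E(\KK_\infty)[p^\infty] = 0$.

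The vanishing of $\ker h_{n,v}$ splits according to the residue characteristic of $v$. For ${v \in S \setminus S_p}$, the argument from the proof of Proposition~\ref{fineSelmercontrol_prop1} applies verbatim: since $H \in \mathcal{H}^{\bullet\star}$ the prime $v$ does not split completely in $\KK_\infty^H/K$, and $K_v$ admits no $\Zp^2$-extension, so for any ${w \mid v}$ the decomposition subgroup ${H_{n,w} \subseteq H_n \cong \Zp}$ must be trivial, giving ${\ker h_{n,v} = 0}$. For ${v \mid p}$, on the other hand, $\KK_\infty$ contains the full $\Zp^2$-extension of ${K_v = \Qp}$, so ${H_{n,w} = H_n}$ and the local map is genuinely nontrivial to analyse; this is where the main work lies.

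To handle the case ${v \mid p}$, I would compare the Kummer short exact sequences
\[ 0 \to E(L_w) \otimes \Qp/\Zp \to H^1(L_w, E[p^\infty]) \to H^1(L_w, E)[p^\infty] \to 0 \]
for ${L \in \{\KK_\infty^{H_n}, \KK_\infty\}}$ (injectivity of the leftmost arrow coming from Lemma~\ref{p-torsion_lemma}) and apply the snake lemma. The kernel of the induced map between the middle columns is ${H^1(H_n, E(\KK_{\infty,w})[p^\infty]) = 0}$ by Lemma~\ref{p-torsion_lemma}, so the snake lemma embeds $\ker h_{n,v}$ into the cokernel of the natural map ${E(F_{n,w}) \otimes \Qp/\Zp \to (E(\KK_{\infty,w}) \otimes \Qp/\Zp)^{H_n}}$, which the standard Kummer long exact sequence identifies with $H^1(H_n, E(\KK_{\infty,w}))[p^\infty]$. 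To establish that this group vanishes, I would invoke Lemma~\ref{freeness_lemma}, which identifies ${(\hat E^{\pm}(\KK_{\infty,w}) \otimes \Qp/\Zp)^\vee \cong \Zp[[G_w]] = \Zp[[G]]}$ (since ${G_w = G}$ at primes above $p$), presenting each $\hat E^{\pm}(\KK_{\infty,w}) \otimes \Qp/\Zp$ as a coinduced and hence $H_n$-acyclic module by Shapiro's lemma. Combining this with Kobayashi's decomposition ${\hat E = \hat E^+ + \hat E^-}$ and the $p$-torsion-freeness of ${\hat E^+ \cap \hat E^-}$ (again Lemma~\ref{p-torsion_lemma}) yields ${H^1(H_n, \hat E(\KK_{\infty,w}) \otimes \Qp/\Zp) = 0}$ via a short-exact-sequence chase using $\mathrm{cd}_p(H_n) = 1$, and a standard direct-limit argument comparing $H^i(H_n, M) \otimes \Qp/\Zp$ with $H^i(H_n, M \otimes \Qp/\Zp)$ converts this into the desired vanishing of $H^1(H_n, E(\KK_{\infty,w}))[p^\infty]$. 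The main obstacle of the proof is precisely this local analysis at primes above $p$: the easy argument used for ${v \nmid p}$ is unavailable here because the decomposition group equals the full $H_n$, and surmounting it requires the structural input of Lemma~\ref{freeness_lemma} together with Kobayashi's plus/minus decomposition.
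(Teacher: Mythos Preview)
Your setup (the snake-lemma diagram) and your treatment of $g_n$ and of $h_{n,v}$ for $v \nmid p$ are correct and match the paper exactly. The issue is entirely at $v \mid p$, where both the strategy and the execution diverge from the paper.

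The paper's argument at $v \mid p$ is one line: since $E$ has good supersingular reduction and every prime above $p$ is ramified in $\KK_\infty^H/K$, one has $H^1((\KK_\infty^{H_n})_w, E)[p^\infty] = 0$ and $H^1(\KK_{\infty,w}, E)[p^\infty] = 0$ by the Coates--Greenberg result (see \cite[Prop.~4.8]{CG} or \cite[p.~70]{Gb_LNM}). Hence $J_v(E/\KK_\infty^{H_n}) = J_v(E/\KK_\infty) = 0$ for $v \mid p$, so these primes simply drop out of the diagram and there is nothing to prove. You missed this key simplification.

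More seriously, the alternative route you take has a genuine gap at the final step. You correctly reduce to showing $H^1(H_n, E(\KK_{\infty,w}))[p^\infty] = 0$, and (granting Kobayashi's decomposition $\hat E = \hat E^+ + \hat E^-$ over $\KK_{\infty,w}$) you can deduce $H^1(H_n, \hat E(\KK_{\infty,w}) \otimes \Qp/\Zp) = 0$ from Lemma~\ref{freeness_lemma}. But your ``standard direct-limit argument'' does not convert this into the desired vanishing. Since $E(\KK_{\infty,w})$ has no $p$-torsion and $\mathrm{cd}_p(H_n) = 1$, the short exact sequence $0 \to E \xrightarrow{p^m} E \to E/p^m \to 0$ gives $H^1(H_n, E)/p^m \cong H^1(H_n, E/p^m)$; passing to the limit yields $H^1(H_n, E) \otimes \Qp/\Zp \cong H^1(H_n, E \otimes \Qp/\Zp) = 0$. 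This says only that $H^1(H_n, E)$ is $p$-divisible, which is \emph{not} the same as $H^1(H_n, E)[p^\infty] = 0$ (think of $\Qp/\Zp$). So as written the argument does not close, whereas the Coates--Greenberg vanishing makes the whole question at $v \mid p$ trivial.
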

\begin{proof}
For simplicity, let ${F_{\infty}^n=\KK_{\infty}^{H_n}}$. First we note that for any prime $v$ of $F_{\infty}^n$ above $p$ we have ${H^1(F^n_{\infty,v}, E)[p^{\infty}]=0}$. This follows from the fact that $E$ has good supersingular reduction at $p$ and every prime of $K$ above $p$ ramifies in $\KK_{\infty}^H/K$ (see \cite[pg.~70]{Gb_LNM}). Similarly for any prime $w$ of $\KK_{\infty}$ above $p$ we have ${H^1(\KK_{\infty,w}, E)[p^{\infty}]=0}$. Let $S_{tame, \infty}$ (resp. $S_{tame,n}$) be the set of primes of $\KK_{\infty}$ (resp. $F_{\infty}^n$) above a prime ${v \in S}$ where ${v \nmid p}$. Then we have a commutative diagram with vertical maps induced by restriction
\begin{equation*}
\begin{tikzcd}[column sep = small, scale cd=0.85]
0 \arrow[r] & \Selinf(E/\KK_{\infty})^{H_n} \arrow[r] &  H^1(G_S(\KK_{\infty}), E[p^{\infty}])^{H_n}  \arrow[r] & \Big(\underset{w \in S_{tame, \infty}}{\prod} H^1(\KK_{\infty,w}, E(\KK_{\infty,w})[p^{\infty}])\Big)^{H_n}\\
0 \arrow[r] & \Selinf(E/F_{\infty}^n) \ar[r] \arrow[u, "s_n"] &H^1(G_S(F_{\infty}^n), E[p^{\infty}]) \arrow[u, "g_n"] \arrow[r, "\theta_n"] & \underset{v \in S_{tame, n}}{\prod} H^1(F^n_{\infty,v}, E(F_{\infty,v}^n)[p^{\infty}]) \arrow[u, "h_n"]
\end{tikzcd}
\end{equation*}
By the snake lemma we have an exact sequence
$$0 \to \ker s_n \to \ker g_n \to \ker h_n \cap \img \theta_n \to \coker s_n \to \coker g_n.$$

From this exact sequence we see that to show that $s_n$ is an isomorphism we only need to show that $g_n$ is an isomorphism and that $h_n$ is injective.

We have
$$\ker {g_n} =H^1(H_n, E(\KK_{\infty})[p^{\infty}])$$
and we have an injection
$$\coker g_n \hookrightarrow H^2(H_n, E(\KK_{\infty})[p^{\infty}]).$$
Therefore we see from Lemma~\ref{p-torsion_lemma} that $g_n$ is an isomorphism. As for the map $h_n$ it follows from Shapiro's lemma that $$\ker h_n=\prod_{w \in S_{tame,n}} H^1(H_{n,w}, E(\KK_{\infty,w})[p^{\infty}])$$ where in the product above we have also written $w$ for a fixed prime of $\KK_{\infty}$ above $w$ and $H_{n,w}$ is the decomposition group. As primes in $S$ do not split completely in $F_{\infty}^n/K$ by assumption, $F_{\infty,w}^n/K_{n,w}$ is a $\Zp$-extension for any ${w \in S_{tame,n}}$. Since $K_{n,w}$ has no $\mathbb{Z}_p^2$-extension we may conclude that ${\KK_{\infty,w} = F_{\infty,w}^n}$. Therefore the decomposition subgroup $H_{n,w}$ of ${H_n = \Gal(\KK_\infty/F_{\infty}^n)}$ is trivial, and ${\ker h_n=0}$ as required. This completes the proof.
\end{proof}

\begin{lemma}\label{WL_lemma}
Let ${\bullet, \star \in \{+,-\}}$ and ${H \in \mathcal{H}^{\bullet\star}}$. If $Y(E/\KK_{\infty})_f$ is finitely generated over $\Lambda(H)$, then \begin{enumerate}
\item $Y(E/\KK_{\infty}^{H_n})$ is a torsion $\Lambda_{H,n}$-module, and
\item ${H^2(G_S(\KK_{\infty}^{H_n}), E[p^{\infty}])=0}$
\end{enumerate} for all ${n \geq 0}$.
\end{lemma}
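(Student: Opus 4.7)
My strategy is to reduce both statements about $\KK_\infty^{H_n}$ to statements about $Y(E/\KK_\infty)_{H_n}$ via the control theorem, and then exploit the hypothesis on $Y(E/\KK_\infty)_f$.

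First I would apply Proposition~\ref{fineSelmercontrol_prop1}: taking Pontryagin duals of the isomorphism ${R_{p^\infty}(E/\KK_\infty^{H_n}) \isomarrow R_{p^\infty}(E/\KK_\infty)^{H_n}}$ yields an isomorphism of $\Lambda_{H,n}$-modules
\[ Y(E/\KK_\infty^{H_n}) \;\cong\; Y(E/\KK_\infty)_{H_n}. \]
So for part (1) it suffices to show that $Y(E/\KK_\infty)_{H_n}$ is a torsion $\Lambda_{H,n}$-module. I would then take $H_n$-coinvariants of the tautological short exact sequence
\[ 0 \to Y(E/\KK_\infty)[p^\infty] \to Y(E/\KK_\infty) \to Y(E/\KK_\infty)_f \to 0. \]
Since $Y(E/\KK_\infty)$ is finitely generated over the Noetherian ring $\Lambda_2$ (as a quotient of the cofinitely generated module $H^1(G_S(\KK_\infty),E[p^\infty])^\vee$), the submodule $Y(E/\KK_\infty)[p^\infty]$ is annihilated by a fixed power of $p$, so its $H_n$-coinvariants are $p$-power-torsion, hence $\Lambda_{H,n}$-torsion. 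By hypothesis $Y(E/\KK_\infty)_f$ is finitely generated over $\Lambda(H)$, hence over $\Lambda(H_n)\subseteq \Lambda(H)$; therefore $(Y(E/\KK_\infty)_f)_{H_n}$ is finitely generated over ${\Lambda(H_n)/I_{H_n} \cong \Z_p}$, and in particular is torsion as a $\Lambda_{H,n}$-module (recall ${\Lambda_{H,n} \cong \Z_p[[T]]}$ has Krull dimension $2$). Extensions of $\Lambda_{H,n}$-torsion modules are torsion, so (1) follows.

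For part (2), I would invoke the Iwasawa-theoretic version of global duality (weak Leopoldt): for the $\Z_p$-extension $\KK_\infty^{H_n}/K_{H,n}$, unramified outside the finite set of primes of $K_{H,n}$ above $S$, one has the equivalence
\[ Y(E/\KK_\infty^{H_n}) \text{ is } \Lambda_{H,n}\text{-torsion} \quad \iff \quad H^2(G_S(\KK_\infty^{H_n}), E[p^\infty]) = 0,\]
which is a standard consequence of the Poitou--Tate nine-term exact sequence together with the fact that $p$-cohomological dimension of $G_S(\KK_\infty^{H_n})$ is $2$ (see \cite[Lemma~7.1]{Lim17} and the references therein for the analogous statement over $K_{cyc}$; the argument transfers verbatim to any $\Z_p$-extension). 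Combining this equivalence with part (1) gives $H^2(G_S(\KK_\infty^{H_n}), E[p^\infty]) = 0$.

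The routine verifications are the coinvariant computation and confirming the control theorem applies. The only genuinely delicate point will be invoking the global-duality equivalence for a general $\Z_p$-extension $\KK_\infty^{H_n}/K_{H,n}$ (and not only for the cyclotomic one); this is where one must be sure to check the required hypotheses (finiteness of $S$, $p$-cohomological dimension, triviality of the relevant local $H^2$'s at primes in $S$). Once that is in place, the conclusion is immediate.
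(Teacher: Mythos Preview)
Your proof is correct and follows essentially the same approach as the paper: use the control isomorphism from Proposition~\ref{fineSelmercontrol_prop1} to pass from $Y(E/\KK_\infty^{H_n})$ to $Y(E/\KK_\infty)_{H_n}$, then use that $(Y(E/\KK_\infty)_f)_{H_n}$ is finitely generated over $\Z_p$ to deduce torsion-ness, and finally invoke the weak-Leopoldt equivalence for part~(2). The paper organises part~(1) slightly differently---it uses a commutative square to surject directly onto $Y(E/\KK_\infty^{H_n})_f$ rather than splitting off the $p$-torsion part as you do---but this is cosmetic; for part~(2) the paper cites \cite[Theorem~1.1]{Matar_Torsion}, which gives exactly the equivalence you need over an arbitrary $\Z_p$-extension, so your ``delicate point'' is already handled by a reference in the paper's bibliography.
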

\begin{proof}
Suppose that $Y(E/\KK_{\infty})_f$ is finitely generated over $\Lambda(H)$. Consider the following commutative diagram with exact rows
\begin{equation*}
\xymatrix {
Y(E/\KK_{\infty})_{H_n}  \ar[d]^{\hat{t}_n} \ar[r] & (Y(E/\KK_{\infty})_f)_{H_n}  \ar[d]^{\theta_n} \ar[r] &0\\
Y(E/\KK_{\infty}^{H_n}) \ar[r] & Y(E/\KK_{\infty}^{H_n})_f \ar[r] &0}
\end{equation*}
The map $\hat{t}_n$ is the dual of the map $t_n$ in Proposition~\ref{fineSelmercontrol_prop1} and $\theta_n$ is induced by $\hat{t}_n$. By Proposition~\ref{fineSelmercontrol_prop1}, $\hat{t}_n$ is surjective and hence so is $\theta_n$. Since $Y(E/\KK_{\infty})_f$ is finitely generated over $\Lambda(H)$ and $H_n$ has finite index in $H$, the quotient $(Y(E/\KK_{\infty})_f)_{H_n}$ is finitely generated over $\Zp$. Thus we get that $Y(E/\KK_{\infty}^{H_n})_f$ is finitely generated over $\Zp$. This implies that $Y(E/\KK_{\infty}^{H_n})$ is a torsion $\Lambda_{H,n}$-module. This latter implies by \cite[Theorem~1.1]{Matar_Torsion} that ${H^2(G_S(\KK_{\infty}^{H_n}), E[p^{\infty}])=0}$ as desired.
\end{proof}

\begin{proposition}\label{Selrank_prop}
The following assertions hold.
\begin{enumerate}[(a)]
\item Let ${\bullet, \star \in \{+,-\}}$ and ${H \in \mathcal{H}^{\bullet\star}}$. We have that ${\rank_{\Lambda_{H,n}}(X(E/\KK_{\infty}^{H_n}))=2p^n}$ for ${n=0}$. Assuming that $Y(E/\KK_{\infty})_f$ is finitely generated over $\Lambda(H)$ this holds for all ${n \geq 0}$.
\item ${\rank_{\Lambda_2}(X(E/\KK_{\infty}))=2}$.
\end{enumerate}
\end{proposition}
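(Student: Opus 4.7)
My plan is to compare the classical and fine Selmer groups via the local Kummer sequences. For any intermediate field $L \in \{\KK_\infty^H, \KK_\infty^{H_n}, \KK_\infty\}$ arising in the statement, I would combine the defining exact sequences of $\Selinf(E/L)$ and $R_{p^\infty}(E/L)$ with the local Kummer sequences at primes above $p$ (using that $\hat{E}(L_w)\otimes \Q_p/\Z_p$ vanishes at $v \nmid p$) to produce a four-term exact sequence
\[ 0 \longrightarrow R_{p^\infty}(E/L) \longrightarrow \Selinf(E/L) \longrightarrow \bigoplus_{v\mid p}\bigoplus_{w\mid v} \hat{E}(L_w)\otimes \Q_p/\Z_p \longrightarrow C(L) \longrightarrow 0. \]
The cokernel $C(L)$ vanishes precisely when the weak Leopoldt conjecture $H^2(G_S(L), E[p^\infty]) = 0$ holds, equivalently when $Y(E/L)$ is torsion over the appropriate Iwasawa algebra. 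Under that vanishing, Pontryagin duality yields
\[ 0 \longrightarrow \bigoplus_{v\mid p}\bigoplus_{w\mid v} \bigl(\hat{E}(L_w)\otimes \Q_p/\Z_p\bigr)^{\vee} \longrightarrow X(E/L) \longrightarrow Y(E/L) \longrightarrow 0, \]
and the rank of $X(E/L)$ equals that of the middle term.

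The required torsion of $Y(E/L)$ is available in each case. For (a) with $n=0$, the condition $H \in \mathcal{H}^{\bullet\star}$ forces $X^{\bullet\star}(E/\KK_\infty^H)$ to be $\Lambda$-torsion, and the inclusion $R_{p^\infty} \subseteq \Selpm$ dualises to a surjection $X^{\bullet\star}(E/\KK_\infty^H) \twoheadrightarrow Y(E/\KK_\infty^H)$, so $Y(E/\KK_\infty^H)$ is also $\Lambda$-torsion. For (a) with $n \geq 1$, the hypothesis on $Y(E/\KK_\infty)_f$ combined with Lemma~\ref{WL_lemma} yields that $Y(E/\KK_\infty^{H_n})$ is $\Lambda_{H,n}$-torsion. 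For (b), Lemma~\ref{Yinf_torsion_lemma} supplies unconditionally that $Y(E/\KK_\infty)$ is $\Lambda_2$-torsion.

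The rank of the middle term is then a standard local computation. By Lemma~\ref{p-torsion_lemma} one has $E(L_w)[p^\infty] = 0$, so the dual of the local Kummer sequence $0 \to \hat{E}(L_w)\otimes\Q_p/\Z_p \to H^1(L_w, E[p^\infty]) \to H^1(L_w,E)[p^\infty] \to 0$ identifies the rank of $(\hat{E}(L_w)\otimes \Q_p/\Z_p)^\vee$ over the Iwasawa algebra of the local Galois group as $2[K_v : \Q_p] - [K_v : \Q_p] = [K_v : \Q_p]$, the two ranks coming from the usual local Iwasawa-theoretic Euler characteristic in the supersingular setting. Summing over primes above $p$ of the relevant base field (namely $K_{H,n}$ in case (a) and $K$ in case (b)) yields $[K_{H,n} : \Q] = 2p^n$ for (a) and $[K : \Q] = 2$ for (b). I expect the principal technical obstacle to be the vanishing of $C(L)$ from weak Leopoldt in the two-dimensional setting of (b), which requires transporting the standard one-dimensional equivalence (cf.\ \cite[Lemma~7.1]{Lim17}) to $\Z_p^2$-extensions.
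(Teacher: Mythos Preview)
Your local rank computation is wrong, and this breaks the argument. In the supersingular setting, once the prime $w\mid p$ is ramified in the $\Z_p$-extension (which is guaranteed by $H\in\mathcal{H}^{\bullet\star}$), one has $H^1(L_w,E)[p^\infty]=0$; this is exactly the fact the paper invokes from \cite[p.~70]{Gb_LNM} and \cite[Proposition~4.8]{CG}. Hence the Kummer sequence degenerates to an isomorphism $\hat{E}(L_w)\otimes\Q_p/\Z_p\cong H^1(L_w,E[p^\infty])$, and the dual has rank $2[K_v:\Q_p]$ over the local Iwasawa algebra, not $[K_v:\Q_p]$. Your subtraction $2[K_v:\Q_p]-[K_v:\Q_p]$ is the ordinary-reduction picture, where $H^1(L_w,E)[p^\infty]$ is genuinely cofree of corank $[K_v:\Q_p]$; it does not apply here. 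Summing over the two primes above $p$ gives $\Lambda$-corank $4$, not $2$, for your middle term.

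It follows that your claim $C(L)=0$ under weak Leopoldt cannot hold either: if it did, your own exact sequence would force $\rank_\Lambda X(E/\KK_\infty^H)=4$. In fact $C(L)$ has $\Lambda$-corank $2$, and identifying it precisely requires a Cassels--Poitou--Tate argument that is no simpler than the direct approach. The paper sidesteps all of this: since $J_v(E/L)=H^1(L_w,E)[p^\infty]=0$ for $v\mid p$, the Selmer group is cut out of $H^1(G_S(L),E[p^\infty])$ only by the tame local conditions at $v\nmid p$, and those are $\Lambda$-cotorsion by \cite[Proposition~2]{Gb_IPR}. So $\rank_\Lambda X(E/L)=\corank_\Lambda H^1(G_S(L),E[p^\infty])$, which is computed by the global Euler characteristic formula of Greenberg \cite[Proposition~3]{Gb_IPR} (for (a)) and Ochi--Venjakob \cite[Proposition~3.2]{OV} (for (b)), once $H^2(G_S(L),E[p^\infty])=0$ is known from Proposition~\ref{Seln_surjective_prop}, Lemma~\ref{WL_lemma}, and Lemma~\ref{H2_Kinf_vainishing_lemma} respectively.
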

\begin{proof}
For simplicity, let ${F^n_{\infty}=\KK_{\infty}^{H_n}}$. Using the same notation as in the proof of Proposition~\ref{SelmerControlThm_prop2}, we have exact sequences

\begin{equation}\label{Seldef1}
0 \to \Selinf(E/F^n_{\infty}) \to H^1(G_S(F^n_{\infty}), E[p^{\infty}]) \to \prod_{w \in S_{tame,n}} H^1(F^n_{\infty,w}, E(F^n_{\infty,w})[p^{\infty}]),
\end{equation}

\begin{equation}\label{Seldef2}
0 \to \Selinf(E/\KK_{\infty})\to  H^1(G_S(\KK_{\infty}), E[p^{\infty}]) \to \prod_{w \in S_{tame, \infty}} H^1(\KK_{\infty,w}, E(\KK_{\infty,w})[p^{\infty}]).
\end{equation}

Now assume that $Y(E/\KK_{\infty})_f$ is finitely generated over $\Lambda(H)$ if ${n >0}$. For any ${w \in S_{tame,n}}$ we have by \cite[Proposition~2]{Gb_IPR} that $H^1(F^n_{\infty,w}, E(F^n_{\infty,w})[p^{\infty}])^{\vee}$ is a finitely generated $\Zp$-module. Therefore as $S_{tame,n}$ is finite (since ${H \in \mathcal{H}^{\bullet\star}}$) it follows that ${(\prod_{w \in S_{tame, n}} H^1(F^n_{\infty,w}, E(F^n_{\infty,w})[p^{\infty}]))^{\vee}}$ is a torsion $\Lambda_{H,n}$-module. If ${n=0}$ we have that ${H^2(G_S(F_{\infty}), E[p^{\infty}])=0}$ by Proposition~\ref{Seln_surjective_prop}(3) and if ${n >0}$ we have that ${H^2(G_S(F^n_{\infty}), E[p^{\infty}])=0}$ by Lemma \ref{WL_lemma}. Therefore from \cite[Proposition~3]{Gb_IPR} we have that ${\rank_{\Lambda_{H,n}}(H^1(G_S(F^n_{\infty}), E[p^{\infty}]))=2p^n}$. Assertion~$(a)$ now follows from the remarks above.

As for $(b)$ we first note that by Lemma~\ref{H2_Kinf_vainishing_lemma} we have that ${H^2(G_S(\KK_{\infty}), E[p^{\infty}])=0}$. Therefore from \cite[Proposition~3.2]{OV} we have that ${\rank_{\Lambda_2}(H^1(G_S(K_{\infty}), E[p^{\infty}]))=2}$.

Let $S_{tame, K_{cyc}}$ be the set of primes of $K_{cyc}$ above a prime ${v \in S}$ where ${v \nmid p}$. Let
$$A:=\prod_{w \in S_{tame, K_{cyc}}} H^1(K_{cyc,w}, E(K_{cyc,w})[p^{\infty}]),$$
$$B:=\prod_{w \in S_{tame, \infty}} H^1(\KK_{\infty,w}, E(\KK_{\infty,w})[p^{\infty}]).$$
Note that every prime of $K_{cyc}$ that does not divide $p$ splits completely in $\KK_{\infty}/K_{cyc}$. Therefore we see that ${B^{H_{cyc}}=A}$, and by \cite[Proposition~2]{Gb_IPR} $A$ is a cofinitely generated $\Zp$-module. Therefore $B^{\vee}$ is a cofinitely generated $\Zp[[H_{cyc}]]$-module and hence is a torsion $\Lambda_2$-module. By the above remarks this implies assertion~$(b)$.
\end{proof}

Putting together Lemma~\ref{rankequal_lemma}, Proposition~\ref{SelmerControlThm_prop2} and Proposition~\ref{Selrank_prop}, we obtain an important corollary.
\begin{corollary} \label{cor:HinH}
	Let ${\bullet, \star \in \{+,-\}}$ and ${H \in \mathcal{H}^{\bullet\star}}$. Then the equivalent conditions from Lemma~\ref{rankequal_lemma} hold for $H$ and ${n=0}$. Assuming that $Y(E/\KK_{\infty})_f$ is finitely generated over $\Lambda(H)$ these equivalent conditions also hold for all ${n \geq 0}$.
\end{corollary}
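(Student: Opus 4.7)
The plan is to combine the preceding three ingredients---Lemma~\ref{rankequal_lemma}, the Selmer control Proposition~\ref{SelmerControlThm_prop2}, and the rank computations in Proposition~\ref{Selrank_prop}---in a straightforward bookkeeping argument. The key point is that by dualizing the isomorphism ${\Selinf(E/\KK_{\infty}^{H_n}) \isomarrow \Selinf(E/\KK_{\infty})^{H_n}}$ of Proposition~\ref{SelmerControlThm_prop2}, we obtain an isomorphism of $\Lambda_{H,n}$-modules
\[ X(E/\KK_{\infty})_{H_n} \cong X(E/\KK_{\infty}^{H_n}), \]
which in particular identifies the $\Lambda_{H,n}$-ranks of the two sides.

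First I would handle the case ${n=0}$. Here Proposition~\ref{Selrank_prop}(a) unconditionally gives ${\rg_{\Lambda(H)}(X(E/\KK_{\infty}^H)) = 2}$, while Proposition~\ref{Selrank_prop}(b) gives ${\rg_{\Lambda_2}(X(E/\KK_{\infty})) = 2}$. Combined with the control isomorphism above, this yields
\[ \rg_{\Lambda(H)}(X(E/\KK_{\infty})_H) = 2 = p^0 \cdot \rg_{\Lambda_2}(X(E/\KK_{\infty})), \]
which is condition~(a) of Lemma~\ref{rankequal_lemma} for ${n=0}$. By the equivalences of Lemma~\ref{rankequal_lemma}, this forces (b) and (c) to hold as well.

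Next, for general ${n \geq 0}$ under the hypothesis that $Y(E/\KK_{\infty})_f$ is finitely generated over $\Lambda(H)$, I would run exactly the same argument but invoking the conditional part of Proposition~\ref{Selrank_prop}(a), which under this hypothesis gives ${\rg_{\Lambda_{H,n}}(X(E/\KK_{\infty}^{H_n})) = 2p^n}$ for every $n$. Applying the control isomorphism again yields
\[ \rg_{\Lambda_{H,n}}(X(E/\KK_{\infty})_{H_n}) = 2p^n = p^n \cdot \rg_{\Lambda_2}(X(E/\KK_{\infty})), \]
which is condition~(a) of Lemma~\ref{rankequal_lemma}, and the equivalence finishes the proof.

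There is no real obstacle here, as the work has already been done in the preceding propositions; the only thing worth double-checking is that the control isomorphism from Proposition~\ref{SelmerControlThm_prop2} genuinely produces an isomorphism of $\Lambda_{H,n}$-modules (not merely of abelian groups) when one passes to Pontryagin duals, so that ranks are preserved. This is automatic because the restriction map is $\Gal(\KK_\infty/K_{H,n})$-equivariant.
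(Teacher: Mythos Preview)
Your proof is correct and follows exactly the approach the paper indicates: combine the control isomorphism of Proposition~\ref{SelmerControlThm_prop2} with the rank computations of Proposition~\ref{Selrank_prop} to verify condition~(a) of Lemma~\ref{rankequal_lemma}. The paper merely states that the corollary follows by ``putting together'' these three results, and you have correctly filled in the bookkeeping.
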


The main control theorem that we need is the following result. For any ${n \geq 0}$, ${\bullet, \star \in \{+,-\}}$ and ${H \in \mathcal{H}^{\bullet\star}}$, let $\hat{s_n}$ denote the dual of the map $s_n$ in Proposition~\ref{SelmerControlThm_prop2}.

\begin{theorem}\label{TorsionControlThm}
Assume that $Y(E/\KK_{\infty})_f$ is finitely generated over $\Lambda(H)$ if ${n >0}$. Each map $\hat{s_n}$ induces an injection
$$\hat{s_n}': (T_{\Lambda_2}(X(E/\KK_{\infty})))_{H_n} \hookrightarrow T_{\Lambda_{H,n}}(X(E/\KK_{\infty}^{H_n}))$$
such that there exists an $\Lambda_{H,n}$-isomorphism
\[ \phi_{H_n}: T_{\Lambda_{H,n}}([F_{\Lambda_2}(X(E/\KK_{\infty}))]_{H_n})  \isomarrow \coker \hat{s_n}'. \]
\end{theorem}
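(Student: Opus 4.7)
The plan is to apply $H_n$-homology to the canonical short exact sequence
\[ 0 \longrightarrow T \longrightarrow X(E/\KK_\infty) \longrightarrow F \longrightarrow 0, \]
where $T := T_{\Lambda_2}(X(E/\KK_\infty))$ and $F := F_{\Lambda_2}(X(E/\KK_\infty))$, and to combine the resulting sequence with the control isomorphism
\[ \hat{s}_n \colon X(E/\KK_\infty)_{H_n} \isomarrow X(E/\KK_\infty^{H_n}) \]
coming from Proposition~\ref{SelmerControlThm_prop2} by Pontryagin duality.

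First I would note that since $F$ is $\Lambda_2$-torsion-free and $\Upsilon_n \in \Lambda_2 \setminus \{0\}$, multiplication by $\Upsilon_n$ (equivalently, by $\gamma-1$ for a topological generator $\gamma$ of $H_n$) acts injectively on $F$. Using the standard two-term resolution of $\Z_p$ over $\Lambda(H_n)$, this gives $H_1(H_n, F) = F^{H_n} = 0$, so the long exact homology sequence collapses to
\[ 0 \longrightarrow T_{H_n} \longrightarrow X(E/\KK_\infty)_{H_n} \longrightarrow F_{H_n} \longrightarrow 0. \]
Composing the left map with $\hat{s}_n$ produces a $\Lambda_{H,n}$-linear injection $T_{H_n} \hookrightarrow X(E/\KK_\infty^{H_n})$. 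By Corollary~\ref{cor:HinH} (applied unconditionally when $n=0$, and under the standing hypothesis on $Y(E/\KK_\infty)_f$ when $n>0$), the module $T_{H_n}$ is $\Lambda_{H,n}$-torsion, so the injection factors through $T_{\Lambda_{H,n}}(X(E/\KK_\infty^{H_n}))$; this restricted map is the desired $\hat{s}_n'$.

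To identify the cokernel, I use $\hat{s}_n$ to rewrite the displayed sequence as
\[ 0 \longrightarrow T_{H_n} \longrightarrow X(E/\KK_\infty^{H_n}) \longrightarrow F_{H_n} \longrightarrow 0, \]
and pass to $\Lambda_{H,n}$-torsion submodules. Since $T_{H_n}$ is already torsion, the kernel of the restriction $T_{\Lambda_{H,n}}(X(E/\KK_\infty^{H_n})) \to F_{H_n}$ equals $T_{H_n}$, yielding an injection $\coker \hat{s}_n' \hookrightarrow T_{\Lambda_{H,n}}(F_{H_n})$. Surjectivity is a short lifting argument: if $f \in F_{H_n}$ is killed by some $\mu \in \Lambda_{H,n} \setminus \{0\}$ and $x \in X(E/\KK_\infty^{H_n})$ lifts $f$, then $\mu x$ maps to $0$ in $F_{H_n}$, so $\mu x \in T_{H_n}$ and is annihilated by some $\nu \in \Lambda_{H,n} \setminus \{0\}$; since $\Lambda_{H,n}$ is a domain, $\nu\mu \ne 0$ annihilates $x$, so $x$ is itself $\Lambda_{H,n}$-torsion. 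This produces the claimed isomorphism $\phi_{H_n}: T_{\Lambda_{H,n}}(F_{H_n}) \isomarrow \coker \hat{s}_n'$.

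The main obstacle is verifying that $T_{H_n}$ remains $\Lambda_{H,n}$-torsion after passage to $H_n$-coinvariants; without this, both the target of $\hat{s}_n'$ and the torsion-subobject identification of the cokernel break down. This is precisely the content of Corollary~\ref{cor:HinH}, whose proof ultimately rests on Lemma~\ref{rankequal_lemma} combined with the rank computation of Proposition~\ref{Selrank_prop}, and it is here that the hypothesis on $Y(E/\KK_\infty)_f$ enters when $n>0$. The remaining ingredients — the vanishing $H_1(H_n, F) = 0$ and the torsion-lifting — are formal.
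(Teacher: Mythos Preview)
Your proof is correct and follows essentially the same route as the paper: both arguments take $H_n$-coinvariants of the canonical sequence $0 \to T \to X(E/\KK_\infty) \to F \to 0$, use $H_1(H_n,F)=0$, invoke Corollary~\ref{cor:HinH} to ensure $T_{H_n}$ is $\Lambda_{H,n}$-torsion, and then identify $\coker \hat{s}_n'$ with $T_{\Lambda_{H,n}}(F_{H_n})$. The only cosmetic difference is that the paper packages the last step as a snake-lemma diagram (with a third vertical map $\hat{s}_n''$ into $F_{\Lambda_{H,n}}(X(E/\KK_\infty^{H_n}))$ and a rank comparison to show $\ker \hat{s}_n'' = T_{\Lambda_{H,n}}(F_{H_n})$), whereas you do the torsion-lifting by hand; your version is slightly more direct and avoids the rank equality~\eqref{rank_equality_coinvariants}.
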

\begin{proof}
Corollary~\ref{cor:HinH} implies that $(T_{\Lambda_2}(X(E/\K_{\infty})))_{H_n}$ is $\Lambda_{H,n}$-torsion. From this we see that the map $\hat{s_n}$ induces a map ${\hat{s_n}': (T_{\Lambda_2}(X(E/\KK_{\infty})))_{H_n} \to T_{\Lambda}(X(E/\KK_{\infty}^{H_n}))}$. Now consider the commutative diagram

\begin{equation*}
\begin{tikzcd}[column sep = small, scale cd=0.99]
0 \arrow[r] & (T_{\Lambda_2}(X(E/\KK_{\infty})))_{H_n} \arrow[r] \arrow[d, "\hat{s_n}'"] & X(E/\KK_{\infty})_{H_n} \arrow[r] \arrow[d, "\hat{s_n}"] & (F_{\Lambda_2}(X(E/\KK_{\infty})))_{H_n} \arrow[r] \arrow[d, "\hat{s_n}''"] &0\\
0 \arrow[r] & T_{\Lambda_{H,n}}(X(E/\KK_{\infty}^{H_n})) \arrow[r] & X(E/\KK_{\infty}^{H_n}) \arrow[r] & F_{\Lambda_{H,n}}(X(E/\KK_{\infty}^{H_n})) \arrow[r] &0
\end{tikzcd}
\end{equation*}
The top row is exact since $F_{\Lambda_2}(X(E/\KK_{\infty}))$ is $\Lambda_2$-torsion-free. From the snake lemma applied to this diagram we get an exact sequence
\begin{equation}\label{snakelemma_seq2}
0 \to \ker \hat{s_n}' \to \ker \hat{s_n} \to \ker \hat{s_n}'' \to \coker \hat{s_n}' \to \coker \hat{s_n} \to \coker \hat{s_n}'' \to 0.
\end{equation}
By Proposition~\ref{SelmerControlThm_prop2} we have ${\ker \hat{s_n}=\coker \hat{s_n}=0}$. Therefore from the exact sequence (\ref{snakelemma_seq2}) we see that ${\ker \hat{s_n}'=0}$ and that there is an isomorphism ${\phi_H: \ker \hat{s_n}'' \to \coker \hat{s_n}'}$. So to complete the proof it will suffice to show that $\ker \hat{s_n}''=T_{\Lambda_{H,n}}([F_{\Lambda_2}(X(E/\KK_{\infty}))]_{H_n})$.
By Lemma~\ref{rankequal_lemma} we have that $(T_{\Lambda_2}(X(E/\KK_{\infty})))_{H_n}$ is $\Lambda_{H,n}$-torsion and so
$$\rank_{\Lambda_{H,n}}(X(E/\KK_{\infty})_{H_n})=\rank_{\Lambda_{H,n}}([F_{\Lambda_2}(X(E/\KK_{\infty}))]_{H_n}).$$
Also we clearly have
$$\rank_{\Lambda_{H,n}}(X(E/\K_{\infty}^{H_n}))=\rank_{\Lambda_{H,n}}(F_{ \Lambda_{H,n}}(X(E/\KK_{\infty}^{H_n})).$$
Therefore Proposition~\ref{SelmerControlThm_prop2} implies that
\begin{equation}\label{rank_equality_coinvariants}
\rank_{\Lambda_{H,n}}([F_{\Lambda_2}(X(E/\KK_{\infty}))]_{H_n})=\rank_{\Lambda_{H,n}}(F_{ \Lambda_{H,n}}(X(E/\KK_{\infty}^{H_n}))).
\end{equation}
Since $\coker \hat{s_n}=0$, the exact sequence (\ref{snakelemma_seq2}) shows that ${\coker \hat{s_n}''=0}$. Therefore from the equality~\eqref{rank_equality_coinvariants} we see that $$\rank_{\Lambda_{H,n}}([F_{\Lambda_2}(X(E/\KK_{\infty}))]_{H_n})=\rank_{\Lambda_{H,n}}(\img {\hat{s}_n}'').$$
Furthermore $\img {\hat{s}_n}''$ is $\Lambda_{H,n}$-torsion-free. It follows from these two facts that ${\ker {\hat{s}_n}''=T_{\Lambda_{H,n}}([F_{\Lambda_2}(X(E/\KK_{\infty}))]_{H,n})}$. This completes the proof.
\end{proof}

\begin{remark}
In relation to the object $T_{\Lambda_{H,n}}([F_{\Lambda_2}(X(E/\KK_{\infty}))]_{H_n})$ that appears in the statement of Proposition \ref{TorsionControlThm}, one may ask the following question: If ${\Lambda_2=\Zp[[T,U]]}$, ${\Lambda=\Lambda_2/T}$ and $M$ is a finitely generated torsion-free $\Lambda_2$-module, can ${T_{\Lambda}(M/T)}$ be non-trivial? The answer is yes. In fact we will give two examples: one with ${\lambda(T_{\Lambda}(M/T))=1}$ and one with ${\mu(T_{\Lambda}(M/T))=1}$. Let ${M_1 = \langle T,U \rangle}$ be the ideal of $\Lambda_2$ generated by $T$ and $U$. Then $M_1$ is certainly a torsion-free $\Lambda_2$-module. We have an exact sequence
$$0 \to M_1 \to \Lambda_2 \to \Lambda_2/M_1 \to 0. $$
From this exact sequence, we get the following exact sequence:
$$\Lambda_2[T] \to (\Lambda_2/M_1)[T] \to M_1/T \to \Lambda_2/T. $$
We have ${\Lambda_2[T]=0}$, ${(\Lambda_2/M_1)[T]=\Lambda_2/M_1}$ and ${\Lambda_2/T=\Lambda}$. Therefore we have
$$0 \to \Lambda_2/M_1 \to M_1/T \to \Lambda. $$
From this we see that the $\Lambda$-torsion submodule of $M_1/T$ is isomorphic to ${\Lambda_2/M_1 \cong \Zp}$, so ${\lambda(T_{\Lambda}(M_1/T))=1}$. If one instead considers ${M_2=\langle p,T \rangle}$, then the previous arguments show that ${T_{\Lambda}(M_2/T)\cong \Lambda/p}$, so ${\mu(T_{\Lambda}(M_2/T))=1}$.
\end{remark}

\begin{lemma}\label{structureisom_lemma}
There exists a pseudo-null $\Lambda_2$-module $A$ such that for any ${n \geq 0}$, ${[(a,b)] \in \mathbb{P}^1(\Zp)}$, ${H=\overbar{\langle \sigma^a\tau^b \rangle}}$ and ${\Upsilon_n=(1+T_1)^{p^na}(1+T_2)^{p^nb}-1}$ we have a $\Lambda_{H,n}$-isomorphism
$$T_{\Lambda_{H,n}}([F_{\Lambda_2}(X(E/\KK_{\infty}))]/\Upsilon_n) \cong A[\Upsilon_n]. $$
\end{lemma}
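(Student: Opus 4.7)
The plan is to produce a single fixed pseudo-null $\Lambda_2$-module $A$ coming from the structure theorem applied to the torsion-free module $M := F_{\Lambda_2}(X(E/\KK_\infty))$, and then to verify that $A[\Upsilon_n]$ computes $T_{\Lambda_{H,n}}(M/\Upsilon_n)$ uniformly in $(a,b)$ and $n$.

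First I would use Proposition~\ref{Selrank_prop}(b) to note that $\rg_{\Lambda_2}(X(E/\KK_\infty))=2$, so $M$ is a finitely generated $\Lambda_2$-torsion-free module of rank $2$. By the Bourbaki structure theorem for finitely generated modules over the Krull domain $\Lambda_2$ (the same reference used in the proof of Lemma~\ref{rankequal_lemma}), there is a pseudo-isomorphism $M \to \Lambda_2^2$. Since $M$ is torsion-free, the kernel of this pseudo-isomorphism, being pseudo-null and sitting inside $M$, must be zero. Hence one obtains a short exact sequence
\[ 0 \longrightarrow M \xrightarrow{\ \iota\ } \Lambda_2^2 \longrightarrow A \longrightarrow 0, \]
with $A := \coker\iota$ pseudo-null as a $\Lambda_2$-module. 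I would fix this $A$ once and for all; crucially, it depends only on $M$ and not on the choice of $(a,b)$ or $n$.

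Next, for any $(a,b) \in \mathbb{P}^1(\Z_p)$ and $n\geq 0$, I would apply the snake lemma for multiplication by $\Upsilon_n$ to this sequence. Because $M$ and $\Lambda_2^2$ are $\Lambda_2$-torsion-free and $\Upsilon_n\neq 0$, multiplication by $\Upsilon_n$ is injective on both, yielding the four-term exact sequence
\[ 0 \longrightarrow A[\Upsilon_n] \longrightarrow M/\Upsilon_n M \longrightarrow (\Lambda_2/\Upsilon_n)^2 \longrightarrow A/\Upsilon_n A \longrightarrow 0. \]
Since $(1+T_1)^a(1+T_2)^b$ topologically generates $H$ and its $p^n$-th power equals $\Upsilon_n+1$, one has $\Lambda_2/\Upsilon_n \cong \Z_p[[G/H_n]]$. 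Combined with the group-theoretic facts $H \cap G_{H,n} = H_n$ and $H G_{H,n} = G$, we obtain a splitting $G/H_n \cong (G_{H,n}/H_n) \oplus (H/H_n)$, from which $\Lambda_2/\Upsilon_n$ is free of rank $p^n$ over $\Lambda_{H,n}$. In particular, $(\Lambda_2/\Upsilon_n)^2$ is $\Lambda_{H,n}$-torsion-free.

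The remaining point is that $A[\Upsilon_n]$ is $\Lambda_{H,n}$-torsion. Since $A$ is pseudo-null over $\Lambda_2$, its Krull dimension, and a fortiori the Krull dimension of $A[\Upsilon_n]$, is at most $1$. As $\Lambda_2/\Upsilon_n$ is finite over $\Lambda_{H,n}$ (rank $p^n$), Krull dimension of a finitely generated module is preserved under this finite extension, so $\dim_{\Lambda_{H,n}} A[\Upsilon_n] \leq 1 < \dim\Lambda_{H,n}=2$, which forces $A[\Upsilon_n]$ to be $\Lambda_{H,n}$-torsion. Combining with the fact that the image of $M/\Upsilon_n M$ in $(\Lambda_2/\Upsilon_n)^2$ is $\Lambda_{H,n}$-torsion-free, one concludes
\[ T_{\Lambda_{H,n}}\bigl(M/\Upsilon_n M\bigr) \;=\; A[\Upsilon_n], \]
as required. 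The main subtlety is securing a single $A$ that works uniformly, which is taken care of by fixing the embedding in the very first step; the only delicate input is that the structure theorem yields an honest injection with pseudo-null cokernel, rather than merely a pseudo-isomorphism, which is precisely what torsion-freeness of $M$ guarantees.
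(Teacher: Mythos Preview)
Your proof is correct and in fact takes a cleaner route than the paper's. Both arguments embed the torsion-free module $\mathfrak{F}:=F_{\Lambda_2}(X(E/\KK_\infty))$ into a larger module with pseudo-null cokernel $A$ and then read off $T_{\Lambda_{H,n}}(\mathfrak{F}/\Upsilon_n)\cong A[\Upsilon_n]$ from the snake lemma. The paper, however, embeds $\mathfrak{F}$ into its \emph{reflexive hull} via \cite[Prop.~5.1.8]{NSW}, and then has to work to show that the reflexive hull modulo $\Upsilon_n$ is $\Lambda_{H,n}$-torsion-free: it invokes the characterisation of reflexive modules as second syzygies to produce an embedding into a free module with torsion-free cokernel. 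You bypass this entirely by embedding $\mathfrak{F}$ directly into the free module $\Lambda_2^2$ via the structure theorem (using that $\mathfrak{F}$ is torsion-free so the kernel of the pseudo-isomorphism vanishes), after which the torsion-freeness of $(\Lambda_2/\Upsilon_n)^2$ over $\Lambda_{H,n}$ is immediate. Your approach is more elementary; the paper's has the virtue that the reflexive hull is canonical, but that plays no role here.

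Two minor remarks. First, your group-theoretic splitting $G/H_n\cong (G_{H,n}/H_n)\oplus(H/H_n)$ is slightly more than you need and depends on the particular choice of $K_{H,n}$; it is enough (and cleaner) to observe that $G_{H,n}/H_n$ sits inside $G/H_n$ with index $p^n$, so $\Lambda_2/\Upsilon_n=\Z_p[[G/H_n]]$ is free of rank $p^n$ over $\Lambda_{H,n}=\Z_p[[G_{H,n}/H_n]]$. Second, your appeal to the structure theorem for the direction $\mathfrak{F}\hookrightarrow\Lambda_2^2$ is legitimate: the paper itself uses both directions of the pseudo-isomorphism elsewhere (compare the proofs of Lemma~\ref{rankequal_lemma} and Lemma~\ref{H1_Sel_vanishing_lemma2}), and for a torsion-free module either direction is an honest injection.
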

\begin{proof}
For simplicity, we denote $F_{\Lambda_2}(X(E/\KK_{\infty}))$ by $\mathfrak{F}$. Since $\mathfrak{F}$ is a torsion-free $\Lambda_2$ module, it follows from  \cite[Prop.~5.1.8]{NSW} that there exist a reflexive $\Lambda_2$-module $M$ and a pseudo-null $\Lambda_2$-module $A$ such that we have an exact sequence
\begin{equation}\label{refmod_seq}
0 \to \mathfrak{F} \to M \to A \to 0.
\end{equation}
This exact sequence induces the exact sequence
\begin{equation}\label{refmod_seq2}
M[\Upsilon_n] \to A[\Upsilon_n] \to \mathfrak{F}/\Upsilon_n \to M/\Upsilon_n.
\end{equation}
We claim that the $\Lambda_{H,n}$-module $M/\Upsilon_n$ is torsion-free. To see this, we note that by \cite[Corollary~3.7]{EG} we have that $M$ is a (second) syzygy. So ${M=\ker(B \to C)}$ where $B$ and $C$ are free $\Lambda_2$-modules. It follows from this that we have an exact sequence
$$0 \to M \to B \to D \to 0$$
where $B$ is a free $\Lambda$-module and $D$ is a torsion-free $\Lambda_2$-module. From this exact sequence we get an exact sequence
$$D[\Upsilon_n] \to M/\Upsilon_n \to B/\Upsilon_n.$$
Since $D$ is torsion-free, ${D[\Upsilon_n]=0}$. Since $B$ is a free $\Lambda_2$-module and $\Lambda_2$ is a free $\Zp[[G_{H,n}]]$-module, $B$ is a free $\Zp[[G_{H,n}]]$-module whence $B/\Upsilon_n$ is a free $\Lambda_{H,n}$-module. Therefore as claimed $M/\Upsilon_n$ is a torsion-free $\Lambda_{H,n}$-module. Also as $M$ is reflexive, it is a torsion-free $\Lambda_2$-module. Therefore ${M[\Upsilon_n]=0}$. It follows from these facts and the exact sequence (\ref{refmod_seq2}) that we have an isomorphism ${T_{\Lambda_{H,n}}(\mathfrak{F}/\Upsilon_n) \cong A[\Upsilon_n]}$.
\end{proof}

The rest of the section is devoted to proving results about Iwasawa invariants of $T_{\Lambda_{H,n}}([F_{\Lambda_2}(X(E/\KK_{\infty}))]_{H_n})$.

\begin{proposition}\label{mu_vanishing_prop}
Let ${\bullet,\star \in\{+,-\}}$. Then for all but finitely many ${H \in \mathcal{H}^{\bullet\star}}$ we have that ${\mu_{G_n/H_n}(T_{\Lambda_{H,n}}([F_{\Lambda_2}(X(E/\KK_{\infty}))]_{H_n})) = 0}$ for every ${n \ge 0}$.
\end{proposition}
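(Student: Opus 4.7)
The plan is to invoke Lemma~\ref{structureisom_lemma}, which provides an isomorphism $T_{\Lambda_{H,n}}([F_{\Lambda_2}(X(E/\KK_\infty))]_{H_n}) \cong A[\Upsilon_n]$ for a fixed pseudo-null $\Lambda_2$-module $A$ that does not depend on $H$ or $n$. Consequently the claim reduces to showing that $\mu_{G_n/H_n}(A[\Upsilon_n])=0$ for every ${n \ge 0}$ and every ${H = \overbar{\langle \sigma^a\tau^b\rangle} \in \mathcal{H}^{\bullet\star}}$ lying outside some finite exceptional set.

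First, I would split off the $p$-torsion part of $A$ via the exact sequence ${0 \to A[p^\infty] \to A \to A/A[p^\infty] \to 0}$. Since $A/A[p^\infty]$ is pseudo-null and $p$-torsion-free, every associated prime of $A/A[p^\infty]$ is a height-$2$ prime of $\Lambda_2$ avoiding $(p)$. Adjoining $p$ to $\mathrm{Ann}_{\Lambda_2}(A/A[p^\infty])$ thus produces an ideal of height $3$, so $(A/A[p^\infty])/p$ is finite, and topological Nakayama together with $p$-torsion-freeness forces $A/A[p^\infty] \cong \Z_p^r$ for some $r$. Hence $(A/A[p^\infty])[\Upsilon_n]$ is finitely generated over $\Z_p$ and contributes nothing to the $\mu$-invariant, so the snake lemma reduces the problem to proving $\mu_{G_n/H_n}(B[\Upsilon_n]) = 0$ where ${B := A[p^\infty]}$.

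Since $B$ is pseudo-null and annihilated by some $p^N$, its set of associated primes in $\Lambda_2$ consists of finitely many height-$2$ primes ${\fp_1 = (p, g_1), \ldots, \fp_r = (p, g_r)}$, with each $\bar g_i$ irreducible in $\Fp[[T_1, T_2]]$. Because $B[\Upsilon_n]$ is itself killed by $p^N$, vanishing of $\mu_{G_n/H_n}(B[\Upsilon_n])$ is equivalent to $B[\Upsilon_n]$ being finite, and this in turn is equivalent to no $\fp_i$ appearing in $\mathrm{Ass}_{\Lambda_2}(B[\Upsilon_n])$. Picking $x \in B$ with $\mathrm{Ann}_{\Lambda_2}(x) = \fp_i$, the element $x$ lies in $B[\Upsilon_n]$ iff $\Upsilon_n \in \fp_i$, so ${\fp_i \in \mathrm{Ass}_{\Lambda_2}(B[\Upsilon_n])}$ iff $\Upsilon_n \in \fp_i$. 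It therefore suffices to bound the set of $H$ for which $\Upsilon_n \in \fp_i$ can occur for some $n \ge 0$ and some $i$.

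Fix $\fp_i$ and write $R_i := \Lambda_2/\fp_i = \Fp[[T_1,T_2]]/(\bar g_i)$, with normalisation $\tilde R_i \cong \FF_q[[t]]$ for some $q$. Setting $u = 1 + \bar T_1$ and $v = 1 + \bar T_2$, the Frobenius identity $(1+x)^{p^n} = 1 + x^{p^n}$ valid in characteristic $p$ converts the condition $\Upsilon_n \in \fp_i$ into $(u^a v^b)^{p^n} = 1$ inside $\tilde R_i$. The key observation is that the pro-$p$ group ${1 + \mathfrak{m}_{\tilde R_i}}$ is $p$-torsion-free: any solution of $(1+x)^{p^n} = 1$ in the domain $\FF_q[[t]]$ forces $x^{p^n} = 0$, hence $x = 0$. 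Thus $(u^a v^b)^{p^n} = 1$ is equivalent to the $n$-independent condition $u^a v^b = 1$. Since ${1 + \mathfrak{m}_{\tilde R_i}}$ is a torsion-free $\Z_p$-module, the map ${(a,b) \mapsto u^a v^b}$ from $\Z_p^2$ to ${1 + \mathfrak{m}_{\tilde R_i}}$ has kernel of $\Z_p$-rank at most $2$; a rank-$2$ kernel would force $\bar T_1, \bar T_2 \in (\bar g_i)$, contradicting $\mathrm{ht}(\bar g_i) = 1 < 2 = \mathrm{ht}(\bar T_1, \bar T_2)$. Hence the kernel has $\Z_p$-rank at most $1$ and contributes at most one class in $\mathbb{P}^1(\Z_p)$ per $\fp_i$. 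Summing over the finitely many $\fp_i$ yields the desired finite exceptional set. The hardest step is this last reduction: without passing to the normalisation and exploiting torsion-freeness of its $1$-unit group, the $n$-dependence in $\Upsilon_n \in \fp_i$ would obscure the fact that the excluded $H$'s are controlled by a single multiplicative relation.
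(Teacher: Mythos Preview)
Your proof is correct and takes a genuinely different route from the paper's. Both arguments begin by invoking Lemma~\ref{structureisom_lemma} and reducing to the $p$-primary part $B = A[p^\infty]$ of the fixed pseudo-null module $A$. From there the strategies diverge.

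The paper argues by contradiction via a minimal-prime count: for each ``bad'' $H$ (one for which some $\mu_{G_n/H_n}(B[\Upsilon_n]) > 0$) it forms the ideal $I_H = \mathrm{Ann}_{\Lambda_2}(B) + (\Upsilon_n)$, observes that any minimal prime $\fp_H$ of $I_H$ must have height~$2$ and hence be one of the finitely many minimal primes of $\mathrm{Ann}_{\Lambda_2}(B)$, and then shows that two distinct bad $H$'s cannot share the same $\fp_H$ because adjoining both $\Upsilon$-elements forces height~$3$. Your approach instead identifies the obstruction prime-by-prime: for each height-$2$ associated prime $\fp_i = (p,g_i)$ of $B$ you pass to the domain $R_i = \Lambda_2/\fp_i$ and its normalisation $\FF_q[[t]]$, and exploit the torsion-freeness of the $1$-unit group there to collapse the condition $\Upsilon_n \in \fp_i$ to the $n$-independent relation $u^a v^b = 1$, which in turn pins down at most one class in $\mathbb{P}^1(\Z_p)$. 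Your argument is more constructive---it tells you precisely which $H$'s can be bad---at the cost of invoking the Cohen structure theorem for the normalisation; the paper's argument is shorter and stays entirely within $\Lambda_2$.

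One small imprecision: you assert that the associated primes of $B$ are all of height~$2$, but the maximal ideal $\mathfrak{m}$ could also be associated (if $B$ has a copy of $\Fp$). This is harmless, since $B[\Upsilon_n]$ is finite iff none of its associated primes has height~$< 3$, and only the height-$2$ primes enter your criterion; but the sentence should read ``the height-$2$ associated primes of $B$ are $\fp_1,\dots,\fp_r$'' rather than claiming these exhaust $\mathrm{Ass}(B)$.
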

\begin{proof}
For simplicity, we denote $F_{\Lambda_2}(X(E/\KK_{\infty}))$ by $\mathfrak{F}$. By Lemma \ref{structureisom_lemma} we have that there exists a pseudo null $\Lambda_2$-module $A$ such that for any ${H=\overbar{\langle \sigma^a\tau^b \rangle}\in \mathcal{H}^{\bullet\star}}$ we have that there exists a $\Lambda_{H,n}$-isomorphism ${T_{\Lambda_{H,n}}(\mathfrak{F}/\Upsilon_n) \cong A[\Upsilon_n]}$, where ${\Upsilon_n=(1+T_1)^{p^na}(1+T_2)^{p^nb}-1}$.

Letting ${N := (A[\Upsilon_n])[p^\infty]}$ for brevity we have that ${\mu_{G_n/H_n}(T_{\Lambda_{H,n}}(\mathfrak{F}/\Upsilon_n)) = \mu_{G_n/H_n}(N)}$.

Since $A$ is pseudo-null, the same holds true for $A[p^\infty]$. Therefore the height of its annihilator ideal ${\text{Ann}(A[p^\infty]) \subseteq \Lambda_2}$ is at least two. If this height equals three, then since $\Lambda_2$ has Krull dimension equal to three we may conclude that ${N \subseteq A[p^\infty]}$ is finite.

So we are left with the case that the height of $\text{Ann}(A[p^\infty])$ is exactly two. In order to handle this case we will use an idea from \cite{local_max}, see the proofs of Lemmas~3.1 and 5.5 in loc.cit.
We call a subgroup ${H \in \mathcal{H}^{\bullet\star}}$ of $G$ \emph{bad} if there exists at least one integer ${n \in \N}$ such that ${\mu_{G_n/H_n}(T_{\Lambda_{H,n}}(\mathfrak{F}/\Upsilon_n)) > 0}$. For any such $H$, let $n$ be the minimal integer such that this inequality holds, and let $I_H$ be the ideal
\[ I_H = \textup{Ann}_{\Lambda_2}(A[p^\infty]) + (\Upsilon_n). \]
We want to prove that there are only finitely many bad choices for $H$. To this end, note that if the height of $I_H$ was equal to three, then $(A[\Upsilon_n])[p^\infty]$ would be finite, and so in that case we would have ${\mu_{G_n/H_n}(T_{\Lambda_{H,n}}(\mathfrak{F}/\Upsilon_n)) = 0}$, contrary to our assumptions. Therefore for any bad $H$, we must have that the height of the ideal $I_H$ is equal to two.

Now let $\fp_H$ be any minimal prime ideal of $I_H$. Then the height of $\fp_H$ is also equal to two. Since $\fp_H$ contains the annihilator ideal of $A[p^\infty]$ and as $A$ is a pseudo-null $\Lambda_2$-module, any such prime ideal $\fp_H$ is also a minimal prime ideal of $\textup{Ann}_{\Lambda_2}(A[p^\infty])$. Note that $\textup{Ann}_{\Lambda_2}(A[p^\infty])$ has only finitely many pairwise distinct minimal prime ideals because $\Lambda_2$ is a Noetherian domain.

Therefore the only possible way to produce an infinite number of bad subgroups $H$ would be to have two (in fact, infinitely many) bad subgroups ${H_1 \ne H_2}$ such that the corresponding minimal prime ideals $\fp_{H_1}$ and $\fp_{H_2}$, chosen as above, are in fact equal. So suppose that ${\fp_{H_1} = \fp_{H_2}}$, and let $n_1$ and $n_2$ be the minimal integers that are used in the definitions of $I_{H_1}$ and $I_{H_2}$. Then
\[ \fp_{H_1} \supseteq \textup{Ann}_{\Lambda_2}(A[p^\infty]) + (\Upsilon_{n_1}) + (\Upsilon'_{n_2}), \]
where we denote by $\Upsilon$ (respectively, $\Upsilon'$) the variable attached to $H_1$ (respectively, $H_2$). The groups $H_1^{p^{n_1}}$ and $H_2^{p^{n_2}}$ are generated topologically by ${\gamma_1 = \Upsilon_{n_1} +1}$ and ${\gamma_2 = \Upsilon_{n_2}' + 1}$. Since ${H_1 \ne H_2}$, we know that the subgroup ${\langle \gamma_1, \gamma_2 \rangle}$ of $G$ has finite index. Therefore the height of $\fp_{H_1}$ is three, contrary to our assumptions.

This shows that ${\fp_{H_1} \ne \fp_{H_2}}$ for any two distinct bad subgroups $H_1$ and $H_2$ of $G$. Since the number of minimal primes of $\textup{Ann}_{\Lambda_2}(A[p^\infty])$ is finite, we can conclude that there are only finitely many bad subgroups $H$.
\end{proof}

\begin{lemma}\label{invaraints bound_lemma}
	Let $\bullet,\star \in\{+,-\}$. Then \begin{compactenum}[(a)]
		\item there exists ${C \geq 0}$ such that for any ${n \geq 0}$ and any ${H \in \mathcal{H}^{\bullet\star}}$ we have ${\lambda_{G_n/H_n}(T_{\Lambda_{H,n}}([F_{\Lambda_2}(X(E/\KK_{\infty}))]_{H_n})) \leq C}$,
		\item there exists ${D \geq 0}$ such that for any ${n \geq 0}$ and any ${H \in \mathcal{H}^{\bullet\star}}$ we have ${\mu_{G_n/H_n}(T_{\Lambda_{H,n}}([F_{\Lambda_2}(X(E/\KK_{\infty}))]_{H_n})) \leq D}$.
	\end{compactenum}
\end{lemma}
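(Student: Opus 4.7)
By Lemma~\ref{structureisom_lemma}, there is a fixed pseudo-null $\Lambda_2$-module $A$, depending neither on $H$ nor on $n$, together with $\Lambda_{H,n}$-isomorphisms $T_{\Lambda_{H,n}}([F_{\Lambda_2}(X(E/\KK_\infty))]_{H_n}) \cong A[\Upsilon_n]$. It therefore suffices to exhibit a single constant $C \geq 0$ bounding both $\mu_{G_n/H_n}(A[\Upsilon_n])$ and $\lambda_{G_n/H_n}(A[\Upsilon_n])$ uniformly in $(H,n)$.

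My approach is a d\'{e}vissage on $A$. Since $\Lambda_2$ is Noetherian, $A$ admits a finite filtration $0 = A_0 \subsetneq A_1 \subsetneq \cdots \subsetneq A_r = A$ with cyclic successive quotients $A_i/A_{i-1} \cong \Lambda_2/\mathfrak{p}_i$, and because $A$ is pseudo-null every $\mathfrak{p}_i$ has height at least two. The functor $(-)[\Upsilon_n]$ is left exact, and for a torsion $\Lambda_{H,n}$-module the $\mu$- and $\lambda$-invariants of any $\Lambda_{H,n}$-submodule are bounded above by those of the ambient module. A short induction on $r$ therefore reduces the problem to proving uniform bounds of the same shape for each individual quotient $(\Lambda_2/\mathfrak{p})[\Upsilon_n]$, where $\mathfrak{p}$ is a fixed prime of $\Lambda_2$ of height at least two.

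When $\mathfrak{p}$ has height three, $\Lambda_2/\mathfrak{p}$ is finite and both invariants vanish trivially. When $\mathfrak{p}$ has height two, $\Lambda_2/\mathfrak{p}$ is a one-dimensional complete Noetherian local integral domain, and $(\Lambda_2/\mathfrak{p})[\Upsilon_n] = 0$ whenever $\Upsilon_n \notin \mathfrak{p}$, because $\Upsilon_n$ is then a non-zero-divisor in that domain. The genuinely substantive situation is therefore $\Upsilon_n \in \mathfrak{p}$, where $(\Lambda_2/\mathfrak{p})[\Upsilon_n] = \Lambda_2/\mathfrak{p}$ and one must bound the $\Lambda_{H,n}$-invariants of the fixed ring $\Lambda_2/\mathfrak{p}$ uniformly over those pairs $(H,n)$. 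In this case the action of $\Lambda_{H,n}$ on $\Lambda_2/\mathfrak{p}$ factors through a surjection $\Lambda_{H,n} \twoheadrightarrow R'_{H,n} \subseteq \Lambda_2/\mathfrak{p}$ onto a subring $R'_{H,n}$ which must be one-dimensional (otherwise $\Lambda_2/\mathfrak{p}$, being a finitely generated module over the field $R'_{H,n}$, would have Krull dimension zero). Hence the kernel of this surjection is a principal height-one prime $(f_{H,n})$ of the regular local ring $\Lambda_{H,n}$, and the minimal number of $R'_{H,n}$-generators of $\Lambda_2/\mathfrak{p}$ is controlled by the multiplicity $e(\Lambda_2/\mathfrak{p})$, which is an intrinsic invariant of $\mathfrak{p}$.

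The hardest part will be the final uniformity estimate: bounding the $\mu$- and $\lambda$-invariants of $\Lambda_{H,n}/(f_{H,n})$ independently of $(H,n)$. The key point is that $f_{H,n}$ is characterized as the generator of the kernel of a map whose image lands inside the fixed one-dimensional ring $\Lambda_2/\mathfrak{p}$; a Weierstrass-preparation argument inside $\Lambda_2$ then shows that both the $p$-adic valuation and the distinguished-polynomial degree of $f_{H,n}$ are bounded by data attached to a fixed generating set of $\mathfrak{p}$, and hence are uniform in $(H,n)$. Combining this with the multiplicity bound yields constants $c_{\mathfrak{p}_i}$ for each prime appearing in the fixed filtration of $A$, and summing produces a single constant $C$ satisfying both (a) and (b).
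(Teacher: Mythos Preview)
Your d\'evissage strategy is valid in outline, but you have made the problem much harder than necessary, and the step you flag as ``hardest'' is genuinely left unproved. The paper handles part~(a) with a single observation you are missing: once one knows $A$ is pseudo-null, the quotient $A_f := A/A[p^\infty]$ is pseudo-null \emph{and} $p$-torsion-free, hence of Krull dimension at most one with $A_f/p$ finite, so $A_f$ is a finitely generated free $\Z_p$-module. Then from $0 \to (A[p^\infty])[\Upsilon_n] \to A[\Upsilon_n] \to A_f[\Upsilon_n]$ one reads off
\[
\lambda_{G_n/H_n}(A[\Upsilon_n]) \le \lambda_{G_n/H_n}(A_f[\Upsilon_n]) \le \rank_{\Z_p}(A_f),
\]
since the $p$-primary piece contributes nothing to $\lambda$ and any $\Lambda_{H,n}$-submodule of $A_f$ has $\lambda$ bounded by $\rank_{\Z_p}(A_f)$. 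This gives (a) in two lines, with $C = \rank_{\Z_p}(A_f)$ independent of $(H,n)$. For (b) the paper simply invokes Proposition~\ref{mu_vanishing_prop}, which already shows that $\mu_{G_n/H_n} = 0$ for all $n$ once $H$ lies outside a finite exceptional set.

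By contrast, your prime-filtration reduces to bounding the $\Lambda_{H,n}$-invariants of a fixed one-dimensional domain $\Lambda_2/\mathfrak p$ over a moving regular subring $R'_{H,n}$. Two of your assertions there are not justified. First, the claim that the minimal number of $R'_{H,n}$-generators is controlled by the intrinsic multiplicity $e(\Lambda_2/\mathfrak p)$ is not true as stated: the number of generators equals $\dim_{\F_p}\bigl((\Lambda_2/\mathfrak p)/\mathfrak m_{R'_{H,n}}(\Lambda_2/\mathfrak p)\bigr)$, which is the multiplicity of a \emph{parameter ideal} depending on $R'_{H,n}$ and can exceed $e(\Lambda_2/\mathfrak p)$. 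What actually makes this quantity uniform is that, whenever $\Upsilon_n \in \mathfrak p$, already $\Upsilon_0 \in \mathfrak p$ (since $\mathfrak p$ is prime and the image of $G$ in $(\Lambda_2/\mathfrak p)^\times$ is torsion-free), so the generator of $G_n/H_n$ has the same image in $\Lambda_2/\mathfrak p$ as a fixed complement $\delta$ to $H$; hence $R'_{H,n}$ is literally independent of $n$. Second, your ``Weierstrass-preparation argument'' bounding $\mu(f_{H,n})$ and $\lambda(f_{H,n})$ is only a slogan: you would need to argue separately that $f_{H,n}$ is a unit times $p$ when $p\in\mathfrak p$, and that $\lambda(f_{H,n})\le \rank_{\Z_p}(\Lambda_2/\mathfrak p)$ when $p\notin\mathfrak p$. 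These facts are true, but none of this is the ``Weierstrass preparation'' you invoke, and supplying the details costs more effort than the paper's two-line argument via $A_f$.
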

\begin{proof}
	For simplicity, we denote $F_{\Lambda_2}(X(E/\KK_{\infty}))$ by $\mathfrak{F}$. By Lemma \ref{structureisom_lemma} we have that there exists a pseudo null $\Lambda_2$-module $A$ such that for any ${H=\overbar{\langle \sigma^a\tau^b \rangle}\in \mathcal{H}^{\bullet\star}}$ we have  that there exists a $\Lambda_{H,n}$-isomorphism ${T_{\Lambda_{H,n}}(\mathfrak{F}/\Upsilon_n) \cong A[\Upsilon_n]}$, where ${\Upsilon_n=(1+T_1)^{p^na}(1+T_2)^{p^nb}-1}$.
	
	Let $A_f=A/A[p^{\infty}]$. Since $A$ is a pseudo-null $\Lambda_2$-module, $A_f$ is also pseudo-null i.e. it has Krull dimension at most one. Since multiplication by $p$ is injective on $A_f$ by definition, it follows that $A_f/p$ is finite by \cite[Corollary~11.9]{AM}, i.e. $A_f$ is a finitely generated free $\Z_p$-module. Now the exact sequence
	$$0 \to A[p^{\infty}] \to A \to A_f \to 0$$
	induces an exact sequence
	$$0 \to (A[p^{\infty}])[\Upsilon_n] \to A[\Upsilon_n] \to A_f[\Upsilon_n]. $$
	Since clearly ${\lambda_{G_n/H_n}((A[p^{\infty}])[\Upsilon_n])=0}$, we have
	$$\lambda_{G_n/H_n}(T_{\Lambda_{H,n}}(\mathfrak{F}/\Upsilon_n))=\lambda_{G_n/H_n}(A[\Upsilon_n]) \leq \lambda_{G_n/H_n}(A_f[\Upsilon_n]) \leq \rank_{\Zp}(A_f). $$
	This implies the statement in part~$(a)$.
	
	Part~$(b)$ of the lemma follows from Proposition~\ref{mu_vanishing_prop}.
\end{proof}

We have shown above that the term $\mu_{G_n/H_n}(T_{\Lambda_{H,n}}([F_{\Lambda_2}(X(E/\KK_{\infty}))]_{H_n}))$, which shows up in Theorems~\ref{main_theorem2} and \ref{main_theorem3}, is usually zero. We now show that this term can be replaced by the $\mu$-invariant of other objects. Note that the proof below depends on the proof of Theorem~\ref{main_theorem2} which will be proven in Sections~\ref{section:main-theorem2a} and \ref{section:main-theorem2}.

\begin{theorem}\label{mu-invariants_theorem}
Let ${\bullet, \star \in \{+,-\}}$ and ${H=\overbar{\langle \sigma^a\tau^b \rangle}\in \mathcal{H}^{\bullet\star}}$. Let $Y(E/\KK_{\infty})^{\circ}$ denote the maximal pseudo-null submodule of $Y(E/\KK_{\infty})$. Then the following assertions hold.
\begin{enumerate}
\item We have ${F_{\Lambda_2}(X(E/\KK_{\infty}))=F_{\Lambda_2}(H^1(G_S(\KK_{\infty}), E[p^{\infty}])^\vee)}$ and hence for any ${n \geq 0}$ \\
\resizebox{.9\hsize}{!}{$\mu_{G_n/H_n}(T_{\Lambda_{H,n}}([F_{\Lambda_2}(X(E/\KK_{\infty}))]_{H_n}))= \mu_{G_n/H_n}(T_{\Lambda_{H,n}}([F_{\Lambda_2}(H_1(G_S(\KK_{\infty}), E[p^{\infty}]))]_{H_n}))$}.
\item Assume that $T_{\Lambda_2}(X(E/\KK_{\infty}))_f$ is finitely generated over $\Lambda(H)$ (or equivalently by Theorem~\ref{equivalence_theorem} that $Y(E/\KK_{\infty})_f$ is finitely generated over $\Lambda(H)$). Then for any ${n \geq 0}$ we have
\begin{align*}
\mu_{G_n/H_n}(H_1(H_n, Y(E/\KK_{\infty})))&=\mu_{G_n/H_n}(H_1(H_n, Y(E/\KK_{\infty})^{\circ}))\\
&=\mu_{G_n/H_n}(H_0(H_n, Y(E/\KK_{\infty})^{\circ}))\\
&=\mu_{G_n/H_n}(T_{\Lambda_{H,n}}([F_{\Lambda_2}(X(E/\KK_{\infty}))]_{H_n})).
\end{align*}
\end{enumerate}
\end{theorem}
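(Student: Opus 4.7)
My plan is to start from the defining exact sequence
\[
0 \to \Selinf(E/\KK_\infty) \to H^1(G_S(\KK_\infty), E[p^\infty]) \to \bigoplus_{v \in S} J_v(E/\KK_\infty)
\]
and take Pontryagin duals, yielding a surjection $H^1(G_S(\KK_\infty), E[p^\infty])^\vee \twoheadrightarrow X(E/\KK_\infty)$ with kernel $K$ a quotient of $\bigoplus_v J_v(E/\KK_\infty)^\vee$. Proposition~\ref{Selrank_prop}(b) gives $\rank_{\Lambda_2}(X(E/\KK_\infty)) = 2$, while the vanishing of $H^2(G_S(\KK_\infty), E[p^\infty])$ (Lemma~\ref{H2_Kinf_vainishing_lemma}) combined with \cite[Proposition~3.2]{OV} yields the same $\Lambda_2$-rank for the source. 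Hence $K$ is $\Lambda_2$-torsion; since $K$ lies inside $T_{\Lambda_2}(H^1(G_S(\KK_\infty),E[p^\infty])^\vee)$, the induced map on free parts $F_{\Lambda_2}(H^1(G_S(\KK_\infty),E[p^\infty])^\vee) \to F_{\Lambda_2}(X(E/\KK_\infty))$ is a $\Lambda_2$-isomorphism. Passing to $H_n$-coinvariants and taking $\Lambda_{H,n}$-torsion then gives the claimed equality of $\mu_{G_n/H_n}$-invariants.

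\textbf{Set-up for Part (2).} Under the standing hypothesis that $T_{\Lambda_2}(X(E/\KK_\infty))_f$ (equivalently, $Y(E/\KK_\infty)_f$, by Theorem~\ref{equivalence_theorem}) is finitely generated over $\Lambda(H)$, Theorem~\ref{main_theorem2}(e) holds, and I would rearrange it as
\[
\mu_{G_n/H_n}(T_{\Lambda_{H,n}}([F_{\Lambda_2}(X)]_{H_n})) = \mu_{G_n/H_n}(Y(E/\KK_\infty^{H_n})) - p^n \mu_G(Y(E/\KK_\infty)).
\]
Applying the fine Selmer control theorem (Proposition~\ref{fineSelmercontrol_prop1}), the right-hand side becomes $\mu_{G_n/H_n}(H_0(H_n, Y(E/\KK_\infty))) - p^n\mu_G(Y(E/\KK_\infty))$.

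\textbf{Main argument of Part (2).} The identification of this expression with each of $\mu_{G_n/H_n}(H_1(H_n, Y))$, $\mu_{G_n/H_n}(H_1(H_n, Y^\circ))$ and $\mu_{G_n/H_n}(H_0(H_n, Y^\circ))$ rests on the Iwasawa-style Euler-characteristic identity
\[
\mu_{G_n/H_n}(H_0(H_n, M)) - \mu_{G_n/H_n}(H_1(H_n, M)) = p^n\mu_G(M),
\]
which holds exactly for a finitely generated $\Lambda_2$-torsion module $M$ with no non-trivial pseudo-null submodule. The plan is to apply this to $M = Y/Y^\circ$ (valid by construction) and propagate back to $Y$ via the six-term $H_n$-homology long exact sequence of $0 \to Y^\circ \to Y \to Y/Y^\circ \to 0$, which terminates at $H_1$ since $H_n \cong \Z_p$ has cohomological dimension one. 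The pseudo-null contribution $Y^\circ$ is then handled by the Euler-characteristic identity $\mu_{G_n/H_n}(H_0(H_n, Y^\circ)) = \mu_{G_n/H_n}(H_1(H_n, Y^\circ))$, which is both one of the claimed equalities and a consequence of $\mu_G(Y^\circ) = 0$ combined with the $\Lambda(H)$-finiteness hypothesis.

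\textbf{Main obstacle.} The genuinely delicate point is the careful bookkeeping of pseudo-null corrections in the Euler-characteristic formulas. While the identity is transparent on elementary cyclic quotients $\Lambda_2/p^m$ or $\Lambda_2/f^n$, ensuring that the $\mu$-invariants match cleanly for the pseudo-null module $Y^\circ$ requires a structural analysis of the minimal primes of $\mathrm{Ann}_{\Lambda_2}(Y^\circ[p^\infty])$ in the spirit of the argument used in the proof of Proposition~\ref{mu_vanishing_prop}, leveraging the finite generation of $Y_f$ over $\Lambda(H)$ to rule out spurious contributions.
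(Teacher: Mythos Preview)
Your Part~(1) is correct and matches the paper's argument: both show the kernel of $H^1(G_S(\KK_\infty),E[p^\infty])^\vee \twoheadrightarrow X(E/\KK_\infty)$ is $\Lambda_2$-torsion (the paper phrases this as surjectivity on torsion submodules, you as an isomorphism on free quotients; these are equivalent).

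In Part~(2) there is a genuine gap. Your Euler-characteristic identity
\[
\mu_{G_n/H_n}(H_0(H_n,M)) - \mu_{G_n/H_n}(H_1(H_n,M)) = p^n\mu_G(M)
\]
is \emph{false} for general torsion $\Lambda_2$-modules without pseudo-null submodules. Take $H=\overbar{\langle\sigma\rangle}$ so $\Upsilon_0=T_1$, and $M=\Lambda_2/(T_1+p)$: this is a domain (hence no pseudo-null submodules), $\mu_G(M)=0$, $H_1(H,M)=0$, but $H_0(H,M)=\Lambda_2/(p,T_1)\cong\Lambda_{H,0}/p$ has $\mu_{G/H}=1$. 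What you actually need, beyond the absence of pseudo-null submodules, is that the reduction mod~$p$ of the characteristic power series of $M$ be coprime to $\bar\Upsilon_n$. For $M=Y/Y^\circ$ this \emph{does} follow from the standing hypothesis via Theorem~\ref{main_theorem2}(b), so your argument can be repaired, but the repair is exactly the substance you are glossing over.

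The paper organises this differently and avoids the issue: it derives the Euler-characteristic formula for $Y$ itself (not $Y/Y^\circ$), using Howson's formula applied to $Y[p^\infty]$ and then passing to $Y$ via the long exact sequence of $0\to Y[p^\infty]\to Y\to Y_f\to 0$; here the correction terms from $Y_f$ vanish precisely because $Y_f$ is finitely generated over $\Lambda(H)$ by hypothesis. For the pseudo-null equality $\mu_{G_n/H_n}(H_0(H_n,Y^\circ))=\mu_{G_n/H_n}(H_1(H_n,Y^\circ))$, the paper proves a dedicated lemma (Lemma~\ref{mu_pseudo-null_lemma}) by d\'evissage down to $\mathbb{F}_p$-modules and the structure theorem over $\mathbb{F}_p[[S,T]]$---not by the minimal-prime analysis you suggest, though your route would likely also work.
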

\begin{proof}
Using the same notation as in the proof of Proposition~\ref{Selrank_prop} we have an exact sequence
$$0 \to \Selinf(E/\KK_{\infty})\to  H^1(G_S(\KK_{\infty}), E[p^{\infty}]) \to \prod_{w \in S_{tame, \infty}} H^1(\KK_{\infty,w}, E(\KK_{\infty,w})[p^{\infty}]). $$
Dualizing this sequence we get
$$U \to H^1(G_S(\KK_{\infty}), E[p^{\infty}])^\vee \xrightarrow{\phi} X(E/\KK_{\infty}) \to 0, $$
where ${U=\prod_{w \in S_{tame, \infty}} H^1(\KK_{\infty,w}, E(\KK_{\infty,w})[p^{\infty}])^{\vee}}$.

To prove the first part of theorem, namely ${F_{\Lambda_2}(X(E/\KK_{\infty}))=F_{\Lambda_2}({H^1(G_S(\KK_{\infty}), E[p^{\infty}])^\vee})}$, it easy to see that is suffices to prove that the induced map from $\phi$ given by
$$\phi': T_{\Lambda_2}(H^1(G_S(\KK_{\infty}), E[p^{\infty}])^\vee) \to  T_{\Lambda_2}(X(E/\KK_{\infty}))$$
is surjective. To this end, let ${x \in T_{\Lambda_2}(X(E/\KK_{\infty}))}$. Since $\phi$ is surjective, there exists ${y \in {H^1(G_S(\KK_{\infty}), E[p^{\infty}])^\vee}}$ with ${\phi(y)=x}$. Since ${x \in T_{\Lambda_2}(X(E/\KK_{\infty}))}$, there exists a nonzero ${r \in \Lambda_2}$ with ${\phi(ry)=r\phi(y)=rx=0}$. So ${ry \in \ker \phi}$. In the proof of Proposition~\ref{Selrank_prop} we showed that $U$ is $\Lambda_2$-torsion. Therefore ${\ker \phi}$ is $\Lambda_2$-torsion. Thus there exists a nonzero ${s \in \Lambda_2}$ with ${(sr)y=0}$. Therefore ${y \in T_{\Lambda_2}({H^1(G_S(\KK_{\infty}), E[p^{\infty}])^\vee})}$ and whence $\phi'$ is surjective.

Now we prove the second part. Let ${n \geq 0}$. Assume that $T_{\Lambda_2}(X(E/\KK_{\infty}))_f$ is finitely generated over $\Lambda(H)$ (or equivalently by Theorem~\ref{equivalence_theorem} that $Y(E/\KK_{\infty})_f$ is finitely generated over $\Lambda(H)$). First we observe that each of the groups $H_1(H_n, Y(E/\KK_{\infty}))$, $H_1(H_n, Y(E/\KK_{\infty})^{\circ})$, $H_0(H_n, Y(E/\KK_{\infty})^{\circ})$ is a finitely generated $\Lambda_{H,n}$-torsion module. To simplify notation, we will denote $Y(E/\KK_{\infty})$ simply by $Y_{\infty}$ and $Y(E/\KK_{\infty})^{\circ}$ by $Y_{\infty}^{\circ}$.

According to Lemma~\ref{Yinf_torsion_lemma} $Y_{\infty}$ is a finitely generated $\Lambda_2$-torsion module. Therefore  by \cite[Chapt.~VII, \S 4.4 Theorem~5]{Bourbaki}, there exist irreducible power series ${f_j \in \Zp[[T_1, T_2]]}$, integers $m_i, n_j$ and an exact sequence

\begin{equation}\label{fineSelmer_seq1}
0 \to Y_{\infty}^{\circ} \to Y_{\infty} \to W,
\end{equation}
where ${W=\bigoplus_{i=1}^s \Lambda_2/p^{m_i} \oplus \bigoplus_{j=1}^t \Lambda_2/f_j^{n_j}}$. Since ${cd_p(H_n)=1}$, the functor $H_1(H_n, -)$ is left exact. Therefore from the above we get an exact sequence

\begin{equation}\label{fineSelmer_seq2}
0 \to H_1(H_n, Y_{\infty}^{\circ}) \to H_1(H_n, Y_{\infty}) \to H_1(H_n, W).
\end{equation}

Since ${Y_{\infty,f} = (Y_\infty)_f}$ is finitely generated over $\Lambda(H)$, it follows from Lemma~\ref{WL_lemma} that $Y(E/\KK_{\infty}^{H_n})$ is a torsion $\Lambda_{H,n}$-module. This implies by Proposition~\ref{fineSelmercontrol_prop1} that $H_0(H_n, Y_{\infty})$ is a torsion $\Lambda_{H,n}$-module.

Recall that ${H=\overbar{\langle \sigma^a\tau^b \rangle}}$ and ${\Upsilon_n=(1+T_1)^{p^na}(1+T_2)^{p^nb}-1}$.  Since $H_0(H_n, Y_{\infty})$ is a torsion $\Lambda_{H,n}$-module, an identical proof to the equivalence of $(b)$ and $(c)$ in Lemma~\ref{rankequal_lemma} shows that $\Upsilon_n$ is relatively prime to the character power series $\tilde{f}$ of $Y_{\infty}$. Thus we get ${H_1(H_n, W)=0}$. Therefore from the exact sequence~\eqref{fineSelmer_seq2} we get that ${H_1(H_n, Y_{\infty})=H_1(H_n, Y_{\infty}^{\circ})}$.

Since $(Y_{\infty})^{\circ}$ is a pseudo-null $\Lambda_2$-module, it has Krull dimension at most one. Therefore both $H_1(H_n, Y_{\infty}^{\circ})$ and $H_0(H_n, Y_{\infty}^{\circ})$ have Krull dimension at most one as a $\Lambda(G/H_n)$-module. By the same arguments as in the proof of the equivalence of $(b)$ and $(c)$ in Lemma~\ref{rankequal_lemma} we see that $H_1(H_n, Y_{\infty}^{\circ})$ and $H_0(H_n, Y_{\infty}^{\circ})$ also have Krull dimension at most one as a $\Lambda_{H,n}$-module. Thus ${H_1(H_n, Y_{\infty})=H_1(H_n, Y_{\infty}^{\circ})}$ and $H_0(H_n, Y_{\infty})$ are $\Lambda_{H,n}$-torsion. Also by Lemma~\ref{mu_pseudo-null_lemma} below we have that ${\mu_{G_n/H_n}(H_1(H_n, Y_{\infty}^{\circ}))=\mu_{G_n/H_n}(H_0(H_n, Y_{\infty}^{\circ}))}$. Thus we have shown that each of the groups $H_1(H_n, Y(E/\KK_{\infty}))$, $H_1(H_n, Y(E/\KK_{\infty})^{\circ})$, $H_0(H_n, Y(E/\KK_{\infty})^{\circ})$ is a finitely generated $\Lambda_{H,n}$-torsion module and that their $\mu$-invariants are equal. Therefore it only remains to show that ${\mu_{G_n/H_n}(H_1(H_n, Y_{\infty})) = \mu_{G_n/H_n}(T_{\Lambda_{H,n}}([F_{\Lambda_2}(X(E/\KK_{\infty}))]_{H_n}))}$.

Recall from \cite[Corollary~1.7]{Howson} that if $M$ is a finitely generated $\Lambda(G_n)$-module, then
$$p^{\mu_{G_n}(M)}=\prod_{i \geq 0} \# H_i(G_n, M[p^{\infty}])^{(-1)^i} = \chi(G_n, M[p^\infty]). $$\\
We have a similar formula when $M$ is a finitely generated $\Lambda_{H,n}$-module. From the Hochschild-Serre spectral sequence it is easy to prove that
$$\chi(G_n, Y_{\infty}[p^{\infty}])=\prod_{i=0,1}\chi(G_n/H_n, H_i(H_n, Y_{\infty}[p^{\infty}]))^{(-1)^i}. $$\\
From this we get
\begin{equation}\label{mu_invariant_equality1}
\mu_{G_n}(Y_{\infty})=\mu_{G_n/H_n}(H_0(H_n, Y_{\infty}[p^{\infty}]))-\mu_{G_n/H_n}(H_1(H_n, Y_{\infty}[p^{\infty}])).
\end{equation}\\
Consider the exact sequence
\begin{equation}\label{Yinf_exactseq}
0 \to Y_{\infty}[p^{\infty}] \to Y_{\infty} \to Y_{\infty, f} \to 0.
\end{equation}\\
Since ${cd_p(H)=1}$, the functor $H_1(H,-)$ is left exact. Thus from \eqref{Yinf_exactseq} we get a long exact sequence
\begin{align}
0 &\to H_1(H_n, Y_{\infty}[p^{\infty}]) \to H_1(H_n, Y_{\infty}) \to H_1(H_n, Y_{\infty, f}) \nonumber \\
&\to H_0(H_n, Y_{\infty}[p^{\infty}]) \to H_0(H_n, Y_{\infty}) \to H_0(H_n, Y_{\infty, f}) \to 0.
\end{align}

We claim that all the groups in this exact sequence are $\Lambda_{H,n}$-torsion. Clearly $H_0(H_n, Y_{\infty}[p^{\infty}])$ is $\Lambda_{H,n}$-torsion and we showed above that $H_1(H_n, Y_{\infty})$ is $\Lambda_{H,n}$-torsion. Since $Y_{\infty,f}$ is finitely generated over $\Lambda(H)$ and $H_n$ has finite index in $H$, we have that $H_0(H_n, Y_{\infty,f})$ is finitely generated over $\Zp$ and hence is $\Lambda_{H,n}$-torsion. It follows that all groups appearing in the exact sequence above are $\Lambda_{H,n}$-torsion, so the alternating sum of their $\mu$-invariants is zero. Thus noting the equality~\eqref{mu_invariant_equality1} we get that $\mu_{G_n}(Y_{\infty})$ is equal to
\begin{equation}
\begin{tikzcd}[scale cd=0.83]
	\mu_{G_n/H_n}( H_0(H_n, Y_{\infty}))-\mu_{G_n/H_n}(H_1(H_n, Y_{\infty}))+ \mu_{G_n/H_n}(H_1(H_n, Y_{\infty, f}))- \mu_{G_n/H_n}(H_0(H_n, Y_{\infty, f})).
\end{tikzcd}
\end{equation}

As noted above $H_0(H_n, Y_{\infty,f})$ is finitely generated over $\Zp$ and hence has $\mu$-invariant zero. Moreover, as $Y_{\infty,f}$ has no elements of order $p$ and $H_1(H_n, Y_{\infty, f})$ is a submodule of $Y_{\infty,f}$, we have that ${\mu_{G_n/H_n}(H_1(H_n, Y_{\infty, f}))=0}$. Thus we conclude that
\begin{equation}\label{mu_invariant_equality2}
\mu_{G_n}(Y_{\infty})=\mu_{G_n/H_n}(H_0(H_n, Y_{\infty}))-\mu_{G_n/H_n}(H_1(H_n, Y_{\infty})).
\end{equation}
Since $\Lambda(G)$ is a free $\Lambda(G_n)$-module of rank $p^n$, we have ${p^n\mu_G(Y_{\infty})=\mu_{G_n}(Y_{\infty})}$. By Proposition~\ref{fineSelmercontrol_prop1} we have ${\mu_{G_n/H_n}(H_0(H_n, Y_{\infty}))=\mu_{G_n/H_n}(Y(E/\KK_\infty^{H_n}))}$. Thus \eqref{mu_invariant_equality2} becomes
\begin{equation}\label{mu_invariant_equality3}
p^n\mu_G(Y_{\infty})=\mu_{G_n/H_n}(Y(E/\KK_\infty^{H_n}))-\mu_{G_n/H_n}(H_1(H_n, Y_{\infty})).
\end{equation}
Comparing this with Theorem~\ref{main_theorem2}$(e)$ we get ${\mu_{G_n/H_n}(H_1(H_n, Y_{\infty})) = \mu_{G_n/H_n}(T_{\Lambda_{H,n}}([F_{\Lambda_2}(X(E/\KK_{\infty}))]_{H_n}))}$ as desired.
\end{proof}

We end this section with the following lemma which was used in the proof of Theorem~\ref{mu-invariants_theorem}. It will also be used in the proof of Lemma~\ref{mu_inequality_lemma}.

\begin{lemma}\label{mu_pseudo-null_lemma}
Let $A$ be a pseudo-null $\Lambda_2$-module, ${[(a,b)] \in \mathbb{P}^1(\Zp)}$, ${H=\overbar{\langle \sigma^a\tau^b \rangle}}$, ${\Upsilon_n=(1+T_1)^{p^na}(1+T_2)^{p^nb}-1}$ and ${n \geq 0}$. Then ${\mu_{G_n/H_n}(A^{\Upsilon_n=0})=\mu_{G_n/H_n}(A/\Upsilon_n)}$.
\end{lemma}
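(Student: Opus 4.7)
The plan is to prove the equality by the standard devissage technique: reduce via a prime filtration of $A$ to the basic quotients $\Lambda_2/\fP$, where the statement can be checked directly.

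First, I would verify that both $\mu$-invariants in the claimed equality are well defined. Since $A$ is pseudo-null over $\Lambda_2$, both ${A^{\Upsilon_n = 0} = A[\Upsilon_n]}$ and $A/\Upsilon_n$ are finitely generated $\Lambda_2$-modules annihilated by $\Upsilon_n$, and hence finitely generated over the quotient ${\Lambda_2/(\Upsilon_n) = \Lambda(G/H_n)}$. Because $\Lambda(G/H_n)$ is a free $\Lambda_{H,n}$-module of rank $p^n$, both are finitely generated over $\Lambda_{H,n}$. Moreover, as subquotients of the pseudo-null module $A$, both have Krull dimension at most $1$, while ${\dim \Lambda(G/H_n) = 2}$, so they are $\Lambda(G/H_n)$-torsion; since $\Lambda(G/H_n)$ is integral over $\Lambda_{H,n}$, they are also $\Lambda_{H,n}$-torsion, and the invariants $\mu_{G_n/H_n}$ on both sides of the claimed equality make sense.

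Next, for any pseudo-null $\Lambda_2$-module $M$ set ${\Delta(M) := \mu_{G_n/H_n}(M[\Upsilon_n]) - \mu_{G_n/H_n}(M/\Upsilon_n)}$, and I would show that $\Delta$ is additive on short exact sequences. Given ${0 \to M' \to M \to M'' \to 0}$, the snake lemma applied to the vertical multiplication-by-$\Upsilon_n$ maps produces the six-term exact sequence
\[ 0 \to M'[\Upsilon_n] \to M[\Upsilon_n] \to M''[\Upsilon_n] \to M'/\Upsilon_n \to M/\Upsilon_n \to M''/\Upsilon_n \to 0 \]
of finitely generated $\Lambda_{H,n}$-torsion modules (by the previous paragraph). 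Splitting this into short exact sequences and invoking the additivity of $\mu_{G_n/H_n}$-invariants on short exact sequences of finitely generated $\Lambda_{H,n}$-torsion modules yields ${\Delta(M) = \Delta(M') + \Delta(M'')}$. Now by the standard prime filtration theorem, $A$ admits a finite filtration whose subquotients have the form $\Lambda_2/\fP$ for prime ideals ${\fP \subset \Lambda_2}$; as $A$ is pseudo-null and $\Lambda_2$ is a three-dimensional regular local ring, each such $\fP$ has height at least $2$. By additivity, it therefore suffices to prove ${\Delta(\Lambda_2/\fP) = 0}$ for every $\fP$ of height $2$ or $3$.

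The verification on the basic pieces is short. If ${\mathrm{ht}(\fP) = 3}$, then $\Lambda_2/\fP$ is finite, and so are $(\Lambda_2/\fP)[\Upsilon_n]$ and $(\Lambda_2/\fP)/\Upsilon_n$, giving $\Delta = 0$. If ${\mathrm{ht}(\fP) = 2}$ and ${\Upsilon_n \in \fP}$, multiplication by $\Upsilon_n$ is zero on $\Lambda_2/\fP$, so $(\Lambda_2/\fP)[\Upsilon_n] = \Lambda_2/\fP = (\Lambda_2/\fP)/\Upsilon_n$ and $\Delta = 0$ tautologically. The only delicate case is ${\mathrm{ht}(\fP) = 2}$ with ${\Upsilon_n \notin \fP}$: then $\Lambda_2/\fP$ is a domain in which $\Upsilon_n$ is a nonzero divisor, so ${(\Lambda_2/\fP)[\Upsilon_n] = 0}$, while $\fP$ is not a minimal prime over $\fP + (\Upsilon_n)$ (as $\fP$ is prime and ${\Upsilon_n \notin \fP}$), so by Krull's Hauptidealsatz the ideal ${\fP + (\Upsilon_n)}$ has height $3$; in the three-dimensional local ring $\Lambda_2$ this makes it $\mathfrak{m}$-primary, so $\Lambda_2/(\fP + \Upsilon_n)$ is finite and again $\Delta = 0$. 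The main point to watch throughout is the $\Lambda_{H,n}$-torsion property of every module appearing in the snake-lemma sequence, so that the $\mu$-invariants are defined and additive; this is precisely what the first paragraph ensures.
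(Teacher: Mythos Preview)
Your proof is correct and takes a genuinely different route from the paper's. The paper proceeds in two layers of devissage: first it splits off $A_f = A/A[p^\infty]$ (which is finitely generated over $\Z_p$, so contributes nothing to either $\mu$-invariant) and reduces to $p$-primary $A$; then it further reduces to $A$ killed by $p$, passes to the mod-$p$ ring $\Omega_{2,n} \cong \mathbb{F}_p[[\dot U,\dot T]]$, applies the structure theorem there, and counts the summands $\Omega_{2,n}/h_i^{d_i}$ with $h_i$ associated to $\dot T$. Your single prime filtration over $\Lambda_2$ is more economical: additivity of $\Delta$ on short exact sequences plus the three easy cases for $\Lambda_2/\fP$ handle everything at once, without ever reducing modulo~$p$. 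One small quibble: your invocation of Krull's Hauptidealsatz in the last case is phrased awkwardly (Hauptidealsatz bounds heights from above); the clean argument is simply that $\Lambda_2/\fP$ is a one-dimensional local domain and the image of $\Upsilon_n$ is nonzero, so $(\Lambda_2/\fP)/\Upsilon_n$ has dimension zero and is finite. The paper's approach has the minor advantage of making the value of both $\mu$-invariants explicit (the number of ``bad'' summands), whereas yours only shows they agree; but for the lemma as stated, your argument is shorter and more transparent.
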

\begin{proof}
Let ${A'=A/A[p^{\infty}]}$. Since $A$ is a pseudo-null $\Lambda_2$-module, it has Krull dimension at most one. Therefore $A'$ also has dimension at most one. Since multiplication by $p$ is injective on $A'$, it follows from \cite[Corollary~11.9]{AM} that $A'/p$ has dimension zero and hence is finite. Therefore $A'$ is finitely generated over $\Zp$. Now consider the exact sequence
$$0 \to A[p^{\infty}] \to A \to A' \to 0. $$
From this sequence we get the following exact sequence

$$0 \to A[p^{\infty}]^{\Upsilon_n=0} \to A^{\Upsilon_n=0} \to A'^{\Upsilon_n=0} \to A[p^{\infty}]/\Upsilon_n \to A/\Upsilon_n \to A'/\Upsilon_n \to 0. $$
Noting that the alternating sum of the $\mu$-invariants of the terms in this exact sequence of finitely generated torsion $\Lambda(G_n/H_n)$-modules is zero, we get that \begin{eqnarray*} \mu_{G_n/H_n}(A^{\Upsilon_n=0}) & - & \mu_{G_n/H_n}(A/\Upsilon_n) \end{eqnarray*}
equals
\begin{eqnarray*} 
\mu_{G_n/H_n}(A[p^{\infty}]^{\Upsilon_n=0})-\mu_{G_n/H_n}(A[p^{\infty}]/\Upsilon_n) + \mu_{G_n/H_n}(A'^{\Upsilon_n=0})-\mu_{G_n/H_n}(A'/\Upsilon_n).\end{eqnarray*}

As $A'$ is finitely generated over $\Zp$, we may conclude that ${\mu_{G_n/H_n}(A'^{\Upsilon_n=0})=\mu_{G_n/H_n}(A'/\Upsilon_n)=0}$, so we see from the above that it suffices to prove the desired result of the lemma for $A[p^{\infty}]$. Replacing $A$ with $A[p^{\infty}]$, we may assume that $A$ is $p$-primary.

Since $A$ is $p$-primary and finitely generated over the Noetherian ring $\Lambda_2$, it is annihilated by $p^m$ for some $m$. For any $k$, we have an exact sequence

$$0 \to A[p^{k-1}] \to A[p^k] \xrightarrow{\cdot p^{k-1}} p^{k-1}(A[p^k]) \to 0$$\\
Denoting $p^{k-1}(A[p^k])$ by $D$, we get from the above sequence an exact sequence

$$\resizebox{.96\hsize}{!}{$0 \to A[p^{k-1}]^{\Upsilon_n=0} \to A[p^k]^{\Upsilon_n=0} \to D^{\Upsilon_n=0} \to A[p^{k-1}]/\Upsilon_n \to A[p^k]/\Upsilon_n \to D/\Upsilon_n \to 0.$}$$

All of the terms in this exact sequence are finitely generated torsion $\Lambda(G_n/H_n)$-modules, so the alternating sum of their $\mu$-invariants are zero, i.e.

\begin{eqnarray*} \mu_{G_n/H_n}(A[p^k]^{\Upsilon_n=0}) & - & \mu_{G_n/H_n}(A[p^k]/\Upsilon_n) \end{eqnarray*}
is equal to
\begin{eqnarray*} 
\mu_{G_n/H_n}(A[p^{k-1}]^{\Upsilon_n=0})-\mu_{G_n/H_n}(A[p^{k-1}]/\Upsilon_n) + \mu_{G_n/H_n}(D^{\Upsilon_n=0})-\mu_{G_n/H_n}(D/\Upsilon_n).
\end{eqnarray*}
Since $D$ is annihilated by $p$, the above equality together with a devissage argument shows that we may assume that $A$ is annihilated by $p$. We will denote $G_{H,n}$ by $G_n$. Let ${\Omega_2=\Lambda(G)/p}$ and ${\Omega_{2,n}=\Lambda(G_n)/p}$. Since ${G_n \cong H_n \times G_n/H_n}$, we may choose an isomorphism ${\Omega_{2,n} \cong \Fp[[\dot{U}, \dot{T}]]}$ where $\Upsilon_n$ corresponds to $\dot{T}$. Hence, we must show ${\mu_{G_n/H_n}(A^{\dot{T}=0})=\mu_{G_n/H_n}(A/\dot{T})}$. Now $A$ is finitely generated over $\Omega_2$. Moreover, $\Omega_2$ has Krull dimension two, whereas $A$ as a $\Lambda_2$- or $\Omega_2$-module has Krull dimension at most one because it is a pseudo-null $\Lambda_2$-module.

Let $\text{Ann}_{\Omega_2}(A)$ (resp. $\text{Ann}_{\Omega_{2,n}}(A)$) be the annihilator ideal of $A$ inside $\Omega_2$ (resp. $\Omega_{2,n}$). Since $A$ has Krull dimension at most one as a $\Omega_2$-module, we have ${\dim(\Omega_2/\text{Ann}_{\Omega_2}(A))\leq 1}$. Since ${[G:G_n]=p^n}$, the ring $\Omega_2$ is a free $\Omega_{2,n}$-module of rank $p^n$. It follows from this that $\Omega_2/\text{Ann}_{\Omega_2}(A)$ is finitely generated over its subring $\Omega_{2,n}/\text{Ann}_{\Omega_{2,n}}(A)$ and hence $\Omega_2/\text{Ann}_{\Omega_2}(A)$ is integral over  $\Omega_{2,n}/\text{Ann}_{\Omega_{2,n}}(A)$. This implies by \cite[Theorem~2.2.5]{HS} that ${\dim(\Omega_{2,n}/\text{Ann}_{\Omega_{2,n}}(A))=\dim(\Omega_{2,n}/\text{Ann}_{\Omega_{2,n}}(A)) \leq 1}$. Therefore $A$ is also a finitely generated torsion $\Omega_{2,n}$-module.

Taking \cite[Lemma~2.1]{MHG} into account, we have that there exist irreducible power series ${h_i \in \Fp[[\dot{S}, \dot{T}]]}$, integers $d_i$ and an exact sequence

$$0 \to E \to A \to C \to 0$$\\
where ${E=\bigoplus_{i=1}^{w} \Omega_{2,n}/{h_i}^{d_i}}$ and $C$ is finite. From this sequence we get an exact sequence

\begin{equation}
0 \to E^{\dot{T}=0} \to A^{\dot{T}=0} \to C^{\dot{T}=0} \to E/{\dot{T}} \to A/\dot{T} \to C/\dot{T}.
\end{equation}\\
Since $C$ is finite, it follows from the above exact sequence that ${\mu_{G_n/H_n}(A^{\dot{T}=0}) = \mu_{G_n/H_n}(E^{\dot{T}=0})}$ and ${\mu_{G_n/H_n}(A/\dot{T})=\mu_{G_n/H_n}(E/\dot{T})}$. It is easy to see that both ${\mu_{G_n/H_n}(E^{\dot{T}=0})}$ and $\mu_{G_n/H_n}(E/\dot{T})$ are equal to the number of summands of $E$ with ${h_i=u\dot{T}}$ for some unit ${u \in \Omega_{2,n}}$. Therefore ${\mu_{G_n/H_n}(A^{\dot{T}=0})=\mu_{G_n/H_n}(A/\dot{T})}$ as desired. This completes the proof.
\end{proof}

\section{Surjectivity of the global-to-local map for signed Selmer groups} \label{section:pseudo-null}

In this section, we prove the surjectivity of the global-to-local map defining the signed Selmer group over both $\KK_\infty$ and $\KK_\infty^{H}$, ${H \in \mathcal{H}^{\bullet\star}}$. Along the way we collect important auxiliary results on the cohomology of the Selmer groups.

We begin the section by proving an Euler characteristic formula for global Galois cohomology groups. The setup is as follows: Let $p$ be any rational prime, $L$ a number field, and $L_{\infty}/L$ a $\Zp$-extension with tower fields $L_n$. Let $V$ be a representation space over $\Qp$ for $G_L$ (the absolute Galois group of $L$) of dimension $d$, $T$ a $G_L$-invariant lattice, and ${A=V/T\cong(\Qp/\Zp)^d}$. Assume that $T$ is unramified outside a finite number of primes of $L$. Now let $\Sigma$ be a finite set of places of $K$ containing the primes where $T$ is ramified, the primes above $p$, and all the infinite places. Let $L_{\Sigma}$ denote the maximal extension of $L$ unramified outside $\Sigma$. As in Section~\ref{section:mainintroduction}, if $F$ is a field with ${L \subseteq F \subseteq L_{\Sigma}}$, we let ${G_{\Sigma}(F)=\Gal(L_{\Sigma}/F)}$. Finally, let $\Lambda$ be the Iwasawa algebra attached to $L_{\infty}/L$, and for each real place $v$ of $L$ let $d_v^{\pm}$ be the dimension of the $\pm$-eigenspace for complex conjugation above $v$ acting on $V$. The next proposition proves a formula that relates the $\Lambda$-coranks of $H^1(G_{\Sigma}(L_{\infty}), A)$ and $H^2(G_{\Sigma}(L_{\infty}), A)$ and a formula that relates the $\Lambda$-ranks of $\ilim_n H^1(G_{\Sigma}(L_n),T)$ and $\ilim_n H^2(G_{\Sigma}(L_n),T)$, where the inverse limits of the former two groups are defined with respect to corestriction.

\begin{proposition}\label{Euler_char_prop}
Assume that $p$ is odd and ${H^0(G_{\Sigma}(L_{\infty}), A)=0}$. Then
$$\corank_{\Lambda}(H^1(G_{\Sigma}(L_{\infty}), A))-\corank_{\Lambda}(H^2(G_{\Sigma}(L_{\infty}), A))=\delta(L,V), $$
$$\rank_{\Lambda}(\ilim_n H^1(G_{\Sigma}(L_n),T))-\rank_{\Lambda}(\ilim_n H^2(G_{\Sigma}(L_n),T))=\delta(L,V),$$
where ${\delta(L,V)=\underset{v \, \text{complex}}{\Sigma} d + \underset{v\, \text{real}}{\Sigma} d_v^-}$.
\end{proposition}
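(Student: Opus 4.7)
The plan is to obtain both identities from Tate's global Euler--Poincaré characteristic formula applied at each finite layer $L_n$, combined with Hochschild--Serre and the standard asymptotic rank formula for finitely generated $\Lambda$-modules. Throughout, set $X_i := H^i(G_{\Sigma}(L_{\infty}), A)^{\vee}$ and $\Gamma_n := \Gal(L_\infty/L_n)$. The hypothesis $H^0(G_\Sigma(L_\infty), A) = 0$ forces $H^0(G_\Sigma(L_n), A) = 0$ for every $n$. Moreover, since $p$ is odd, no archimedean place of $L$ ramifies in the $\Z_p$-extension $L_\infty/L$, so each archimedean place of $L$ has exactly $p^n$ places of the same type above it in $L_n$. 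Consequently, $\delta(L_n, V) = p^n \delta(L, V)$ and Tate's finite-level formula reads
\[ -\corank_{\Z_p} H^1(G_\Sigma(L_n), A) + \corank_{\Z_p} H^2(G_\Sigma(L_n), A) = -p^n \delta(L, V). \]

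For the first identity, I would apply Hochschild--Serre for the procyclic group $\Gamma_n$ (of $p$-cohomological dimension one). Using $H^0(G_\Sigma(L_\infty), A) = 0$, this yields $H^1(G_\Sigma(L_n), A) \cong H^1(G_\Sigma(L_\infty), A)^{\Gamma_n}$ together with a short exact sequence
\[ 0 \to H^1(\Gamma_n, H^1(G_\Sigma(L_\infty), A)) \to H^2(G_\Sigma(L_n), A) \to H^2(G_\Sigma(L_\infty), A)^{\Gamma_n} \to 0. \]
Dualising and substituting into the Tate identity above gives
\[ \rank_{\Z_p}(X_1)_{\Gamma_n} - \rank_{\Z_p}(X_2)_{\Gamma_n} - \rank_{\Z_p} X_1^{\Gamma_n} = p^n \delta(L, V). \]
I would then invoke the standard asymptotic $\rank_{\Z_p} M_{\Gamma_n} - \rank_{\Z_p} M^{\Gamma_n} = p^n \rank_\Lambda M + O(1)$ for any finitely generated $\Lambda$-module $M$ (easily derived from the structure theorem, since on the torsion part the difference between coinvariants and invariants is bounded). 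Dividing by $p^n$ and letting $n \to \infty$, the contributions from $X_1^{\Gamma_n}$ and $X_2^{\Gamma_n}$ disappear and we obtain $\rank_\Lambda X_1 - \rank_\Lambda X_2 = \delta(L, V)$, proving the first identity.

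For the second identity I would use Jannsen's spectral sequence (or equivalently the Poitou--Tate exact sequence in Iwasawa cohomology) relating the Iwasawa cohomology modules $H^i_{\mathrm{Iw}}(L_\infty, T) := \ilim_n H^i(G_\Sigma(L_n), T)$ to the Pontryagin duals $X_j$ via $\Ext$-groups over $\Lambda$. Because $\Ext^i_\Lambda(-, \Lambda)$ produces pseudo-null modules for $i \geq 1$, this yields the equality of $\Lambda$-ranks $\rank_\Lambda H^i_{\mathrm{Iw}}(L_\infty, T) = \rank_\Lambda X_i$, and the second identity follows from the first. Alternatively, one can rerun the first two steps with finite coefficients $T/p^k$ at each level, passing first to the inverse limit in $k$ (using $\ilim_k H^i(G_\Sigma(L_n), T/p^k) = H^i_{\mathrm{cts}}(G_\Sigma(L_n), T)$) and then in $n$. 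The main obstacle is the careful bookkeeping of the pseudo-null error terms appearing when one translates finite-level $\Z_p$-ranks into $\Lambda$-ranks in the limit; these do not affect the final identity of $\Lambda$-ranks, but require the structure theorem and Jannsen's duality to be invoked with some precision.
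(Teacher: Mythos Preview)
Your approach is correct and aligns closely with the paper's proof. For the first identity the paper simply cites Greenberg \cite[Proposition~3]{Gb_IPR}, whereas you spell out the underlying argument (Tate's finite-level Euler characteristic combined with Hochschild--Serre and the asymptotic rank formula); these are equivalent. For the second identity both you and the paper invoke Jannsen's spectral sequence to identify $\rank_\Lambda H^i_{\mathrm{Iw}}$ with $\rank_\Lambda X_i$, so the strategies coincide.

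One small correction: $\textup{Ext}^1_\Lambda(M,\Lambda)$ is $\Lambda$-torsion but not pseudo-null in general (e.g.\ $\textup{Ext}^1_\Lambda(\Lambda/p,\Lambda)\cong\Lambda/p$); only $\textup{Ext}^i_\Lambda(M,\Lambda)$ for $i\ge 2$ is pseudo-null. This does not affect your argument, since ``torsion'' is all that is needed to conclude that the $E^1$ and $E^2$ terms in Jannsen's exact sequence contribute nothing to $\Lambda$-ranks, exactly as the paper observes via \cite[Proposition~5.5.3]{NSW}.
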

\begin{proof}
The first formula is proven in \cite[Prop.~3]{Gb_IPR}. To prove the second formula, we therefore only need to show that
\begin{equation}\label{rank_equality}
\corank_{\Lambda}(H^i(G_{\Sigma}(L_{\infty}), A))=\rank_{\Lambda}(\ilim_n H^1(G_{\Sigma}(L_n),T)) \,\, \text{for} \,\, i=1,2.
\end{equation}
To show this, we use Jannsen's spectral sequence \cite{Jannsen}. If $M$ is a finitely generated $\Lambda$-module, we let ${E^i(M)=\text{Ext}_{\Lambda}^i(M, \Lambda)}$. Hence ${E^0(M)=\Hom_{\Lambda}(M, \Lambda):=M^+}$ is just the $\Lambda$-dual of $M$. Note that $M^+$ is a free $\Lambda$-module of the same rank as $M$. By \cite[Corollary~3]{Jannsen} we have an isomorphism
$$\ilim_n H^1(G_{\Sigma}(L_n),T) \cong (H^1(G_{\Sigma}(L_{\infty}), A)^{\vee})^+. $$
From this we get \eqref{rank_equality} for ${i=1}$. By loc. cit. we also have an exact sequence
\begin{align*}
0 \longrightarrow E^1(H^1(G_{\Sigma}(L_{\infty}), A)^{\vee}) \longrightarrow \ilim_n H^2(G_{\Sigma}(L_n),T) &\longrightarrow (H^2(G_{\Sigma}(L_{\infty}), A)^{\vee})^+ \\
\longrightarrow E^2(H^1(G_{\Sigma}(L_{\infty}), A)^{\vee}) \longrightarrow 0.
\end{align*}
By \cite[Prop.~5.5.3]{NSW} the first and last terms in the exact sequence above are $\Lambda$-torsion. Therefore from this we get \eqref{rank_equality} for ${i=2}$.

\end{proof}

If $\fG$ is a profinite group, $\fH$ a closed subgroup of $\fG$ and $A$ a discrete $p$-primary $\fH$-module we write ${\coind_{\fH}^{\fG}(A):=\Hom_{\Zp[[\fH]]}(\Zp[[\fG]], A)}$ for the coinduced $\fG$-module. We will need the following basic lemma.

\begin{lemma}\label{coind_lemma}
Let $\fG$ be a pro-$p$ $p$-adic Lie group which contains no element of order $p$ and let $\fH$ be a closed subgroup of $\fG$. Let $A$ be a discrete $p$-primary $\fH$-module such that $A^{\vee}$ is a finitely generated $\Zp[[\fH]]$-module. Then ${\coind_{\fH}^{\fG}(A)^{\vee}}$ is a finitely generated $\Zp[[\fG]]$-module and  ${\rank_{\Zp[[\fH]]}(A^{\vee})=\rank_{\Zp[[\fG]]}(\coind_{\fH}^{\fG}(A))^{\vee}}$. Moreover, if $A^{\vee}$ is a free $\Zp[[\fH]]$-module, then ${\coind_{\fH}^{\fG}(A)^{\vee}}$ is a free $\Zp[[\fG]]$-module.
\end{lemma}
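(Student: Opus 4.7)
The plan is to reduce everything to identifying $\coind_\fH^\fG(A)^\vee$ with the induced module $\Zp[[\fG]]\,\widehat{\otimes}_{\Zp[[\fH]]}\,A^\vee$. To establish this, I would write $A = \Hom_\Zp(A^\vee,\Qp/\Zp)$ and apply tensor--Hom adjunction: one obtains
\[ \coind_\fH^\fG(A) = \Hom_{\Zp[[\fH]]}\bigl(\Zp[[\fG]],A\bigr) \;\cong\; \Hom_\Zp\bigl(\Zp[[\fG]]\otimes_{\Zp[[\fH]]}A^\vee,\,\Qp/\Zp\bigr), \]
and dualising yields the desired isomorphism of $\Zp[[\fG]]$-modules. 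Since $A^\vee$ is finitely generated over $\Zp[[\fH]]$, the completed tensor product coincides with the ordinary one, so the identification is clean.

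With this description in hand, I would invoke the fundamental structural result (due in this generality to Brumer) that, because $\fG$ is a pro-$p$ $p$-adic Lie group without $p$-torsion, the Iwasawa algebra $\Zp[[\fG]]$ is a faithfully flat $\Zp[[\fH]]$-module for every closed subgroup $\fH$, and both $\Zp[[\fH]]$ and $\Zp[[\fG]]$ are Noetherian (Auslander regular) domains. Given any surjection $\Zp[[\fH]]^n \twoheadrightarrow A^\vee$, tensoring immediately gives a surjection $\Zp[[\fG]]^n \twoheadrightarrow \coind_\fH^\fG(A)^\vee$, proving finite generation. If in addition $A^\vee$ is free over $\Zp[[\fH]]$, the same step shows directly that $\coind_\fH^\fG(A)^\vee$ is free of the same rank over $\Zp[[\fG]]$, which handles the last assertion of the lemma.

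For the rank equality, set $r = \rank_{\Zp[[\fH]]}(A^\vee)$. Since $\Zp[[\fH]]$ is a Noetherian prime ring, Goldie's theorem provides a short exact sequence $0 \to \Zp[[\fH]]^r \to A^\vee \to T \to 0$ in which $T$ is a finitely generated torsion $\Zp[[\fH]]$-module. Tensoring with $\Zp[[\fG]]$ and using the flatness input produces
\[ 0 \to \Zp[[\fG]]^r \to \coind_\fH^\fG(A)^\vee \to \Zp[[\fG]]\otimes_{\Zp[[\fH]]}T \to 0. \]
To conclude that $\rank_{\Zp[[\fG]]}\bigl(\coind_\fH^\fG(A)^\vee\bigr) = r$, I would show that the rightmost term is $\Zp[[\fG]]$-torsion: any regular element $s \in \Zp[[\fH]]$ annihilating $T$ remains regular in $\Zp[[\fG]]$, because multiplication by $s$ is injective on $\Zp[[\fG]]$ by flatness of the extension $\Zp[[\fH]] \subseteq \Zp[[\fG]]$, and hence $s$ kills the tensor product.

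The main obstacle lies less in any explicit calculation and more in handling correctly the non-commutative inputs: the flatness of $\Zp[[\fG]]$ over $\Zp[[\fH]]$, the preservation of regular elements under this flat extension, and the fact that ranks are well defined via Goldie/skew-field quotients in this setting. Each of these is standard for Iwasawa algebras of $p$-adic Lie groups without $p$-torsion, but one must assemble them carefully; once that is done, the three assertions of the lemma follow essentially formally from the identification in the first paragraph.
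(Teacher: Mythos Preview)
Your proof is correct and proceeds via a genuinely different route from the paper's. Both arguments start from the identification $\coind_{\fH}^{\fG}(A)^{\vee}\cong A^{\vee}\,\widehat{\otimes}_{\Zp[[\fH]]}\,\Zp[[\fG]]$, but the paper only invokes this at the very end, for the freeness assertion. For finite generation, the paper instead applies Shapiro's Lemma to compute $H_0(\fG,\coind_{\fH}^{\fG}(A)^{\vee})\cong H_0(\fH,A^{\vee})$ and concludes by Nakayama; for the rank equality, it uses Howson's Euler-characteristic formula $\rank_{\Zp[[\fG]]}(M)=\sum_i(-1)^i\corank_{\Zp}H^i(\fG,M^{\vee})$ together with Shapiro again to transport the alternating sum from $\fG$ to $\fH$. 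Your approach trades these cohomological inputs for the structural fact that $\Zp[[\fG]]$ is flat over $\Zp[[\fH]]$ (Brumer), which lets you push an exact sequence $0\to\Zp[[\fH]]^r\to A^{\vee}\to T\to 0$ through the tensor product and check that torsion is preserved. Your method is somewhat more direct and avoids the Euler-characteristic machinery; the paper's method has the virtue of making the link to group cohomology explicit, which fits the ambient arguments in the paper. Both are clean once the relevant background is in place.
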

\begin{proof}
First we show that $\coind_{\fH}^{\fG}(A)^{\vee}$ is a finitely generated $\Zp[[\fG]]$-module. Applying Shapiro's Lemma we have
$$H_0(\fG, \coind_{\fH}^{\fG}(A)^{\vee})\cong H^0(\fG,\coind_{\fH}^{\fG}(A))\cong H^0(\fH, A)\cong H_0(\fH, A^{\vee}).$$
Therefore by Nakayama's Lemma $\coind_{\fH}^{\fG}(A)^{\vee}$ is a finitely generated $\Zp[[\fG]]$-module.
We have by \cite[Theorem~1.1]{Howson} that
$$\rank_{\Zp[[\fG]]}(\coind_{\fH}^{\fG}(A)^{\vee})=\sum_{i \geq 0} (-1)^i\corank_{\Zp}(H^i(\fG, \coind_{\fH}^{\fG}(A))). $$
Applying Shapiro's Lemma the right hand side becomes
$$\sum_{i \geq 0} (-1)^i\corank_{\Zp}(H^i(\fH, A))$$
which is $\rank_{\Zp[[\fH]]}(A^{\vee})$ by loc. cit. again.

By applying the $\Hom$-tensor adjunction with regard to the complete tensor product (\cite[Proposition~5.5.4(c)]{RZ}) we see that we have an isomorphism ${\coind_{\fH}^{\fG}(A)^{\vee} \cong A^{\vee} \, \widehat{\otimes}_{\Zp[[\fH]]} \, \Zp[[\fG]]}$.  Therefore if $A^{\vee}$ is a free $\Zp[[\fH]]$-module we see that $\coind_{\fH}^{\fG}(A)^{\vee}$ is a free $\Zp[[\fG]]$-module.

\end{proof}

Let ${\bullet, \star \in \{+,-\}}$ and ${H \in \mathcal{H}^{\bullet\star}}$. For any $n$, consider the sequence
\begin{align} \begin{tikzcd}[scale cd=0.86] \label{eq:definingSelbulletstar}
\displaystyle 0 \longrightarrow \Selpm(E/\KK_{\infty}^{H_n}) \longrightarrow H^1(G_S(\KK_{\infty}^{H_n}), E[p^{\infty}]) \stackrel{\lambda_{H_n}^{\bullet\star}}{\longrightarrow} J_p^{\bullet\star}(E/\KK_{\infty}^{H_n}) \times \bigoplus_{v \in S, v \nmid p} J_v(E/\KK_{\infty}^{H_n}) \longrightarrow 0, \end{tikzcd} \end{align}
where as in Section~\ref{section:mainintroduction} we define
$$J_p^{\bullet\star}(E/\KK_{\infty}^{H_n}) = \bigoplus_{w \mid \fp} \frac{H^1((\KK_{\infty}^{H_n})_w, E[p^\infty])}{\hat{E}^{\bullet}((\KK_{\infty}^{H_n})_w) \otimes \Q_p/\Z_p} \times \bigoplus_{w \mid \bar{\fp}} \frac{H^1((\KK_{\infty}^{H_n})_w, E[p^\infty])}{\hat{E}^{\star}((\KK_{\infty}^{H_n})_w) \otimes \Q_p/\Z_p},$$
$$J_v(E/\KK_{\infty}^{H_n}) = \bigoplus_{w \mid v} H^1((\KK_{\infty}^{H_n})_w, E[p^\infty]) \quad \text{for}\, \,v \nmid p. $$
In the above definition the sum runs over all primes of $w$ above $v$.
	
\begin{proposition}\label{Seln_surjective_prop} 	
The above sequence~\eqref{eq:definingSelbulletstar} is exact for ${n=0}$, and we have that ${H^2(G_S(\KK_{\infty}^H), E[p^{\infty}])=0}$.
Furthermore, if $X^{\bullet\star}(E/\KK_{\infty})_f$ is finitely generated over $\Lambda(H)$, then for all $n$
\begin{enumerate}
\item $X^{\bullet\star}(E/\KK_{\infty}^{H_n})$ is a torsion $\Lambda_{H,n}$-module.
\item The sequence~\eqref{eq:definingSelbulletstar} is exact.
\item $H^2(G_S(\KK_{\infty}^{H_n}), E[p^{\infty}])=0$.
\end{enumerate}
\end{proposition}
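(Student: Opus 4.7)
The proof splits naturally into three pieces, which I would address in turn: vanishing of $H^2$, the $\Lambda_{H,n}$-torsion of the signed Selmer dual for $n>0$, and the surjectivity of the global-to-local map.

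My starting point is the observation that the fine local condition (triviality in all of $H^1$) is strictly stronger than any signed condition at every place, so $R_{p^\infty}(E/L) \subseteq \Selpm(E/L)$ and dually $Y(E/L)$ is a quotient of $X^{\bullet\star}(E/L)$. Since $H \in \mathcal{H}^{\bullet\star}$ by hypothesis, $X^{\bullet\star}(E/\KK_{\infty}^H)$ is $\Lambda$-torsion by definition of $\mathcal{H}^{\bullet\star}$, whence $Y(E/\KK_{\infty}^H)$ is $\Lambda$-torsion. Applying \cite[Theorem~1.1]{Matar_Torsion} I would then obtain $H^2(G_S(\KK_{\infty}^H), E[p^\infty]) = 0$, handling the second half of the first claim in the proposition.

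For the ``furthermore'' part I would first prove~(1), that $X^{\bullet\star}(E/\KK_\infty^{H_n})$ is $\Lambda_{H,n}$-torsion; item~(3) then follows by applying the previous paragraph to $\KK_\infty^{H_n}$ in place of $\KK_\infty^H$. By Lemma~\ref{Xinf_torsion_lemma}, $X^{\bullet\star}(E/\KK_\infty)$ is $\Lambda_2$-torsion, so $X^{\bullet\star}(E/\KK_\infty)[p^\infty]$ is annihilated by some fixed $p^m$, and the same is true of its $H_n$-coinvariants. The complementary piece $X^{\bullet\star}(E/\KK_\infty)_f$ is finitely generated over $\Lambda(H)$ by hypothesis, and since $\Lambda(H)/I_{H_n} = \Z_p[H/H_n]$ is free of rank $p^n$ over $\Z_p$, the coinvariants $(X^{\bullet\star}(E/\KK_\infty)_f)_{H_n}$ are finitely generated over $\Z_p$ and in particular $\Lambda_{H,n}$-torsion. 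The short exact sequence ${0 \to X^{\bullet\star}(E/\KK_\infty)[p^\infty] \to X^{\bullet\star}(E/\KK_\infty) \to X^{\bullet\star}(E/\KK_\infty)_f \to 0}$, after taking $H_n$-coinvariants, then shows that $X^{\bullet\star}(E/\KK_\infty)_{H_n}$ is $\Lambda_{H,n}$-torsion, and the control theorem (Theorem~\ref{SelmerControlThm_prop}) identifies this with $X^{\bullet\star}(E/\KK_\infty^{H_n})$.

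For exactness of the defining sequence~\eqref{eq:definingSelbulletstar} (the exactness claim for $n=0$ and item~(2) of the ``furthermore''), I would invoke the Poitou-Tate nine-term exact sequence for the signed Selmer structure. With $H^2(G_S(\KK_\infty^{H_n}), E[p^\infty]) = 0$ already available, the cokernel of the global-to-local map gets identified with the Pontryagin dual of a compact Selmer group for $T_pE$ whose local conditions at the primes above $p$ are the Tate-orthogonal complements of $\hat{E}^\bullet(\cdot)\otimes \Q_p/\Z_p$ and $\hat{E}^\star(\cdot)\otimes \Q_p/\Z_p$. By Kobayashi's local duality (cf.~\cite{Kob}) these orthogonals are again signed lattices for $T_pE$, and a $\Lambda_{H,n}$-corank computation combining Proposition~\ref{Euler_char_prop} (for $H^1(G_S(\KK_\infty^{H_n}), E[p^\infty])$), Proposition~\ref{pm_points_structure_prop} (for the local pieces at $\fp, \bar{\fp}$), and \cite[Prop.~2]{Gb_IPR} (for primes of bad reduction away from $p$) would force this cokernel to have $\Lambda_{H,n}$-corank zero; the divisibility of $J_p^{\bullet\star} \times \bigoplus_{v \nmid p} J_v$ then yields its vanishing. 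The main technical obstacle lies precisely in this last step: while the $H^2$-vanishing itself comes cheaply from Matar's theorem via the fine Selmer quotient, upgrading it to an actual surjection requires careful control of the dual Selmer term appearing in the Poitou-Tate sequence via Kobayashi's local duality for the lattices $\hat{E}^\pm$.
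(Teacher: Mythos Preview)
Your treatment of the $H^2$-vanishing and of item~(1) matches the paper's argument essentially verbatim: fine Selmer is a quotient of signed Selmer, hence $\Lambda$-torsion, and Matar's theorem gives $H^2=0$; the control theorem plus finite generation of $X^{\bullet\star}(E/\KK_\infty)_f$ over $\Lambda(H)$ forces the coinvariants, hence $X^{\bullet\star}(E/\KK_\infty^{H_n})$, to be $\Lambda_{H,n}$-torsion.

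The surjectivity argument, however, has a genuine gap in its final step. Your corank computation (via Proposition~\ref{Euler_char_prop}, Proposition~\ref{pm_points_structure_prop}, and Greenberg's local result) correctly shows that the compact Selmer group dual to $\coker\lambda_{H_n}^{\bullet\star}$ is $\Lambda_{H,n}$-torsion. But the passage from ``$\Lambda$-corank zero'' to ``zero'' via divisibility of the target fails: a divisible cofinitely generated $\Lambda$-module of corank zero need not vanish --- take $M=\Q_p/\Z_p$ with trivial $\Gamma$-action, whose dual is $\Lambda/(T)$. Divisibility only kills the $\mu$-part, not the $\lambda$-part. (Separately, the divisibility of $J_v(E/\KK_\infty^{H_n})$ for bad $v\nmid p$ is itself not automatic.)

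The paper closes this gap differently: it identifies $(\coker\lambda_{H_n}^{\bullet\star})^\vee$ with $\ker\text{loc}^{\bullet\star}$, a \emph{submodule} of $H^1_{\text{Iw}}(G_S(\KK_\infty^{H_n}),T)$, and then proves that this Iwasawa $H^1$ is $\Lambda$-torsion-free (using that $E(\KK_\infty^{H_n})[p^\infty]=0$ and the description $H^1_{\text{Iw}}\cong\varprojlim_{m,k}H^1(G_S(\KK_\infty^{H_n}),E[p^\infty])[p^k]^{\Gamma_m}$ together with \cite[Prop.~5.5.10]{NSW}). Torsion plus torsion-free then gives zero. You should replace the divisibility step with this torsion-freeness argument.
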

\begin{proof}
	Let ${n \geq 0}$. Assume that $X^{\bullet\star}(E/\KK_{\infty}^{H_n})$ is a torsion $\Lambda_{H,n}$-module. We now show that sequence~\eqref{eq:definingSelbulletstar} is exact and that ${H^2(G_S(\KK_{\infty}^{H_n}), E[p^{\infty}])=0}$. The result for ${n=0}$ will follow because by definition of $\mathcal{H}^{\bullet\star}$ we have that $X^{\bullet\star}(E/\KK_{\infty}^H)$ is a torsion $\Lambda_{H,0}$-module. To prove the exactness of the sequence (i.e. the surjectivity of the second map), we follow the general strategy of the proof of \cite[Prop.~4.4]{LeiSuj} while making many necessary modifications as we go along.

We will need to fix an Iwasawa algebra $\Lambda$ and compare $\Lambda$-coranks of various global and local cohomology groups. When ${n=0}$ and both $\fp$ and $\bar{\fp}$ are totally ramified in $\KK_{\infty}^{H_n}$, then we could simply take our Iwasawa algebra $\Lambda$ to be $\Lambda_{H,0}$. This will simplify the proof. If ${n > 0}$ or at least one of the primes $\fp$ and $\bar{\fp}$ is not totally ramified in $\KK_{\infty}^{H_n}$, then our choice of $\Lambda$ will make the proof much longer.  Let $w$ (resp. $\bar{w}$) be a prime of $\KK_{\infty}^{H_n}$ above $\fp$ (resp. $\bar{\fp}$). Recall from Section~\ref{section:mainintroduction} that $\hat{E}^{\pm}((\KK_{\infty}^{H_n})_w)$ (resp. $\hat{E}^{\pm}((\KK_{\infty}^{H_n})_{\bar{w}})$) is defined as in \eqref{plusdef}, \eqref{minusdef} and \eqref{eq:def2-3} with respect to the $\Zp$-extension $\KK_{\infty}^{H_n}/L^{(\fp)}_{i(\KK_{\infty}^H, \fp)+n}$ (resp. $\KK_{\infty}^{H_n}/L^{(\bar{\fp})}_{i(\KK_{\infty}^H, \bar{\fp})+n}$). Note that any prime of $L^{(\fp)}_{i(\KK_{\infty}^H, \fp)+n}$ below $w$ (resp. $L^{(\bar{\fp})}_{i(\KK_{\infty}^H, \bar{\fp})+n}$ below $\bar{w}$) is totally ramified in $\KK_{\infty}^{H_n}/L^{(\fp)}_{i(\KK_{\infty}^H, \fp)+n}$ (resp. $\KK_{\infty}^{H_n}/L^{(\bar{\fp})}_{i(\KK_{\infty}^H, \bar{\fp})+n}$). Let $\{\KK_{(m, \fp)}^{H_n}\}$ (resp.  $\{\KK_{(m, \bar{\fp})}^{H_n}\}$) be the tower fields of the $\Zp$-extension $\KK_{\infty}^{H_n}/L^{(\fp)}_{i(\KK_{\infty}^H, \fp)+n}$ (resp. $\KK_{\infty}^{H_n}/L^{(\bar{\fp})}_{i(\KK_{\infty}^H, \bar{\fp})+n}$). For some $s \geq 0$ the field $\KK_{(s, \fp)}^{H_n}$ contains $L^{(\bar{\fp})}_{i(\KK_{\infty}^H, \bar{\fp})+n}$. Therefore it follows that for some ${t \geq 0}$ we have ${\KK_{(s, \fp)}^{H_n}=\KK_{(t, \bar{\fp})}^{H_n}}$. Let ${d=[\KK_{(s, \fp)}^{H_n}:K]=[\KK_{(t, \bar{\fp})}^{H_n}:K]}$. We now fix $\Lambda$ to be the Iwasawa algebra ${\Zp[[\Gal(\KK_{\infty}^{H_n}/\KK_{(s, \fp)}^{H_n})]]=\Zp[[\Gal(\KK_{\infty}^{H_n}/\KK_{(t, \bar{\fp})}^{H_n})]]}$. We now study $\Lambda$-ranks of various objects related to $X^{\bullet\star}(E/\KK_{\infty}^{H_n})$. We will use the following fact extensively below (see \cite[Corollary~1.5]{Howson} for a proof): Let ${\tilde{G} \cong \Zp}$ and let $\tilde{N}$ be an open subgroup of $\tilde{G}$. Then $\Zp[[\tilde{G}]]$ is free over $\Zp[[\tilde{N}]]$ of rank $[\tilde{G}:\tilde{N}]$, and the rank of a finitely generated $\Zp[[\tilde{G}]]$-module over $\Zp[[\tilde{N}]]$ equals $[\tilde{G}:\tilde{N}]$ times its rank over $\Zp[[\tilde{G}]]$. In particular, a $\Zp[[\tilde{G}]]$-module is $\Zp[[\tilde{G}]]$-torsion if and only if it is $\Zp[[\tilde{N}]]$-torsion. From Proposition~\ref{pm_points_structure_prop} we have

\begin{align}\label{first_pm_isom}
\Big(\hat{E}^{\pm}((\KK_{\infty}^{H_n})_w)\otimes \Qp/\Zp\Big)^{\vee} &\cong \Zp[[\Gal(\KK_{\infty}^{H_n}/L^{(\fp)}_{i(\KK_{\infty}^H,  \fp)+n})]]^{[(L^{(\fp)}_{i(\KK_{\infty}^H, \fp)+n})_w:\Qp]}\nonumber \\
&\cong \Lambda^{[(L^{(\fp)}_{i(\KK_{\infty}^H, \fp)+n})_w:\Qp][(\KK_{(s, \fp)}^{H_n})_w:(L^{(\fp)}_{i(\KK_{\infty}^H, \fp)+n})_w]} \nonumber \\
&= \Lambda^{[(\KK_{(s, \fp)}^{H_n})_w:\Qp]},
\end{align}

\begin{align}\label{second_pm_isom}
\Big(\hat{E}^{\pm}((\KK_{\infty}^{H_n})_{\bar{w}})\otimes \Qp/\Zp\Big)^{\vee} &\cong \Zp[[\Gal(\KK_{\infty}^{H_n}/L^{(\bar{\fp})}_{i(\KK_{\infty}^H,  \bar{\fp})+n})]]^{[(L^{(\bar{\fp})}_{i(\KK_{\infty}^H, \bar{\fp})+n})_{\bar{w}}:\Qp]}\nonumber \\
&\cong \Lambda^{[(L^{(\bar{\fp})}_{i(\KK_{\infty}^H, \bar{\fp})+n})_{\bar{w}}:\Qp][(\KK_{(t, \bar{\fp})}^{H_n})_{\bar{w}}:(L^{({\bar{\fp}})}_{i(\KK_{\infty}^H, {\bar{\fp}})+n})_{\bar{w}}]} \nonumber \\
&= \Lambda^{[(\KK_{(t, \bar{\fp})}^{H_n})_{\bar{w}}:\Qp]}\nonumber \\
&= \Lambda^{[(\KK_{(s, \fp)}^{H_n})_{\bar{w}}:\Qp]}.
\end{align}

Let ${T=T_p(E)}$ be the $p$-adic Tate module of $E$. For ${i=1,2}$ define
$$H^i_{\text{Iw}}(G_S(\KK_{\infty}^{H_n}), T)=\ilim_m H^i(G_S(\KK_{(m, \fp)}^{H_n}), T).$$
For any ${m \geq 0}$ we define $H^1_{\pm}((\KK_{(m,\fp)}^{H_n})_w, T)$ as the exact annihilator of ${\hat{E}^{\pm}((\KK_{(m,\fp)}^{H_n})_w) \otimes \Qp/\Zp}$ with respect to the Tate pairing
$$H^1((\KK_{(m,\fp)}^{H_n})_w, E[p^{\infty}]) \times H^1((\KK_{(m,\fp)}^{H_n})_w, T) \longrightarrow \Qp/\Zp. $$
For any ${m \geq 0}$ we similarly define $H^1_{\pm}((\KK_{(m,\bar{\fp})}^{H_n})_{\bar{w}}, T)$.

We define
$$H^1_{\text{Iw}}((\KK_{\infty}^{H_n})_w, T)=\ilim H^1((\KK_{(m,\fp)}^{H_n})_w, T), $$
$$H^1_{\text{Iw}}((\KK_{\infty}^{H_n})_{\bar{w}}, T)=\ilim H^1((\KK_{(m,\bar{\fp})}^{H_n})_{\bar{w}}, T). $$
Since ${\KK_{(s, \fp)}^{H_n}=\KK_{(t, \bar{\fp})}^{H_n}}$, we can write
$$H^1_{\text{Iw}}((\KK_{\infty}^{H_n})_{\bar{w}}, T)=\ilim_{m \geq s} H^1((\KK_{(m,\fp)}^{H_n})_{\bar{w}}, T). $$

We define
$$H^1_{\text{Iw},\pm}((\KK_{\infty}^{H_n})_w, T)=\ilim H^1_{\pm}((\KK_{(m,\fp)}^{H_n})_w, T), $$
$$H^1_{\text{Iw},\pm}((\KK_{\infty}^{H_n})_{\bar{w}}, T)=\ilim H^1_{\pm}((\KK_{(m,\bar{\fp})}^{H_n})_{\bar{w}}, T). $$
Since ${\KK_{(s, \fp)}^{H_n}=\KK_{(t, \bar{\fp})}^{H_n}}$, we can write
$$H^1_{\text{Iw},\pm}((\KK_{\infty}^{H_n})_{\bar{w}}, T)=\ilim_{m \geq s} H^1_{\pm}((\KK_{(m,\fp)}^{H_n})_{\bar{w}}, T). $$

Consider the Poitou-Tate exact sequence (\cite[Proposition~A.3.2]{PR})
\begin{align}\label{Poitou-Tate_seq}
H^1_{\text{Iw}}(G_S(\KK_{\infty}^{H_n}), T) \stackrel{\text{loc}^{\bullet\star}}{\longrightarrow} Z \longrightarrow X^{\bullet\star}(E/\KK_{\infty}^{H_n}) \longrightarrow H^2_{\text{Iw}}(G_S(\KK_{\infty}^{H_n}), T) \longrightarrow 0,
\end{align}
where
\[ \resizebox{.97\hsize}{!}{$Z = \bigoplus_{w | \fp} H^1_{\text{Iw}}((\KK_{\infty}^{H_n})_w, T)/H^1_{\text{Iw}, \bullet}((\KK_{\infty}^{H_n})_w, T) \times \bigoplus_{w | \bar{\fp}} H^1_{\text{Iw}}((\KK_{\infty}^{H_n})_{\bar{w}}, T)/H^1_{\text{Iw}, \star}((\KK_{\infty}^{H_n})_{\bar{w}}, T).$} \]
By \eqref{first_pm_isom} and \eqref{second_pm_isom} and Tate local duality we have

$$\rank_{\Lambda}(Z)=[\KK_{(s, \fp)}^{H_n}:\Q]. $$

Taking Lemma~\ref{p-torsion_lemma} into account, from Proposition~\ref{Euler_char_prop} we have
$$\rank_{\Lambda}(H^1_{\text{Iw}}(G_S(\KK_{\infty}^{H_n}), T))-\rank_{\Lambda}(H^2_{\text{Iw}}(G_S(\KK_{\infty}^{H_n}), T))=[\KK_{(s, \fp)}^{H_n}:\Q].$$

Therefore from the exact sequence \eqref{Poitou-Tate_seq} we get that
$$\rank_{\Lambda}(\ker \, \text{loc}^{\bullet\star})=\rank_{\Lambda}(X^{\bullet\star}(E/\KK_{\infty}^{H_n})). $$
Since $X^{\bullet\star}(E/\KK_{\infty}^{H_n})$ is a torsion $\Lambda_{H,n}$-module, it is also a torsion $\Lambda$-module and so from the above we get that $\ker \, \text{loc}^{\bullet\star}$ is $\Lambda$-torsion. We now show that $\ker \, \text{loc}^{}$ is $\Lambda$-torsion-free which will imply that ${\ker \, \text{loc}^{\bullet\star}=0}$. To prove that $\ker \, \text{loc}^{\bullet\star}$ is $\Lambda$-torsion-free, it will suffice to show  $H^1_{\text{Iw}}(G_S(\KK_{\infty}^{H_n}), T)$ is $\Lambda$-torsion-free.

For any $m$ and $k$ we have an exact sequence
$$\begin{tikzcd}[scale cd=0.84] 0 \longrightarrow E(\KK_{(m, \fp)}^{H_n}))[p^{\infty}]/p^k \longrightarrow H^1(G_S(\KK_{(m, \fp)}^{H_n}), E[p^k]) \longrightarrow H^1(G_S(\KK_{(m, \fp)}^{H_n}), E[p^{\infty}])[p^k] \longrightarrow 0. \end{tikzcd} $$

By Lemma~\ref{p-torsion_lemma} the first term in the above sequence is zero. Therefore we have an isomorphism ${H^1_{\text{Iw}}(G_S(\KK_{\infty}^{H_n}), T) \cong \ilim_{m,k} H^1(G_S(\KK_{(m, \fp)}^{H_n}), E[p^{\infty}])[p^k]}$, where the inverse limit over $m$ is with respect to corestriction and over $k$ is with respect to the multiplication by $p$ map. Therefore we need to show that $\ilim_{m,k} H^1(G_S(\KK_{(m, \fp)}^{H_n}), E[p^{\infty}])[p^k]$ is $\Lambda$-torsion-free. Let ${\Gamma=\Gal(\KK_{\infty}^{H_n}/\KK_{(s, \fp)}^{H_n})}$. For any ${m \geq 0}$ we let ${\Gamma_m=\Gamma^{p^m}}$. Now consider the exact sequence
$$\begin{tikzcd}[scale cd=0.84] 0 \longrightarrow H^1(\Gamma_m, E(\KK_{\infty}^{H_n})[p^{\infty}]) \longrightarrow H^1(G_S(\KK_{(s+m, \fp)}^{H_n}), E[p^{\infty}]) \longrightarrow H^1(G_S(\KK_{\infty}^{H_n}), E[p^{\infty}])^{\Gamma_m} \longrightarrow 0. \end{tikzcd} $$
The last map is surjective because ${cd_p(\Gamma_m)=1}$. By Lemma~\ref{p-torsion_lemma} the first term in the above sequence is zero. Therefore we have an isomorphism
${\ilim_{m,k} H^1(G_S(\KK_{(m, \fp)}^{H_n}), E[p^{\infty}])[p^k] \cong \ilim_{m,k} H^1(G_S(\KK_{\infty}^{H_n}), E[p^{\infty}])[p^k]^{\Gamma_m}}$. By \cite[Prop.~5.5.10]{NSW} the latter group is $\Lambda$-torsion-free, whence so is the former. As explained above, this proves that we have ${\ker \, \text{loc}^{\bullet\star}=0}$.

Now consider the fine Selmer group $R_{p^{\infty}}(E/\KK_{\infty}^{H_n})$ defined by the exact sequence
	
	$$\displaystyle 0 \longrightarrow R_{p^{\infty}}(E/\KK_{\infty}^{H_n}) \longrightarrow H^1(G_S(\KK_{\infty}^{H_n}), E[p^{\infty}]) \longrightarrow \bigoplus_{w|v, v \in S} H^1(\KK_{\infty}^{H_n}, E[p^{\infty}]). $$
	As $R_{p^{\infty}}(E/\KK_{\infty}^{H_n})$ is contained in $\Selpm(E/\KK_{\infty}^{H_n})$, the dual $X^{\bullet\star}(E/\KK_{\infty}^{H_n})$ surjects onto $R_{p^{\infty}}(E/\KK_{\infty}^{H_n})^{\vee}$ (the Pontryagin dual of $R_{p^{\infty}}(E/\KK_{\infty}^{H_n})$). So $R_{p^{\infty}}(E/\KK_{\infty}^{H_n})^{\vee}$ is a torsion $\Lambda_{H,n}$-module. By \cite[Theorem~2.2]{Matar_Torsion} this implies that ${H^2(G_S(\KK_{\infty}^{H_n}), E[p^{\infty}])=0}$. Therefore we see from the second Poitou-Tate exact sequence given in \cite[Prop.~A.3.2]{PR} that we have an isomorphism
$$(\coker \lambda_{H_n}^{\bullet\star})^{\vee} \cong \ker \, \text{loc}^{\bullet\star}. $$
As $\ker \, \text{loc}^{\bullet\star}$ is trivial, we get that $\lambda_{H_n}^{\bullet\star}$ is surjective as desired.

Suppose that $X^{\bullet\star}(E/\KK_{\infty})_f$ is finitely generated over $\Lambda(H)$. To complete the proof, from what we just observed, it will suffice to show that $X^{\bullet\star}(E/\KK_{\infty}^{H_n})$ is a torsion $\Lambda_{H,n}$-module. To prove this, we can proceed as in \cite[Proposition~2.5]{CS2}. Consider the following commutative diagram with exact rows
	\begin{equation*}
		\xymatrix {
			X^{\bullet\star}(E/\KK_{\infty})_{H_n}  \ar[d]^{\hat{s}_n^{\bullet\star}} \ar[r] & (X^{\bullet\star}(E/\KK_{\infty})_f)_{H_n}  \ar[d]^{\theta_n^{\bullet\star}} \ar[r] &0\\
			X^{\bullet\star}(E/\KK_{\infty}^{H_n}) \ar[r] & X^{\bullet\star}(E/\KK_{\infty}^{H_n})_f \ar[r] &0}
	\end{equation*}
	The map $\hat{s}_n^{\bullet\star}$ is the dual of the map $s_n^{\bullet\star}$ in Theorem~\ref{SelmerControlThm_prop} and $\theta_n^{\bullet\star}$ is induced by $\hat{s}_n^{\bullet\star}$. By Theorem~\ref{SelmerControlThm_prop}, $\coker \hat{s}_n^{\bullet\star}$ is trivial and hence so is $\coker \theta_n^{\bullet\star}$. Since $X^{\bullet\star}(E/\KK_{\infty})_f$ is finitely generated over $\Lambda(H)$ and $H_n$ has finite index in $H$, the quotient $(X^{\bullet\star}(E/\KK_{\infty})_f)_{H_n}$ is finitely generated over $\Zp$. Thus we get that $X^{\bullet\star}(E/\KK_{\infty}^{H_n})_f$ is finitely generated over $\Zp$. This implies that $X^{\bullet\star}(E/\KK_{\infty}^{H_n})$ is a torsion $\Lambda_{H,n}$-module which completes our proof.
\end{proof}

Note that we already know from Lemma~\ref{WL_lemma} that assertion~(3) of Proposition~\ref{Seln_surjective_prop} also holds true if the Pontryagin dual $Y(E/\KK_\infty)_f$ of the fine Selmer group is finitely generated over $\Lambda(H)$. In fact, it is part of Theorem~\ref{equivalence_theorem}, to be proven in a later section, that if $X^{\bullet\star}(E/\KK_\infty)_f$ is finitely generated over $\Lambda(H)$, then $Y(E/\KK_\infty)_f$ will also be finitely generated over $\Lambda(H)$. Therefore the latter condition turns out to be weaker than the one in the above proposition.
\begin{proposition}\label{Seln_surjective_prop2}
Let ${\bullet, \star \in \{+,-\}}$ and ${H \in \mathcal{H}^{\bullet\star}}$. Then the following assertions hold.
\begin{enumerate}[(a)]
\item We have an exact sequence $$ 0 \longrightarrow \Selpm(E/\KK_{\infty}^{H}) \longrightarrow \Selinf(E/\KK_{\infty}^{H}) \longrightarrow J_p^{\bullet\star}(E/\KK_{\infty}^{H}) \longrightarrow 0$$
\item $J_p^{\bullet\star}(E/\KK_{\infty}^{H})^{\vee}$ is a free $\Lambda$-module of rank two.
\end{enumerate}
\end{proposition}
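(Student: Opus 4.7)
For part~(a), my plan is to start from Proposition~\ref{Seln_surjective_prop} applied with $n=0$, which yields the exact sequence
\[
0 \to \Selpm(E/\KK_\infty^H) \to H^1(G_S(\KK_\infty^H), E[p^\infty]) \to J_p^{\bullet\star}(E/\KK_\infty^H) \times \bigoplus_{v \in S,\, v \nmid p} J_v(E/\KK_\infty^H) \to 0,
\]
together with the key observation (already invoked in the proof of Proposition~\ref{SelmerControlThm_prop2}) that Greenberg's result forces $H^1((\KK_\infty^H)_w, E)[p^\infty] = 0$ at every prime $w$ of $\KK_\infty^H$ above $p$, because $E$ has good supersingular reduction at $p$ and both $\fp$ and $\bar\fp$ ramify in $\KK_\infty^H/K$. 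Consequently $J_v(E/\KK_\infty^H) = 0$ for $v \mid p$, so that $\Selinf(E/\KK_\infty^H)$ coincides with the kernel of the map to $\bigoplus_{v \nmid p} J_v(E/\KK_\infty^H)$. Given any $c \in J_p^{\bullet\star}(E/\KK_\infty^H)$, I would lift $(c,0)$ via the displayed surjection to a class $x \in H^1$; since $x$ then automatically lies in $\Selinf(E/\KK_\infty^H)$ and maps to $c$, this furnishes the required surjection, with the left-hand exactness being immediate from~\eqref{Selpm_def}.

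For part~(b), I would first dualize (a) to obtain
\[
0 \to J_p^{\bullet\star}(E/\KK_\infty^H)^\vee \to X(E/\KK_\infty^H) \to X^{\bullet\star}(E/\KK_\infty^H) \to 0.
\]
Since Proposition~\ref{Selrank_prop}(a) at $n=0$ gives $\rank_\Lambda X(E/\KK_\infty^H) = 2$ while Definition~\ref{def:mathcal{H}}(c) makes $X^{\bullet\star}(E/\KK_\infty^H)$ a torsion $\Lambda$-module, the rank conclusion $\rank_\Lambda J_p^{\bullet\star}(E/\KK_\infty^H)^\vee = 2$ follows immediately.

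To prove freeness I would decompose $J_p^{\bullet\star} = J_\fp^\bullet \oplus J_{\bar\fp}^\star$ and treat $J_\fp^\bullet$ in detail, the argument at $\bar\fp$ being identical. Fix a prime $w_0$ of $\KK_\infty^H$ above $\fp$, set $G' = \Gal(\KK_\infty^H/K)$ and let $G'_{w_0} \subseteq G'$ denote the decomposition subgroup, which is an open subgroup of $G' \cong \Z_p$ (hence $G'_{w_0} \cong \Z_p$) because $\fp$ does not split completely in $\KK_\infty^H/K$. The finite direct sum over $w \mid \fp$ is naturally the coinduction $\coind_{G'_{w_0}}^{G'} A_{w_0}$, where $A_{w_0} = H^1((\KK_\infty^H)_{w_0}, E[p^\infty])/(\hat E^\bullet((\KK_\infty^H)_{w_0}) \otimes \Qp/\Zp)$. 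By Lemma~\ref{coind_lemma} it then suffices to show that $A_{w_0}^\vee$ is a free $\Zp[[G'_{w_0}]]$-module of rank one. Pontryagin dualizing the defining sequence for $A_{w_0}$ produces
\[
0 \to A_{w_0}^\vee \to H^1((\KK_\infty^H)_{w_0}, E[p^\infty])^\vee \to \bigl(\hat E^\bullet((\KK_\infty^H)_{w_0}) \otimes \Qp/\Zp\bigr)^\vee \to 0,
\]
in which the right-hand term can be identified, by Pontryagin dualizing Proposition~\ref{invariants_prop} and invoking Lemma~\ref{freeness_lemma} at a prime $w$ of $\KK_\infty$ above $w_0$ with $H_w = H \cap G_w$, with $\Zp[[G_w]]_{H_w} \cong \Zp[[G'_{w_0}]]$, free of rank one. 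Since $H^1((\KK_\infty^H)_{w_0}, E)[p^\infty]=0$, the middle term equals $(\hat E((\KK_\infty^H)_{w_0}) \otimes \Qp/\Zp)^\vee$, which I expect to be free of rank two over $\Zp[[G'_{w_0}]]$ by an argument parallel to the proof of Lemma~\ref{freeness_lemma} with $\hat E^\pm$ replaced by $\hat E$ and ranks computed via Howson's Theorem~\ref{HowsonTheorem}. Granted this, the displayed sequence splits (the right-hand term being projective), and since over the regular local ring $\Zp[[G'_{w_0}]] \cong \Zp[[T]]$ every finitely generated projective module is free, $A_{w_0}^\vee \cong \Zp[[G'_{w_0}]]$ follows. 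The main delicate ingredient is thus the rank-two freeness of $(\hat E((\KK_\infty^H)_{w_0}) \otimes \Qp/\Zp)^\vee$; the induced structure together with Lemma~\ref{coind_lemma} will then cleanly transfer the local computation to $(J_\fp^\bullet)^\vee$ and hence to $(J_p^{\bullet\star})^\vee$.
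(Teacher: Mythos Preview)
Your proof is correct and follows the same overall architecture as the paper (reduction to a single prime above $\fp$, then coinduction via Lemma~\ref{coind_lemma}), but you take a somewhat different route at the local step. The paper obtains the rank-two freeness of $H^1((\KK_\infty^H)_{w_0},E[p^\infty])^\vee$ over $\Lambda_\fp=\Zp[[G'_{w_0}]]$ by directly invoking \cite[Corollary~2]{Gb_IPR} (using $E(L_{\infty,w_0})[p^\infty]=0$ from Lemma~\ref{p-torsion_lemma}); this is precisely your ``delicate ingredient'', so rather than reproving it by an argument parallel to Lemma~\ref{freeness_lemma}, you can simply cite Greenberg. For the rank-one piece $(\hat E^\bullet\otimes\Qp/\Zp)^\vee$, the paper works at the level $\KK_\infty^H$ via Proposition~\ref{pm_points_structure_prop} and only records that it has $\Lambda_\fp$-rank one and no nonzero finite submodules, whereas your descent from $\KK_\infty$ through Proposition~\ref{invariants_prop} and Lemma~\ref{freeness_lemma} actually yields the stronger statement that it is free of rank one over $\Lambda_\fp$. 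Finally, where the paper concludes $A_{w_0}^\vee\cong\Lambda_\fp$ by applying \cite[Lemma~3.31]{KO}, your splitting argument (the quotient being free, hence projective, over the local ring $\Lambda_\fp$) is an equally clean alternative that avoids the external reference. Your separate rank computation from Proposition~\ref{Selrank_prop}(a) is unnecessary once the local freeness is in hand, but it introduces no circularity.
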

\begin{proof}
Assertion~$(a)$ follows from Proposition~\ref{Seln_surjective_prop}. Now we prove $(b)$. Write ${L_\infty = \KK_\infty^H}$ for simplicity. Define

$$J_{\fp}^{\bullet}(E/L_{\infty}) = \bigoplus_{w \mid \fp} \frac{H^1(L_{\infty,w}, E[p^\infty])}{\hat{E}^{\bullet}(L_{\infty,w}) \otimes \Q_p/\Z_p}, $$

$$J_{\bar{\fp}}^{\star}(E/L_{\infty})=\bigoplus_{w \mid \bar{\fp}} \frac{H^1(L_{\infty,w}, E[p^\infty])}{\hat{E}^{\star}(L_{\infty,w}) \otimes \Q_p/\Z_p}.$$
Then  ${J_p^{\bullet\star}(E/L_\infty)=J_{\fp}^{\bullet}(E/L_{\infty}) \times J_{\bar{\fp}}^{\star}(E/L_{\infty})}$.
Let $w$ be a prime of $L_{\infty}$ over $\fp$. Let $\Gamma_{\fp}$ be the decomposition group of $w$ inside ${\Gamma:=\Gal(L_{\infty}/K)}$ and ${\Lambda_{\fp}=\Zp[[\Gamma_{\fp}]]}$, ${\Lambda:=\Zp[[\Gamma]]}$. We will now show that the Pontryagin dual of ${J_{\fp,w}^{\bullet}(E/L_{\infty}):=\frac{H^1(L_{\infty,w}, E[p^\infty])}{\hat{E}^{\bullet}(L_{\infty,w}) \otimes \Q_p/\Z_p}}$ is a free $\Lambda_{\fp}$-module of rank one. From Proposition~\ref{pm_points_structure_prop} we have that there exists an open subgroup ${\Delta_{\fp} \subseteq \Gamma_{\fp}}$ such that ${(\hat{E}^{\bullet}(L_{\infty,w}) \otimes \Q_p/\Z_p)^{\vee} \cong \Zp[[\Delta_{\fp}]]^{[\Gamma_{\fp}:\Delta_{\fp}]}}$. From this we see that ${(\hat{E}^{\bullet}(L_{\infty,w}) \otimes \Q_p/\Z_p)^{\vee}}$ has no finite nontrivial $\Zp[[\Delta_{\fp}]]$-submodules and hence also has no finite nontrivial $\Lambda_{\fp}$-submodules. Moreover from \cite[Corollary~1.5]{Howson} we see that ${\rank_{\Lambda_{\fp}}((\hat{E}^{\bullet}(L_{\infty,w}) \otimes \Q_p/\Z_p)^{\vee})=1}$. By Lemma~\ref{p-torsion_lemma} we get that ${E(L_{\infty,w})[p^{\infty}]=0}$ and therefore by \cite[Corollary~2]{Gb_IPR} we have that ${H^1(L_{\infty,w}, E[p^{\infty}])^{\vee}}$ is a free $\Lambda_{\fp}$-module of rank two. By combining the previous facts and applying \cite[Lemma~3.31]{KO} we get that ${J_{\fp,w}^{\bullet}(E/L_{\infty})^{\vee} \cong \Lambda_{\fp}}$.

Let ${\Gamma:=\Gal(L_{\infty}/K)}$ and let $w$ be a prime of $L_{\infty}$ above $\fp$. Then ${J_{\fp}^{\bullet}(E/L_{\infty})=\coind^{\Gamma}_{\Gamma_{\fp}} J_{\fp,w}(E/L_{\infty})}$. Since we have ${J_{\fp,w}^{\bullet}(E/L_{\infty})^{\vee} \cong \Lambda_{\fp}}$, it follows from Lemma~\ref{coind_lemma} that  ${J_{\fp}^{\bullet}(E/L_{\infty})^{\vee} \cong \Lambda}$. An identical proof shows that we also have ${J_{\bar{\fp}}^{\star}(E/L_{\infty})^\vee \cong \Lambda}$. Thus we get $(b)$.
\end{proof}

Recall that in Section~\ref{section:control} we defined the module
$$J_p^{\bullet\star}(E/\KK_{\infty}) = \bigoplus_{w | \fp}\frac{H^1(\KK_{\infty,w}, E[p^{\infty}])}{\hat{E}^{\bullet}(\KK_{\infty,w})\otimes \Qp/\Zp} \times  \bigoplus_{w | \bar{\fp}}\frac{H^1(\KK_{\infty,w}, E[p^{\infty}])}{\hat{E}^{\star}(\KK_{\infty,w})\otimes \Qp/\Zp}$$

and for any ${v \in S \setminus S_p}$ we defined $$J_v(E/\KK_{\infty}) = \bigoplus_{w | v}H^1(\KK_{\infty,w}, E)[p^{\infty}].$$
Consider the exact sequence
\begin{equation}\label{Selpm_Lambda2_seq}\begin{tikzcd}[scale cd=0.89]
\displaystyle 0 \longrightarrow \Selpm(E/\KK_{\infty}) \longrightarrow H^1(G_S(\KK_{\infty}), E[p^{\infty}]) \stackrel{\lambda_{\bullet\star}}{\longrightarrow} J_p^{\bullet\star}(E/\KK_{\infty}) \times \bigoplus_{v \in S, v \nmid p} J_v(E/\KK_{\infty}) \longrightarrow 0. \end{tikzcd}
\end{equation}

\begin{proposition}\label{Sel_Lambda2_surjective_prop}
Let ${\bullet, \star \in \{+,-\}}$. If $X^{\bullet\star}(E/\KK_{\infty})$ is a $\Lambda_2$-torsion module, then we have
\end{proposition}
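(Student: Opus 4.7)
The plan is to show that the map $\lambda_{\bullet\star}$ in the sequence \eqref{Selpm_Lambda2_seq} is surjective by adapting the Poitou--Tate argument from the proof of Proposition~\ref{Seln_surjective_prop} to the $\Lambda_2$-level. First, I set $T = T_p(E)$ and define the Iwasawa cohomology groups $H^i_{\text{Iw}}(G_S(\KK_\infty), T) = \ilim_F H^i(G_S(F), T)$, where $F$ ranges over finite subextensions of $\KK_\infty/K$ and the inverse limit is with respect to corestriction. Mirroring the construction in the proof of Proposition~\ref{Seln_surjective_prop}, I let $H^1_{\text{Iw}, \pm}(\KK_{\infty,w}, T)$ be the exact annihilator of $\hat{E}^{\pm}(\KK_{\infty,w}) \otimes \Q_p/\Z_p$ under the Tate pairing. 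The Poitou--Tate exact sequence \cite[Proposition~A.3.2]{PR} then yields
$$H^1_{\text{Iw}}(G_S(\KK_\infty), T) \xrightarrow{\text{loc}^{\bullet\star}} Z \longrightarrow X^{\bullet\star}(E/\KK_\infty) \longrightarrow H^2_{\text{Iw}}(G_S(\KK_\infty), T) \longrightarrow 0,$$
where $Z = \bigoplus_{w|\fp} H^1_{\text{Iw}}(\KK_{\infty,w}, T)/H^1_{\text{Iw},\bullet}(\KK_{\infty,w}, T) \oplus \bigoplus_{w|\bar{\fp}} H^1_{\text{Iw}}(\KK_{\infty,w}, T)/H^1_{\text{Iw},\star}(\KK_{\infty,w}, T)$, together with analogous terms for $v \in S$, $v \nmid p$.

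Second, I compute $\Lambda_2$-ranks. By Lemma~\ref{freeness_lemma}, each local dual $(\hat{E}^{\pm}(\KK_{\infty,w}) \otimes \Q_p/\Z_p)^\vee \cong \Z_p[[G_w]]$; since $p$ splits in $K/\Q$ and both primes above $p$ are ramified (in fact, totally ramified after passing to $\KK_\infty$), we have $G_w = G$, and Tate local duality then shows that the corresponding summand of $Z$ has $\Lambda_2$-rank one. The tame-prime contributions are $\Lambda_2$-torsion because their decomposition subgroups in $G$ have infinite index (and hence the associated Iwasawa modules have lower Krull dimension). Thus $\rank_{\Lambda_2}(Z) = 2$. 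Combining this with the $\Z_p^2$-version of Proposition~\ref{Euler_char_prop} (obtained from Jannsen's spectral sequence~\cite{Jannsen} in the same way as the $\Z_p$-case, using $\delta(K, V_p(E)) = 2$), I obtain
$$\rank_{\Lambda_2}(H^1_{\text{Iw}}(G_S(\KK_\infty), T)) - \rank_{\Lambda_2}(H^2_{\text{Iw}}(G_S(\KK_\infty), T)) = 2 = \rank_{\Lambda_2}(Z).$$
Under the hypothesis that $X^{\bullet\star}(E/\KK_\infty)$ is $\Lambda_2$-torsion, the Poitou--Tate sequence then forces $\ker \text{loc}^{\bullet\star}$ to be $\Lambda_2$-torsion.

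Third, I show that $H^1_{\text{Iw}}(G_S(\KK_\infty), T)$ is $\Lambda_2$-torsion-free. Lemma~\ref{p-torsion_lemma} gives $E(\KK_\infty)[p^\infty] = 0$, so Kummer theory together with a Hochschild--Serre argument (as in the proof of Proposition~\ref{Seln_surjective_prop}) identifies $H^1_{\text{Iw}}(G_S(\KK_\infty), T)$ with $\ilim_{U,k} H^1(G_S(\KK_\infty), E[p^\infty])[p^k]^{U}$, where $U$ ranges over open subgroups of $G$; the analogue of \cite[Proposition~5.5.10]{NSW} for $\Z_p^d$-extensions then yields $\Lambda_2$-torsion-freeness. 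Consequently $\ker \text{loc}^{\bullet\star} = 0$, so using the already-established vanishing $H^2(G_S(\KK_\infty), E[p^\infty]) = 0$ (invoked in the proof of Proposition~\ref{Selrank_prop}$(b)$), the second Poitou--Tate exact sequence gives $(\coker \lambda_{\bullet\star})^\vee \cong \ker \text{loc}^{\bullet\star} = 0$, proving the surjectivity claim.

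The main obstacle I anticipate is the $\Lambda_2$-rank computation for the local terms, since Lemma~\ref{freeness_lemma} only controls $(\hat{E}^{\pm}(\KK_{\infty,w}) \otimes \Q_p/\Z_p)^\vee$; one must also determine $\rank_{\Lambda_2}(H^1_{\text{Iw}}(\KK_{\infty,w}, T))$ and verify the tame terms are torsion. A secondary subtlety is articulating the Poitou--Tate exact sequence and Jannsen's Euler characteristic formula in the $\Z_p^2$-setting, but this is a standard extension of the $\Z_p$-statements that appear in \cite{Jannsen} and \cite{PR}.
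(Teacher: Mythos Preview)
The paper's own proof is a one-line citation to \cite[Proposition~4.12]{Lei-Lim}, so your direct argument is essentially unpacking that reference by lifting the Poitou--Tate method of Proposition~\ref{Seln_surjective_prop} to the $\Lambda_2$-level. The overall strategy is sound and is the natural one.

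Two points deserve attention. First, you invoke $H^2(G_S(\KK_\infty), E[p^\infty]) = 0$ as an established input, but this is precisely assertion~(b) of the proposition you are proving. Your reference to Proposition~\ref{Selrank_prop}(b) is awkward because that proof quotes Lemma~\ref{H2_Kinf_vainishing_lemma}, one of whose two proofs in turn cites Proposition~\ref{Sel_Lambda2_surjective_prop} itself. The clean fix is to argue (b) directly, exactly as in the $\Z_p$-case: the hypothesis that $X^{\bullet\star}(E/\KK_\infty)$ is $\Lambda_2$-torsion forces its quotient $Y(E/\KK_\infty)$ to be $\Lambda_2$-torsion, and this is equivalent to the vanishing of $H^2(G_S(\KK_\infty), E[p^\infty])$ (the $\Z_p^2$-analogue of the step in Proposition~\ref{Seln_surjective_prop} that cites \cite{Matar_Torsion}; see also \cite{OV}). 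Once (b) is in hand, your deduction of (a) via the second Poitou--Tate sequence goes through.

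Second, your assertion that $G_w = G$ for $w \mid p$ is not literally correct in general: the primes above $p$ may split nontrivially in $\KK_\infty/K$ when $p \mid h_K$, so $G_w$ can be a proper open subgroup of $G$. What actually matters is that $G_w \cong \Z_p^2$ has finite index in $G$, so Lemma~\ref{freeness_lemma} gives $(\hat{E}^\pm(\KK_{\infty,w}) \otimes \Q_p/\Z_p)^\vee \cong \Z_p[[G_w]]$, and then coinduction via Lemma~\ref{coind_lemma} yields a free $\Lambda_2$-module of rank one for each of $\fp$ and $\bar{\fp}$; this is precisely how the paper handles the local rank computation in Proposition~\ref{Sel_Lambda2_surjective_prop2}. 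With that correction, your rank bookkeeping for $Z$ and the Euler-characteristic identity (available over $\Z_p^2$ via \cite{OV} as in the proof of Proposition~\ref{Selrank_prop}) is correct.
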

\begin{enumerate}[(a)]
\item The sequence (\ref{Selpm_Lambda2_seq}) is exact.
\item ${H^2(G_S(\KK_{\infty}), E[p^{\infty}])=0}$.
\end{enumerate}
\begin{proof}
See \cite[Proposition~4.12]{Lei-Lim}.
\end{proof}
Using results from Section~\ref{section:auxiliary} we will actually derive in Lemma~\ref{H2_Kinf_vainishing_lemma} below that Assertion~(b) of Proposition~\ref{Sel_Lambda2_surjective_prop} holds unconditionally in our setting.

\begin{proposition}\label{Sel_Lambda2_surjective_prop2}
Let ${\bullet, \star \in \{+,-\}}$. If $X^{\bullet\star}(E/\KK_{\infty})$ is a $\Lambda_2$-torsion module, then we have
\begin{enumerate}[(a)]
\item We have an exact sequence $$ 0 \longrightarrow \Selpm(E/\KK_{\infty}) \longrightarrow \Selinf(E/\KK_{\infty}) \longrightarrow J_p^{\bullet\star}(E/\KK_{\infty}) \longrightarrow 0$$
\item $J_p^{\bullet\star}(E/\KK_{\infty})^{\vee}$ is a free $\Lambda_2$-module of rank two.
\end{enumerate}
\end{proposition}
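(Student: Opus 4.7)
The strategy is to directly mimic the proof of Proposition~\ref{Seln_surjective_prop2}, which handles the $\Z_p$-extension case, now working over the full $\Z_p^2$-extension $\KK_\infty$.

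For part~$(a)$, the plan is to extract surjectivity of the map $\Selinf(E/\KK_\infty) \to J_p^{\bullet\star}(E/\KK_\infty)$ from the surjectivity of the global-to-local map $\lambda_{\bullet\star}$ guaranteed by Proposition~\ref{Sel_Lambda2_surjective_prop}$(a)$. Given $x \in J_p^{\bullet\star}(E/\KK_\infty)$, surjectivity of $\lambda_{\bullet\star}$ produces a preimage $\tilde{x} \in H^1(G_S(\KK_\infty), E[p^\infty])$ mapping to $(x,0)$ in the product with the tame components $\bigoplus_{v \in S, v \nmid p} J_v(E/\KK_\infty)$. Since $\tilde{x}$ lies in the kernel of the map to the tame local modules, it belongs to $\Selinf(E/\KK_\infty)$ by the very definition of the latter, proving the desired surjectivity. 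Exactness on the left and in the middle is immediate from the definition~\eqref{Selpm_def} of the signed Selmer group.

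For part~$(b)$, the argument splits into local computations at each prime above $p$, combined with a coinduction step. Let $w$ be a prime of $\KK_\infty$ above $\fp$ and set $G_w \subseteq G$ for the corresponding decomposition group. First, Lemma~\ref{freeness_lemma} yields that
\[ (\hat{E}^{\bullet}(\KK_{\infty,w}) \otimes \Q_p/\Z_p)^\vee \cong \Z_p[[G_w]] \]
is free of rank one over $\Lambda(G_w) = \Z_p[[G_w]]$. Since $E(\KK_{\infty,w})[p^\infty] = 0$ by Lemma~\ref{p-torsion_lemma} and $G_w$ is a pro-$p$ $p$-adic Lie group with no element of order $p$, an application of \cite[Corollary~2]{Gb_IPR} (or its analogue for $p$-adic Lie groups of higher dimension) shows that $H^1(\KK_{\infty,w}, E[p^\infty])^\vee$ is a free $\Lambda(G_w)$-module of rank two. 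Combining these two facts via \cite[Lemma~3.31]{KO} exactly as in the proof of Proposition~\ref{Seln_surjective_prop2}$(b)$ then yields
\[ \left( \frac{H^1(\KK_{\infty,w}, E[p^\infty])}{\hat{E}^{\bullet}(\KK_{\infty,w}) \otimes \Q_p/\Z_p} \right)^{\!\vee} \cong \Lambda(G_w). \]

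To pass from a single prime to the full local factor at $\fp$, I would use that $\bigoplus_{w | \fp} \frac{H^1(\KK_{\infty,w}, E[p^\infty])}{\hat{E}^{\bullet}(\KK_{\infty,w}) \otimes \Q_p/\Z_p}$ is the coinduced module $\coind^G_{G_w}$ of the local factor at a fixed $w | \fp$. Since $G$ is a pro-$p$ $p$-adic Lie group without $p$-torsion, Lemma~\ref{coind_lemma} converts freeness of rank one over $\Lambda(G_w)$ into freeness of rank one over $\Lambda_2$ for the coinduced module. Running the identical argument at $\bar{\fp}$ and taking the direct product gives $J_p^{\bullet\star}(E/\KK_\infty)^\vee$ as a free $\Lambda_2$-module of rank two.

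The main technical point to watch is the freeness of $H^1(\KK_{\infty,w}, E[p^\infty])^\vee$ over $\Lambda(G_w)$, which uses the 2-dimensional analogue of Greenberg's criterion; everything else is essentially a repackaging of the corresponding $\Z_p$-extension computation in Proposition~\ref{Seln_surjective_prop2} after upgrading from $\Lambda$ to $\Lambda_2$.
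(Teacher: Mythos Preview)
Your argument for part~$(a)$ is the same as the paper's: both deduce surjectivity onto $J_p^{\bullet\star}(E/\KK_\infty)$ directly from Proposition~\ref{Sel_Lambda2_surjective_prop}$(a)$.

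For part~$(b)$ your route differs from the paper's. You mimic the proof of Proposition~\ref{Seln_surjective_prop2}$(b)$ step by step: use Lemma~\ref{freeness_lemma} for the submodule, argue that $H^1(\KK_{\infty,w},E[p^\infty])^\vee$ is free of rank two over $\Lambda(G_w)$, and then invoke \cite[Lemma~3.31]{KO} to extract freeness of the quotient, before coinducing via Lemma~\ref{coind_lemma}. The paper instead short-circuits the entire local computation by citing \cite[Corollary~3.9]{Lei-Lim}, which gives $J_{\fp,w}^\bullet(E/\KK_\infty)^\vee \cong \Lambda(G_w)$ in one stroke, and then coinduces.

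Your approach is more self-contained but leaves two points to be nailed down. The freeness of $H^1(\KK_{\infty,w},E[p^\infty])^\vee$ over the \emph{two-variable} Iwasawa algebra $\Lambda(G_w)$ is not covered by \cite[Corollary~2]{Gb_IPR}, which is stated for $\Z_p$-extensions; you correctly flag this as the ``main technical point'' but do not supply a reference or argument. Likewise \cite[Lemma~3.31]{KO} is formulated over $\Z_p[[T]]$, so its applicability over $\Lambda(G_w)\cong\Z_p[[T_1,T_2]]$ would need to be checked. Both facts are true and essentially what \cite[Corollary~3.9]{Lei-Lim} packages, so the paper's citation is the cleaner way to close the argument; if you want to keep your hands-on route, you should either cite that result or provide the two-variable analogues explicitly.
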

\begin{proof}
$(a)$ follows from Proposition~\ref{Sel_Lambda2_surjective_prop}. Now we prove $(b)$. Define

$$J_{\fp}^{\bullet}(E/\KK_{\infty}) = \bigoplus_{w \mid \fp} \frac{H^1(\KK_{\infty,w}, E[p^\infty])}{\hat{E}^{\bullet}(\KK_{\infty,w}) \otimes \Q_p/\Z_p}, $$

$$J_{\bar{\fp}}^{\star}(E/\KK_{\infty})=\bigoplus_{w \mid \bar{\fp}} \frac{H^1(\KK_{\infty,w}, E[p^\infty])}{\hat{E}^{\star}(\KK_{\infty,w}) \otimes \Q_p/\Z_p}.$$
Then  ${J_p^{\bullet\star}(E/{\KK_{\infty}})=J_{\fp}^{\bullet}(E/\KK_{\infty}) \times J_{\bar{\fp}}^{\star}(E/\KK_{\infty})}$.

Let $w$ be a prime of $\KK_{\infty}$ over $\fp$. Let $G_{\fp}$ be the decomposition group of $w$ inside ${G=\Gal(\KK_{\infty}/K)}$ and ${\Lambda_{2,\fp}=\Zp[[G_{\fp}]]}$. Let ${J_{\fp,w}^{\bullet}(E/\KK_{\infty}):=\frac{H^1(\KK_{\infty,w}, E[p^\infty])}{\hat{E}^{\bullet}(\KK_{\infty,w}) \otimes \Q_p/\Z_p}}$. By \cite[Corollary~3.9]{Lei-Lim} we have that ${J_{\fp,w}^{\bullet}(E/\KK_{\infty})^{\vee}\cong \Lambda_{2, \fp}}$.

Let $w$ be a prime of $\KK_{\infty}$ above $\fp$. Then ${J_{\fp}^{\bullet}(E/\KK_{\infty})=\coind^{G}_{G_{\fp}} J_{\fp,w}^{\bullet}(E/\KK_{\infty})}$. Since we have ${J_{\fp,w}^{\bullet}(E/\KK_{\infty})^{\vee} \cong \Lambda_{2,\fp}}$, it follows from Lemma~\ref{coind_lemma} that ${J_{\fp}^{\bullet}(E/\KK_{\infty})^{\vee} \cong \Lambda_2}$. An identical proof shows that we also have ${J_{\bar{\fp}}^{\star}(E/\KK_{\infty})^\vee \cong \Lambda_2}$. Thus we get $(b)$.
\end{proof}

For later use, we collect some more results on the cohomology of the Selmer groups.
\begin{lemma}\label{H2_Kinf_vainishing_lemma}
	${H^2(G_S(\KK_{\infty}), E[p^{\infty}])=0}$.
\end{lemma}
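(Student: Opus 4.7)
The plan is to deduce the vanishing over $\KK_\infty$ from the known vanishing over the cyclotomic $\Z_p$-extension, using the Hochschild-Serre spectral sequence combined with Nakayama's lemma.

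First, by Proposition~\ref{Hcyc_prop} we have $H_{cyc}\in\mathcal{H}^{++}$, so Proposition~\ref{Seln_surjective_prop} (applied to $\bullet\star=++$ with $n=0$) gives
\[ H^2(G_S(K_{cyc}),E[p^\infty])=0, \]
where $K_{cyc}=\KK_\infty^{H_{cyc}}$.

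Next, I would exploit the exact sequence of profinite groups
\[ 1\longrightarrow G_S(\KK_\infty)\longrightarrow G_S(K_{cyc})\longrightarrow H_{cyc}\longrightarrow 1 \]
via the Hochschild-Serre spectral sequence
\[ E_2^{p,q}=H^p\bigl(H_{cyc},H^q(G_S(\KK_\infty),E[p^\infty])\bigr)\Longrightarrow H^{p+q}(G_S(K_{cyc}),E[p^\infty]). \]
Since $H_{cyc}\cong\Z_p$ has $p$-cohomological dimension $1$, the rows with $p\ge 2$ vanish, so in total degree $2$ one obtains the short exact sequence
\[ 0\to H^1\bigl(H_{cyc},H^1(G_S(\KK_\infty),E[p^\infty])\bigr)\to H^2(G_S(K_{cyc}),E[p^\infty])\to H^2(G_S(\KK_\infty),E[p^\infty])^{H_{cyc}}\to 0. \]
Combined with the first step, this forces $H^2(G_S(\KK_\infty),E[p^\infty])^{H_{cyc}}=0$.

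Finally, I would invoke Nakayama's lemma. By standard finite generation results in Iwasawa theory (e.g.\ via Jannsen's spectral sequence, or arguments in the same spirit as those behind \cite[Proposition~3.2]{OV} used earlier in the paper), the Pontryagin dual $M:=H^2(G_S(\KK_\infty),E[p^\infty])^\vee$ is a finitely generated $\Lambda_2$-module. Pontryagin duality then translates the vanishing of $H_{cyc}$-invariants into the statement that $(h-1)M=M$ for a topological generator $h$ of $H_{cyc}$. Since $(h-1)$ lies in the maximal ideal $\mathfrak{m}=(p,T_1,T_2)$ of the local ring $\Lambda_2$, this yields $\mathfrak{m}M=M$, and Nakayama's lemma gives $M=0$. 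The only mildly technical point is the finite generation of $M$ over $\Lambda_2$; everything else is a formal consequence of the results already established in the earlier sections.
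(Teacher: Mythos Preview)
Your argument is correct and matches the first of the two proofs the paper sketches: the paper says the lemma ``can be proved along the same lines as \cite[Lemma~2.12]{MHG}, using Proposition~\ref{Hcyc_prop} and Proposition~\ref{Seln_surjective_prop}'', which is precisely your Hochschild--Serre plus Nakayama route. The paper also notes a shorter alternative: since $H_{cyc}\in\mathcal{H}^{++}$ (Proposition~\ref{Hcyc_prop}), Lemma~\ref{Xinf_torsion_lemma} gives that $X^{++}(E/\KK_\infty)$ is $\Lambda_2$-torsion, and then Proposition~\ref{Sel_Lambda2_surjective_prop}(b) (quoted from \cite{Lei-Lim}) yields the vanishing of $H^2$ directly, bypassing the spectral sequence and the finite-generation input.
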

\begin{proof}
	This can be proved along the same lines as \cite[Lemma~2.12]{MHG}, using Proposition~\ref{Hcyc_prop} and Proposition~\ref{Seln_surjective_prop}. Alternatively the desired result follows from Proposition~\ref{Hcyc_prop}, Lemma~\ref{Xinf_torsion_lemma} and Proposition ~\ref{Sel_Lambda2_surjective_prop}.
\end{proof}
\begin{lemma}\label{H_i_vanishing_lemma}
	Let ${\bullet, \star \in \{+,-\}}$ and ${H \in \mathcal{H}^{\bullet\star}}$. \\
	We have ${H^i(H, H^1(G_S(\KK_{\infty}), E[p^{\infty}]))=0}$ for all ${i \geq 1}$. Furthermore, if $X^{\bullet\star}(E/\KK_{\infty})_f$ is finitely generated over $\Lambda(H)$, then for any $n$ we have ${H^i(H_n, H^1(G_S(\KK_{\infty}), E[p^{\infty}]))=0}$ for all ${i \geq 1}$.
\end{lemma}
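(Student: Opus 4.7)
Since $H \cong \Z_p$ and $H_n \subseteq H$ is open, both have cohomological dimension $cd_p = 1$. Hence $H^i(H, M) = 0 = H^i(H_n, M)$ for every $p$-primary module $M$ and every $i \ge 2$. It therefore suffices to treat $i = 1$.

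The plan is to apply the Hochschild--Serre spectral sequence attached to the normal subgroup $G_S(\KK_\infty) \trianglelefteq G_S(\KK_\infty^H)$ with quotient $H$, namely
\[ E_2^{p,q} = H^p(H, H^q(G_S(\KK_\infty), E[p^\infty])) \Longrightarrow H^{p+q}(G_S(\KK_\infty^H), E[p^\infty]). \]
Because $cd_p(H) = 1$ we have $E_2^{p,q} = 0$ whenever $p \ge 2$, so all higher differentials vanish and $E_\infty = E_2$. The $2$-step filtration on the abutment in total degree $2$ yields the short exact sequence
\[ 0 \longrightarrow H^1(H, H^1(G_S(\KK_\infty), E[p^\infty])) \longrightarrow H^2(G_S(\KK_\infty^H), E[p^\infty]) \longrightarrow H^2(G_S(\KK_\infty), E[p^\infty])^H \longrightarrow 0. \]
The rightmost term vanishes by Lemma~\ref{H2_Kinf_vainishing_lemma}, and the middle term vanishes by Proposition~\ref{Seln_surjective_prop} (applied with $n=0$, which does not require the finite generation hypothesis). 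Therefore $H^1(H, H^1(G_S(\KK_\infty), E[p^\infty])) = 0$, which, combined with the cohomological dimension remark, proves the first assertion.

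For the second assertion, we run exactly the same argument with $H_n$ in place of $H$ and $\KK_\infty^{H_n}$ in place of $\KK_\infty^H$. The extension $\KK_\infty/\KK_\infty^{H_n}$ is Galois with group $H_n \cong \Z_p$, so Hochschild--Serre applies and degenerates as before. The vanishing of $H^2(G_S(\KK_\infty), E[p^\infty])^{H_n}$ is again given by Lemma~\ref{H2_Kinf_vainishing_lemma}, and the vanishing of $H^2(G_S(\KK_\infty^{H_n}), E[p^\infty])$ for every $n$ is precisely the content of Proposition~\ref{Seln_surjective_prop} under the assumption that $X^{\bullet\star}(E/\KK_\infty)_f$ is finitely generated over $\Lambda(H)$. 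The resulting identification $H^1(H_n, H^1(G_S(\KK_\infty), E[p^\infty])) \cong H^2(G_S(\KK_\infty^{H_n}), E[p^\infty]) = 0$ completes the proof.

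There is no serious obstacle: the argument is a clean degeneration of Hochschild--Serre, and the only non-formal inputs (vanishing of the relevant $H^2$'s over $\KK_\infty$ and over $\KK_\infty^{H_n}$) have already been established in Lemma~\ref{H2_Kinf_vainishing_lemma} and Proposition~\ref{Seln_surjective_prop}.
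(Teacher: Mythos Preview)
Your proof is correct and is essentially the same approach as the paper's: the paper refers to \cite[Lemma~2.13]{MHG} and cites precisely Proposition~\ref{Seln_surjective_prop} and Lemma~\ref{H2_Kinf_vainishing_lemma} as the inputs, which is exactly the Hochschild--Serre degeneration argument you wrote out. (A tiny remark: once the middle term $H^2(G_S(\KK_\infty^{H_n}),E[p^\infty])$ vanishes, the injectivity of $H^1(H_n,H^1)$ into it already gives the conclusion, so invoking Lemma~\ref{H2_Kinf_vainishing_lemma} for the rightmost term is redundant, though of course still true.)
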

\begin{proof}
	This can be proved along the same lines as \cite[Lemma~2.13]{MHG}, using Proposition~\ref{Seln_surjective_prop} and Lemma~\ref{H2_Kinf_vainishing_lemma}.
\end{proof}
In order to prove Lemma~\ref{H1_Sel_vanishing_lemma}, which is the final aim of this section, we need one more auxiliary result. Recall that for ${H \in \mathcal{H}^{\bullet\star}}$ and ${n \in \N}$ we abbreviate the Iwasawa algebra ${\Lambda(\Gal(\KK_{\infty}^{H_n}/K_{H,n})) = \Z_p[[\Gal(\KK_{\infty}^{H_n}/K_{H,n})]]}$ to $\Lambda_{H,n}$ (these fields have been defined in Section~\ref{section:mainintroduction}, just above the large field diagram). In the following lemma we use similar notation to the one in the statement of Proposition~\ref{Seln_surjective_prop}.

\begin{lemma}\label{Sel_Hn_surjective_lemma}
	Let $\bullet, \star \in \{+,-\}$ and ${H \in \mathcal{H}^{\bullet\star}}$. For any $n$, consider the sequence
	\begin{align}\begin{tikzcd}[scale cd=0.78] \label{eq:definingSel} 0 \longrightarrow \Selpm(E/\KK_{\infty})^{H_n} \longrightarrow H^1(G_S(\KK_{\infty}), E[p^{\infty}])^{H_n} \longrightarrow J_p^{\bullet\star}(E/\KK_{\infty})^{H_n} \times \bigoplus_{v \in S, v \nmid p} J_v(E/\KK_{\infty})^{H_n} \longrightarrow 0. \end{tikzcd} \end{align}
	This sequence is exact for ${n=0}$. If $X^{\bullet\star}(E/\KK_{\infty})_f$ is finitely generated over $\Lambda(H)$, then sequence~\eqref{eq:definingSel} is exact for all $n$.
\end{lemma}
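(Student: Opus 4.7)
The plan is to apply the $H_n$-cohomology functor to the short exact sequence~\eqref{Selpm_Lambda2_seq} from Proposition~\ref{Sel_Lambda2_surjective_prop}; its hypothesis is satisfied because $X^{\bullet\star}(E/\KK_\infty)$ is $\Lambda_2$-torsion by Lemma~\ref{Xinf_torsion_lemma} (since $H\in\mathcal{H}^{\bullet\star}$ guarantees $\mathcal{H}^{\bullet\star}\ne\emptyset$). Now Lemma~\ref{H_i_vanishing_lemma} gives ${H^i(H_n,H^1(G_S(\KK_\infty),E[p^\infty]))=0}$ for every ${i\geq 1}$ -- unconditionally when ${n=0}$, and for all $n$ under the hypothesis that $X^{\bullet\star}(E/\KK_\infty)_f$ is finitely generated over $\Lambda(H)$. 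Consequently the resulting long exact sequence in $H_n$-cohomology truncates to
\[ 0 \longrightarrow \Selpm(E/\KK_\infty)^{H_n} \longrightarrow H^1(G_S(\KK_\infty), E[p^\infty])^{H_n} \longrightarrow J(\KK_\infty)^{H_n} \longrightarrow H^1(H_n, \Selpm(E/\KK_\infty)) \longrightarrow 0, \]
where ${J(\KK_\infty):=J_p^{\bullet\star}(E/\KK_\infty)\times\bigoplus_{v\in S,\,v\nmid p}J_v(E/\KK_\infty)}$. Hence the proof reduces to showing the vanishing of $H^1(H_n, \Selpm(E/\KK_\infty))$.

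To establish this vanishing I would compare the above four-term exact sequence with the short exact sequence over $\KK_\infty^{H_n}$ provided by Proposition~\ref{Seln_surjective_prop} (applicable for ${n=0}$ unconditionally, and for all $n$ under our hypothesis, since that hypothesis forces $X^{\bullet\star}(E/\KK_\infty^{H_n})$ to be $\Lambda_{H,n}$-torsion as recorded in that proposition). The restriction maps assemble into a commutative diagram between the two sequences. The leftmost vertical map is an isomorphism by the control theorem (Theorem~\ref{SelmerControlThm_prop}), and the middle vertical map is an isomorphism by the argument in the proof of Proposition~\ref{SelmerControlThm_prop2} (which only relies on Lemma~\ref{p-torsion_lemma}). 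A standard diagram chase then identifies $H^1(H_n, \Selpm(E/\KK_\infty))$ with the cokernel of the third vertical map, so the task reduces to proving that the local comparison map ${J(\KK_\infty^{H_n})\to J(\KK_\infty)^{H_n}}$ is an isomorphism.

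The main obstacle is this last bijectivity. At the tame places ${v\in S}$ with ${v\nmid p}$, the decomposition subgroup $H_{n,w}$ is trivial because no prime of $S$ splits completely in $\KK_\infty^H/K$ and $K_{n,w}$ admits no $\Z_p^2$-extension (the same observation used in the proofs of Theorem~\ref{SelmerControlThm_prop} and Proposition~\ref{SelmerControlThm_prop2}); consequently the two sides are already canonically identified. At the $p$-adic places, Proposition~\ref{invariants_prop} identifies $\hat{E}^{\pm}(F^n_{\infty,w})\otimes\Q_p/\Z_p$ with $(\hat{E}^{\pm}(\KK_{\infty,w})\otimes\Q_p/\Z_p)^{H_{n,w}}$, and Lemma~\ref{p-torsion_lemma} yields $H^1(\KK_{\infty,w},E[p^\infty])^{H_{n,w}}=H^1(F^n_{\infty,w},E[p^\infty])$; consequently the cokernel of the comparison on the corresponding quotients $J_{p,w}^\pm$ embeds into $H^1(H_{n,w},\hat{E}^{\pm}(\KK_{\infty,w})\otimes\Q_p/\Z_p)$. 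By Pontryagin duality together with Lemma~\ref{freeness_lemma}, this group is dual to $H_1(H_{n,w},\Z_p[[G_w]])$, which vanishes once one observes that $\Z_p[[G_w]]$ is a free $\Z_p[[H_{n,w}]]$-module: the short exact sequence ${0\to H_{n,w}\to G_w\to G_w/H_{n,w}\to 0}$ of closed subgroups of ${G\cong\Z_p^2}$ consists of free $\Z_p$-modules and hence splits, yielding a decomposition ${\Z_p[[G_w]]\cong\Z_p[[H_{n,w}]]\,\widehat{\otimes}_{\Z_p}\,\Z_p[[G_w/H_{n,w}]]}$. A final snake-lemma chase then delivers the required vanishing of $H^1(H_n,\Selpm(E/\KK_\infty))$, completing the plan.
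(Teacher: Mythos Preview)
Your approach differs from the paper's more direct argument, which simply shows that the local restriction map ${h_n^{\bullet\star}\colon J(F_\infty^n)\to J(\KK_\infty)^{H_n}}$ is \emph{surjective} and then combines this with the surjectivity of $\lambda_n^{\bullet\star}$ (Proposition~\ref{Seln_surjective_prop}) in a two-by-two square to conclude that $\rho_n^{\bullet\star}$ is surjective. Your route through the four-term sequence and the vanishing of $H^1(H_n,\Selpm(E/\KK_\infty))$ is more roundabout but legitimate, and in fact it simultaneously proves Lemma~\ref{H1_Sel_vanishing_lemma}; the underlying local inputs are the same as the paper's.

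There is, however, a genuine gap in your justification at the $p$-adic places when ${n>0}$. You claim that the sequence ${0\to H_{n,w}\to G_w\to G_w/H_{n,w}\to 0}$ consists of free $\Z_p$-modules and hence splits. But ${H_n=H^{p^n}}$, so $H_{n,w}$ is (up to finite index) $p^n$ times a direct summand of ${G_w\cong\Z_p^2}$; the quotient $G_w/H_{n,w}$ then typically contains a cyclic factor of order $p^n$, so it is \emph{not} $\Z_p$-free and the sequence does not split as stated. The conclusion you want---that ${H_1(H_{n,w},\Z_p[[G_w]])=0}$---is nonetheless correct and follows more simply: since ${H_{n,w}\cong\Z_p}$ is non-trivial with topological generator $\gamma$, one has ${H_1(H_{n,w},\Z_p[[G_w]])=\Z_p[[G_w]][\gamma-1]=0}$ because $\Z_p[[G_w]]$ is an integral domain. (This is exactly the paper's observation that ${(E^\pm(\KK_{\infty,w})\otimes\Q_p/\Z_p)_{H_{n,w}}=0}$, read off directly from Lemma~\ref{freeness_lemma}.) With this correction your argument goes through.
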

\begin{proof}
	We proceed as in \cite[Lemma~2.3]{CSS}. Let ${H \in \mathcal{H}^{\bullet\star}}$ and ${n \geq 0}$. For simplicity, let ${F_{\infty}^n=\KK_{\infty}^{H_n}}$. Recall that $S_p$ denotes the set of primes in $S$ above $p$. Consider the map induced by restriction
	$$h_n^{\bullet\star}:J_p^{\bullet\star}(E/F_{\infty}^n) \times \bigoplus_{v \in S \setminus S_p} J_v(E/F_{\infty}^n) \longrightarrow J_p^{\bullet\star}(E/\KK_{\infty})^{H_n} \times \bigoplus_{v \in S \setminus S_p} J_v(E/\KK_{\infty})^{H_n}. $$
	We can write ${h_n^{\bullet\star}=\oplus_{w|p} h_{n,w}^{\bullet\star} \times \oplus_{w \nmid p} h_{n,w}}$ where $w$ runs over all primes of $F_{\infty}^n$ above primes in $S$. We claim that $h_n^{\bullet\star}$ is surjective. Let $w$ be a prime of $F_{\infty}^n$ not above $p$. Then from the inflation-restriction sequence $\coker h_{n,w}$ is contained in $H^2(H_{n,w}, E(\KK_{\infty,w})[p^{\infty}])$ where we have also written $w$ for a fixed prime of $\KK_{\infty}$ above $w$ and $H_{n,w}$ is the decomposition group. Since ${cd_p(H_{n,w}) \le 1}$, we see that ${H^2(H_{n,w}, E(\KK_{\infty,w})[p^{\infty}])=0}$, whence $h_{n,w}$ is surjective.
	
	Now let $w$ be a prime of $F_{\infty}^n$ above $p$. Consider the commutative diagram with exact rows where we have also written $w$ for a fixed prime of $\KK_{\infty}$ above $w$ and $H_{n,w}$ is the decomposition group:
	
	\begin{equation*}
		\begin{tikzcd}[column sep = small, scale cd=0.84]
			0 \arrow[r] & \left(E^{\pm}(\KK_{\infty,w})\otimes \Qp/\Zp\right)^{H_{n,w}} \arrow[r] & H^1(\KK_{\infty,w}, E[p^{\infty}])^{H_{n,w}}  \arrow[r] & \left(\frac{H^1(\KK_{\infty,w}, E[p^{\infty}])}{E^{\pm}(\KK_{\infty,w})\otimes \Qp/\Zp}\right)^{H_{n,w}} \arrow[r]& 0\\
			0 \arrow[r] & E^{\pm}(F^n_{\infty,w})\otimes \Qp/\Zp \ar[r] \arrow[u, "\alpha^{\bullet\star}"] &H^1(F^n_{\infty,w}, E[p^{\infty}]) \arrow[u, "\beta"] \arrow[r] & \frac{H^1(F^n_{\infty,w}, E[p^{\infty}])}{E^{\pm}(F^n_{\infty,w})\otimes \Qp/\Zp} \arrow[u, "h_{n,w}^{\bullet\star}"] \arrow[r] &0 \end{tikzcd}
	\end{equation*}
	
	It follows from Lemma~\ref{freeness_lemma} that ${(E^{\pm}(\KK_{\infty,w})\otimes \Qp/\Zp)_{H_{n,w}}=0}$. Therefore the arrow on the right of the top row is in fact surjective. Since ${cd_p(H_{n,w}) \le 1}$, the same argument above shows that $\beta$ is surjective. The above diagram then shows that $h_{n,w}^{\bullet\star}$ is also surjective. Thus we have shown that $h_n^{\bullet\star}$ is surjective.
	
	Now consider the commutative diagram
	\begin{equation*}
		\xymatrix {
			H^1(G_S(\KK_{\infty}), E[p^{\infty}])^{H_n}  \ar[r]^-{\rho_n^{\bullet\star}} & J_p^{\bullet\star}(E/\KK_{\infty})^{H_n} \times \bigoplus_{v \in S \setminus S_p} J_v(E/\KK_{\infty})^{H_n}\\
			H^1(G_S(F_n), E[p^{\infty}]) \ar[u] \ar[r]^-{\lambda_n^{\bullet\star}} & J_p^{\bullet\star}(E/F_{\infty}^n) \times \bigoplus_{v \in S \setminus S_p} J_v(E/F_{\infty}^n) \ar[u]^{h_n^{\bullet\star}}}
	\end{equation*}
	The exactness of sequence~\eqref{eq:definingSel} is equivalent to the surjectivity of $\rho_n^{\bullet\star}$. Since $h_n^{\bullet\star}$ is surjective, the commutative diagram shows that the surjectivity of $\rho_n^{\bullet\star}$ will follow if $\lambda_n^{\bullet\star}$ is surjective. Therefore, the desired result follows from Proposition~\ref{Seln_surjective_prop}.
\end{proof}

\begin{lemma}\label{H1_Sel_vanishing_lemma}
	Let ${\bullet, \star \in \{+,-\}}$ and ${H \in \mathcal{H}^{\bullet\star}}$. We have ${H^1(H, \Selpm(E/\KK_{\infty}))=0}$. If $X^{\bullet\star}(E/\KK_{\infty})_f$ is finitely generated over $\Lambda(H)$, then ${H^1(H_n, \Selpm(E/\KK_{\infty}))=0}$ for all ${n \geq 0}$.
\end{lemma}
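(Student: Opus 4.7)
The plan is to extract $H^1(H_n, \Selpm(E/\KK_\infty))$ from the long exact sequence of $H_n$-cohomology associated with the defining short exact sequence of the signed Selmer group, and then kill it using the two preceding lemmas.

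First I would invoke Proposition~\ref{Sel_Lambda2_surjective_prop}, which applies because $\mathcal{H}^{\bullet\star} \ne \emptyset$ forces $X^{\bullet\star}(E/\KK_\infty)$ to be $\Lambda_2$-torsion by Lemma~\ref{Xinf_torsion_lemma}. This gives the short exact sequence
\[ 0 \longrightarrow \Selpm(E/\KK_\infty) \longrightarrow H^1(G_S(\KK_\infty), E[p^\infty]) \stackrel{\lambda_{\bullet\star}}{\longrightarrow} J_p^{\bullet\star}(E/\KK_\infty) \times \bigoplus_{v \in S, v \nmid p} J_v(E/\KK_\infty) \longrightarrow 0. \]

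Next I would apply the long exact sequence of $H_n$-cohomology and isolate the relevant portion:
\[ \bigl(J_p^{\bullet\star}(E/\KK_\infty) \times \bigoplus_{v \nmid p} J_v(E/\KK_\infty)\bigr)^{H_n} \stackrel{\delta}{\longrightarrow} H^1(H_n, \Selpm(E/\KK_\infty)) \longrightarrow H^1(H_n, H^1(G_S(\KK_\infty), E[p^\infty])). \]
The right-hand term vanishes by Lemma~\ref{H_i_vanishing_lemma}; this is unconditional for $n=0$, and uses the finite-generation hypothesis on $X^{\bullet\star}(E/\KK_\infty)_f$ when $n > 0$. Thus $H^1(H_n, \Selpm(E/\KK_\infty))$ is identified with the cokernel of the restriction map
\[ H^1(G_S(\KK_\infty), E[p^\infty])^{H_n} \longrightarrow \bigl(J_p^{\bullet\star}(E/\KK_\infty) \times \bigoplus_{v \nmid p} J_v(E/\KK_\infty)\bigr)^{H_n}. \]

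Finally I would invoke Lemma~\ref{Sel_Hn_surjective_lemma}, which asserts precisely the surjectivity of this restriction map (again unconditionally for $n=0$, and for all $n$ under the finite-generation hypothesis). Combining this surjectivity with the vanishing of $H^1(H_n, H^1(G_S(\KK_\infty), E[p^\infty]))$ forces $H^1(H_n, \Selpm(E/\KK_\infty)) = 0$, as desired. There is no genuine obstacle here: the proof is a pure diagram chase, and the two ingredients (surjectivity of the global-to-local map on $H_n$-invariants and the vanishing of the cohomology of the global $H^1$) have already been developed in the preceding lemmas exactly for this type of application.
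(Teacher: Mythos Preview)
Your proof is correct and matches the paper's approach: the paper simply refers to \cite[Lemma~2.14]{MHG} and says to apply Lemma~\ref{Sel_Hn_surjective_lemma} and Lemma~\ref{H_i_vanishing_lemma}, which is exactly the long-exact-sequence argument you spell out. The only minor addition is that you explicitly justify the use of Proposition~\ref{Sel_Lambda2_surjective_prop} via Lemma~\ref{Xinf_torsion_lemma}, which the paper leaves implicit.
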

\begin{proof}
	This is proved using the same arguments as in the proof of \cite[Lemma~2.14]{MHG}, by applying Lemma~\ref{Sel_Hn_surjective_lemma} and Lemma~\ref{H_i_vanishing_lemma}.
\end{proof}

\section{Pseudo-null submodules and a comparison theorem}
In this section, we prove the non-existence of nontrivial pseudo-null submodules for both the $\Lambda_2$-module $X^{\bullet\star}(E/\KK_{\infty})$ and the $\Lambda_{H,n}$-modules $X^{\bullet\star}(E/\KK_{\infty}^{H_n})$, ${n \in \N}$ and ${H \in \mathcal{H}^{\bullet\star}}$, provided that they are torsion (see Theorem~\ref{pseudo-null_theorem} below). We also prove a comparison theorem relating results about the Selmer group and signed Selmer group. As an application we obtain a control theorem for $(X^{\bullet\star}(E/\KK_{\infty})_f)_{H_n}$ and $X^{\bullet\star}(E/\KK_{\infty}^{H_n})_f$.

We first prove the following result.

\begin{proposition}\label{pseudo-null_prop}
Let ${\bullet, \star \in \{+,-\}}$. We have
\begin{enumerate}[(a)]
\item If $X^{\bullet\star}(E/\KK_{\infty})$ is a $\Lambda_2$-torsion module, then $X^{\bullet\star}(E/\KK_{\infty})$ has no nontrivial pseudo-null submodules if and only if $X(E/\KK_{\infty})$ has no nontrivial pseudo-null submodules.
\item If $X^{\bullet\star}(E/\KK_{\infty})$ is a $\Lambda_2$-torsion module, then ${\text{pd}_{\Lambda_2}(X^{\bullet\star}(E/\KK_{\infty})) \leq 1}$ if and only if ${\text{pd}_{\Lambda_2}(X(E/\KK_{\infty})) \leq 1}$.
\item Let ${H \in \mathcal{H}^{\bullet\star}}$. Then $X^{\bullet\star}(E/\KK_{\infty}^H)$ has no nontrivial finite $\Lambda$-submodules if and only if $X(E/\KK_{\infty}^H)$ has no nontrivial finite $\Lambda$-submodules.
\item Let ${H \in \mathcal{H}^{\bullet\star}}$. Then ${\text{pd}_{\Lambda}(X^{\bullet\star}(E/\KK_{\infty}^H)) \leq 1}$ if and only if ${\text{pd}_{\Lambda}(X(E/\KK_{\infty}^H) \leq 1}$.
\end{enumerate}
\end{proposition}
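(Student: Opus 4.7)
The plan is to deduce all four assertions from the short exact sequences furnished by Propositions~\ref{Sel_Lambda2_surjective_prop2}(a) and~\ref{Seln_surjective_prop2}(a). Dualising these yields
$$ 0 \longrightarrow J_p^{\bullet\star}(E/\KK_\infty)^\vee \longrightarrow X(E/\KK_\infty) \longrightarrow X^{\bullet\star}(E/\KK_\infty) \longrightarrow 0 $$
and, for any $H \in \mathcal{H}^{\bullet\star}$,
$$ 0 \longrightarrow J_p^{\bullet\star}(E/\KK_\infty^H)^\vee \longrightarrow X(E/\KK_\infty^H) \longrightarrow X^{\bullet\star}(E/\KK_\infty^H) \longrightarrow 0, $$
in which the leftmost term is free of rank $2$ over $\Lambda_2$ (respectively over $\Lambda$) by Propositions~\ref{Sel_Lambda2_surjective_prop2}(b) and~\ref{Seln_surjective_prop2}(b). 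Writing $J^\vee$ for the kernel in either case, the arguments in the two situations will be strictly parallel; one should recall that a finitely generated $\Lambda$-module is pseudo-null precisely when it is finite.

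Parts $(b)$ and $(d)$ are essentially formal: since $J^\vee$ is projective, the standard projective-dimension estimates for a short exact sequence give
$$ \text{pd}(X^{\bullet\star}) \,\leq\, \max\bigl(\text{pd}(J^\vee)+1,\ \text{pd}(X)\bigr) \,=\, \max(1,\text{pd}(X)) $$
together with $\text{pd}(X) \leq \max(\text{pd}(J^\vee),\text{pd}(X^{\bullet\star})) = \text{pd}(X^{\bullet\star})$, which combine to yield $\text{pd}(X)\leq 1 \Longleftrightarrow \text{pd}(X^{\bullet\star})\leq 1$ in both the $\Lambda_2$- and the $\Lambda$-setup. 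The ``only if'' direction of $(a)$ and $(c)$ is equally direct: any pseudo-null (resp.\ finite) submodule $N \subseteq X$ satisfies $N \cap J^\vee = 0$, because the free module $J^\vee$ contains no nonzero pseudo-null or finite submodule, so $N$ embeds into $X^{\bullet\star}$ and hence vanishes by hypothesis.

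For the harder converse direction of $(a)$ and $(c)$, suppose $X$ has no nontrivial pseudo-null (resp.\ finite) submodule and let $N' \subseteq X^{\bullet\star}$ be such a submodule. Its preimage under $\pi \colon X \twoheadrightarrow X^{\bullet\star}$ fits into an extension
$$ 0 \longrightarrow J^\vee \longrightarrow \pi^{-1}(N') \longrightarrow N' \longrightarrow 0 $$
classified by $\text{Ext}^1(N',J^\vee) \cong \text{Ext}^1(N',\Lambda_2)^2$ (resp.\ $\text{Ext}^1(N',\Lambda)^2$). Now $\Lambda_2$ and $\Lambda$ are Auslander regular commutative local rings of Krull dimensions $3$ and $2$ respectively, and a pseudo-null $\Lambda_2$-module (resp.\ a finite $\Lambda$-module) has grade $\geq 2$; hence $\text{Ext}^i(N',\Lambda_2) = 0$ (resp.\ $\text{Ext}^i(N',\Lambda) = 0$) for $i < 2$. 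The extension therefore splits, exhibiting $N'$ as a submodule of $\pi^{-1}(N') \subseteq X$, and the hypothesis on $X$ forces $N' = 0$. The single nontrivial ingredient is this grade-based $\text{Ext}$-vanishing, which is a standard consequence of Auslander regularity.
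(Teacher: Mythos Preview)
Your proof is correct and follows essentially the same approach as the paper. Both arguments hinge on the exact sequences from Propositions~\ref{Sel_Lambda2_surjective_prop2} and~\ref{Seln_surjective_prop2} with free kernel, and the key input $\text{Ext}^1(N',\Lambda_2)=0$ for $N'$ pseudo-null; the paper phrases the converse in $(a)$/$(c)$ via the long exact sequence $\Hom(A,X)\to\Hom(A,X^{\bullet\star})\to\text{Ext}^1(A,U)$ to conclude $\Hom(A,X^{\bullet\star})=0$, whereas you read the same Ext-vanishing as a splitting of the pullback extension---these are equivalent ways of packaging the identical fact.
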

\begin{proof}
First we prove $(a)$. Let ${U:=J_p^{\bullet\star}(E/\KK_{\infty})^{\vee}}$. By Proposition~\ref{Sel_Lambda2_surjective_prop2} we have an exact sequence

\begin{equation}\label{Sel_exact_seq_dual}
0 \to U \to X(E/\KK_{\infty})\to X^{\bullet\star}(E/\KK_{\infty}) \to 0
\end{equation}
Furthermore $U$ is a free $\Lambda_2$-module of rank two. Suppose that $X^{\bullet\star}(E/\KK_{\infty})$ has no nontrivial pseudo-null submodules and let $A$ be a pseudo-null submodule of $X(E/\KK_{\infty})$. Then since $U$ is a free $\Lambda_2$-module the exact sequence~\eqref{Sel_exact_seq_dual} shows that $A$ injects into $X^{\bullet\star}(E/\KK_{\infty})$ and therefore ${A=0}$.

Suppose that $X(E/\KK_{\infty})$ has no nontrivial pseudo-null submodules and let $A$ be a pseudo-null submodule of $X^{\bullet\star}(E/\KK_{\infty})$. From the exact sequence~\eqref{Sel_exact_seq_dual} we get an exact sequence
\begin{equation}\label{Ext_exact_seq}
\Hom_{\Lambda_2}(A,X(E/\KK_{\infty})) \to \Hom_{\Lambda_2}(A, X^{\bullet\star}(E/\KK_{\infty})) \to \text{Ext}_{\Lambda_2}^1(A, U)
\end{equation}
Since $X(E/\KK_{\infty})$ has no nontrivial pseudo-null submodules, we have ${\Hom_{\Lambda_2}(A, X(E/\KK_{\infty}))=0}$. Since $A$ is a pseudo-null $\Lambda_2$-module and $U$ is a free $\Lambda_2$ module, it follows that ${\text{Ext}_{\Lambda_2}^1(A, U)=0}$ (see \cite[Example~1.3]{CSS2}). Therefore from the exact sequence~\eqref{Ext_exact_seq} we get that ${\Hom_{\Lambda_2}(A, X^{\bullet\star}(E/\KK_{\infty}))=0}$. It follows that ${A=0}$ and so the proof of $(a)$ is complete. The proof of $(c)$ is identical where we use Proposition~\ref{Seln_surjective_prop2} in place of Proposition~\ref{Sel_Lambda2_surjective_prop2} and $\Lambda$ in place of $\Lambda_2$.

Now we prove $(b)$ using the exact sequence~\eqref{Sel_exact_seq_dual}. Since $U$ is a free $\Lambda_2$-module ${\text{pd}_{\Lambda_2}(U)=0}$. It follows from this, the short exact sequence~\eqref{Sel_exact_seq_dual} and \cite[Tag 065S]{Stacks} that $(b)$ holds. The proof of $(d)$ is identical where we use Proposition~\ref{Seln_surjective_prop2} in place of Proposition~\ref{Sel_Lambda2_surjective_prop2} and $\Lambda$ in place of $\Lambda_2$.
\end{proof}

Now we consider the special case of the cyclotomic $\Z_p$-extension $K_{cyc}$ of $K$. Note that as every prime of $K$ above $p$ is totally ramified in $K_{cyc}/K$, our definition of  $\Selm^{++}(E/K_{cyc})$ coincides with the standard definition as in \cite{Kob}, \cite{IP} and \cite{Hamidi}.
\begin{theorem}\label{finite_submodules_theorem}
$X^{++}(E/K_{cyc})$ is a torsion $\Lambda$-module with no nontrivial finite submodules.
\end{theorem}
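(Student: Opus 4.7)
The plan breaks naturally into two parts: the torsion assertion, and the absence of nontrivial finite $\Lambda$-submodules.

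For the torsion statement, no work is needed beyond citing Proposition~\ref{Hcyc_prop}: it places $H_{cyc}$ in $\mathcal{H}^{++}$, and $\mathcal{H}^{++}$ is defined precisely to force $X^{++}(E/\KK_\infty^{H_{cyc}}) = X^{++}(E/K_{cyc})$ to be $\Lambda$-torsion.

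For the statement that $X^{++}(E/K_{cyc})$ has no nontrivial finite $\Lambda$-submodule, I would proceed via the quadratic-twist decomposition. Dualising the isomorphism \eqref{quad_twist_isom+} gives a $\Lambda$-module isomorphism
\[
  X^{++}(E/K_{cyc}) \;\cong\; X^{+}(E/\Q_{cyc}) \,\oplus\, X^{+}(E^{(K)}/\Q_{cyc}).
\]
Because $p$ splits in $K/\Q$, it is unramified there, so the quadratic twist $E^{(K)}$ inherits good supersingular reduction at $p$ from $E$. A direct sum of $\Lambda$-modules has no nontrivial finite submodule iff neither summand does, so it suffices to show that for any elliptic curve $A/\Q$ with good supersingular reduction at $p$, the $\Lambda$-module $X^{+}(A/\Q_{cyc})$ has no nontrivial finite submodule. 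This last statement is part of the structural analysis of the plus/minus Selmer groups carried out by Kobayashi in \cite{Kob} (see in particular his Theorem~7.3 and its proof), and has been reused many times in the literature on supersingular Iwasawa theory.

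As an alternative route avoiding the descent to $\Q_{cyc}$, one can apply Proposition~\ref{pseudo-null_prop}(c) with $H = H_{cyc}$: since $H_{cyc} \in \mathcal{H}^{++}$, the desired vanishing of finite $\Lambda$-submodules inside $X^{++}(E/K_{cyc})$ is equivalent to the vanishing of finite $\Lambda$-submodules inside the classical Selmer group dual $X(E/K_{cyc})$. The latter is a standard fact in the Iwasawa theory of elliptic curves over the cyclotomic $\Z_p$-extension and can be established by the usual Poitou--Tate arguments together with Lemma~\ref{p-torsion_lemma} (which ensures $E(K_{cyc})[p^\infty]=0$). The main (and only) obstacle in the argument is choosing which of these two packages of external input one prefers to cite; the theorem itself adds no genuinely new difficulty beyond what is already available in \cite{Kob}.
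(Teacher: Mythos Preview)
Your argument is sound. For the torsion part you cite exactly what the paper does (Proposition~\ref{Hcyc_prop}). For the absence of finite submodules, the paper does not pass through the quadratic-twist decomposition: it simply invokes \cite[Theorem~3.14]{KimFiniteSubmodules} directly for $X^{++}(E/K_{cyc})$, since B.D.~Kim's result is formulated over base fields more general than $\Q$. Your decomposition route is correct, but be aware that the no-finite-submodules statement for $X^{\pm}(A/\Q_{cyc})$ is not actually asserted in \cite[Theorem~7.3]{Kob} itself; it is established in \cite{KimFiniteSubmodules}, so you would end up citing the same source as the paper, only applied over $\Q$ rather than over $K$. Your alternative via Proposition~\ref{pseudo-null_prop}(c) is also legitimate and non-circular, though in the supersingular setting $X(E/K_{cyc})$ has $\Lambda$-rank~$2$, and the no-finite-submodules claim for this non-torsion module, while provable from the surjectivity and Poitou--Tate ingredients you indicate, is not quite as off-the-shelf as in the ordinary case.
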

\begin{proof}
The module $X^{++}(E/K_{cyc})$ is $\Lambda$-torsion by Proposition~\ref{Hcyc_prop}. By \cite[Theorem~3.14]{KimFiniteSubmodules} $X^{++}(E/K_{cyc})$ has no nontrivial finite submodules.
\end{proof}

Now we are ready to prove the first main result of this section concerning the non-existence of nontrivial pseudo-null submodules. We will prove this for both the $\Lambda_2$-module $X^{\bullet\star}(E/\KK_{\infty})$ and the $\Lambda_{H,n}$-module $X^{\bullet\star}(E/\KK_{\infty}^{H_n})$ assuming it is torsion. The approach we take is an up-down proof where we deduce from Theorem \ref{finite_submodules_theorem} and Proposition~\ref{pseudo-null_prop} that $X^{\bullet\star}(E/\KK_{\infty})$ has no nontrivial pseudo-null $\Lambda_2$-submodules. We then ``go down" to prove from this statement that the $\Lambda_{H,n}$-module $X^{\bullet\star}(E/\KK_{\infty}^{H_n})$ has no nontrivial pseudo-null submodules assuming it is torsion.

\begin{theorem}\label{pseudo-null_theorem}
Let ${\bullet, \star \in \{+,-\}}$. Assume that ${\mathcal{H}^{\bullet\star} \neq \emptyset}$. Then we have
\begin{enumerate}[(a)]
\item $X^{\bullet\star}(E/\KK_{\infty})$ is $\Lambda_2$-torsion with no nontrivial pseudo-null $\Lambda_2$-submodules.
\item For any ${H \in \mathcal{H}^{\bullet\star}}$ and any ${n \geq 0}$, if $X^{\bullet\star}(E/\KK_{\infty}^{H_n})$ is $\Lambda_{H,n}$-torsion, then $X^{\bullet\star}(E/\KK_{\infty}^{H_n})$ has no nontrivial finite $\Lambda_{H,n}$-submodules.
\end{enumerate}
\end{theorem}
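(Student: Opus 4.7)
The $\Lambda_2$-torsion part of (a) is immediate from Lemma~\ref{Xinf_torsion_lemma}, since ${\mathcal{H}^{\bullet\star} \neq \emptyset}$. The interesting content of both parts of the theorem is the absence of nontrivial (pseudo-null or finite) submodules, and the paper's hint of an ``up--down'' argument dictates the plan: I would climb up from Kobayashi/Kim's finiteness result on $X^{++}(E/K_{cyc})$ (Theorem~\ref{finite_submodules_theorem}) to a pseudo-null statement at the $\Z_p^2$-level in (a), and then descend that statement to the intermediate $\Z_p$-extensions $\KK_\infty^{H_n}$ in~(b).

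\textbf{Going up (proof of (a)).} By Proposition~\ref{Hcyc_prop}, ${H_{cyc} \in \mathcal{H}^{++}}$, and by Theorem~\ref{finite_submodules_theorem}, $X^{++}(E/K_{cyc})$ is $\Lambda$-torsion with no nontrivial finite $\Lambda$-submodule. By the control theorem (Theorem~\ref{SelmerControlThm_prop}) we may identify ${X^{++}(E/K_{cyc}) \cong X^{++}(E/\KK_\infty)_{H_{cyc}}}$. The key lifting step is a standard Iwasawa-theoretic fact: if $M$ is a finitely generated $\Lambda_2$-torsion module, $T = \gamma - 1 \in \Lambda_2$ for a topological generator $\gamma$ of $H \cong \Z_p$, and $T$ is coprime to the characteristic ideal of $M$ while $M_H$ has no nontrivial finite $\Lambda_{G/H}$-submodule, then $M$ has no nontrivial pseudo-null $\Lambda_2$-submodule. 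Since $X^{++}(E/\KK_\infty)_{H_{cyc}}$ is $\Lambda$-torsion, the generator of $H_{cyc}$ is coprime to the characteristic ideal of $X^{++}(E/\KK_\infty)$, so this lemma applies and $X^{++}(E/\KK_\infty)$ carries no nontrivial pseudo-null $\Lambda_2$-submodule. Applying Proposition~\ref{pseudo-null_prop}(a) transfers this to $X(E/\KK_\infty)$, and then applying Proposition~\ref{pseudo-null_prop}(a) again (in the reverse direction) transfers it to $X^{\bullet\star}(E/\KK_\infty)$ for every $\bullet, \star$ with ${\mathcal{H}^{\bullet\star} \neq \emptyset}$, completing~(a).

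\textbf{Going down (proof of (b)).} Fix ${H \in \mathcal{H}^{\bullet\star}}$, ${n \geq 0}$, and assume $X^{\bullet\star}(E/\KK_\infty^{H_n})$ is $\Lambda_{H,n}$-torsion. The control theorem~\ref{SelmerControlThm_prop} identifies $X^{\bullet\star}(E/\KK_\infty)_{H_n}$ with $X^{\bullet\star}(E/\KK_\infty^{H_n})$. By~(a), $X^{\bullet\star}(E/\KK_\infty)$ has no nontrivial pseudo-null $\Lambda_2$-submodule. The descent lemma needed here is the converse counterpart of the lifting lemma above: if $M$ is a finitely generated $\Lambda_2$-module with no nontrivial pseudo-null submodule, and $T = \gamma_n - 1$ is coprime to the characteristic of the torsion part of $M$ (which is guaranteed here by the $\Lambda_{H,n}$-torsion hypothesis via Lemma~\ref{rankequal_lemma}), then $M/TM$ has no nontrivial finite $\Lambda_{G/H_n}$-submodule. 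The technique is to use the exact sequence ${0 \to M[T] \to M \xrightarrow{T} M \to M/TM \to 0}$, noting that $M[T]$ is pseudo-null and hence zero under the hypothesis, then embedding $M$ into an elementary $\Lambda_2$-module with pseudo-null cokernel (via \cite[Prop.~5.1.8]{NSW}, as in the proof of Lemma~\ref{structureisom_lemma}) and tracking the cohomology of $T$-multiplication. This gives the result for the $\Lambda$-module $X^{\bullet\star}(E/\KK_\infty)_{H_n}$ as a $\Lambda_{H,n}$-module after descending along the inclusion $\Lambda_{H,n} \hookrightarrow \Lambda_2/(T)$ of rank $p^n$, as in the proof of Lemma~\ref{rankequal_lemma}.

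\textbf{Main obstacle.} The technical heart lies in the two complementary lemmas controlling pseudo-null and finite submodules under quotient by an element $T$ that is coprime to the characteristic ideal. The upward direction (pseudo-null vanishes once finite vanishes after quotient) requires care because the kernel $M[T]$ and cokernel $M/TM$ of multiplication by $T$ interact with the pseudo-null part in subtle ways; one must verify both that $M[T]$ is forced to be pseudo-null (hence zero by the structure theorem after embedding) and that any hypothetical pseudo-null submodule would produce a nontrivial finite piece in $M_H$. The downward direction needs an extra argument to cross between the ring $\Lambda_2/(T)$ and its subring $\Lambda_{H,n}$ of index $p^n$, which is handled exactly as in the proof of Lemma~\ref{rankequal_lemma}.
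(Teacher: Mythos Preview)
Your upward ``lifting lemma'' is false as stated, and this is a genuine gap. Take $M = \Lambda_2/(p,T_1) \oplus \Lambda_2/(T_2)$ with $H$ generated by $T_1+1$. Then $M$ is $\Lambda_2$-torsion with characteristic ideal $(T_2)$, which is coprime to $T_1$; the coinvariants $M_H \cong \mathbb{F}_p[[T_2]] \oplus \Z_p$ have no nontrivial finite $\Z_p[[T_2]]$-submodule; yet $M$ contains the pseudo-null submodule $\Lambda_2/(p,T_1)$. Your proposed fix in the ``Main obstacle'' paragraph---that $M[T]$ is pseudo-null and ``hence zero by the structure theorem after embedding''---is circular: the embedding of $M$ into an elementary module with pseudo-null cokernel only forces $M[T]=0$ once you already know $M$ has no pseudo-null submodule, which is exactly what you are trying to prove.

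The missing ingredient is an \emph{independent} vanishing result $H_1(H_{cyc}, X^{++}(E/\KK_\infty)) = 0$, which the paper supplies as Lemma~\ref{H1_Sel_vanishing_lemma} (coming from the surjectivity of the global-to-local map, not from module theory). With this in hand the paper argues via depth: $H_1=0$ means the generator of $H_{cyc}$ is $M$-regular, so $\text{depth}_{\Lambda_2}(M) = \text{depth}_\Lambda(M_{H_{cyc}}) + 1$; Theorem~\ref{finite_submodules_theorem} gives $\text{depth}_\Lambda(M_{H_{cyc}}) \geq 1$, hence $\text{depth}_{\Lambda_2}(M) \geq 2$, and Auslander--Buchsbaum yields $\text{pd}_{\Lambda_2}(M) \leq 1$, which rules out pseudo-null submodules by \cite[Proposition~3.10]{Venjakob2}. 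Proposition~\ref{pseudo-null_prop}(b) then transfers the projective-dimension bound through $X(E/\KK_\infty)$ to all signs. Your downward argument for (b) is much closer to correct---once (a) is known, $M[T]$ really is zero and the descent works---and indeed the paper proceeds in essentially the way you describe, phrased in terms of projective dimension and depth over $\Lambda(G_{H,n})$.
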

\begin{proof}
We use some ideas from \cite{Hamidi} for this proof. First note that by Proposition~\ref{Hcyc_prop} and Lemma~\ref{Xinf_torsion_lemma}, it follows that $X^{++}(E/\KK_{\infty})$ is a $\Lambda_2$-torsion module. Also by Lemma~\ref{Xinf_torsion_lemma} we have that $X^{\bullet\star}(E/\KK_{\infty})$ is a $\Lambda_2$-torsion module. Lemma~\ref{H1_Sel_vanishing_lemma} yields that ${H_1(H_{cyc}, X^{++}(E/\KK_{\infty}))=0}$. It follows from this that
$$\text{depth}_{\Lambda_2}(X^{++}(E/\KK_{\infty}))=\text{depth}_{\Lambda}(X^{++}(E/\KK_{\infty})_{H_{cyc}})+1.$$
By Theorem~\ref{SelmerControlThm_prop} we have ${X^{++}(E/\KK_{\infty})_{H_{cyc}} \cong X^{++}(E/K_{cyc})}$. Therefore
\begin{equation}\label{depth_formula}
\text{depth}_{\Lambda_2}(X^{++}(E/\KK_{\infty}))=\text{depth}_{\Lambda}(X^{++}(E/K_{cyc}))+1.
\end{equation}

In particular, ${\text{depth}_{\Lambda_2}(X^{++}(E/\KK_{\infty})) \geq 1}$. Suppose that ${\text{depth}_{\Lambda_2}(X^{++}(E/\KK_{\infty}))=1}$. Then it follows from equation~\eqref{depth_formula} that ${\text{depth}_{\Lambda}(X^{++}(E/K_{cyc})=0}$. This implies that every element of ${\langle p, T \rangle \subseteq \Zp[[T]]=\Lambda}$ is a zero-divisor for $X^{++}(E/K_{cyc})$. We know that the set of all zero-divisors for $X^{++}(E/K_{cyc})$ is the union of the associated primes of $X^{++}(E/K_{cyc})$ (see \cite[Theorem~6.1]{Matsumura}). Combining this with the fact that every element of ${\langle p, T \rangle}$ is a zero divisor of $X^{++}(E/K_{cyc})$, it follows from the prime avoidance lemma~\cite[Proposition~1.11]{AM} that ${\langle p, T \rangle}$ is an associated primes of $X^{++}(E/K_{cyc})$. Thus we have an injection ${\Zp[[T]]/\langle p, T \rangle \hookrightarrow X^{++}(E/K_{cyc})}$. So $X^{++}(E/K_{cyc})$ has a finite $\Lambda$-submodule isomorphic to $\Fp$ which contradicts Theorem~\ref{finite_submodules_theorem}. Therefore ${\text{depth}_{\Lambda_2}(X^{++}(E/\KK_{\infty})) \geq 2}$. By the Auslander-Buchsbaum formula this implies that ${\text{pd}_{\Lambda_2}(X^{++}(E/\KK_{\infty})) \leq 1}$.

By Proposition~\ref{pseudo-null_prop}$(b)$ we have that ${\text{pd}_{\Lambda_2}(X^{++}(E/\KK_{\infty})) \leq 1}$ if and only if ${\text{pd}_{\Lambda_2}(X(E/\KK_{\infty})) \leq 1}$ if and only if ${\text{pd}_{\Lambda_2}(X^{\bullet\star}(E/\KK_{\infty})) \leq 1}$. Since ${\text{pd}_{\Lambda_2}(X^{++}(E/\KK_{\infty})) \leq 1}$ by the above, we may conclude that ${\text{pd}_{\Lambda_2}(X^{\bullet\star}(E/\KK_{\infty})) \leq 1}$. From \cite[Proposition~3.10]{Venjakob2} this implies that $X^{\bullet\star}(E/\KK_{\infty})$ has no nontrivial pseudo-null $\Lambda_2$-submodules thus proving $(a)$.

Now we proceed to prove part~$(b)$. Let ${H \in \mathcal{H}^{\bullet\star}}$ and let ${n \geq 0}$. Assume that $X^{\bullet\star}(E/K_{\infty}^H)$ is $\Lambda_{H,n}$-torsion. We need to show that $X^{\bullet\star}(E/K_{\infty}^{H_n})$ has no nontrivial finite $\Lambda_{H,n}$-submodules. As in Section~\ref{section:mainintroduction}, let ${G_{H,n}=\Gal(\KK_{\infty}/K_{H,n})}$. From \cite[Theorem~4.3.1]{Weibel} we have
$$\text{pd}_{\Lambda(G_{H,n})}(X^{\bullet\star}(E/\KK_{\infty})) \leq \text{pd}_{\Lambda_2}(X^{\bullet\star}(E/\KK_{\infty})) + \text{pd}_{\Lambda(G_{H,n})}(\Lambda_2). $$

We showed above that ${\text{pd}_{\Lambda_2}(X^{\bullet\star}(E/\KK_{\infty})) \leq 1}$. Also as $G_{H,n}$ has finite index in $G$, the Iwasawa algebra $\Lambda_2$ is a free $\Lambda(G_{H,n})$-module of finite rank. So ${\text{pd}_{\Lambda(G_{H,n})}(\Lambda_2)=0}$. From these two facts and the above inequality we get that ${\text{pd}_{\Lambda(G_{H,n})}(X^{\bullet\star}(E/\KK_{\infty})) \leq 1}$. From \cite[Proposition~3.10]{Venjakob2} this implies that $X^{\bullet\star}(E/\KK_{\infty})$ has no non-trivial pseudo-null $\Lambda(G_{H,n})$-submodules.

Note that since $X^{\bullet\star}(E/\KK_{\infty})$ is $\Lambda_2$-torsion, we have that $X^{\bullet\star}(E/\KK_{\infty})$ is $\Lambda(G_{H,n})$-torsion (see \cite[Corollary~1.5]{Howson}). Let $\gamma_H$ be a topological generator of $H$. Let $U$ be the variable that corresponds to ${\gamma_H-1}$. Since $X^{\bullet\star}(E/\KK_{\infty}^{H_n})$ is $\Lambda_{H,n}$-torsion, we get from Theorem~\ref{SelmerControlThm_prop} that $X^{\bullet\star}(E/\KK_{\infty})/U$ is $\Lambda_{H,n}$-torsion. Therefore $U$ does not divide the characteristic power series of $X^{\bullet\star}(E/\KK_{\infty})$ as a $\Lambda(G_{H,n})$-module (see \cite[Lemma~2.2]{MHG}). Combining this with the fact that $X^{\bullet\star}(E/\KK_{\infty})$ has no nontrivial pseudo-null $\Lambda(G_{H,n})$-submodules we conclude that
$$\text{depth}_{\Lambda(G_{H,n})}(X^{\bullet\star}(E/\KK_{\infty}))=\text{depth}_{\Lambda_{H,n}}(X^{\bullet\star}(E/\KK_{\infty})/U)+1.$$
From the Auslander-Buchsbaum formula we conclude from this that
$$\text{pd}_{\Lambda(G_{H,n})}(X^{\bullet\star}(E/\KK_{\infty}))=\text{pd}_{\Lambda_{H,n}}(X^{\bullet\star}(E/\KK_{\infty})/U).$$
We showed above that ${\text{pd}_{\Lambda(G_{H,n})}(X^{\bullet\star}(E/\KK_{\infty})) \leq 1}$. Combining this with Theorem~\ref{SelmerControlThm_prop} and the above equality, we get ${\text{pd}_{\Lambda_{H,n}}(X^{\bullet\star}(E/\KK_{\infty}^{H_n})) \leq 1}$. By \cite[Proposition~3.10]{Venjakob2}, this implies that $X^{\bullet\star}(E/\KK_{\infty}^{H_n})$ has no nontrivial finite $\Lambda_{H,n}$-submodules. This completes the proof.
\end{proof}
We need the following comparison result, the first part of which first appeared in the work~\cite{Hamidi} of Hamidi.

\begin{proposition}\label{comparison_prop}
Let ${\bullet,\star \in \{+,-\}}$. Then the following assertions hold.
\begin{enumerate}[(a)]
\item Assume that $X^{\bullet\star}(E/\KK_{\infty})$ is a $\Lambda_2$-torsion module. Then we have a $\Lambda_2$-module injection
$$T_{\Lambda_2}(X(E/\K_\infty)) \hookrightarrow X^{\bullet\star}(E/\KK_\infty).$$
\item Let ${H \in \mathcal{H}^{\bullet\star}}$. We have a $\Lambda$-module injection
$$T_{\Lambda}(X(E/\K_\infty^H)) \hookrightarrow X^{\bullet\star}(E/\KK_\infty^H).$$
\end{enumerate}
\end{proposition}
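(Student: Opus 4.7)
\medskip

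The plan is to deduce both parts directly from the defining short exact sequences together with the freeness of the local terms, which have already been established in Propositions~\ref{Sel_Lambda2_surjective_prop2} and~\ref{Seln_surjective_prop2}. Indeed, the only mechanism we need is the elementary fact that a torsion submodule of a module maps injectively into any torsion-free quotient modulo kernel only if the kernel itself is torsion-free; in our setting, the kernels are free, so torsion-free, and the argument is immediate.

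For part~(a), Proposition~\ref{Sel_Lambda2_surjective_prop2} gives, under the hypothesis that $X^{\bullet\star}(E/\KK_{\infty})$ is $\Lambda_2$-torsion, a short exact sequence
\begin{equation*}
0 \longrightarrow J_p^{\bullet\star}(E/\KK_{\infty})^{\vee} \longrightarrow X(E/\KK_{\infty}) \stackrel{\pi}{\longrightarrow} X^{\bullet\star}(E/\KK_{\infty}) \longrightarrow 0,
\end{equation*}
in which $J_p^{\bullet\star}(E/\KK_{\infty})^{\vee}$ is a free (in particular torsion-free) $\Lambda_2$-module of rank two. I would consider the composition
\[ \varphi\colon T_{\Lambda_2}(X(E/\KK_{\infty})) \hookrightarrow X(E/\KK_{\infty}) \stackrel{\pi}{\twoheadrightarrow} X^{\bullet\star}(E/\KK_{\infty}). \]
Its kernel equals $T_{\Lambda_2}(X(E/\KK_{\infty})) \cap J_p^{\bullet\star}(E/\KK_{\infty})^{\vee}$, which is a torsion submodule of the torsion-free module $J_p^{\bullet\star}(E/\KK_{\infty})^{\vee}$, hence zero. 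Thus $\varphi$ is the desired injection.

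For part~(b), the argument is identical after replacing Proposition~\ref{Sel_Lambda2_surjective_prop2} by Proposition~\ref{Seln_surjective_prop2}: for $H \in \mathcal{H}^{\bullet\star}$ we get a short exact sequence
\begin{equation*}
0 \longrightarrow J_p^{\bullet\star}(E/\KK_{\infty}^H)^{\vee} \longrightarrow X(E/\KK_{\infty}^H) \longrightarrow X^{\bullet\star}(E/\KK_{\infty}^H) \longrightarrow 0
\end{equation*}
in which $J_p^{\bullet\star}(E/\KK_{\infty}^H)^{\vee}$ is a free $\Lambda$-module of rank two. Precisely the same intersection-with-a-free-module computation produces the injection $T_{\Lambda}(X(E/\KK_{\infty}^H)) \hookrightarrow X^{\bullet\star}(E/\KK_{\infty}^H)$.

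There is really no obstacle to speak of: once Propositions~\ref{Sel_Lambda2_surjective_prop2} and~\ref{Seln_surjective_prop2} are in hand (and the former uses Lemma~\ref{Xinf_torsion_lemma} / the hypothesis that $X^{\bullet\star}(E/\KK_{\infty})$ is $\Lambda_2$-torsion to guarantee exactness on the right), the proposition is a one-line diagram chase. If anything, the only subtle input is the freeness statement in part~(b) of those two propositions, which ultimately relies on Proposition~\ref{pm_points_structure_prop} together with Lemma~\ref{coind_lemma}; but these are already proved upstream.
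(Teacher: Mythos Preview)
Your proof is correct and follows exactly the same approach as the paper: both use the short exact sequences from Propositions~\ref{Sel_Lambda2_surjective_prop2} and~\ref{Seln_surjective_prop2} together with the freeness (hence torsion-freeness) of $J_p^{\bullet\star}(-)^{\vee}$ to conclude that the torsion submodule injects into the quotient. The paper's proof is in fact terser than yours, simply stating ``from this we see that $(a)$ follows'' without spelling out the intersection argument.
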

\begin{proof}
Let ${U:=J_p^{\bullet\star}(E/\KK_{\infty})^{\vee}}$. By Proposition~\ref{Sel_Lambda2_surjective_prop2} we have an exact sequence

$$0 \to U \to X(E/\KK_{\infty})\to X^{\bullet\star}(E/\KK_{\infty}) \to 0.$$

Furthermore $U$ is a free $\Lambda_2$-module of rank two. From this we see that $(a)$ follows. The proof of $(b)$ is identical where we use Proposition~\ref{Seln_surjective_prop2} in place of Proposition~\ref{Sel_Lambda2_surjective_prop2}.
\end{proof}

From the above comparison proposition we deduce the following important

\begin{proposition} \label{cor:hamidi}
	Let ${\bullet, \star \in \{+,-\}}$, and let ${L_\infty \in \mathcal{H}^{\bullet\star}}$. We write ${H = \Gal(\K_\infty/L_\infty)}$. Then the following assertions hold. \begin{compactenum}[(a)]
		\item $\mu_G(T_{\Lambda_2}(X(E/\K_\infty))) \le \mu_G(X^{\bullet\star}(E/\K_\infty))$. In particular, if the $\mu$-invariant of $X^{\bullet\star}(E/\K_\infty)$ vanishes for any choice of signs $\bullet$ and $\star$, then the $\mu$-invariant of the torsion submodule of $X(E/\K_\infty)$ vanishes.
		\item If $X^{\bullet\star}(E/\K_\infty)_f$ is finitely generated over $\Lambda(H)$, then $\lambda(T_\Lambda(X(E/F_\infty)))$ is bounded as $F_\infty$ runs through the elements in a suitable Greenberg neighbourhood of $L_\infty$.
		\item If $X^{\bullet\star}(E/\K_\infty)_f$ is finitely generated over $\Lambda(H)$, then $X(E/\KK_\infty)_f$ is finitely generated over $\Lambda(H)$. In other words, if the $\mathfrak{M}_H(G)$-property holds for $X^{\bullet\star}(E/\KK_\infty)$, for any choice of signs $\bullet$ and $\star$, then the $\mathfrak{M}_H(G)$-property holds for the Selmer group of $E$ over $\KK_\infty$.
\item $T_{\Lambda_2}(X(E/\KK_{\infty}))$ has no nontrivial pseudo-null $\Lambda_2$-submodules. Also $T_{\Lambda}({X(E/\KK_{\infty}^H)})$ has no nontrivial pseudo-null $\Lambda$-submodules.
	\end{compactenum}
\end{proposition}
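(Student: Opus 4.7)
The plan is to derive all four assertions from the two injections provided by Proposition~\ref{comparison_prop} together with the structural results of Theorem~\ref{pseudo-null_theorem} and the equivalence $(a)\Leftrightarrow(h)$ in Theorem~\ref{main_theorem}. Note first that since $\mathcal{H}^{\bullet\star}$ is nonempty (it contains $H$), Lemma~\ref{Xinf_torsion_lemma} guarantees that $X^{\bullet\star}(E/\KK_{\infty})$ is $\Lambda_2$-torsion, so Proposition~\ref{comparison_prop}(a) yields a short exact sequence of finitely generated torsion $\Lambda_2$-modules
\[ 0 \longrightarrow T_{\Lambda_2}(X(E/\KK_{\infty})) \longrightarrow X^{\bullet\star}(E/\KK_{\infty}) \longrightarrow Q \longrightarrow 0, \]
and Proposition~\ref{comparison_prop}(b) provides, for every $L_\infty \in \mathcal{H}^{\bullet\star}$, an analogous injection on the level~$L_\infty$ of finitely generated torsion $\Lambda$-modules.

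For assertion~(a), additivity of the $\mu$-invariant on short exact sequences gives
\[ \mu_G(X^{\bullet\star}(E/\KK_{\infty})) = \mu_G(T_{\Lambda_2}(X(E/\KK_{\infty}))) + \mu_G(Q) \ge \mu_G(T_{\Lambda_2}(X(E/\KK_{\infty}))), \]
which is exactly the desired inequality. For assertion~(c), observe that the injection carries $X(E/\KK_{\infty})[p^{\infty}]=T_{\Lambda_2}(X(E/\KK_{\infty}))[p^{\infty}]$ into $X^{\bullet\star}(E/\KK_{\infty})[p^{\infty}]$, and by a short chase (if $p^n x$ maps to~$0$, then already $p^n x=0$ by injectivity) the induced map
\[ X(E/\KK_{\infty})_f \;\hookrightarrow\; X^{\bullet\star}(E/\KK_{\infty})_f \]
is injective. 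Since $\Lambda(H) \cong \Z_p[[T]]$ is Noetherian and $X^{\bullet\star}(E/\KK_{\infty})_f$ is finitely generated over $\Lambda(H)$ by hypothesis, the submodule $X(E/\KK_{\infty})_f$ is also finitely generated over $\Lambda(H)$.

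For assertion~(b), Theorem~\ref{main_theorem} (equivalence of~(a) and~(h)) ensures that $\lambda(X^{\bullet\star}(E/F_\infty))$ is bounded as $F_\infty$ varies through a Greenberg neighbourhood $U$ of $\KK_\infty^H$. Using Proposition~\ref{Zpextensions_prop}, we may shrink $U$ so that $F_\infty \in \mathcal{H}^{\bullet\star}$ for every $F_\infty \in U$; then Proposition~\ref{comparison_prop}(b), applied at each such $F_\infty$, gives $\lambda(T_\Lambda(X(E/F_\infty))) \le \lambda(X^{\bullet\star}(E/F_\infty))$, which establishes the boundedness statement. Finally, for assertion~(d), the $\Lambda_2$-injection $T_{\Lambda_2}(X(E/\KK_{\infty})) \hookrightarrow X^{\bullet\star}(E/\KK_{\infty})$ combined with Theorem~\ref{pseudo-null_theorem}(a) rules out nontrivial pseudo-null $\Lambda_2$-submodules of $T_{\Lambda_2}(X(E/\KK_{\infty}))$; and the $\Lambda$-injection $T_\Lambda(X(E/\KK_{\infty}^H)) \hookrightarrow X^{\bullet\star}(E/\KK_{\infty}^H)$ together with Theorem~\ref{pseudo-null_theorem}(b) (noting that for $\Lambda \cong \Z_p[[T]]$ of Krull dimension two, pseudo-null submodules coincide with finite ones) yields the second part of~(d). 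None of these steps present a serious obstacle; the content is packaged in Proposition~\ref{comparison_prop} and Theorem~\ref{pseudo-null_theorem}, and the only point requiring mild care is the verification that the map on $(\,\cdot\,)_f$ induced by an injection remains injective, as handled above.
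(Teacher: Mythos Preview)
Your proof is correct and follows essentially the same approach as the paper: all four parts are derived from the injections of Proposition~\ref{comparison_prop} combined with Theorem~\ref{pseudo-null_theorem} and the equivalence $(a)\Leftrightarrow(h)$ of Theorem~\ref{main_theorem}. Your treatment of~(c) via the direct chase on the induced map of $(\cdot)_f$-quotients is equivalent to the paper's slightly slicker observation that $Y_f\cong p^mY$ and $X_f\cong p^mX$ for $m$ large enough, whence $Y_f\subseteq X_f$ is immediate from the inclusion $Y\subseteq X$.
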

\begin{proof}
By Lemma~\ref{Xinf_torsion_lemma} $X^{\bullet\star}(E/\KK_{\infty})$ is a $\Lambda_2$-torsion module.
	Assertion~$(a)$ follows directly from Proposition~\ref{comparison_prop}. For the second assertion, we choose a neighbourhood ${U \subseteq \mathcal{H}^{\bullet\star}}$ of $L_\infty$ such that $\lambda(X^{\bullet\star}(E/F_\infty))$ is bounded as $F_\infty$ runs over the elements in $U$ (this is possible by Theorem~\ref{main_theorem}, ${(a) \Longrightarrow (h)}$). The assertion then follows from Proposition~\ref{comparison_prop}, which can be applied to any ${F_\infty \in U}$ by construction.
	
	In order to prove assertion~$(c)$ we let ${X = X^{\bullet\star}(E/\KK_\infty)}$ and ${Y = T_{\Lambda_2}(X(E/\KK_\infty))}$ for brevity, and we choose an integer $m$ such that ${X[p^\infty] = X[p^m]}$ and ${Y[p^\infty] = Y[p^m]}$. Then there are canonical isomorphisms ${X_f \cong p^m \cdot X}$ and ${Y_f \cong p^m \cdot Y}$. Therefore Proposition~\ref{comparison_prop} implies that ${Y_f \subseteq X_f}$. The assertion follows since $\Lambda(H)$ is a Noetherian domain.

Assertion~$(d)$ follows from Proposition~\ref{comparison_prop} and Theorem~\ref{pseudo-null_theorem}.
\end{proof}

We now prove the following key lemma by combining Proposition~\ref{cor:hamidi}$(d)$ with Corollary~\ref{cor:HinH}. We note that another proof of this lemma can be given by using the techniques in \cite{CSS}.

\begin{lemma}\label{H1_Sel_vanishing_lemma2}
	Let ${\bullet, \star \in \{+,-\}}$ and ${H \in \mathcal{H}^{\bullet\star}}$. Then
	$$H_1(H, T_{\Lambda_2}(X(E/\KK_{\infty}))=H_1(H, X(E/\KK_{\infty}))=0.$$
	If $Y(E/\KK_{\infty})_f$ is finitely generated over $\Lambda(H)$, then
	$$H_1(H_n, T_{\Lambda_2}(X(E/\KK_{\infty})))=H_1(H_n, X(E/\KK_{\infty}))=0$$
	for all ${n \geq 0}$.
\end{lemma}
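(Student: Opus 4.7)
The plan is to compute both $H_1$ groups directly via the standard free resolution of $\Z_p$ as a $\Z_p[[H_n]]$-module. Writing $h_n$ for a topological generator of the pro-cyclic group $H_n$ and $\Upsilon_n = h_n - 1$, the resolution
\[ 0 \longrightarrow \Z_p[[H_n]] \xrightarrow{\ \Upsilon_n\ } \Z_p[[H_n]] \longrightarrow \Z_p \longrightarrow 0 \]
identifies $H_1(H_n, M)$ with the $\Upsilon_n$-torsion $M[\Upsilon_n]$ for any compact $\Z_p[[H_n]]$-module $M$. So both vanishing statements amount to showing that the relevant $\Upsilon_n$-torsion is zero.

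I would first dispose of $X(E/\KK_\infty)$ in favour of its torsion submodule. Writing $Z := T_{\Lambda_2}(X(E/\KK_\infty))$ and $F := F_{\Lambda_2}(X(E/\KK_\infty))$, the defining short exact sequence $0 \to Z \to X(E/\KK_\infty) \to F \to 0$ gives a long exact sequence of $H_n$-homology in which $H_1(H_n, F) = F[\Upsilon_n] = 0$ because $F$ is $\Lambda_2$-torsion-free and $\Upsilon_n \ne 0$. Hence $H_1(H_n, X(E/\KK_\infty))$ is squeezed between $H_1(H_n, Z)$ and $0$, so it suffices to show $H_1(H_n, Z) = Z[\Upsilon_n] = 0$.

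For the vanishing of $Z[\Upsilon_n]$ the key is to combine two ingredients. Corollary~\ref{cor:HinH} tells us that condition~$(c)$ of Lemma~\ref{rankequal_lemma} is in force: for $n=0$ unconditionally, and for all $n \geq 0$ under the assumption that $Y(E/\KK_\infty)_f$ is finitely generated over $\Lambda(H)$. Thus $\Upsilon_n$ is coprime to the characteristic power series $f_\infty$ of $Z$. By the structure theorem for finitely generated torsion $\Lambda_2$-modules, $Z$ is pseudo-isomorphic to an elementary module $E = \bigoplus_i \Lambda_2/p^{m_i} \oplus \bigoplus_j \Lambda_2/f_j^{n_j}$ on which $f_\infty$ acts as zero; Proposition~\ref{cor:hamidi}$(d)$ forces the pseudo-isomorphism to be an injection $Z \hookrightarrow E$, so $f_\infty$ annihilates $Z$ as well. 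Consequently $Z[\Upsilon_n]$ is annihilated by the ideal $(f_\infty, \Upsilon_n)$, which has height~$2$ in the regular ring $\Lambda_2$ by the coprimeness. Therefore $Z[\Upsilon_n]$ is pseudo-null, and a second application of Proposition~\ref{cor:hamidi}$(d)$ yields $Z[\Upsilon_n] = 0$.

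The only delicate point is the reduction in the previous paragraph, namely checking that $Z[\Upsilon_n]$ really is pseudo-null; once $\Upsilon_n$ and an annihilator of $Z$ are coprime, height considerations make this automatic. Every other ingredient is formal: the resolution computation and the snake-lemma-style passage from $Z$ to $X(E/\KK_\infty)$. This gives the first statement by specialising to $n=0$, and the second by invoking the additional hypothesis on $Y(E/\KK_\infty)_f$ to feed into Corollary~\ref{cor:HinH}.
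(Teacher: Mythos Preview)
Your proof is correct and follows essentially the same route as the paper: both use Proposition~\ref{cor:hamidi}$(d)$ to inject $Z = T_{\Lambda_2}(X(E/\KK_\infty))$ into an elementary module $W$, invoke Corollary~\ref{cor:HinH} for the coprimeness of $\Upsilon_n$ and $f_\infty$, and handle $X(E/\KK_\infty)$ via the short exact sequence with torsion-free quotient $F$. The only cosmetic difference is in the final step: the paper observes directly that $W[\Upsilon_n]=0$ (since $\Upsilon_n$ is a nonzerodivisor on each summand $\Lambda_2/g$ with $g\mid f_\infty$) and then uses left-exactness of $H_1(H_n,-)$ on the injection $Z\hookrightarrow W$, whereas you instead pull back the annihilator $f_\infty$ to $Z$ and argue that $Z[\Upsilon_n]$ is pseudo-null, then appeal a second time to Proposition~\ref{cor:hamidi}$(d)$. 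Both are equally valid and equally short.
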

\begin{proof}
Assume that $Y(E/\KK_{\infty})_f$ is finitely generated over $\Lambda(H)$ if ${n>0}$. From Proposition~\ref{cor:hamidi}$(d)$ we know that either ${T_{\Lambda_2}(X(E/\KK_{\infty}))=0}$ or ${f_{\infty} \neq 1}$. If ${T_{\Lambda_2}(X(E/\KK_{\infty}))=0}$, then we certainly have ${H_1(H_n, T_{\Lambda_2}(X(E/\KK_{\infty})))=0}$. So assume now that ${f_{\infty} \neq 1}$. Let ${n \geq 0}$ and ${\Upsilon_n=(1+T_1)^{p^na}(1+T_2)^{p^nb}-1}$, where ${H=\overbar{\langle \sigma^a\tau^b \rangle}}$. By Proposition~\ref{cor:hamidi}$(d)$ and \cite[Chapt.~VII, \S 4.4 Theorem~5]{Bourbaki}, there exist irreducible power series ${f_j \in \Zp[[T, U]]}$, integers $m_i$, $n_j$ and an injection
$$\phi: T_{\Lambda_2}(X(E/\KK_{\infty})) \hookrightarrow W, $$
where ${W=\bigoplus_{i=1}^s \Lambda_2/p^{m_i} \oplus \bigoplus_{j=1}^t \Lambda_2/f_j^{n_j}}$ and ${f_{\infty}=p^{\sum_{i=1}^s m_i}\prod_{j=1}^t f_j^{n_j}}$.

From Corollary~\ref{cor:HinH} we have that $\Upsilon_n$ is relatively prime to $f_{\infty}$. From this it is easy to see that ${H_1(H_n, W)=0}$. Since ${cd_p(H)=1}$, the functor $H_1(H, -)$ is left exact. So it follows from the injection $\phi$ that ${H_1(H_n, T_{\Lambda_2}(X(E/\KK_{\infty})))=0}$.

Now consider the exact sequence
$$0 \to T_{\Lambda_2}(X(E/\KK_{\infty})) \to X(E/\KK_{\infty}) \to F_{\Lambda_2}(X(E/\KK_{\infty})) \to 0. $$
Since $F_{\Lambda_2}(X(E/\KK_{\infty}))$ is $\Lambda_2$-torsion-free, we get that ${H_1(H_n, F_{\Lambda_2}(X(E/\KK_{\infty}))) = 0}$. Thus from ${H_1(H_n, T_{\Lambda_2}(X(E/\KK_{\infty})))=0}$ we conclude that ${H_1(H_n, X(E/\KK_{\infty}))=0}$.
\end{proof}

We note that Proposition~\ref{cor:hamidi}$(b)$, to the best of our knowledge, is the first criterion for proving the local boundedness of $\lambda$-invariants for certain \emph{non-torsion} Iwasawa modules. It has been shown in \cite[Theorem~7.7]{non-torsion} that a non-torsion Iwasawa module can force the $\lambda$-invariants to be unbounded. To be more precise, if the $\Lambda$-corank of the Selmer group of a certain $\Z_p$-extension $L_\infty$ differs from the corank of the Selmer groups over arbitrarily close $\Z_p$-extensions $F_\infty$ in a neighbourhood $U$ of $L_\infty$, then the $\lambda$-invariants are likely to be unbounded on $U$. Here we actually have that ${\rg_\Lambda(X(E/F_\infty)) = 2}$ for all ${F_\infty = \KK_\infty^H}$, ${H \in \mathcal{H}^{\bullet\star}}$; see Proposition~\ref{Selrank_prop}(a). Therefore the $\Lambda$-coranks are locally constant around ${L_\infty \in \mathcal{H}^{\bullet\star}}$ since the subset ${\mathcal{H}^{\bullet\star} \subseteq \mathcal{E}}$ is open with regard to Greenberg's topology.

\section{Cohomological interpretation of the $\mathfrak{M}_H(G)$-property}
In this section, we show for both the signed Selmer group and the Selmer group that we can give a cohomological statement that is equivalent to the $\mathfrak{M}_H(G)$-property. We first need the following

\begin{lemma}\label{uniform_m_lemma}
Let ${\bullet, \star \in \{+,-\}}$. Let ${H \in \mathcal{H}^{\bullet\star}}$. If $X^{\bullet\star}(E/\KK_{\infty})_f$ is finitely generated over $\Lambda(H)$, then there exists ${m>0}$ such that ${X^{\bullet\star}(E/\KK_{\infty})[p^{\infty}]=X^{\bullet\star}(E/\KK_{\infty})[p^m]}$ and moreover  ${X^{\bullet\star}(E/\KK_{\infty}^{H_n})[p^{\infty}]=X^{\bullet\star}(E/\KK_{\infty}^{H_n})[p^m]}$ for all $n$.
\end{lemma}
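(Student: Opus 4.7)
The first statement is immediate. Since $X^{\bullet\star}(E/\KK_\infty)$ is a finitely generated $\Lambda_2$-module and $\Lambda_2$ is Noetherian, the submodule $M_1 := X^{\bullet\star}(E/\KK_\infty)[p^\infty]$ is itself a finitely generated $p$-primary $\Lambda_2$-module, hence annihilated by $p^{m_0}$ for some $m_0 > 0$. So any $m \geq m_0$ satisfies the first requirement.

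For the second (uniform) statement, set $M := X^{\bullet\star}(E/\KK_\infty)$ and $M_f := M/M_1$. The control theorem (Theorem~\ref{SelmerControlThm_prop}) gives $X^{\bullet\star}(E/\KK_\infty^{H_n}) \cong M_{H_n}$ for each $n$. Applying $H_n$-homology to $0 \to M_1 \to M \to M_f \to 0$ and using the vanishing $H_1(H_n, M) = 0$ (which follows by Pontryagin duality from Lemma~\ref{H1_Sel_vanishing_lemma}, applicable under the $\mathfrak{M}_H(G)$-hypothesis), one obtains the four-term exact sequence
\[ 0 \to H_1(H_n, M_f) \to (M_1)_{H_n} \to M_{H_n} \to (M_f)_{H_n} \to 0. \]
In particular, $M_{H_n}$ is an extension of $(M_f)_{H_n}$ by a quotient $A_n$ of $(M_1)_{H_n}$ that is annihilated by $p^{m_0}$. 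If we can produce a constant $k$, independent of $n$, such that $(M_f)_{H_n}[p^\infty]$ is annihilated by $p^k$, then a standard diagram chase shows $M_{H_n}[p^\infty]$ is annihilated by $p^{m_0+k}$, and the lemma follows with $m := m_0+k$.

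To produce such a $k$, the plan is to invoke the equivalence~(i) of Theorem~\ref{main_theorem}: the $\mathfrak{M}_H(G)$-property supplies an injective $\Lambda(H)$-homomorphism $M_f \hookrightarrow \Lambda(H)^{\lambda_H^{\bullet\star}}$ with finite cokernel $C$. Since $C$ is a finite $\Lambda(H)$-module (hence a $p$-group), it is annihilated by some $p^k$. Taking $H_n$-coinvariants of $0 \to M_f \to \Lambda(H)^{\lambda_H^{\bullet\star}} \to C \to 0$ and noting $H_1(H_n, \Lambda(H)^{\lambda_H^{\bullet\star}}) = 0$ (as $\Lambda(H)$ is a domain and a topological generator of $H_n$ minus $1$ is a non-zero divisor), one obtains
\[ 0 \to H_1(H_n, C) \to (M_f)_{H_n} \to (\Lambda(H)^{\lambda_H^{\bullet\star}})_{H_n} \to C_{H_n} \to 0. \]
The penultimate term is $\Z_p$-free, so $(M_f)_{H_n}[p^\infty]$ embeds into $H_1(H_n, C)$, which is annihilated by $p^k$. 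This is the uniform bound we need.

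The main obstacle is that Theorem~\ref{main_theorem}(i) is the deepest of the equivalent conditions and may not be available at the point in the paper where this lemma is used. If so, one must establish the required embedding into a free $\Lambda(H)$-module directly from the structure theory: combining Theorem~\ref{pseudo-null_theorem} (no nontrivial pseudo-null $\Lambda_2$-submodules) with the finite generation of $M_f$ over $\Lambda(H)$, one argues via Weierstrass preparation applied to the irreducible factors $g_j$ of $g_\infty^{\bullet\star}$ (using condition~(g)) that $\Lambda_2/g_j^{n_j}$ is $\Z_p$-free after passing to $H_n$-coinvariants, so that the potentially problematic $\Lambda(H)$-torsion summands of $M_f$ do not in fact occur. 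This structural observation is the technical heart of the argument.
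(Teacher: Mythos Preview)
Your primary argument is correct but, as you note, circular: Theorem~\ref{main_theorem}(i) is Theorem~\ref{Xinf_structure_theorem}, whose proof runs through Proposition~\ref{Xf_ControlThm_prop}, which in turn depends on Proposition~\ref{MHG_cohomology_group_prop}(2), which uses precisely this lemma. Your fallback sketch is not a proof. You invoke Theorem~\ref{pseudo-null_theorem} only at the $\Lambda_2$-level and then gesture at Weierstrass preparation and condition~(g), but you never actually establish the uniform bound on $(M_f)_{H_n}[p^\infty]$; the claim that ``$\Lambda_2/g_j^{n_j}$ is $\Z_p$-free after passing to $H_n$-coinvariants'' needs an argument (note that $\Lambda_2/\Upsilon_n$ is not a domain for $n>0$), and even then you must control the pseudo-null cokernel in the structure map.

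The paper's approach is both shorter and avoids the detour entirely. The key input you are missing is Theorem~\ref{pseudo-null_theorem}(b): for every $n$, the torsion $\Lambda_{H,n}$-module $X^{\bullet\star}(E/\KK_\infty^{H_n})$ has \emph{no nontrivial finite submodules}. Combined with the vanishing $H_1(H_n, X^{\bullet\star}(E/\KK_\infty))=0$ (Lemma~\ref{H1_Sel_vanishing_lemma}) and the control isomorphism, this allows the argument of \cite[Proposition~4.3]{MHG} to go through and yields the stronger conclusion that \emph{any} $m$ with $X^{\bullet\star}(E/\KK_\infty)[p^\infty]=X^{\bullet\star}(E/\KK_\infty)[p^m]$ already satisfies $X^{\bullet\star}(E/\KK_\infty^{H_n})[p^\infty]=X^{\bullet\star}(E/\KK_\infty^{H_n})[p^m]$ for all $n$. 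Your route, even if repaired, would only produce some larger $m_0+k$; the paper's route shows no enlargement is needed, and it does so using only results proved earlier in the paper.
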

\begin{proof}
As usual, let ${F_{\infty}^n=\KK_{\infty}^{H_n}}$. From Proposition~\ref{Seln_surjective_prop} we get that $X^{\bullet\star}(E/F_{\infty}^n)$ is a torsion $\Lambda_{H,n}$-module for all $n$. From Theorem~\ref{pseudo-null_theorem} we get that the maximal finite $\Lambda_{H,n}$-submodule of $X^{\bullet\star}(E/F_{\infty}^n)$ is trivial for all $n$. Taking Lemma~\ref{H1_Sel_vanishing_lemma} into account, the proof of \cite[Proposition~4.3]{MHG} implies that for any choice of $m$ such that ${X^{\bullet\star}(E/\KK_{\infty})[p^{\infty}]=X^{\bullet\star}(E/\KK_{\infty})[p^m]}$ we actually also have that ${X^{\bullet\star}(E/F^n_\infty)[p^{\infty}]=X^{\bullet\star}(E/F^n_\infty)[p^{m}]}$ for all $n$.
\end{proof}

The maps $\hat{s}_n^{\bullet\star}$ from Theorem~\ref{SelmerControlThm_prop} induce maps \[ \theta_n^{\bullet\star}: (X^{\bullet\star}(E/\KK_{\infty})_f)_{H_n} \longrightarrow X^{\bullet\star}(E/\KK_{\infty}^{H_n})_f. \]
The first main result of this section is

\begin{proposition}\label{MHG_cohomology_group_prop}
Let ${\bullet, \star \in \{+,-\}}$. Let ${H \in \mathcal{H}^{\bullet\star}}$. Consider the following statements:
\begin{enumerate}[(a)]
\item $X^{\bullet\star}(E/\KK_{\infty})_f$ is finitely generated over $\Lambda(H)$.
\item For any $m>0$ such that $p^m$ annihilates $X^{\bullet\star}(E/\KK_{\infty})[p^{\infty}]$ and $X^{\bullet\star}(E/\KK_{\infty}^H)[p^{\infty}]$, the group  $H_1(H,X^{\bullet\star}(E/\KK_{\infty})/p^m)$ is finite.
\item $H_1(H,X^{\bullet\star}(E/\KK_{\infty})_f/p)$ is finite.
\end{enumerate}
Then the following assertions hold:
\begin{enumerate}[(1)]
\item We have the following implications: ${(c) \Longrightarrow (b) \Longleftrightarrow (a)}$.
\item If $X^{\bullet\star}(E/\KK_{\infty})_f$ is finitely generated over $\Lambda(H)$, then there exists ${m>0}$ such that ${X^{\bullet\star}(E/\KK_{\infty})[p^{\infty}]=X^{\bullet\star}(E/\KK_{\infty})[p^m]}$ and ${X^{\bullet\star}(E/\KK_{\infty}^{H_n})[p^{\infty}]=X^{\bullet\star}(E/\KK_{\infty}^{H_n})[p^m]}$ for all $n$. In this case for any such $m$ we have that $H_1(H_n,X^{\bullet\star}(E/\KK_{\infty})/p^m)$ is finite and bounded for all $n$. Furthermore, for all $n$ we have an isomorphism ${\ker \theta_n^{\bullet\star} \cong H_1(H_n,X^{\bullet\star}(E/\KK_{\infty})/p^m)}$.
\end{enumerate}
\end{proposition}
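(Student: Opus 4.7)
The plan is to establish assertion (2) first and then derive the implications in (1); throughout I write $X := X^{\bullet\star}(E/\KK_\infty)$ and $X_n := X^{\bullet\star}(E/\KK_\infty^{H_n})$.

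\textbf{Uniform $m$ and identification of $\ker\theta_n^{\bullet\star}$.} The existence of $m$ in assertion (2) is Lemma~\ref{uniform_m_lemma}. For such an $m$, multiplication by $p^m$ yields an isomorphism $X_f\cong p^m X$. Taking $H_n$-homology of $0\to p^m X\to X\to X/p^m\to 0$ and using $H_1(H_n,X)=0$ from Lemma~\ref{H1_Sel_vanishing_lemma} produces
\[ 0\to H_1(H_n,X/p^m)\to (X_f)_{H_n}\to X_{H_n}\to (X/p^m)_{H_n}\to 0. \]
The control theorem (Theorem~\ref{SelmerControlThm_prop}) identifies $X_{H_n}\cong X_n$ and the image of $(X_f)_{H_n}$ with $p^mX_n\cong (X_n)_f$, giving surjectivity of $\theta_n^{\bullet\star}$ and the isomorphism $\ker\theta_n^{\bullet\star}\cong H_1(H_n,X/p^m)$. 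A direct computation further identifies this kernel with the $p^m$-torsion subgroup $(X_f)_{H_n}[p^m]$.

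\textbf{$(a)\Rightarrow(b)$ and uniform boundedness.} Under (a), $(X_f)_{H_n}=X_f/\omega_nX_f$ (with $\omega_n=(1+T)^{p^n}-1\in\Lambda(H)\cong\Z_p[[T]]$) is finitely generated over $\Z_p$, so its $p^m$-torsion subgroup $H_1(H_n,X/p^m)$ is finite. For uniformity in $n$, use that $X_f$ is $p$-torsion-free, so over the two-dimensional regular local ring $\Lambda(H)$ its pseudo-isomorphism class is $\Lambda(H)^r\oplus\bigoplus_j\Lambda(H)/h_j^{b_j}$ with the $h_j$ coprime to $p$; in particular, there is no $\Lambda(H)/p^{a_i}$ summand. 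On the free part, $(X_f)_{H_n}[p^m]=0$; on each $\Lambda(H)/h_j^{b_j}$ summand it is zero (if $h_j\nmid\omega_n$) or a copy of $\Lambda(H)/h_j$ of fixed size $p^{\deg h_j}$ (if $h_j\mid\omega_n$), and only finitely many $h_j$ ever contribute. Together with the finite pseudo-isomorphism discrepancy this gives a uniform bound.

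\textbf{$(b)\Rightarrow(a)$ and $(c)\Rightarrow(b)$.} For $(b)\Rightarrow(a)$, take $n=0$ in the identification above: $(X_f)_H$ sits in an extension of $(X^{\bullet\star}(E/\KK_\infty^H))_f$ (a $p$-torsion-free torsion $\Lambda$-module, hence finitely generated over $\Z_p$) by the finite group $H_1(H,X/p^m)$, so $(X_f)_H$ is finitely generated over $\Z_p$, and compact Nakayama on the compact $\Lambda(H)$-module $X_f$ yields (a). For $(c)\Rightarrow(b)$, the homology sequence of $0\to X_f\xrightarrow{p} X_f\to X_f/p\to 0$ gives $0\to (X_f)^H/p\to (X_f/p)^H\to (X_f)_H[p]\to 0$, so (c) makes both $(X_f)^H/p$ and $(X_f)_H[p]$ finite. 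Compact Nakayama upgrades the first to $(X_f)^H$ being finitely generated over $\Z_p$, hence $(X_f)^H/p^m$ is finite; iterating the inclusion $(X_f)_H[p^k]/(X_f)_H[p^{k-1}]\hookrightarrow (X_f)_H[p]$ shows that $(X_f)_H[p^m]$ is finite. Feeding both into the analogous decomposition for $p^m$ yields $H_1(H,X_f/p^m)$ finite, and the injection $H_1(H,X/p^m)\hookrightarrow H_1(H,X_f/p^m)$ obtained from $0\to X[p^m]\to X/p^m\to X_f/p^m\to 0$ (using $(X[p^m])^H\subseteq X^H=0$) concludes the argument.

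The main obstacle lies in the uniform boundedness step: finiteness of $H_1(H_n,X/p^m)$ for each fixed $n$ follows routinely from (a), but the uniformity in $n$ depends on the structure theorem over $\Lambda(H)$ combined with the crucial observation that $X_f$ has no $p$-torsion, which rules out the $\Lambda(H)/p^{a_i}$ summands whose $H_n$-coinvariants would otherwise grow exponentially with $n$. The remaining steps are standard manipulations of long exact sequences using Lemma~\ref{H1_Sel_vanishing_lemma}, the control theorem, and compact Nakayama.
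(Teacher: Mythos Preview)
Your proof is largely correct and follows the paper's approach closely for the identification $\ker\theta_n^{\bullet\star}\cong H_1(H_n,X/p^m)$ and for the implications $(a)\Leftrightarrow(b)$ and $(c)\Rightarrow(b)$. The arguments for these parts are essentially the same as the paper's, with only cosmetic differences (your d\'evissage for $(c)\Rightarrow(b)$ via $(X_f)_H[p^k]/(X_f)_H[p^{k-1}]\hookrightarrow (X_f)_H[p]$ versus the paper's induction on $m$ via $0\to p^{m-1}X_f/p^m\to X_f/p^m\to X_f/p^{m-1}\to 0$).

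However, your uniform boundedness argument contains an error. The claim that on a summand $\Lambda(H)/h_j^{b_j}$ the $p^m$-torsion of the $H_n$-coinvariants is ``zero (if $h_j\nmid\omega_n$) or a copy of $\Lambda(H)/h_j$ of fixed size $p^{\deg h_j}$ (if $h_j\mid\omega_n$)'' is wrong in both cases. When $h_j\nmid\omega_n$, the quotient $\Lambda(H)/(h_j^{b_j},\omega_n)$ is a nonzero finite $p$-group whose $p^m$-torsion is generally nonzero (take $h_j=T-p$, $b_j=1$: one gets $\Z/p^{\min(m,n+1)}$). When $h_j\mid\omega_n$ and $b_j=1$, the quotient is $\Lambda(H)/h_j$, which is $\Z_p$-free of rank $\deg h_j$ and hence has \emph{zero} $p^m$-torsion, not size $p^{\deg h_j}$. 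Your conclusion is still salvageable: since $\Lambda(H)/h_j^{b_j}$ is $\Z_p$-free of rank $b_j\deg h_j$, any quotient of it is generated by at most $b_j\deg h_j$ elements over $\Z_p$, so its $p^m$-torsion is bounded by $p^{m\,b_j\deg h_j}$ uniformly in $n$; combined with the pseudo-isomorphism discrepancy this does give a bound.

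The paper's argument is much shorter and avoids structure theory entirely: since $H_1(H_n,X/p^m)=(X/p^m)^{H_n}$ and these form an increasing chain of $\Lambda_2$-submodules of the Noetherian module $X/p^m$, the chain must stabilize. Each term is finite by the same reasoning as $(a)\Rightarrow(b)$ applied at level $n$, so the stabilization immediately gives the uniform bound.
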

\begin{proof}
First we show ${(a) \Longleftrightarrow (b)}$. Let ${m>0}$ be an integer as in statement $(b)$. Then we have isomorphisms ${X^{\bullet\star}(E/\KK_{\infty})_f \cong p^mX^{\bullet\star}(E/\KK_{\infty})}$ and ${X^{\bullet\star}(E/\KK_{\infty}^H)_f \cong p^mX^{\bullet\star}(E/\KK_{\infty}^H)}$. Consider the commutative diagram with exact rows
\begin{equation*}
\begin{tikzcd}[column sep = small, scale cd=0.75]
			  0 \arrow[r] &  H_1(H, X^{\bullet\star}(E/\KK_{\infty})/p^m) \arrow[r, "\phi^{\bullet\star}"] &(X^{\bullet\star}(E/\KK_{\infty})_f)_H  \arrow[d, "\theta_0^{\bullet\star}"] \arrow[r] & X^{\bullet\star}(E/\KK_{\infty})_H \arrow[d, "\hat{s}_0^{\bullet\star}"] \arrow[r] & (X^{\bullet\star}(E/\KK_{\infty})/p^m)_H \arrow[d] \arrow[r] &0\\
		 & 0 \arrow[r] &X^{\bullet\star}(E/\KK_{\infty}^H)_f \arrow[r] & X^{\bullet\star}(E/\KK_{\infty}^H) \arrow[r] &(X^{\bullet\star}(E/\KK_{\infty}^H)/p^m) \arrow[r] &0
\end{tikzcd}
\end{equation*}
The zero on the left of the top row is because ${H_1(H, X^{\bullet\star}(E/\KK_{\infty}))=0}$ by Lemma~\ref{H1_Sel_vanishing_lemma}. The middle vertical arrow is an isomorphism by Proposition~\ref{SelmerControlThm_prop}. From the commutative diagram we see that ${\ker \theta_0^{\bullet\star}=\phi^{\bullet\star}(H_1(H, X^{\bullet\star}(E/\KK_{\infty})/p^m))}$. Since $\phi^{\bullet\star}$ is an injection and $p^m$ annihilates $H_1(H, X^{\bullet\star}(E/\KK_{\infty})/p^m)$, the equivalence ${(a) \Longleftrightarrow (b)}$ will follow if we show that $(a)$ is equivalent to ${\ker \theta_0^{\bullet\star}}$ being finitely generated over $\Zp$.

If $(a)$ is true then $(X^{\bullet\star}(E/\KK_{\infty})_f)_H$ is finitely generated over $\Zp$ whence so is $\ker \theta_0^{\bullet\star}$. On the other hand, if $\ker \theta_0^{\bullet\star}$ is finitely generated over $\Zp$, then since $X^{\bullet\star}(E/\KK_{\infty}^H)_f$ is finitely generated over $\Zp$ we get that $(X^{\bullet\star}(E/\KK_{\infty})_f)_H$ is a finitely generated $\Zp$-module. It follows that $(a)$ holds. Thus ${(a) \Longleftrightarrow (b)}$.

Now we prove that ${(c) \Longrightarrow (b)}$. Assume that $(c)$ is true. We have an exact sequence
$$0 \to X^{\bullet\star}(E/\KK_{\infty})[p^{\infty}] \to X^{\bullet\star}(E/\KK_{\infty}) \to X^{\bullet\star}(E/\KK_{\infty})_f \to 0.$$

Since $p^m$ annihilates $X^{\bullet\star}(E/\KK_{\infty})[p^{\infty}]$, we have that  ${X^{\bullet\star}(E/\KK_{\infty})[p^{\infty}]/p^m=X^{\bullet\star}(E/\KK_{\infty})[p^{\infty}]}$. Thus from the above exact sequence and the fact that ${X^{\bullet\star}(E/\KK_{\infty})_f[p^{\infty}]=0}$ we get the exact sequence
$$0 \to X^{\bullet\star}(E/\KK_{\infty})[p^{\infty}] \to X^{\bullet\star}(E/\KK_{\infty})/p^m \to X^{\bullet\star}(E/\KK_{\infty})_f/p^m \to 0.$$
This in turn induces an exact sequence
\begin{equation*}
	\begin{tikzcd}[column sep = small, scale cd=0.92]
	0 \to H_1(H, X^{\bullet\star}(E/\KK_{\infty})[p^{\infty}]) \to H_1(H, X^{\bullet\star}(E/\KK_{\infty})/p^m) \to H_1(H, X^{\bullet\star}(E/\KK_{\infty})_f/p^m).
\end{tikzcd}
\end{equation*}	

Since ${cd_p(H)=1}$, the functor $H_1(H, -)$ is left exact. Therefore since ${H_1(H, X^{\bullet\star}(E/\KK_{\infty}))=0}$ by Lemma~\ref{H1_Sel_vanishing_lemma} it follows that ${H_1(H, X^{\bullet\star}(E/\KK_{\infty})[p^{\infty}])=0}$. Thus we see from the above exact sequence that it will suffice to show that $H_1(H, X^{\bullet\star}(E/\KK_{\infty})_f/p^m)$ is finite.

We now proceed to show this by induction on $m$. For ${m=1}$ this is statement~$(c)$. Let ${m>1}$ and assume the statement is true for all ${i <m}$.  We have an exact sequence
$$0 \to p^{m-1}X^{\bullet\star}(E/\KK_{\infty})_f/p^m \to X^{\bullet\star}(E/\KK_{\infty})_f/p^m \to X^{\bullet\star}(E/\KK_{\infty})_f/p^{m-1} \to 0. $$
Consider the surjection ${\phi: X^{\bullet\star}(E/\KK_{\infty})_f/p \twoheadrightarrow p^{m-1}X^{\bullet\star}(E/\KK_{\infty}{})_f/p^m}$ induced by multiplication by $p^{m-1}$. Assume that ${x+pX^{\bullet\star}(E/\KK_{\infty})_f \in \ker \phi}$. Then ${p^{m-1}x=p^m y}$ for some ${y \in X^{\bullet\star}(E/\KK_{\infty})_f}$. So ${p^{m-1}(x-py)=0}$. Since ${X^{\bullet\star}(E/\KK_{\infty})_f[p^{\infty}]=0}$, we may conclude that ${x=py}$. Thus $\ker \phi$ is trivial, i.e. $\phi$ is an isomorphism.
The above sequence induces the following exact sequence
\begin{equation*}
	\begin{tikzcd}[column sep = small, scale cd=0.84]
0 \to H_1(H, p^{m-1}X^{\bullet\star}(E/\KK_{\infty})_f/p^m) \to H_1(H, X^{\bullet\star}(E/\KK_{\infty})_f/p^m) \to H_1(H, X^{\bullet\star}(E/\KK_{\infty})_f/p^{m-1}).
\end{tikzcd}
\end{equation*}
By the induction hypothesis ${H_1(H, X^{\bullet\star}(E/\KK_{\infty})_f/p^{m-1})}$ is finite, and as $\phi$ is an isomorphism we also get that $H_1(H, p^{m-1}X^{\bullet\star}(E/\KK_{\infty})_f/p^m)$ is finite. Thus it follows that ${H_1(H, X^{\bullet\star}(E/\KK_{\infty})_f/p^m)}$ is finite as desired.

Now we prove (2). Assume that $X^{\bullet\star}(E/\KK_{\infty})_f$ is finitely generated over $\Lambda(H)$. From Lemma~\ref{uniform_m_lemma} there exists an integer $m$ satisfying the statement in (2).

An identical proof as in the implication~${(a) \Longrightarrow (b)}$ above shows that for any ${n \geq 0}$ we have that ${H_1(H_n,X^{\bullet\star}(E/\KK_{\infty})/p^m)}$ is finite and furthermore we have an isomorphism ${\ker \theta_n^{\bullet\star} \cong H_1(H_n,X^{\bullet\star}(E/\KK_{\infty})/p^m)}$. Since $X^{\bullet\star}(E/\KK_{\infty})$ is finitely generated over the Noetherian ring $\Lambda_2$, it follows that ${X^{\bullet\star}(E/\KK_{\infty})/p^m}$ is a Noetherian $\Lambda_2$-module. As the $\Lambda_2$-submodules ${H_1(H_n,X^{\bullet\star}(E/\KK_{\infty})/p^m)=(X(E/K_{\infty})/p^m)^{H_n}}$ form an increasing nested chain, the chain must stabilize and so the orders are bounded.
\end{proof}

In the ordinary case we gave a rather involved proof of the analog of the proposition below in \cite[Proposition~4.2]{MHG}. Here we follow a slightly different approach to prove this result using Proposition~\ref{MHG_cohomology_group_prop}. We feel that our proof below is easier to follow than the one in loc. cit.

\begin{proposition}\label{Xf_ControlThm_prop}
Let ${\bullet, \star \in \{+,-\}}$ and ${H \in \mathcal{H}^{\bullet\star}}$. Assume that $X^{\bullet\star}(E/\KK_{\infty})_f$ is finitely generated over $\Lambda(H)$. Then the maps (induced by the duals of the maps $s_n^{\bullet\star}$ in Theorem~\ref{SelmerControlThm_prop})
	$$\theta_n^{\bullet\star}: (X^{\bullet\star}(E/\KK_{\infty})_f)_{H_n} \longrightarrow X^{\bullet\star}(E/\KK_{\infty}^{H_n})_f$$ are surjective and have finite kernels whose orders are bounded independently of $n$.
\end{proposition}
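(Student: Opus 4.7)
The plan is to extract both the surjectivity and the uniform control of the kernel from results already proved in the excerpt, so very little new work is required; the role of this proposition is essentially to package Proposition~\ref{MHG_cohomology_group_prop}(2) with the control theorem for the full signed Selmer groups (Theorem~\ref{SelmerControlThm_prop}).

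First, I would handle surjectivity of $\theta_n^{\bullet\star}$. Consider the commutative square
\[
\begin{tikzcd}
X^{\bullet\star}(E/\KK_{\infty})_{H_n} \ar[r, twoheadrightarrow] \ar[d, "\hat{s}_n^{\bullet\star}"'] & (X^{\bullet\star}(E/\KK_{\infty})_f)_{H_n} \ar[d, "\theta_n^{\bullet\star}"] \\
X^{\bullet\star}(E/\KK_{\infty}^{H_n}) \ar[r, twoheadrightarrow] & X^{\bullet\star}(E/\KK_{\infty}^{H_n})_f
\end{tikzcd}
\]
whose horizontal arrows are the canonical quotient maps by the $p^\infty$-torsion. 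The left vertical map is an isomorphism by Theorem~\ref{SelmerControlThm_prop}, and the bottom horizontal map is surjective by definition. Hence the composite down-then-right is surjective, which forces $\theta_n^{\bullet\star}$ itself to be surjective for every $n$.

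Next, I would handle the kernel. By hypothesis, $X^{\bullet\star}(E/\KK_{\infty})_f$ is finitely generated over $\Lambda(H)$, so Lemma~\ref{uniform_m_lemma} produces an integer $m > 0$ with $X^{\bullet\star}(E/\KK_{\infty})[p^{\infty}] = X^{\bullet\star}(E/\KK_{\infty})[p^m]$ and $X^{\bullet\star}(E/\KK_{\infty}^{H_n})[p^{\infty}] = X^{\bullet\star}(E/\KK_{\infty}^{H_n})[p^m]$ for every $n \geq 0$. Fixing this uniform $m$, assertion~(2) of Proposition~\ref{MHG_cohomology_group_prop} already gives, for all $n$, a canonical isomorphism
\[ \ker \theta_n^{\bullet\star} \;\cong\; H_1(H_n, X^{\bullet\star}(E/\KK_{\infty})/p^m), \]
together with the statement that these homology groups are finite with orders bounded independently of $n$. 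Combining this with the surjectivity above yields exactly the conclusion of the proposition.

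There is essentially no obstacle here: the hard work has been front-loaded into Theorem~\ref{SelmerControlThm_prop} (for surjectivity) and into Proposition~\ref{MHG_cohomology_group_prop}(2) (for the identification of $\ker \theta_n^{\bullet\star}$ with a fixed-$m$ Galois homology group that is Noetherian over $\Lambda_2/p^m$ and whose successive $H_n$-invariants form an ascending chain, hence must stabilize and be uniformly bounded). The only thing to be careful about is that the integer $m$ chosen to identify the kernel with the homology group must serve \emph{all} layers $n$ simultaneously; this is precisely why Lemma~\ref{uniform_m_lemma} is invoked before applying Proposition~\ref{MHG_cohomology_group_prop}(2).
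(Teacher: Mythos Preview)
Your proof is correct and follows essentially the same approach as the paper: surjectivity comes from the control theorem (Theorem~\ref{SelmerControlThm_prop}) via the obvious commutative square, and the uniform bound on the kernels is read off directly from Proposition~\ref{MHG_cohomology_group_prop}(2). The only difference is cosmetic: the paper draws a slightly larger diagram including the $p^\infty$-torsion column, while you keep only the right-hand square, and you make the use of Lemma~\ref{uniform_m_lemma} explicit whereas the paper absorbs it into the citation of Proposition~\ref{MHG_cohomology_group_prop}(2).
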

\begin{proof}
We have a commutative diagram

\begin{equation*}
\begin{tikzcd}[column sep = small, scale cd=0.98]
 & X^{\bullet\star}(E/\KK_{\infty})[p^{\infty}]_{H_n} \arrow[d, "\phi^{\bullet\star}_n"] \arrow[r] &  X^{\bullet\star}(E/\KK_{\infty})_{H_n}  \arrow[d, "\hat{s}_n^{\bullet\star}"] \arrow[r] & ( X^{\bullet\star}(E/\KK_{\infty})_f)_{H_n}  \arrow[d, "\theta_n^{\bullet\star}"] \arrow[r] &0\\
0 \arrow[r] &X^{\bullet\star}(E/\KK_{\infty}^{H_n})[p^{\infty}] \arrow[r] & X^{\bullet\star}(E/\KK_{\infty}^{H_n}) \arrow[r] & X^{\bullet\star}(E/\KK_{\infty}^{H_n})_f \arrow[r] &0
\end{tikzcd}
\end{equation*}
The commutative diagram shows that we have a surjection ${\coker \hat{s}_n^{\bullet\star} \twoheadrightarrow \coker \theta_n^{\bullet\star}}$. Since by Theorem~\ref{SelmerControlThm_prop} $\hat{s}_n^{\bullet\star}$ is surjective for all $n$, the same is true for $\theta_n^{\bullet\star}$.

Proposition~\ref{MHG_cohomology_group_prop} shows that $\ker \theta_n^{\bullet\star}$ is finite and bounded with $n$.
\end{proof}

\begin{corollary} \label{X_ptorsion_controlThmCor}
Let ${\bullet, \star \in \{+,-\}}$ and ${H \in \mathcal{H}^{\bullet\star}}$. Assume that $X^{\bullet\star}(E/\KK_{\infty})_f$ is finitely generated over $\Lambda(H)$.	For any ${n \ge 0}$ the dual of the restriction induces a map
$$\phi^{\bullet\star}_n \colon X^{\bullet\star}(E/\KK_{\infty})[p^{\infty}]_{H_n} \longrightarrow X^{\bullet\star}(E/\KK_{\infty}^{H_n})[p^{\infty}].$$
These maps are injective with finite cokernels, and the orders of the cokernels are bounded independently of $n$.
\end{corollary}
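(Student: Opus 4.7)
\medskip
\noindent\textbf{Proof plan for Corollary~\ref{X_ptorsion_controlThmCor}.}
The strategy is a standard snake-lemma diagram chase applied to the commutative square already assembled in the proof of Proposition~\ref{Xf_ControlThm_prop}, combined with the two key inputs already at our disposal: Theorem~\ref{SelmerControlThm_prop} (that $\hat s_n^{\bullet\star}$ is an isomorphism) and Proposition~\ref{Xf_ControlThm_prop} (that $\theta_n^{\bullet\star}$ is surjective with finite kernel, bounded independently of $n$). The only subtlety is verifying that the top row of the diagram is exact on the left, i.e.\ that the map $X^{\bullet\star}(E/\KK_\infty)[p^\infty]_{H_n} \to X^{\bullet\star}(E/\KK_\infty)_{H_n}$ is injective.

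First I would establish this injectivity. Taking $H_n$-homology of the short exact sequence
\[ 0 \longrightarrow X^{\bullet\star}(E/\KK_\infty)[p^\infty] \longrightarrow X^{\bullet\star}(E/\KK_\infty) \longrightarrow X^{\bullet\star}(E/\KK_\infty)_f \longrightarrow 0, \]
and using that $H_n \cong \Z_p$ has cohomological $p$-dimension one, yields the four-term exact sequence
\[ 0 \to H_1(H_n, X^{\bullet\star}(E/\KK_\infty)_f) \to X^{\bullet\star}(E/\KK_\infty)[p^\infty]_{H_n} \to X^{\bullet\star}(E/\KK_\infty)_{H_n} \to (X^{\bullet\star}(E/\KK_\infty)_f)_{H_n} \to 0. \]
Now $H_1(H_n, X^{\bullet\star}(E/\KK_\infty)_f) = (X^{\bullet\star}(E/\KK_\infty)_f)^{H_n}$ sits inside $X^{\bullet\star}(E/\KK_\infty)_f$ and is therefore $p$-torsion-free, while its image in $X^{\bullet\star}(E/\KK_\infty)[p^\infty]_{H_n}$ is $p$-primary. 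Hence this first map must vanish, and the second map is injective as required.

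Next I would apply the snake lemma to the resulting commutative diagram with exact rows
\[ \begin{tikzcd}[column sep=small, scale cd=0.95] 0 \arrow[r] & X^{\bullet\star}(E/\KK_\infty)[p^\infty]_{H_n} \arrow[r] \arrow[d,"\phi_n^{\bullet\star}"] & X^{\bullet\star}(E/\KK_\infty)_{H_n} \arrow[r] \arrow[d,"\hat s_n^{\bullet\star}"] & (X^{\bullet\star}(E/\KK_\infty)_f)_{H_n} \arrow[r] \arrow[d,"\theta_n^{\bullet\star}"] & 0 \\ 0 \arrow[r] & X^{\bullet\star}(E/\KK_\infty^{H_n})[p^\infty] \arrow[r] & X^{\bullet\star}(E/\KK_\infty^{H_n}) \arrow[r] & X^{\bullet\star}(E/\KK_\infty^{H_n})_f \arrow[r] & 0. \end{tikzcd} \]
The resulting six-term exact sequence reads
\[ 0 \to \ker\phi_n^{\bullet\star} \to \ker\hat s_n^{\bullet\star} \to \ker\theta_n^{\bullet\star} \to \coker\phi_n^{\bullet\star} \to \coker\hat s_n^{\bullet\star} \to \coker\theta_n^{\bullet\star} \to 0. \]
Since $\hat s_n^{\bullet\star}$ is an isomorphism, the outer two terms of the middle column are zero, forcing $\ker\phi_n^{\bullet\star}=0$ and yielding an isomorphism $\coker\phi_n^{\bullet\star} \cong \ker\theta_n^{\bullet\star}$. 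Proposition~\ref{Xf_ControlThm_prop} then supplies the finiteness and the uniform bound on $\#\ker\theta_n^{\bullet\star}$, completing the proof. There is no serious obstacle here; the only point that required care is the left exactness of the top row, and this is handled by the parity observation above (free-of-$p$-torsion vs.\ $p$-primary).
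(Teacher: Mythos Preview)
Your overall strategy---snake lemma on the same commutative diagram, combining Theorem~\ref{SelmerControlThm_prop} and Proposition~\ref{Xf_ControlThm_prop}---is exactly the paper's approach. However, the step where you argue that the top row is left-exact contains a genuine error.

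You claim that the connecting map $\delta \colon H_1(H_n, X^{\bullet\star}(E/\KK_\infty)_f) \to X^{\bullet\star}(E/\KK_\infty)[p^\infty]_{H_n}$ vanishes because the source is $p$-torsion-free while the target is $p$-primary. This is a non-sequitur: a homomorphism from a $p$-torsion-free module to a $p$-primary module need not be zero (consider the reduction map $\Z_p \to \Z_p/p$). The image of $\delta$ is a \emph{quotient}, not a submodule, of the source, so $p$-torsion-freeness of the source gives no constraint. (Relatedly, your ``four-term exact sequence starting with $0$'' does not follow from $cd_p(H_n)=1$ alone; one also needs $H_1(H_n, X^{\bullet\star}(E/\KK_\infty)) \to H_1(H_n, X^{\bullet\star}(E/\KK_\infty)_f)$ to be surjective. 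But this is moot, since only the image of $\delta$ matters for the snake lemma.)

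The fix, which is what the paper does, is to show that $H_1(H_n, X^{\bullet\star}(E/\KK_\infty)_f)$ itself vanishes. Choose $t$ with $X^{\bullet\star}(E/\KK_\infty)_f \cong p^t X^{\bullet\star}(E/\KK_\infty)$, so that $X^{\bullet\star}(E/\KK_\infty)_f$ embeds in $X^{\bullet\star}(E/\KK_\infty)$. Since $cd_p(H_n)=1$, the functor $H_1(H_n,-)$ is left exact, and Lemma~\ref{H1_Sel_vanishing_lemma} (which applies for all $n$ under the standing hypothesis that $X^{\bullet\star}(E/\KK_\infty)_f$ is finitely generated over $\Lambda(H)$) gives $H_1(H_n, X^{\bullet\star}(E/\KK_\infty))=0$; hence $H_1(H_n, X^{\bullet\star}(E/\KK_\infty)_f)=0$ as well. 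With this in place, your diagram chase goes through exactly as written.
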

\begin{proof}
Consider the commutative diagram from the proof of Proposition~\ref{Xf_ControlThm_prop}.
Since ${cd_p(H_n)=1}$, the functor $H_1(H_n,-)$ is left exact. On the other hand, one has that ${X^{\bullet\star}(E/\KK_\infty)_f \cong p^t  X^{\bullet\star}(E/\KK_\infty)}$ for some large enough $t$. So from Lemma~\ref{H1_Sel_vanishing_lemma} it follows that ${H_1(H_n,  X^{\bullet\star}(E/\KK_\infty)_f)=0}$. Therefore we have a zero on the left of the top row in the commutative diagram.
By Theorem~\ref{SelmerControlThm_prop} the maps $\hat{s_n}^{\bullet\star}$ are isomorphisms. Proposition~\ref{Xf_ControlThm_prop} implies that the maps $\theta_n^{\bullet\star}$ are surjective and have finite kernels of bounded order. The statement of the corollary now follows from the snake lemma.
\end{proof}

The next main result of this section is the analogue of Proposition~\ref{MHG_cohomology_group_prop} for the Selmer group. Here we will content ourselves with only proving the equivalences of the analogues of $(a)$ and $(b)$ in Proposition~\ref{MHG_cohomology_group_prop}.

\begin{proposition}\label{MHG_cohomology_group_prop2}
Let ${\bullet, \star \in \{+,-\}}$. Let ${H \in \mathcal{H}^{\bullet\star}}$. The following statements are equivalent:
\begin{enumerate}[(a)]
\item $X(E/\KK_{\infty})_f$ is finitely generated over $\Lambda(H)$.
\item For any ${m>0}$ such that $p^m$ annihilates $X(E/\KK_{\infty})[p^{\infty}]$ and $X(E/\KK_{\infty}^H)[p^{\infty}]$, the group ${H_1(H,T_{\Lambda_2}(X(E/\KK_{\infty}))/p^m)}$ is finite.
\end{enumerate}
\end{proposition}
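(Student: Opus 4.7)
The plan is to adapt the argument for the signed Selmer analogue (Proposition~\ref{MHG_cohomology_group_prop}) to the non-torsion setting of the classical Selmer group, replacing $X^{\bullet\star}(E/\KK_{\infty})$ with its $\Lambda_2$-torsion submodule at the appropriate places and using Theorem~\ref{TorsionControlThm} as a substitute for the full control isomorphism. Throughout, write $T := T_{\Lambda_2}(X(E/\KK_{\infty}))$ and $A := T[p^{\infty}] = X(E/\KK_{\infty})[p^{\infty}]$, so that $X(E/\KK_{\infty})_f = T/A$ by Definition~\ref{def:free-and-torsion}.

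First I would fix $m$ as in~(b); the hypothesis $p^m A = 0$ gives a $\Lambda_2$-isomorphism $X(E/\KK_{\infty})_f \xrightarrow{\cdot p^m} p^m T \subseteq T$. The short exact sequence $0 \to p^m T \to T \to T/p^m \to 0$, combined with $H_1(H,T) = 0$ from Lemma~\ref{H1_Sel_vanishing_lemma2} and the fact that $\mathrm{cd}_p(H) = 1$, will yield
\[ 0 \longrightarrow H_1(H, T/p^m) \longrightarrow (X(E/\KK_{\infty})_f)_H \xrightarrow{\cdot p^m} T_H \longrightarrow (T/p^m)_H \longrightarrow 0. \]
Thus $H_1(H, T/p^m)$ is identified with the kernel of $(X(E/\KK_{\infty})_f)_H \to T_H$. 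Next, Theorem~\ref{TorsionControlThm} provides an injection $\hat{s}_0'\colon T_H \hookrightarrow T_{\Lambda}(X(E/\KK_{\infty}^H))$, and the analogous identification $X(E/\KK_{\infty}^H)_f \cong p^m T_{\Lambda}(X(E/\KK_{\infty}^H))$ holds because $p^m$ annihilates $X(E/\KK_{\infty}^H)[p^{\infty}]$. Packaging these into a commutative square with the natural map $\theta_0\colon (X(E/\KK_{\infty})_f)_H \to X(E/\KK_{\infty}^H)_f$ induced by the Selmer control isomorphism of Proposition~\ref{SelmerControlThm_prop2}, one obtains a canonical identification $\ker\theta_0 \cong H_1(H, T/p^m)$.

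Finally I would deduce the equivalence. For (a)~$\Longrightarrow$~(b): if $X(E/\KK_{\infty})_f$ is finitely generated over $\Lambda(H)$, then $(X(E/\KK_{\infty})_f)_H$ is finitely generated over $\Z_p$, so its subgroup $\ker\theta_0 \cong H_1(H, T/p^m)$ is both $\Z_p$-finitely generated and $p^m$-torsion, hence finite. For (b)~$\Longrightarrow$~(a): the module $X(E/\KK_{\infty}^H)_f$ is finitely generated over $\Z_p$, being the $p$-torsion-free quotient of the finitely generated torsion $\Lambda$-module $T_{\Lambda}(X(E/\KK_{\infty}^H))$; hence the image of $\theta_0$ is finitely generated over $\Z_p$, and combined with the finiteness of $\ker\theta_0$ this forces $(X(E/\KK_{\infty})_f)_H$ to be finitely generated over $\Z_p$, whence topological Nakayama yields~(a). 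The one subtlety compared to the signed case is that the control map $\hat{s}_0'$ is not surjective in general (its cokernel being $T_{\Lambda}([F_{\Lambda_2}(X(E/\KK_{\infty}))]_H)$ by Theorem~\ref{TorsionControlThm}); however, only the injectivity of $\hat{s}_0'$ is used in identifying $\ker\theta_0$, so the potential non-surjectivity is harmless for the argument—which is why the cleaner implication $(c)\Longrightarrow(b)$ from Proposition~\ref{MHG_cohomology_group_prop} is not pursued here.
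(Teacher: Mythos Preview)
Your proposal is correct and follows essentially the same route as the paper: both arguments use the vanishing $H_1(H,T_{\Lambda_2}(X(E/\KK_\infty)))=0$ from Lemma~\ref{H1_Sel_vanishing_lemma2} together with the injectivity of $\hat{s}_0'$ from Theorem~\ref{TorsionControlThm} to identify $\ker\theta_0$ with $H_1(H,T/p^m)$, and then conclude via Nakayama. Your closing remark that only the injectivity (not the surjectivity) of $\hat{s}_0'$ is needed is exactly the point, and the paper likewise refrains from pursuing the analogue of~(c).
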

\begin{proof}
The proof is almost identical to the proof of Proposition~\ref{MHG_cohomology_group_prop}. Let ${m>0}$ be an integer as in statement $(b)$. Then we have isomorphisms ${X(E/\KK_{\infty})_f \cong p^mT_{\Lambda_2}(X(E/\KK_{\infty}))}$ and ${X(E/\KK_{\infty}^H)_f \cong p^mT_{\Lambda}(X(E/\KK_{\infty}^H))}$. Consider the commutative diagram with exact rows
\begin{equation*}
\begin{tikzcd}[column sep = small, scale cd=0.70]
			  0 \arrow[r] &  H_1(H, T_{\Lambda_2}(X(E/\KK_{\infty}))/p^m) \arrow[r, "\phi"] &(X(E/\KK_{\infty})_f)_H  \arrow[d, "\theta"] \arrow[r] & T_{\Lambda_2}(X(E/\KK_{\infty}))_H \arrow[d, "\hat{s}'_0"] \arrow[r] & (T_{\Lambda_2}(X(E/\KK_{\infty}))/p^m)_H \arrow[d] \arrow[r] &0\\
		 & 0 \arrow[r] &X(E/\KK_{\infty}^H)_f \arrow[r] & T_{\Lambda}(X(E/\KK_{\infty}^H)) \arrow[r] &(T_{\Lambda}(X(E/\KK_{\infty}^H))/p^m) \arrow[r] &0
\end{tikzcd}
\end{equation*}
The zero on the left of the top row is because ${H_1(H, T_{\Lambda_2}(X(E/\KK_{\infty})))=0}$ by Lemma~\ref{H1_Sel_vanishing_lemma2}. The middle vertical arrow is an injection by Theorem~\ref{TorsionControlThm}. From the commutative diagram we see that ${\ker \theta=\phi(H_1(H, T_{\Lambda_2}(X(E/\KK_{\infty}))/p^m))}$. Since $\phi$ is an injection and $p^m$ annihilates $H_1(H, T_{\Lambda_2}(X(E/\KK_{\infty}))/p^m)$, the equivalence ${(a) \Longleftrightarrow (b)}$ will follow if we show that $(a)$ is equivalent to  $\ker \theta$ being finitely generated over $\Zp$.

If $(a)$ is true then $(X^{\bullet\star}(E/\KK_{\infty})_f)_H$ is finitely generated over $\Zp$ whence so is $\ker \theta$. On the other hand, if $\ker \theta$ is finitely generated over $\Zp$, then since $X(E/\KK_{\infty}^H)_f$ is finitely generated over $\Zp$ we get that $(X(E/\KK_{\infty})_f)_H$ is a finitely generated $\Zp$-module. It follows that $(a)$ holds. Thus ${(a) \Longleftrightarrow (b)}$.
\end{proof}

\section{On the variation of Iwasawa invariants of signed Selmer groups in a Greenberg neighborhood} \label{section:fukuda}
In this section, we prove results on the variation of Iwasawa invariants in Greenberg open neighborhoods. Since we need a control theorem which is, to our knowledge, known only for the $+/+$ and $-/-$-Selmer groups, we have to restrict our setting accordingly. We start by proving that control theorem, which generalizes \cite[Theorem 6.8]{IP} proven by Iovita and Pollack. For any ${n \in \N}$ we let
$$ \Phi_n(T) = \sum_{i=0}^{p-1} T^{i p^{n-1}}$$ be the $p^n$-th cyclotomic polynomial, and we set
$$ \omega_n^+ = T \cdot \prod_{1 \le m \le n, \textup{$m$ even}} \Phi_m(1+T), \quad \omega_n^- = T \cdot \prod_{1 \le m \le n, \textup{$m$ odd}} \Phi_m(1+T).$$

Recall that $L_{\infty}/K$ is a $\Zp$-extension, and let ${\Gamma=\Gal(L_{\infty}/K)}$. Let $\gamma$ be a topological generator of $\Gamma$. If $M$ is a $\Gamma$-module and ${n,d \geq 0}$ are integers we define for any ${\bullet \in \{+,-\}}$ the polynomial ${\omega^{\bullet}_{n,d} = \omega^{\bullet}_n(\gamma^{p^d}-1) = \omega^{\bullet}_n((T+1)^{p^d}-1)\in \Z_p[[T]] \cong \Z_p[[\Gamma]]}$, and we let
$$M^{\omega^{\bullet}_{n,d}=0}=\{x \in M \mid \omega^{\bullet}_{n,d} \, x=0\}. $$

\begin{proposition} \label{prop:local_max_control_thm}
	Let ${\bullet \in \{+,-\}}$, let ${L_\infty = \KK_\infty^H}$ for some ${H \in \mathcal{H}^{\bullet\bullet}}$ be a $\Z_p$-extension of $K$, with intermediate fields $L_n$. We assume that ${i(L_{\infty},\fp)=i(L_{\infty}, \bar{\fp})}$ and define $d$ to be this common value.\\ Then for any ${n \in \N}$ the map
	$${ s_n^{\bullet\bullet}: \Selm_{p^{\infty}}^{\bullet\bullet}(E/L_{d+n})^{\omega_{n,d}^{\bullet} = 0} \longrightarrow  \Selm_{p^{\infty}}^{\bullet\bullet}(E/L_{\infty})^{\omega_{n,d}^{\bullet}=0}}$$ is injective with finite cokernel, and the cardinality of the cokernel of $s_n^{\bullet\bullet}$ is bounded as ${n \to \infty}$. We can actually choose a neighborhood ${U = \mathcal{E}(L_\infty,m)}$ of $L_\infty$ such that the upper bound for the cardinalities of the cokernels of the maps $s_n^{\bullet\bullet}$ holds uniformly on $U$.
\end{proposition}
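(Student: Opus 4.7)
The plan is to adapt the strategy of \cite[Theorem~6.8]{IP} to the situation where the two primes of $K$ above $p$ only become totally ramified from the $d$-th layer onwards. The hypothesis ${i(L_\infty, \fp) = i(L_\infty, \bar{\fp}) = d}$ is precisely what allows a single polynomial $\omega_{n,d}^{\bullet}$ to control the local signed conditions at both primes simultaneously. First I would set up the standard Greenberg-style diagram with vertical restriction maps from level $L_{d+n}$ to $L_{\infty}$, pass to $\omega_{n,d}^{\bullet}$-torsion throughout, and apply the snake lemma to reduce the problem to analysing the restriction maps on (i) global cohomology $H^1(G_S(-), E[p^{\infty}])$, (ii) local cohomology at primes above $p$, and (iii) local cohomology at the tame primes of $S$.

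Parts~(i) and (iii) are comparatively soft. The kernel and cokernel of the global restriction are controlled by ${H^i(\Gal(L_\infty/L_{d+n}), E(L_\infty)[p^{\infty}])}$ for ${i \in \{1,2\}}$, both of which vanish by Lemma~\ref{p-torsion_lemma}. For the tame primes, the fact that no prime in $S$ splits completely in $L_\infty/K$ (which holds since ${H \in \mathcal{H}^{\bullet\bullet}}$) together with Hochschild--Serre and Shapiro yields finite kernels of order bounded uniformly in $n$, as in the proof of Proposition~\ref{SelmerControlThm_prop}. The heart of the argument is part~(ii). I would view $L_\infty/L_d$ as a $\Zp$-extension in which both $\fp$ and $\bar{\fp}$ are totally ramified and observe that ${\omega_{n,d}^{\bullet} = \omega_n^{\bullet}((1+T)^{p^d}-1)}$ is exactly the polynomial that the Iovita--Pollack machinery attaches to the $n$-th layer of the shifted extension $L_\infty/L_d$. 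Applying their local control theorem levelwise to $L_\infty/L_d$, together with the structural freeness provided by Proposition~\ref{pm_points_structure_prop}, yields the required identification of the $\omega_{n,d}^{\bullet}$-torsion of the Kummer images
\[ \bigl(\hat{E}^{\bullet}(L_{d+n,w}) \otimes \Qp/\Zp\bigr)^{\omega_{n,d}^{\bullet}=0} \longrightarrow \bigl(\hat{E}^{\bullet}(L_{\infty,w}) \otimes \Qp/\Zp\bigr)^{\omega_{n,d}^{\bullet}=0} \]
with finite cokernel of bounded order, for each prime $w$ of $L_\infty$ above $p$. Matching signs are essential here: $\omega_n^{+}$ and $\omega_n^{-}$ encode incompatible trace conditions, so mixed signs would require different polynomials at $\fp$ and at $\bar{\fp}$, and no single operator could control both local conditions.

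To promote the bound to a Greenberg neighborhood ${U = \mathcal{E}(L_\infty,m)}$, I would choose $m$ large enough so that every ${L'_\infty \in U}$ shares with $L_\infty$ both the index values ${i(L'_\infty,\fp) = i(L'_\infty,\bar{\fp}) = d}$ and the decomposition behaviour of the finitely many tame primes of $S$ up to a sufficiently high layer; the bounds obtained for $L_\infty$ then transfer to every ${L'_\infty \in U}$ without change. The main obstacle will be in part~(ii): producing and rigorously verifying the Iovita--Pollack style control on $\omega_{n,d}^{\bullet}$-torsion in this shifted setup, while keeping careful track of the reindexing by $d$ built into the definition of $\hat{E}^{\bullet}((\KK_\infty^{H_c})_w)$ from Section~\ref{section:mainintroduction}. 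Once this local identification is established, the assembly of the global control theorem and the extraction of uniform bounds on $U$ is formal.
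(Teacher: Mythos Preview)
Your proposal is correct and follows essentially the same route as the paper: reduce via the snake lemma to the global, tame-local, and $p$-local pieces, dispatch the first two using Lemma~\ref{p-torsion_lemma} and \cite[Lemma~3.3]{Gb_LNM}, and handle the key $p$-local step via the structural result of Proposition~\ref{pm_points_structure_prop} (which in turn rests on Kim's Coleman maps \cite{KimParityConj}). The paper sharpens one point you leave vague: the local map $\phi_n$ at each prime above $p$ is shown to be an \emph{isomorphism}, not merely a map with bounded finite cokernel, by comparing the finite-level structure $(\hat{E}^{\bullet}(L_{n,w})\otimes\Qp/\Zp)^{\vee}\cong(\Lambda_n^{\bullet})^t$ with the $\omega_{n,d}^{\bullet}$-torsion at the infinite level via a rank count; the bounded cokernel in the statement then arises solely from the tame primes, after which the paper defers to Kobayashi's argument \cite[Theorem~9.3]{Kob}.
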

\begin{proof}
	To prove this control theorem we use results of B.D. Kim~\cite{KimParityConj}. First we define ${\Lambda_n^{\bullet}:=\Zp[T]/\langle \omega_n^{\bullet} \rangle}$. Let $w$ be a prime of $L_{\infty}$ above $p$. Recall from Section~\ref{section:mainintroduction} that $\hat{E}^{\bullet}(L_{\infty,w})$ is defined as in \eqref{plusdef}, \eqref{minusdef} and \eqref{eq:def2-3} with respect to the $\Zp$-extension ${L_{\infty}/L^{(v)}_{i(L_{\infty}, v)}}$ where $v$ is the prime of $K$ below $w$. Let $\{L_n\}$ be the tower fields of the $\Zp$-extension ${L_{\infty}/L^{(v)}_{i(L_{\infty}, v)}}$.
	Let $\mathcal{L}_{\infty}$ be the union of the completions at $w$ of the fields $L_n$ and denote the completion of $L^{(v)}_{i(L_{\infty}, v)}$ at the prime below $w$ by $k$. Then $\mathcal{L}_{\infty}/k$ is a totally ramified $\Zp$-extension and $k/\Qp$ is unramified. Moreover $\mathcal{L}_{\infty}/\Qp$ is an abelian extension. This is the setup in \cite{KimParityConj}. Let ${t=[k:\Qp]}$. By \cite[Proposition~3.12]{KimParityConj} and the discussion on page 5 of \cite{KimCofree} we see that the Coleman maps in \cite[Proposition~3.7]{KimParityConj} are surjective. Combining this fact with \cite[Proposition~3.11]{KimParityConj} we get that
	
	\begin{equation}\label{pm_points_isom}
		(\hat{E}^{\bullet}(L_{n,w})\otimes \Qp/\Zp)^{\vee} \cong (\Lambda_n^{\bullet})^t
	\end{equation}\\
	for any ${n \in \N}$.
	
	Let ${n \in \N}$. It is easy to see that $\hat{E}^{\bullet}(L_{n,w})$ is annihilated by ${\omega^{\bullet}_n(\gamma^{p^d}-1)}$. Now consider the map
	
	$$\phi_n: \hat{E}^{\bullet}(L_{n,w})\otimes \Qp/\Zp \to (\hat{E}^{\bullet}(L_{\infty,w})\otimes \Qp/\Zp)^{\omega^{\bullet}_{n,d}=0}. $$\\
	We claim that $\phi_n$ is an isomorphism. An identical proof to \cite[Lemma~8.17]{Kob} shows that the Kummer map induces injections

	\begin{equation}\label{kummer_map3}
		\hat{E}^{\bullet}(L_{n,w})\otimes \Qp/\Zp \hookrightarrow H^1(L_{n,w}, E[p^{\infty}]),
	\end{equation}
	
	\begin{equation}\label{kummer_map4}
		\hat{E}^{\bullet}(L_{\infty,w})\otimes \Qp/\Zp \hookrightarrow H^1(L_{\infty,w}, E[p^{\infty}]).
	\end{equation}\\
	The kernel of the restriction map ${H^1(L_{n,w}, E[p^{\infty}]) \to H^1(L_{\infty,w}, E[p^{\infty}])}$ is ${H^1(\Gal(L_{\infty,w}/L_{n,w}), E(L_{\infty,w})[p^{\infty}])}$ which is trivial by Lemma~\ref{p-torsion_lemma}. Therefore taking into account the injections~\eqref{kummer_map3} and \eqref{kummer_map4} we see that $\phi_n$ is an injection.
	
	Now let ${\Lambda:=\Zp[[\Gal(L_{\infty}/L^{(v)}_{i(L_{\infty}, v)+n})]]}$. Then by Proposition~\ref{pm_points_structure_prop} we have
	
	$$((\hat{E}^{\bullet}(L_{\infty,w})\otimes \Qp/\Zp)^{\omega^{\bullet}_{n,d}=0})^{\vee} \cong \Lambda^t/\langle\omega^{\bullet}_n(\gamma^{p^d}-1)\rangle \cong (\Lambda_n^{\bullet})^t.$$\\
	Therefore by taking Pontryagin duals and using the isomorphism~\eqref{pm_points_isom} we see that $\phi_n^{\vee}$ is surjective where the domain and codomain are free $\Zp$-modules with the same rank. If $\phi_n^{\vee}$ was not injective, then $\ker \phi_n^{\vee}$ would have positive $\Zp$-rank. This would force the $\Zp$-rank of the codomain to be less than the $\Zp$-rank of the domain which would contradict the fact above. Therefore, as claimed, $\phi_n$ is an isomorphism.
	
	As $\phi_n$ is an isomorphism the rest of the proof of our control theorem follows exactly as in the proof of Kobayashi's control theorem \cite[Theorem~9.3]{Kob}. By the proof of this result, an upper bound for the cardinality of the cokernels of $s_n^{\bullet\bullet}$ can be obtained via \cite[Lemma~3.3]{Gb_LNM}. Since ${H \in \mathcal{H}^{\bullet\bullet}}$ we can choose a neighborhood ${U = \mathcal{E}(L_\infty,m)}$ of $L_\infty$ such that the primes $v$ of $K$ where $E$ has bad reduction do not split completely in $L_m/K$. Then the number of primes of $F_\infty$ above such primes $v$ will be constant as $F_\infty$ runs over the elements in $U$, and it follows from the proof of \cite[Lemma~3.3]{Gb_LNM} that the upper bound on the cardinalities of the cokernels is not only uniform in $n$, but also in ${F_\infty \in U}$.
\end{proof}

Now we are ready to prove the main result of this section.
\begin{thm} \label{thm:local-maximality}
Let ${\bullet \in \{+,-\}}$, ${H \in \mathcal{H}^{\bullet\bullet}}$ and ${L_{\infty} = \KK_{\infty}^H}$. We assume that ${i(L_\infty, \fp) = i(L_\infty, \bar{\fp})}$. \\ Then there exists a Greenberg neighborhood ${U = \mathcal{E}(L_{\infty},n)}$ of $L_{\infty}$ such that the following assertions hold for each ${F_{\infty} \in U}$. \begin{compactenum}[(a)]
    \item $\Gal(\KK_{\infty}/F_{\infty}) \in \mathcal{H}^{\bullet\bullet}$,
    \item $\mu(X^{\bullet\bullet}(E/F_{\infty})) \le \mu(X^{\bullet\bullet}(E/L_{\infty}))$,
    \item $\lambda(X^{\bullet\bullet}(E/F_{\infty})) = \lambda(X^{\bullet\bullet}(E/L_{\infty}))$ holds if ${\mu(X^{\bullet\bullet}(E/F_{\infty})) = \mu(X^{\bullet\bullet}(E/L_{\infty}))}$.
  \end{compactenum}
\end{thm}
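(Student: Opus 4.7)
The plan is to combine the openness of the defining conditions of $\mathcal{H}^{\bullet\bullet}$ with a Fukuda-style comparison based on Proposition~\ref{prop:local_max_control_thm}.

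Part~(a) is essentially automatic. The non-split and ramification conditions in Definition~\ref{def:mathcal{H}} are decided by finitely many decomposition and inertia subgroups, hence they are open in Greenberg's topology. The torsion condition on $X^{\bullet\bullet}(E/\KK_\infty^H)$ excludes only finitely many elements of $\mathcal{E}$ by Proposition~\ref{Torsion_prop}, and we simply shrink $U$ to avoid them. Finally, $i(F_\infty,\fp)$ and $i(F_\infty,\bar{\fp})$ are locally constant functions of $F_\infty$, so in a small enough neighbourhood $U$ both take the common value $d=i(L_\infty,\fp)=i(L_\infty,\bar{\fp})$.

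For (b) and (c), we apply Proposition~\ref{prop:local_max_control_thm} to both $L_\infty$ and every $F_\infty \in U$ with this common integer $d$; the uniformity at the end of that proposition gives $\#\coker(s_n^{\bullet\bullet}) \le C$ uniformly in $n$ and in $F_\infty \in U$. Shrinking $U=\mathcal{E}(L_\infty,N_0)$ further, the layers $L_{d+n}$ and $F_{d+n}$ coincide for all $d+n\le N_0$, so the control theorem applied to $L_\infty$ and to $F_\infty$ yields, after Pontryagin duality, the uniform bound
\[ \Bigl|\,v_p\bigl(\#(X^{\bullet\bullet}(E/L_\infty)/\omega_{n,d}^\bullet)\bigr) - v_p\bigl(\#(X^{\bullet\bullet}(E/F_\infty)/\omega_{n,d}^\bullet)\bigr)\,\Bigr| \le 2v_p(C) \]
for all $n$ with $d+n \le N_0$. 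The standard asymptotic for the signed Iwasawa invariants then reads $v_p(\#(X^{\bullet\bullet}(E/\cdot)/\omega_{n,d}^\bullet))=\mu\cdot a(n)+\lambda\cdot b(n)+O(1)$, with $a(n)=\deg(\omega_{n,d}^\bullet)$ growing exponentially in $n$ and $b(n)$ growing linearly, valid once $\omega_{n,d}^\bullet$ is coprime to the characteristic power series; the absence of nontrivial pseudo-null submodules granted by Theorem~\ref{pseudo-null_theorem} guarantees that there are no parasitic contributions from such submodules. Feeding this asymptotic into the displayed bound yields $(\mu_L-\mu_F)\,a(n)+(\lambda_L-\lambda_F)\,b(n) = O(1)$ uniformly in $n\le N_0-d$. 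Enlarging $N_0$ first forces $\mu_F \le \mu_L$, because of the exponential growth of $a(n)$; under the equality $\mu_L=\mu_F$, the linear growth of $b(n)$ then forces $\lambda_F=\lambda_L$.

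The main obstacle is to make the $O(1)$ error terms in the asymptotic formula uniform in $F_\infty \in U$, since otherwise the bounded-difference argument collapses pointwise. This uniformity is delivered precisely by the final statement of Proposition~\ref{prop:local_max_control_thm}: the single constant $C$ serves simultaneously for $L_\infty$ and every $F_\infty \in U$, so that the asymptotic comparison produces contradictions valid on all of $U$ rather than pointwise. With this uniform control in place, together with the non-existence of pseudo-null submodules provided by Theorem~\ref{pseudo-null_theorem}, the exponential growth of $a(n)$ does the remaining work and yields both the semi-continuity of $\mu$ and the local constancy of $\lambda$ on $\mu$-level sets.
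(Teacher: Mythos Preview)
Your outline for part~(a) is correct and matches the paper.

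For parts~(b) and~(c), the strategy of comparing $v_p\bigl(\#(X^{\bullet\bullet}(E/L_\infty)/\omega_{n,d}^\bullet)\bigr)$ with $v_p\bigl(\#(X^{\bullet\bullet}(E/F_\infty)/\omega_{n,d}^\bullet)\bigr)$ via the shared finite layers is the right starting point, but there is a genuine gap. The uniformity of $C$ in Proposition~\ref{prop:local_max_control_thm} controls only the \emph{difference} between these two quantities; it does not make the $O(1)$ term in your asymptotic $v_p(\#(X_F/\omega_{n,d}^\bullet)) = \mu_F\, a(n) + \lambda_F\, b(n) + O(1)$ uniform in $F_\infty$. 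That $O(1)$ encodes the contributions $v_p\bigl(|\Lambda/(g_F,\Phi_m)|\bigr)$ from the low-index factors $\Phi_m$ of $\omega_{n,d}^\bullet$, and these depend on the characteristic polynomial $g_F$; they can be arbitrarily large, and the quotient $X/\omega_{n,d}^\bullet$ is even infinite if some $\Phi_m$ divides the characteristic power series. The threshold beyond which your asymptotic is valid also depends on $\lambda_F$, which you do not know in advance. So for a fixed $U=\mathcal{E}(L_\infty,N_0)$ there is no guarantee that the asymptotic for $X_F$ holds for any $n\le N_0-d$, and your contradiction argument for~(c) does not close.

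The paper avoids this with an extra device: it further quotients by $\nu_{2s,s}(T)=\prod_{m=s+1}^{2s}\Phi_m(1+T)$ for a single fixed $s$ (chosen large relative to $\lambda_L$ and $v_p(C)$). Since $\nu_{2s,s}$ has only high-index $\Phi_m$ factors, the quantity $\rank_s(X):=v_p(|X/\nu_{2s,s}|)$ satisfies the \emph{exact} formula $\rank_s(E_X)=\mu(X)(p^{2s}-p^s)+\lambda(X)\,s$ whenever $\lambda(X)<p^s(p-1)$, with no error term. A Nakayama-type stabilisation argument (finding indices $i<j$ of the same parity with equal values of $v_p(|X/(\omega_{i,d}^\bullet,\nu_{2s,s})|)$, forcing $\omega_{i,d}^\bullet$ to annihilate $X/\nu_{2s,s}$) then identifies $\rank_s(X_L)$ and $\rank_s(X_F)$ with quantities comparable via the control theorem, giving $|\rank_s(E_{X_L})-\rank_s(E_{X_F})|\le v_p(C)$. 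The exact formula now yields~(b) directly; for~(c) one first bounds $\lambda_F<p^s(p-1)$ via a crude lower bound on $\rank_s(E_{X_F})$, after which the exact formula applies to $X_F$ too and forces $\lambda_F=\lambda_L$. The passage through $\nu_{2s,s}$ and the stabilisation step are the missing ingredients.
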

\begin{proof}
	We know from Proposition~\ref{Zpextensions_prop} that $U$ can be chosen such that the first condition is satisfied.

	Now we proceed along the lines of the proof of \cite[Theorem~4.11]{fukuda-selmer}. Write ${X = X^{\bullet\bullet}(E/L_\infty)}$ and ${\lambda = \lambda(X)}$ for brevity, and consider polynomials
	\[ \nu_{m,n}(T) = \frac{(T+1)^{p^m}-1}{(T+1)^{p^n}-1} \in \Lambda = \Z_p[[T]]\]
	with ${m \ge n}$. Since the polynomials $\nu_{n+1,n}(T)$ are pairwise coprime, we can make $n$ large enough to ensure that $\nu_{m,n}(T)$ is coprime with the characteristic power series of $X$ for each ${m \ge n}$. Below we will typically consider ${m = 2n}$.
	
	Fix a pseudo-isomorphism ${\varphi \colon X \longrightarrow E_X}$, where $E_X$ denotes an elementary $\Lambda$-module. Since $X$ does not contain any non-trivial pseudo-null submodules by  Theorem~\ref{pseudo-null_theorem}, $\varphi$ is actually an injection.
	
	We choose ${s \in \N}$ large enough to ensure that $\nu_{2s,s}(T)$ is coprime with the characteristic power series of $X$. Then
	\[ \rank_s(X) := v_p(|X/\nu_{2s,s}(T)|)\]
	is finite, where we denote by $v_p$ the $p$-adic valuation, normalised such that ${v_p(p) = 1}$. Moreover, it follows from \cite[proof of Theorem~3.10]{local_beh} that
	\[ \rank_s(X) = \rank_s(E_X) \]
	for each such $s$ (here we again use Theorem~\ref{pseudo-null_theorem}).
	
	The characteristic power series of $X$ is associated to the product ${g_X \cdot p^{\mu(X)}}$ of a unique distinguished polynomial $g_X$ of degree ${\lambda = \lambda(X)}$ and a power of $p$. If $s$ is larger than the degree of $g_X$, then
	\[ \rank_s(E_X) = \mu(X) \cdot (p^{2s} - p^s) + \lambda \cdot s \]
	by the proof of \cite[Theorem~3.10]{local_beh}.
	
	Now let $s$ be large enough such that
	\begin {align} \label{eq:s} p^s (p-1) > s \cdot \lambda + v_p(C), \end{align}
	where $C$ denotes an upper bound for the cardinalities of the cokernels of the maps $s_n^{\bullet\bullet}$ from Proposition~\ref{prop:local_max_control_thm}.
	Dualising the maps from Proposition~\ref{prop:local_max_control_thm} we may conclude that
	\begin{eqnarray} v_p(|X/(\omega^{\bullet}_{n,d}, \nu_{2s,s}(T))|) - v_p(C) & \le & \rg_s(X^{\bullet\bullet}(E/L_{d+n})/\omega^{\bullet}_{n,d}) \nonumber \\
		& \le & v_p(|X/(\omega^{\bullet}_{n,d}, \nu_{2s,s}(T))|) \label{eq:sect7}\\
	& \le & \rg_s(X) \nonumber \end{eqnarray}
	for each ${n \in \N}$. 
	In what follows we will assume that $n$ has been chosen large enough to ensure that $\omega^{\bullet}_{n,d}$ annihilates $X/\nu_{2s,s}(T)$. Note that this will be true for any sufficiently large $n$. Indeed, $\omega^{\bullet}_{n,d}$ can be written as a product of at least $n/2$ non-unit factors. By Nakayama's Lemma the cardinality of any finite non-zero $\Lambda$-module decreases on multiplication by a non-unit. If ${|X/\nu_{2s,s}(T)| = p^w}$, then any product of at least $w$ non-unit factors in $\Lambda$ will actually annihilate that quotient.
	
	For these $n$ we have
	\[ v_p(|X/(\omega_{n,d}^\bullet, \nu_{2s,s}(T))|) = \rg_s(X).\]
	We may conclude from the above that there exists some ${m \in \N}$ such that
	\begin{align} \label{eq:inf1} \rg_s(X^{\bullet\bullet}(E/L_{d+k})/\omega_{k,d}^\bullet) = \rg_s(X^{\bullet\bullet}(E/L_{d+m})/\omega_{m,d}^\bullet)\end{align}
	for infinitely many ${k \ge m}$. In what follows we will show that there exist infinitely many $k$ such that not only equation~\eqref{eq:inf1} holds for $k$, but instead also a certain second condition is valid for these $k$.
	
	Let ${N \subseteq \N}$ denote any subset of the infinite set of ${k \ge m}$ for which equation~\eqref{eq:inf1} holds. If $N$ contains at least ${v_p(C) + 1}$ pairwise distinct even (respectively, odd) integers $k$, where $C$ is as above and we are considering even (respectively, odd) integers if ${\bullet = +}$ (respectively, ${\bullet = -}$), then the inequalities~\eqref{eq:sect7} imply that
	\begin{equation} \label{eq:equal_quotients} v_p(|X/(\omega_{i,d}^\bullet, \nu_{2s,s}(T))|) = v_p(|X/(\omega_{j,d}^\bullet, \nu_{2s,s}(T))|)\end{equation}
	for at least two even (respectively, odd) indices $i$ and $j$ from $N$ with ${i < j}$, and it follows from Nakayama's Lemma that
	\begin{align} \label{eq:inf2} \rank_s(X) =  v_p(|X/(\omega_{i,d}^\bullet, \nu_{2s,s}(T))|)\end{align}
	for any such index $i$. Indeed, we have ${\omega_{j,d}^\bullet = \omega_{i,d}^\bullet \cdot h}$ for some non-unit $h$ (recall that $i$ and $j$ have the same parity). Letting ${Z = (\omega_{i,d}^\bullet \cdot X +  \nu_{2s,s}(T) \cdot X)/\nu_{2s,s}(T)X}$, it follows from equation~\eqref{eq:equal_quotients} that
	\[ h \cdot Z = Z. \]
	Since ${Z \subseteq X/\nu_{2s,s}(T)}$ is finite, this is possible only if ${Z = 0}$. Therefore
	\[ \omega_{i,d}^\bullet \cdot X \subseteq \nu_{2s,s}(T) \cdot X, \]
	proving~\eqref{eq:inf2}.
	
	Since there are infinitely many possible pairwise disjoint choices for the set $N$, we may conclude that there exist infinitely many indices ${k \ge m}$ such that both equations~\eqref{eq:inf1} and \eqref{eq:inf2} hold for $k$, i.e.
	\[ \rg_s(X^{\bullet\bullet}(E/L_{d+k})/\omega_{k,d}^\bullet) = \rg_s(X^{\bullet\bullet}(E/L_{d+m})/\omega_{m,d}^\bullet)\]
	and
	\begin{align*} \rank_s(X) =  v_p(|X/(\omega_{k,d}^\bullet, \nu_{2s,s}(T))|)\end{align*}
	hold. In what follows, we will only consider indices $k$ in this infinite set.
		
	Suppose that ${U = \mathcal{E}(L_\infty,d+n)}$ is small enough such that ${n \ge k}$ for at least ${v_p(C) + 1}$ even (respectively, odd) such integers $k$, where again we are considering even (respectively, odd) integers if ${\bullet = +}$ (respectively, ${\bullet = -}$) -- it suffices to increase the value $n$ accordingly. We may also assume that $U$ is small enough to make the conclusion of Proposition~\ref{prop:local_max_control_thm} valid on $U$. Let ${F_\infty \in U}$. Then ${F_{d+k} = L_{d+k}}$ and ${F_{d+m} = L_{d+m}}$ for any pair $(k,m)$ of integers satisfying ${k \le n}$ and ${m \le n}$, and
	\[ \rg_s(X^{\bullet\bullet}(E/F_{d+k})/\omega_{k,d}^\bullet) = \rg_s(X^{\bullet\bullet}(E/F_{d+m})/\omega_{m,d}^\bullet)\]
	if $k$ and $m$ have been chosen as above.
	Since the bound $C$ from Proposition~\ref{prop:local_max_control_thm} holds uniformly on $U$, we may conclude that
	$$ v_p(|X^{\bullet\bullet}(E/F_\infty)/(\omega_{i,d}^\bullet, \nu_{2s,s}(T))|) = v_p(|X^{\bullet\bullet}(E/F_\infty)/(\omega_{j,d}^\bullet, \nu_{2s,s}(T))|)$$
	for at least two different even (respectively, odd) indices ${i < j}$, and it follows from Nakayama's Lemma that
	\begin{align} \label{eq:compare2} \rank_s(X^{\bullet\bullet}(E/F_\infty)) =  v_p(|X^{\bullet\bullet}(E/F_\infty)/(\omega_{i,d}^\bullet, \nu_{2s,s}(T))|)\end{align}
	for such $i$ (cf. also the proof of \cite[Corollary~3.8]{fukuda-selmer}). Taking into account both equations~\eqref{eq:inf2} and \eqref{eq:compare2} (by construction there exists at least one index ${i}$ for which both equations hold) and in view of Proposition~\ref{prop:local_max_control_thm} we may conclude that
	\begin{align} \label{eq:main} \rg_s(X) - v_p(C) \le \rank_s(X^{\bullet\bullet}(E/F_\infty)) \le \rg_s(X) + v_p(C) \end{align}
	for each ${F_\infty \in U}$.

	Since none of the $\Lambda$-modules $X^{\bullet\bullet}(E/F_\infty)$ contains any non-trivial pseudo-null submodules, it follows from the above that
	\begin{align} \label{eq:E_X} \rg_s(X) = \rank_s(E_X) = \mu(X) \cdot (p^{2s} - p^s) + \lambda \cdot s \end{align}
	differs from $\rank_s(E_X^{\bullet\bullet}(E/F_\infty))$ by at most $v_p(C)$, where $E_X^{\bullet\bullet}(E/F_\infty)$ denotes an elementary $\Lambda$-module which is pseudo-isomorphic to $X^{\bullet\bullet}(E/F_\infty)$. More precisely, fix some ${F_\infty \in U}$, and write ${\tilde{X} = X^{\bullet\bullet}(E/F_\infty)}$ and ${E_{\tilde{X}} = E_X^{\bullet\bullet}(E/F_\infty)}$ for brevity. Then $\tilde{X}$ also does not contain any non-trivial pseudo-null submodules. Therefore equation~\eqref{eq:main} can be rewritten as
	\begin{align} \label{eq:main2} \rg_s(E_X) - v_p(C) \le \rg_s(E_{\tilde{X}}) \le \rg_s(E_X) + v_p(C). \end{align}
	
	Since ${\rg_s(E_{\tilde{X}}) \ge \mu(\tilde{X}) \cdot (p^{2s} - p^s)}$, we may conclude that
	\[ \mu(\tilde{X}) \cdot (p^{2s} - p^s) - v_p(C) \le \rank_s(E_X) + v_p(C) = \mu(X) \cdot (p^{2s} - p^s) + \lambda \cdot s + v_p(C). \]
	In view of~\eqref{eq:s} this proves part~(b) of the theorem.
	
	Now fix ${F_\infty \in U}$ such that ${\mu(\tilde{X}) = \mu(X)}$. If ${\lambda(\tilde{X}) \ge p^s (p-1)}$, then
	\[ \rg_s(E_{\tilde{X}}) \ge \mu(\tilde{X}) \cdot (p^{2s} - p^s) + p^s(p-1) \]
	by the proof of \cite[Theorem~3.10]{local_beh} and the proof of \cite[Theorem~4.11]{fukuda-selmer}. By our choice of $s$ (see~\eqref{eq:s}) and in view of equation~\eqref{eq:main2}, this is not possible. It follows that ${\lambda(\tilde{X}) < p^s(p-1)}$. Then the proof of \cite[Theorem~3.10]{local_beh} implies that
	\[ \rg_s(E_{\tilde{X}}) = \mu(\tilde{X}) \cdot (p^{2s} - p^s) + s \cdot \lambda(\tilde{X}). \]
	Now suppose that $s$ has been chosen large enough to ensure that ${s > v_p(C)}$.
	In view of equation~\eqref{eq:main2}, and as ${\mu(\tilde{X}) = \mu(X)}$ by assumption, we may conclude that ${\lambda(\tilde{X}) = \lambda}$.
\end{proof}
\begin{corollary} \label{cor:mu=0}
Let ${\bullet \in \{+,-\}}$, ${H \in \mathcal{H}^{\bullet\bullet}}$ and ${L_{\infty} = \KK_{\infty}^H}$. We assume that $i(L_{\infty},\fp)=i(L_{\infty}, \bar{\fp})$. Suppose that ${\mu(X^{\bullet\bullet}(E/L_{\infty})) = 0}$. \\ Then there exists a Greenberg neighborhood ${U = \mathcal{E}(L_{\infty},n)}$ of $L_{\infty}$ such that
	\[ \mu(X^{\bullet\bullet}(E/F_{\infty})) = 0, \quad \lambda(X^{\bullet\bullet}(E/F_{\infty})) {= } \lambda(X^{\bullet\bullet}(E/L_{\infty})) \]
	for each ${F_\infty \in U}$.
\end{corollary}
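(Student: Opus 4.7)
The plan is to deduce the corollary as a direct consequence of Theorem~\ref{thm:local-maximality}, taking advantage of the fact that $\mu$-invariants are nonnegative integers.

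First I would invoke Theorem~\ref{thm:local-maximality} with the same $L_\infty$ and $H$. Since the hypothesis ${i(L_\infty,\fp)=i(L_\infty,\bar{\fp})}$ of the theorem is assumed in the corollary, the theorem yields a Greenberg neighborhood ${U=\mathcal{E}(L_\infty,n)}$ such that each ${F_\infty\in U}$ satisfies the three conclusions~(a)--(c) of the theorem. Fix such an $F_\infty$ and write ${X(L)=X^{\bullet\bullet}(E/L)}$ for brevity.

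Next, I would combine conclusion~(b) of the theorem with the hypothesis ${\mu(X(L_\infty))=0}$ to obtain
\[ 0 \le \mu(X(F_\infty)) \le \mu(X(L_\infty)) = 0, \]
which forces ${\mu(X(F_\infty))=0}$. In particular, the equality case of part~(c) of the theorem is satisfied, so it yields ${\lambda(X(F_\infty))=\lambda(X(L_\infty))}$, completing the proof.

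There is really no obstacle: the whole content is packed into Theorem~\ref{thm:local-maximality}, and the corollary only exploits the numerical fact that $\mu$-invariants cannot be negative, so that the inequality in~(b) is forced to be an equality in the special case ${\mu(X(L_\infty))=0}$. Because the neighborhood $U$ provided by the theorem is independent of the particular $F_\infty$, the same $U$ works uniformly for the corollary.
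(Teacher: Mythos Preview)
Your proof is correct and follows exactly the same approach as the paper: apply Theorem~\ref{thm:local-maximality}, use part~(b) together with nonnegativity of $\mu$-invariants to force $\mu(X^{\bullet\bullet}(E/F_\infty))=0$, and then invoke part~(c) for the $\lambda$-equality.
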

\begin{proof}
	Choose $U$ as in Theorem~\ref{thm:local-maximality}. In view of Theorem~\ref{thm:local-maximality}(b) we have
	\[ \mu(X^{\bullet\bullet}(E/F_{\infty})) = 0 = \mu(X^{\bullet\bullet}(E/L_{\infty}))\]
	for each ${F_\infty \in U}$. The second assertion follows from Theorem~\ref{thm:local-maximality}(c).
\end{proof}

Now we derive a useful if-and-only-if condition for the local boundedness of $\lambda$-invariants. An analogous result in the setting of Iwasawa theory of graphs was obtained in \cite[Theorem~6.5]{graphs}.
\begin{thm} \label{thm:strong_local_behavior}
	Let ${\bullet \in \{+,-\}}$, ${H \in \mathcal{H}^{\bullet\bullet}}$ and let ${L_{\infty} = \KK_{\infty}^H}$. We assume that ${i(L_{\infty},\fp)=i(L_{\infty}, \bar{\fp})}$. Then \begin{compactitem}
		\item \textbf{either} there exists a Greenberg neighborhood ${U = \mathcal{E}(L_{\infty},n)}$ of $L_{\infty}$ such that ${\mu(X^{\bullet\bullet}(E/F_\infty)) = \mu(X^{\bullet\bullet}(E/L_{\infty}))}$ and
		      $$\lambda(X^{\bullet\bullet}(E/F_\infty)) = \lambda(X^{\bullet\bullet}(E/L_{\infty}))$$
		      for each ${F_\infty \in U}$,
		\item \textbf{or} for any neighborhood $U$ of $L_\infty$ there exists some ${F_\infty \in U}$ such that $\mu(X^{\bullet\bullet}(E/F_{\infty})) < \mu(X^{\bullet\bullet}(E/L_{\infty}))$, and $\lambda(X^{\bullet\bullet}(E/F_\infty))$ is \emph{unbounded} on some neighborhood of $L_\infty$.
	\end{compactitem}
\end{thm}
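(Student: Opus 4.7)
The plan is to derive a dichotomy by unpacking the quantitative content of the proof of Theorem~\ref{thm:local-maximality}. Write $\mu_0 = \mu(X^{\bullet\bullet}(E/L_\infty))$ and $\lambda_0 = \lambda(X^{\bullet\bullet}(E/L_\infty))$, and fix a neighborhood $U^*$ on which assertions (a)--(c) of that theorem hold. The natural split is whether $\mu$ attains the value $\mu_0$ on some entire sub-neighborhood of $L_\infty$ inside $U^*$.

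If there exists some $V \subseteq U^*$ with $\mu(X^{\bullet\bullet}(E/F_\infty)) = \mu_0$ for every $F_\infty \in V$, then Theorem~\ref{thm:local-maximality}(c) immediately yields $\lambda(X^{\bullet\bullet}(E/F_\infty)) = \lambda_0$ on $V$, giving the first alternative. Otherwise, combining the negation of this condition with the inequality $\mu(X^{\bullet\bullet}(E/F_\infty)) \le \mu_0$ from Theorem~\ref{thm:local-maximality}(b) shows that every neighborhood of $L_\infty$ contains some $F_\infty$ with $\mu(X^{\bullet\bullet}(E/F_\infty)) < \mu_0$, establishing the first half of the second alternative.

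The substantive point is the $\lambda$-unboundedness in this second case. I would extract from the proof of Theorem~\ref{thm:local-maximality} the fact that for each sufficiently large $s$ there is a neighborhood $U_s$ of $L_\infty$ (which may shrink with $s$) on which the two-sided bound \eqref{eq:main2} holds, namely
\[\bigl|\rg_s(E_{X^{\bullet\bullet}(E/F_\infty)}) - \rg_s(E_X)\bigr| \le v_p(C),\]
with $\rg_s(E_X) = \mu_0(p^{2s}-p^s) + s\lambda_0$ and $C$ the uniform bound of Proposition~\ref{prop:local_max_control_thm}. Given an arbitrary neighborhood $U_0$ of $L_\infty$, I would pick $s$ so large that $U_s \subseteq U_0$ and use the Case~2 hypothesis to find $F_\infty^{(s)} \in U_s$ with $\tilde\mu := \mu(X^{\bullet\bullet}(E/F_\infty^{(s)})) \le \mu_0 - 1$. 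Assume towards contradiction that $\tilde\lambda := \lambda(X^{\bullet\bullet}(E/F_\infty^{(s)})) < p^s(p-1)$; then the elementary-module rank formula used in the proof of Theorem~\ref{thm:local-maximality} gives
\[\rg_s\bigl(E_{X^{\bullet\bullet}(E/F_\infty^{(s)})}\bigr) = \tilde\mu(p^{2s}-p^s) + s\tilde\lambda,\]
and substituting into the lower bound above yields
\[s\tilde\lambda \ge (\mu_0 - \tilde\mu)(p^{2s}-p^s) + s\lambda_0 - v_p(C) \ge (p^{2s}-p^s) + s\lambda_0 - v_p(C),\]
which contradicts $s\tilde\lambda < s \cdot p^s(p-1)$ once $s$ is large enough that $p^{2s}$ dominates $s \cdot p^s(p-1)$. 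Hence $\tilde\lambda \ge p^s(p-1)$, and letting $s \to \infty$ exhibits elements of $U_0$ with arbitrarily large $\lambda$-invariant.

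The main obstacle is the careful bookkeeping of the nested neighborhoods $U_s$: they shrink with $s$, so unboundedness must be phrased on any fixed (but arbitrary) neighborhood $U_0$ of $L_\infty$, using that $U_s \subseteq U_0$ once $s$ is large. Once this is arranged correctly, the heart of the argument is the asymptotic dominance of $p^{2s}$ over $s \cdot p^s$, which is what forces a drop in $\mu$ to be compensated by an explosion of $\lambda$.
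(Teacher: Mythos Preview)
Your proposal is correct and follows essentially the same approach as the paper: both arguments split on whether $\mu$ is constant on some sub-neighborhood, invoke Theorem~\ref{thm:local-maximality}(c) in the first case, and in the second case use the two-sided bound~\eqref{eq:main2} from the proof of Theorem~\ref{thm:local-maximality} together with the asymptotic dominance of $p^{2s}$ over $sp^s$ to force $\lambda$ to blow up. The only cosmetic difference is that you package the final step as a single contradiction yielding $\tilde\lambda \ge p^s(p-1)$, whereas the paper keeps the dichotomy ``either $\tilde\lambda \ge p^s(p-1)$ or $\tilde\lambda \ge \lfloor (p^{2s}-p^s)/s \rfloor - v_p(C)$'' and notes both bounds tend to infinity.
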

\begin{proof}
	Choose a neighborhood $U$ of $L_\infty$ as in Theorem~\ref{thm:local-maximality}. Suppose first that
	\[ \mu(X^{\bullet\bullet}(E/F_\infty)) = \mu(X^{\bullet\bullet}(E/L_{\infty})) \]
	for each ${F_\infty \in U}$. Then we know from Theorem~\ref{thm:local-maximality}(c) that
	\[ \lambda(X^{\bullet\bullet}(E/F_\infty)) = \lambda(X^{\bullet\bullet}(E/L_{\infty})) =:  \lambda \]
	for each ${F_\infty \in U}$.
	
	Now suppose that for \emph{any} arbitrarily small neighborhood $U$ of $L_\infty$, we have that
	\[ \mu(X^{\bullet\bullet}(E/F_{\infty})) < \mu(X^{\bullet\bullet}(E/L_{\infty}))\]
	for some ${F_\infty \in U}$, and fix such a $\Z_p$-extension $F_\infty$ of $K$. As in the proof of Theorem~\ref{thm:local-maximality}, we abbreviate $X^{\bullet\bullet}(E/L_\infty)$, $X^{\bullet\bullet}(E/F_\infty)$, $E_X^{\bullet\bullet}(E/L_\infty)$ and $E_X^{\bullet\bullet}(E/F_\infty)$ to $X$, $\tilde{X}$, $E_X$ and $E_{\tilde{X}}$, respectively.
	
	We choose $s$ as in the proof of Theorem~\ref{thm:local-maximality}, and we assume the neighborhood $U$ of $L_\infty$ to be chosen small enough such that
	\begin{eqnarray*} \rank_s(E_X) - v_p(C) & \le & \rank_s(E_{\tilde{X}}) \\
		& \le & \rank_s(E_X) + v_p(C) \\
		& = & \mu(X) \cdot (p^{2s} - p^s) + \lambda \cdot s + v_p(C). \end{eqnarray*}
	We have that either ${\lambda(\tilde{X}) \ge p^s(p-1)}$, or
	\begin{eqnarray*} \mu(X) \cdot (p^{2s} - p^s) + \lambda \cdot s - v_p(C) & \le & \mu(\tilde{X}) \cdot (p^{2s} - p^s) + \lambda(\tilde{X}) \cdot s \\
		& \le & \mu(X) \cdot (p^{2s} - p^s) + \lambda \cdot s + v_p(C). \end{eqnarray*}
	In the second case we may conclude that
	\[ \lambda(\tilde{X}) \ge \lambda + \left\lfloor \frac{p^{2s} - p^s}{s} \right\rfloor \cdot (\mu(X) - \mu(\tilde{X})) - v_p(C) \ge \left\lfloor \frac{p^{2s} - p^s}{s} \right\rfloor - v_p(C), \]
	where $\left\lfloor \frac{p^{2s} - p^s}{s} \right\rfloor$ means the largest integer which is less than or equal to ${\frac{p^{2s} - p^s}{s}}$.
	
	Now let ${t > s}$ be arbitrary. We can choose a (potentially smaller) neighborhood $U'$ of $L_\infty$ such that
	\[ \rg_t(E_X) - v_p(C) \le \rg_t(E_X^{\bullet\bullet}(E/F_\infty)) \le \rank_t(E_X) + v_p(C)\]
	for each ${F_\infty \in U'}$. By hypothesis, we can again choose $F_\infty$ such that ${\mu(X^{\bullet\bullet}(E/F_{\infty})) < \mu(X)}$. For such $F_\infty$ we have that either ${\lambda(\tilde{X}) \ge p^t(p-1)}$ or ${\lambda(\tilde{X}) \ge \left\lfloor \frac{p^{2t} - p^t}{t} \right\rfloor - v_p(C)}$. Letting $t$ tend to infinity, we may conclude that $\lambda(X^{\bullet\bullet}(E/F_\infty))$ is unbounded on a neighborhood of $L_\infty$.
\end{proof}

\section{Algebraic structure theorems and the $\mathfrak{M}_H(G)$-property} \label{section:Xf}
The aim of this section is to collect algebraic structure results on the Iwasawa module $X^{\bullet\star}(E/\KK_\infty)_f$ and on the $p$-torsion submodules of $X^{\bullet\star}(E/\KK_\infty)$ and $X^{\bullet\star}(E/\KK_\infty^{H_n})$, ${n \in \N}$, with an eye towards the $\mathfrak{M}_H(G)$-property.
	
First we are going to study the $\Lambda(H)$-structure of $X^{\bullet\star}(E/\KK_\infty)_f$.
To be more precise, fix ${\bullet, \star \in \{+,-\}}$ and ${H \in \mathcal{H}^{\bullet\star}}$. Assuming that $X^{\bullet\star}(E/\KK_{\infty})_f$ is finitely generated over $\Lambda(H)$, we will determine its structure as a $\Lambda(H)$-module.
\begin{theorem}\label{Kida_formula}
Let ${\bullet, \star \in \{+,-\}}$ and ${H \in \mathcal{H}^{\bullet\star}}$. Assume that $X^{\bullet\star}(E/\KK_{\infty})_f$ is finitely generated over $\Lambda(H)$. Then for any ${n \ge 0}$ we have $$\rank_{\Zp}(X^{\bullet\star}(E/\KK_{\infty}^{H_n})_f)=p^n\rank_{\Zp}(X^{\bullet\star}(E/\KK_{\infty}^H)_f). $$
\end{theorem}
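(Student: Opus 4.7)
First, I would invoke the control theorem of Proposition~\ref{Xf_ControlThm_prop}: the surjective homomorphisms
\[ \theta_n^{\bullet\star} \colon (X_f)_{H_n} \twoheadrightarrow X^{\bullet\star}(E/\KK_\infty^{H_n})_f \]
(where $X_f := X^{\bullet\star}(E/\KK_\infty)_f$) have finite kernels whose orders are uniformly bounded in $n$, so $\rank_{\Zp}(X^{\bullet\star}(E/\KK_\infty^{H_n})_f) = \rank_{\Zp}((X_f)_{H_n})$ for every $n \ge 0$. It therefore suffices to prove that $\rank_{\Zp}((X_f)_{H_n}) = p^n \cdot \rank_{\Zp}((X_f)_H)$.

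Second, I would show that $X_f$ is a torsion-free $\Lambda(H)$-module. The $\Lambda(H)$-torsion submodule $T_{\Lambda(H)}(X_f)$ is automatically a $\Lambda_2$-submodule, since $\Lambda_2$ is commutative. Because $X_f$ contains no $p$-power torsion (as $X^{\bullet\star}(E/\KK_\infty)$ is $\Lambda_2$-torsion and we have divided out its $p$-primary part), the same is true of $T_{\Lambda(H)}(X_f)$; combined with the fact that it is a finitely generated $\Lambda(H)$-torsion module, the Iwasawa structure theorem over $\Lambda(H)\cong\Zp[[T]]$ forces its $\mu_{\Lambda(H)}$-invariant to vanish, so $T_{\Lambda(H)}(X_f)$ is finitely generated over $\Zp$, and hence pseudo-null as a $\Lambda_2$-module. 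By Theorem~\ref{pseudo-null_theorem}, $X^{\bullet\star}(E/\KK_\infty)$ contains no non-trivial pseudo-null $\Lambda_2$-submodules, so (after identifying $X_f$ with the $\Lambda_2$-submodule $p^m\,X^{\bullet\star}(E/\KK_\infty)\subseteq X^{\bullet\star}(E/\KK_\infty)$ for any $m$ annihilating the $p$-torsion) we conclude $T_{\Lambda(H)}(X_f)=0$.

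Third, since $X_f$ is now a finitely generated torsion-free $\Lambda(H)\cong\Zp[[T]]$-module of rank $r:=\rank_{\Lambda(H)}(X_f)$, the structure theorem provides a pseudo-isomorphism $X_f\to\Lambda(H)^r$; its kernel is a finite $\Lambda(H)$-submodule of $X_f$, hence a pseudo-null $\Lambda_2$-submodule, hence zero by Theorem~\ref{pseudo-null_theorem}. We thus obtain a short exact sequence
\[ 0\longrightarrow X_f\longrightarrow\Lambda(H)^r\longrightarrow C\longrightarrow 0 \]
with $C$ a finite $\Lambda(H)$-module. Taking the long exact sequence of $H_n$-homology and noting that $(\Lambda(H)^r)^{H_n}=0$ because $\Lambda(H)$ is an integral domain and $(1+T)^{p^n}-1\neq 0$, we arrive at
\[ 0\longrightarrow C^{H_n}\longrightarrow(X_f)_{H_n}\longrightarrow(\Lambda(H)^r)_{H_n}\longrightarrow C_{H_n}\longrightarrow 0. \]
Since $C^{H_n}$ and $C_{H_n}$ are finite while $(\Lambda(H)^r)_{H_n}$ has $\Zp$-rank $rp^n$, it follows that $\rank_{\Zp}((X_f)_{H_n})=rp^n$; specialising to $n=0$ yields $\rank_{\Zp}((X_f)_H)=r$, and the claim is proved.

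The main obstacle will be the second step, where one must simultaneously exploit the hypothesis that $X_f$ is finitely generated over $\Lambda(H)$ (together with the absence of $p$-torsion) and the crucial absence of non-trivial pseudo-null $\Lambda_2$-submodules in $X^{\bullet\star}(E/\KK_\infty)$ furnished by Theorem~\ref{pseudo-null_theorem} in order to kill the $\Lambda(H)$-torsion part of $X_f$; everything else is formal.
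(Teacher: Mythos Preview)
Your proof is correct, but it takes a genuinely different route from the paper's. The paper defers to \cite[Theorem~8.1]{MHG}, feeding in Proposition~\ref{Xf_ControlThm_prop}, Proposition~\ref{Seln_surjective_prop}, and crucially Lemma~\ref{H1_Sel_vanishing_lemma}; the latter gives $H_1(H_n,X^{\bullet\star}(E/\KK_\infty))=0$ for all $n$, and since $X_f\cong p^m X^{\bullet\star}(E/\KK_\infty)$ with $H_1(H_n,-)$ left exact, one obtains $H_1(H_n,X_f)=0$, whence (e.g.\ via Howson's rank formula) $\rank_{\Zp}((X_f)_{H_n})=\rank_{\Lambda(H_n)}(X_f)=p^n\rank_{\Lambda(H)}(X_f)$. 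Your argument bypasses the $H_1$-vanishing entirely: you invoke Theorem~\ref{pseudo-null_theorem} to show $X_f$ is $\Lambda(H)$-torsion-free, and then produce the embedding $X_f\hookrightarrow\Lambda(H)^r$ with finite cokernel. This embedding is exactly the first assertion of Theorem~\ref{Xinf_structure_theorem}, which in the paper is \emph{deduced from} Theorem~\ref{Kida_formula} rather than used to prove it; so you have effectively reversed the logical dependence between Theorems~\ref{Kida_formula} and~\ref{Xinf_structure_theorem}. Both orders are valid, and your route is arguably more self-contained since it avoids the cohomological input of Lemma~\ref{H1_Sel_vanishing_lemma} in favour of the purely module-theoretic Theorem~\ref{pseudo-null_theorem}. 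One minor redundancy: in your third step the second appeal to Theorem~\ref{pseudo-null_theorem} is unnecessary, because once $X_f$ is $\Lambda(H)$-torsion-free (your step two) the kernel of any pseudo-isomorphism out of it, being finite and hence $\Lambda(H)$-torsion, already vanishes.
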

\begin{proof}
Taking Proposition~\ref{Xf_ControlThm_prop}, Proposition~\ref{Seln_surjective_prop} and Lemma~\ref{H1_Sel_vanishing_lemma} into account, this theorem can be proven as in \cite[Theorem~8.1]{MHG}.
\end{proof}

\begin{theorem}\label{Xinf_structure_theorem}
Let ${\bullet, \star \in \{+,-\}}$ and ${H \in \mathcal{H}^{\bullet\star}}$. Assume that $X^{\bullet\star}(E/\KK_{\infty})_f$ is finitely generated over $\Lambda(H)$. Then we have an injective $\Lambda(H)$-homomorphism
$$X^{\bullet\star}(E/\KK_{\infty})_f \hookrightarrow \Lambda(H)^{\lambda_H^{\bullet\star}}$$ with finite cokernel and a $\Lambda_2$-homomorphism
$$0 \longrightarrow X^{\bullet\star}(E/\KK_{\infty}) \longrightarrow \bigoplus_{i=1}^{s} \Lambda_2/{(f_i^{\bullet\star})}^{n_i} \oplus \bigoplus_{j=1}^{t} \Lambda_2/p^{m_j^{\bullet\star}} \longrightarrow B^{\bullet\star} \longrightarrow 0$$
where ${s \leq \lambda_H^{\bullet\star}}$, $B^{\bullet\star}$ is a pseudo-null $\Lambda_2$-module, ${f_i^{\bullet\star} \in \Lambda_2 \setminus \Lambda(H)}$ are irreducible power series and ${\mu_G(X^{\bullet\star}(E/\KK_{\infty}))=\sum_{j=1}^t m_j^{\bullet\star}}$.
\end{theorem}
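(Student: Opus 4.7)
The plan is to prove the theorem in two steps mirroring its two assertions. The key intermediate step I would establish first is that $X^{\bullet\star}(E/\KK_{\infty})_f$ is not merely finitely generated but actually \emph{torsion-free} over the two-dimensional regular local ring $\Lambda(H) \cong \Z_p[[T_H]]$. Granted this, the first embedding follows from the structure theorem for finitely generated torsion-free modules over $\Lambda(H)$, combined with Proposition~\ref{Xf_ControlThm_prop} and Theorem~\ref{Kida_formula} to identify the rank. For the second assertion, I would apply the structure theorem for finitely generated $\Lambda_2$-torsion modules directly to $X^{\bullet\star}(E/\KK_{\infty})$ (which is torsion by Lemma~\ref{Xinf_torsion_lemma}), using Theorem~\ref{pseudo-null_theorem} to promote the pseudo-isomorphism to a genuine injection, and then verify the constraints on the irreducible factors $f_i$ and the invariants.

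Establishing $\Lambda(H)$-torsion-freeness will be the main obstacle. I would proceed as follows. Let $T := T_{\Lambda(H)}(X^{\bullet\star}(E/\KK_{\infty})_f)$. Since $X^{\bullet\star}(E/\KK_{\infty})_f$ has no $p$-torsion by construction, $T$ is annihilated by some $g \in \Lambda(H)\setminus\{0\}$ coprime to $p$; Weierstrass preparation shows $\Lambda(H)/g$ is finitely generated over $\Z_p$, and hence so is $T$. Then $\Lambda_2/\textup{Ann}_{\Lambda_2}(T)$ embeds into the finitely generated $\Z_p$-algebra $\textup{End}_{\Z_p}(T)$, which has Krull dimension at most one, forcing $\textup{Ann}_{\Lambda_2}(T)$ to have height at least two, so that $T$ is pseudo-null as a $\Lambda_2$-module. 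Next I would argue that $X^{\bullet\star}(E/\KK_{\infty})_f$ itself carries no nonzero pseudo-null $\Lambda_2$-submodule: by Theorem~\ref{pseudo-null_theorem}, the structure-theoretic embedding of $M := X^{\bullet\star}(E/\KK_{\infty})$ into an elementary module restricts, on passing to the $p$-torsion-free quotient, to an embedding $M_f \hookrightarrow \bigoplus_i \Lambda_2/f_i^{n_i}$, and the target has no nonzero pseudo-null submodule since each of its associated primes has height one. Hence $T = 0$.

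Part~(1) is then routine: the structure theorem over the two-dimensional regular local ring $\Lambda(H)$ furnishes a pseudo-isomorphism $X^{\bullet\star}(E/\KK_{\infty})_f \longrightarrow \Lambda(H)^r$; injectivity follows because any finite $\Lambda(H)$-submodule would be pseudo-null over $\Lambda_2$ and hence zero by the preceding paragraph, and the cokernel is automatically finite since $\Lambda(H)$ has Krull dimension two. Taking $H$-coinvariants and invoking Proposition~\ref{Xf_ControlThm_prop}, one computes $r = \rank_{\Z_p}((M_f)_H) = \rank_{\Z_p}(X^{\bullet\star}(E/\KK_{\infty}^H)_f) = \lambda_H^{\bullet\star}$, consistently with Theorem~\ref{Kida_formula}.

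For part~(2), the structure theorem over $\Lambda_2$ combined with Theorem~\ref{pseudo-null_theorem} yields the displayed short exact sequence, and the identity $\mu_G(X^{\bullet\star}(E/\KK_{\infty}))=\sum_j m_j^{\bullet\star}$ is immediate from the definition of the $\mu$-invariant. The delicate claim is $f_i^{\bullet\star} \in \Lambda_2 \setminus \Lambda(H)$: assuming some irreducible $f_{i_0} \in \Lambda(H)$, I would project the embedding $M_f \hookrightarrow \bigoplus_i \Lambda_2/f_i^{n_i}$ onto the summand $\Lambda_2/f_{i_0}^{n_{i_0}}$; writing $\Lambda_2 = \Lambda(H)[[U]]$ for a complementary variable $U$, the image $N$ is a $U$-stable, $\Lambda(H)$-finitely-generated submodule of $A[[U]]$ with $A = \Lambda(H)/f_{i_0}^{n_{i_0}}$ a finite $\Z_p$-algebra, and the cokernel is pseudo-null. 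Reducing modulo $f_{i_0}$, the image lies in the domain $\bar A[[U]]$ with $\bar A = \Lambda(H)/f_{i_0}$; since any nonzero element of $\bar A[[U]]$ is regular in $U$ of some finite degree, no nonzero $U$-stable $\Z_p$-finitely-generated submodule can exist there, and a Nakayama argument (using $f_{i_0} \in (p,T_H)$, the Jacobson radical of $\Lambda(H)$) then gives $N = 0$, contradicting pseudo-nullity of $\Lambda_2/f_{i_0}^{n_{i_0}}$, whose support has Krull dimension two. Finally, the bound $s \le \lambda_H^{\bullet\star}$ follows by a rank comparison: every pseudo-null $\Lambda_2$-module is $\Lambda(H)$-torsion (for each element $b$, the inclusion $\Lambda(H)/(\textup{Ann}_{\Lambda_2}(b) \cap \Lambda(H)) \hookrightarrow \Lambda_2/\textup{Ann}_{\Lambda_2}(b)$ of a two-dimensional ring into a ring of Krull dimension at most one forces the intersection to be nonzero), so $\lambda_H^{\bullet\star}$ equals the $\Lambda(H)$-rank of $\bigoplus_i \Lambda_2/f_i^{n_i}$; Weierstrass division makes each $\Lambda_2/f_i^{n_i}$ into a free $\Lambda(H)$-module of rank $n_i \cdot d_i$, where $d_i \ge 1$ is the degree of the distinguished polynomial associated to $f_i$, yielding $s \le \sum_i n_i d_i = \lambda_H^{\bullet\star}$.
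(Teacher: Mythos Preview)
Your overall strategy is sound and aligns with the paper's approach (which defers to \cite[Theorem~8.2]{MHG}, supplying the ingredients Theorem~\ref{pseudo-null_theorem}, Theorem~\ref{Kida_formula}, and Proposition~\ref{Xf_ControlThm_prop}). The torsion-freeness argument for $X^{\bullet\star}(E/\KK_\infty)_f$ over $\Lambda(H)$, the determination of the $\Lambda(H)$-rank via the control theorem, and the use of Theorem~\ref{pseudo-null_theorem} to promote the structure-theoretic pseudo-isomorphism to an injection are all correct.

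There are, however, two genuine gaps in your final paragraph. First, Weierstrass preparation in the variable $U$ (complementary to $\Lambda(H)$) requires not merely $f_i \notin \Lambda(H)$ but the stronger condition $f_i \notin \mathfrak{m}_{\Lambda(H)}\cdot\Lambda_2$, i.e.\ that the image of $f_i$ in $\Lambda_2/(p,\Upsilon) \cong \F_p[[U]]$ is nonzero. Your argument via the projection to $A[[U]]$ only yields the weaker statement. The stronger statement does hold here: it is exactly condition~(g) of Theorem~\ref{main_theorem} (equivalently, $\bar\Upsilon \nmid \bar g_\infty^{\bullet\star}$ in $\Lambda_2/p$), which the paper establishes independently of Theorem~\ref{Xinf_structure_theorem} via Proposition~\ref{prop:new}. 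Once you know each $\bar f_i$ is coprime to $\bar\Upsilon$, Weierstrass applies and $\Lambda_2/f_i^{n_i}$ is $\Lambda(H)$-free of rank $n_i d_i$ as you claim.

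Second, your justification that every pseudo-null $\Lambda_2$-module is $\Lambda(H)$-torsion is incomplete: a two-dimensional local domain \emph{can} embed in a one-dimensional local ring in general (e.g.\ $\Z_p[[T]] \hookrightarrow \Q_p[[T]]$), so the bare dimension comparison does not force the annihilator intersection to be nonzero. What saves you is that in your situation the cokernel $C$ of $M_f \hookrightarrow E$ is a quotient of the $\Lambda(H)$-free module $E$ (once the first gap is closed), hence finitely generated over $\Lambda(H)$. Then for $c \in C$ the cyclic $\Lambda_2$-module $\Lambda_2 c$ is also $\Lambda(H)$-finite, so some monic polynomial $U^n - \sum_{k<n} a_k U^k$ with $a_k \in \Lambda(H)$ lies in $\textup{Ann}_{\Lambda_2}(c)$; thus $\Lambda_2/\textup{Ann}_{\Lambda_2}(c)$ is module-finite over $\Lambda(H)$, and now going-up gives $\dim(\Lambda(H)/(\textup{Ann}_{\Lambda_2}(c)\cap\Lambda(H))) \le 1$, forcing the intersection to be nonzero. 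With these two fixes your rank comparison $s \le \sum_i n_i d_i = \lambda_H^{\bullet\star}$ goes through.
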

\begin{proof}
Taking into account Theorem~\ref{pseudo-null_theorem}, Theorem~\ref{Kida_formula} and Proposition~\ref{Xf_ControlThm_prop}, this theorem can be proven as in \cite[Theorem~8.2]{MHG}.
\end{proof}

Now we are going to relate the $p$-torsion submodules $X^{\bullet\star}(E/\KK_\infty)[p^\infty]$ and $X^{\bullet\star}(E/\KK_\infty^{H_n})[p^\infty]$, ${n \in \N}$. We show that the $\mathfrak{M}_H(G)$-property implies that the $\Lambda_2$-module structure of $X^{\bullet\star}(E/\KK_{\infty})[p^{\infty}]$ is ``similar" to the $\Lambda_{H,n}$-module structure of $X^{\bullet\star}(E/\KK_{\infty}^{H_n})[p^{\infty}]$. First we need the following two lemmas.

\begin{lemma}\label{finiteness_lemma}
Let ${\Lambda:=\Zp[[T]]}$, $M$ a finitely generated $p$-primary $\Lambda$-module and let ${\mathfrak{p}=p\Lambda}$. If $M_{\mathfrak{p}}$ is finite, then ${M_{\mathfrak{p}}=0}$.
\end{lemma}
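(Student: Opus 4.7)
The plan is to exploit the fact that the localization $\Lambda_{\mathfrak{p}}$ at the height-one prime $\mathfrak{p} = p\Lambda$ is a discrete valuation ring whose residue field is infinite. Once this is observed, Nakayama's lemma will force $M_{\mathfrak{p}}$ to be zero as soon as we know it is finite.

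More precisely, I would begin by noting that $\Lambda = \Z_p[[T]]$ is a two-dimensional regular local ring, that $\mathfrak{p} = p\Lambda$ is a height-one prime, and that the localization $\Lambda_{\mathfrak{p}}$ is therefore a DVR with uniformiser $p$ and residue field
\[
\Lambda_{\mathfrak{p}}/p\Lambda_{\mathfrak{p}} \;\cong\; \textup{Frac}(\Lambda/p\Lambda) \;\cong\; \textup{Frac}(\F_p[[T]]) \;\cong\; \F_p((T)),
\]
which is an infinite field. Since $M$ is finitely generated over the Noetherian ring $\Lambda$, the localization $M_{\mathfrak{p}}$ is a finitely generated module over $\Lambda_{\mathfrak{p}}$, and since $M$ is $p$-primary, some power of $p$ annihilates $M_{\mathfrak{p}}$ as well.

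Now suppose for contradiction that $M_{\mathfrak{p}}$ is finite but non-zero. Because $\Lambda_{\mathfrak{p}}$ is local with maximal ideal $p\Lambda_{\mathfrak{p}}$ and $M_{\mathfrak{p}}$ is finitely generated, Nakayama's lemma forces $M_{\mathfrak{p}}/pM_{\mathfrak{p}} \neq 0$. But $M_{\mathfrak{p}}/pM_{\mathfrak{p}}$ is a non-zero vector space over the infinite residue field $\F_p((T))$, hence is itself infinite; this contradicts the assumption that $M_{\mathfrak{p}}$ is finite. Therefore $M_{\mathfrak{p}} = 0$, as claimed.

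There is essentially no obstacle here: the only slightly subtle step is the identification of the residue field of $\Lambda_{\mathfrak{p}}$ as $\F_p((T))$, after which Nakayama closes out the argument immediately.
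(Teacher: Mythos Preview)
Your proof is correct and takes a genuinely different route from the paper. The paper argues indirectly via the structure theorem: if $M$ itself were infinite, it would contain a submodule of the form $\bigoplus_i \Lambda/p^{m_i}$ with finite cokernel, and each $\Lambda/p^{m_i}$ has infinite localisation at $\mathfrak{p}$, contradicting finiteness of $M_{\mathfrak{p}}$; hence $M$ is finite, so pseudo-null, so $M_{\mathfrak{p}}=0$ since $\mathfrak{p}$ has height one. Your argument bypasses the structure theorem entirely by working directly over the DVR $\Lambda_{\mathfrak{p}}$ and invoking Nakayama together with the infinitude of its residue field $\F_p((T))$. Your approach is shorter and more elementary, and in fact does not use the $p$-primary hypothesis at all (the sentence about $p^n$ annihilating $M_{\mathfrak{p}}$ is never invoked). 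The paper's argument, on the other hand, yields the slightly stronger intermediate conclusion that $M$ itself is finite, though this is not used elsewhere.
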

\begin{proof}
Let ${S=\Lambda \setminus \mathfrak{p}}$ and assume that $S^{-1}M$ is finite. If $M$ is infinite, taking \cite[Lemma~2.1]{MHG} into account we see that there exist integers ${m_i > 0}$ and an injection ${\bigoplus_{i=1}^s \Lambda/p^{m_i} \hookrightarrow M}$ with finite cokernel.  It is easy to see that $S^{-1}(\Lambda/p^{m_i})$ is infinite. This contradicts the assumption that $S^{-1}M$ is finite. Therefore we see that $M$ is finite and so is pseudo-null. Since ${\text{ht}(\mathfrak{p})=1}$, we get ${M_{\mathfrak{p}}=0}$.
\end{proof}

\begin{proposition}\label{p-primary_structure_prop}
Let ${\bullet, \star \in \{+,-\}}$ and ${H \in \mathcal{H}^{\bullet\star}}$. Assume that $X^{\bullet\star}(E/\KK_{\infty})_f$ is finitely generated over $\Lambda(H)$. Then for all $n$, $X^{\bullet\star}(E/\KK_{\infty}^{H_n})$ is a torsion $\Lambda_{H,n}$-module. Furthermore, if $$ X^{\bullet\star}(E/\KK_{\infty})[p^{\infty}] \sim \bigoplus_{i=1}^s \Lambda_2/p^{m_i^{\bullet\star}},$$
then for all $n$ we have pseudo-isomorphisms
$$X^{\bullet\star}(E/\KK_{\infty}^{H_n})[p^{\infty}] \sim \bigoplus_{i=1}^s  \left(\Lambda_{H,n}/p^{m_i^{\bullet\star}}\right)^{p^n}.$$
\end{proposition}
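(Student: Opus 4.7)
The first assertion is immediate: together with Proposition~\ref{Seln_surjective_prop}, the hypothesis that $X^{\bullet\star}(E/\KK_\infty)_f$ is finitely generated over $\Lambda(H)$ forces $X^{\bullet\star}(E/\KK_\infty^{H_n})$ to be $\Lambda_{H,n}$-torsion for every ${n \geq 0}$. For the structural claim, my plan begins by invoking Theorem~\ref{pseudo-null_theorem}: since $X^{\bullet\star}(E/\KK_\infty)$ has no nontrivial pseudo-null $\Lambda_2$-submodule, neither does its $p$-primary part, and so the assumed pseudo-isomorphism can be promoted to a short exact sequence of $\Lambda_2$-modules
\begin{align*}
    0 \longrightarrow X^{\bullet\star}(E/\KK_\infty)[p^\infty] \longrightarrow \bigoplus_{i=1}^s \Lambda_2/p^{m_i^{\bullet\star}} \longrightarrow B \longrightarrow 0
\end{align*}
in which $B$ is a $p$-primary pseudo-null $\Lambda_2$-module.

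The next step is to take $H_n$-coinvariants. Since ${\Upsilon_n = (1+T_1)^{p^n a}(1+T_2)^{p^n b} - 1}$ is coprime to $p$ in $\Lambda_2$, multiplication by $\Upsilon_n$ is injective on each summand $\Lambda_2/p^{m_i^{\bullet\star}}$; hence ${H_1(H_n, \bigoplus_i \Lambda_2/p^{m_i^{\bullet\star}}) = 0}$. Using ${cd_p(H_n) = 1}$ together with the identification $(\Lambda_2/p^{m_i^{\bullet\star}})_{H_n} \cong (\Lambda_{H,n}/p^{m_i^{\bullet\star}})^{p^n}$, which comes from the fact that ${\Lambda_2/\Upsilon_n}$ is a free $\Lambda_{H,n}$-module of rank $p^n$, the long exact homology sequence becomes
\begin{align*}
    0 \longrightarrow B[\Upsilon_n] \longrightarrow X^{\bullet\star}(E/\KK_\infty)[p^\infty]_{H_n} \longrightarrow \bigoplus_{i=1}^s (\Lambda_{H,n}/p^{m_i^{\bullet\star}})^{p^n} \longrightarrow B/\Upsilon_n \longrightarrow 0.
\end{align*}
Corollary~\ref{X_ptorsion_controlThmCor} now supplies the injection ${\phi_n^{\bullet\star}\colon X^{\bullet\star}(E/\KK_\infty)[p^\infty]_{H_n} \hookrightarrow X^{\bullet\star}(E/\KK_\infty^{H_n})[p^\infty]}$ with finite cokernel whose order is bounded independently of $n$, so the two $\Lambda_{H,n}$-modules $X^{\bullet\star}(E/\KK_\infty^{H_n})[p^\infty]$ and $X^{\bullet\star}(E/\KK_\infty)[p^\infty]_{H_n}$ are pseudo-isomorphic.

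The hardest part will be that $B[\Upsilon_n]$ and $B/\Upsilon_n$, although pseudo-null as $\Lambda_2$-modules, need not be finite as $\Lambda_{H,n}$-modules and hence cannot be absorbed directly into a $\Lambda_{H,n}$-pseudo-isomorphism. The resolution is Lemma~\ref{mu_pseudo-null_lemma}, which provides the crucial identity ${\mu_{G_n/H_n}(B[\Upsilon_n]) = \mu_{G_n/H_n}(B/\Upsilon_n)}$, so that these two error contributions cancel in the alternating sum of $\mu$-invariants, yielding the total identity ${\mu_{G_n/H_n}(X^{\bullet\star}(E/\KK_\infty^{H_n})[p^\infty]) = p^n\sum_{i=1}^s m_i^{\bullet\star}}$. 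To upgrade this to a matching of the individual elementary divisors, I would repeat the whole analysis at every $p^k$-torsion level: intersecting the first sequence with the $p^k$-torsion submodules produces a short exact sequence
\begin{align*}
    0 \longrightarrow X^{\bullet\star}(E/\KK_\infty)[p^k] \longrightarrow \bigoplus_{i=1}^s \Lambda_2/p^{\min(k, m_i^{\bullet\star})} \longrightarrow B_k \longrightarrow 0,
\end{align*}
with $B_k \subseteq B[p^k]$ again pseudo-null as a $\Lambda_2$-module. Applying the same coinvariants plus control-theorem analysis and invoking Lemma~\ref{mu_pseudo-null_lemma} for $B_k$ gives ${\mu_{G_n/H_n}(X^{\bullet\star}(E/\KK_\infty^{H_n})[p^k]) = p^n \sum_{i=1}^s \min(k, m_i^{\bullet\star})}$ for every ${k \geq 1}$. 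Since this family of invariants uniquely determines the elementary divisors of a $p$-primary $\Lambda_{H,n}$-torsion module, and since by Theorem~\ref{pseudo-null_theorem} the module $X^{\bullet\star}(E/\KK_\infty^{H_n})[p^\infty]$ has no nontrivial finite $\Lambda_{H,n}$-submodule, this yields the asserted pseudo-isomorphism.
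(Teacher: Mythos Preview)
Your overall strategy is different from the paper's and starts off well, but the final ``$p^k$-level refinement'' has a genuine gap.

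The computation of $\mu_{G_n/H_n}\big((X^{\bullet\star}(E/\KK_\infty)[p^k])_{H_n}\big) = p^n\sum_i \min(k,m_i^{\bullet\star})$ via Lemma~\ref{mu_pseudo-null_lemma} is fine. The problem is in passing from this to $\mu_{G_n/H_n}\big(X^{\bullet\star}(E/\KK_\infty^{H_n})[p^k]\big)$. Corollary~\ref{X_ptorsion_controlThmCor} compares $(X^{\bullet\star}(E/\KK_\infty)[p^\infty])_{H_n}$ with $X^{\bullet\star}(E/\KK_\infty^{H_n})[p^\infty]$, and from that one does get $\big((X^{\bullet\star}(E/\KK_\infty)[p^\infty])_{H_n}\big)[p^k] \sim X^{\bullet\star}(E/\KK_\infty^{H_n})[p^k]$. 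But what you have computed is the $\mu$-invariant of $(X^{\bullet\star}(E/\KK_\infty)[p^k])_{H_n}$, and the natural map
\[
(X^{\bullet\star}(E/\KK_\infty)[p^k])_{H_n}\;\hookrightarrow\;\big((X^{\bullet\star}(E/\KK_\infty)[p^\infty])_{H_n}\big)[p^k]
\]
can be a strict inclusion: its cokernel is the image of $H_1\big(H_n, X^{\bullet\star}(E/\KK_\infty)[p^\infty]/p^k\big)$, which has no obvious reason to have vanishing $\mu_{G_n/H_n}$. Consequently your argument only yields the inequality $p^n\sum_i \min(k,m_i^{\bullet\star}) \le \sum_j \min(k,n_j)$ (with equality for $k$ large), and this does not pin down the elementary divisors $\{n_j\}$.

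The paper sidesteps this difficulty by a localisation argument rather than a $\mu$-invariant count: one localises $\Lambda_{H,n}$ at the height-one prime $(p)$. At that localisation the pseudo-null error terms from $B$ (or $C$ in the paper's notation) become finite and hence vanish by Lemma~\ref{finiteness_lemma}, so one obtains an honest isomorphism $\tilde T^{-1}\big(\bigoplus_i (\Lambda_{H,n}/p^{m_i^{\bullet\star}})^{p^n}\big) \cong \tilde T^{-1}\big((X^{\bullet\star}(E/\KK_\infty)[p^\infty])_{H_n}\big)$; since $\Lambda_{H,n,(p)}$ is a DVR, this already encodes all the elementary divisors. One then lifts this localised isomorphism to a global pseudo-isomorphism via a standard $\Hom$-localisation argument (as in Bourbaki). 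This is more direct: it recovers the full elementary-divisor structure in one stroke, precisely because localisation at $(p)$ kills the pseudo-null contributions exactly rather than merely balancing their $\mu$-invariants.
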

\begin{proof}
Assume that $X^{\bullet\star}(E/\KK_{\infty})_f$ is finitely generated over $\Lambda(H)$. Then by Proposition~\ref{Seln_surjective_prop} we have for any ${n \geq 0}$ that $X^{\bullet\star}(E/\KK_{\infty}^{H_n})$ is a torsion $\Lambda_{H,n}$-module. Now let ${n \geq 0}$ be fixed. In view of Corollary~\ref{X_ptorsion_controlThmCor} it will suffice to prove the statement of the proposition with $X^{\bullet\star}(E/\KK_{\infty}^{H_n})[p^{\infty}]$ replaced by $X^{\bullet\star}(E/\KK_{\infty})[p^{\infty}]_{H_n}$. We now proceed with proving this.

For simplicity, we denote $X^{\bullet\star}(E/\KK_{\infty})[p^{\infty}]$ by $X$. Recall that ${\Lambda_2 \cong \Zp[[T_1,T_2]]}$ and ${G_n = G_{H,n}}$ (see Section~\ref{section:mainintroduction}). Let ${\Upsilon=(1+T_1)^{p^na}(1+T_2)^{p^nb}-1}$. Since ${[G:G_n]=p^n}$, the Iwasawa algebra ${\Lambda_2=\Lambda(G)}$ is a free $\Lambda(G_n)$-module of rank $p^n$. We now replace $G$ by $G_n$. Then ${\Lambda_{H,n} = \Z_p[[G_n/H_n]]}$ is isomorphic to $\Lambda_2/\Upsilon$. For simplicity, we denote $\Lambda_{H,n}$ by $\Lambda$. We have a projection map ${\pi: \Lambda_2 \twoheadrightarrow \Lambda}$. Now let ${\mathfrak{p}=p\Lambda}$, ${\mathfrak{P}=\pi^{-1}(\mathfrak{p})}$, ${\tilde{T}=\Lambda \setminus \mathfrak{p}}$ and ${S=\Lambda_2 \setminus \mathfrak{P}}$. Note that ${\mathfrak{P}=\langle p, \Upsilon \rangle}$ is a prime ideal of $\Lambda_2$ of height two. Now $S^{-1}\Lambda_2$ is a Noetherian, integrally closed domain and as $\mathfrak{P}$ has height two therefore $S^{-1}\Lambda_2$ has dimension two. Let ${E=\bigoplus_{i=1}^s \left(\Lambda_2/p^{m_i}\right)^{p^n}}$ be the direct product of $p^n$ copies of the group in the statement of the proposition with the `$\bullet\star$' sign removed for simplicity. Recall that $\Lambda(G)$ is a free $\Lambda(G_n)$-module of rank $p^n$ and that we have replaced $G$ by $G_n$. Therefore taking \cite[Lemma~2.1]{MHG} into account, we have an exact sequence
$$0 \longrightarrow E \longrightarrow X \longrightarrow C \longrightarrow 0$$
where $C$ is a pseudo-null $\Lambda_2$-module. This exact sequence induces another exact sequence
$$0 \longrightarrow S^{-1}E \longrightarrow S^{-1}X \longrightarrow S^{-1}C \longrightarrow 0.$$

By the definition of a pseudo-null $\Lambda_2$-module, and noting that the $\textup{Ext}$ functor commutes with localisation (see \cite[Proposition~3.3.10]{Weibel}), $S^{-1}C$ is a pseudo-null $S^{-1}\Lambda_2$-module. Since $S^{-1}\Lambda_2$ has Krull dimension two, this means that $S^{-1}C$ is finite. The above exact sequence induces another exact sequence
\begin{equation}\label{seq1}
(S^{-1}C)^{\Upsilon=0} \longrightarrow (S^{-1}E)/\Upsilon  \longrightarrow (S^{-1}X)/\Upsilon \longrightarrow (S^{-1}C)/\Upsilon \longrightarrow 0.
\end{equation}

We have the following isomorphisms
\begin{equation}\label{isom1}
(S^{-1}C)^{\Upsilon=0} \cong S^{-1}(C^{\Upsilon=0}) \cong \tilde{T}^{-1}(C^{\Upsilon=0}),
\end{equation}

\begin{align*}
(S^{-1}E)/\Upsilon \cong S^{-1}\Big(\bigoplus_{i=1}^s (\Lambda_2/p^{m_i})/\Upsilon \Big)^{p^n} &\cong  S^{-1} \Big(\bigoplus_{i=1}^s (\Lambda_2/\Upsilon)/p^{m_i} \Big)^{p^n}\\
&\cong S^{-1}\Big(\bigoplus_{i=1}^s \Lambda/p^{m_i}\Big)^{p^n}\\
&\cong  \tilde{T}^{-1}\Big(\bigoplus_{i=1}^s \Lambda/p^{m_i}\Big)^{p^n}, \stepcounter{equation} \tag{\theequation} \label{isom2}
\end{align*}

\begin{equation}\label{isom3}
(S^{-1}X)/\Upsilon \cong S^{-1}(X/\Upsilon) \cong \tilde{T}^{-1}(X/\Upsilon),
\end{equation}

\begin{equation}\label{isom4}
(S^{-1}C)/\Upsilon \cong S^{-1}(C/\Upsilon) \cong \tilde{T}^{-1}(C/\Upsilon).
\end{equation}

Let ${E'=\bigoplus_{i=1}^s\left(\Lambda/p^{m_i}\right)^{p^n}}$. From the exact sequence~\eqref{seq1} and the isomorphisms~\eqref{isom1} -- \eqref{isom4} we get an exact sequence

\begin{equation}\label{seq2}
\tilde{T}^{-1}(C^{\Upsilon=0}) \longrightarrow \tilde{T}^{-1}E'  \longrightarrow \tilde{T}^{-1}(X/\Upsilon) \longrightarrow \tilde{T}^{-1}(C/\Upsilon) \longrightarrow 0.
\end{equation}

We showed above that $S^{-1}C$ is finite. Therefore from the isomorphisms~\eqref{isom1} and \eqref{isom4} both $\tilde{T}^{-1}(C^{\Upsilon=0})$ and $\tilde{T}^{-1}(C/\Upsilon)$ are finite. Then Lemma~\ref{finiteness_lemma} implies that these groups are zero. It follows that we have an isomorphism ${f_0:\tilde{T}^{-1}E' \isomarrow \tilde{T}^{-1}(X/\Upsilon)}$. We now use the idea in the proof of \cite[Chapter~VII, \S~4.4 Theorem~5]{Bourbaki} to show that $f_0$ can be lifted to a pseudo-isomorphism ${f: E' \to X/\Upsilon}$. This will conclude the proof of the proposition.

By \cite[Proposition~2.10]{Eisenbud} we have a natural isomorphism

$$\Hom_{\tilde{T}^{-1}\Lambda}(\tilde{T}^{-1}E', \tilde{T}^{-1}(X/\Upsilon)) \cong \tilde{T}^{-1}\Hom_{\Lambda}(E', X/\Upsilon). $$

The isomorphism $\psi$ from the right side to the left side is defined as follows: let ${h=g/t \in \tilde{T}^{-1}\Hom_{\Lambda}(E', X/\Upsilon)}$ where ${g \in \Hom_{\Lambda}(E', X/\Upsilon)}$ and ${t \in \tilde{T}}$. Then ${(\psi(g/t))(e/t')=g(e)/(tt')}$ (here $e \in E'$ and ${t' \in \tilde{T}}$).

Therefore there exists a map ${f: E' \to X/\Upsilon}$ and ${t \in \tilde{T}}$ such that ${f=tf_0}$ (this is because the map $\psi$ above is surjective so there exists ${f \in \Hom_{\Lambda}(E', X/\Upsilon)}$ and ${t \in \tilde{T}}$ such that ${\psi(f/t)=f_0}$. This implies that ${f=tf_0}$). We claim that $f$ is a pseudo-isomorphism. Let $\mathfrak{p}'$ be a prime ideal of $\Lambda$ with ${\text{ht}(\mathfrak{p}') \leq 1}$. We need to show that the map ${f_{\mathfrak{p'}}: E'_{\mathfrak{p}'} \to (X/\Upsilon)_{\mathfrak{p'}}}$ is an isomorphism. If ${\mathfrak{p'} \neq \mathfrak{p}}$, then the complement of $\mathfrak{p'}$ in $\Lambda$ contains an element $\alpha$ that annihilates $E'$ and $X/\Upsilon$. Therefore in this case ${E'_{\mathfrak{p'}}=0}$ and ${(X/\Upsilon)_{\mathfrak{p'}}=0}$ whence $f_{\mathfrak{p'}}$ is an isomorphism. When ${\mathfrak{p'}=\mathfrak{p}}$, $t$ is a unit in ${\Lambda_{\mathfrak{p'}}=\tilde{T}^{-1}\Lambda}$. Recall that $f_0$ is an isomorphism. Therefore in this case $f_{\mathfrak{p'}}$ is an isomorphism. It follows from this that $f$ is a pseudo-isomorphism. This implies the desired result.
\end{proof}

\section{$\mu$-invariants and the $\mathfrak{M}_H(G)$-property}\label{section:mu-invariants}
In the current section we start with a theorem which basically proves the equivalence of conditions ${(a)-(c)}$ in Theorem~\ref{main_theorem}. Recall the notation ${G_n = G_{H,n}}$ from Section~\ref{section:mainintroduction}.
\begin{theorem}\label{mu_invariants_theorem}
Let ${\bullet, \star \in \{+,-\}}$ and ${H \in \mathcal{H}^{\bullet\star}}$. We have $$\mu_G(X^{\bullet\star}(E/\KK_{\infty}))=\mu_{G/H}(X^{\bullet\star}(E/\KK_{\infty}^{H}))-\mu_{G/H}(H_0(H, X^{\bullet\star}(E/\KK_{\infty})_f)).$$
	Moreover, the following three statements are equivalent: \begin{compactenum}[(a)]
		\item $X^{\bullet\star}(E/\KK_{\infty})_f$ is finitely generated over $\Lambda(H)$,
		\item for all ${n \geq 0}$, $X^{\bullet\star}(E/\KK_{\infty}^{H_n})$ is a torsion $\Lambda_{H,n}$-module, and we have $$\mu_{G_n}(X^{\bullet\star}(E/\KK_{\infty}))=\mu_{G_n/H_n}(X^{\bullet\star}(E/\KK_{\infty}^{H_n})), $$
		\item ${\mu_G(X^{\bullet\star}(E/\KK_{\infty}))=\mu_{G/H}(X^{\bullet\star}(E/\KK_{\infty}^{H}))}$.
	\end{compactenum}
\end{theorem}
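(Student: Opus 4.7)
The strategy is to establish the $\mu$-invariant formula by combining Howson's multiplicative Euler characteristic formula \cite[Corollary~1.7]{Howson} with the Hochschild-Serre spectral sequence and the cohomological vanishing coming from Lemma~\ref{H1_Sel_vanishing_lemma}, and then to extract the three-way equivalence by a compact Nakayama argument. Write $M = X^{\bullet\star}(E/\KK_\infty)$ for brevity. Since $M_f$ is $p$-torsion-free, $\mu_G(M) = \mu_G(M[p^\infty])$, and Howson's formula gives $p^{\mu_G(M)} = \chi(G, M[p^\infty])$. Because $cd_p(H) = cd_p(G/H) = 1$, Hochschild-Serre collapses to
\[ \chi(G, M[p^\infty]) = \chi(G/H, H_0(H, M[p^\infty]))/\chi(G/H, H_1(H, M[p^\infty])). \]
Applying $H_*(H,-)$ to the short exact sequence $0 \to M[p^\infty] \to M \to M_f \to 0$ and invoking $H_1(H, M) = 0$ from Lemma~\ref{H1_Sel_vanishing_lemma} yields both $H_1(H, M[p^\infty]) = 0$ and the four-term sequence
\[ 0 \to H_1(H, M_f) \to H_0(H, M[p^\infty]) \to H_0(H, M) \to H_0(H, M_f) \to 0. \]
The control theorem~\ref{SelmerControlThm_prop} identifies $H_0(H, M) \cong X^{\bullet\star}(E/\KK_\infty^H)$, and the submodule $H_1(H, M_f) \subseteq M_f$ inherits the absence of $p$-torsion, hence has vanishing $\mu$-invariant. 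Taking $\mu$-invariants along the four-term sequence and substituting back then produces the claimed formula.

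For the equivalences, $(b) \Rightarrow (c)$ is just the case $n=0$. For $(a) \Rightarrow (b)$, under hypothesis $(a)$ Proposition~\ref{Seln_surjective_prop} gives the $\Lambda_{H,n}$-torsion of $X^{\bullet\star}(E/\KK_\infty^{H_n})$ and Lemma~\ref{H1_Sel_vanishing_lemma} gives $H_1(H_n, M) = 0$, so the derivation above runs verbatim with $H_n$ in place of $H$ and produces
\[ \mu_{G_n}(M) = \mu_{G_n/H_n}(X^{\bullet\star}(E/\KK_\infty^{H_n})) - \mu_{G_n/H_n}(H_0(H_n, M_f)). \]
Since $H_n$ has finite index in $H$, the hypothesis that $M_f$ is finitely generated over $\Lambda(H)$ forces $M_f$ finitely generated over $\Lambda(H_n)$, whence $H_0(H_n, M_f)$ is finitely generated over $\Z_p$ and has $\mu$-invariant zero. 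For the crucial implication $(c) \Rightarrow (a)$ the formula together with $(c)$ extracts $\mu_{G/H}(H_0(H, M_f)) = 0$; the four-term sequence exhibits $H_0(H, M_f)$ as a quotient of the $\Lambda$-torsion module $X^{\bullet\star}(E/\KK_\infty^H)$, hence finitely generated and $\Lambda(G/H)$-torsion. The vanishing of $\mu$ on a finitely generated torsion $\Lambda(G/H)$-module forces it to be finitely generated over $\Z_p$, so $M_f/\mathfrak{m}_H M_f \cong (M_f)_H / p$ is finite, where $\mathfrak{m}_H$ denotes the maximal ideal of the local ring $\Lambda(H)$. The compact Nakayama lemma then yields that $M_f$ is finitely generated over $\Lambda(H)$.

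The main technical obstacle is the bookkeeping inside Howson's Euler characteristic formula: one must verify that each intermediate object appearing in the Hochschild-Serre computation (namely $H_0(H, M[p^\infty])$, and the terms of the four-term sequence) is a finitely generated $\Lambda(G/H)$-torsion module with a legitimately finite $\mu$-invariant, so that Howson's formula is applicable in each slot. The key reductions are that $H_0(H, M[p^\infty])$ is $p$-primary and finitely generated over $\Lambda(G/H)$, while $H_1(H, M[p^\infty])$ vanishes thanks to $H_1(H, M) = 0$. Once this is in hand, the formula essentially forces the three-way equivalence, with compact Nakayama playing the role of the final lifting step from the coinvariants back to $M_f$ itself.
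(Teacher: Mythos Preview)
Your proof is correct and follows essentially the same approach as the paper, which refers to \cite[Theorem~5.1 and Corollary~5.2]{MHG} (the explicit version of this argument appears in the paper in the proof of Theorem~\ref{mu-invariants_theorem} for fine Selmer groups). Your derivation via Howson's Euler characteristic formula, the Hochschild--Serre collapse using $H_1(H,M)=0$, and the four-term sequence obtained from $0\to M[p^\infty]\to M\to M_f\to 0$ is exactly the mechanism behind the cited reference, and your compact Nakayama argument for $(c)\Rightarrow(a)$ matches \cite[Corollary~5.2]{MHG}.
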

\begin{proof}
  For the first statement and the proof of the implication ${(a) \Longrightarrow (b)}$ we can mimic the proof of \cite[Theorem~5.1]{MHG} using Theorem~\ref{SelmerControlThm_prop} and Lemma~\ref{H1_Sel_vanishing_lemma}. It is clear that ${(b) \Longrightarrow (c)}$. For the implication ${(c) \Longrightarrow (a)}$ we apply (with minor modifications) the proof of \cite[Corollary~5.2]{MHG}.
\end{proof}
We also need an analogue of Theorem~\ref{mu_invariants_theorem} for the Selmer group. Recall that for any finitely generated $R$-module $M$ (where $R$ denotes any integral domain like ${R = \Lambda_2}$) we denote by $T_R(M)$ the torsion $R$-submodule of $M$.
\begin{theorem}\label{mu_invariants_theorem2}
Let ${\bullet, \star \in \{+,-\}}$ and ${H \in \mathcal{H}^{\bullet\star}}$. We have
\begin{align*}
\resizebox{.57\hsize}{!}{$\mu_G({T_{\Lambda_2}(X(E/\KK_{\infty}))})=\mu_{G/H}(T_{\Lambda_{H,0}}(X(E/\KK_{\infty}^{H})))$} & \resizebox{.4\hsize}{!}{$-\mu_{G/H}(T_{\Lambda_{H,0}}([F_{\Lambda_2}(X(E/\KK_{\infty}))]_H))$} \\
&\resizebox{.4\hsize}{!}{$-\mu_{G/H}(H_0(H, T_{\Lambda_2}{(X(E/\KK_{\infty})_f)})).$}
\end{align*}
Consider the following three statements: \begin{compactenum}[(a)]
        \item $X(E/\KK_{\infty})_f$ is finitely generated over $\Lambda(H)$.
        \item For all ${n \geq 0}$ we have $$\resizebox{.94\hsize}{!}{$\mu_{G_n}(T_{\Lambda_2}(X(E/\KK_{\infty})))=\mu_{G_n/H_n}(T_{\Lambda_{H,n}}(X(E/\KK_{\infty}^{H_n})))-\mu_{G_n/H_n}(T_{\Lambda_{H,n}}([F_{\Lambda_2}(X(E/\KK_{\infty}))]_{H_n})).$} $$
        \item For ${n=0}$ we have
        $$\resizebox{.94\hsize}{!}{$\mu_G(T_{\Lambda_2}(X(E/\KK_{\infty})))=\mu_{G/H}(T_{\Lambda_{H,0}}(X(E/\KK_{\infty}^{H})))-\mu_{G/H}(T_{\Lambda_{H,0}}([F_{\Lambda_2}(X(E/\KK_{\infty}))]_H)).$} $$
   \end{compactenum}
Then the following assertions hold
\begin{enumerate}[(1)]
\item We have the implications ${(b) \Longrightarrow (c) \Longrightarrow (a)}$.
\item If $Y(E/\KK_{\infty})_f$ is finitely generated over $\Lambda(H)$, then  $(a)$ implies $(b)$. In other words, by (1), in that case the three statements~$(a)-(c)$ are equivalent.
\end{enumerate}
\end{theorem}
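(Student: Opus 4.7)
Write $M := T_{\Lambda_2}(X(E/\KK_\infty))$ and $M_f := M/M[p^\infty]$; note that $M_f$ coincides with both $X(E/\KK_\infty)_f$ and $T_{\Lambda_2}(X(E/\KK_\infty)_f)$, and that $M[p^\infty] = X(E/\KK_\infty)[p^\infty]$. My plan is first to derive, for each $n$ with ${H_1(H_n, M) = 0}$, the general identity
\begin{align*}
\mu_{G_n}(M) = \mu_{G_n/H_n}(T_{\Lambda_{H,n}}(X(E/\KK_\infty^{H_n}))) &- \mu_{G_n/H_n}(T_{\Lambda_{H,n}}([F_{\Lambda_2}(X(E/\KK_\infty))]_{H_n})) \\
&- \mu_{G_n/H_n}(H_0(H_n, M_f)),
\end{align*}
which specialises to the first displayed formula of the theorem when ${n=0}$. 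I would combine Howson's Euler characteristic identity $p^{\mu_{G_n}(M)} = \chi(G_n, M[p^\infty])$ with the Hochschild--Serre spectral sequence for ${1 \to H_n \to G_n \to G_n/H_n \to 1}$: since ${cd_p(H_n) \le 1}$, the spectral sequence simplifies as soon as ${H_1(H_n, M[p^\infty]) = 0}$, which follows from ${H_1(H_n, M) = 0}$. Lemma~\ref{H1_Sel_vanishing_lemma2} guarantees the latter unconditionally for ${n = 0}$, and for all $n$ under the hypothesis on $Y(E/\KK_\infty)_f$. The short exact sequence ${0 \to M[p^\infty] \to M \to M_f \to 0}$ then produces
\[ 0 \to H_1(H_n, M_f) \to H_0(H_n, M[p^\infty]) \to H_0(H_n, M) \to H_0(H_n, M_f) \to 0, \]
and since $H_1(H_n, M_f)$ is a submodule of the $p$-torsion-free module $M_f$, its $\mu$-invariant vanishes. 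Hence $\mu$-additivity yields ${\mu_{G_n/H_n}(H_0(H_n, M[p^\infty])) = \mu_{G_n/H_n}(H_0(H_n, M)) - \mu_{G_n/H_n}(H_0(H_n, M_f))}$. Finally, Theorem~\ref{TorsionControlThm} supplies
\[ 0 \to H_0(H_n, M) \to T_{\Lambda_{H,n}}(X(E/\KK_\infty^{H_n})) \to T_{\Lambda_{H,n}}([F_{\Lambda_2}(X(E/\KK_\infty))]_{H_n}) \to 0, \]
and one more application of $\mu$-additivity converts $\mu_{G_n/H_n}(H_0(H_n, M))$ into the first two summands of the identity above.

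Next I turn to the implications. The step ${(b) \Longrightarrow (c)}$ is the trivial specialisation to ${n = 0}$. For ${(c) \Longrightarrow (a)}$, I would subtract $(c)$ from the unconditional ${n = 0}$ identity to obtain ${\mu_{G/H}(H_0(H, M_f)) = 0}$. The module $H_0(H, M_f)$ is a quotient of ${H_0(H, M) = M_H}$, which by Theorem~\ref{TorsionControlThm} injects into the $\Lambda_{H,0}$-torsion module $T_{\Lambda_{H,0}}(X(E/\KK_\infty^H))$; hence $H_0(H, M_f)$ is itself $\Lambda(G/H)$-torsion, and its vanishing $\mu$-invariant forces it to be finitely generated over $\Z_p$. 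Therefore ${M_f/(p, T_H) M_f}$ is finite, where $T_H$ generates the augmentation ideal of ${\Lambda(H) \cong \Z_p[[T_H]]}$. Since $M_f$ is compact as a quotient of the finitely generated $\Lambda_2$-module $X(E/\KK_\infty)$, the topological form of Nakayama's lemma for the complete local Noetherian ring $\Lambda(H)$ implies that ${M_f = X(E/\KK_\infty)_f}$ is finitely generated over $\Lambda(H)$, which is $(a)$.

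For ${(a) \Longrightarrow (b)}$ under the hypothesis that $Y(E/\KK_\infty)_f$ is finitely generated over $\Lambda(H)$: Lemma~\ref{H1_Sel_vanishing_lemma2} extends ${H_1(H_n, M) = 0}$ to all $n$, so the general identity holds for every $n$, while ${\mu_{G_n}(M) = p^n \mu_G(M)}$ since $\Lambda(G)$ is free of rank $p^n$ over $\Lambda(G_n)$. Condition $(a)$ makes $M_f$ finitely generated over $\Lambda(H_n)$ as well (since $H_n$ has finite index in $H$), so ${H_0(H_n, M_f) = (M_f)_{H_n}}$ is finitely generated over $\Z_p$ and its $\mu_{G_n/H_n}$-invariant vanishes, which gives $(b)$. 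The hard part of the argument will be the step ${(c) \Longrightarrow (a)}$: one must upgrade the vanishing of a single $\mu$-invariant to finite generation of $M_f$ over the smaller subring $\Lambda(H)$, which requires both the torsion certification for $H_0(H, M_f)$ via the control theorem and the topological form of Nakayama's lemma for the compact $\Lambda(H)$-module $M_f$.
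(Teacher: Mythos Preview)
Your proposal is correct and follows essentially the same approach as the paper. The paper's proof is terser, delegating the $\mu$-identity to ``mimicking the proof of \cite[Theorem~5.1]{MHG}'' and the implication ${(c)\Longrightarrow(a)}$ to ``the proof of \cite[Corollary~5.2]{MHG}'', whereas you spell out explicitly the Howson Euler-characteristic formula, the Hochschild--Serre collapse from $H_1(H_n,M[p^\infty])=0$, and the Nakayama argument over $\Lambda(H)$; one small imprecision is that your framing ``for each $n$ with $H_1(H_n,M)=0$'' is not quite the right hypothesis, since your argument also invokes Theorem~\ref{TorsionControlThm} (and the torsion-ness of $M_{H_n}$ via Corollary~\ref{cor:HinH}), both of which independently require the $Y(E/\KK_\infty)_f$ assumption for $n>0$ --- but since these hypotheses coincide in every case you actually use, this does not affect the argument.
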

\begin{proof}
Assume that $Y(E/\KK_{\infty})_f$ is finitely generated over $\Lambda(H)$. With this assumption we have that ${H_1(H_n, T_{\Lambda_2}(X(E/\KK_{\infty}))=0}$ for all ${n \ge 0}$ by Lemma~\ref{H1_Sel_vanishing_lemma2}. Using this fact and mimicking the proof of \cite[Theorem~5.1]{MHG} we have that for any ${n \geq 0}$

$$\resizebox{.93\hsize}{!}{$\mu_{G_n}(T_{\Lambda_2}(X(E/\KK_{\infty})))=\mu_{G_n/H_n}((T_{\Lambda_{H,n}}(X(E/\KK_{\infty})))_{H_n})-\mu_{G_n/H_n}(H_0(H_n, T_{\Lambda_2}(X(E/\KK_{\infty})_f))).$} $$

Combining this with the control theorem (Theorem~\ref{TorsionControlThm}) we get
\begin{align}\label{mu_Selmer_relation}
\resizebox{.57\hsize}{!}{$\mu_G({T_{\Lambda_2}(X(E/\KK_{\infty}))})=\mu_{G_n/H_n}(T_{\Lambda_{H,n}}(X(E/\KK_{\infty}^{H_n})))$} & \resizebox{.4\hsize}{!}{$-\mu_{G_n/H_n}(T_{\Lambda_{H,n}}([F_{\Lambda_2}(X(E/\KK_{\infty}))]_{H_n}))$} \nonumber \\
&\resizebox{.4\hsize}{!}{$-\mu_{G_n/H_n}(H_0(H_n, T_{\Lambda_2}{(X(E/\KK_{\infty})_f)})).$}
\end{align}
This implies the second statement. Indeed, assume that $X(E/\KK_{\infty})_f$ is finitely generated over $\Lambda(H)$. Since $H_n$ has finite index in $H$, the quotient $H_0(H_n, T_{\Lambda_2}{(X(E/\KK_{\infty})_f)})$ is finitely generated over $\Zp$ and hence has $\mu_{G_n/H_n}$-invariant zero. Thus from $(\ref{mu_Selmer_relation})$ we get that ${(a) \Longrightarrow (b)}$.

It is clear that ${(b) \Longrightarrow (c)}$. Finally for the implication ${(c) \Longrightarrow (a)}$ we use $\eqref{mu_Selmer_relation}$ for ${n=0}$ and apply (with minor modifications) the proof of \cite[Corollary~5.2]{MHG}.
\end{proof}

\section{Asymptotic growth of Iwasawa invariants} \label{section:monsky}
The last ingredient which is needed for our main results is a growth formula for the Iwasawa invariants of the $\Lambda$-modules $X^{\bullet\star}(E/\KK_\infty^{H_n})$, ${n \in \N}$, provided that all of them are torsion. The proofs are based on work of Cuoco and Monsky.

We fix two $\Z_p$-extensions ${k_{\infty}, F \subseteq \KK_{\infty}}$ of $K$ such that ${k_{\infty} \cap F = K}$. Let ${H \subseteq G}$ be such that ${F = \KK_{\infty}^H}$, and write ${F_n = \KK_{\infty}^{H_n}}$ for each ${n \in \N}$. Let moreover ${k_n \subseteq k_{\infty}}$, ${n \in \N}$, be the unique subfield of degree $p^n$ over $K$. Then $F_n/k_n$ is a $\Z_p$-extension, and we write ${\Lambda_n = \Z_p[[\Gal(F_n/k_n)]]}$. We have the following diagram of fields.
\[ \xymatrix{& \KK_{\infty} \ar@{-}[dr] \ar@{-}@/_1.4pc/[dr]_{H_n} \ar@{-}@/^2.9pc/[ddrr]^{H}& & \\
	k_{\infty} \ar@{-}[ur] \ar@{-}[dr] & & F_n = \KK_{\infty}^{H_n} \ar@{-}[dl] \ar@{-}[dr] & \\
	& k_n \ar@{-}[dr] & & F = \KK_{\infty}^H \ar@{-}[dl] \\
	& & K &} \]

Let $X_n^{\bullet\star}$ be the Pontryagin dual of the signed Selmer group of $E$ over the $\Z_p$-extension $F_n/k_n$, and suppose that $X_n^{\bullet\star}$ is a torsion $\Lambda_n$-module for each ${n \in \N}$. We recall from Proposition~\ref{Seln_surjective_prop} that this will be the case if $X^{\bullet\star}(E/\KK_\infty)_f$ is finitely generated as a $\Lambda(H)$-module. We write ${X^{\bullet\star} = X^{\bullet\star}(E/\KK_{\infty})}$ for brevity. Let $\mu_n^{\bullet\star}$ and $\lambda_n^{\bullet\star}$ denote the Iwasawa invariants of the ${\Lambda_n \cong \Z_p[[T]]}$-module $X_n^{\bullet\star}$. As in Section~\ref{section:mainintroduction} we write the characteristic power series ${f_\infty^{\bullet\star} \in \Z_p[[T_1, T_2]]}$ of $X^{\bullet\star}$ as ${f_\infty^{\bullet\star} = p^{\mu_G(X^{\bullet\star})} \cdot g_\infty^{\bullet\star}}$, and we denote by ${\overline{g_\infty^{\bullet\star}} \in \Z_p[[T_1, T_2]]/p}$ the residue class modulo $p$. Recall that if $\mathcal{H}^{\bullet\star}$ is non-empty, then $X^{\bullet\star}(E/\KK_\infty)$ is $\Lambda_2$-torsion, and thus ${g_\infty^{\bullet\star} \ne 0}$.
\begin{thm} \label{thm:cuoco-monsky}
	Suppose that ${H \in \mathcal{H}^{\bullet\star}}$, let $X^{\bullet\star}$ be as above and assume that ${X_n^{\bullet\star} = X^{\bullet\star}(E/F_n)}$ is a torsion $\Lambda_n$-module for each $n$. Then the following assertions hold. \begin{compactenum}[(a)]
		\item We have
		\[ \mu_n^{\bullet\star} = \mu_G(X^{\bullet\star}) p^n + m_1^{\bullet\star} n + \mathcal{O}(1)\]
		and
		\[ \lambda_n^{\bullet\star} = l^{\bullet\star} p^n + \mathcal{O}(1)\]
		for suitable constants $m_1^{\bullet\star}$ and $l^{\bullet\star}$.
		\item If ${H = \overline{\langle \sigma^a \tau^b \rangle}}$ and ${\Upsilon = (1+T_1)^a (1+T_2)^b - 1}$ (where ${T_1 = \sigma - 1}$ and ${T_2 = \tau - 1}$ as usual), then
		\[ m_1^{\bullet\star} = v_{\overline{\Upsilon}}(\overline{g_\infty^{\bullet\star}}), \]
		i.e. $m_1^{\bullet\star}$ equals the number of factors $\overline{\Upsilon}$ which divide $\overline{g_\infty^{\bullet\star}}$ in $\Z_p[[T_1, T_2]]/p$.
		\item If $X^{\bullet\star}_f$ is finitely generated as a $\Lambda(H)$-module, then ${m_1^{\bullet\star} = 0}$ and ${l^{\bullet\star} = \textup{rank}_{\Lambda(H)}(X^{\bullet\star}_f)}$.
	\end{compactenum}
\end{thm}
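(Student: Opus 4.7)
The plan is to use the control theorem (Theorem~\ref{SelmerControlThm_prop}) to reduce the problem to an asymptotic computation for coinvariants of $\Lambda_2$-modules, and then to combine Cuoco-Monsky type asymptotic formulas with the structure theorems already proved in this paper. First I would invoke Theorem~\ref{SelmerControlThm_prop} to obtain $\Lambda(G/H_n)$-isomorphisms ${X_n^{\bullet\star} \cong X^{\bullet\star}(E/\KK_\infty)_{H_n}}$ for every ${n \geq 0}$. Since $\Lambda(G/H_n)$ is a free $\Lambda_n$-module of rank $p^n$, viewing $X^{\bullet\star}(E/\KK_\infty)_{H_n}$ as a $\Lambda_n$-module turns $\mu_n^{\bullet\star}$ and $\lambda_n^{\bullet\star}$ into the classical Iwasawa invariants of ${X^{\bullet\star}/\Upsilon_n X^{\bullet\star}}$ over $\Lambda_n$, where ${\Upsilon_n = (1+T_1)^{p^n a}(1+T_2)^{p^n b}-1}$.

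For parts (a) and (b), I would fix a pseudo-isomorphism
$$ X^{\bullet\star} \sim \bigoplus_{i=1}^s \Lambda_2/p^{m_i^{\bullet\star}} \,\oplus\, \bigoplus_{j=1}^t \Lambda_2/(f_j^{\bullet\star})^{e_j} $$
via the structure theorem and compute each summand's contribution to $\mu_n^{\bullet\star}$ and $\lambda_n^{\bullet\star}$. A pseudo-null $\Lambda_2$-module has annihilator of height at least two, so its $\Upsilon_n$-coinvariants and $\Upsilon_n$-invariants remain of bounded $\Z_p$-rank and contribute only $\mathcal{O}(1)$ to both invariants. A summand $\Lambda_2/p^{m_i}$ yields ${(\Lambda_2/p^{m_i})/\Upsilon_n \cong (\Lambda_n/p^{m_i})^{p^n}}$, contributing $m_i p^n$ to $\mu_n^{\bullet\star}$ and nothing to $\lambda_n^{\bullet\star}$; summing produces the leading term $\mu_G(X^{\bullet\star}) p^n$. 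For each cyclic summand $\Lambda_2/(f_j^{\bullet\star})^{e_j}$ the Cuoco-Monsky asymptotic formula yields a $\mu_n$-contribution of ${e_j \, v_{\overline{\Upsilon}}(\overline{f_j^{\bullet\star}}) \cdot n + \mathcal{O}(1)}$ and a $\lambda_n$-contribution of ${c_j p^n + \mathcal{O}(1)}$ for suitable constants $c_j$. Summing over $j$ and using that $\overline{g_\infty^{\bullet\star}}$ is, up to a unit in $\Lambda_2/p$, the product ${\prod_j \overline{f_j^{\bullet\star}}^{\,e_j}}$, one obtains
$$ m_1^{\bullet\star} = \sum_{j=1}^t e_j \, v_{\overline{\Upsilon}}(\overline{f_j^{\bullet\star}}) = v_{\overline{\Upsilon}}(\overline{g_\infty^{\bullet\star}}), $$
which establishes both (a) and (b).

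For part (c), suppose that $X^{\bullet\star}_f$ is finitely generated over $\Lambda(H)$. By the equivalence ${(a) \Leftrightarrow (g)}$ of Theorem~\ref{main_theorem}, the image of $g_\infty^{\bullet\star}$ in $\Lambda_2/p$ is not divisible by $\overline{\Upsilon}$, so ${v_{\overline{\Upsilon}}(\overline{g_\infty^{\bullet\star}}) = 0}$ and part~(b) gives ${m_1^{\bullet\star} = 0}$. For the identification of $l^{\bullet\star}$, Theorem~\ref{Xinf_structure_theorem} provides an injection ${X^{\bullet\star}_f \hookrightarrow \Lambda(H)^{\lambda_H^{\bullet\star}}}$ with finite cokernel, whence ${\rg_{\Lambda(H)}(X^{\bullet\star}_f) = \lambda_H^{\bullet\star}}$. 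On the other hand, since $X_n^{\bullet\star}$ is $\Lambda_n$-torsion, we have ${\lambda_n^{\bullet\star} = \rg_{\Z_p}(X^{\bullet\star}(E/\KK_\infty^{H_n})_f)}$, which equals $p^n \lambda_H^{\bullet\star}$ by Theorem~\ref{Kida_formula}. Comparing with the expansion ${\lambda_n^{\bullet\star} = l^{\bullet\star} p^n + \mathcal{O}(1)}$ from (a) gives ${l^{\bullet\star} = \lambda_H^{\bullet\star} = \rg_{\Lambda(H)}(X^{\bullet\star}_f)}$.

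The main technical obstacle will be establishing the Cuoco-Monsky asymptotic for a cyclic summand $\Lambda_2/f^e$, and specifically the linear-in-$n$ growth term ${e \cdot v_{\overline{\Upsilon}}(\overline{f}) \cdot n}$ in the $\mu_n$-invariant. The subtlety is that, although $f$ and $\Upsilon_n$ are coprime in $\Lambda_2$ (as required for $X_n^{\bullet\star}$ to be $\Lambda_n$-torsion by Lemma~\ref{rankequal_lemma}), their reductions modulo $p$ may share common factors of $\overline{\Upsilon}$, and it is precisely the $p$-adic valuations of the appropriate norms or resultants along the tower $\{\Upsilon_n\}_{n \ge 0}$ that produce the correction $m_1^{\bullet\star} n$.
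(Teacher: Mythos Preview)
Your approach is correct, but there is one logical ordering issue and a few differences from the paper worth flagging.

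For part~(c), your appeal to Theorem~\ref{main_theorem}~$(a)\Leftrightarrow(g)$ is a forward reference: that equivalence is proved in the paper \emph{via} Proposition~\ref{prop:new}, whose proof in turn invokes parts~(a) and~(b) of the present theorem. This is not formally circular (you have already established (a) and (b)), but it is convoluted, since the proof of $(a)\Rightarrow(g)$ in Proposition~\ref{prop:new} already passes through the intermediate conclusion ${m_1^{\bullet\star}=0}$. The paper instead argues directly: Theorem~\ref{mu_invariants_theorem} gives ${\mu_{G_n/H_n}(X_n^{\bullet\star}) = p^n \mu_G(X^{\bullet\star})}$ for all $n$, and comparing with the asymptotic formula in~(a) forces ${m_1^{\bullet\star}=0}$. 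For the identification of $l^{\bullet\star}$, your route through Theorem~\ref{Kida_formula} and Theorem~\ref{Xinf_structure_theorem} is a genuine alternative to the paper's use of Harris's rank formula, and arguably cleaner: Kida's formula gives an \emph{exact} equality ${\lambda_n^{\bullet\star}=p^n\lambda_H^{\bullet\star}}$, so no comparison of asymptotic coefficients is needed.

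For parts~(a) and~(b), the paper simply cites Cuoco's work (the proof of \cite[Lemma~6.7]{MHG} and \cite[Theorem~2.4]{Cuoco3}) rather than re-deriving the asymptotic via the structure theorem. Your sketch is essentially what underlies Cuoco's arguments, but be careful with your claim that pseudo-null error terms contribute only $\mathcal{O}(1)$ to $\mu_n^{\bullet\star}$: for a general pseudo-null $\Lambda_2$-module $B$, boundedness of $\mu(B_{H_n})$ as $n$ varies is not entirely obvious and is part of what Cuoco's analysis establishes. The $\lambda$-bound is easier (since $B/B[p^\infty]$ is finitely generated over $\Z_p$), but the $\mu$-bound requires the kind of height-two annihilator argument you allude to at the end; this is where the real work lies, as you correctly anticipate.
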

\begin{proof}
	For the proof of statement~$(a)$ one can use, with negligible changes, the approach from the proof of \cite[Lemma~6.7]{MHG}, which is based on the work of Cuoco (see \cite{Cuoco1}). Here we need that ${H \in \mathcal{H}^{\bullet\star}}$ in order to be allowed to apply our control theorem (Theorem~\ref{SelmerControlThm_prop}).
	
	Assertion~$(b)$ follows from \cite[Theorem~2.4]{Cuoco3} (see also the proof of \cite[Lemma~6.9]{MHG}). Note that in terms of Cuoco's formulation we have that
	\[ m_1^{\bullet\star} = \sum_{\mathfrak{P}} v_{\mathfrak{P}}(\overline{g_\infty^{\bullet\star}}), \]
	where $\mathfrak{P}$ runs over all prime ideals of $\Z_p[[T_1, T_2]]/p$ of the form ${(\overline{\psi} - \overline{1})}$, where ${\psi \in H \setminus H^p}$. Since ${\gamma := \Upsilon + 1}$ is a topological generator of ${H \cong \Z_p}$, we have that any such element $\psi$ can be written in the form ${\psi = \gamma^u}$ for some unit ${u \in \Z_p^\times}$. This means that we can also write ${\gamma = \psi^v}$, where ${v = u^{-1} \in \Z_p}$. Since ${\psi - 1 = (\Upsilon+1)^u - 1 \in (\Upsilon)}$ and ${\Upsilon = ((\psi-1)+1)^v - 1 \in (\psi - 1)}$, we may conclude that all the resulting prime ideals are in fact equal to the principal ideal generated by the coset of $\Upsilon$. In other words, the above sum has only one summand.
	
	Finally, it follows from Theorem~\ref{mu_invariants_theorem} and the asymptotic formula proven in $(a)$ that ${m_1^{\bullet\star} = 0}$ if $X_f^{\bullet\star}$ is finitely generated over $\Lambda(H)$.
	
	Moreover, since ${H_n = H^{p^n}}$ for each ${n \in \N}$, it follows from a result of Harris (see \cite{Harris}) that
	\[ \textup{rank}_{\Z_p}((X^{\bullet\star}_f)_{H_n}) = \textup{rank}_{\Lambda(H)}(X^{\bullet\star}_f) \cdot p^n + \mathcal{O}(1). \]
	On the other hand, it follows from assertion~(a) and our control theorem that
	\[ \textup{rank}_{\Z_p}((X^{\bullet\star}_f)_{H_n}) = \lambda(X^{\bullet\star}_{H_n}) = \lambda(X_n^{\bullet\star}) = l^{\bullet\star} p^n + \mathcal{O}(1). \]
	Comparing coefficients yields the last assertion and concludes the proof of the theorem.
\end{proof}

We will also need the analogue of Theorem~\ref{thm:cuoco-monsky} for Selmer groups. Let ${X_n=\Selinf(E/F_n)^{\vee}}$  and let ${X=\Sel(E/\KK_{\infty})^{\vee}}$. Let $\mu_n$ and $\lambda_n$ denote the Iwasawa invariants of the ${\Lambda_n \cong \Z_p[[T]]}$-module $T_{\Lambda_n}(X_n)$. As in Section~\ref{section:mainintroduction} (see~\eqref{eq:f_infty}) we write the characteristic power series ${f_\infty \in \Z_p[[T_1, T_2]]}$ of $X$ as ${f_\infty = p^{\mu_G(T_{\Lambda_2}(X))} \cdot g_\infty}$, and we denote by ${\overline{g_\infty} \in \Z_p[[T_1, T_2]]/p}$ the residue class modulo $p$.

\begin{thm} \label{thm2:cuoco-monsky}
	Suppose that ${H \in \mathcal{H}^{\bullet\star}}$, and let $X$ and the $X_n$ be as above. Then the following assertions hold. \begin{compactenum}[(a)]
		\item We have
		\[ \mu_n = \mu_G(T_{\Lambda_2}(X)) p^n + m_1 n + \mathcal{O}(1)\]
		and
		\[ \lambda_n = l p^n + \mathcal{O}(1)\]
		for suitable constants $m_1$ and $l$.
		\item If ${H = \overline{\langle \sigma^a \tau^b \rangle}}$ and ${\Upsilon = (1+T_1)^a (1+T_2)^b - 1}$ (where ${T_1 = \sigma - 1}$ and ${T_2 = \tau - 1}$ as usual), then
		\[ m_1 = v_{\overline{\Upsilon}}(\overline{g_\infty}), \]
		i.e. $m_1$ equals the number of factors $\overline{\Upsilon}$ which divide $\overline{g_\infty}$ in $\Z_p[[T_1, T_2]]/p$.
		\item If $X_f$ is finitely generated as a $\Lambda(H)$-module, then ${m_1 = 0}$.
        \item  If $X_f$ is finitely generated as a $\Lambda(H)$-module, then ${l = \textup{rank}_{\Lambda(H)}(X_f)}$.
	\end{compactenum}
\end{thm}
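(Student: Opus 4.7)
The plan is to mirror the proof of Theorem~\ref{thm:cuoco-monsky}, with the $\Lambda_2$-torsion module $T_{\Lambda_2}(X(E/\KK_\infty))$ replacing the role of $X^{\bullet\star}(E/\KK_\infty)$; recall that $T_{\Lambda_2}(X(E/\KK_\infty))$ is indeed $\Lambda_2$-torsion by Proposition~\ref{Hcyc_prop} and Lemma~\ref{Xinf_torsion_lemma} combined with Proposition~\ref{comparison_prop}. The principal new feature is that the relevant control theorem (Theorem~\ref{TorsionControlThm}) is not an isomorphism; rather, it provides the exact sequence
\[
  0 \longrightarrow (T_{\Lambda_2}(X))_{H_n} \longrightarrow T_{\Lambda_n}(X_n) \longrightarrow T_{\Lambda_n}\bigl([F_{\Lambda_2}(X)]_{H_n}\bigr) \longrightarrow 0,
\]
where I use the identification $\Lambda_n = \Lambda_{H,n}$ obtained by choosing $K_{H,n}=k_n$, legitimate since $F_n = F\cdot k_n = \KK_\infty^{H_n}$. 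The extra cokernel term will be absorbed into an $O(1)$ error by invoking Lemma~\ref{invaraints bound_lemma}.

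\textbf{Parts (a) and (b).} First I apply the Cuoco--Monsky asymptotic formulas to the finitely generated torsion $\Lambda_2$-module $T_{\Lambda_2}(X)$, following exactly the strategy used in \cite[Lemma~6.7]{MHG} (based on Cuoco \cite{Cuoco1}) for the $\mu$-invariant and in \cite[Theorem~2.4]{Cuoco3} for the $\lambda$-invariant. This yields
\[
  \mu\bigl((T_{\Lambda_2}(X))_{H_n}\bigr) = \mu_G(T_{\Lambda_2}(X)) \cdot p^n + m_1 \cdot n + O(1), \qquad \lambda\bigl((T_{\Lambda_2}(X))_{H_n}\bigr) = l \cdot p^n + O(1),
\]
with $m_1 = \sum_\mathfrak{P} v_\mathfrak{P}(\overline{g_\infty})$ summed over primes $\mathfrak{P}$ of $\Lambda_2/p$ of the form $(\overline{\psi-1})$, $\psi \in H \setminus H^p$. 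The same reduction given in the proof of Theorem~\ref{thm:cuoco-monsky}(b) collapses this sum into a single term, namely $m_1 = v_{\overline{\Upsilon}}(\overline{g_\infty})$. Next I combine the above formulas with the control theorem's exact sequence and the uniform bounds in $n$ from Lemma~\ref{invaraints bound_lemma}, giving $\mu_n = \mu((T_{\Lambda_2}(X))_{H_n}) + O(1)$ and $\lambda_n = \lambda((T_{\Lambda_2}(X))_{H_n}) + O(1)$. Substituting yields (a) and (b).

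\textbf{Parts (c) and (d).} For (c), assume $X_f$ is finitely generated over $\Lambda(H)$. By the equivalence $(a) \Longleftrightarrow (f)$ in Theorem~\ref{main_theorem3}, this is precisely saying that $\overline{g_\infty}$ is not divisible by $\overline{\Upsilon}$ in $\Lambda_2/p$, so $m_1 = v_{\overline{\Upsilon}}(\overline{g_\infty}) = 0$. For (d), the short exact sequence
\[
  0 \longrightarrow T_{\Lambda_2}(X)[p^\infty] \longrightarrow T_{\Lambda_2}(X) \longrightarrow X_f \longrightarrow 0
\]
(recall $X_f = T_{\Lambda_2}(X)_f$ by Definition~\ref{def:free-and-torsion}) and taking $H_n$-coinvariants gives
\[
  \textup{rank}_{\Z_p}\bigl((T_{\Lambda_2}(X))_{H_n}\bigr) = \textup{rank}_{\Z_p}\bigl((X_f)_{H_n}\bigr),
\]
since the image of the $p$-primary module $(T_{\Lambda_2}(X)[p^\infty])_{H_n}$ does not contribute to the $\Z_p$-rank. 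Harris's theorem applied to the finitely generated $\Lambda(H)$-module $X_f$ (as in the proof of Theorem~\ref{thm:cuoco-monsky}(c)) yields $\textup{rank}_{\Z_p}((X_f)_{H_n}) = \textup{rank}_{\Lambda(H)}(X_f) \cdot p^n + O(1)$. Comparing this with $\lambda_n = \lambda((T_{\Lambda_2}(X))_{H_n}) + O(1) = l \cdot p^n + O(1)$ identifies $l = \textup{rank}_{\Lambda(H)}(X_f)$.

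\textbf{Main obstacle.} The essential new difficulty compared to the signed Selmer case treated in Theorem~\ref{thm:cuoco-monsky} is controlling the contribution of the cokernel $T_{\Lambda_n}([F_{\Lambda_2}(X)]_{H_n})$ arising in the control theorem. This is exactly the purpose of Lemma~\ref{invaraints bound_lemma}, which, via the pseudo-null module identification in Lemma~\ref{structureisom_lemma}, provides a single pseudo-null $\Lambda_2$-module $A$ (independent of $n$) with $T_{\Lambda_n}([F_{\Lambda_2}(X)]_{H_n}) \cong A[\Upsilon_n]$, giving uniform bounds on both its $\mu$- and $\lambda$-invariants. Once this uniform control of the error is in place, the argument proceeds in parallel with the signed Selmer case.
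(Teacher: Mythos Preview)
Your treatment of parts (a), (b), and (d) matches the paper's: apply Cuoco--Monsky to the torsion $\Lambda_2$-module $T_{\Lambda_2}(X)$, transfer to $T_{\Lambda_n}(X_n)$ via the control theorem (Theorem~\ref{TorsionControlThm}), and absorb the cokernel using the uniform bounds of Lemma~\ref{invaraints bound_lemma}; for (d) you then compare with Harris's formula exactly as the paper does.

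Part (c), however, is circular as written. You invoke the equivalence $(a)\Longleftrightarrow(f)$ of Theorem~\ref{main_theorem3}, but in the paper that equivalence is established via Proposition~\ref{Monskyequiv_prop}, and the direction $(a)\Longrightarrow(b)$ of that proposition is proved by citing Theorem~\ref{thm2:cuoco-monsky} itself (one needs precisely parts (b) and (c) here to pass from ``$X_f$ finitely generated over $\Lambda(H)$'' to ``$v_{\overline{\Upsilon}}(\overline{g_\infty})=0$''). So your argument for (c) rests on the very statement you are trying to prove. The paper instead obtains (c) from Theorem~\ref{mu_invariants_theorem2}, which sits earlier in the logical order: under the hypothesis that $X_f$ is finitely generated over $\Lambda(H)$, that theorem combined with Lemma~\ref{invaraints bound_lemma} yields
\[
  \mu_n = \mu_G(T_{\Lambda_2}(X))\,p^n + O(1),
\]
and comparing this with the asymptotic formula already established in (a) forces $m_1=0$.
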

\begin{proof}
The proof of $(a)$ follows as the proof of Theorem \ref{thm:cuoco-monsky}(a) where we need to use the control theorem for Selmer groups (Theorem~\ref{TorsionControlThm}). In the proof (cf. proof of \cite[Lemma~6.7]{MHG}) we need to show that the $\mu$- and $\lambda$-invariants of the kernel and cokernel of the maps in the control theorem are bounded. This follows from Lemma~\ref{invaraints bound_lemma}. The proof of part~$(b)$ is identical to the proof of Theorem~\ref{thm:cuoco-monsky}$(b)$.\\

For the proof of part $(c)$, assume that  $X_f$ is finitely generated as a $\Lambda(H)$-module. Noting that $\Lambda(G)$ is a free $\Lambda(G_n)$-module of rank $p^n$ and taking into account Lemma~\ref{invaraints bound_lemma}, we get from Theorem~\ref{mu_invariants_theorem2} that
$$\mu_n = \mu_G(T_{\Lambda_2}(X)) p^n + \mathcal{O}(1). $$
Therefore ${m_1=0}$.
The proof of $(d)$ follows as the proof of Theorem~\ref{thm:cuoco-monsky}$(d)$ where we need to use the control theorem for Selmer groups (Theorem~\ref{TorsionControlThm}) while taking Lemma~\ref{invaraints bound_lemma} into account.
\end{proof}

We finally state the analogue of Theorem~\ref{thm:cuoco-monsky} for fine Selmer groups. Let ${Y_n=R_{p^{\infty}}(E/F_n)^{\vee}}$  and let ${Y=R_{p^{\infty}}(E/\KK_{\infty})^{\vee}}$. We assume that $Y_n$ is a torsion module over ${\Lambda_n \cong \Z_p[[T]]}$ for each $n$. In view of Lemma~\ref{WL_lemma} it suffices to assume that $Y(E/\KK_\infty)_f$ is a finitely generated $\Lambda(H)$-module. Let $\tilde{\mu}_n$ and $\tilde{\lambda}_n$ denote the Iwasawa invariants of the $\Lambda_n$-module $Y_n$. As in Section~\ref{section:mainintroduction} we write the characteristic power series ${\tilde{f}_\infty \in \Z_p[[T_1, T_2]]}$ of $Y$ as ${\tilde{f}_\infty = p^{\mu_G(Y)} \cdot \tilde{g}_\infty}$, and we denote by ${\overline{\tilde{g}_\infty} \in \Z_p[[T_1, T_2]]/p}$ the residue class modulo $p$.

\begin{thm} \label{thm3:cuoco-monsky}
	Suppose that ${H \in \mathcal{H}^{\bullet\star}}$. We assume that $Y_n$ is a torsion $\Lambda_n$-module for each $n$. Then the following assertions hold. \begin{compactenum}[(a)]
		\item We have
		\[ \tilde{\mu}_n = \mu_G(Y) p^n + \tilde{m}_1 n + \mathcal{O}(1)\]
		and
		\[ \tilde{\lambda}_n = \tilde{l} p^n + \mathcal{O}(1)\]
		for suitable constants $\tilde{m}_1$ and $\tilde{l}$.
		\item If ${H = \overline{\langle \sigma^a \tau^b \rangle}}$ and ${\Upsilon = (1+T_1)^a (1+T_2)^b - 1}$ (where ${T_1 = \sigma - 1}$ and ${T_2 = \tau - 1}$ as usual), then
		\[ \tilde{m}_1 = v_{\overline{\Upsilon}}(\overline{\tilde{g}_\infty}), \]
		i.e. $\tilde{m}_1$ equals the number of factors $\overline{\Upsilon}$ which divide $\overline{\tilde{g}_\infty}$ in $\Z_p[[T_1, T_2]]/p$.
		\item If $Y_f$ is finitely generated as a $\Lambda(H)$-module, then ${\tilde{m}_1 = 0}$.
        \item  If $Y_f$ is finitely generated as a $\Lambda(H)$-module, then ${\tilde{l} = \textup{rank}_{\Lambda(H)}(Y_f)}$.
	\end{compactenum}
\end{thm}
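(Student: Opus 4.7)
The proof will closely mirror the arguments used for Theorem~\ref{thm:cuoco-monsky} and Theorem~\ref{thm2:cuoco-monsky}, but is substantially cleaner because the control theorem for fine Selmer groups (Proposition~\ref{fineSelmercontrol_prop1}) produces an honest \emph{isomorphism} $R_{p^{\infty}}(E/\KK_{\infty}^{H_n}) \isomarrow R_{p^{\infty}}(E/\KK_{\infty})^{H_n}$, whose Pontryagin dual gives $Y_n \cong Y_{H_n}$ without any kernel or cokernel to track. The plan is to establish (a) and (b) as direct consequences of the Cuoco--Monsky asymptotic machinery, and then derive (c) and (d) by comparing the resulting asymptotics against the information already proven for $Y(E/\KK_\infty)$.

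For part~(a), I would apply the Cuoco--Monsky asymptotic formula (as in the proof of \cite[Lemma~6.7]{MHG}) to the finitely generated torsion $\Lambda_2$-module $Y$ (torsion by Lemma~\ref{Yinf_torsion_lemma}), using the control isomorphism $Y_n \cong Y_{H_n}$ to translate the asymptotics for the coinvariants $Y_{H_n}$ into statements about $Y_n$. Since there are no error terms coming from the control map, one gets the clean asymptotic $\tilde{\mu}_n = \mu_G(Y) p^n + \tilde{m}_1 n + \mathcal{O}(1)$ and $\tilde{\lambda}_n = \tilde{l} p^n + \mathcal{O}(1)$ directly. Part~(b) is then an immediate consequence of \cite[Theorem~2.4]{Cuoco3} applied to $\tilde{f}_\infty = p^{\mu_G(Y)} \tilde{g}_\infty$, and the same reduction of the sum of valuations to the single factor $v_{\overline{\Upsilon}}$ explained in the proof of Theorem~\ref{thm:cuoco-monsky}(b) applies verbatim, since the pertinent primes all collapse to the principal ideal generated by the coset of $\Upsilon$.

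For part~(c), suppose that $Y_f$ is finitely generated over $\Lambda(H)$. Then Theorem~\ref{main_theorem2}(e) gives
\[ \tilde{\mu}_n \;=\; \mu_{G_n/H_n}(Y(E/\KK_{\infty}^{H_n})) \;=\; p^n \mu_G(Y) \;+\; \mu_{G_n/H_n}(T_{\Lambda_{H,n}}([F_{\Lambda_2}(X(E/\KK_{\infty}))]_{H_n})). \]
By Lemma~\ref{invaraints bound_lemma}(b), the last term is bounded by a constant independent of $n$. Comparing this with the asymptotic formula from part~(a), namely $\tilde{\mu}_n = \mu_G(Y) p^n + \tilde{m}_1 n + \mathcal{O}(1)$, forces $\tilde{m}_1 n = \mathcal{O}(1)$, and hence $\tilde{m}_1 = 0$.

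For part~(d), again assume $Y_f$ is finitely generated over $\Lambda(H)$. A result of Harris (\cite{Harris}) applied to the finitely generated $\Lambda(H)$-module $Y_f$ and the chain $H_n = H^{p^n}$ yields
\[ \textup{rank}_{\Zp}((Y_f)_{H_n}) \;=\; \textup{rank}_{\Lambda(H)}(Y_f) \cdot p^n \;+\; \mathcal{O}(1). \]
Since $Y$ is a finitely generated $\Lambda_2$-module, $Y[p^\infty]$ is annihilated by some fixed $p^m$, so $(Y[p^\infty])_{H_n}$ has $\Zp$-rank zero. The exact sequence $(Y[p^\infty])_{H_n} \to Y_{H_n} \to (Y_f)_{H_n} \to 0$ thus yields $\textup{rank}_{\Zp}(Y_{H_n}) = \textup{rank}_{\Zp}((Y_f)_{H_n})$. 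Via the control isomorphism $Y_n \cong Y_{H_n}$ and the identity $\tilde{\lambda}_n = \textup{rank}_{\Zp}(Y_n/Y_n[p^\infty]) = \textup{rank}_{\Zp}(Y_n)$, we conclude $\tilde{\lambda}_n = \textup{rank}_{\Lambda(H)}(Y_f) \cdot p^n + \mathcal{O}(1)$, and comparison with part~(a) gives $\tilde{l} = \textup{rank}_{\Lambda(H)}(Y_f)$. The main (but minor) obstacle is simply keeping the bookkeeping straight when passing between the $\Z_p$-ranks of $Y_n$, $Y_{H_n}$ and $(Y_f)_{H_n}$; every other ingredient has already been assembled in earlier sections.
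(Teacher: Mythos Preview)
Your treatment of parts~(a), (b) and (d) is correct and matches the paper's approach: the exact control theorem (Proposition~\ref{fineSelmercontrol_prop1}) together with the Cuoco--Monsky machinery and Harris's result give these parts cleanly.

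The issue is part~(c). Your invocation of Theorem~\ref{main_theorem2}(e) introduces a genuine circularity in the paper's logical order. The implication $(a)\Longrightarrow(e)$ of Theorem~\ref{main_theorem2} is established in Section~\ref{section:main-theorem2}, and its proof passes through Theorem~\ref{equivalence_theorem}, whose proof in turn uses the equivalence $(a)\Longleftrightarrow(c)$ of Theorem~\ref{main_theorem2}. But the direction $(a)\Longrightarrow(b)$ of Theorem~\ref{main_theorem2} (proved in Section~\ref{section:main-theorem2a}) is obtained precisely by citing Theorem~\ref{thm3:cuoco-monsky}---and that citation needs part~(c). So the chain
\[
\text{\ref{thm3:cuoco-monsky}(c)} \;\to\; \text{\ref{main_theorem2}}(a\Rightarrow e) \;\to\; \text{\ref{equivalence_theorem}} \;\to\; \text{\ref{main_theorem2}}(a\Rightarrow b) \;\to\; \text{\ref{thm3:cuoco-monsky}(c)}
\]
closes up on itself.

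The paper's intended argument (``identical to the proofs of Theorems~\ref{thm:cuoco-monsky} and \ref{thm2:cuoco-monsky}'') is to adapt the $\mu$-invariant bookkeeping of Theorems~\ref{mu_invariants_theorem} and \ref{mu_invariants_theorem2} directly for $Y$, rather than quoting Theorem~\ref{main_theorem2}. Concretely, one can establish the formula
\[
p^n\mu_G(Y) \;=\; \mu_{G_n/H_n}(Y_{H_n}) \;-\; \mu_{G_n/H_n}(H_1(H_n,Y))
\]
via the Euler-characteristic/Hochschild--Serre argument (as in the derivation of equation~\eqref{mu_invariant_equality2}), then show under the hypothesis that $Y_f$ is finitely generated over $\Lambda(H)$ that $H_1(H_n,Y)=H_1(H_n,Y^{\circ})$ for the maximal pseudo-null submodule $Y^{\circ}$ (since $\Upsilon_n$ is then coprime to $\tilde{f}_\infty$, so $H_1(H_n,W)=0$ for the elementary module), and finally bound $\mu_{G_n/H_n}(H_1(H_n,Y^{\circ}))$ uniformly using that $Y^{\circ}$ is a fixed pseudo-null $\Lambda_2$-module (cf.\ Lemma~\ref{mu_pseudo-null_lemma} and the argument of Lemma~\ref{invaraints bound_lemma}). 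All of these ingredients sit in Sections~5 and 9 and carry no forward reference to Theorem~\ref{main_theorem2}.
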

\begin{proof}
The proof is identical to the proofs of Theorems~\ref{thm:cuoco-monsky} and \ref{thm2:cuoco-monsky} using our exact control theorem Proposition~\ref{fineSelmercontrol_prop1}.
\end{proof}

\part{Proofs of the main results}
We can now prove the main results (see Section~\ref{section:mainintroduction} for the precise statements).
\section{Proof of Theorem~\ref{main_theorem}} \label{section:main-theorem}
In this section, we prove our first main result (Theorem~\ref{main_theorem}).
We first collect a couple of auxiliary results. Recall from Section~\ref{section:mainintroduction} the Greenberg topology on $\mathcal{E}$: For any ${L \in \mathcal{E}}$ and ${n \in \N}$ the set $\mathcal{E}(L,n)$ consists of the $\Z_p$-extensions of $K$ which coincide with $L$ at least up to the $n$-th layer. In order to control the variation of the Iwasawa invariants of signed Selmer groups in Greenberg neighborhoods, we first need the following
\begin{proposition} \label{prop:neighborhood}
Let ${\bullet, \star \in \{+,-\}}$, ${H \in \mathcal{H}^{\bullet\star}}$ and ${L_{\infty} = \KK_{\infty}^H}$. Then there exists an integer $n$ such that ${\Gal(\KK_{\infty}/L'_{\infty}) \in \mathcal{H}^{\bullet\star}}$ for each ${L'_{\infty} \in \mathcal{E}(L_{\infty},n)}$.
\end{proposition}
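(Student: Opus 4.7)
The strategy is to observe that Proposition~\ref{prop:neighborhood} is essentially a reformulation of Proposition~\ref{Zpextensions_prop} once combined with the fact that Greenberg's topology on $\mathcal{E}$ is Hausdorff (via the homeomorphism ${\mathcal{E} \cong \mathbb{P}^1(\Z_p)}$ stated in Section~\ref{section:mainintroduction}). Since by hypothesis ${H \in \mathcal{H}^{\bullet\star}}$, the set $\mathcal{H}^{\bullet\star}$ is non-empty, so Proposition~\ref{Zpextensions_prop} applies and yields that the complement ${\mathcal{E} \setminus \mathcal{H}^{\bullet\star}}$ consists of only finitely many ``bad'' $\Z_p$-extensions ${L_\infty^{(1)}, \ldots, L_\infty^{(r)}}$ of $K$. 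Each $L_\infty^{(i)}$ fails at least one of the three defining conditions (a), (b), (c) of Definition~\ref{def:mathcal{H}}.

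The plan is then to show that any one of these finitely many bad extensions can be ``separated'' from $L_\infty$ inside a sufficiently small Greenberg neighborhood. Concretely, for each ${i \in \{1, \ldots, r\}}$ we have ${L_\infty^{(i)} \ne L_\infty}$, and the intersection ${L_\infty^{(i)} \cap L_\infty}$ is some finite layer $L_{n_i}$ of $L_\infty/K$ (otherwise the two $\Z_p$-extensions would coincide). By the definition of the base of neighborhoods, this means ${L_\infty^{(i)} \notin \mathcal{E}(L_\infty, n_i + 1)}$. Setting ${n := 1 + \max_{i} n_i}$, I claim that ${\mathcal{E}(L_\infty,n) \subseteq \mathcal{H}^{\bullet\star}}$: indeed, any $L_\infty' \in \mathcal{E}(L_\infty,n)$ which is not in $\mathcal{H}^{\bullet\star}$ would have to coincide with some $L_\infty^{(i)}$, but this is excluded by construction. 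Identifying $L_\infty'$ with $H' = \Gal(\KK_\infty/L_\infty')$ then gives ${H' \in \mathcal{H}^{\bullet\star}}$ for every $L_\infty' \in \mathcal{E}(L_\infty, n)$.

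There is no real obstacle here: the only nontrivial ingredient is the finiteness of the complement of $\mathcal{H}^{\bullet\star}$, which has already been established in Proposition~\ref{Zpextensions_prop}. The remaining content is the purely topological observation that in a Hausdorff space, any point is open in the complement of any finite set not containing it, translated into the explicit base $\{\mathcal{E}(L_\infty,n)\}_{n \ge 0}$ of neighborhoods of $L_\infty$. In effect, Proposition~\ref{prop:neighborhood} simply records that $\mathcal{H}^{\bullet\star}$ is an open subset of $\mathcal{E}$ in Greenberg's topology, a fact that will be used repeatedly in the following sections to justify restricting attention to a neighborhood of a given ${L_\infty = \KK_\infty^H \in \mathcal{H}^{\bullet\star}}$ when studying the variation of Iwasawa invariants.
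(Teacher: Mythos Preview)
Your proof is correct and essentially follows the paper's approach: both arguments amount to observing that the complement of $\mathcal{H}^{\bullet\star}$ in $\mathcal{E}$ is finite and then excluding those finitely many bad $\Z_p$-extensions by shrinking the neighborhood. The only cosmetic difference is that the paper handles conditions~(a) and~(b) directly by choosing $n$ large enough that they are already witnessed at the layer $L_n$, and invokes Proposition~\ref{Torsion_prop} separately for~(c), whereas you package all three conditions at once via Proposition~\ref{Zpextensions_prop} and a clean Hausdorff separation argument.
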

\begin{proof}
	Since ${H \in \mathcal{H}^{\bullet\star}}$, we know that \begin{enumerate}[(a)]
		\item no prime in $S$ splits completely in $L_{\infty}/K$,
		\item every prime of $K$ above $p$ is totally ramified in $L_{\infty}/K$, and
		\item $X^{\bullet\star}(E/L_{\infty})$ is a torsion $\Lambda$-module.
	\end{enumerate}
Write ${L_{\infty} = \bigcup_n L_n}$, and choose ${n \ge 1}$ large enough such that no prime in $S$ splits completely in $L_{n}/K$ and such that no prime above $p$ is unramified in $L_n/K$. Then any ${L'_{\infty} \in \mathcal{E}(L_{\infty},n)}$ satisfies the first two conditions~$(a)$ and $(b)$. It follows from Lemma~\ref{Xinf_torsion_lemma} and Proposition~\ref{Torsion_prop} that condition~(c) actually excludes only finitely many ${L'_{\infty} \in \mathcal{E}}$. Therefore, by increasing $n$ appropriately, we can ensure that ${\mathcal{E}(L_{\infty},n) \subseteq \mathcal{H}^{\bullet\star}}$.
\end{proof}

Next, we turn to the $\lambda$-invariants. The following proposition establishes a connection between
conditions~$(g)$ and $(h)$ in Theorem~\ref{main_theorem}. Recall that if ${\bullet, \star \in \{+,-\}}$, then ${f_{\infty}^{\bullet\star} = p^{m^{\bullet\star}} \cdot g_{\infty}^{\bullet\star} \in \Lambda(G)}$ denotes the characteristic power series of $X^{\bullet\star}(E/\KK_{\infty})$ and ${p \nmid g_{\infty}^{\bullet\star}}$. We also recall that ${\Lambda(G) \cong \Z_p[[T_1,T_2]]}$, where the two topological generators ${\sigma, \tau \in G}$ correspond to ${T_1+1}$ and ${T_2+1}$.

Using a result of Monsky from \cite{Monsky}, we deduce the following
\begin{proposition}\label{SelmerMonsky_prop}
Let ${\bullet, \star \in \{+,-\}}$. 
Let ${H=\overbar{\langle \sigma^a\tau^b \rangle} \in \mathcal{H}^{\bullet\star}}$.  Then $\lambda(X^{\bullet\star}(E/L))$ is bounded in a neighborhood of $\KK_{\infty}^H$ if and only if the image of $g_{\infty}^{\bullet\star}$ in $\Lambda_2/p$ is not divisible by the coset of ${(1+T_1)^a(1+T_2)^b-1}$.
\end{proposition}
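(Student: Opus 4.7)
The plan is to combine Proposition~\ref{prop:neighborhood}, the control theorem (Theorem~\ref{SelmerControlThm_prop}), and Monsky's local boundedness criterion from \cite{Monsky}. First I will use Proposition~\ref{prop:neighborhood} to shrink to a Greenberg neighborhood ${U = \mathcal{E}(\KK_\infty^H, n_0)}$ of $\KK_\infty^H$ such that ${\Gal(\KK_\infty/L) \in \mathcal{H}^{\bullet\star}}$ for every ${L \in U}$; outside $U$ there are only finitely many $\Z_p$-extensions to consider (Lemma~\ref{Xinf_torsion_lemma} and Proposition~\ref{Torsion_prop}), so boundedness on $U$ is equivalent to boundedness on a neighborhood. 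For such ${L = \KK_\infty^{H'}}$ with ${H' = \overbar{\langle \sigma^{a'}\tau^{b'}\rangle}}$, writing ${\Upsilon' = (1+T_1)^{a'}(1+T_2)^{b'}-1}$, Theorem~\ref{SelmerControlThm_prop} identifies
\[ X^{\bullet\star}(E/L) \,\cong\, X^{\bullet\star}(E/\KK_\infty)_{H'} \,=\, X^{\bullet\star}(E/\KK_\infty)/\Upsilon', \]
so that $\lambda(X^{\bullet\star}(E/L))$ is the $\lambda$-invariant of this $\Lambda(G/H')$-module.

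Next I will invoke Monsky's theorem from \cite{Monsky}: for a finitely generated torsion $\Lambda_2$-module $M$ with no nontrivial pseudo-null $\Lambda_2$-submodules and characteristic power series $f = p^{\mu}g$ with $p\nmid g$, the map
\[ [(a',b')] \,\longmapsto\, \lambda\bigl(M/\Upsilon'\bigr) \]
is locally bounded at $[(a,b)] \in \mathbb{P}^1(\Z_p)$ if and only if the image $\bar g \in \Lambda_2/p$ is not divisible by the image $\bar\Upsilon$ of $(1+T_1)^a(1+T_2)^b-1$. I will apply this to ${M = X^{\bullet\star}(E/\KK_\infty)}$: by Lemma~\ref{Xinf_torsion_lemma} this $M$ is $\Lambda_2$-torsion, and by Theorem~\ref{pseudo-null_theorem}(a) it has no nontrivial pseudo-null $\Lambda_2$-submodule. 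The homeomorphism ${\mathbb{P}^1(\Z_p) \leftrightarrow \mathcal{E}}$ transports Greenberg neighborhoods of $\KK_\infty^H$ to neighborhoods of $[(a,b)]$, so local boundedness on $U$ matches local boundedness in Monsky's sense. This will complete the equivalence.

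The main obstacle is pinning down the precise form of Monsky's criterion needed here, since in \cite{Monsky} the result is phrased in terms of the asymptotic behaviour of $\mu_n^{\bullet\star}$ and $\lambda_n^{\bullet\star}$ along intermediate $\Z_p$-extensions rather than as an assertion about $\lambda(M/\Upsilon')$ for varying $\Upsilon'$. The translation is carried by Theorem~\ref{thm:cuoco-monsky}: part~(b) identifies the ``jumping term'' $m_1^{\bullet\star}$ with $v_{\bar\Upsilon}(\overline{g_\infty^{\bullet\star}})$, so the divisibility condition of the proposition is exactly the condition $m_1^{\bullet\star} = 0$ that controls whether the $\mu_n^{\bullet\star}$ grow like $\mu_G(X^{\bullet\star})p^n + O(1)$ or strictly faster; the latter forces pockets of unbounded $\lambda$-invariant via Nakayama-type arguments inside arbitrarily small Greenberg neighborhoods. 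Conversely, when $\bar\Upsilon \nmid \overline{g_\infty^{\bullet\star}}$, a uniform bound for $\lambda(M/\Upsilon')$ can be extracted from the elementary structure of $M$ modulo $p$ together with the Cuoco--Monsky growth formulas, which is the local-boundedness half of Monsky's theorem.
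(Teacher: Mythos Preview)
Your approach is correct and matches the paper's: identify $X^{\bullet\star}(E/L)$ with $X^{\bullet\star}(E/\KK_\infty)_{H'}$ via Theorem~\ref{SelmerControlThm_prop}, restrict to a Greenberg neighborhood contained in $\mathcal{H}^{\bullet\star}$ via Proposition~\ref{prop:neighborhood}, and then invoke \cite[Theorem~3.3]{Monsky}.

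Your final paragraph, however, is an unnecessary and somewhat misdirected detour. Monsky's Theorem~3.3 \emph{is} precisely the local boundedness criterion you stated in your second paragraph (and it applies to any finitely generated torsion $\Lambda_2$-module; the no-pseudo-null hypothesis from Theorem~\ref{pseudo-null_theorem} is not required), so there is nothing to reconstruct. Moreover, Theorem~\ref{thm:cuoco-monsky} is about the asymptotics of $\mu_n^{\bullet\star},\lambda_n^{\bullet\star}$ along the fixed tower $\{\KK_\infty^{H_n}\}_{n\ge 0}$ for a \emph{single} $H$, which is a different kind of variation from letting $H'$ range over a Greenberg neighborhood of $H$; it would not by itself deliver the boundedness statement for varying $H'$. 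Simply cite \cite[Theorem~3.3]{Monsky} directly, as the paper does.
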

\begin{proof}
	Let ${H' \in \mathcal{H}^{\bullet\star}}$. According to Theorem~\ref{SelmerControlThm_prop}, the maps
	\[ X^{\bullet\star}(E/\KK_{\infty})_{H'} \longrightarrow X^{\bullet\star}(E/\KK_{\infty}^{H'})\]
	are isomorphisms. Therefore, considering both the domain and codomain as $\Lambda(G/H')$-modules, we see that their $\lambda$-invariants are equal. Taking Proposition~\ref{prop:neighborhood} into account, the desired result follows from \cite[Theorem~3.3]{Monsky}.
\end{proof}

\begin{proposition} \label{prop:new}
Let ${\bullet, \star \in \{+,-\}}$. For any ${H = \overline{\langle \sigma^a \tau^b \rangle} \in \mathcal{H}^{\bullet\star}}$, $X^{\bullet\star}(E/\KK_{\infty})_f$ is finitely generated as a $\Lambda(H)$-module if and only if the image of $g_{\infty}^{\bullet\star}$ in $\Lambda_2/p$ is not divisible by the coset of ${(1+T_1)^a (1+T_2)^b - 1}$.
\end{proposition}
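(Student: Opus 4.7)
The forward direction is immediate from Theorem~\ref{thm:cuoco-monsky}: if $X^{\bullet\star}(E/\KK_\infty)_f$ is finitely generated over $\Lambda(H)$, then part~(c) of that theorem yields $m_1^{\bullet\star}=0$, and part~(b) identifies $m_1^{\bullet\star}$ with $v_{\overline{\Upsilon}}(\overline{g_\infty^{\bullet\star}})$; consequently the coset of $\Upsilon = (1+T_1)^a(1+T_2)^b-1$ does not divide the coset of $g_\infty^{\bullet\star}$ in $\Lambda_2/p\Lambda_2$.

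For the converse, my plan is to combine the structure theorem for $X := X^{\bullet\star}(E/\KK_\infty)$ with a commutative-algebra input governing when $\Lambda_2/(f)$ is finitely generated over $\Lambda(H)$. Since $X$ is $\Lambda_2$-torsion by Lemma~\ref{Xinf_torsion_lemma} and contains no nontrivial pseudo-null $\Lambda_2$-submodule by Theorem~\ref{pseudo-null_theorem}(a), the structure theorem \cite[Chapt.~VII, \S4.4 Thm.~5]{Bourbaki} provides an injection
\[ X \hookrightarrow E := \bigoplus_{i=1}^{s} \Lambda_2/p^{m_i^{\bullet\star}} \oplus \bigoplus_{j=1}^{t} \Lambda_2/(f_j^{n_j^{\bullet\star}}) \]
with pseudo-null cokernel, where $\prod_j f_j^{n_j^{\bullet\star}}$ agrees with $g_\infty^{\bullet\star}$ up to a unit. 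A snake-lemma argument applied to the compatible decompositions $0 \to X[p^\infty] \to X \to X_f \to 0$ and $0 \to E[p^\infty] \to E \to E_f \to 0$---using that $X \cap E[p^\infty] = X[p^\infty]$ because $X \hookrightarrow E$---then produces an exact sequence $0 \to X_f \to E_f \to C \to 0$ with $E_f = \bigoplus_j \Lambda_2/(f_j^{n_j^{\bullet\star}})$ and $C$ pseudo-null.

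The crucial commutative-algebra claim is: for an irreducible $f \in \Lambda_2 \setminus p\Lambda_2$, the quotient $\Lambda_2/(f)$ is finitely generated over $\Lambda(H)$ if and only if $\overline{\Upsilon}\nmid\overline{f}$ in $\Lambda_2/p$. Since $[(a,b)] \in \mathbb{P}^1(\Zp)$, after possibly swapping $\sigma$ and $\tau$ I may assume $a \in \Zp^{\times}$; then $\Upsilon \equiv aT_1 + bT_2 \pmod{(T_1,T_2)^2}$, and successive approximation yields $\Lambda_2 = \Lambda(H)[[\Upsilon']]$ with $\Upsilon' := T_2$, so that $\overline{\Upsilon}$ is a regular parameter of $\Lambda_2/p$. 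The condition $\overline{\Upsilon}\nmid\overline{f}$ is equivalent to $f \notin (p,\Upsilon)\Lambda_2$, in which case Weierstrass preparation in $\Lambda(H)[[\Upsilon']]$ writes $f$, up to a unit, as a distinguished polynomial in $\Upsilon'$ over $\Lambda(H)$; hence $\Lambda_2/(f)$ is free of finite positive rank over $\Lambda(H)$. Conversely, if $\overline{\Upsilon}\mid\overline{f}$, then $f \in (p,\Upsilon)\Lambda_2$, and $\Lambda_2/(f, p, \Upsilon) = \Lambda_2/(p,\Upsilon) \cong \Fp[[\Upsilon']]$ is infinite over $\Fp$; this contradicts finite generation of $\Lambda_2/(f)$ over the complete Noetherian local ring $\Lambda(H)$ with residue field $\Fp$, by Nakayama.

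Assembling: assuming $\overline{\Upsilon}\nmid\overline{g_\infty^{\bullet\star}}$, the primality of $\overline{\Upsilon}$ in $\Lambda_2/p$ forces $\overline{\Upsilon}\nmid\overline{f_j}$ for every $j$; the claim, combined with the filtration of $\Lambda_2/(f_j^{n_j})$ whose successive quotients are isomorphic to $\Lambda_2/(f_j)$, then makes each summand of $E_f$ finitely generated over $\Lambda(H)$, and therefore so is $E_f$ itself. Because $\Lambda(H)$ is Noetherian, the $\Lambda(H)$-submodule $X_f \hookrightarrow E_f$ inherits finite generation, completing the argument. The main technical burden will lie in verifying the commutative-algebra claim cleanly; the snake-lemma step and the final assembly are routine once that claim and the structure theorem are in hand.
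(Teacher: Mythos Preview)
Your proof is correct and follows the paper's approach: the forward direction via Theorem~\ref{thm:cuoco-monsky} is the same (the paper routes through Theorem~\ref{mu_invariants_theorem} first to verify the torsion hypothesis on the layers, but this is packaged into part~(c) which you cite directly), and for the reverse direction the paper simply appeals to the ``completely algebraic proof of \cite[Proposition~7.1]{MHG}'', which your structure-theorem-plus-Weierstrass-preparation argument spells out explicitly and correctly.
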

\begin{proof}
	Since ${H \in \mathcal{H}^{\bullet\star}}$ we know that ${g_\infty^{\bullet\star} \ne 0}$. Suppose first that $X^{\bullet\star}(E/\KK_{\infty})_f$ is a finitely generated $\Lambda(H)$-module, and let ${L_{\infty} = \KK_{\infty}^H}$. Then it follows from Theorem~\ref{mu_invariants_theorem} that for all $n \geq 0$, $X^{\bullet\star}(E/\KK_{\infty}^{H_n})$ is a torsion $\Lambda_{H,n}$-module, and we have $$\mu_{G_n}(X^{\bullet\star}(E/\KK_{\infty}))=\mu_{G_n/H_n}(X^{\bullet\star}(E/\KK_{\infty}^{H_n})). $$
	This means that we have ${m_1^{\bullet\star} = 0}$ in Theorem~\ref{thm:cuoco-monsky}(a). It now follows from the characterization of $m_1^{\bullet\star}$ in Theorem~\ref{thm:cuoco-monsky}(b) that the image of $g_{\infty}^{\bullet\star}$ in $\Lambda_2/p$ is not divisible by the coset of ${(1+T_1)^a(1+T_2)^b-1}$.
	
	On the other hand, suppose that the image of $g_{\infty}^{\bullet\star}$ in $\Lambda_2/p$ is not divisible by the coset of ${(1+T_1)^a (1+T_2)^b - 1}$. Then we use the completely algebraic proof of \cite[Proposition~7.1]{MHG} in order to deduce that $X^{\bullet\star}(E/\KK_{\infty})$ is finitely generated over $\Lambda(H)$.
\end{proof}

We now prove Theorem~\ref{main_theorem}.
\begin{proof}[Proof of Theorem~\ref{main_theorem}]
Let ${\bullet, \star \in \{+,-\}}$ and ${H \in \mathcal{H}^{\bullet\star}}$. We have that $X^{\bullet\star}(E/\KK_{\infty})$ is $\Lambda_2$-torsion by Lemma~\ref{Xinf_torsion_lemma}. \\
${(c) \Longrightarrow (b) \Longleftrightarrow (a)}$: Proposition~\ref{MHG_cohomology_group_prop}

${(i) \Longrightarrow (c)}$: If $(i)$ holds then we have an exact sequence
$$0 \to X^{\bullet\star}(E/\KK_{\infty})_f \to \Lambda(H)^{\lambda_H^{\bullet\star}} \to C \to 0$$ where $C$ is finite. This sequence induces the exact sequence
$$0 \to C[p] \to X^{\bullet\star}(E/\KK_{\infty})_f/p \xrightarrow{\theta} (\Lambda(H)/p)^{\lambda_H^{\bullet\star}} \to C/p\to 0.$$
Since ${cd_p(H)=1}$, the functor $H_1(H, -)$ is left exact. So as ${H_1(H, (\Lambda(H)/p)^{\lambda_H^{\bullet\star}})=0}$, it follows that ${H_1(H, \img \theta)=0}$. Therefore we see that ${H_1(H, X^{\bullet\star}(E/\KK_{\infty})_f/p) \cong H_1(H, C[p])}$. Since $C$ is finite, $(c)$ holds.

${(a) \Longleftrightarrow (d) \Longleftrightarrow (e)}$: follows from Proposition~\ref{Seln_surjective_prop} and Theorem~\ref{mu_invariants_theorem}, noting that, since $\Lambda(G)$ is a free $\Lambda(G_n)$-module of rank $p^n$, we have ${p^n\mu_G(X^{\bullet\star}(E/\KK_{\infty}))=\mu_{G_n}(X^{\bullet\star}(E/\KK_{\infty}))}$.

${(a) \Longleftrightarrow (g) \Longleftrightarrow (h)}$: follows from Propositions~\ref{SelmerMonsky_prop} and \ref{prop:new}. 

${(f) \Longrightarrow (d)}$: Clear.

${(i) \Longrightarrow (a)}$: Clear.

${(a) \Longrightarrow (f)}$: Proposition~\ref{p-primary_structure_prop}.

${(a) \Longrightarrow (i)}$:  Theorem~\ref{Xinf_structure_theorem}.

${(h) \Longleftrightarrow (j)}$: It is clear that $(j)$ implies $(h)$. Now suppose that $(h)$ holds. Then it follows from Theorem~\ref{thm:strong_local_behavior} that in fact both the $\mu$- and $\lambda$-invariants are constant (in a potentially smaller neighbourhood of $\KK_\infty^H$).
\end{proof}

\begin{proof}[Proof of Proposition~\ref{almost_all_prop}]
Recall from Proposition~\ref{Zpextensions_prop} that ${H \in \mathcal{H}^{\bullet\star}}$ for all but finitely many ${H \subseteq G}$ satisfying ${G/H \cong \Z_p}$. Let ${\Omega_2=\Lambda_2/p\Lambda_2(\cong \Fp[[T, U]])}$. This is a UFD, so by Theorem~\ref{main_theorem}(g), we only need to prove the following statement: If $[(a,b)]$ and $[(c,d)]$ are two distinct elements of $\mathbb{P}^1(\Zp)$, then ${\sigma^a\tau^b-1}$ and ${\sigma^c\tau^d-1}$ are relatively prime elements of $\Omega_2$. This can be proven in the same way as \cite[Lemma~2.4]{MHG}.
\end{proof}

\section{Proof of the first part of Theorem~\ref{main_theorem2} and Theorems \ref{2dimWingberg_theorem1}, \ref{2dimWingberg_theorem2} and ~\ref{equivalence_theorem}}\label{section:main-theorem2a}
As in Section~\ref{section:main-theorem}, we deduce from a result of Monsky from \cite{Monsky} the following
\begin{proposition}\label{FineSelmerMonsky_prop}
Let ${\bullet, \star \in \{+,-\}}$. 
Let ${H=\overbar{\langle \sigma^a\tau^b \rangle} \in \mathcal{H}^{\bullet\star}}$.  Then $\lambda(Y(E/L))$ is bounded in a neighborhood of $\KK_{\infty}^H$ if and only if the image of $\tilde{g}_{\infty}$ in $\Lambda_2/p$ is not divisible by the coset of ${(1+T_1)^a(1+T_2)^b-1}$.
\end{proposition}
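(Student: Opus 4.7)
The plan is to mimic the proof of Proposition~\ref{SelmerMonsky_prop}, replacing the signed Selmer group throughout by the fine Selmer group. The two essential ingredients are an exact control theorem and the torsionness of $Y(E/\KK_{\infty})$ as a $\Lambda_2$-module; both are already available in the excerpt (Proposition~\ref{fineSelmercontrol_prop1} and Lemma~\ref{Yinf_torsion_lemma}, respectively).

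First I would apply Proposition~\ref{prop:neighborhood} to choose a Greenberg neighborhood $\mathcal{E}(\KK_\infty^H, n)$ of $\KK_\infty^H$ such that every $L_\infty = \KK_\infty^{H'}$ in this neighborhood satisfies $H' \in \mathcal{H}^{\bullet\star}$. This ensures the hypotheses of the control theorem for fine Selmer groups are verified uniformly across the neighborhood.

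Next, for each such $H'$, I would invoke Proposition~\ref{fineSelmercontrol_prop1} with $n=0$ to get an isomorphism
\[
R_{p^\infty}(E/\KK_\infty^{H'}) \;\isomarrow\; R_{p^\infty}(E/\KK_\infty)^{H'}.
\]
Taking Pontryagin duals yields a $\Lambda(G/H')$-isomorphism $Y(E/\KK_\infty^{H'}) \cong Y(E/\KK_\infty)_{H'}$. In particular, the two modules have the same $\lambda$-invariant, so boundedness of $\lambda(Y(E/L_\infty))$ as $L_\infty = \KK_\infty^{H'}$ varies in the neighborhood is equivalent to boundedness of $\lambda_{G/H'}(Y(E/\KK_\infty)_{H'})$.

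Finally, since $Y(E/\KK_\infty)$ is a finitely generated torsion $\Lambda_2$-module (Lemma~\ref{Yinf_torsion_lemma}) with characteristic power series $\tilde{f}_\infty = p^{\tilde\mu} \tilde{g}_\infty$ as in~\eqref{eq:tildef_infty} and $p \nmid \tilde g_\infty$, I would apply Monsky's theorem \cite[Theorem~3.3]{Monsky} directly to $Y(E/\KK_\infty)$: it asserts precisely that $\lambda_{G/H'}(Y(E/\KK_\infty)_{H'})$ is locally bounded around $H = \overbar{\langle \sigma^a \tau^b \rangle}$ if and only if the image of $\tilde{g}_\infty$ in $\Lambda_2/p$ is not divisible by the coset of $(1+T_1)^a(1+T_2)^b - 1$. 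Combining these three steps yields the claim. The argument is essentially formal and presents no real obstacle; the only point requiring attention is that the control theorem is only stated for subgroups in $\mathcal{H}^{\bullet\star}$, which is exactly why the preparatory shrinking of the neighborhood via Proposition~\ref{prop:neighborhood} is needed at the outset.
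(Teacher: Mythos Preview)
Your proposal is correct and follows essentially the same approach as the paper's proof: apply Proposition~\ref{prop:neighborhood} to reduce to $H' \in \mathcal{H}^{\bullet\star}$, use the exact control theorem (Proposition~\ref{fineSelmercontrol_prop1}) to identify $\lambda$-invariants of $Y(E/\KK_\infty^{H'})$ and $Y(E/\KK_\infty)_{H'}$, and then invoke \cite[Theorem~3.3]{Monsky}. The paper's proof is just a more compressed version of exactly these steps.
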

\begin{proof}
	Let ${H' \in \mathcal{H}^{\bullet\star}}$. According to Proposition \ref{fineSelmercontrol_prop1}, the maps
	\[ Y(E/\KK_{\infty})_{H'} \longrightarrow Y(E/\KK_{\infty}^{H'})\]
	are isomorphisms. Therefore, considering both the domain and codomain as $\Lambda(G/H')$-modules, we see that their $\lambda$-invariants are equal. Taking Proposition~\ref{prop:neighborhood} into account, the desired result follows from \cite[Theorem~3.3]{Monsky}.
\end{proof}

We can now prove the first part of Theorem~\ref{main_theorem2}.
\begin{proof}[Proof of implications ${(a) \Longleftrightarrow (b) \Longleftrightarrow (c)}$ of Theorem~\ref{main_theorem2}] $\,$ \\
$Y(E/\KK_{\infty})$ is a torsion $\Lambda_2$-module by Lemma~\ref{Yinf_torsion_lemma}.

${(a) \Longrightarrow (b)}$: Theorem~\ref{thm3:cuoco-monsky}.

${(b) \Longrightarrow (a)}$: This follows from the completely algebraic proof of \cite[Proposition~7.1]{MHG}.

${(b) \Longleftrightarrow (c)}$: We get the required equivalence from Proposition~\ref{FineSelmerMonsky_prop}.
\end{proof}

Now we also need the following result for $\lambda$-invariants of Selmer groups which we deduce from Monsky's paper \cite{Monsky}.
\begin{proposition}\label{SelmerMonsky_prop2}
Let ${\bullet, \star \in \{+,-\}}$. 
Let ${H=\overbar{\langle \sigma^a\tau^b \rangle} \in \mathcal{H}^{\bullet\star}}$.  Then $\lambda(T_{\Lambda}(X(E/L)))$ is bounded in a neighborhood of $\KK_{\infty}^H$ if and only if the image of $g_{\infty}$ in $\Lambda_2/p$ is not divisible by the coset of ${(1+T_1)^a(1+T_2)^b-1}$.
\end{proposition}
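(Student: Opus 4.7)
The approach mirrors the proofs of Propositions~\ref{SelmerMonsky_prop} and \ref{FineSelmerMonsky_prop}: combine a control theorem with Monsky's criterion \cite[Theorem~3.3]{Monsky}, which characterises boundedness of $\lambda$-invariants of quotients $M_{H'}$ of a finitely generated $\Lambda_2$-torsion module $M$ as $H'$ ranges over an open set of closed rank-one subgroups of $G$. The difference from the earlier two propositions is that here the relevant control theorem (Theorem~\ref{TorsionControlThm}) is not an isomorphism but only exact with a non-trivial cokernel, which forces us to invoke a uniform bound on that cokernel.

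First, I would restrict attention to a Greenberg neighbourhood $U \subseteq \mathcal{E}$ of $\KK_\infty^H$ such that every $L'_\infty \in U$ satisfies $\Gal(\KK_\infty/L'_\infty) \in \mathcal{H}^{\bullet\star}$; such $U$ exists by Proposition~\ref{prop:neighborhood}. On $U$ we may then apply Theorem~\ref{TorsionControlThm} with $n=0$ to each $H' = \Gal(\KK_\infty/L'_\infty)$, obtaining a short exact sequence of finitely generated $\Lambda_{H',0}$-torsion modules
\[
0 \longrightarrow T_{\Lambda_2}(X(E/\KK_\infty))_{H'} \longrightarrow T_{\Lambda_{H',0}}(X(E/L'_\infty)) \longrightarrow T_{\Lambda_{H',0}}([F_{\Lambda_2}(X(E/\KK_{\infty}))]_{H'}) \longrightarrow 0.
\]
Since $\lambda$-invariants are additive on short exact sequences of torsion modules, this yields
\[
\lambda_{G/H'}(T_{\Lambda}(X(E/L'_\infty))) = \lambda_{G/H'}(T_{\Lambda_2}(X(E/\KK_\infty))_{H'}) + \lambda_{G/H'}(T_{\Lambda_{H',0}}([F_{\Lambda_2}(X(E/\KK_{\infty}))]_{H'})).
\]
By Lemma~\ref{invaraints bound_lemma}(a), the last term is uniformly bounded as $H'$ varies over $\mathcal{H}^{\bullet\star}$. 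Hence the left-hand side is bounded on $U$ if and only if the first summand on the right is bounded on $U$.

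Finally, I would apply Monsky's criterion \cite[Theorem~3.3]{Monsky} to the finitely generated $\Lambda_2$-torsion module $M = T_{\Lambda_2}(X(E/\KK_\infty))$, whose characteristic power series is $f_\infty = p^{\mu} g_\infty$ with $p \nmid g_\infty$ by~\eqref{eq:f_infty}. Monsky's theorem asserts that $\lambda_{G/H'}(M_{H'})$ is bounded on a neighbourhood of $H$ if and only if the image $\overline{g_\infty}$ in $\Lambda_2/p$ is not divisible by the coset of $(1+T_1)^a(1+T_2)^b-1$. Combining this with the equivalence established in the previous step gives the proposition.

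The main obstacle, and the reason why this proof differs structurally from Propositions~\ref{SelmerMonsky_prop} and \ref{FineSelmerMonsky_prop}, is the presence of the non-trivial cokernel $T_{\Lambda_{H',0}}([F_{\Lambda_2}(X(E/\KK_{\infty}))]_{H'})$ in the control theorem. Without the uniform $\lambda$-bound provided by Lemma~\ref{invaraints bound_lemma}, the additivity of $\lambda$ in short exact sequences would preclude a clean transfer of boundedness between the two sides; thus the crux of the argument is the legitimate appeal to that lemma.
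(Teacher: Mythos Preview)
Your proof is correct and follows essentially the same approach as the paper: both restrict to a neighbourhood via Proposition~\ref{prop:neighborhood}, invoke Theorem~\ref{TorsionControlThm} at $n=0$ to relate $T_{\Lambda_2}(X(E/\KK_\infty))_{H'}$ and $T_\Lambda(X(E/L'_\infty))$ up to a cokernel, use Lemma~\ref{invaraints bound_lemma}(a) to uniformly bound the $\lambda$-invariant of that cokernel, and then apply Monsky's theorem. The paper phrases this more tersely (``injections whose cokernels have bounded $\lambda$-invariants'') whereas you spell out the short exact sequence and the additivity of $\lambda$, but the substance is the same.
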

\begin{proof}
	Let ${H' \in \mathcal{H}^{\bullet\star}}$. Taking Lemma~\ref{invaraints bound_lemma} into account, we see that according to Theorem~\ref{TorsionControlThm} the maps
	\[ T_{\Lambda_2}(X(E/\KK_{\infty}))_{H'} \longrightarrow T_{\Lambda}(X(E/\KK_{\infty}^{H'}))\]
	are injections whose cokernels have bounded $\lambda$-invariants. Therefore, considering both the domain and codomain as $\Lambda(G/H')$-modules, we see that the difference of their $\lambda$-invariants is bounded. Taking Proposition~\ref{prop:neighborhood} into account, the desired result follows from \cite[Theorem~3.3]{Monsky}.
\end{proof}

\begin{proposition}\label{Monskyequiv_prop}
Let ${\bullet, \star \in \{+,-\}}$ and ${H=\overbar{\langle \sigma^a\tau^b \rangle}\in \mathcal{H}^{\bullet\star}}$. The following statements are equivalent:
\begin{enumerate}[(a)]
\item $T_{\Lambda_2}(X(E/\KK_{\infty}))_f$ is finitely generated over $\Lambda(H)$.
\item The image of $g_{\infty}$ in $\Lambda_2/p$ is not divisible by the coset of ${(1+T_1)^a(1+T_2)^b-1}$.
\item $\lambda_{G/H}(T_{\Lambda}(X(E/L_\infty)))$ is bounded on a neighbourhood $U$ of $\KK_\infty^H$.
\end{enumerate}
\end{proposition}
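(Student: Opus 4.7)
The equivalence $(b) \Longleftrightarrow (c)$ is immediate from Proposition~\ref{SelmerMonsky_prop2}, so the substantive content is the pair of implications $(a) \Longleftrightarrow (b)$. The plan is to mimic the structure of the argument for the signed Selmer case (Proposition~\ref{prop:new}), with the critical additional input being Proposition~\ref{cor:hamidi}$(d)$, which guarantees that $T_{\Lambda_2}(X(E/\KK_\infty))$ has no nontrivial pseudo-null $\Lambda_2$-submodules.

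The direction $(a) \Longrightarrow (b)$ is a direct application of Theorem~\ref{thm2:cuoco-monsky}. Choose a second $\Z_p$-extension ${k_\infty \subseteq \KK_\infty}$ with ${k_\infty \cap \KK_\infty^H = K}$, and apply the asymptotic formula of Theorem~\ref{thm2:cuoco-monsky}$(a)$ to the $\mu$-invariants $\mu_n$ of $T_{\Lambda_n}(X_n)$. If $T_{\Lambda_2}(X(E/\KK_\infty))_f$ is finitely generated over $\Lambda(H)$, then part~$(c)$ of that theorem asserts ${m_1 = 0}$, and in view of the formula ${m_1 = v_{\bar\Upsilon}(\overline{g_\infty})}$ from part~$(b)$ we conclude that $\bar\Upsilon$ does not divide $\overline{g_\infty}$ in $\Lambda_2/p$, which is $(b)$.

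The implication $(b) \Longrightarrow (a)$ is the harder direction and will proceed via the purely algebraic route used in \cite[Proposition~7.1]{MHG}. Write ${\Upsilon = (1+T_1)^a(1+T_2)^b-1}$ and note that ${\Lambda(H) \subseteq \Lambda_2}$ corresponds to the subring of power series in $\Upsilon$. Since $T_{\Lambda_2}(X(E/\KK_\infty))$ is a finitely generated torsion $\Lambda_2$-module, by \cite[Chapt.~VII, \S 4.4 Theorem~5]{Bourbaki} there is a $\Lambda_2$-homomorphism
\[ T_{\Lambda_2}(X(E/\KK_\infty)) \longrightarrow \bigoplus_{i=1}^s \Lambda_2/p^{m_i} \oplus \bigoplus_{j=1}^t \Lambda_2/f_j^{n_j} \]
whose kernel and cokernel are pseudo-null, and such that ${\prod_j f_j^{n_j}}$ generates the same ideal as $g_\infty$. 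By Proposition~\ref{cor:hamidi}$(d)$ the kernel of this map is actually trivial. After quotienting out the $p$-primary part, $T_{\Lambda_2}(X(E/\KK_\infty))_f$ embeds (up to pseudo-null cokernel, hence with finitely generated $\Lambda(H)$-cokernel under hypothesis~$(b)$) into ${\bigoplus_j \Lambda_2/f_j^{n_j}}$. Hypothesis~$(b)$ asserts that $\bar\Upsilon$ is coprime in $\Lambda_2/p$ with each $\overline{f_j}$; we then argue, exactly as in \cite[Proposition~7.1]{MHG}, that each $\Lambda_2/f_j^{n_j}$ is finitely generated over $\Lambda(H)$ — the quotient $\Lambda_2/(\Upsilon, f_j^{n_j})$ is finite modulo $p$ by the Weierstrass preparation theorem applied in $\Lambda_2/\Upsilon \cong \Lambda(H)$, so Nakayama's lemma over the complete local ring $\Lambda(H)$ yields finite generation of $\Lambda_2/f_j^{n_j}$ as a $\Lambda(H)$-module.

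The main obstacle is precisely this last step: controlling the finitely many pseudo-null pieces and making sure the embedding up to pseudo-null is strict (no kernel), so that finite $\Lambda(H)$-generation transfers from the elementary module back to $T_{\Lambda_2}(X(E/\KK_\infty))_f$. This is where Proposition~\ref{cor:hamidi}$(d)$, combined with the fact that any pseudo-null $\Lambda_2$-module is automatically finitely generated over $\Lambda(H)$ once we know $\bar\Upsilon$ is coprime to its characteristic annihilator (which is automatic for pseudo-null modules in at least one of the obvious directions), carries the argument through.
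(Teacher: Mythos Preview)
Your proposal is correct and follows exactly the paper's route: $(b)\Leftrightarrow(c)$ via Proposition~\ref{SelmerMonsky_prop2}, $(a)\Rightarrow(b)$ via Theorem~\ref{thm2:cuoco-monsky}, and $(b)\Rightarrow(a)$ via the algebraic argument of \cite[Proposition~7.1]{MHG}. One clean-up: once Proposition~\ref{cor:hamidi}$(d)$ gives you the strict embedding $T_{\Lambda_2}(X(E/\KK_\infty))_f \hookrightarrow \bigoplus_j \Lambda_2/f_j^{n_j}$ and hypothesis~$(b)$ makes the target finitely generated over the Noetherian ring $\Lambda(H)$, you are already finished --- your closing remark about arbitrary pseudo-null $\Lambda_2$-modules being finitely generated over $\Lambda(H)$ is both unneeded and false as stated (e.g.\ $\Lambda_2/(p,\Upsilon)$ is pseudo-null but not finitely generated over $\Lambda(H)$), and in fact even the appeal to Proposition~\ref{cor:hamidi}$(d)$ is optional, since any pseudo-null kernel of the structure map out of $M_f$ inherits the no-$p$-torsion property and is therefore already finitely generated over $\Z_p$.
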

\begin{proof}
${(a) \Longrightarrow (b)}$: Theorem~\ref{thm2:cuoco-monsky}.

${(b) \Longrightarrow (a)}$: This follows from the completely algebraic proof of \cite[Proposition~7.1]{MHG}.

${(b) \Longleftrightarrow (c)}$: We get the required equivalence from Proposition \ref{SelmerMonsky_prop2}.
\end{proof}

Let $\Gamma$ be algebraically and topologically isomorphic to $\Z_p$. We identify the completed group ring $\Z_p[[\Gamma]]$ with $\Lambda$. For any finitely generated $\Lambda$-module $Z$ we denote by $\dot{Z}$ the $\Lambda$-module $Z$ with the inverse $\Gamma$-action, i.e. ${\gamma \cdot z = \gamma^{-1}z}$ for any ${z \in Z}$ and ${\gamma \in \Gamma}$. Using this notation we have the following result which can be derived easily from \cite[Theorem~1.1]{Matar_Torsion}.

\begin{proposition} \label{fineSelmerisom_prop}
	Let ${\bullet, \star \in \{+,-\}}$ and let ${L_\infty = \KK_\infty^H}$ be a $\Zp$-extension of $K$ such that ${H \in \mathcal{H}^{\bullet\star}}$. Then $Y(E/L_\infty)$ is a $\Lambda$-torsion module, and there exists a pseudo-isomorphism
	\[ T_{\Lambda}({\dot{X}}(E/L_\infty)) \sim Y(E/L_\infty). \]
	Furthermore, if $X^{\bullet\star}(E/\KK_\infty)_f$ is finitely generated over $\Lambda(H)$, then $Y(E/\KK_\infty^{H_n})$ is a torsion $\Lambda$-module and
	\[ T_{\Lambda}({\dot{X}}(E/\KK_\infty^{H_n})) \sim Y(E/\KK_\infty^{H_n}) \]
	for all ${n \geq 0}$.
\end{proposition}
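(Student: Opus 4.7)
The plan is to invoke \cite[Theorem~1.1]{Matar_Torsion}, which asserts (for a $\Z_p$-extension $L_\infty/K$ with intermediate fields having no nontrivial $\mathbb{Z}_p^2$-extension of their completions at the tame primes of $S$) an equivalence between the vanishing of $H^2(G_S(L_\infty), E[p^\infty])$, the $\Lambda$-torsionness of $Y(E/L_\infty)$, and the existence of a pseudo-isomorphism ${T_\Lambda(\dot{X}(E/L_\infty)) \sim Y(E/L_\infty)}$. Thus the entire statement reduces to checking the vanishing of the appropriate $H^2$.

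For the first assertion, take ${L_\infty = \KK_\infty^H}$ with ${H \in \mathcal{H}^{\bullet\star}}$. By Proposition~\ref{Seln_surjective_prop} (the $n=0$ case), we have ${H^2(G_S(\KK_\infty^H), E[p^\infty]) = 0}$ unconditionally. Together with the hypothesis ${H \in \mathcal{H}^{\bullet\star}}$, which ensures in particular that no prime in $S$ splits completely in $\KK_\infty^H/K$ and that the remaining local hypotheses of loc.\,cit. are satisfied, we can apply \cite[Theorem~1.1]{Matar_Torsion} to conclude that $Y(E/L_\infty)$ is a torsion $\Lambda$-module and that
\[ T_\Lambda(\dot{X}(E/L_\infty)) \sim Y(E/L_\infty). \]

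For the second (conditional) assertion, fix ${n \ge 0}$ and assume that $X^{\bullet\star}(E/\KK_\infty)_f$ is finitely generated over $\Lambda(H)$. Proposition~\ref{Seln_surjective_prop}(3) then gives ${H^2(G_S(\KK_\infty^{H_n}), E[p^\infty]) = 0}$. Since ${H_n \subseteq H \in \mathcal{H}^{\bullet\star}}$, the $\Z_p$-extension ${\KK_\infty^{H_n}/K_{H,n}}$ still satisfies the local conditions (primes in $S$ do not split completely, primes above $p$ ramify), so a second application of \cite[Theorem~1.1]{Matar_Torsion} yields the $\Lambda$-torsionness of $Y(E/\KK_\infty^{H_n})$ and the desired pseudo-isomorphism. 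The only genuinely subtle point is therefore the verification of the $H^2$-vanishing hypothesis, but this has already been packaged for us in Proposition~\ref{Seln_surjective_prop}; no new calculation is needed here.
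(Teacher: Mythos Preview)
Your proof is correct and follows the same approach as the paper, which likewise reduces everything to \cite[Theorem~1.1]{Matar_Torsion} via the $H^2$-vanishing supplied by Proposition~\ref{Seln_surjective_prop}. The paper additionally invokes Lemma~\ref{p-torsion_lemma} to verify ${E(\KK_\infty^{H_n})[p^\infty]=0}$, which is another hypothesis of \cite[Theorem~1.1]{Matar_Torsion} that you should mention explicitly.
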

\begin{proof}
	We have ${E(\KK_\infty^{H_n})[p^\infty] = 0}$ for all $n$ by Lemma~\ref{p-torsion_lemma} and ${H^2(G_S(\KK_\infty^{H_n}), E[p^\infty]) = 0}$ by Proposition~\ref{Seln_surjective_prop} (in order to apply the proposition in the ${n > 0}$ case we have to assume that $X^{\bullet\star}(E/\KK_\infty)_f$ is finitely generated over $\Lambda(H)$). Therefore we get the desired results from \cite[Theorem~1.1]{Matar_Torsion}.
\end{proof}
\begin{remark}\label{inverting_action_remark}
Let ${\Gamma \cong \Z_p}$, let ${\Lambda = \Z_p[[T]] \cong \Z_p[[\Gamma]]}$ and suppose that $Z$ is a finitely generated and torsion $\Lambda$-module. Then
\begin{compactenum}[(a)]
	\item ${\mu(\dot{Z}) = \mu(Z)}$, and
	\item ${\lambda(\dot{Z})=\lambda(Z)}$.
\end{compactenum}
\end{remark}
\begin{proof}
Let $f$ be the characteristic polynomial of $Z$. Then it is easy to show that the characteristic polynomial of $\dot{Z}$ is ${u(1+T)^{\deg(f)} \cdot f((1+T)^{-1}-1)}$ for some unit ${u \in \Lambda}$. The assertions follow from this observation.
\end{proof}

As a first application of Proposition~\ref{fineSelmerisom_prop} we can now prove Theorem~\ref{equivalence_theorem} from Section~\ref{section:mainintroduction}, which provides implications between the $\mathfrak{M}_H(G)$-properties of signed Selmer groups, Selmer groups and fine Selmer groups.
\begin{proof}[Proof of Theorem~\ref{equivalence_theorem}]
${(a) \Longrightarrow (b)}$: Proposition~\ref{cor:hamidi}(c).

${(b) \Longleftrightarrow (c)}$: Let ${\bullet, \star \in \{+,-\}}$ and ${H=\overbar{\langle \sigma^a\tau^b \rangle}\in \mathcal{H}^{\bullet\star}}$. From Proposition~\ref{fineSelmerisom_prop} and Remark~\ref{inverting_action_remark} we have that ${\lambda(Y(E/\KK_{\infty}^H))=\lambda(T_{\Lambda}(X(E/\KK_{\infty}^H))}$. Taking Proposition~\ref{prop:neighborhood} into account, the equivalence of $(b)$ and $(c)$ therefore follows from Theorem~\ref{main_theorem2} and Proposition~\ref{Monskyequiv_prop}.
\end{proof}

\begin{corollary}\label{mu_invariants_corollary2}
		Let ${\bullet, \star \in \{+,-\}}$ and ${H=\overbar{\langle \sigma^a\tau^b \rangle}\in \mathcal{H}^{\bullet\star}}$. The three assertions~$(a)$, $(b)$ and $(c)$ from Theorem~\ref{mu_invariants_theorem2} are equivalent.
	\end{corollary}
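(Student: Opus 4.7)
The implications ${(b) \Longrightarrow (c) \Longrightarrow (a)}$ are already contained in Theorem~\ref{mu_invariants_theorem2}(1) and hold unconditionally, so the only remaining task is to establish ${(a) \Longrightarrow (b)}$ without having to assume a priori that $Y(E/\KK_{\infty})_f$ is finitely generated over $\Lambda(H)$.

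The plan is to reduce this to the conditional implication ${(a) \Longrightarrow (b)}$ supplied by Theorem~\ref{mu_invariants_theorem2}(2). First, I would observe that $X(E/\KK_{\infty})_f$ and $T_{\Lambda_2}(X(E/\KK_{\infty}))_f$ coincide: by Definition~\ref{def:free-and-torsion}, $X(E/\KK_{\infty})_f = T_{\Lambda_2}(X(E/\KK_{\infty}))/X(E/\KK_{\infty})[p^{\infty}]$, and since $X(E/\KK_{\infty})[p^{\infty}] = T_{\Lambda_2}(X(E/\KK_{\infty}))[p^{\infty}]$, this quotient equals $T_{\Lambda_2}(X(E/\KK_{\infty}))_f$. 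Hence hypothesis~$(a)$ of Theorem~\ref{mu_invariants_theorem2} is exactly the condition that $T_{\Lambda_2}(X(E/\KK_{\infty}))_f$ is finitely generated over $\Lambda(H)$, i.e.\ condition~$(b)$ of Theorem~\ref{equivalence_theorem}.

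Now assume $(a)$ of the corollary. By the equivalence ${(b) \Longleftrightarrow (c)}$ of Theorem~\ref{equivalence_theorem} (already proved above), $Y(E/\KK_{\infty})_f$ is then finitely generated over $\Lambda(H)$. With this auxiliary hypothesis in place, Theorem~\ref{mu_invariants_theorem2}(2) is applicable and yields precisely the implication ${(a) \Longrightarrow (b)}$, finishing the chain of equivalences.

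There is no genuine obstacle here: the argument is a direct combination of Theorem~\ref{mu_invariants_theorem2} with the freshly established Theorem~\ref{equivalence_theorem}, the only subtlety being the notational identification $X(E/\KK_{\infty})_f = T_{\Lambda_2}(X(E/\KK_{\infty}))_f$ which ensures that the hypothesis of the corollary matches the hypothesis appearing in Theorem~\ref{equivalence_theorem}$(b)$.
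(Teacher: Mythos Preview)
Your proof is correct and follows essentially the same approach as the paper: both use Theorem~\ref{equivalence_theorem} to deduce that condition~$(a)$ of Theorem~\ref{mu_invariants_theorem2} automatically yields the auxiliary hypothesis on $Y(E/\KK_{\infty})_f$, after which Theorem~\ref{mu_invariants_theorem2}(2) closes the cycle. The paper's version is simply more terse, omitting your explicit check that $X(E/\KK_{\infty})_f = T_{\Lambda_2}(X(E/\KK_{\infty}))_f$.
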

	\begin{proof}
		We already know from Theorem~\ref{mu_invariants_theorem2} that ${(b) \Longrightarrow (c) \Longrightarrow (a)}$ and that $(a)$ implies $(b)$ provided that $Y(E/\KK_\infty)_f$ is finitely generated over $\Lambda(H)$. But the latter hypothesis is automatically satisfied under condition~$(a)$ by Theorem~\ref{equivalence_theorem}.
\end{proof}

A result analogous to Proposition~\ref{fineSelmerisom_prop} under a slightly different set of hypotheses has been obtained by Wingberg through a different approach in~\cite{Wingberg}. We find it remarkable that we can derive a partial result (Theorems \ref{2dimWingberg_theorem1} and \ref{2dimWingberg_theorem2}) in that direction on the $\Z_p^2$-level by using the asymptotic formulas from Section~\ref{section:monsky}.

\begin{proof}[Proof of Theorems~\ref{2dimWingberg_theorem1} and \ref{2dimWingberg_theorem2}]
	We will use the notation from Theorems~\ref{thm2:cuoco-monsky} and \ref{thm3:cuoco-monsky}. If $Y(E/\KK_\infty)_f$ is finitely generated as a $\Lambda(H)$-module \textit{for some} ${\bullet, \star \in \{+,-\}}$ and some ${H \in \mathcal{H}^{\bullet\star}}$, then Lemma~\ref{WL_lemma} implies that $Y_n$ is a torsion $\Lambda_n$-module for all $n$. Therefore the asymptotic formulas from Theorem~\ref{thm2:cuoco-monsky}$(a)$ and Theorem~\ref{thm3:cuoco-monsky}$(a)$ hold true. In view of Proposition~\ref{fineSelmerisom_prop} and Remark~\ref{inverting_action_remark} we have ${\mu_n = \tilde{\mu}_n}$ and ${\lambda_n = \tilde{\lambda}_n}$ for all $n$. Therefore the constants in the asymptotic formulas have to coincide. This will prove Theorem~\ref{2dimWingberg_theorem1} if we can show that there exist ${\bullet, \star \in \{+,-\}}$ and ${H \in \mathcal{H}^{\bullet\star}}$ such that $Y(E/\KK_\infty)_f$ is finitely generated as a $\Lambda(H)$-module. We choose ${\bullet=\star=+}$. Then the existence of such an $H$ follows from Propositions~\ref{Hcyc_prop}, \ref{almost_all_prop} and Theorem~\ref{equivalence_theorem}. If we now let ${H \in \mathcal{H}^{\bullet\star}}$ be as in Theorem~\ref{2dimWingberg_theorem2}, the above observations imply the first part of Theorem~\ref{2dimWingberg_theorem2} since ${m_1 = v_{\bar{\Upsilon}}(\overline{g_\infty})}$ and ${\tilde{m_1} = v_{\bar{\Upsilon}}(\overline{\tilde{g}_\infty})}$ by Theorem~\ref{thm2:cuoco-monsky}$(b)$ and Theorem~\ref{thm3:cuoco-monsky}$(b)$.
	
	We can conclude via Proposition~\ref{Monskyequiv_prop} that $X(E/\KK_\infty)_f$ is finitely generated over $\Lambda(H)$ if and only if ${v_{\bar{\Upsilon}}(\overline{g_\infty}) = 0}$. On the other hand, it follows from the already proven part of Theorem~\ref{main_theorem2} (to be more precise, we use the equivalence ${(a) \Longleftrightarrow (b)}$ in Theorem~\ref{main_theorem2}, which has been shown at the beginning of the current section) that $Y(E/\KK_\infty)_f$ is finitely generated over $\Lambda(H)$ if and only if ${v_{\bar{\Upsilon}}(\overline{\tilde{g}_\infty}) = 0}$.
	
The second part of Theorem~\ref{2dimWingberg_theorem2} follows from Theorem~\ref{thm2:cuoco-monsky}$(d)$ and Theorem~\ref{thm3:cuoco-monsky}$(d)$, since ${l = \tilde{l}}$ in the asymptotic formulas.
\end{proof}
\begin{remark}
	Note that the above theorem is quite close to a two-dimensional analogue of Proposition~\ref{fineSelmerisom_prop}. If $f_\infty$ and $\tilde{f}_\infty$ denote the characteristic power series of $T_{\Lambda_2}(X(E/\KK_\infty))$ and $Y(E/\KK_\infty)$, then we have shown that the valuations of $f_\infty$ and $\tilde{f}_\infty$ with regard to $p$ and the prime elements $\Upsilon$ attached to some ${H \in \mathcal{H}^{\bullet\star}}$ coincide. In other words, if there are no further prime divisors of $f_\infty$ or $\tilde{f}_\infty$ in the UFD $\Lambda_2$, then these two characteristic power series are associated.
\end{remark}

For the proof of the remaining assertions from Theorem~\ref{main_theorem2}, parts of Theorem~\ref{main_theorem3} are used. Therefore these proofs are postponed to the next section.

\section{Proof of Theorem~\ref{main_theorem3} and the second part of Theorem~\ref{main_theorem2}} \label{section:main-theorem2}

We have the following important lemma.
\begin{lemma}\label{lemma:fukuda-for-selmer}
	Let ${\bullet, \star \in \{+,-\}}$ and ${H \in \mathcal{H}^{\bullet\star}}$. Then there exists a Greenberg open neighbourhood $U$ of $\KK_\infty^H$ such that the following assertions hold.
	\begin{compactenum}[(a)]
		\item $\mu(T_\Lambda(X(E/L_\infty))) \le \mu(T_\Lambda(X(E/\KK_\infty^H)))$ for each ${L_\infty \in U}$, and
		\item there exists a constant $C$ such that ${\lambda(T_\Lambda(X(E/L_\infty))) \le C}$ for each ${L_\infty \in U}$ which satisfies ${\mu(T_\Lambda(X(E/L_\infty))) = \mu(T_\Lambda(X(E/\KK_\infty^H)))}$.
	\end{compactenum}
\end{lemma}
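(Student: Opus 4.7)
The plan is to reduce the statement to the corresponding one for fine Selmer groups and then apply a Fukuda-type argument using the \emph{exact} control theorem available in that setting. By Proposition~\ref{fineSelmerisom_prop} combined with Remark~\ref{inverting_action_remark}, for every $L_\infty = \KK_\infty^{H'}$ with $H' \in \mathcal{H}^{\bullet\star}$ the $\Lambda$-modules $T_\Lambda(X(E/L_\infty))$ and $Y(E/L_\infty)$ share the same $\mu$- and $\lambda$-invariants (inverting the $\Gamma$-action commutes with taking the $\Lambda$-torsion submodule and leaves these invariants unchanged). Hence it suffices to establish the analogous assertions (a$'$) and (b$'$) with $T_\Lambda(X(E/L_\infty))$ replaced by $Y(E/L_\infty)$. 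First I would shrink the neighborhood to lie inside $\mathcal{H}^{\bullet\star}$ using Proposition~\ref{prop:neighborhood}, so that these translations are valid throughout $U$.

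Next, since $Y(E/\KK_\infty)$ is a finitely generated torsion $\Lambda_2$-module by Lemma~\ref{Yinf_torsion_lemma}, I fix an elementary decomposition and write the characteristic power series as $\tilde f_\infty = p^{\tilde\mu}\tilde g_\infty$ with $p\nmid \tilde g_\infty$ as in \eqref{eq:tildef_infty}. For any $H' = \overbar{\langle \sigma^{a'}\tau^{b'}\rangle}$ close to $H$, the exact control theorem (Proposition~\ref{fineSelmercontrol_prop1}) yields
\[ Y(E/L_\infty) \;\cong\; Y(E/\KK_\infty)_{H'}\;\cong\; Y(E/\KK_\infty)/\Upsilon'Y(E/\KK_\infty), \]
where $\Upsilon' = (1+T_1)^{a'}(1+T_2)^{b'} -1$. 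Computing $\mu$ of this quotient from the elementary structure, the $p$-primary cyclic summands contribute the constant $\tilde\mu$ while each non-$p$ summand $\Lambda_2/f_j^{n_j}$ contributes positively precisely when $\bar f_j$ divides $\bar\Upsilon'$ in the UFD $\Lambda_2/p$. In UFD language this contribution equals $v_{\bar{\Upsilon}'}(\overline{\tilde g_\infty})$, up to error terms coming from pseudo-null summands, which are uniformly bounded in a Greenberg neighborhood (cf.\ Lemma~\ref{structureisom_lemma} and the proof of Proposition~\ref{mu_vanishing_prop}).

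For assertion~(a$'$), I would use that $\overline{\tilde g_\infty}$ has only finitely many prime divisors in $\Lambda_2/p$, and that two elements $\overline{\Upsilon}'$, $\overline{\Upsilon}''$ associated to distinct $H',H''\in\cE$ are non-associate in $\Lambda_2/p$ (as in the proof of Proposition~\ref{almost_all_prop}). Hence, after shrinking $U$, the only $L_\infty \in U$ for which $\overline{\Upsilon'}$ can divide $\overline{\tilde g_\infty}$ is $L_\infty = \KK_\infty^H$ itself, so the generic value $\tilde\mu$ is attained on $U\setminus\{\KK_\infty^H\}$ and is bounded above by $\mu(Y(E/\KK_\infty^H)) = \tilde\mu + v_{\bar\Upsilon}(\overline{\tilde g_\infty})$. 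This proves (a$'$). For (b$'$), the $\mu$-equality $v_{\bar{\Upsilon}'}(\overline{\tilde g_\infty}) = v_{\bar\Upsilon}(\overline{\tilde g_\infty})$ is precisely the condition under which Monsky's theorem \cite[Theorem~3.3]{Monsky} (compare the proof of Proposition~\ref{FineSelmerMonsky_prop}) provides a uniform bound on $\lambda(Y(E/L_\infty))$ in a Greenberg neighborhood; in the exceptional case that $\bar{\Upsilon}$ already divides $\overline{\tilde g_\infty}$, the equality is attained only at $L_\infty = \KK_\infty^H$, giving a trivial bound.

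The main technical obstacle I anticipate is handling the pseudo-null error terms cleanly: passing from the elementary quotient $E'/\Upsilon'$ to the genuine quotient $Y(E/\KK_\infty)/\Upsilon'$ introduces contributions from $(\ker)[\Upsilon']$ and $(\coker)/\Upsilon'$ of the pseudo-isomorphism, which must be shown to have bounded $\mu$- and $\lambda$-invariants as $H'$ varies in the neighborhood. This is precisely the kind of bound established in Lemma~\ref{invaraints bound_lemma}, which should also apply after reducing to fine Selmer groups, so the argument should go through.
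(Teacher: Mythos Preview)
Your reduction to fine Selmer groups via Proposition~\ref{fineSelmerisom_prop} and Remark~\ref{inverting_action_remark}, together with shrinking into $\mathcal{H}^{\bullet\star}$, is exactly what the paper does. The paper then takes a much shorter route for the fine Selmer statement: it verifies that no prime of $S$ splits completely and that $E(\KK_{\infty,w}^H)[p^\infty]=0$ (Lemma~\ref{p-torsion_lemma}), and directly invokes \cite[Theorems~5.11 and 5.15]{fukuda-selmer} to obtain the local maximality of $\mu$ and the $\lambda$-bound for $Y(E/L_\infty)$.

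Your alternative direct argument via the $\Lambda_2$-structure of $Y(E/\KK_\infty)$ is viable but contains an incorrect formula that you should be aware of. The claim that the $\mu$-contribution of a non-$p$ summand $\Lambda_2/f_j^{n_j}$ to $\mu(W/\Upsilon')$ \emph{equals} $v_{\bar\Upsilon'}(\overline{\tilde g_\infty})$ is false: for instance $(\Lambda_2/(T_1+p^2))/(T_1)\cong(\Z/p^2)[[T_2]]$ has $\mu=2$, whereas $v_{\overline{T_1}}(\overline{T_1+p^2})=1$. What \emph{is} true, and what your argument for (a$'$) actually needs, is the dichotomy you state first: the contribution is positive precisely when $\bar f_j\sim\bar\Upsilon'$, and zero otherwise (since then $\Lambda_2/(p,f_j,\Upsilon')$ is finite, forcing $\mu=0$ by Nakayama). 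This alone gives $\mu(W/\Upsilon')=\tilde\mu$ for $L_\infty\in U\setminus\{\KK_\infty^H\}$ and $\mu(W/\Upsilon)\ge\tilde\mu$ at $\KK_\infty^H$, which is all (a$'$) requires.

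For the pseudo-null error you should not cite Lemma~\ref{invaraints bound_lemma}, which concerns a different module. Instead, writing $0\to C\to Y(E/\KK_\infty)\to W$ with $C=Y(E/\KK_\infty)^\circ$ and pseudo-null cokernel, one gets $\mu(Y/\Upsilon')=\mu(C/\Upsilon')+\mu(W/\Upsilon')$ by a direct snake-lemma computation together with Lemma~\ref{mu_pseudo-null_lemma}; then the argument of Proposition~\ref{mu_vanishing_prop} (applied to $C$) shows $\mu(C/\Upsilon')=0$ for all but finitely many $H'$, hence on $U\setminus\{\KK_\infty^H\}$ after shrinking. Your dichotomy for (b$'$) then goes through: either $\mu(Y(E/\KK_\infty^H))>\tilde\mu$ and equality is attained only at $\KK_\infty^H$, or $\mu(Y(E/\KK_\infty^H))=\tilde\mu$ forces $\bar\Upsilon\nmid\overline{\tilde g_\infty}$, and Proposition~\ref{FineSelmerMonsky_prop} bounds $\lambda$ on all of $U$.
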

\begin{proof}
	As ${H \in \mathcal{H}^{\bullet\star}}$ no prime of $S$ is totally split in $\KK_\infty^H/K$. Moreover, by Lemma \ref{p-torsion_lemma} we have ${E(\KK_{\infty,w}^H)[p^\infty] = 0}$ for each prime $w$ above $p$. Therefore the hypotheses from \cite[Theorems~5.11 and 5.15]{fukuda-selmer} are satisfied. Therefore we can choose a neighbourhood $U$ of $\KK_\infty^H$ such that \begin{compactenum}[(i)]
		\item $\mu(Y(E/L_\infty)) \le \mu(Y(E/\KK_\infty^H))$ for each ${L_\infty \in U}$, and
		\item $\lambda(Y(E/L_\infty)) \le \lambda(Y(E/\KK_\infty^H))$ for each ${L_\infty \in U}$ which satisfies ${\mu(Y(E/L_\infty)) = \mu(Y(E/\KK_\infty^H))}$.
	\end{compactenum}
	The assertion follows from Proposition~\ref{fineSelmerisom_prop} since reversing the $\Gamma$-action on a finitely generated torsion $\Lambda$-module $Z$ does not change its $\mu$-invariant. Moreover, if the $\lambda$-invariants of a given set of such $\Lambda$-modules $Z$ are bounded, then the $\lambda$-invariants of the modules $\dot{Z}$ are still bounded (see Remark \ref{inverting_action_remark}).
\end{proof}

\begin{lemma}\label{mu_inequality_lemma}
Let ${\bullet, \star \in \{+,-\}}$ and ${H=\overbar{\langle \sigma^a\tau^b \rangle}\in \mathcal{H}^{\bullet\star}}$. We have ${\mu_{G/H}(T_{\Lambda_2}(X(E/\KK_\infty))_H) \geq \mu_G(T_{\Lambda_2}(X(E/\KK_\infty)))}$.
\end{lemma}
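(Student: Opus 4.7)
Abbreviate $Z = T_{\Lambda_2}(X(E/\KK_\infty))$ and $\Upsilon = (1+T_1)^a(1+T_2)^b-1$, so that $\Upsilon$ topologically generates the augmentation ideal of $\Lambda(H)$ and $\Lambda_2/\Upsilon\cong \Lambda(G/H)$. My plan is to reduce the inequality to a direct calculation on an elementary module that is pseudo-isomorphic to $Z$. Exactly as in the proof of Lemma~\ref{rankequal_lemma}, an application of \cite[Chapt.~VII, \S 4.4 Theorem~5]{Bourbaki} to the finitely generated torsion $\Lambda_2$-module $Z$ produces an exact sequence
\[ 0 \longrightarrow W \longrightarrow Z \longrightarrow B \longrightarrow 0, \]
where $W = \bigoplus_{i=1}^s \Lambda_2/p^{m_i} \oplus \bigoplus_{j=1}^t \Lambda_2/f_j^{n_j}$ with each $f_j$ irreducible and coprime to $p$, $\mu_G(Z)=\sum_i m_i$, and $B$ is a pseudo-null $\Lambda_2$-module.

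I will then take $H$-(co)invariants. Since $cd_p(H)=1$, the long exact sequence of $H$-homology attached to the above short exact sequence collapses to a six-term sequence $0\to W^H\to Z^H\to B^H\to W_H\to Z_H\to B_H\to 0$. Lemma~\ref{H1_Sel_vanishing_lemma2} gives $Z^H=H_1(H,Z)=0$, and hence $W^H=0$; so the sequence reduces to
\[ 0\longrightarrow B^H\longrightarrow W_H\longrightarrow Z_H\longrightarrow B_H\longrightarrow 0. \]
Corollary~\ref{cor:HinH} asserts that $\Upsilon$ is coprime in $\Lambda_2$ to the characteristic power series of $Z$, hence to each $f_j$; combined with $\gcd(\Upsilon,p)=1$ in $\Lambda_2$, this shows that $W_H$, and therefore also $Z_H$, is $\Lambda(G/H)$-torsion. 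Because $B$ is pseudo-null over $\Lambda_2$, its localisation at the height-one prime $(\Upsilon)$ vanishes, and it follows that $B^H$ and $B_H$ are $\Lambda(G/H)$-torsion as well. Additivity of the $\mu$-invariant along this four-term exact sequence of torsion $\Lambda(G/H)$-modules then yields
\[ \mu_{G/H}(Z_H) = \mu_{G/H}(W_H) - \mu_{G/H}(B^H) + \mu_{G/H}(B_H), \]
and Lemma~\ref{mu_pseudo-null_lemma} applied with $n=0$ to the pseudo-null $\Lambda_2$-module $B$ gives the cancellation $\mu_{G/H}(B^H)=\mu_{G/H}(B_H)$. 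The invocation of Lemma~\ref{mu_pseudo-null_lemma} to absorb the two pseudo-null error terms is the key technical point of the argument; once it is granted one obtains $\mu_{G/H}(Z_H)=\mu_{G/H}(W_H)$.

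It remains to bound $\mu_{G/H}(W_H)$ from below. Using $\Lambda_2/\Upsilon\cong\Lambda(G/H)$,
\[ W_H \;\cong\; \bigoplus_{i=1}^s \Lambda(G/H)/p^{m_i}\;\oplus\;\bigoplus_{j=1}^t \Lambda(G/H)/\overline{f_j}^{\,n_j}, \]
with each $\overline{f_j}$ the nonzero image of $f_j$ in $\Lambda(G/H)$. The first block contributes exactly $\sum_i m_i=\mu_G(Z)$ to the $\mu$-invariant, while the summands of the second block contribute a non-negative amount. Hence $\mu_{G/H}(W_H)\geq \mu_G(Z)$, and combined with $\mu_{G/H}(Z_H)=\mu_{G/H}(W_H)$ this gives the desired inequality. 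The possibility that $\overline{f_j}$ is divisible by $p$ in $\Lambda(G/H)$ even though $p\nmid f_j$ in $\Lambda_2$ is precisely what prevents reading off an equality, and it reflects the error term $\mu_{G/H}(T_\Lambda([F_{\Lambda_2}(X(E/\KK_\infty))]_H))$ that appears in Theorem~\ref{main_theorem3}(c).
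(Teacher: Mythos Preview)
Your proof is correct and follows essentially the same route as the paper: reduce to an elementary module $W$ pseudo-isomorphic to $Z$, use Corollary~\ref{cor:HinH} to see all $H$-coinvariants are $\Lambda(G/H)$-torsion, invoke Lemma~\ref{mu_pseudo-null_lemma} to cancel the pseudo-null error terms, and read off the inequality from the $p^{m_i}$-summands of $W_H$.

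The one structural difference is the direction of the embedding. The paper uses Proposition~\ref{cor:hamidi}(d) (that $Z$ has no nontrivial pseudo-null submodules) to obtain $0\to Z\to W\to A\to 0$, and then gets $W^{\Upsilon=0}=0$ directly from the coprimality of $\Upsilon$ with $f_\infty$. You instead use the general embedding $0\to W\to Z\to B\to 0$ from \cite[Lemma~2.1]{MHG}, and then appeal to Lemma~\ref{H1_Sel_vanishing_lemma2} (which itself rests on Proposition~\ref{cor:hamidi}(d)) to obtain $H_1(H,Z)=0$ and hence $H_1(H,W)=0$. The two variants invoke the same circle of results and arrive at the same four-term sequence up to relabelling; your version avoids the case split on $\mu_G(Z)=0$ that the paper makes at the outset.
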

\begin{proof}
If ${\mu_G(T_{\Lambda_2}(X(E/\KK_\infty)))=0}$, then the statement is true. Now assume that ${\mu_G(T_{\Lambda_2}(X(E/\KK_\infty))) \neq 0}$. 
By Proposition~\ref{cor:hamidi}(d) and \cite[Chapt.~VII, \S 4.4 Theorem~5]{Bourbaki}, there exist a pseudo-null $\Lambda_2$-module $A$, irreducible power series ${f_j \in \Zp[[T, U]]}$, integers $m_i$, $n_j$ and an exact sequence

\begin{equation}\label{torsion_exactseq1}
0 \to T_{\Lambda_2}(X(E/\KK_{\infty})) \to W \to A \to 0
\end{equation}
where ${W=\bigoplus_{i=1}^s \Lambda_2/p^{m_i} \oplus \bigoplus_{j=1}^t \Lambda_2/f_j^{n_j}}$ and ${f_{\infty}=p^{\sum_{i=1}^s m_i}\prod_{j=1}^t f_j^{n_j}}$.

Let ${\Upsilon=(1+T_1)^a(1+T_2)^b-1}$. From Corollary~\ref{cor:HinH} we have that $\Upsilon$ is relatively prime to $f_{\infty}$. Therefore from this fact we see that ${W^{\Upsilon=0}=0}$. It follows from this and the exact sequence~(\ref{torsion_exactseq1}) that we have an exact sequence
\begin{equation}\label{torsion_exactseq2}
0 \to A^{\Upsilon=0} \to T_{\Lambda_2}(X(E/\KK_{\infty}))/\Upsilon \to W/\Upsilon \to A/\Upsilon \to 0.
\end{equation}

From Corollary~\ref{cor:HinH} we have that $T_{\Lambda_2}(X(E/\KK_{\infty}))/\Upsilon$ is $\Lambda_{H,0}$-torsion. Also as $A$ is a pseudo-null $\Lambda_2$-module, $A$ has Krull dimension at most one. Therefore both ${A^{\Upsilon=0}}$ and $A/\Upsilon$ are $\Lambda_{H,0}$-torsion. So we see from these facts that all the modules appearing in the exact sequence~(\ref{torsion_exactseq2}) are $\Lambda_{H,0}$-torsion. Therefore the alternating sum of the $\mu_{G/H}$-invariants is zero. By Lemma~\ref{mu_pseudo-null_lemma} we have ${\mu_{G/H}(A^{\Upsilon=0})=\mu_{G/H}(A/\Upsilon)}$. So we see that
$$\mu_{G/H}(T_{\Lambda_2}(X(E/\KK_{\infty}))/\Upsilon)=\mu_{G/H}(W/\Upsilon). $$
It is easy to see that ${\mu_{G/H}(W/\Upsilon) \geq \mu_G(W)}$. This implies the intended result.
\end{proof}

\begin{proof}[Proof of Theorem~\ref{main_theorem3}]

First we prove statement (1):
${(a) \Longleftrightarrow (b)}$: Proposition~\ref{MHG_cohomology_group_prop2}.

${(a) \Longleftrightarrow (c) \Longleftrightarrow (d)}$: 
These equivalences follow from Corollary~\ref{mu_invariants_corollary2}, noting that, since $\Lambda(G)$ is a free $\Lambda(G_n)$-module of rank $p^n$, we have ${p^n\mu_G(T_{\Lambda_2}(E/\KK_{\infty}))=\mu_{G_n}(T_{\Lambda_2}(X(E/\KK_{\infty}))}$.

${(a) \Longleftrightarrow (e) \Longleftrightarrow (f)}$: Proposition \ref{Monskyequiv_prop}.

Thus we have now shown that all the statements~$(a)-(f)$ are equivalent.

\vspace{2mm}
Now we prove assertion~$(2)$ of Theorem~\ref{main_theorem3}.

${(g) \Longrightarrow (h)}$: From Theorem~\ref{TorsionControlThm} we know that
\begin{align} \label{eq:musum} \resizebox{.94\hsize}{!}{$\mu_{G/H}(T_{\Lambda_2}(X(E/\KK_\infty))_H) = \mu_{G/H}(T_{\Lambda}(X(E/\KK_\infty^H))) - \mu_{G/H}(T_{\Lambda}([F_{\Lambda_2}(X(E/\KK_{\infty}))]_H))$}. \end{align}

Since ${\mu_{G/H}(T_{\Lambda}([F_{\Lambda_2}(X(E/\KK_{\infty}))]_H)) \ge 0}$, it follows from equation~\eqref{eq:musum} that for each ${H \in \mathcal{H}^{\bullet\star}}$ we have an inequality
\[ \mu_{G/H}(T_{\Lambda}(X(E/\KK_\infty^H))) \ge \mu_{G/H}(T_{\Lambda_2}(X(E/\KK_\infty))_H). \]
Since by Lemma~\ref{mu_inequality_lemma} the right-hand side of this inequality is larger than or equal to $\mu_G(T_{\Lambda_2}(X(E/\KK_\infty)))$ we can conclude that
\begin{align} \label{eq:muinv} \mu_{G/H}(T_{\Lambda}(X(E/\KK_\infty^H))) \ge \mu_G(T_{\Lambda_2}(X(E/\KK_\infty)))
\end{align}
for each ${H \in \mathcal{H}^{\bullet\star}}$.

In other words, if condition~$(g)$ holds true, then the value of $\mu_{G/H}(T_{\Lambda}(X(E/\KK_\infty^H)))$ is as small as possible. If we choose a neighbourhood $U$ of $\KK_\infty^H$ such that $U$ is contained in $\mathcal{H}^{\bullet\star}$ (this is possible in view of Proposition~\ref{Zpextensions_prop}), then this means that
\[ \mu(T_\Lambda(X(E/L_\infty))) \ge \mu(T_\Lambda(X(E/\KK_\infty^H)))\]
for each ${L_\infty \in U}$. On the other hand, we can use Lemma~\ref{lemma:fukuda-for-selmer} in order to make $U$ small enough such that also the reverse inequalities
\[ \mu(T_\Lambda(X(E/L_\infty))) \le \mu(T_\Lambda(X(E/\KK_\infty^H)))\]
hold true for each ${L_\infty \in U}$. This implies that condition~$(h)$ holds true.

${(h) \Longrightarrow (e)}$: Lemma~\ref{lemma:fukuda-for-selmer}$(b)$.

(3) Proposition~\ref{mu_vanishing_prop}.

(4) Lemma~\ref{invaraints bound_lemma}.
\end{proof}

Now we can prove the remaining assertions from Theorem~\ref{main_theorem2}.
\begin{proof}[Second part of the proof of Theorem~\ref{main_theorem2}] $\,$ \\
	$(a) \Longrightarrow (e)$: If $Y(E/\KK_\infty)_f$ is finitely generated over $\Lambda(H)$, then Lemma~\ref{WL_lemma} implies that $Y(E/\KK_\infty^{H_n})$ is a torsion $\Lambda_{H,n}$-module for all n. Moreover, it follows from Theorem~\ref{equivalence_theorem} that $X(E/\KK_\infty)_f$ is finitely generated over $\Lambda(H)$. Therefore Theorem~\ref{main_theorem3}${(a) \Longrightarrow (d)}$ implies that
	$$
	\resizebox{.97\hsize}{!}{$p^n\mu_G(T_{\Lambda_2}(X(E/\KK_\infty))) = \mu_{G_n/H_n}(T_{\Lambda_{H,n}}(X(E/\KK_\infty^{H_n}))) - \mu_{G_n/H_n}(T_{\Lambda_{H,n}}([F_{\Lambda_2}(X(E/\KK_{\infty}))]_{H_n}))$}$$
	for all ${n \ge 0}$. Assertion~$(e)$ follows since
	\[ \mu_G(T_{\Lambda_2}(X(E/\KK_\infty))) = \mu_G(Y(E/\KK_\infty))\]
	by Theorem~\ref{2dimWingberg_theorem1} and
	\[ \mu_{G_n/H_n}(T_{\Lambda_{H,n}}(X(E/\KK_\infty^{H_n}))) = \mu_{G_n/H_n}(Y(E/\KK_\infty^{H_n}))\]
	by Proposition~\ref{fineSelmerisom_prop}.
	
	$(e) \Longrightarrow (d)$: Clear.
	
	$(d) \Longrightarrow (a)$: If $(d)$ holds, then Theorem~\ref{2dimWingberg_theorem1} and Proposition~\ref{fineSelmerisom_prop} imply that condition~$(c)$ in Theorem~\ref{main_theorem3} is satisfied. Therefore $X(E/\KK_\infty)_f$ is finitely generated as a $\Lambda(H)$-module, and condition~$(a)$ is satisfied by Theorem~\ref{equivalence_theorem}.
	
	$(f) \Longrightarrow (g)$: In view of Theorem~\ref{2dimWingberg_theorem1} and Proposition~\ref{fineSelmerisom_prop}, conditions~$(f)$ and $(g)$ correspond to conditions~$(g)$ and $(h)$ in Theorem~\ref{main_theorem3}, respectively. Therefore the desired implication follows from Theorem~\ref{main_theorem3}.
	
	$(g) \Longrightarrow (c)$: Again using Theorem~\ref{2dimWingberg_theorem1} and Proposition~\ref{fineSelmerisom_prop}, we can use the implication ${(h) \Longrightarrow (e)}$ from Theorem~\ref{main_theorem3}.
\end{proof}

Let ${\bullet, \star \in \{+,-\}}$ and ${H \in \mathcal{H}^{\bullet\star}}$. Proposition~\ref{cor:hamidi}$(c)$ states that if $X^{\bullet\star}(E/\K_\infty)_f$ is finitely generated over $\Lambda(H)$, then $X(E/\KK_\infty)_f$ is finitely generated over $\Lambda(H)$. We end this section with a necessary and sufficient criterion for the reverse implication. Let ${U:=J_p^{\bullet\star}(E/\KK_{\infty})^{\vee}}$. By Lemma \ref{Xinf_torsion_lemma} and Proposition~\ref{Sel_Lambda2_surjective_prop2} we have an exact sequence

$$0 \to U \xrightarrow{\phi^{\bullet\star}} X(E/\KK_{\infty}) \xrightarrow{\psi^{\bullet\star}} X^{\bullet\star}(E/\KK_{\infty}) \to 0.$$

Furthermore $U$ is a free $\Lambda_2$-module of rank two. Recall that we defined $F_{\Lambda_2}(X(E/\KK_\infty))$ to be $X(E/\KK_\infty)/T_{\Lambda_2}(X(E/\KK_\infty))$. Now consider the projection map
$$\pi: X(E/\KK_{\infty}) \to F_{\Lambda_2}(X(E/\KK_\infty)).$$
Since $U$ is a free $\Lambda_2$-module the map ${\pi \circ \phi^{\bullet\star}}$ is an injection. By Proposition~\ref{Selrank_prop} we have that $$\rank_{\Lambda_2}(F_{\Lambda_2}(X(E/\KK_{\infty})))=\rank_{\Lambda_2}(X(E/\KK_{\infty}))=2.$$
Therefore we see that
$$Z^{\bullet\star}(E/\KK_{\infty}):=F_{\Lambda_2}(X(E/\KK_{\infty}))/(\pi\circ \phi^{\bullet\star})(U)$$
is a torsion $\Lambda_2$-module.
Let ${\tilde{f}_{\infty}^{\bullet\star} \in \Lambda_2}$ be the characteristic power series of $Z^{\bullet\star}(E/\KK_{\infty})$. Since $Z^{\bullet\star}(E/\KK_\infty)$ is $\Lambda_2$-torsion we know that ${\tilde{f}_{\infty}^{\bullet\star} \ne 0}$. We write ${\tilde{f}_{\infty}^{\bullet\star}=p^{m}\tilde{g}_{\infty}^{\bullet\star}}$ where ${m=\mu_G(Z^{\bullet\star}(E/\KK_{\infty}))}$, so that ${p \nmid \tilde{g}_{\infty}^{\bullet\star}}$. 

\begin{proposition} \label{prop:MH(G)impliesMH(G)+-}
	Let ${\bullet, \star \in \{+,-\}}$ and ${H=\overbar{\langle \sigma^a\tau^b \rangle}\in \mathcal{H}^{\bullet\star}}$. Assume that $X(E/\K_\infty)_f$ is finitely generated over $\Lambda(H)$. Then $X^{\bullet\star}(E/\KK_\infty)_f$ is finitely generated over $\Lambda(H)$ if and only if the image of $\tilde{g}_{\infty}^{\bullet\star}$ in $\Lambda_2/p$ is not divisible by the coset of ${(1+T_1)^a(1+T_2)^b-1}$.
\end{proposition}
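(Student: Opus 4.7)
The plan is to translate the desired statement into a comparison of characteristic power series via the two short exact sequences linking $X(E/\KK_\infty)$, $X^{\bullet\star}(E/\KK_\infty)$ and $Z^{\bullet\star}(E/\KK_\infty)$, and then to invoke the equivalence $(a)\Longleftrightarrow(g)$ of Theorem~\ref{main_theorem} together with the equivalence $(a)\Longleftrightarrow(f)$ of Theorem~\ref{main_theorem3}. By Lemma~\ref{Xinf_torsion_lemma}, $X^{\bullet\star}(E/\KK_\infty)$ is $\Lambda_2$-torsion, so $f_\infty^{\bullet\star}$ and $g_\infty^{\bullet\star}$ make sense.

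First I would derive from the exact sequence
\[
0 \longrightarrow U \stackrel{\phi^{\bullet\star}}{\longrightarrow} X(E/\KK_\infty) \stackrel{\psi^{\bullet\star}}{\longrightarrow} X^{\bullet\star}(E/\KK_\infty) \longrightarrow 0
\]
an exact sequence of torsion $\Lambda_2$-modules
\[
0 \longrightarrow T_{\Lambda_2}(X(E/\KK_\infty)) \longrightarrow X^{\bullet\star}(E/\KK_\infty) \longrightarrow Z^{\bullet\star}(E/\KK_\infty) \longrightarrow 0.
\]
Indeed, since $U$ is a free $\Lambda_2$-module, the composition $\pi\circ\phi^{\bullet\star}\colon U\hookrightarrow F_{\Lambda_2}(X(E/\KK_\infty))$ is injective, and the snake lemma applied to the commutative diagram with rows $0\to T_{\Lambda_2}(X(E/\KK_\infty))\to X(E/\KK_\infty)\to F_{\Lambda_2}(X(E/\KK_\infty))\to 0$ and the constant map $U\to U$ yields the sequence above, with cokernel $F_{\Lambda_2}(X(E/\KK_\infty))/(\pi\circ\phi^{\bullet\star})(U)=Z^{\bullet\star}(E/\KK_\infty)$.

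Next, using multiplicativity of characteristic power series on the above short exact sequence of torsion $\Lambda_2$-modules, I get an equality (up to a unit in $\Lambda_2$) $f_\infty^{\bullet\star} = f_\infty\cdot \tilde{f}_\infty^{\bullet\star}$. Factoring out the $p$-parts and using that $\Lambda_2$ is a UFD, this gives the key identity
\[
g_\infty^{\bullet\star}\;=\; g_\infty\cdot \tilde{g}_\infty^{\bullet\star}\qquad(\text{up to a unit in }\Lambda_2),
\]
and similarly for the $\mu$-invariants. Reducing modulo $p$ inside the UFD $\Lambda_2/p$, it follows that $\overline{\Upsilon}=(1+T_1)^a(1+T_2)^b-1$ divides $\overline{g_\infty^{\bullet\star}}$ if and only if it divides $\overline{g_\infty}$ or $\overline{\tilde{g}_\infty^{\bullet\star}}$.

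Finally I would apply the equivalences already proven. The hypothesis that $X(E/\KK_\infty)_f$ is finitely generated over $\Lambda(H)$ is, by the equivalence $(a)\Longleftrightarrow(f)$ of Theorem~\ref{main_theorem3}, exactly the statement that $\overline{\Upsilon}$ does not divide $\overline{g_\infty}$ in $\Lambda_2/p$. On the other hand, by the equivalence $(a)\Longleftrightarrow(g)$ of Theorem~\ref{main_theorem}, the $\Lambda(H)$-finite-generation of $X^{\bullet\star}(E/\KK_\infty)_f$ is equivalent to $\overline{\Upsilon}\nmid\overline{g_\infty^{\bullet\star}}$. Combining these two equivalences with the divisibility $\overline{g_\infty^{\bullet\star}} = \overline{g_\infty}\cdot\overline{\tilde{g}_\infty^{\bullet\star}}$ gives the claim: under the standing hypothesis, $X^{\bullet\star}(E/\KK_\infty)_f$ is finitely generated over $\Lambda(H)$ if and only if $\overline{\Upsilon}\nmid\overline{\tilde{g}_\infty^{\bullet\star}}$. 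There is no real obstacle here; the only step requiring a brief justification is the derivation of the middle exact sequence from the freeness of $U$, and the cleanest way to record that is via the snake lemma argument above.
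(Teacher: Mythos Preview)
Your proof is correct and follows essentially the same route as the paper's: both derive the short exact sequence $0\to T_{\Lambda_2}(X(E/\KK_\infty))\to X^{\bullet\star}(E/\KK_\infty)\to Z^{\bullet\star}(E/\KK_\infty)\to 0$, deduce the factorisation $g_\infty^{\bullet\star}=g_\infty\cdot\tilde g_\infty^{\bullet\star}$ from multiplicativity of characteristic ideals, and then invoke the equivalences $(a)\Leftrightarrow(g)$ of Theorem~\ref{main_theorem} and $(a)\Leftrightarrow(f)$ of Theorem~\ref{main_theorem3}. The only difference is expository: you spell out the snake-lemma derivation of the exact sequence and the divisibility argument in $\Lambda_2/p$, whereas the paper states the isomorphism $Z^{\bullet\star}\cong X^{\bullet\star}/T_{\Lambda_2}(X)$ directly and cites \cite[Proposition~15.22]{Washington} for the factorisation.
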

\begin{proof}
	Recall that ${f_{\infty}^{\bullet\star} \in \Lambda_2}$ is the characteristic power series of $X^{\bullet\star}(E/\KK_{\infty})$. As ${H \in \mathcal{H}^{\bullet\star}}$ we know that $X^{\bullet\star}(E/\KK_\infty)$ is a torsion $\Lambda_2$-module. Therefore ${f_\infty^{\bullet\star} \ne 0}$, and we write ${f^{\bullet\star}_{\infty}=p^{m^{\bullet\star}}g^{\bullet\star}_{\infty}}$ where ${m^{\bullet\star}=\mu_G(X^{\bullet\star}(E/\KK_{\infty}))}$, so that ${p \nmid g^{\bullet\star}_{\infty}}$. Also recall that ${f_{\infty} \in \Lambda_2}$ is the characteristic power series of $T_{\Lambda_2}(X(E/\KK_{\infty}))$ and that we write ${f_{\infty}=p^{k}g_{\infty}}$ where ${k=\mu_G(T_{\Lambda_2}(X(E/\KK_{\infty})))}$, so that ${p \nmid g_{\infty}}$.

	It is easy to see that the map $\psi^{\bullet\star}$ induces an isomorphism
	$$Z^{\bullet\star}(E/\KK_{\infty}) \cong X^{\bullet\star}(E/\KK_{\infty})/T_{\Lambda_2}(X(E/\KK_{\infty})).$$
	Therefore we have an exact sequence
	$$0 \to T_{\Lambda_2}(X(E/\KK_{\infty})) \to X^{\bullet\star}(E/\KK_{\infty}) \to Z^{\bullet\star}(E/\KK_{\infty}) \to 0.$$
	
	As in the proof of \cite[Proposition~15.22]{Washington} the above exact sequence shows that  ${g_{\infty}^{\bullet\star}=g_{\infty}\tilde{g}_{\infty}^{\bullet\star}}$. The result then follows from this observation and Theorems~\ref{main_theorem} and \ref{main_theorem3}.
\end{proof}

\section{Fine Selmer groups, Conjectures~A and B and the $\mathfrak{M}_H(G)$-property} \label{section:fineSelmer}
In this section, we prove Theorem~\ref{freestructure_theorem} and Propositions~\ref{propositionA}, \ref{propositionB} and \ref{propositionC}.

\begin{proof}[Proof of Proposition~\ref{propositionA}]
Recall from Proposition~\ref{Hcyc_prop} that ${H_{cyc} \in \mathcal{H}^{++} \cap \mathcal{H}^{--}}$; in what follows we will work with ${(\bullet\star) = (++)}$ and ${H = H_{cyc}}$. Suppose that Conjecture~\ref{conjectureA} holds true. Then Proposition~\ref{fineSelmerisom_prop} implies that ${\mu_{G/H}(T_{\Lambda}(X(E/\KK_\infty^H))) = 0}$. It follows from inequality~\eqref{eq:muinv} in the proof of Theorem~\ref{main_theorem3} above that also ${\mu_G(T_{\Lambda_2}(X(E/\KK_\infty))) = 0}$. This means that condition~$(g)$ from Theorem~\ref{main_theorem3} is satisfied for ${H = H_{cyc}}$. This proves that $X(E/\KK_\infty))_f$ is finitely generated over $\Lambda(H)$. From Theorem~\ref{equivalence_theorem} we also have that $Y(E/\KK_\infty))_f$ is finitely generated over $\Lambda(H)$. We can prove that $Y(E/\KK_\infty))_f$ is finitely generated over $\Lambda(H)$ in a more direct way by noting that since $Y(E/K_{cyc})$ is a finitely generated $\Zp$-module, we can derive from Proposition~\ref{fineSelmercontrol_prop1} that $Y(E/\KK_{\infty})_H$ is finitely generated over $\Zp$. Hence $Y(E/\KK_{\infty})$ is finitely generated over $\Lambda(H)$. This certainly implies that $Y(E/\KK_{\infty})_f$ is also finitely generated over $\Lambda(H)$.

Now we show that ${\mu_G(T_{\Lambda_2}(X(E/\KK_\infty)))=0}$. To show this, it will suffice by Theorem~\ref{2dimWingberg_theorem1} to show that ${\mu_G(Y(E/\KK_{\infty}))=0}$. Assume that ${\mu_G(Y(E/\KK_{\infty}))>0}$. Then taking into account \cite[Lemma~2.1]{MHG}, the structure theorem implies that for some ${m >0}$ we have an injection
$$\Lambda_2/p^m \hookrightarrow Y(E/\KK_{\infty}). $$
Since $Y(E/\KK_{\infty})$ is finitely generated over the Noetherian ring $\Lambda(H)$, it follows that ${\Lambda_2/p^m}$ is finitely generated over $\Lambda(H)$. This is clearly not true and hence we get a contradiction. Hence we do indeed have that ${\mu_G(Y(E/\KK_{\infty}))=0}$.
\end{proof}

\begin{proof}[Proof of Proposition~\ref{propositionB}]
Assume that $Y(E/\KK_{\infty})$ is a pseudo-null $\Lambda_2$-module and let ${H \in \mathcal{H}^{\bullet\star}}$. Then ${\tilde{f}_{\infty}=1}$ (the characteristic power series of $Y(E/\KK_{\infty})$). Therefore by Theorem~\ref{main_theorem2} $Y(E/\KK_{\infty})_f$ is finitely generated over $\Lambda(H)$. Then by Theorem~\ref{equivalence_theorem} we also get that $X(E/\KK_{\infty})_f$ is finitely generated over $\Lambda(H)$.

The following is an alternative longer argument to show that $X(E/\KK_{\infty})_f$ is finitely generated over $\Lambda(H)$. For any ${H' \in \mathcal{H}^{\bullet\star}}$ let ${\psi_{H'}: R_{p^{\infty}}(E/\KK_{\infty}^{H'}) \to R_{p^{\infty}}(E/\KK_{\infty})^{H'}}$ be the isomorphism of Proposition~\ref{fineSelmercontrol_prop1}. Dualising the map $\psi_{H'}$ we get an isomorphism ${\widehat{\psi_{H'}}: Y(E/\KK_{\infty})_{H'} \isomarrow Y(E/\KK_{\infty}^{H'})}$.

Since $Y(E/\KK_{\infty})$ is a pseudo-null $\Lambda_2$-module, the quotient $Y(E/\KK_{\infty})_f$ is also pseudo-null and hence has Krull dimension at most one.  Moreover, multiplication by $p$ is injective on $Y(E/\KK_{\infty})_f$ by definition. Therefore $Y(E/\KK_{\infty})_f/p$ is finite by \cite[Corollary~11.9]{AM}, i.e. in this case $Y(E/\KK_{\infty})_f$ is a finitely generated free $\Z_p$-module. Let ${H' \in \mathcal{H}^{\bullet\star}}$ and consider the commutative diagram with exact rows

\begin{equation*}
\xymatrix {
Y(E/\KK_{\infty})_{H'}  \ar[d]^{\psi_{H'}} \ar[r] & (Y(E/\KK_{\infty})_f)_{H'}  \ar[d]^{\theta_{H'}} \ar[r] &0\\
Y(E/\KK_{\infty}^{H'}) \ar[r] & Y(E/\KK_{\infty}^{H'})_f \ar[r] &0}
\end{equation*}

Since $\psi_{H'}$ is surjective, the map $\theta_{H'}$ is also surjective. It follows that

\begin{eqnarray*}
	\lambda(Y(E/\KK_{\infty}^{H'}))=\rank_{\Zp}(Y(E/\KK_{\infty}^{H'})_f) & \leq & \rank_{\Zp}((Y(E/\KK_{\infty})_f)_{H'})\\
	& \leq & \rank_{\Zp}(Y(E/\KK_{\infty})_f).
\end{eqnarray*}

So we see that $\lambda(Y(E/L_{\infty}))$ is bounded in a neighborhood of $\KK_{\infty}^H$. Taking Proposition~\ref{prop:neighborhood} into account and applying Proposition~\ref{fineSelmerisom_prop} we then see that $\lambda(T_{\Lambda}(X(E/L_\infty)))$ is bounded on a neighbourhood $U$ of $\KK_\infty^H$. Therefore from Theorem~\ref{main_theorem3} we have that $X(E/\KK_\infty)$ satisfies the $\mathfrak{M}_H(G)$-property.

Finally, since $Y(E/\KK_{\infty})$ is a pseudo-null $\Lambda_2$-module, we have ${\mu_G(Y(E/\KK_{\infty}))=0}$. Then Theorem \ref{2dimWingberg_theorem1} shows that ${\mu_G(T_{\Lambda_2}(X(E/\KK_\infty)))=0}$ as desired.
\end{proof}

\begin{proof}[Proof of Proposition~\ref{propositionC}]
Assume that $Y(E/K_{cyc})$ is finite. Then by Proposition~\ref{fineSelmerisom_prop} $T_{\Lambda}(X(E/K_{cyc}))$ is finite. Let ${H=H_{cyc}}$. It then follows from this and Theorem~\ref{TorsionControlThm} that $T_{\Lambda_2}(X(E/\KK_{\infty}))_H$ is finite and hence $T_{\Lambda_2}(X(E/\KK_{\infty}))$ is a finitely generated torsion $\Lambda(H)$-module. By \cite[Proposition~5.4]{Venjakob1} this implies that $T_{\Lambda_2}(X(E/\KK_{\infty}))$  is a pseudo-null $\Lambda_2$-module. Since by Proposition~\ref{cor:hamidi} $T_{\Lambda_2}(X(E/\KK_{\infty}))$ has no nontrivial pseudo-null submodules, we conclude that ${T_{\Lambda_2}(X(E/\KK_{\infty}))=0}$ as desired.
\end{proof}

\section{On the freeness of the Selmer group} \label{section:freeSelmer}
Now we prove Theorem~\ref{freestructure_theorem}. Let ${y_K \in E(K)}$ be the Heegner point defined before the statement of Theorem~\ref{freestructure_theorem}. First we need the following

\begin{theorem}\label{MN_theorem}
If ${y_K \notin pE(K)}$, then ${E(K)\otimes \Zp=\Zp(y_K\otimes 1) \cong \Zp}$ and ${\Sha(E/K)[p^{\infty}]=0}$.
\end{theorem}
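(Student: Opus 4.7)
The plan is to invoke a refined form of Kolyvagin's theorem on the Euler system of Heegner points, in the sharp $p$-primary version that exploits the non-$p$-divisibility of the basic Heegner point. Under the Heegner hypothesis (every prime dividing $N$ splits in $K/\Q$), the modular parametrisation $X_0(N) \to E$ together with the chosen ideal $\cN$ yields Heegner points $y_n \in E(K[n])$ for squarefree $n$ coprime to $ND_K$, and these points satisfy the Euler system norm compatibilities needed for Kolyvagin's derivative construction.

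First, I would observe that the hypothesis $y_K \notin pE(K)$ forces $y_K$ to be of infinite order. Indeed, if $y_K$ were torsion then, after multiplying by a sufficiently large integer prime to $p$, one would be able to contradict $y_K \notin pE(K)$ using the structure of $E(K)_{\text{tors}}$ (which has order prime to $p$ when $p$ is large with respect to $E$, as is implicit in hypothesis~(c) and in the supersingular setting). Kolyvagin's theorem combined with Gross--Zagier then yields $\mathrm{rank}_\Z E(K) = 1$ and $\Sha(E/K)$ finite.

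Second, I would apply the sharp $p$-adic form of Kolyvagin's divisibility, asserting
\[ \mathrm{ord}_p |\Sha(E/K)[p^\infty]| \;\le\; 2 \cdot \mathrm{ord}_p \bigl[\,E(K)/E(K)_{\text{tors}} : \Z \cdot \bar y_K \,\bigr], \]
where $\bar y_K$ denotes the image of $y_K$ in $E(K)/E(K)_{\text{tors}}$. Since $E(K)/E(K)_{\text{tors}} \cong \Z$, the non-$p$-divisibility of $y_K$ makes the index on the right coprime to $p$. Hence $\Sha(E/K)[p^\infty] = 0$, and simultaneously $y_K$ is a generator of $E(K) \otimes \Zp$, proving both claims of the theorem at once.

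The main technical hurdle will be to check that the hypotheses of the sharp Kolyvagin bound are fulfilled in our good supersingular setting. Hypotheses~(b) and (c) of Theorem~\ref{freestructure_theorem}, namely $p \nmid \prod_{v | N} c_v$ and $\ell \not\equiv 1 \pmod{p}$ at primes $\ell$ of multiplicative reduction, are designed precisely to annihilate the local error terms appearing in Kolyvagin's derivative classes and in the Tamagawa factors controlling the passage from $y_K \notin pE(K)$ to the $p$-triviality of $\Sha$; the splitting of $p$ in $K/\Q$ and $p \ge 5$ provide the remaining ingredients (e.g.\ irreducibility of the residual representation and absence of $p$-torsion in $E(K)$) needed to apply the Euler system machinery. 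Having verified these inputs, the conclusion will follow by citing the standard results of Kolyvagin and Gross--Zagier, together with the $p$-adic refinements in the literature (e.g.\ work of Nekov\'a\v{r} and W.~Zhang).
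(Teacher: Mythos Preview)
Your overall strategy --- invoke the sharp $p$-adic refinement of Kolyvagin's theorem --- is exactly what the paper does: it cites Matar--Nekov\'a\v{r} \cite[Theorem~6.7]{MN1}. However, there is a genuine gap in your argument. You invoke hypotheses~(b) and (c) of Theorem~\ref{freestructure_theorem} (the Tamagawa and congruence conditions) to ``annihilate local error terms'', but these are \emph{not} hypotheses of Theorem~\ref{MN_theorem}. The only assumption you have available is ${y_K \notin pE(K)}$, together with the ambient setup ($E$ supersingular at ${p \ge 5}$, Heegner hypothesis, $p$ split in $K$). Hypotheses~(b) and (c) enter only later, in the separate proof of Theorem~\ref{freestructure_theorem}, where they are used to control local cohomology at bad primes; they play no role in Theorem~\ref{MN_theorem} and cannot be assumed here.

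The key ingredient you are missing is this: good \emph{supersingular} reduction at $p$ forces the mod-$p$ representation $E[p]$ to be irreducible as a $\Gal(\Q(E[p])/\Q)$-module, by Serre \cite[Prop.~12(c)]{Serre1}. This irreducibility --- not the Tamagawa conditions --- is precisely the hypothesis needed for the Matar--Nekov\'a\v{r} result. Your parenthetical remark attributing irreducibility to ``the splitting of $p$ in $K/\Q$ and $p \ge 5$'' is incorrect: neither of these alone yields irreducibility (in the ordinary case $E[p]$ can be reducible regardless). Once you replace the erroneous appeal to (b) and (c) by the Serre irreducibility argument, your proof collapses to the paper's one-line citation.
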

\begin{proof}
Since $E$ has good supersingular reduction at $p$ from \cite[Prop.~12(c)]{Serre1} we have that $E[p]$ is an irreducible $\Gal(\Q(E[p])/\Q)$-module. Therefore the result follows from \cite[Theorem~6.7]{MN1} (see also \cite{MN2}).
\end{proof}

\begin{proof}[Proof of Theorem~\ref{freestructure_theorem}]
First we show that $(2)$ and $(3)$ follow from $(1)$. Assume that ${X(E/\KK_{\infty}) \cong (\Lambda_2)^2}$. Let ${\bullet, \star \in \{+,-\}}$ and let ${H \in \mathcal{H}^{\bullet\star}}$. By Proposition~\ref{SelmerControlThm_prop2} we see that ${X(E/\KK_{\infty}^H) \cong \Lambda^2}$. Therefore by Proposition~\ref{fineSelmerisom_prop} we have that $Y(E/\KK_{\infty}^H)$ is finite. Taking into account Proposition~\ref{Hcyc_prop} we get in particular that $Y(E/K_{cyc})$ is finite. This proves $(2)$. Let ${H=H_{cyc}}$. By Proposition~\ref{fineSelmercontrol_prop1} we have an isomorphism ${Y(E/\KK_{\infty})_H \cong Y(E/K_{cyc})}$. Since $Y(E/K_{cyc})$ is finite, the quotient $Y(E/\KK_{\infty})_H$ is finite and hence $Y(E/\KK_{\infty})$ is a finitely generated torsion $\Lambda(H)$-module. Therefore by \cite[Proposition~5.4]{Venjakob1} $Y(E/\KK_{\infty})$ is a pseudo-null $\Lambda_2$-module. This proves $(3)$.

We now prove (1). The proof of part~(1) of the theorem will be very similar to the proof of \cite[Theorem~3.2]{Matar_Tower}.  We begin the proof by studying the kernel and cokernel of the map ${s: \Selinf(E/K) \to \Selinf(E/\KK_{\infty})^G}$.

Define $S$ to be the set of primes of $K$ dividing $Np$ and $S_{\infty}$ to be the primes of $\KK_{\infty}$ above those in $S$.

Now consider the following commutative diagram
\begin{equation*}
	\begin{tikzcd}[column sep = small]
0 \arrow[r] & \Selinf(E/\KK_{\infty})^{G} \arrow[r] & H^1(G_S(\KK_{\infty}), E[p^{\infty}])^{G} \arrow[r]
& \left(\underset{v \in S_{\infty}}{\bigoplus} H^1(\KK_{\infty,v}, E)[p^{\infty}]\right)^{G} \\
0 \arrow[r] & \Selm_{p^{\infty}}(E/K) \arrow[u, "s"] \arrow[r] & H^1(G_S(K), E[p^{\infty}]) \arrow[u, "h"] \arrow[r, "\psi"]
& \underset{v \in S} {\bigoplus} H^1(K_v, E)[p^{\infty}] \arrow[u, "g"]
\end{tikzcd}
\end{equation*}

Applying the snake lemma to the above diagram we get

$$0 \to \ker s \to \ker h \to \ker g \cap \img \psi \to \coker s \to \coker h. $$\\
We have ${\ker h=H^1(G, E(\KK_{\infty})[p^{\infty}])}$ and we have an injection ${\coker h \hookrightarrow H^2(G, E(\KK_{\infty})[p^{\infty}])}$. By Lem\-ma~\ref{p-torsion_lemma} we have ${E(\KK_{\infty})[p^{\infty}]=\{0\}}$ and so the map $h$ is an isomorphism. Therefore from the above exact sequence we get that $s$ is an injection and that ${\coker s = \ker g \cap \img \psi}$.

We now analyze $\ker g$. Let $v$ be a prime of $K$ that does not divide $p$ and consider the map ${g_v: H^1(K_v, E)[p^{\infty}] \to \left(\oplus_{w | v} H^1(\KK_{\infty, w}, E)[p^{\infty}]\right)^G}$. By assumption~$(b)$ in the theorem we see that as in the proof of \cite[Proposition~10.3]{MHG} we have ${\ker g_v=0}$.

Now let ${v \in S_{\infty}}$ be a prime above $p$. Since $E$ has good supersingular reduction at $p$ and as every prime of $K$ above $p$ ramifies in $\KK_{\infty}/K$ it follows from \cite[Proposition~4.8]{CG} that ${H^1(\KK_{{\infty},v}, E)[p^{\infty}]=0}$.

The two observations above imply that ${\ker g = H^1(K_{\fp}, E)[p^{\infty}] \times H^1(K_{\bar{\fp}}, E)[p^{\infty}]}$. Therefore ${\coker s = \img \psi \cap \left(H^1(K_{\fp}, E)[p^{\infty}] \times H^1(K_{\bar{\fp}}, E)[p^{\infty}]\right)}$. Let $\psi'$ be the map ${\psi': H^1(G_S(K), E[p^{\infty}]) \to H^1(K_{\fp}, E)[p^{\infty}] \times H^1(K_{\bar{\fp}}, E)[p^{\infty}]}$ such that ${\coker s = \img \psi'}$.

We will show below that ${\coker s = \img \psi' \cong \Qp/\Zp}$. Let us explain how this implies the desired result. Since ${\Selinf(E/K) \cong \Qp/\Zp}$ by Theorem~\ref{MN_theorem} and ${\ker s = 0}$, we get an exact sequence
$$0 \to \Qp/\Zp \stackrel{s}{\to} \Selinf(E/\KK_{\infty})^G \to \Qp/\Zp \to 0. $$

Then taking the dual of the above sequence and noting that $\Zp$ is a projective $\Zp$-module we get ${X(E/\KK_{\infty})_G=\Zp \times \Zp}$. This implies by Nakayama's Lemma that $X(E/\KK_{\infty})$ is generated by two elements as a $\Lambda_2$-module and so ${X(E/\KK_\infty) \cong (\Lambda_2)^2/I}$ for some $\Lambda_2$-submodule $I$ of $(\Lambda_2)^2$. If ${I \neq 0}$ then ${\rank_{\Lambda_2}(X(E/\KK_{\infty})) \le 1}$. This contradicts Proposition~\ref{Selrank_prop}. Therefore ${I=0}$ and we get our desired result.

For any ${n > 0}$ the usual $p^n$-Selmer group is defined as
$$\displaystyle 0 \longrightarrow \Selm_{p^n}(E/K) \longrightarrow H^1(G_S(K), E[p^n])\longrightarrow \prod_{v\in S} H^1(K_v, E)[p^n]. $$

We now define a subgroup ${\Selm^{\dagger}_{p^n}(E/K) \subseteq \Selm_{p^n}(E/K)}$ as

$$\displaystyle 0 \longrightarrow \Selm^{\dagger}_{p^n}(E/K) \longrightarrow \Selm_{p^n}(E/K) \longrightarrow \prod_{v\in S \setminus\{\fp, \bar{\fp}\}} H^1(K_v, E[p^n]). $$

We study $\img \psi'$ by using the Cassels-Poitou-Tate exact sequence (see \cite[Theorem~1.7.3]{Rubin}):

$$H^1(G_S(K_n), E[p^{\infty}]) \xrightarrow{\psi'} \underset{i=1,2}{\bigoplus} H^1(K_{\fp_i}, E)[p^{\infty}] \xrightarrow{\uptheta} S_p^{\dagger}(E/K)^{\vee}, $$
where ${S_p^{\dagger}(E/K)^{\vee} = \ilim_n \Selm^{\dagger}_{p^n}(E/K)}$, ${\fp_1 = \fp}$ and ${\fp_2 = \bar{\fp}}$.

Define ${S_p(E/K) = \ilim_n \Selm_{p^n}(E/K)}$. We want to show that ${S_p^{\dagger}(E/K)^{\vee}=S_p(E/K)}$. From the definitions we see that in order to show this we need to prove that ${E(K_v)^*:=\ilim E(K_v)/p^n}$ (the $p$-adic completion of $E(K_v)$) is trivial for any ${v \in S \setminus \{\fp, \bar{\fp} \}}$.

Let ${v \in S \setminus \{\fp, \bar{\fp} \}}$. By Mattuck's Theorem we have that ${E(K_v) \cong \mathbb{Z}_l \times T}$, where $l$ is the residue characteristic of $K_v$ and $T$ is a finite group. Since ${l \ne p}$ we have ${E(K_v)^*=E(K_v)[p^{\infty}]}$. Assume that $E$ has additive reduction at $v$. Let $E_0(K_v)$ be the group of points of $E(K_v)$ of nonsingular reduction. Since ${p \geq 5}$, by \cite[pg.~448]{Silverman1} the cardinality of $E(K_v)/E_0(K_v)$ is prime to $p$. Therefore ${E(K_v)[p^{\infty}]=E_0(K_v)[p^{\infty}]}$. Let $\hat{E}$ be the formal group of $E/K_v$ and $\mathfrak{m}_v$ the maximal ideal of $O_v$ (the ring of integers of $K_v$). Since ${v \nmid p}$, $\hat{E}(\mathfrak{m}_v)$ has no elements of order $p$ and so ${E(K_v)[p^{\infty}]=E_0(K_v)[p^{\infty}]}$ injects into the set $\tilde{E}_{ns}(k_v)$ of nonsingular points of the reduction of $E$ modulo the residue field $k_v$. But as $E$ has additive reduction at $v$ we have ${\tilde{E}_{ns}(k_v)\cong k_v}$. Since $k_v$ has no points of order $p$ (because ${v \nmid p}$), we see that ${E(K_v)[p^{\infty}]=0}$.
	
Now assume that $E$ has multiplicative reduction at $v$. By $(b)$ ${p \nmid c_v}$ and by $(c)$ we have ${p \nmid \#k_v^{\times}}$ where $k_v$ is the residue field at $v$. As above, since ${p \nmid c_v}$, we have that ${E(K_v)[p^{\infty}]=E_0(K_v)[p^{\infty}]}$ injects into $\tilde{E}_{ns}(k_v)$. As $E$ has multiplicative reduction at $v$, ${\tilde{E}_{ns}(k_v)\cong k_v^{\times}}$. Since ${p \nmid \#k_v^{\times}}$, we have that ${E(K_v)[p^{\infty}]=0}$. Thus we see that for any ${v \in S \setminus \{\fp, \bar{\fp} \}}$ we have ${E(K_v)^*=0}$ and thus ${S_p^{\dagger}(E/K)^{\vee}=S_p(E/K)}$.

We want to show that ${\ker \uptheta = \img \psi' \cong \Qp/\Zp}$ or equivalently ${\coker \hat{\uptheta} \cong \Zp}$ where $\hat{\uptheta}$ is the dual of $\uptheta$. Since ${S_p^{\dagger}(E/K)^{\vee}=S_p(E/K)}$ we have the map
$$\hat{\uptheta}: S_p(E/K) \to E(K_{\fp})^* \times E(K_{\bar{\fp}})^*. $$
By Mattuck's Theorem ${E(K_{\fp}) \cong \Zp \times T}$ where $T$ is a finite group. By Lemma~\ref{p-torsion_lemma} the order of $T$ is not divisible by $p$. Therefore ${E(K_{\fp})^* \cong \Zp}$. Similarly ${E(K_{\bar{\fp}})^* \cong \Zp}$. Also Theorem~\ref{MN_theorem} implies that ${S_p(E/K)=E(K)\otimes \Zp=\Zp}$.

Without loss of generality, we assume that $y_K$ is not divisible by $p$ in $E(K_{\fp})$ (it can be shown that this also implies that $y_K$ is not divisible by $p$ in $E(K_{\bar{\fp}})$, but we won't need this). This implies that the restriction map from ${S_p(E/K) = \Zp}$ to ${E(K_{\fp})\otimes \Zp=\Zp}$ is an isomorphism. So now we have a map ${\hat{\uptheta}: \Zp \to \Zp \times \Zp}$ such that if $\pi_i$ is the projection of the target group onto its $i$-th factor then ${\pi_1 \circ \hat{\uptheta}}$ is an isomorphism. We want to show that ${\coker \hat{\uptheta} \cong \Zp}$. Since $\hat{\uptheta}$ is not the zero map, to show this we only need to show that ${\text{Tors}_{\Zp}(\coker \hat{\uptheta}) = \{0\}}$.

Let ${(a,b) \in \Zp \times \Zp}$ be such that ${(ra,rb) \in \img\hat{\uptheta}}$ for some ${r \in \Zp \backslash \{0\}}$. We must show that ${(a,b) \in \img\hat{\uptheta}}$. Let ${x \in \Zp}$ be such that ${\hat{\uptheta}(x) = (ra,rb)}$. Since ${\pi_1 \circ \hat{\uptheta}}$ is surjective there exists ${y \in \Zp}$ such that ${\hat{\uptheta}(y)=(a,c)}$ for some ${c \in \Zp}$. Then we have ${\pi_1(\hat{\uptheta}(ry))=r\pi_1(\hat{\uptheta}(y))=ra=\pi_1(\hat{\uptheta}(x))}$ which implies that ${ry=x}$ since ${\pi_1 \circ \hat{\uptheta}}$ is injective. Therefore ${(ra,rc)=(ra,rb)}$ so ${b=c}$ showing that ${(a,b) \in \img \hat{\uptheta}}$ as desired. This completes the proof.
\end{proof}

\part{Applications}
In the final part of our paper, we focus on applications of our main theorems. We start with results concerning Mazur's Conjecture~\ref{MazurConjecture}.

\section{Bounding Ranks}\label{section:bounding_ranks}
In this section, we prove Theorem~\ref{rankbound_theorem}. Throughout this section, let $K_{ac}$ be the anticyclotomic $\Zp$-extension of $K$. First we need
\begin{lemma}\label{rankbound_lemma}
If $L_{\infty}/K$ is a $\Zp$-extension with $X^{\bullet \star}(E/L_{\infty})$ a torsion $\Lambda$-module for each of the four choices of the pair ${\bullet, \star}$, then the rank of $E$ is bounded in $L_{\infty}/K$.
\end{lemma}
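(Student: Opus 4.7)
The strategy is to bound $\rank_{\Z} E(L_n)$ uniformly in $n$ by controlling the $\Z_p$-corank of the image $E(L_n)\otimes \Q_p/\Z_p \hookrightarrow \Selinf(E/L_n)$, by means of the four signed Selmer groups simultaneously.

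First I would verify that, for the single $\Z_p$-extension $L_\infty/K$, the analog of Proposition~\ref{Seln_surjective_prop2} holds: since each $X^{\bullet\star}(E/L_\infty)$ is $\Lambda$-torsion, the defining sequence
\[
0 \longrightarrow \Selpm(E/L_\infty) \longrightarrow \Selinf(E/L_\infty) \longrightarrow J_p^{\bullet\star}(E/L_\infty) \longrightarrow 0
\]
is exact (this uses the same Poitou--Tate ingredients as in Proposition~\ref{Seln_surjective_prop}). A control theorem of Kobayashi--Iovita--Pollack type, applied within $L_\infty/K$, then gives that $\Selpm(E/L_n)$ has $\Z_p$-corank bounded by $\lambda(X^{\bullet\star}(E/L_\infty))+O(1)$, uniformly in $n$. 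Since the fine Selmer group $R_{p^\infty}(E/L_\infty)$ is contained in every $\Selpm(E/L_\infty)$, it is itself $\Lambda$-torsion, and Proposition~\ref{fineSelmercontrol_prop1} (adapted to this setting) yields that $R_{p^\infty}(E/L_n)$ also has bounded $\Z_p$-corank.

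Next I would exploit the local identity $\hat{E}^+(L_{n,w}) + \hat{E}^-(L_{n,w}) = \hat{E}(L_{n,w})$ (up to a bounded-index subgroup) established by Kobayashi at every prime $w$ above $p$. Combining this identity at the two places $\fp$ and $\bar\fp$ with all four sign choices shows that the image of $E(L_n)\otimes \Q_p/\Z_p$ in $\Selinf(E/L_n)$ lies, up to a subgroup of corank bounded by $\corank R_{p^\infty}(E/L_n)$, in the sum inside $\Selinf(E/L_n)$ of the four signed Selmer groups $\Sel^{\bullet\star}(E/L_n)$. Concretely, an element of $E(L_n)\otimes \Q_p/\Z_p$ whose image at every place above $p$ lies simultaneously in $\hat{E}^+ \cap \hat{E}^-$ is, up to bounded error, in the fine Selmer group; the complementary direction is covered by the four signed Selmer groups via the local decomposition. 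Putting the two steps together yields
\[
\rank_\Z E(L_n) \;\le\; \sum_{\bullet,\star \in \{+,-\}} \corank_{\Z_p} \Sel^{\bullet\star}(E/L_n) \;+\; \corank_{\Z_p} R_{p^\infty}(E/L_n) \;+\; O(1),
\]
all of whose terms are bounded independently of $n$.

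The main obstacle is the second step: although the decomposition $\hat{E} = \hat{E}^+ + \hat{E}^-$ is a purely local statement, translating it into a global containment of $E(L_n)\otimes \Q_p/\Z_p$ in the sum of the four $\Sel^{\bullet\star}(E/L_n)$ requires a careful Poitou--Tate/global-reciprocity argument, since the sum of Selmer groups is generally strictly smaller than the Selmer group attached to the pointwise sum of local conditions. A cleaner alternative, which I expect to work, is to argue on the dual side: the short exact sequence $0 \to \Lambda^2 \to X(E/L_\infty) \to X^{\bullet\star}(E/L_\infty) \to 0$ (from the first step) together with Proposition~\ref{fineSelmercontrol_prop1} and the control theorem, forces the Mordell--Weil contribution to the corank of $\Selinf(E/L_n)$ to be pseudo-null modulo the torsion-controlled pieces, which directly yields the rank bound.
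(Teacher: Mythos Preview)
Your proposal is in the right spirit but takes a more roundabout route than the paper, and the step you flag as ``the main obstacle'' is exactly where it stalls. The paper bypasses that obstacle entirely with a simpler device.

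The paper does not work with doubly-signed Selmer groups at the finite levels $L_n$ at all. Instead, it introduces Kobayashi's \emph{single-signed} groups $\Selm^{\pm}_{p^\infty}(E/L_n)$, defined with respect to $L_\infty/K$ without the index shift. Kobayashi's \cite[Proposition~10.1]{Kob} gives directly that the map
\[
\psi_n:\Selm^{+}_{p^\infty}(E/L_n)\oplus\Selm^{-}_{p^\infty}(E/L_n)\longrightarrow\Selinf(E/L_n),\qquad (x,y)\mapsto x-y,
\]
has finite cokernel. So the entire burden is to bound $\corank_{\Z_p}\Selm^{\bullet}_{p^\infty}(E/L_n)$ for each single sign $\bullet\in\{+,-\}$. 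The key observation is then a parity trick: the restriction map sends $\Selm^{\bullet}_{p^\infty}(E/L_n)$ injectively (by Lemma~\ref{p-torsion_lemma}) into $\Selm^{s(\bullet)t(\bullet)}_{p^\infty}(E/L_\infty)^{\Gamma_n}$, where $s(\bullet)=\bullet\cdot\mathrm{sgn}(i(L_\infty,\fp))$ and $t(\bullet)=\bullet\cdot\mathrm{sgn}(i(L_\infty,\bar\fp))$ account for the discrepancy between Kobayashi's unshifted definition at level $n$ and the paper's index-shifted definition at level $\infty$. Since all four $X^{\bullet\star}(E/L_\infty)$ are $\Lambda$-torsion, the coranks of their $\Gamma_n$-invariants are bounded, and one is done.

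What this buys relative to your approach: no fine Selmer group is needed, no control theorem for doubly-signed Selmer groups at finite levels is invoked (indeed the paper only proves such a theorem under the restrictive hypothesis $i(L_\infty,\fp)=i(L_\infty,\bar\fp)$, and only for equal signs), and the delicate global-reciprocity step you identify never arises. Your bound involving all four doubly-signed groups plus the fine Selmer group is not needed; two of the four signed Selmer groups over $L_\infty$ actually suffice for any fixed $L_\infty$, depending on the parities of $i(L_\infty,\fp)$ and $i(L_\infty,\bar\fp)$ (cf.\ the Remark after the lemma). Your alternative ``dual side'' argument via $0\to\Lambda^2\to X(E/L_\infty)\to X^{\bullet\star}(E/L_\infty)\to 0$ is also not enough on its own: it controls $\Selinf(E/L_\infty)$ modulo $\Lambda^2$, but passing back down to $\Selinf(E/L_n)$ still requires relating finite-level data to the signed groups over $L_\infty$, which is precisely the restriction-plus-parity step above.
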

\begin{proof}
Let $v$ be a prime of $L_{\infty}$ above $p$. For any ${n \in N}$ we define $\hat{E}^{\pm}_g(L_{n,v})$ as in (1) and (2) of the paper with respect to the $\Zp$-extension $L_{\infty}/K$. Note that this definition differs from the definition of $\hat{E}^{\pm}(L_{n,v})$ listed in Section~\ref{section:mainintroduction}, since we ignore the possible index shifts by the constants $i(L_\infty, \fp)$ and $i(L_\infty, \bar{\fp})$. The definition is the same that Kobayashi~\cite{Kob} used; note that he only worked over the cyclotomic $\Zp$-extension.

Following Kobayashi, we define for any ${n \geq 0}$
$$\displaystyle 0 \longrightarrow \Selpmsingle(E/L_n) \longrightarrow \Selinf(E/L_n) \longrightarrow \prod_{\fp | p} \frac{H^1(L_{n,\fp}, E[p^{\infty}])}{\hat{E}^{\pm}_g(L_{n,\fp})\otimes \Q_p/\Z_p}. $$

For any ${n \in \N}$ consider the map
$$\psi_n \colon \Selm^+_{p^{\infty}}(E/L_n) \oplus \Selm^-_{p^{\infty}}(E/L_n) \longrightarrow \Selinf(E/L_n)$$
given by ${\psi_n(x,y)=x-y}$. Then \cite[Proposition~10.1]{Kob} shows that $\coker \psi_n$ is finite. Using the injection ${E(L_n) \otimes \Qp/\Zp \hookrightarrow \Selinf(E/L_n)}$, it is easy to see that in order to show that $\rank(E(L_n))$ is bounded with $n$ it suffices to show that both $\corank_{\Zp}(\Selm^+_{p^{\infty}}(E/L_n))$ and $\corank_{\Zp}(\Selm^-_{p^{\infty}}(E/L_n))$ are bounded with $n$.

We define $\text{sgn}: \Z \longrightarrow \{+,-\}$ by
\begin{equation*}
\text{sgn}(n)=
    \begin{cases}
        + & \text{if } n \equiv 0 \mod 2\\
        - & \text{if } n \not\equiv 0 \mod 2
    \end{cases}
\end{equation*}

We also define ${s,t: \{+,-\} \longrightarrow \{+,-\}}$ as follows. For any ${\bullet \in \{+,-\}}$ we have

$$s(\bullet)=\bullet \times \text{sgn}(i(L_{\infty}, \fp)),$$
$$t(\bullet)=\bullet \times \text{sgn}(i(L_{\infty}, \bar{\fp})).$$

For any ${n \in \N}$ let ${\Gamma_n=\Gal(L_{\infty}/L_n)}$. For any ${\bullet \in \{+,-\}}$ it follows from the definitions that the restriction map induces a map
$$\Selm^{\bullet}_{p^{\infty}}(E/L_n) \longrightarrow \Selm^{s(\bullet) t(\bullet)}_{p^{\infty}}(E/L_{\infty})^{\Gamma_n}.$$
Assuming that $X^{\bullet \star}(E/L_{\infty})$ is a torsion $\Lambda$-module for each of the four choices of the pair ${\bullet, \star}$, we get that $\rank_{\Zp}(X^{\bullet \star}(E/L_{\infty})_{\Gamma_n})$ is bounded with $n$ for all four choices. From Lemma~\ref{p-torsion_lemma} we have that the maps in each of the cases above are injections. Therefore it follows that the rank of $E$ is bounded in $L_{\infty}/K$.
\end{proof}
\begin{remark}
If ${i(L_\infty, \fp) \equiv i(L_\infty, \overline{\fp}) \!\mod 2}$, then the proof of Lemma~\ref{rankbound_lemma} shows that $X^{++}(E/L_{\infty})$ and $X^{--}(E/L_{\infty})$ being $\Lambda$-torsion are sufficient to imply that the rank of $E$ is bounded in $L_{\infty}/K$. \\
In the complementary case, i.e. if ${i(L_\infty, \fp) \not\equiv i(L_\infty, \overline{\fp}) \!\mod 2}$, then $X^{+-}(E/L_{\infty})$ and $X^{-+}(E/L_{\infty})$ both being $\Lambda$-torsion also implies the result.
\end{remark}

The main tool needed to prove Theorem~\ref{rankbound_theorem} is
\begin{theorem}\label{rankbound_theorem2}
Let ${\bullet, \star \in \{+,-\}}$. Let $s$ be the number of $\Zp$-extensions of $L_{\infty}/K$, where $X^{\bullet\star}(E/L_{\infty})$ is not a torsion $\Lambda$-module. Then
$$s \le \min\{\lambda^{\bullet\star}_{H^{\bullet\star}} \, | \, H^{\bullet\star} \in \Omega^{\bullet\star} \}. $$
\end{theorem}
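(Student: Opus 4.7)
The plan is to use the structure theorem from Theorem~\ref{main_theorem}(i) together with the injectivity of the restriction map from Lemma~\ref{restriction_map_lemma}(a) in order to inject the set of ``bad'' $\Z_p$-extensions into the set of irreducible non-$p$ factors of the characteristic power series of $X^{\bullet\star}(E/\KK_\infty)$, which by Theorem~\ref{main_theorem}(i) consists of at most $\lambda^{\bullet\star}_{H^{\bullet\star}}$ elements. We may assume that $\Omega^{\bullet\star}$ is non-empty, as otherwise the right-hand side is $+\infty$ and the inequality is vacuous. Fix some $H^{\bullet\star} \in \Omega^{\bullet\star}$ and write $\lambda:=\lambda_{H^{\bullet\star}}^{\bullet\star}$. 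Applying Theorem~\ref{main_theorem}(i) we obtain an exact sequence of $\Lambda_2$-modules
\[
0 \longrightarrow X^{\bullet\star}(E/\KK_\infty) \longrightarrow \bigoplus_{i=1}^{r} \Lambda_2/(f_i^{\bullet\star})^{n_i^{\bullet\star}} \oplus \bigoplus_{j=1}^{t} \Lambda_2/p^{m_j^{\bullet\star}} \longrightarrow B^{\bullet\star} \longrightarrow 0,
\]
with $r\le \lambda$, $B^{\bullet\star}$ pseudo-null, and the $f_i^{\bullet\star} \in \Lambda_2\setminus\Lambda(H^{\bullet\star})$ irreducible. Since $X^{\bullet\star}(E/\KK_\infty)$ is already $\Lambda_2$-torsion by Lemma~\ref{Xinf_torsion_lemma}, its characteristic power series is (up to a unit)
\[ f_\infty^{\bullet\star} = p^{\sum_j m_j^{\bullet\star}} \cdot \prod_{i=1}^{r}(f_i^{\bullet\star})^{n_i^{\bullet\star}}. \]

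Next, let $L_\infty=\KK_\infty^{H'}$ be any $\Z_p$-extension of $K$ with $H' = \overline{\langle \sigma^{a'}\tau^{b'}\rangle}$, and set $\Upsilon_{H'} = (1+T_1)^{a'}(1+T_2)^{b'}-1 \in \Lambda_2$. The key claim is that if $X^{\bullet\star}(E/L_\infty)$ is not a torsion $\Lambda_{G/H'}$-module, then $\Upsilon_{H'}$ must divide $f_\infty^{\bullet\star}$ in $\Lambda_2$. Indeed, by Lemma~\ref{restriction_map_lemma}(a) the restriction map $\Selpm(E/L_\infty)\to\Selpm(E/\KK_\infty)^{H'}$ is well-defined; its kernel embeds into $H^1(H', E(\KK_\infty)[p^\infty])$, which vanishes by Lemma~\ref{p-torsion_lemma}, so the restriction is injective. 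Dualising produces a surjection
\[ X^{\bullet\star}(E/\KK_\infty)_{H'} = X^{\bullet\star}(E/\KK_\infty)/\Upsilon_{H'} \twoheadrightarrow X^{\bullet\star}(E/L_\infty). \]
If the target is not $\Lambda_{G/H'}$-torsion, neither is the source. Via the pseudo-isomorphism displayed above, and arguing exactly as in the implication $(c)\Longrightarrow(b)$ of Lemma~\ref{rankequal_lemma} (now applied to the already $\Lambda_2$-torsion module $X^{\bullet\star}(E/\KK_\infty)$, so that the argument reduces to analysing when $\Lambda_2/(g,\Upsilon_{H'})$ has Krull dimension two), this forces $\Upsilon_{H'}$ to share an irreducible factor with $f_\infty^{\bullet\star}$.

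Since $\Upsilon_{H'}$ is a distinguished power series of positive degree, and thus irreducible and coprime to $p$ in $\Lambda_2$, it must in fact be associate to one of the $f_i^{\bullet\star}$. Moreover, distinct $\Z_p$-extensions $L_\infty\ne L_\infty'$ correspond via the bijection $\mathcal{E}\leftrightarrow\mathbb{P}^1(\Z_p)$ to distinct classes $[(a',b')]\ne[(a'',b'')]$, and as in the proof of Proposition~\ref{almost_all_prop} the associated elements $\Upsilon_{H'}$ and $\Upsilon_{H''}$ are then coprime (hence non-associate) in $\Lambda_2$. The assignment $L_\infty \mapsto f_i^{\bullet\star}$ is therefore injective on the set of ``bad'' $\Z_p$-extensions, giving $s\le r\le \lambda^{\bullet\star}_{H^{\bullet\star}}$. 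Taking the minimum over $H^{\bullet\star} \in \Omega^{\bullet\star}$ yields the desired bound. The step requiring the most care is the ``Lemma~\ref{rankequal_lemma}-style'' divisibility criterion used above: one verifies directly from the elementary decomposition of $X^{\bullet\star}(E/\KK_\infty)$ that $\Lambda_2/(g,\Upsilon_{H'})$ is $\Lambda_{G/H'}$-torsion precisely when $g$ and $\Upsilon_{H'}$ are coprime, which suffices after noting that the pseudo-null cokernel $B^{\bullet\star}$ contributes only pseudo-null, and hence $\Lambda_{G/H'}$-torsion, corrections.
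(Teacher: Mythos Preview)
Your argument is correct and follows essentially the same route as the paper, which refers back to \cite[Theorem~1.5]{MHG}: both use the $\Lambda_2$-structure decomposition from Theorem~\ref{main_theorem}(i) (which in turn relies on Proposition~\ref{Xf_ControlThm_prop}) to bound the number of non-$p$ irreducible factors of $f_\infty^{\bullet\star}$ by $\lambda_{H^{\bullet\star}}^{\bullet\star}$, and then match each ``bad'' $L_\infty$ to one such factor via the standard divisibility criterion. Two small points of phrasing: calling $\Upsilon_{H'}$ a ``distinguished power series'' is a one-variable notion and not literally applicable here---what you need (and what holds) is that $\Lambda_2/(\Upsilon_{H'})\cong\Lambda(G/H')$ is a domain, so $\Upsilon_{H'}$ is prime (hence irreducible) in the UFD $\Lambda_2$ and visibly not in $p\Lambda_2$; and your appeal to Proposition~\ref{almost_all_prop} for coprimality is stated there modulo $p$, but the stronger statement in $\Lambda_2$ follows immediately from irreducibility plus the observation that $(\Upsilon_{H'})=(\Upsilon_{H''})$ forces $H'=H''$.
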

\begin{proof}
The proof is identical to the proof of \cite[Theorem~1.5]{MHG} where we use Proposition~\ref{Xf_ControlThm_prop} to replace \cite[Proposition~4.2]{MHG} in the proof.
\end{proof}

\begin{proof}[Proof of Theorem~\ref{rankbound_theorem}]
The first statement follows immediately from Lemma~\ref{rankbound_lemma} and Theorem~\ref{rankbound_theorem2}. Now assume that every prime dividing $N$ splits in $K/\Q$, ${p \nmid h_K}$ and ${\Gal(\Q(E[p])/\Q)=GL_2(\Fp)}$. The second statement follows from Lemma~\ref{rankbound_lemma} and Theorem~\ref{rankbound_theorem2} if one observes the following
\begin{enumerate}[(a)]
\item By \cite[Theorem 1.4]{LV} both $X^{++}(E/K_{ac})$ and $X^{--}(E/K_{ac})$ have $\Lambda$-rank equal to one.
\item By \cite[Theorem 1.4]{Vatsal} together with the main result of \cite{BD}, it follows that ${\rank(E(K_{ac,n}))=p^n +O(1)}$, so the rank of $E$ is unbounded in $K_{ac}/K$.
\end{enumerate}
\end{proof}

In the next section, this theorem will be applied in order to produce examples which (partly) verify Mazur's Conjecture~\ref{MazurConjecture}.

\section{Further results and examples related to Conjecture~\ref{MazurConjecture}} \label{section:appendix}

The aim of this section is to analyse Conjecture~\ref{MazurConjecture} in some concrete cases. Recall that $E$ is an elliptic curve with good supersingular reduction at the prime ${p \ge 5}$, and that $K$ denotes an imaginary quadratic number field in which $p$ splits. We split this final section into two subsections, one devoted to the case where the root number is 1 and the other where the root number is -1. As mentioned in Section~\ref{section:mainintroduction}, we can easily produce examples verifying Conjecture~\ref{MazurConjecture} when the root number is 1 using Theorem~\ref{conj_rankzero_theorem} in Subsection~\ref{subsection:appendix1} below. However, unlike the ordinary case which we studied in \cite{MHG}, we are unable to produce any examples verifying Conjecture~\ref{MazurConjecture} when the root number is -1. In this case, the best we can do in Subsection~\ref{subsection:appendix2} below, is to produce examples to Theorem~\ref{rankbound_theorem} where ${t \leq 3}$.

\subsection{Root number 1}\label{subsection:appendix1}
In this section, attached to any $\Zp$-extension $L_{\infty}/K$ we prove a control theorem for signed Selmer groups only from the layer $K$ to $L_{\infty}$. When ${L_{\infty}=\KK_{\infty}^H}$ for some ${H \in \mathcal{H}^{\bullet\star}}$ and both $\fp$ and $\bar{\fp}$ are totally ramified in $L_{\infty}/K$ our result below will follow from Proposition~\ref{prop:local_max_control_thm}. However we will not be able to use this proposition for the general case.

\begin{proposition}\label{freeness_local_points_prop}
Let ${v \in \{\fp, \bar{\fp}\}}$ and let $w$ be any prime of $L^{(v)}_{\infty}$ above $v$. Then ${(E((L^{(v)}_{\infty})_w)\otimes \Qp/\Zp)^{\vee} \cong \Zp[[\Gal((L^{(v)}_{\infty})_w/K_v)]]}$.
\end{proposition}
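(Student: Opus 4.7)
The plan is to reduce the problem to a computation with the formal group and the $p$-adic logarithm over the unramified $\Z_p$-extension of $K_v$. First I would dispose of the case in which $v$ splits completely in $L^{(v)}_{\infty}/K$. Since $\Gal(L^{(v)}_{\infty}/K)\cong\Z_p$, its only closed subgroups are $\{0\}$ and the subgroups of finite index; hence the decomposition subgroup of any prime $w$ above $v$ is either trivial or all of $\Z_p$. In the trivial case, $(L^{(v)}_{\infty})_w=K_v=\Q_p$, and Mattuck's theorem together with Lemma~\ref{p-torsion_lemma} gives ${E(K_v)\cong \Z_p\oplus T}$ with $|T|$ prime to $p$, so $E(K_v)\otimes\Q_p/\Z_p\cong\Q_p/\Z_p$ and its Pontryagin dual is $\Z_p=\Z_p[[\{1\}]]$, as desired. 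In the remaining case $(L^{(v)}_{\infty})_w$ is the unique unramified $\Z_p$-extension $K_v^{ur}$ of $K_v=\Q_p$, and the rest of the proof is devoted to this situation.

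Write $\Gamma=\Gal(K_v^{ur}/K_v)$ with intermediate layers $K_v^{(n)}$ and local Galois groups $\Gamma_n=\Gal(K_v^{(n)}/K_v)$. Since $E$ has good supersingular reduction, for each $n$ the order $|\tilde{E}(\F_{p^{p^n}})|$ is coprime to $p$; consequently $\tilde{E}$ over the residue field $k^{ur}$ of $K_v^{ur}$ is $p$-divisible with trivial $p$-primary part, which together with the exact sequence ${0\to\hat{E}(\mathfrak{m}_{K_v^{ur}})\to E(K_v^{ur})\to\tilde{E}(k^{ur})\to 0}$ yields an isomorphism ${E(K_v^{ur})\otimes\Q_p/\Z_p\cong\hat{E}(\mathfrak{m}_{K_v^{ur}})\otimes\Q_p/\Z_p}$. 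Since $p\ge 5$, the $p$-adic logarithm of $\hat{E}$ converges on the entire maximal ideal of any unramified extension of $\Q_p$; by Lemma~\ref{p-torsion_lemma} the formal group has no nontrivial $p$-torsion over $K_v^{ur}$, so $\log$ is injective. A direct computation of the image (via the inverse exponential, which also converges for $p\ge 5$) yields a $\Z_p[\Gamma_n]$-equivariant isomorphism ${\hat{E}(\mathfrak{m}_{K_v^{(n)}})\cong \mathfrak{m}_{K_v^{(n)}}\cong\mathcal{O}_{K_v^{(n)}}}$ for every $n$. Tensoring with $\Q_p/\Z_p$ and taking the direct limit over $n$ identifies ${\hat{E}(\mathfrak{m}_{K_v^{ur}})\otimes\Q_p/\Z_p\cong K_v^{ur}/\mathcal{O}_{K_v^{ur}}}$ as $\Z_p[[\Gamma]]$-modules.

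It therefore remains to show that $M:=(K_v^{ur}/\mathcal{O}_{K_v^{ur}})^{\vee}\cong\Z_p[[\Gamma]]$, and I would establish this by a Nakayama and rank argument modelled on the proof of Lemma~\ref{freeness_lemma}. For cyclicity, the Pontryagin dual of $M/(p,T)M$ equals ${((K_v^{ur}/\mathcal{O}_{K_v^{ur}})[p])^{\Gamma}\cong(\mathcal{O}_{K_v^{ur}}/p)^{\Gamma}=(k^{ur})^{\Gamma}=\F_p}$, since $\Gamma$ acts on the residue field $k^{ur}$ as its absolute Galois group over $\F_p$; by Nakayama's lemma, $M$ is generated by a single element. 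For the rank, I would take $\Gamma^{p^n}$-invariants of the short exact sequence ${0\to\mathcal{O}_{K_v^{ur}}\to K_v^{ur}\to K_v^{ur}/\mathcal{O}_{K_v^{ur}}\to 0}$, using that the normal basis theorem for unramified local extensions gives ${H^1(\Gal(K_v^{(m)}/K_v^{(n)}),\mathcal{O}_{K_v^{(m)}})=0}$ and hence ${(K_v^{ur}/\mathcal{O}_{K_v^{ur}})^{\Gamma^{p^n}}=K_v^{(n)}/\mathcal{O}_{K_v^{(n)}}}$, a group of $\Z_p$-corank $p^n$. The standard asymptotic growth formula for finitely generated $\Z_p[[\Gamma]]$-modules then forces $\rg_{\Z_p[[\Gamma]]}(M)=1$. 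Combining cyclicity and rank one, any surjection $\Z_p[[\Gamma]]\twoheadrightarrow M$ has a torsion kernel, but $\Z_p[[\Gamma]]$ is an integral domain, so the kernel vanishes and the surjection is an isomorphism.

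The main obstacle I expect to encounter is the careful verification of the Galois-equivariant isomorphism $\hat{E}(\mathfrak{m}_{K_v^{(n)}})\cong\mathcal{O}_{K_v^{(n)}}$ and its compatibility with the transition maps as $n$ varies, together with the cohomological vanishing coming from the integral normal basis theorem that is used to commute $\Gamma^{p^n}$-invariants with the quotient $K_v^{ur}/\mathcal{O}_{K_v^{ur}}$. Everything else is a routine Nakayama-plus-rank argument of the same flavour as Lemma~\ref{freeness_lemma}.
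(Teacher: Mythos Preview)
Your proof is correct and takes a genuinely different route from the paper's. The paper dualises via Tate local duality for abelian varieties to reduce to showing that $\varprojlim_{n,m} H^1(F_n,E)[p^m]\cong\Lambda$, then invokes Mazur's result that $H^i(\Gamma_n,E(\mathcal{L}_\infty))=0$ for $i=1,2$ (good reduction in an unramified tower) to obtain $(H^1(\mathcal{L}_\infty,E)[p^\infty])^{\Gamma_n}\cong H^1(F_n,E)[p^\infty]$, computes the $\Zp$-corank of the latter as $p^n$ via Mattuck, and concludes using \cite[Proposition~5.5.10]{NSW}. Your approach instead stays on the ``points'' side: the formal logarithm identifies $\hat{E}(\mathfrak{m}_{K_v^{(n)}})\cong\mathfrak{m}_{K_v^{(n)}}$ Galois-equivariantly (this works because $\log'_{\hat{E}}\in\Zp[[X]]$ gives $na_n\in\Zp$, so $\log$ and $\exp$ preserve valuation on $\mathfrak{m}$ once $p\ge 3$), reducing everything to the explicit module $K_v^{ur}/\mathcal{O}_{K_v^{ur}}$, whose dual you then pin down by Nakayama plus a rank count. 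Your argument is more elementary and self-contained, avoiding Tate duality, Mazur's theorem, and the structural result from \cite{NSW}; the paper's argument is more conceptual and would generalise more readily to abelian varieties. Note also that by Lemma~\ref{extensionsplit_lemma} (proved just before this proposition) the prime $v$ never splits completely in $L^{(v)}_\infty$, so your first case is in fact vacuous, though including it does no harm.
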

\begin{proof}
Let $\mathcal{L}_{\infty}$ be the union of the completions at $w$ of all number fields inside $L^{(v)}_{\infty}$. By \cite[Lemma~3.1]{Minardi} $\mathcal{L}_{\infty}/K_v$ is a $\Zp$-extension. Let $\{F_n\}$ be the tower fields of the $\Zp$-extension $\mathcal{L}_{\infty}/K_v$ so that ${[F_n:K_v]=p^n}$. For any $n$ we have by Tate duality for abelian varieties over local fields \cite{Tate} an isomorphism

$$(E(F_n) \otimes \Qp/\Zp)^{\vee} \cong \ilim_m H^1(F_n, E)[p^m]$$\\
where the inverse limit is taken with respect to multiplication by $p$. It is shown in \cite[\S~3]{Tate} that the restriction map is dual to the corestriction map in this isomorphism. Therefore we have
$$\Big(E(\mathcal{L}_{\infty})\otimes \Qp/\Zp\Big)^{\vee}=\Big(\dlim_n E(F_n)\otimes \Qp/\Zp\Big)^{\vee}\cong \ilim_{n,m} H^1(F_n, E)[p^m]$$
where in the above the inverse limit over $n$ is taken with respect to the corestriction maps. Let ${\Gamma=\Gal(\mathcal{L}_{\infty}/K_v)}$ and ${\Lambda=\Zp[[\Gamma]]}$. We let $\Gamma_n$ denote $\Gamma^{p^n}$. By the above we must show that ${\ilim_{n,m} H^1(F_n, E)[p^m] \cong \Lambda}$. First we show that ${\corank_{\Lambda}(H^1(\mathcal{L}_{\infty}, E)[p^{\infty}])=1}$.

Let ${n \geq 0}$. Since $E$ has good reduction at $p$ and $\mathcal{L}_{\infty}/K$ is unramified, we get from \cite[Prop.~4.3]{Mazur} and its proof that ${H^1(\Gamma_n, E(\mathcal{L}_{\infty}))=H^2(\Gamma_n, E(\mathcal{L}_{\infty}))=0}$. Therefore it follows that $(H^1(\mathcal{L}_{\infty}, E)[p^{\infty}])^{\Gamma_n}\cong H^1(F_n, E)[p^{\infty}]$. By Tate duality $(\ilim_s E(F_n)/p^s)^{\vee} \cong H^1(F_n, E)[p^{\infty}]$. By Mattuck's theorem ${E(F_n) \cong \mathbb{Z}_p^{p^n} \times T}$ where $T$ is a finite group. It follows from this that ${\rank_{\Zp}(\ilim_s E(F_n)/p^s)=p^n}$ and therefore ${\corank_{\Zp}((H^1(\mathcal{L}_{\infty}, E)[p^{\infty}])^{\Gamma_n})=\corank_{\Zp}(H^1(F_n, E)[p^{\infty}])=p^n}$. This implies that ${\corank_{\Lambda}(H^1(\mathcal{L}_{\infty}, E)[p^{\infty}])=1}$.

Since ${(H^1(\mathcal{L}_{\infty}, E)[p^{\infty}])^{\Gamma_n} \cong H^1(F_n, E)[p^{\infty}]}$, we have that
\[ \ilim_{n,m} H^1(F_n, E)[p^m] \cong \ilim_{n,m} (H^1(\mathcal{L}_{\infty}, E)[p^m])^{\Gamma_n}\]
where the second inverse limit is taken over $n$ with respect to the norm maps. Since ${\corank_{\Lambda}(H^1(\mathcal{L}_{\infty}, E)[p^{\infty}])=1}$, we get from \cite[Proposition~5.5.10]{NSW} that ${\ilim_{n,m} (H^1(\mathcal{L}_{\infty}, E)[p^m])^{\Gamma_n} \cong \Lambda}$ whence ${\ilim_{n,m} H^1(F_n, E)[p^m]\cong \Lambda}$. This completes the proof.
\end{proof}

Let ${\bullet, \star \in \{+,-\}}$. Clearly we have ${\Selpm(E/K)=\Selinf(E/K)}$ since ${\hat{E}^\pm_v(K)= E(K_v)}$ for both ${v = \fp}$ and ${v = \bar{\fp}}$. The desired control theorem is

\begin{proposition}\label{level_zero_control_thm}
Let ${\bullet, \star \in \{+,-\}}$, $L_{\infty}/K$ be a $\Zp$-extension with ${\Gamma=\Gal(L_{\infty}/K)}$. Consider the map induced by restriction ${s: \Selinf(E/K) \to \Selpm(E/L_{\infty})^{\Gamma}}$. Then $s$ is an injection with finite cokernel.
\end{proposition}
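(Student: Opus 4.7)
The plan is to adapt the snake-lemma argument of Theorem~\ref{SelmerControlThm_prop}, now comparing $K$ with a general $\Z_p$-extension $L_\infty$. Two preliminary observations simplify the setup: since the trace condition defining $\hat{E}^{\pm}(K_v)$ is vacuous at the base, $\hat{E}^{\bullet}(K_v) = \hat{E}(K_v)$ for each $v \mid p$, and hence $\Selm^{\bullet\star}_{p^{\infty}}(E/K) = \Selinf(E/K)$. Moreover every $\Z_p$-extension $L_\infty/K$ sits inside $\KK_\infty$, so Lemma~\ref{p-torsion_lemma} gives $E(L_\infty)[p^\infty] = 0$ and $E(L_{\infty,w})[p^\infty] = 0$ at every prime $w \mid p$.

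I would set up the commutative diagram whose bottom row is the defining sequence of $\Selinf(E/K)$ and whose top row is the $\Gamma$-fixed part of the defining sequence of $\Selpm(E/L_\infty)$, with vertical maps $s$, $g$, and $h=\prod_{v\in S}h_v$ induced by restriction. Inflation--restriction together with the triviality of $E(L_\infty)[p^\infty]$ show $\ker g = 0$ and $\coker g \hookrightarrow H^2(\Gamma, E(L_\infty)[p^\infty]) = 0$, so $g$ is an isomorphism. The snake lemma then yields $\ker s = 0$ and an injection $\coker s \hookrightarrow \ker h$, reducing the problem to the finiteness of $\ker h$.

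For $v \in S \setminus S_p$ the group $H^1(K_v, E)[p^\infty]$ is finite by standard local duality (since $v \nmid p$), so $\ker h_v$ is finite. For $v \in S_p$ a second snake lemma applied to the local Kummer diagram over $K_v$ versus over $L_{\infty,w}$ (with the signed local condition) reduces the finiteness of $\ker h_v$ to that of $\coker \alpha_v$, where
\[ \alpha_v \colon E(K_v) \otimes \Q_p/\Z_p \longrightarrow \bigl(\hat{E}^{\bullet}(L_{\infty,w}) \otimes \Q_p/\Z_p\bigr)^{\Gamma_v} \]
and $\Gamma_v \subseteq \Gamma$ is the decomposition subgroup at $w$. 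The vanishing of $E(L_{\infty,w})[p^\infty]$ makes the restriction $H^1(K_v, E[p^\infty]) \to H^1(L_{\infty,w}, E[p^\infty])^{\Gamma_v}$ injective, hence $\alpha_v$ is injective as well.

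The source of $\alpha_v$ is $\Q_p/\Z_p$ (by Mattuck and the triviality of $E(K_v)[p^\infty]$), of $\Z_p$-corank one. The main obstacle is to show the target also has $\Z_p$-corank one. Writing $L_\infty = \KK_\infty^H$ with $H_w = H \cap G_w$ and $\Gamma_v = G_w/H_w$, Lemma~\ref{freeness_lemma} gives $(\hat{E}^{\bullet}(\KK_{\infty,w}) \otimes \Q_p/\Z_p)^{\vee} \cong \Zp[[G_w]]$, and the local argument of Proposition~\ref{invariants_prop}---which uses only the totally ramified structure of Proposition~\ref{pm_points_structure_prop} and goes through for arbitrary $L_\infty$, with the degenerate cases $L_\infty = L^{(\fp)}_\infty$ and $L_\infty = L^{(\bar{\fp})}_\infty$ handled separately via Proposition~\ref{freeness_local_points_prop}---identifies $(\hat{E}^{\bullet}(L_{\infty,w}) \otimes \Q_p/\Z_p)^{\vee}$ with the $H_w$-coinvariants of $\Zp[[G_w]]$, namely $\Zp[[\Gamma_v]]$. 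Its $\Gamma_v$-coinvariants are $\Z_p$, so the target of $\alpha_v$ has $\Z_p$-corank one. An injection between cofinitely generated $\Z_p$-modules of the same corank has finite cokernel, completing the argument.
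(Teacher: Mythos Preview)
Your proof is correct and follows essentially the same approach as the paper: the snake-lemma diagram, the isomorphism of $g$ via Lemma~\ref{p-torsion_lemma}, the finiteness of $\ker h_v$ for $v\nmid p$, and the key local step at $v\mid p$ of analysing the map $\alpha_v$ (the paper's $\phi_\fp$) using the freeness result Lemma~\ref{freeness_lemma} together with Proposition~\ref{pm_points_structure_prop} (and Proposition~\ref{freeness_local_points_prop} in the unramified degenerate case). The only cosmetic difference is that the paper pushes the rank argument one step further and concludes that $\alpha_v$ is in fact an \emph{isomorphism} (both source and target are exactly $\Qp/\Zp$, and an injection $\Qp/\Zp\hookrightarrow\Qp/\Zp$ is automatically surjective), whereas you stop at ``finite cokernel''; either suffices for the proposition.
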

\begin{proof}
Let $w$ be a prime of $\KK_{\infty}$ above $\fp$. Let ${\Gamma_w=\Gal(L_{\infty,w}/K_{\fp})}$. Consider the map

$$\phi_{\fp}: E(K_{\fp})\otimes\Qp/\Zp \to (\hat{E}^{\bullet}(L_{\infty,w})\otimes \Qp/\Zp)^{\Gamma_w}. $$\\
We claim that $\phi_{\fp}$ is an isomorphism. This is the key result needed to prove the proposition. As in the proof of Proposition~\ref{prop:local_max_control_thm} the map $\phi_{\fp}$ is an injection. Let ${H_w=\Gal(\KK_{\infty,w}/L_{\infty,w})}$. As above, we have an injection

$$\theta_{\fp}: \hat{E}^{\bullet}(L_{\infty,w})\otimes \Qp/\Zp \hookrightarrow (\hat{E}^{\bullet}(\KK_{\infty,w})\otimes \Qp/\Zp)^{H_w}. $$\\
Let ${\Lambda_w=\Zp[[\Gamma_w]]}$. By dualizing the above map and taking into account Lemma~\ref{freeness_lemma} we get a surjection

$$\theta_{\fp}^{\vee}: \Lambda_w \twoheadrightarrow (\hat{E}^{\bullet}(L_{\infty,w})\otimes \Qp/\Zp)^{\vee}. $$\\
If $\theta_{\fp}^{\vee}$ was not injective, then $\ker \theta_{\fp}^{\vee}$ would have $\Lambda_w$-rank one. This would then imply that ${(\hat{E}^{\bullet}(L_{\infty,w})\otimes \Qp/\Zp)^{\vee}}$ is a $\Lambda_w$-torsion module. If ${L_{\infty}=L^{(\fp)}_{\infty}}$, Proposition~\ref{freeness_local_points_prop} shows that this is not the case. Otherwise Proposition~\ref{pm_points_structure_prop} shows that for some open subgroup ${(\Gamma_w)^{p^n} \subseteq \Gamma_w}$,  ${(\hat{E}^{\bullet}(L_{\infty,w})\otimes \Qp/\Zp)^{\vee}}$ is not a torsion $\Zp[[(\Gamma_w)^{p^n}]]$-module and hence is not a torsion $\Lambda_w$-module (see \cite[Corollary~1.5]{Howson}). Note that the statement of Proposition~\ref{pm_points_structure_prop} required that ${L_{\infty}=\KK_{\infty}^H}$ for some ${H \in \mathcal{H}^{\ast\diamond}}$ where ${\ast,\diamond \in \{+,-\}}$, however the proof of the proposition did not use conditions~(a) and (c) in Definition~\ref{def:mathcal{H}}. Moreover the proof shows that the statement of Proposition~\ref{pm_points_structure_prop} is also valid for $L^{(\bar{\fp})}_{\infty}$. Therefore we see that
$$(\hat{E}^{\bullet}(L_{\infty,w})\otimes \Qp/\Zp)^{\vee} \cong \Lambda_w. $$\\
Now dualizing the map $\phi_{\fp}$ and using the above isomorphism we get a surjection

$$\phi_{\fp}^{\vee}: \Zp \twoheadrightarrow (E(K_{\fp})\otimes\Qp/\Zp)^{\vee}. $$\\
By Mattuck's theorem the codomain of this map is isomorphic to $\Zp$. So it follows that $\phi_{\fp}^{\vee}$ is an isomorphism and hence the same is true for $\phi_{\fp}$. In the same way we can show the analogous result for the prime $\bar{\fp}$. Now consider the following maps

$$g: H^1(K, E[p^{\infty}]) \to H^1(L_{\infty}, E[p^{\infty}])^{\Gamma}. $$\\
If ${v \in S \setminus S_p}$, fix a prime $L_{\infty}$ above $v$ which we also denote by $v$. We have a map

$$h_v: H^1(K_v, E[p^{\infty}]) \to H^1(L_{\infty,v}, E[p^{\infty}]). $$\\
Given the result that $\phi_{\fp}$ and $\phi_{\bar{\fp}}$ are isomorphisms, we see from the proof of Theorem~\ref{SelmerControlThm_prop} using the snake lemma that the proof of this proposition will be complete if we can show that $g$ is an isomorphism and that $\ker h_v$ is finite for any ${v \in S \setminus S_p}$.

Let ${v \in S \setminus S_p}$. If $v$ splits completely in $L_{\infty}/K$, then ${\ker h_v=0}$. Otherwise, we can see that $\ker h_v$ is finite from \cite[Lemma~3.3]{Gb_LNM}. As for the map $g$, we have ${\ker g = H^1(\Gamma, E(L_{\infty})[p^{\infty}])}$ and ${\coker g \subseteq H^2(\Gamma, E(K_{\infty})[p^{\infty}])}$. Thus ${\ker g=\coker g=0}$ from Lemma~\ref{p-torsion_lemma}. This completes the proof of the proposition.
\end{proof}

Using the above the proposition we can now prove

\begin{theorem}\label{conj_rankzero_theorem}
If ${\rank(E(K))=0}$ and ${\Sha(E/K)[p^{\infty}]=0}$, then Conjecture~\ref{MazurConjecture} is true.
\end{theorem}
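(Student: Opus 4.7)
The plan is to combine the level-zero control theorem (Proposition~\ref{level_zero_control_thm}) with the rank-boundedness criterion (Lemma~\ref{rankbound_lemma}). First I would translate the hypotheses into the vanishing of the $p^{\infty}$-Selmer group over $K$: the standard short exact sequence
\[ 0 \longrightarrow E(K) \otimes \Q_p/\Z_p \longrightarrow \Selinf(E/K) \longrightarrow \Sha(E/K)[p^{\infty}] \longrightarrow 0 \]
together with $\rank(E(K)) = 0$ (so $E(K)$ is finite and $E(K) \otimes \Q_p/\Z_p = 0$) and $\Sha(E/K)[p^{\infty}] = 0$ forces $\Selinf(E/K) = 0$. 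Since $\hat{E}^{\bullet}(K_v) = E(K_v)$ for $v \in \{\fp, \bar{\fp}\}$, we also have $\Selpm(E/K) = \Selinf(E/K) = 0$ for every choice of signs.

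Now fix an arbitrary $\Z_p$-extension $L_{\infty}/K$, set $\Gamma = \Gal(L_{\infty}/K)$, and let $\bullet, \star \in \{+,-\}$. Proposition~\ref{level_zero_control_thm} supplies a restriction map
\[ s \colon \Selinf(E/K) \longrightarrow \Selpm(E/L_{\infty})^{\Gamma} \]
which is injective with finite cokernel. Because the source vanishes, $\Selpm(E/L_{\infty})^{\Gamma}$ must itself be finite. Dualising, the $\Gamma$-coinvariants $X^{\bullet\star}(E/L_{\infty})_{\Gamma}$ form a finite $\Z_p$-module.

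Next I would use the structure theory of finitely generated $\Lambda$-modules to deduce that $X^{\bullet\star}(E/L_{\infty})$ is $\Lambda$-torsion. Indeed, if the $\Lambda$-rank were $r \geq 1$, then up to a pseudo-null submodule $X^{\bullet\star}(E/L_{\infty})$ would contain a free summand of rank $r$, whose $\Gamma$-coinvariants have $\Z_p$-rank $r$; this would contradict the finiteness just established. Hence $X^{\bullet\star}(E/L_{\infty})$ is $\Lambda$-torsion for every one of the four pairs of signs $\bullet, \star$.

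Finally, I would invoke Lemma~\ref{rankbound_lemma}, which asserts precisely that if all four signed Selmer duals $X^{\bullet\star}(E/L_{\infty})$ are $\Lambda$-torsion, then the Mordell--Weil rank of $E$ remains bounded along $L_{\infty}/K$. Since $L_{\infty}$ was an arbitrary $\Z_p$-extension of $K$, this establishes Conjecture~\ref{MazurConjecture} in the present setting (observe that the anticyclotomic exception in the conjecture is automatically avoided: the parity conjecture, or the direct input $\rank(E(K)) = 0$ together with $\Sha(E/K)[p^{\infty}] = 0$, forces the root number of $E/K$ to be $+1$, though our argument never needed to distinguish the anticyclotomic direction anyway). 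There is no serious obstacle: all the heavy lifting is packaged in Proposition~\ref{freeness_local_points_prop}, Proposition~\ref{level_zero_control_thm}, and Lemma~\ref{rankbound_lemma}; the only mild subtlety is the Nakayama-type step converting finiteness of $\Gamma$-coinvariants into $\Lambda$-torsionness, which follows immediately from the structure theorem.
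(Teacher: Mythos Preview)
Your proof is correct and follows essentially the same route as the paper: use Proposition~\ref{level_zero_control_thm} to show $\Selpm(E/L_{\infty})^{\Gamma}$ is finite, deduce via the structure theorem that $X^{\bullet\star}(E/L_{\infty})$ is $\Lambda$-torsion for all four sign pairs, and then apply Lemma~\ref{rankbound_lemma}. The paper phrases the first step as $\Selinf(E/K)$ being finite rather than zero, but your slightly sharper observation makes no difference to the argument.
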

\begin{proof}
By the discussion before \cite[Theorem~9.3]{MHG} we have that the root number of $L(E_K, s)$ is 1. Let $L_{\infty}/K$ be a $\Zp$-extension with ${\Gamma=\Gal(L_{\infty}/K)}$. Also let ${\bullet, \star \in \{+,-\}}$. By Proposition~\ref{level_zero_control_thm} the map ${s: \Selinf(E/K) \to \Selpm(E/L_{\infty})^{\Gamma}}$ has finite cokernel. Therefore, as $\Selinf(E/K)$ is finite, it follows that $\Selpm(E/L_{\infty})^{\Gamma}$ is also finite. This implies by the structure theorem for $X^{\bullet\star}(E/L_{\infty})$ as a $\Lambda$-module that $X^{\bullet\star}(E/L_{\infty})$ is a torsion $\Lambda$-module. The desired result follows from Lemma~\ref{rankbound_lemma}.
\end{proof}

\subsection{Root number -1} \label{subsection:appendix2}

Throughout this subsection let $K_{ac}$ be the anticyclotomic $\Zp$-extension of $K$ with tower fields $K_n$, where we assume that ${K \neq \Q(\sqrt{-1}), \Q(\sqrt{-3})}$. In this subsection we assume that ${p \nmid h_K}$. This will ensure that both $\fp$ and $\bar{\fp}$ are totally ramified in $K_{ac}/K$. By abuse of notation, we will also refer to the prime of $K_{ac}$ or $K_n$ above $\fp$ (resp. $\bar{\fp}$) by $\fp$ (resp. $\bar{\fp}$). In Section~\ref{section:mainintroduction} we defined the fine Selmer group of $E/L$ for any $\Zp$-extension $L_{\infty}/K$ and denoted it by $R_{p^{\infty}}(E/L_{\infty})$. We use the same notation in this section. For any $\Zp$-extension $L_{\infty}/K$, let $X(E/L_{\infty})$ be the Pontryagain dual of $\Selinf(E/L_{\infty})$. Let $N$ be the conductor of $E$. We will also need to define Selmer groups with various local conditions.

\begin{definition} \label{def:modifiedSelmer}
For ${v \in \{\fp, \bar{\fp}\}}$ and ${\mathscr{L}_v \in \{\pm, 0 , 1, \emptyset\}}$ we define
$$H^1_{\mathscr{L}_v}(K_{n,v}, E[p^{\infty}])=
\begin{cases}
E^{\pm}(K_{n,v})\otimes \Qp/\Zp & \text{if} \, \mathscr{L}_v=\pm,\\
\{0\} & \text{if} \, \mathscr{L}_v=0,\\
E(\Qp)\otimes \Qp/\Zp & \text{if} \, \mathscr{L}_v=1,\\
H^1(K_{n,v}, E[p^{\infty}]) & \text{if} \, \mathscr{L}_v=\emptyset.
\end{cases}$$

Then for ${\mathscr{L}=\{\mathscr{L}_{\fp}, \mathscr{L}_{\bar{\fp}}\}}$ we define the modified Selmer group $\Selm^{\mathscr{L}}_{p^{\infty}}(E/K_n)$ as the kernel of the map
$$H^1(G_S(K_n), E[p^\infty]) \to \prod_{v \in \{\fp, \bar{\fp}\}} \frac{H^1(K_{n,v}, E[p^{\infty}])}{H^1_{\mathscr{L}_v}(K_{n,v}, E[p^{\infty}])} \times \prod_{v|w, w \in S \setminus \{\fp, \bar{\fp}\}} H^1(K_{n,v}, E)[p^{\infty}].$$
We define ${\Selm^{\mathscr{L}}_{p^{\infty}}(E/K_{ac})=\dlim \Selm^{\mathscr{L}}_{p^{\infty}}(E/K_n)}$. We denote the Pontryagin dual of $\Selm^{\mathscr{L}}_{p^{\infty}}(E/K_{ac})$ by $X^{\mathscr{L}}(E/K_{ac})$.
\end{definition}

\begin{remark}\label{fineSelmer_remark}
If ${F=K_n}$ or $K_{ac}$, then ${\Selm^{00}(E/F) = R_{p^{\infty}}(E/F)}$ is the fine Selmer group, as defined in Section~\ref{section:definitions}. This follows from the fact that if $L$ is a finite extension of $\Ql$ where ${l \neq p}$, then by Mattuck's theorem ${E(L) \cong \mathbb{Z}_l^d \times T}$ where ${d=[L:\Ql]}$ and $T$ is a finite group. This implies that ${E(L) \otimes \Qp/\Zp=0}$.
\end{remark}

For the rest of this section we assume the Heegner hypothesis: Every prime dividing $N$ splits in $K/\Q$. If $K[1]$ is the Hilbert class field of $K$, then a choice of an ideal $\cN$ of $\cO_K$ such that ${\cO_K/\cN \cong \cy{N}}$ and a modular parametrization ${X_0(N) \to E}$ allow us to define a Heegner point ${y_1 \in E(K[1])}$. Let $y_K$ be the trace of this point down to $K$. For any prime $v$ of $\Q$, we let $c_v$ be the Tamagawa number of $E$ at $v$.

\begin{proposition}\label{mixedSelmertorsion_prop}
Assume that ${\Gal(\Q(E[p])/\Q)=GL_2(\Fp)}$. Then both $X^{+-}(E/K_{ac})$ and $X^{-+}(E/K_{ac})$ are $\Lambda$-torsion.
\end{proposition}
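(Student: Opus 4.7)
The plan is to show that $X^{+-}(E/K_{ac})$ is $\Lambda$-torsion; the argument for $X^{-+}(E/K_{ac})$ is entirely symmetric, obtained by interchanging the roles of $\fp$ and $\bar\fp$. My approach combines a Poitou--Tate rank count with the Longo--Vigni rank-one theorem \cite[Theorem~1.4]{LV} for the doubly signed Selmer groups, together with the structure of the modified Selmer groups of Definition~\ref{def:modifiedSelmer}.

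First I would carry out the Poitou--Tate setup as in the proof of Proposition~\ref{Seln_surjective_prop}, adapted to the anticyclotomic $\Z_p$-extension $K_{ac}/K$. Since $p \nmid h_K$, both primes of $K$ above $p$ are totally ramified in $K_{ac}/K$. Under the hypothesis $\Gal(\Q(E[p])/\Q)=GL_2(\F_p)$ combined with the Heegner hypothesis, one obtains the weak Leopoldt conjecture $H^2(G_S(K_{ac}), E[p^\infty])=0$ over $K_{ac}$ (using \cite[Theorem~1.1]{Matar_Torsion} or Coates--Sujatha type results applied in this setting). Proposition~\ref{Euler_char_prop} then yields $\corank_\Lambda(H^1(G_S(K_{ac}), E[p^\infty])) = [K:\Q] = 2$, while Proposition~\ref{pm_points_structure_prop} gives $(\hat{E}^\pm(K_{ac,v}) \otimes \Q_p/\Z_p)^\vee \cong \Lambda^{[K_v:\Q_p]} = \Lambda$ for $v \in \{\fp,\bar\fp\}$ (recall $K_v = \Q_p$ since $p$ splits).

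Second, and this is the key step, I would compare $\Selm^{+-}_{p^\infty}(E/K_{ac})$ with $\Selm^{++}_{p^\infty}(E/K_{ac})$ via the Mayer--Vietoris exact sequence
\[ 0 \to \Selm^{++}_{p^\infty} \cap \Selm^{+-}_{p^\infty} \to \Selm^{++}_{p^\infty} \oplus \Selm^{+-}_{p^\infty} \to \Selm^{++}_{p^\infty} + \Selm^{+-}_{p^\infty} \to 0 \]
of submodules of $\Selm_{p^\infty}(E/K_{ac})$. Locally at $\bar\fp$, Kobayashi's theory gives $\hat E^+(K_{ac,\bar\fp}) \cap \hat E^-(K_{ac,\bar\fp}) = \hat E(\Q_p)$, a finitely generated $\Z_p$-module of $\Lambda$-corank $0$, while $\hat E^+(K_{ac,\bar\fp}) + \hat E^-(K_{ac,\bar\fp})$ is of finite index in $\hat E(K_{ac,\bar\fp})$. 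Consequently the intersection Selmer group coincides (up to $\Lambda$-cotorsion error) with the modified Selmer group $\Selm^{+1}_{p^\infty}(E/K_{ac})$, while the sum is contained in the Selmer group with $+$ condition at $\fp$ and the full classical Selmer condition at $\bar\fp$. A rank count using the Poitou--Tate sequence for these modified Selmer groups, combined with the $\Lambda$-cotorsion of the fine Selmer group $R_{p^\infty}(E/K_{ac})$ (equivalent to weak Leopoldt, already invoked above), shows that $\corank_\Lambda(\Selm^{++}_{p^\infty} \cap \Selm^{+-}_{p^\infty}) = 0$ and $\corank_\Lambda(\Selm^{++}_{p^\infty} + \Selm^{+-}_{p^\infty}) = 1$. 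Since $\corank_\Lambda(\Selm^{++}_{p^\infty}(E/K_{ac})) = 1$ by \cite[Theorem~1.4]{LV}, additivity of $\Lambda$-coranks along the Mayer--Vietoris sequence forces $\corank_\Lambda(\Selm^{+-}_{p^\infty}(E/K_{ac})) = 0$, i.e.\ $X^{+-}(E/K_{ac})$ is $\Lambda$-torsion.

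The main obstacle will be rigorously verifying the two corank computations in the third step, in particular pinning down the $\Lambda$-cotorsion of $\Selm^{++}_{p^\infty} \cap \Selm^{+-}_{p^\infty}$. This amounts to a careful Poitou--Tate analysis for the modified Selmer group $\Selm^{+1}_{p^\infty}$ of Definition~\ref{def:modifiedSelmer}, using that its local condition at $\bar\fp$ is $E(\Q_p) \otimes \Q_p/\Z_p$ of $\Lambda$-corank $0$, and checking that the Kobayashi identification $\hat E^+ \cap \hat E^- = \hat E(\Q_p)$ extends coherently to the tower $K_{ac}/K$ so that no spurious $\Lambda$-rank contribution arises. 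An alternative, and perhaps more transparent, approach would be to argue via the Heegner point norm system: under the Heegner hypothesis and root number $-1$, the compatible Heegner points lie in $\Selm^{++}_{p^\infty}(E/K_{ac})$ and $\Selm^{--}_{p^\infty}(E/K_{ac})$ (accounting for the Longo--Vigni rank one) but not in $\Selm^{+-}_{p^\infty}$ or $\Selm^{-+}_{p^\infty}$, whose $\Lambda$-ranks therefore drop to $0$.
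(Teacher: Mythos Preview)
Your proposal has a genuine gap at the step you yourself flag as the main obstacle. The Poitou--Tate rank count does \emph{not} determine $\corank_\Lambda(\Selm^{+1}_{p^\infty}(E/K_{ac}))$ absolutely; it only gives a relation with a dual Selmer group. Concretely, from $\Selm^{+1}\subseteq\Selm^{++}$ you obtain $\corank_\Lambda(\Selm^{+1})\le 1$, and from the local quotient $E^{+}(K_{ac,\bar\fp})\otimes\Q_p/\Z_p\big/E(\Q_p)\otimes\Q_p/\Z_p$ having $\Lambda$-corank $1$ you obtain $\corank_\Lambda(\Selm^{+1})\ge 0$; both values $0$ and $1$ remain compatible with weak Leopoldt and with Longo--Vigni's $\corank_\Lambda(\Selm^{++})=1$. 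Likewise your claim $\corank_\Lambda(\Selm^{++}+\Selm^{+-})=1$ requires $\corank_\Lambda(\Selm^{+\emptyset})=1$, but the defining sequence $0\to\Selm^{+\emptyset}\to\Selinf\to J_\fp^{+}$ only yields $\corank_\Lambda(\Selm^{+\emptyset})\ge 1$: surjectivity onto $J_\fp^{+}$ is not available, since the proof of Proposition~\ref{Seln_surjective_prop2} needs $X^{\bullet\star}(E/K_{ac})$ to be torsion, and $X^{++}(E/K_{ac})$ has $\Lambda$-rank one. So your Mayer--Vietoris identity $\corank(\Selm^{+1})+\corank(\Selm^{++}+\Selm^{+-})=1+\corank(\Selm^{+-})$ is consistent with both $\corank(\Selm^{+-})=0$ and $\corank(\Selm^{+-})=1$.

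The paper's proof is simply a citation of \cite[Theorem~A.12]{GHKL}, which rests on the Castella--Wan divisibility in the anticyclotomic main conjecture \cite{CastellaWan}; this is genuinely deep input that your soft rank-counting cannot replace. Indeed, the paper's own Theorem~\ref{mixedSelmertorsion_theorem} runs \emph{your} Mayer--Vietoris argument to compute $\mu$ and $\lambda$, but must invoke Proposition~\ref{mixedSelmertorsion_prop} and the GHKL facts that $X^{+1}(E/K_{ac})$ is $\Lambda$-torsion and $\rank_\Lambda X^{+\emptyset}(E/K_{ac})=1$ as \emph{inputs}, confirming that these do not follow from Poitou--Tate and Longo--Vigni alone. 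Your alternative Heegner-point suggestion has the same defect: the absence of Heegner classes in $\Selm^{+-}$ gives no upper bound on its corank; an Euler-system bound would require a different Kolyvagin system adapted to the mixed-sign condition, which is essentially what Castella--Wan supplies.
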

\begin{proof}
This is proven in \cite[Theorem~A.12]{GHKL} using the key results of Castella-Wan \cite{CastellaWan}.
\end{proof}

\begin{lemma}\label{Heegnerpointdiv_lemma}
${y_K \notin pE(K_{\fp})}$ if and only if ${y_K \notin pE(K_{\bar{\fp}})}$.
\end{lemma}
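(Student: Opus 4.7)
The plan is to exploit the Galois action of the non-trivial element $\tau \in \Gal(K/\Q)$, which swaps the primes $\fp$ and $\bar{\fp}$ since $p$ splits in $K/\Q$, together with the standard behaviour of the Heegner point under complex conjugation.

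First I would observe that $\tau$ induces an isomorphism of local fields $\tau_{\ast}\colon K_{\fp}\isomarrow K_{\bar{\fp}}$, and hence an isomorphism of abelian groups $\tau_{\ast}\colon E(K_{\fp})\isomarrow E(K_{\bar{\fp}})$. For any global point ${P\in E(K)}$, viewed as an element of $E(K_{\fp})$ on the left, this isomorphism sends $P$ to the point $\tau(P)\in E(K)\subseteq E(K_{\bar{\fp}})$. In particular, $\tau_{\ast}(pE(K_{\fp}))=pE(K_{\bar{\fp}})$, so
\[ y_K\in pE(K_{\fp})\quad\Longleftrightarrow\quad \tau(y_K)\in pE(K_{\bar{\fp}}). \]

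Next I would invoke the classical formula relating $\tau(y_K)$ and $y_K$: there exist a sign $\epsilon\in\{+1,-1\}$ (depending on the Atkin-Lehner eigenvalue $w_N$) and a torsion point ${t\in E(K)_{\textup{tors}}}$ such that $\tau(y_K)=\epsilon\, y_K+t$. The key point is not the precise value of $\epsilon$ but only that $\epsilon$ is a unit in $\Z_p$ and that $t$ is torsion. By Lemma~\ref{p-torsion_lemma} we have ${E(\KK_{\infty,w})[p^{\infty}]=0}$ for any prime $w$ above $p$, so in particular ${E(K_{\fp})[p]=0=E(K_{\bar{\fp}})[p]}$. Combined with Mattuck's theorem ${E(K_v)\cong \Z_p\times T_v}$ with $T_v$ finite of order coprime to $p$, this shows that multiplication by $p$ is an automorphism on $T_v$, hence ${T_v\subseteq pE(K_v)}$ for ${v\in\{\fp,\bar{\fp}\}}$. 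Therefore ${t\in pE(K_{\bar{\fp}})}$.

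Putting these pieces together yields
\[ \tau(y_K)\in pE(K_{\bar{\fp}})\ \Longleftrightarrow\ \epsilon\, y_K+t\in pE(K_{\bar{\fp}})\ \Longleftrightarrow\ y_K\in pE(K_{\bar{\fp}}), \]
the last equivalence using that $\epsilon\in \Z_p^{\times}$ and ${t\in pE(K_{\bar{\fp}})}$. Chaining this with the first displayed equivalence gives the lemma.

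The only subtle ingredient is the precise form of the functional equation ${\tau(y_K)=\epsilon\, y_K+t}$; this is standard in the theory of Heegner points (see the usual references for the sign of the Atkin-Lehner involution on CM points) and is precisely where the Heegner hypothesis is being used. If one prefers a more self-contained argument, the same conclusion can be obtained by combining the analogous statement for the trace $y_1\in E(K[1])$ with functoriality of the trace to $K$; but invoking the well-known identity $\tau(y_K)=\pm y_K$ modulo torsion keeps the proof short and transparent.
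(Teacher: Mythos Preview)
Your proof is correct and follows essentially the same approach as the paper's: use the relation $\tau(y_K)=\epsilon y_K + Q$ with $Q$ torsion (the paper cites \cite[Prop.~5.3]{Gross}), observe via Lemma~\ref{p-torsion_lemma} that $Q$ has order prime to $p$ and hence lies in $pE(K_v)$, and combine this with the fact that $\tau$ swaps $\fp$ and $\bar{\fp}$. The paper's version is more terse (it works with $pE(K)$ rather than $pE(K_v)$ and leaves the ``$\tau$ swaps the primes'' step implicit), but the substance is identical.
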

\begin{proof}
Let $\tau$ be the complex conjugation automorphism and let $\epsilon$ be the root number of $E/\Q$. By \cite[Prop.~5.3]{Gross} we have that ${\tau(y_K)=\epsilon y_K + Q}$ for some torsion point $Q$ in $E(K)$. By Lemma~\ref{p-torsion_lemma} ${E(K)[p]=0}$. Therefore we have ${\tau(y_K)+ pE(K) = \epsilon y_K + pE(K)}$. The desired result follows from this.
\end{proof}

We need the following control theorem.

\begin{proposition}\label{fineSelmercontrol_prop2}
Assume that ${p \nmid \prod_{v | N} c_v}$. Let ${s: R_{p^{\infty}}(E/K) \to R_{p^{\infty}}(E/K_{ac})^{\Gamma}}$ be the map induced by restriction. Then $s$ is an isomorphism.
\end{proposition}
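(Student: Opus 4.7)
The plan is to follow the template of Proposition~\ref{fineSelmercontrol_prop1}, with $\Gamma = \Gal(K_{ac}/K)$ playing the role of the group $H_n$ there. First I would set up the commutative diagram with exact rows defining $R_{p^{\infty}}(E/K)$ and $R_{p^{\infty}}(E/K_{ac})^{\Gamma}$, in which the vertical maps are induced by restriction, and apply the snake lemma. The theorem will then follow once we verify that the induced global restriction $g\colon H^1(G_S(K), E[p^{\infty}]) \to H^1(G_S(K_{ac}), E[p^{\infty}])^{\Gamma}$ is an isomorphism and that the kernel of each local restriction map is trivial for every $v \in S$.

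For $g$, applying inflation-restriction while using that $E(K_{ac})[p^{\infty}] \subseteq E(\KK_{\infty})[p^{\infty}] = 0$ by Lemma~\ref{p-torsion_lemma}, both $H^1(\Gamma, E(K_{ac})[p^{\infty}])$ and $H^2(\Gamma, E(K_{ac})[p^{\infty}])$ vanish, so that $g$ is an isomorphism.

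For the local terms, Shapiro's lemma identifies the relevant kernel with $H^1(\Gamma_v, E(K_{ac,w})[p^{\infty}])$, where $w$ is any prime of $K_{ac}$ above $v$ and $\Gamma_v \subseteq \Gamma$ is its decomposition subgroup. For $v \in S_p$ Lemma~\ref{p-torsion_lemma} again yields $E(K_{ac,w})[p^{\infty}] = 0$. For $v \in S \setminus S_p$, the prime $v$ lies above some rational prime $\ell \mid N$ which splits in $K$ by the Heegner hypothesis and which satisfies $\ell \ne p$, so that $K_{ac,w}/K_v$ is either trivial or an unramified $\Z_p$-extension. The assumption $p \nmid c_v$ forces $E(K_{ac,w})[p^{\infty}] = E_0(K_{ac,w})[p^{\infty}]$, and this embeds via reduction into the $p$-primary torsion of the nonsingular part $\tilde{E}_{ns}(k_w)$ of the special fibre. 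In the additive case the target is $k_w^+[p^{\infty}] = 0$ since $\mathrm{char}(k_w) = \ell \ne p$; in the multiplicative case, using the Tate uniformisation $E(K_{ac,w}) \cong K_{ac,w}^{\times}/q_E^{\Z}$ (after an unramified quadratic twist if the reduction is non-split) together with the observation that $p \nmid c_v = \ord_v(q_E)$ makes $K_v(q_E^{1/p^n})/K_v$ totally ramified of degree $p^n$ while $K_{ac,w}/K_v$ is unramified, I would conclude that $E(K_{ac,w})[p^{\infty}] = \mu_{p^{\infty}}(K_{ac,w})$. Since the latter is a divisible abelian $p$-group on which Frobenius acts via multiplication by $|k_v|$, multiplication by the non-zero integer $|k_v|-1$ is surjective on it, so $H^1(\Gamma_v, \mu_{p^{\infty}}(K_{ac,w})) = 0$.

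The main obstacle will be the multiplicative reduction step: ensuring that the Tamagawa hypothesis $p \nmid c_v$ is strong enough to exclude $p$-power roots of the Tate parameter $q_E$ from $K_{ac,w}$, and then completing the divisibility argument for the Frobenius action on $\mu_{p^{\infty}}(K_{ac,w})$. All other parts of the proof are direct analogues of the corresponding steps in Proposition~\ref{fineSelmercontrol_prop1}.
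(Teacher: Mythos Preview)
Your overall strategy—the snake-lemma diagram, the inflation–restriction argument for $g$ via Lemma~\ref{p-torsion_lemma}, and the reduction of $\ker h$ to the groups $H^1(\Gamma_v,E(K_{ac,w})[p^\infty])$—is exactly what the paper does. The treatment of $g$ and of the primes $v\mid p$ is identical to the paper's.

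For $v\nmid p$ the paper takes a shorter and more uniform route than your case analysis. Using Mattuck's theorem, $E(F)\otimes\Q_p/\Z_p=0$ for any finite extension $F/\Q_\ell$ with $\ell\ne p$, so $H^1(K_v,E[p^\infty])=H^1(K_v,E)[p^\infty]$ and likewise over $K_{ac,w}$; then the remark following \cite[Lemma~3.3]{Gb_LNM} gives directly that the kernel of $H^1(K_v,E)[p^\infty]\to H^1(K_{ac,w},E)[p^\infty]$ has order equal to the $p$-part of $c_v$, hence is trivial. This handles all reduction types at once with no further work.

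Your hands-on computation is correct for additive and for split multiplicative reduction (the divisibility argument for $H^1(\Gamma_v,\mu_{p^\infty}(K_{ac,w}))$ is clean), but there is a genuine gap in the non-split multiplicative case. There one has $c_v\in\{1,2\}$, so $p\nmid c_v$ is automatic, whereas $\ord_v(q_E)$—the Tamagawa number of the quadratic twist over which $E$ becomes a Tate curve—need not be prime to $p$. Your assertion ``$p\nmid c_v=\ord_v(q_E)$'' is therefore false in this case, and the ramification argument excluding $p$-power roots of $q_E$ from $K_{ac,w}$ does not go through as written. You would need a separate analysis of the $\Gamma_v$-cohomology of the norm-one torus here, or else simply invoke Greenberg's result as the paper does.
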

\begin{proof}

We have a commutative diagram

\begin{equation*}
\xymatrix {
0 \ar[r] & R_{p^{\infty}}(E/K_{ac})^{\Gamma} \ar[r] & H^1(G_S(K_{ac}), E[p^{\infty}])^{\Gamma}  \ar[r] & \bigoplus_{v \in S} M_v(E/K_{ac})^{\Gamma}\\
0 \ar[r] & R_{p^{\infty}}(E/K) \ar[r] \ar[u]^{s} &H^1(G_S(K), E[p^{\infty}]) \ar[u]^{g} \ar[r] & \bigoplus_{v \in S} M_v(E/K) \ar[u]^{h}}
\end{equation*}
where $M_v(E/K)$ and $M_v(E/K_{ac})$ are defined in Section~\ref{section:mainintroduction}, before Conjecture~\ref{conjectureA}. By the snake lemma, to prove that $s$ is an isomorphism, we only need to show that $g$ is an isomorphism and that $h$ is an injection.

We have
\[ \ker g=H^1(\Gamma, E(K_{ac})[p^{\infty}])\]
and an injection
\[ \coker g \hookrightarrow H^2(\Gamma, E(K_{ac})[p^{\infty}]).\]
Lemma~\ref{p-torsion_lemma} shows that ${\ker g=\coker g=0}$. For every ${v \in S}$ we fix a prime $w$ of $K_{ac}$ above $v$. Taking into account Shapiro's lemma, to show that $h$ is an injection, we need to show that for any ${v \in S}$ the restriction map ${h_v: H^1(K_v, E[p^{\infty}]) \to H^1(K_{ac,w}, E[p^{\infty}])}$ is an injection. First assume that $v$ divides $p$. Then ${\ker h_v=H^1(\Gamma_w,  E(K_{ac,w})[p^{\infty}])}$ where $\Gamma_w$ is the decomposition group. This is trivial by Lemma~\ref{p-torsion_lemma}. Now assume that $v$ does not divide $p$. If $F$ is a finite extension of $K_v$, then by Mattuck's theorem we have ${E(F) \cong \mathbb{Z}_l^d \times T}$ where ${l \neq p}$ is the residue characteristic of $K_v$ and ${d=[F:\Ql]}$. Therefore ${E(F) \otimes \Qp/\Zp=0}$. It follows from this that ${E(K_v) \otimes \Qp/\Zp=E(K_{ac,w}) \otimes \Qp/\Zp=0}$. So ${H^1(K_v, E[p^{\infty}])=H^1(K_v, E)[p^{\infty}]}$ and ${H^1(K_{ac,w}, E[p^{\infty}])=H^1(K_{ac,w}, E)[p^{\infty}]}$. Therefore since ${p \nmid c_v}$ we get from \cite[remark following Lemma~3.3]{Gb_LNM} that ${\ker h_v=0}$.
\end{proof}

\begin{theorem}\label{fineSelmertrivial_theorem}
Assume the following:
\begin{enumerate}[(a)]
\item $y_K \notin pE(K_v)$ for some prime $v$ above $p$
\item $p \nmid \prod_{v | N} c_v$
\end{enumerate}
Then $R_{p^{\infty}}(E/K_{ac})=0$.
\end{theorem}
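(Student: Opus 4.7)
The plan is to first reduce the assertion to the level-zero statement $R_{p^{\infty}}(E/K)=0$, and then propagate it up the anticyclotomic tower using the control theorem together with Nakayama's lemma.

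The first step is to analyse the Selmer group of $E/K$. By Lemma~\ref{Heegnerpointdiv_lemma}, hypothesis~(a) in fact forces $y_K\notin pE(K_v)$ for \emph{both} primes $v\in\{\fp,\bar{\fp}\}$, and in particular $y_K\notin pE(K)$. Theorem~\ref{MN_theorem} then yields $E(K)\otimes\Z_p=\Z_p\cdot(y_K\otimes 1)$ and $\Sha(E/K)[p^{\infty}]=0$, so the descent exact sequence gives a canonical isomorphism $E(K)\otimes\Q_p/\Z_p\isomarrow\Selinf(E/K)\cong\Q_p/\Z_p$ generated by the Kummer image of $y_K\otimes 1$.

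Next I would prove that $R_{p^{\infty}}(E/K)=0$. For each $v\in S\setminus\{\fp,\bar{\fp}\}$, Mattuck's theorem gives $E(K_v)\cong\Z_\ell^d\times T$ with $\ell\ne p$ and $T$ finite; since both $\Z_\ell$ and every finite cyclic group become $0$ after tensoring over $\Z$ with $\Q_p/\Z_p$, we conclude $E(K_v)\otimes\Q_p/\Z_p=0$. Therefore the Selmer and fine local conditions at $v$ coincide, and $R_{p^{\infty}}(E/K)$ is exactly the subgroup of $\Selinf(E/K)$ consisting of classes that restrict to $0$ in $H^1(K_v,E[p^{\infty}])$ for the two primes $v\mid p$. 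Fix such a $v$. Lemma~\ref{p-torsion_lemma} gives $E(K_v)[p^\infty]=0$, so $E(K_v)\cong\Z_p$ and $E(K_v)\otimes\Q_p/\Z_p\cong\Q_p/\Z_p$; the hypothesis $y_K\notin pE(K_v)$ means that the class of $y_K$ is a generator of this quotient. Hence the localization map $E(K)\otimes\Q_p/\Z_p\to E(K_v)\otimes\Q_p/\Z_p$ sends generator to generator and is an isomorphism, and composing with the injective Kummer map $E(K_v)\otimes\Q_p/\Z_p\longhookrightarrow H^1(K_v,E[p^{\infty}])$ shows that the restriction $\Selinf(E/K)\to H^1(K_v,E[p^{\infty}])$ is already injective. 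Consequently $R_{p^{\infty}}(E/K)=0$.

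Finally, Proposition~\ref{fineSelmercontrol_prop2}, whose hypothesis is precisely~(b), gives $R_{p^{\infty}}(E/K_{ac})^{\Gamma}\cong R_{p^{\infty}}(E/K)=0$, equivalently $Y(E/K_{ac})_{\Gamma}=0$. Since the fine condition is stronger than the $(+,+)$-condition at $\fp$ and $\bar{\fp}$, and since both local conditions coincide at the tame primes (where $E(K_{ac,w})\otimes\Q_p/\Z_p=0$ by the same Mattuck argument as above), we have $R_{p^{\infty}}(E/K_{ac})\subseteq\Selm^{++}_{p^{\infty}}(E/K_{ac})$. Therefore $Y(E/K_{ac})$ is a quotient of $X^{++}(E/K_{ac})$ and so finitely generated over $\Lambda$; Nakayama's lemma then forces $Y(E/K_{ac})=0$, completing the proof. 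The only genuine difficulty is Step~2: the vanishing $R_{p^{\infty}}(E/K)=0$ hinges on showing that the restriction of the full $p^{\infty}$-Selmer group at a single place above $p$ is already injective, which ultimately rests on the Heegner point generating $E(K_v)$ as a $\Z_p$-module --- exactly what hypothesis~(a) supplies.
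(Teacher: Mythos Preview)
Your proof is correct and follows essentially the same approach as the paper: reduce to $R_{p^{\infty}}(E/K)=0$ via the control theorem of Proposition~\ref{fineSelmercontrol_prop2}, then prove that vanishing by showing the localisation $\Selinf(E/K)\to H^1(K_v,E[p^{\infty}])$ at a prime $v\mid p$ is injective, using Theorem~\ref{MN_theorem} together with the hypothesis $y_K\notin pE(K_v)$. Your presentation is in fact slightly tidier than the paper's: you make explicit the Nakayama step needed to pass from $R_{p^{\infty}}(E/K_{ac})^{\Gamma}=0$ to $R_{p^{\infty}}(E/K_{ac})=0$ (the paper leaves this implicit), and your injectivity argument via ``generator to generator'' between two copies of $\Q_p/\Z_p$ is more direct than the paper's hands-on direct-limit computation. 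One cosmetic point: $E(K_v)$ need not be literally isomorphic to $\Z_p$ (there can be prime-to-$p$ torsion), but this does not affect anything since you only use $E(K_v)\otimes\Q_p/\Z_p\cong\Q_p/\Z_p$.
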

\begin{proof}
By Proposition~\ref{fineSelmercontrol_prop2} it will suffice to show that ${R_{p^{\infty}}(E/K)=0}$. We have an exact sequence
$$0 \to R_{p^{\infty}}(E/K) \to \Selinf(E/K) \to \prod_{v \in S} H^1(K_v, E[p^{\infty}]). $$
Now consider the localization map $\psi_{\fp}: \Selinf(E/K) \to H^1(K_{\fp}, E[p^{\infty}])$. From the exact sequence above we see that to show ${R_{p^{\infty}}(E/K)=0}$ it will suffice to prove that ${\ker \psi_{\fp}=0}$.
By Theorem \ref{MN_theorem} we have ${\Selinf(E/K)=E(K) \otimes \Qp/\Zp}$. Therefore it will suffice to show that the map
$$\psi'_{\fp}: E(K) \otimes \Qp/\Zp \to E(K_{\fp}) \otimes \Qp/\Zp$$
is an injection.
Note that ${E(K) \otimes \Qp/\Zp=\dlim E(K)/p^n}$ and ${E(K_{\fp}) \otimes \Qp/\Zp = \dlim E(K_{\fp})/p^n}$.

By Theorem \ref{MN_theorem} we have that ${E(K)[p^{\infty}]=0}$ and ${\rank(E(K))=1}$. Furthermore if $P$ is a generator of the free part of $E(K)$, then ${y_K=tP}$ for some integer $t$ not divisible by $p$. Let ${\phi_n: E(K)/p^n \to \dlim E(K)/p^m}$ and ${\phi_{\fp,n}: E(K_{\fp})/p^n \to \dlim E(K_{\fp})/p^m}$ be the maps into the direct limits.

Now let ${x \in \dlim E(K)/p^n}$ and suppose that ${\psi_{\fp}'(x)=0}$. By the above ${x=\phi_n(sP + p^nE(K))}$ for some integers $n$ and $s$. Since ${\psi_{\fp}'(x)=0}$, we can conclude that ${\phi_{\fp,n}(sP + p^nE(K_{\fp}))=0}$.

Now we claim that the transition maps in the direct limit ${\dlim E(K_{\fp})/p^n}$ are injective. To see this, let ${Q \in E(K_{\fp})/p^m}$ and suppose that for ${n > m}$ we have ${p^{n-m}Q \in p^nE(K_{\fp})}$. So ${p^{n-m}Q=p^nQ'}$ for some ${Q' \in E(K_{\fp})}$. This implies that ${p^{n-m}(Q-p^mQ')=0}$. Lemma~\ref{p-torsion_lemma} then implies that ${Q=p^mQ'}$. Therefore the transition maps are in fact injective.

Since ${\phi_{\fp,n}(sP + p^nE(K_{\fp}))=0}$ and the transition maps in the direct limit ${\dlim E(K_{\fp})/p^m}$ are injective, we have that ${sP+p^nE(K_{\fp})=0}$, i.e. ${sP=p^nQ}$ for some ${Q \in E(K_{\fp})}$. This furthermore implies that ${sy_K=p^ntQ}$. Let $m$ be the largest power of $p$ dividing $s$. If ${m < n}$, then ${p^m(s'y_K-p^{n-m}tQ)=0}$, where ${s'=s/p^m}$. This implies by Lemma~\ref{p-torsion_lemma} that ${s'y_K=p^{n-m}tQ}$ and as $s'$ is prime to $p$, it follows that ${y_K \in pE(K_{\fp})}$. Taking Lemma~\ref{Heegnerpointdiv_lemma} into account, this contradicts $(a)$. So we see that ${m \geq n}$ whence ${x=0}$ and so $\psi_{\fp}'$ is injective.
\end{proof}

Recall the notation introduced in Definition~\ref{def:modifiedSelmer}. From the previous theorem we deduce

\begin{theorem}\label{Selmer11_theorem}
Assume the following:
\begin{enumerate}[(a)]
\item $y_K \notin pE(K_v)$ for some prime $v$ above $p$
\item $p \nmid \prod_{v | N} c_v$
\end{enumerate}
Then $X^{11}(E/K_{ac})$ is a torsion $\Lambda$-module with ${\mu=0}$ and ${\lambda=1}$.
\end{theorem}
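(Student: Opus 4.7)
The strategy is to compare $\Selm^{11}(E/K_{ac})$ with the fine Selmer group $R_{p^\infty}(E/K_{ac})$, exploiting the fact that they have identical local conditions except at the primes above $p$. Comparing Definition~\ref{def:modifiedSelmer} for $\mathscr{L}=00$ and $\mathscr{L}=11$ (and invoking Remark~\ref{fineSelmer_remark}) yields a natural $\Gamma$-equivariant exact sequence
\[
0 \longrightarrow R_{p^\infty}(E/K_{ac}) \longrightarrow \Selm^{11}(E/K_{ac}) \longrightarrow \bigoplus_{w\mid p} E(\Q_p)\otimes\Q_p/\Z_p,
\]
in which the target is a trivial $\Gamma$-module isomorphic to $(\Q_p/\Z_p)^2$ (the subgroups $E(\Q_p)\otimes\Q_p/\Z_p$ are defined over $\Q_p\subseteq K$, so $\Gamma=\Gal(K_{ac}/K)$ acts trivially on them). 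By Theorem~\ref{fineSelmertrivial_theorem} the leftmost term vanishes, producing a $\Gamma$-equivariant injection $\Selm^{11}(E/K_{ac})\hookrightarrow(\Q_p/\Z_p)^2$. Dualising, $X^{11}(E/K_{ac})$ is a $\Lambda$-quotient of $\Z_p^2$ carrying the trivial $\Gamma$-action; in particular $T$ annihilates it, so it is $\Lambda$-torsion with $\mu=0$ and $\lambda\leq 2$.

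Next I would establish a control theorem $s\colon \Selm^{11}(E/K)\xrightarrow{\sim}\Selm^{11}(E/K_{ac})^{\Gamma}$ via the same snake-lemma strategy as Proposition~\ref{fineSelmercontrol_prop2}. The global term is controlled by $E(K_{ac})[p^\infty]=0$ (Lemma~\ref{p-torsion_lemma}); at $v\in S\setminus\{\fp,\bar\fp\}$ the local restriction is injective by hypothesis~(b), exactly as in loc.\ cit.; at $v\mid p$ the subgroups $E(\Q_p)\otimes\Q_p/\Z_p$ coincide at both levels with trivial $\Gamma_w$-action, and a snake-lemma comparison of the two short exact sequences defining the local quotients reduces the injectivity of the associated map to the fact that the restriction map $H^1(K_v,E[p^\infty])\to H^1(K_{ac,w},E[p^\infty])^{\Gamma_w}$ is an isomorphism (using once more $E(K_{ac,w})[p^\infty]=0$ together with $\operatorname{cd}_p(\Gamma_w)=1$).

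At the base level, $\Selm^{11}(E/K)=\Selm_{p^\infty}(E/K)$ since $K_\fp=K_{\bar\fp}=\Q_p$ forces the two sets of local conditions to agree, and Theorem~\ref{MN_theorem} together with hypothesis~(a) (applied via Lemma~\ref{Heegnerpointdiv_lemma}) identifies this with $E(K)\otimes\Q_p/\Z_p\cong\Q_p/\Z_p$. Since the ambient trivial $\Gamma$-module $(\Q_p/\Z_p)^2$ forces $\Selm^{11}(E/K_{ac})^{\Gamma}=\Selm^{11}(E/K_{ac})$, combining with the control theorem gives $\Selm^{11}(E/K_{ac})\cong\Q_p/\Z_p$. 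Dualising yields $X^{11}(E/K_{ac})\cong\Z_p$, and $T$ acts trivially, so as a $\Lambda$-module $X^{11}(E/K_{ac})\cong\Lambda/(T)$, whence $\mu=0$ and $\lambda=1$.

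The main obstacle is the local control step at the primes above $p$: one must verify that
\[
H^1(K_v,E[p^\infty])/\bigl(E(\Q_p)\otimes\Q_p/\Z_p\bigr)\;\longrightarrow\;\bigl(H^1(K_{ac,w},E[p^\infty])/(E(\Q_p)\otimes\Q_p/\Z_p)\bigr)^{\Gamma_w}
\]
is injective, since this quotient is the only genuinely new local term compared with the fine Selmer control theorem of Proposition~\ref{fineSelmercontrol_prop2}. The identification of the subgroup as $\Gamma_w$-trivial and the vanishing of $E(K_{ac,w})[p^\infty]$ are the two ingredients that make the snake-lemma argument go through.
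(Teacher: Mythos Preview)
Your proof is correct and follows essentially the same approach as the paper's: both derive the exact sequence $0\to R_{p^\infty}(E/K_{ac})\to\Selm^{11}_{p^\infty}(E/K_{ac})\to\prod_{i=1,2}E(\Q_p)\otimes\Q_p/\Z_p$, apply Theorem~\ref{fineSelmertrivial_theorem} to obtain an injection into $(\Q_p/\Z_p)^2$, and combine this with a control theorem identifying $\Selm^{11}_{p^\infty}(E/K_{ac})^\Gamma\cong\Q_p/\Z_p$. The paper cites \cite[Lemma~4.3]{Matar_Tower} for the control step where you sketch it directly, and your explicit observation that the ambient $(\Q_p/\Z_p)^2$ carries trivial $\Gamma$-action (forcing $\Selm^{11}_{p^\infty}(E/K_{ac})^\Gamma=\Selm^{11}_{p^\infty}(E/K_{ac})$) makes transparent the deduction that the paper leaves as ``we see that we must have''.
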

\begin{proof}
Note that ${\Selm^{11}_{p^{\infty}}(E/K)=\Selinf(E/K)}$. By \cite[Lemma~4.3]{Matar_Tower} ${\Selinf(E/K) \cong \Selm^{11}_{p^{\infty}}(E/K_{ac})^{\Gamma}}$. Combining this with Theorem~\ref{MN_theorem} implies that ${\Selm^{11}_{p^{\infty}}(E/K_{ac})^{\Gamma} \cong \Qp/\Zp}$. Now consider the commutative diagram with exact rows

\begin{equation*}
\resizebox{\displaywidth}{!}{
\xymatrix{
& 0 \ar[d] \ar[r] & \Selinf(E/K_{ac}) \ar[d] \ar[r] & \Selinf(E/K_{ac}) \ar[d] \ar[r]  & 0 \\
0 \ar[r] & \prod \limits_{i=1,2} E(\Qp)\otimes \Qp/\Zp \ar[r] & \prod \limits_{v \in \{\fp, \bar{\fp}\}} H^1(K_{ac,v}, E[p^{\infty}]) \ar[r] & \prod \limits_{v \in \{\fp, \bar{\fp}\}} \frac{H^1(K_{ac,v}, E[p^{\infty}])}{E(\Qp)\otimes \Qp/\Zp} \ar[r] & 0
}}
\end{equation*}

Taking Remark~\ref{fineSelmer_remark} into account, it follows from applying the snake lemma to the above diagram that we have an exact sequence

$$0 \longrightarrow R_{p^{\infty}}(E/K_{ac}) \longrightarrow \Selm^{11}_{p^{\infty}}(E/K_{ac}) \longrightarrow \prod_{i=1,2} E(\Qp)\otimes \Qp/\Zp. $$
From Theorem~\ref{fineSelmertrivial_theorem} this implies that we have an injection
$$\Selm^{11}_{p^{\infty}}(E/K_{ac}) \hookrightarrow \prod_{i=1,2} E(\Qp)\otimes \Qp/\Zp. $$
By Mattuck's theorem ${E(\Qp) \cong \Zp \times T}$ where $T$ is a finite group. Therefore ${E(\Qp) \otimes \Qp/\Zp \cong \Qp/\Zp}$. From this, the injection above and the fact that ${\Selm_{p^{\infty}}^{11}(E/K_{ac})^{\Gamma} \cong \Qp/\Zp}$ we see that we must have ${\Selm_{p^{\infty}}^{11}(E/K_{ac})\cong \Qp/\Zp}$. Therefore $X^{11}(E/K_{ac})$ is $\Lambda$-torsion with ${\mu=0}$ and ${\lambda=1}$.
\end{proof}

On the other hand, we have
\begin{theorem}\label{Selmerpmfree_theorem}
Assume the following:
\begin{enumerate}[(a)]
\item $y_K \notin pE(K)$
\item $p \nmid \prod_{v | N} c_v$
\end{enumerate}
Then both $X^{++}(E/K_{ac})$ and $X^{--}(E/K_{ac})$ are free $\Lambda$-modules of rank one.
\end{theorem}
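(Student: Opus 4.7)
The plan is to show that $X^{++}(E/K_{ac})$ is a cyclic $\Lambda$-module of positive $\Lambda$-rank, hence isomorphic to $\Lambda$ (the argument for $X^{--}$ being identical by symmetry).

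First, by assumption~(a) and Theorem~\ref{MN_theorem}, $\Selinf(E/K) \cong \Qp/\Zp$. Over the base field $K$, the plus/minus trace conditions involve an empty range of intermediate layers, so $\hat{E}^{+}(K_v) = \hat{E}(K_v)$ for each $v\mid p$ and consequently $\Selm^{++}(E/K) = \Selinf(E/K) \cong \Qp/\Zp$.

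Second, I would establish the base-level control isomorphism
$$ \Selm^{++}(E/K) \isomarrow \Selm^{++}(E/K_{ac})^{\Gamma}, $$
via a snake-lemma argument paralleling the proof of Theorem~\ref{SelmerControlThm_prop}. The relevant inputs are the vanishing $E(K_{ac})[p^\infty] = 0$ from Lemma~\ref{p-torsion_lemma} (ensuring the middle global restriction map is an isomorphism); the vanishing of the local kernels at primes $v\nmid p$, using the Heegner hypothesis together with assumption~(b) in the same way as in the proof of Theorem~\ref{freestructure_theorem}; and, at primes $v\mid p$, a local injectivity statement deduced from Proposition~\ref{pm_points_structure_prop} (which identifies $(\hat{E}^{+}(K_{ac,v}) \otimes \Qp/\Zp)^{\vee} \cong \Lambda$, so that its $\Gamma$-invariants recover $E(\Qp)\otimes\Qp/\Zp = \hat{E}^{+}(K_v)\otimes \Qp/\Zp$) combined with the argument of Proposition~\ref{invariants_prop} and Proposition~\ref{localinjection_prop}.

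Third, dualizing this isomorphism yields $X^{++}(E/K_{ac})_{\Gamma} \cong \Zp$, whence $X^{++}(E/K_{ac})/(p,T) \cong \mathbb{F}_p$ is one-dimensional over $\mathbb{F}_p$. Nakayama's lemma then implies that $X^{++}(E/K_{ac})$ is a cyclic $\Lambda$-module, say ${X^{++}(E/K_{ac}) \cong \Lambda/I}$ for some ideal ${I \subseteq \Lambda}$.

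Finally, by \cite[Theorem~1.4]{LV}, which is applicable under the Heegner hypothesis together with the non-torsion Heegner point provided by Theorem~\ref{MN_theorem}, the $\Lambda$-rank of $X^{++}(E/K_{ac})$ is positive (in fact, equal to one). Since $\Lambda/I$ has positive $\Lambda$-rank only when $I=0$, we conclude $X^{++}(E/K_{ac}) \cong \Lambda$, i.e., it is free of rank one. The main technical obstacle will be Step~2, in particular the verification that the cokernel of the base-level control map vanishes, which requires a precise identification of the $\Gamma$-invariants of the local signed points at primes above $p$ using Proposition~\ref{pm_points_structure_prop} and a careful handling of the tame local conditions via assumption~(b) and the Heegner hypothesis.
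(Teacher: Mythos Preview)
Your proposal is essentially correct and in fact reconstructs the argument of \cite[Theorem~3.1]{Matar_Tower}, which is all the paper cites here. The paper's own proof is a one-line reference, but the technique is spelled out in the proof of the very next result (Theorem~\ref{Selmeremptypmfree_theorem}), and it follows exactly your outline: identify the base-level Selmer group with $\Qp/\Zp$ via Theorem~\ref{MN_theorem}, prove a level-zero control isomorphism (using $p\nmid\prod c_v$ to kill the local kernels at bad primes and the Iovita--Pollack/Kobayashi argument at primes above $p$), apply Nakayama to obtain cyclicity, and then invoke a rank-one statement to conclude $I=0$.

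One caveat on your last step: you invoke \cite[Theorem~1.4]{LV} for the positive $\Lambda$-rank, but in this paper that result is only cited under the additional hypothesis $\Gal(\Q(E[p])/\Q)=GL_2(\Fp)$ (see the proofs of Theorems~\ref{rankbound_theorem} and \ref{example_theorem2}), which Theorem~\ref{Selmerpmfree_theorem} does \emph{not} assume. The argument in \cite{Matar_Tower} presumably obtains $\rank_\Lambda \ge 1$ more directly from the nontriviality of the norm-compatible Heegner points over the anticyclotomic tower (already guaranteed by the non-torsion $y_K$ from Theorem~\ref{MN_theorem}), rather than via Longo--Vigni. You should either check that \cite[Theorem~1.4]{LV} genuinely applies without the $GL_2$-surjectivity hypothesis, or replace this citation with a direct Heegner-point argument.
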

\begin{proof}
See \cite[Theorem~3.1]{Matar_Tower}.
\end{proof}

\begin{theorem}\label{Selmeremptypmfree_theorem}
Assume the following:
\begin{enumerate}[(a)]
\item $y_K \notin pE(K_v)$ for some prime $v$ above $p$
\item $p \nmid \prod_{v | N} c_v$
\item $\Gal(\Q(E[p])/\Q)=GL_2(\Fp)$
\end{enumerate}
Then for any ${\bullet \in \{+.-\}}$ we have that both $X^{\bullet\emptyset}(E/K_{ac})$ and $X^{\emptyset\bullet}(E/K_{ac})$ are free $\Lambda$-modules of rank one.
\end{theorem}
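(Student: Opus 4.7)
The proof plan is to exploit the known structure of $X^{\bullet\bullet}(E/K_{ac})$ from Theorem~\ref{Selmerpmfree_theorem} (which gives $X^{\bullet\bullet}(E/K_{ac}) \cong \Lambda$) together with Poitou-Tate duality to control the discrepancy between $\Selm^{\bullet\bullet}$ and $\Selm^{\bullet\emptyset}$ (respectively, $\Selm^{\emptyset\bullet}$). The hypothesis $\Gal(\Q(E[p])/\Q) = GL_2(\FF_p)$ is first used to invoke Proposition~\ref{mixedSelmertorsion_prop}, so that $X^{+-}(E/K_{ac})$ and $X^{-+}(E/K_{ac})$ are $\Lambda$-torsion. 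Together with the blanket assumption $p \nmid h_K$ (which ensures that both $\fp$ and $\bar{\fp}$ are totally ramified in $K_{ac}/K$) this places the anticyclotomic tower into $\mathcal{H}^{+-} \cap \mathcal{H}^{-+}$, so that Propositions~\ref{Seln_surjective_prop} and \ref{Seln_surjective_prop2} become available in these mixed sign settings.

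Next, comparing the defining sequences of $\Selm^{\bullet\bullet}$ and $\Selm^{\bullet\emptyset}$ (the only difference being the local condition at $\bar{\fp}$) yields the exact sequence
$$0 \longrightarrow \Selm^{\bullet\bullet}(E/K_{ac}) \longrightarrow \Selm^{\bullet\emptyset}(E/K_{ac}) \xrightarrow{\ \rho\ } J_{\bar{\fp}}^{\bullet}(E/K_{ac}).$$
Dualizing and substituting $X^{\bullet\bullet}(E/K_{ac}) \cong \Lambda$ from Theorem~\ref{Selmerpmfree_theorem} together with the identification $J_{\bar{\fp}}^{\bullet}(E/K_{ac})^{\vee} \cong \Lambda$ from Proposition~\ref{pm_points_structure_prop} produces
$$0 \longrightarrow (\operatorname{Im}\rho)^{\vee} \longrightarrow X^{\bullet\emptyset}(E/K_{ac}) \longrightarrow \Lambda \longrightarrow 0.$$
It therefore suffices to show $\operatorname{Im}\rho = 0$, for then the sequence collapses to $X^{\bullet\emptyset}(E/K_{ac}) \cong \Lambda$.

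The vanishing of $\operatorname{Im}\rho$ is to be achieved by Poitou-Tate duality applied to the Selmer structure $(\bullet,\emptyset)$, whose dual Selmer structure is $(\bullet,0)$. The Poitou-Tate nine-term sequence identifies the cokernel of the global-to-local map defining $\Selm^{\bullet\emptyset}$ with a piece of the Pontryagin dual of the strict mixed Selmer group $\Selm^{\bullet 0}(E/K_{ac})$, so it is enough to prove $X^{\bullet 0}(E/K_{ac}) = 0$. At the base level, $\Selm^{\bullet 0}(E/K)$ is the kernel of the localization $\Selinf(E/K) \to E(K_{\bar{\fp}}) \otimes \Q_p/\Z_p$; by Theorem~\ref{MN_theorem} the source equals $\Z_p\,(y_K \otimes 1) \cong \Q_p/\Z_p$, and under the hypothesis $y_K \notin pE(K_{\bar{\fp}})$ (granted by Lemma~\ref{Heegnerpointdiv_lemma}) the localization is an isomorphism of $\Q_p/\Z_p$'s, so $\Selm^{\bullet 0}(E/K) = 0$. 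A control theorem in the spirit of Proposition~\ref{level_zero_control_thm} (adapted to the mixed $(\bullet,0)$ local conditions, which is permissible because the strict condition at $\bar{\fp}$ causes no new local obstructions by Proposition~\ref{freeness_local_points_prop}) then propagates this vanishing to $X^{\bullet 0}(E/K_{ac}) = 0$.

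The assertion for $X^{\emptyset\bullet}(E/K_{ac})$ follows by the symmetric argument, interchanging the roles of $\fp$ and $\bar{\fp}$ throughout. The main obstacle is the Poitou-Tate / control argument needed to promote the vanishing $\Selm^{\bullet 0}(E/K) = 0$ at the base level to the vanishing $X^{\bullet 0}(E/K_{ac}) = 0$ at the anticyclotomic level; this is where the non-divisibility of the Heegner point and the large Galois image hypothesis interact most delicately and where a careful bookkeeping of local factors at $\bar{\fp}$ will be required.
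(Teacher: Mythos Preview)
Your approach is genuinely different from the paper's. The paper argues directly for $X^{\emptyset\bullet}$: it first computes $\Selm^{\emptyset 1}_{p^\infty}(E/K)\cong\Qp/\Zp$ via Cassels--Poitou--Tate at the base level, then proves a control isomorphism $\Selm^{\emptyset 1}_{p^\infty}(E/K)\isomarrow\Selm^{\emptyset\bullet}_{p^\infty}(E/K_{ac})^\Gamma$, so Nakayama gives that $X^{\emptyset\bullet}(E/K_{ac})$ is cyclic over $\Lambda$, and finally invokes \cite[Theorem~A.12]{GHKL} (this is where hypothesis~(c) enters) to get $\rank_\Lambda=1$ and hence freeness. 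Your route instead tries to show the inclusion $\Selm^{\bullet\bullet}\hookrightarrow\Selm^{\bullet\emptyset}$ is an equality by proving $\operatorname{Im}\rho=0$ via the vanishing of $X^{\bullet 0}(E/K_{ac})$; this would be attractive because it would avoid the external rank input.

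However, your Poitou--Tate step is misidentified. You appeal to the cokernel of the global-to-local map defining $\Selm^{\bullet\emptyset}$, but that map lands in $J_\fp^\bullet\times\prod_{v\nmid p}J_v$ and carries no component at $\bar\fp$; its cokernel says nothing about $\operatorname{Im}\rho\subseteq J_{\bar\fp}^\bullet$. What you actually need is the five-term global duality sequence obtained from the snake lemma comparing the $(\bullet,\bullet)$ and $(\bullet,\emptyset)$ global-to-local maps:
\[
0\to\Selm^{\bullet\bullet}\to\Selm^{\bullet\emptyset}\xrightarrow{\rho}J_{\bar\fp}^\bullet\to\coker\lambda^{\bullet\bullet}\to\coker\lambda^{\bullet\emptyset}\to 0,
\]
together with the Poitou--Tate identifications of these cokernels. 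But by \cite[Proposition~A.3.2]{PR} those cokernels are duals of \emph{compact} Iwasawa Selmer groups $\hat{\Selm}^{\bullet\bullet}(K_{ac},T)$ and $\hat{\Selm}^{\bullet 0}(K_{ac},T)$ for the Tate module $T$, not $X^{\bullet\bullet}(E/K_{ac})$ and $X^{\bullet 0}(E/K_{ac})$ as you implicitly assume. So even after correctly establishing $\Selm^{\bullet 0}(E/K_{ac})=0$ by control (your steps (3)--(4), which are fine), you would still have to show the compact group $\hat{\Selm}^{\bullet 0}(K_{ac},T)$ vanishes and that $\hat{\Selm}^{\bullet\bullet}(K_{ac},T)^\vee$ has $\Lambda$-corank one, neither of which you address. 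Your invocation of Proposition~\ref{mixedSelmertorsion_prop} is also not wired into the actual argument as written. The paper's route sidesteps all of this by working with a control theorem for $\Selm^{\emptyset\bullet}$ directly and pulling the rank from \cite{GHKL}.
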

\begin{proof}
Assuming $(a)$, $(b)$ and $(c)$ we will show that $X^{\emptyset\bullet}(E/K_{ac})$ is a free $\Lambda$-module of rank one. The proof uses the same technique used to prove \cite[Theorem~3.1]{Matar_Tower}. The proof for $X^{\bullet\emptyset}(E/K_{ac})$ will be identical. First we show that ${\Selm^{\emptyset1}_{p^{\infty}}(E/K) \cong \Qp/\Zp}$.

We have an exact sequence

$$0 \longrightarrow \Selinf(E/K) \to \Selm^{\emptyset1}_{p^{\infty}}(E/K) \xrightarrow{\psi} H^1(K_{\fp}, E)[p^{\infty}]. $$
By Theorem~\ref{MN_theorem} ${\Selinf(E/K) \cong \Qp/\Zp}$. Therefore to show that ${\Selm^{\emptyset1}_{p^{\infty}}(E/K) \cong \Qp/\Zp}$, we need to prove that ${\img \psi=0}$. By \cite[Theorem~1.7.3]{Rubin} we have an exact sequence
$$\Selm^{\emptyset1}_{p^{\infty}}(E/K) \xrightarrow{\psi} H^1(K_{\fp}, E)[p^{\infty}] \xrightarrow{\theta} S_p(E/K)^{\vee}$$
where ${S_p(E/K) =\ilim_n \Selm_{p^n}(E/K)}$. Here $\Selm_{p^n}(E/K)$ is the classical $p^n$-Selmer group defined by
$$0 \to \Selm_{p^n}(E/K) \to H^1(G_S(K), E[p^n]) \to \prod_{v \in S} H^1(K_v, E)[p^n].$$
We need to show that ${\ker \theta=\img \psi=0}$ or equivalently that the dual map to $\theta$ is surjective. The dual map is
$$\hat{\theta}: S_p(E/K) \to E(K_{\fp})^*$$
where $E(K_{\fp})$ is the $p$-adic completion of $E(K_{\fp})$ (see \cite[Theorem~1.7.3(ii)]{Rubin}).

By Mattuck's theorem ${E(K_{\fp}) \cong \Zp \times T}$ where $T$ is a finite group. By Lemma~\ref{p-torsion_lemma} the order of $T$ is not divisible by $p$. Therefore ${E(K_{\fp})^* \cong \Zp}$. By Theorem~\ref{MN_theorem} ${S_p(E/K)=E(K)^*=E(K)\otimes \Zp=\Zp(y_K\otimes 1) \cong \Zp}$. By condition $(a)$ and Lemma~\ref{Heegnerpointdiv_lemma} ${y_K \notin pE(K_{\fp})}$. Combining all the previous facts together shows that $\hat{\theta}$ is surjective. Therefore ${\Selm^{\emptyset1}_{p^{\infty}}(E/K) \cong \Qp/\Zp}$ as claimed.

Now the restriction map induces a map ${s: \Selm^{\emptyset1}_{p^{\infty}}(E/K) \to \Selm^{\emptyset\bullet}_{p^{\infty}}(E/K_{ac})^{\Gamma}}$. We claim that this map is an isomorphism. To see this, we refer the reader to the proof of \cite[Theorem~6.8]{IP} (which is based on the proof of \cite[Theorem~9.3]{Kob}). The proof reveals that the map $s$ is an injection and that the map
$$H^1(K_{\bar{\fp}}, E[p^{\infty}])/E(K_{\bar{\fp}})\otimes \Qp/\Zp \to (H^1(K_{ac, \bar{\fp}}, E[p^{\infty}])/E^{\bullet}(K_{ac, \bar{\fp}})\otimes \Qp/\Zp)^{\Gamma}$$
is also an injection.

We have a map
$$\phi_{\fp}: H^1(K_{\fp}, E[p^{\infty}])/H^1_{\emptyset}(K_{\fp}, E[p^{\infty}]) \to H^1(K_{ac, \fp}, E[p^{\infty}])/H^1_{\emptyset}(K_{ac, \fp}, E[p^{\infty}]).$$

For every prime $v$ of $K$ dividing $N$ we fix a prime $w$ of $K_{ac}$ above $v$. For every such prime $v$ we have a map
$$\phi_v: H^1(K_v, E)[p^{\infty}] \to H^1(K_{ac,w}, E)[p^{\infty}].$$

From the proof of loc. cit., we see that to prove that $s$ is an isomorphism we need to show that $\phi_{\fp}$ is an injection and that for any prime $v$ of $K$ dividing $N$ we have that $\phi_v$ is an injection. There is nothing to prove regarding the map $\phi_{\fp}$ because both the domain and codomain of this map are trivial. As for $\ker \phi_v$, this is trivial by condition~$(b)$ (see \cite[remark following Lemma~3.3]{Gb_LNM}).

Therefore the map $s$ is in fact an isomorphism. Since ${\Selm^{\emptyset1}_{p^{\infty}}(E/K) \cong \Qp/\Zp}$, it follows from Nakayama's lemma that $X^{\emptyset\bullet}(E/K_{ac})$ is a cyclic $\Lambda$-module i.e. $X^{\emptyset\bullet}(E/K_{ac}) \cong \Lambda/I$ for some ideal $I$ of $\Lambda$. The proof of \cite[Theorem~A.12]{GHKL} shows that ${\rank_{\Lambda}(X^{\emptyset\bullet}(E/K_{ac}))=1}$ so ${I=0}$. Therefore $X^{\emptyset\bullet}(E/K_{ac})$ is a free $\Lambda$-module of rank one. This completes the proof.
\end{proof}

\begin{theorem}\label{mixedSelmertorsion_theorem}
Assume the following:
\begin{enumerate}[(a)]
\item $y_K \notin pE(K_v)$ for some prime $v$ above $p$
\item $p \nmid \prod_{v | N} c_v$
\item $\Gal(\Q(E[p])/\Q)=GL_2(\Fp)$
\end{enumerate}
Then both $X^{+-}(E/K_{ac})$ and $X^{-+}(E/K_{ac})$ are $\Lambda$-torsion with $\mu=0$ and $\lambda=1$.
\end{theorem}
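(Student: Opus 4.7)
The strategy is to emulate the proof of Theorem~\ref{Selmeremptypmfree_theorem}. First, I would observe that at the base level $n=0$ the definitions \eqref{plusdef}--\eqref{eq:def2-3} are vacuous, so $\hat{E}^{\pm}(K_v) = \hat{E}(K_v)$ for $v\mid p$, and therefore $\Selpm(E/K) = \Selinf(E/K)$ with $(\bullet,\star)=(+,-)$. Combined with Theorem~\ref{MN_theorem} and hypothesis~(a) this gives $\Selm^{+-}_{p^\infty}(E/K) \cong \Qp/\Zp$.

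Next I would build the restriction map $s\colon \Selm^{+-}_{p^\infty}(E/K) \to \Selm^{+-}_{p^\infty}(E/K_{ac})^{\Gamma}$ and show it is an isomorphism by adapting the control-theorem argument used in the proof of Theorem~\ref{Selmeremptypmfree_theorem} (which ultimately invokes \cite[Theorem~6.8]{IP}). The three ingredients would be: injectivity of global $H^1$-restriction, which comes from $E(K_{ac})[p^\infty]=0$ (Lemma~\ref{p-torsion_lemma}); finiteness of the local kernels at $v\in S\setminus\{\fp,\bar\fp\}$, provided by hypothesis~(b) as in Proposition~\ref{fineSelmercontrol_prop2}; and injectivity at $\fp$ and $\bar\fp$ of the maps
\[ \frac{H^1(K_{\fp},E[p^\infty])}{E(K_{\fp})\otimes\Qp/\Zp} \to \Bigl(\frac{H^1(K_{ac,\fp},E[p^\infty])}{\hat{E}^{+}(K_{ac,\fp})\otimes\Qp/\Zp}\Bigr)^{\Gamma},\]
and the analogous map with $-$ at $\bar\fp$, which is precisely the Iovita--Pollack local computation since both primes above $p$ are totally ramified in $K_{ac}/K$ (using $p\nmid h_K$). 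Dualising yields $(X^{+-}(E/K_{ac}))_{\Gamma}\cong\Zp$.

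Now I would assemble the structural information. By Proposition~\ref{mixedSelmertorsion_prop}, hypothesis~(c) gives that $X^{+-}(E/K_{ac})$ is $\Lambda$-torsion. Theorem~\ref{Selmeremptypmfree_theorem} yields a surjection $X^{+\emptyset}(E/K_{ac})\twoheadrightarrow X^{+-}(E/K_{ac})$ from the free $\Lambda$-module of rank one, so $X^{+-}(E/K_{ac})\cong \Lambda/I$ for a nonzero ideal $I\subseteq\Lambda$; the coinvariance computation of the previous paragraph forces $I\subseteq(T)$. Moreover, since no prime of $S$ splits completely in $K_{ac}/K$, both primes above $p$ are ramified, and $X^{+-}(E/K_{ac})$ is $\Lambda$-torsion, the subgroup $\Gal(\KK_\infty/K_{ac})$ lies in $\mathcal{H}^{+-}$, so Theorem~\ref{pseudo-null_theorem}(b) provides that $X^{+-}(E/K_{ac})$ has no nontrivial finite $\Lambda$-submodules.

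The final and hardest step is to upgrade these constraints to $I=(T)$, which is equivalent to $\mu=0$ and $\lambda=1$. The difficulty is that the combination ``$\Lambda/I$ with $I\subseteq(T)$, torsion, no nontrivial finite submodules, coinvariants equal to $\Zp$'' still permits ideals such as $(pT)$ (yielding $\mu=1,\lambda=1$) or $(T^{2})$, so the steps above are not sufficient on their own. I would close this gap by showing that the Heegner point $y_K$, regarded via the isomorphism $s$ of step~2 as a class in $\Selm^{+-}_{p^\infty}(E/K_{ac})$, actually generates $\Selm^{+-}_{p^\infty}(E/K_{ac})$ as a discrete abelian group, forcing $\Selm^{+-}_{p^\infty}(E/K_{ac})\cong\Qp/\Zp$ and hence $X^{+-}(E/K_{ac})\cong\Zp$. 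This is carried out by a Kolyvagin-style Euler-system argument using the derivative classes of the anticyclotomic Heegner system, in the spirit of the proof of Theorem~\ref{Selmerpmfree_theorem} (that is, \cite[Theorem~3.1]{Matar_Tower}), where hypotheses~(a)--(c) ensure respectively non-divisibility of $y_K$, the Tamagawa vanishing required to avoid loss at bad primes, and the big-image property needed for Kolyvagin's bound. The statement for $X^{-+}(E/K_{ac})$ then follows by interchanging the roles of $\fp$ and $\bar\fp$.
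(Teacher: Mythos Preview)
Your steps 1--7 are sound, and you correctly identify that they only yield $X^{+-}(E/K_{ac})\cong\Lambda/I$ with $I\subseteq(T)$, $I\ne 0$, and no nontrivial finite submodules, which still allows $I=(pT)$, $I=(T^2)$, etc. The genuine gap is step~8. Your proposal to show that $y_K$ generates $\Selm^{+-}_{p^\infty}(E/K_{ac})$ via a Kolyvagin-type argument ``in the spirit of \cite[Theorem~3.1]{Matar_Tower}'' is not merely vague but structurally problematic: the anticyclotomic Heegner system is adapted to the $\pm\pm$ local conditions (the norm relations for Heegner points over the layers $K_n$ are governed by the polynomials $\omega_n^{\pm}$ at \emph{both} primes simultaneously, reflecting the complex-conjugation symmetry), so the derivative classes naturally bound $\Selm^{++}$ and $\Selm^{--}$, not the mixed group $\Selm^{+-}$. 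There is no evident way to produce from this Euler system annihilators for $\Selm^{+-}_{p^\infty}(E/K_{ac})$ beyond what already factors through $\Selm^{11}$, and the reference \cite{Matar_Tower} does not treat the mixed case.

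The paper's proof bypasses this obstruction entirely by a purely algebraic rank-counting argument. It shows that both natural inclusions $\Selm^{+1}_{p^\infty}(E/K_{ac})\hookrightarrow\Selm^{+-}_{p^\infty}(E/K_{ac})$ and $\Selm^{1-}_{p^\infty}(E/K_{ac})\hookrightarrow\Selm^{+-}_{p^\infty}(E/K_{ac})$ are surjective, which forces $\Selm^{+-}=\Selm^{11}$ and then Theorem~\ref{Selmer11_theorem} gives $\mu=0$, $\lambda=1$ directly. The surjectivity is proved by dualising: one uses the exact sequence $0\to\Selm^{+1}\to\Selm^{++}\oplus\Selm^{+-}\to\Selm^{+\emptyset}$ from \cite[Lemma~2.6.5]{CW} (resting on Iovita--Pollack), together with the structural input $X^{++}\cong X^{+\emptyset}\cong\Lambda$ from Theorems~\ref{Selmerpmfree_theorem} and \ref{Selmeremptypmfree_theorem} and the torsionness of $X^{+-}$ and $X^{+1}$, and then a short $\Lambda$-rank computation shows $\img\hat\phi\cap\img\hat\pi_2=0$, whence $X^{+-}\hookrightarrow X^{+1}$. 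This is the key idea you are missing.
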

\begin{proof}
We will prove the result for $X^{+-}(E/K_{ac})$. The proof for $X^{-+}(E/K_{ac})$ will be identical.

We have inclusion maps
$$\iota^+: \Selm^{+1}_{p^{\infty}}(E/K_{ac}) \hookrightarrow \Selm^{+-}_{p^{\infty}}(E/K_{ac}), $$
$$\iota^-: \Selm^{1-}_{p^{\infty}}(E/K_{ac}) \hookrightarrow \Selm^{+-}_{p^{\infty}}(E/K_{ac}). $$

Our strategy will be to show that both $\iota^+$ and $\iota^-$ are surjective. Once this is shown we will have ${\Selm^{+-}_{p^{\infty}}(E/K_{ac})=\Selm^{+1}_{p^{\infty}}(E/K_{ac})=\Selm^{1-}_{p^{\infty}}(E/K_{ac})}$. It follows from this that ${\Selm^{+-}_{p^{\infty}}(E/K_{ac})=\Selm^{11}_{p^{\infty}}(E/K_{ac})}$ and so the statement of the theorem will follow from Theorem~\ref{Selmer11_theorem}.

To simplify notation in the proof below we will denote $\iota^+$ by $\iota$. We now proceed to show that $\iota$ is surjective. Equivalently we show that the dual map
$$\hat{\iota}: X^{+-}(E/K_{ac}) \twoheadrightarrow X^{+1}(E/K_{ac})$$
is injective. We also have an inclusion map
$$\kappa: \Selm^{+1}_{p^{\infty}}(E/K_{ac}) \hookrightarrow \Selm^{++}_{p^{\infty}}(E/K_{ac}). $$
Now consider the diagonal map
$$\theta: \Selm^{+1}_{p^{\infty}}(E/K_{ac}) \hookrightarrow \Selm^{++}_{p^{\infty}}(E/K_{ac}) \oplus \Selm^{+-}_{p^{\infty}}(E/K_{ac})$$
defined as ${\theta=(\kappa, \iota)}$. Let $\pi_1$ and $\pi_2$ be the projection maps of the codomain of $\theta$ onto its first and second components. Then we have ${\iota=\pi_2 \circ \theta}$. Taking dual maps, we need to show that ${\hat{\iota} =\hat{\theta} \circ \hat{\pi}_2}$ is injective. Note that
\begin{equation}\label{directsum_dual}
(\Selm^{++}_{p^{\infty}}(E/K_{ac}) \oplus \Selm^{+-}_{p^{\infty}}(E/K_{ac}))^{\vee}=X^{++}(E/K_{ac}) \oplus X^{+-}(E/K_{ac})
\end{equation}
and that the maps
$$\hat{\pi}_1: X^{++}(E/K_{ac}) \hookrightarrow X^{++}(E/K_{ac}) \oplus X^{+-}(E/K_{ac})$$
and
$$\hat{\pi}_2: X^{+-}(E/K_{ac}) \hookrightarrow X^{++}(E/K_{ac}) \oplus X^{+-}(E/K_{ac})$$
are the natural injections into the direct sum. Now consider the sequence

\begin{equation}\label{pmSelmer_seq1}
0 \to \Selm^{+1}_{p^{\infty}}(E/K_{ac}) \xrightarrow{\theta} \Selm^{++}_{p^{\infty}}(E/K_{ac}) \oplus \Selm^{+-}_{p^{\infty}}(E/K_{ac}) \xrightarrow{\phi} \Selm^{+\emptyset}_{p^{\infty}}(E/K_{ac}).
\end{equation}
Considering $\Selm^{++}_{p^{\infty}}(E/K_{ac})$ and $\Selm^{+-}_{p^{\infty}}(E/K_{ac})$ as subgroups of $\Selm^{+\emptyset}_{p^\infty}(E/K_{ac})$, the map $\phi$ above is ${\phi(x,y)=x-y}$. From \cite[Lemma~2.6.5]{CW} which uses the results of Iovita-Pollack \cite{IP} we see that ${\ker \phi = \img \theta}$ and therefore the sequence~\eqref{pmSelmer_seq1} is exact. Dualizing this exact sequence and taking \eqref{directsum_dual} into account we get an exact sequence
\begin{equation}\label{pmSelmer_seq2}
X^{+\emptyset}(E/K_{ac}) \xrightarrow{\hat{\phi}} X^{++}(E/K_{ac}) \oplus X^{+-}(E/K_{ac}) \xrightarrow{\hat{\theta}} X^{+1}(E/K_{ac}) \to 0.
\end{equation}
From Proposition~\ref{mixedSelmertorsion_prop} $X^{+-}(E/K_{ac})$ is $\Lambda$-torsion. From Theorems~\ref{Selmerpmfree_theorem} and \ref{Selmeremptypmfree_theorem} we have ${X^{++}(E/K_{ac}) \cong X^{+\emptyset}(E/K_{ac}) \cong \Lambda}$. Also as shown in the proof of \cite[Theorem~A.12]{GHKL} we have that $X^{+1}(E/K_{ac})$ is $\Lambda$-torsion. Now we claim that the map $\hat{\phi}$ is an injection. To see this, suppose that ${\ker \hat{\phi} \neq 0}$. Since ${X^{+\emptyset}(E/K_{ac}) \cong \Lambda}$, we then must have that ${\rank_{\Lambda}(\ker \hat{\phi})=1}$ and hence ${\rank_{\Lambda}(\img \hat{\phi})=0}$. This is clearly impossible since ${\rank_{\Lambda}(X^{++}(E/K_{ac}) \oplus X^{+-}(E/K_{ac}))=1}$ and ${\rank_{\Lambda}(X^{+1}(E/K_{ac}))=0}$. Therefore, as claimed, $\hat{\phi}$ is an injection.

Recall that we need to show that ${\hat{\theta} \circ \hat{\pi}_2}$ is an injection. Since $\hat{\pi}_2$ is an injection, it follows from the exact sequence~\eqref{pmSelmer_seq2} that ${\hat{\theta} \circ \hat{\pi}_2}$ is an injection if and only if ${\img \hat{\phi} \cap \img \hat{\pi}_2 = 0}$. Suppose that ${\img \hat{\phi} \cap \img \hat{\pi}_2 \neq 0}$. Then since ${\img \hat{\phi} \cong \Lambda}$, we conclude that ${\rank_{\Lambda}(\img \hat{\phi} \cap \img \hat{\pi}_2)=1}$.

We have a map ${\rho: \img \hat{\phi} \to \img \hat{\phi} \cap \img \hat{\pi}_2}$ which is the projection onto the second component in the direct sum. The kernel of $\rho$ is ${\img \hat{\phi} \cap \img \hat{\pi}_1}$. Therefore we have an exact sequence
$$0\to \img \hat{\phi} \cap \img \hat{\pi}_1 \to \img \hat{\phi} \to \img \hat{\phi} \cap \img \hat{\pi}_2 \to 0.$$
Since the second and third terms of the exact sequence have $\Lambda$-rank one, it follows that ${\img \hat{\phi} \cap \img \hat{\pi}_1}$ is $\Lambda$-torsion. We have an exact sequence
$$0 \to \img \hat{\phi} \cap \img \hat{\pi}_1 \to \img \hat{\pi}_1 \to \hat{\theta}(\img \hat{\pi}_1) \to 0. $$
Since $X^{++}(E/K_{ac}) \cong \Lambda$ and $\hat{\pi}_1$ is an injection, we have that ${\rank_{\Lambda}(\img \hat{\pi}_1)=1}$. Since ${\img \hat{\phi} \cap \img \hat{\pi}_1}$ is $\Lambda$-torsion, the above exact sequence implies that ${\rank_{\Lambda}(\hat{\theta}(\img \hat{\pi}_1))=1}$. This contradicts the fact that $X^{+1}(E/K_{ac})$ is $\Lambda$-torsion. Therefore we see that ${\hat{\theta} \circ \hat{\pi}_2}$ is an injection whence ${\iota^+=\iota}$ is surjective.

To prove that $\iota^-$ is surjective we proceed as above but rather than \eqref{pmSelmer_seq1} we use the exact sequence
\begin{equation}
0 \to \Selm^{1-}_{p^{\infty}}(E/K_{ac}) \xrightarrow{\theta} \Selm^{--}_{p^{\infty}}(E/K_{ac}) \oplus \Selm^{+-}_{p^{\infty}}(E/K_{ac}) \xrightarrow{\phi} \Selm^{\emptyset-}_{p^{\infty}}(E/K_{ac}).
\end{equation}
Note that it is also shown in the proof of \cite[Theorem~A.12]{GHKL} that $X^{1-}(E/K_{ac})$ is $\Lambda$-torsion. This completes the proof.
\end{proof}

Now let ${L^{\pm}_p(E), L^{\pm}_p(E^{(K)}) \in \Zp[[T]]}$ be the plus/minus $p$-adic $L$-functions attached to $E/\Q$ and $E^{(K)}/\Q$ of Pollack~\cite{Pollack}. Combining the previous results, we now show

\begin{proposition}\label{Rankbound_prop}
Assume the following
\begin{enumerate}[(a)]
\item Every prime dividing $pN$ splits in $K/\Q$
\item $p \nmid h_K$
\item $\Gal(\Q(E[p])/\Q)=GL_2(\Fp)$
\item $y_K \notin pE(K_v)$ for some prime $v$ above $p$
\item $p \nmid \prod_{v | N} c_v$
\item $\lambda(L^{\bullet}_p(E))+\lambda(L^{\bullet}_p(E^{(K)}))=1$ for each ${\bullet \in \{+,-\}}$
\item $\mu(L^{\bullet}_p(E))=\mu(L^{\bullet}_p(E^{(K)}))=0$ for each ${\bullet \in \{+,-\}}$
\end{enumerate}
Under the above assumptions the number of $\Zp$-extensions of $K$, where the rank of $E$ does not stay bounded is at most 3.
\end{proposition}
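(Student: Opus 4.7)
\emph{Proof plan.} The strategy is to apply the second part of Theorem~\ref{rankbound_theorem}, whose hypotheses (every prime dividing $N$ splits in $K/\Q$, $p \nmid h_K$, and $\Gal(\Q(E[p])/\Q) = GL_2(\Fp)$) are precisely conditions (a), (b), (c) of the proposition. That second part yields the sharper bound $t \le \min\{\lambda^{++}_A + \lambda^{--}_B + \lambda^{+-}_C + \lambda^{-+}_D \mid (A,B,C,D)\in \Omega\} - 1$. The aim is to exhibit an explicit tuple $(A,B,C,D)\in \Omega$ for which the inner sum is at most $4$; I would take $A = B = H_{cyc}$ and $C = D = H_{ac}$ (the subgroups fixing $K_{cyc}$ and $K_{ac}$ respectively).

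For the cyclotomic choices, Proposition~\ref{Hcyc_prop} gives $H_{cyc} \in \mathcal{H}^{++}\cap \mathcal{H}^{--}$. Under hypothesis~(g), Kobayashi's one-sided divisibility in the plus/minus main conjecture~\cite[Theorem~4.1]{Kob} forces $\mu(X^{\pm}(E/\Q_{cyc})) = \mu(X^{\pm}(E^{(K)}/\Q_{cyc})) = 0$ and $\lambda(X^{\pm}(E/\Q_{cyc})) \le \lambda(L^{\pm}_p(E))$, and similarly for $E^{(K)}$. Via the isomorphisms~\eqref{quad_twist_isom+} and \eqref{quad_twist_isom-}, this yields $\mu(X^{\bullet\bullet}(E/K_{cyc})) = 0$, so $X^{\bullet\bullet}(E/K_{cyc})$ is finitely generated over $\Z_p$; combined with Theorem~\ref{SelmerControlThm_prop} and the discussion following Definition~\ref{def:MHG}, this forces $H_{cyc} \in \Omega^{\bullet\bullet}$. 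Moreover, by hypothesis~(f),
\[ \lambda^{\bullet\bullet}_{H_{cyc}} \le \lambda(L^{\bullet}_p(E)) + \lambda(L^{\bullet}_p(E^{(K)})) = 1 \]
for each $\bullet \in \{+,-\}$.

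For the anticyclotomic choices, hypothesis~(b) guarantees that both primes above $p$ are totally ramified in $K_{ac}/K$, and Theorem~\ref{mixedSelmertorsion_theorem} (whose hypotheses follow from (a), (c), (d), (e)) asserts that $X^{+-}(E/K_{ac})$ and $X^{-+}(E/K_{ac})$ are $\Lambda$-torsion with $\mu = 0$ and $\lambda = 1$. Once it is verified that no prime in $S$ splits completely in $K_{ac}/K$---so that $H_{ac} \in \mathcal{H}^{+-}\cap\mathcal{H}^{-+}$---the vanishing of $\mu$ together with Theorem~\ref{SelmerControlThm_prop} again places $H_{ac}$ in $\Omega^{+-}\cap\Omega^{-+}$, with $\lambda^{+-}_{H_{ac}} = \lambda^{-+}_{H_{ac}} = 1$. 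Assembling the four bounds, the inner sum is at most $1+1+1+1=4$, whence $t \le 4-1 = 3$.

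The step I expect to be the main obstacle is checking condition~(a) of Definition~\ref{def:mathcal{H}} for $H_{ac}$: under the Heegner hypothesis each prime $\ell \mid N$ splits in $K/\Q$, and I must verify that the two primes above $\ell$ have non-trivial Frobenius in $\Gal(K_{ac}/K)$, so that they do not split completely in the anticyclotomic tower. This is a genuine Frobenius computation rather than a formal consequence of the other hypotheses; if it fails for $H_{ac}$ itself I would fall back on Proposition~\ref{Zpextensions_prop} to replace $H_{ac}$ by a nearby $\Z_p$-extension in $\mathcal{H}^{+-}\cap\mathcal{H}^{-+}$ and use Theorem~\ref{main_theorem}~(h) (boundedness of $\lambda$ in a neighborhood) together with Theorem~\ref{mixedSelmertorsion_theorem} to recover the bound $\lambda^{+-}_C,\lambda^{-+}_D \le 1$ on that neighborhood.
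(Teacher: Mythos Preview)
Your proposal is correct and follows essentially the same route as the paper: apply the second part of Theorem~\ref{rankbound_theorem} with $(A,B,C,D) = (H_{cyc}, H_{cyc}, H_{ac}, H_{ac})$, bound the cyclotomic $\lambda$-invariants by $1$ via Kobayashi's one-sided divisibility and the quadratic-twist decomposition, and use Theorem~\ref{mixedSelmertorsion_theorem} for the anticyclotomic mixed-sign $\lambda$-invariants.

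The only place you diverge is in the step you flag as the obstacle, namely checking that no prime of $S$ splits completely in $K_{ac}/K$. The paper dispatches this directly by citing \cite[Theorem~2]{Brink}: since every prime dividing $N$ splits in $K/\Q$, each such prime has only finitely many primes above it in $K_{ac}$, hence does not split completely. This is cleaner than a Frobenius computation. Your proposed fallback, however, would not work as stated: Theorem~\ref{main_theorem}(h) only gives \emph{boundedness} of $\lambda$ in a neighborhood, not the value $1$, and the sharper statement (j) (constancy of $\lambda$) is only proved for equal signs $\bullet = \star$, so it is unavailable for $X^{+-}$ and $X^{-+}$. Fortunately the fallback is never needed.
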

\begin{proof}
For this proof we fix $S$ to be the set of primes of $K$ dividing $Np$.
Let $\Q_{cyc}$ be the cyclotomic $\Zp$-extension of $\Q$. Let $\Selpmsingle(E/K_{cyc})$ be the plus and minus Selmer groups over $\Q_{cyc}$ as in Kobayashi's paper \cite{Kob}. Let $X^{\pm}(E/\Q_{cyc})$ denote the Pontryagin dual of $\Selpmsingle(E/\Q_{cyc})$. By \cite[Theorem~1.3]{Kob} we have for each ${\bullet \in \{+,-\}}$ that
\begin{equation}\label{lambda_relation}
\lambda(X^{\bullet}(E/\Q_{cyc})) \leq \lambda(L^{\bullet}_p(E)),
\end{equation}
\begin{equation}\label{mu_relation}
\mu(X^{\bullet}(E/\Q_{cyc})) \leq \mu(L^{\bullet}_p(E)),
\end{equation}
and similarly for $E^{(K)}$. Since $p$ is odd, we can identify $\Gal(\Q_{cyc}/\Q)$ and $\Gal(K_{cyc}/K)$. Let ${\Lambda_{cyc}=\Zp[[\Gal(K_{cyc}/K)]]}$. As in Proposition~\ref{Hcyc_prop}, we have for each ${\bullet \in \{+,-\}}$ an isomorphism of $\Lambda_{cyc}$-modules

$$X^{\bullet\bullet}(E/K_{cyc})\cong X^{\bullet}(E/\Q_{cyc}) \oplus X^{\bullet}(E^{(K)}/\Q_{cyc}). $$
Therefore from $(f)$, $(g)$, \eqref{lambda_relation} and \eqref{mu_relation} we have for each ${\bullet \in \{+,-\}}$ that
\begin{equation}\label{mu_rel1}
\mu(X^{\bullet\bullet}(E/K_{cyc}))=0,
\end{equation}
\begin{equation}\label{lambda_rel1}
\lambda(X^{\bullet\bullet}(E/K_{cyc})) \leq 1.
\end{equation}

From Theorem~\ref{mixedSelmertorsion_theorem} we have that $X^{+-}(E/K_{ac})$ and $X^{-+}(E/K_{ac})$ are $\Lambda$-torsion and
\begin{equation}\label{mu_rel2}
\mu(X^{+-}(E/K_{ac}))=\mu(X^{-+}(E/K_{ac}))=0,
\end{equation}
\begin{equation}\label{lambda_rel2}
\lambda(X^{+-}(E/K_{ac}))=\lambda(X^{-+}(E/K_{ac}))=1.
\end{equation}

By Proposition~\ref{Hcyc_prop} ${H_{cyc} \in \mathcal{H}^{++} \cap \mathcal{H}^{--}}$. Therefore from \eqref{mu_rel1} and Theorem~\ref{SelmerControlThm_prop} we have that for ${\bullet \in \{+,-\}}$ that $X^{\bullet \bullet}(E/\K_{\infty})_{H_{cyc}}$ is finitely generated over $\Zp$. So $X^{\bullet \bullet}(E/\K_{\infty})$ is finitely generated over $\Lambda(H_{cyc})$ whence also $X^{\bullet \bullet}(E/\K_{\infty})_f$ is finitely generated over $\Lambda(H_{cyc})$.

Since all the primes dividing $N$ split in $K/\Q$, it follows from \cite[Theorem~2]{Brink} that there are a finite number of primes of $K_{ac}$ above any prime of $K$ dividing $N$. Therefore it follows that no prime in $S$ splits completely in $K_{ac}/K$. Also we know that every prime of $K$ above $p$ ramifies in $K_{ac}/K$ (in fact, since we assumed that ${p \nmid h_K}$ such primes are totally ramified). Let ${H_{ac}=\Gal(K_{\infty}/K_{ac})}$. Then from the previous observations and Proposition~\ref{mixedSelmertorsion_prop} we get that ${H_{ac} \in \mathcal{H}^{+-}, \mathcal{H}^{-+}}$. Also just as above, we get from \eqref{mu_rel2} that both $X^{+-}(E/\K_{\infty})_f$ and $X^{-+}(E/\K_{\infty})_f$ are finitely generated over $\Lambda(H_{ac})$.

From the above and \eqref{lambda_rel1} and \eqref{lambda_rel2} we see that the desired result follows from Theorem~\ref{rankbound_theorem}.
\end{proof}
The fact that all the primes dividing $N$ are assumed to split in $K/\Q$ in the above proposition implies that the root number of $L(E_K, s)$ is $-1$. So in this case Conjecture \ref{MazurConjecture} predicts that the rank of $E$ stays bounded along all $\Zp$-extensions of $K$ except the anticyclotomic one. Therefore in order to give examples in this case satisfying Conjecture~\ref{MazurConjecture}, we need to lower the bound in the statement of the proposition by two. It is unclear to the authors how to achieve this. Nevertheless, we give examples below satisfying Proposition~\ref{Rankbound_prop}. Conditions~$(f)$ and $(g)$ were checked in the LMFDB database \cite{LMFDB} and the other conditions were verified by computations in SAGE \cite{Sage}.

\begin{itemize}
\item Let $E=21a1$ (Cremona labeling \cite{Cremona}), ${K=\Q(\sqrt{-5})}$ and ${p=23}$.
\item Let $E=34a1$, ${K=\Q(\sqrt{-15})}$ and ${p=23}$.
\item Let $E=44a1$, ${K=\Q(\sqrt{-7})}$ and ${p=29}$.
\end{itemize}

\section{Finding examples where the $\mathfrak{M}_H(G)$-property fails} \label{section:counterexample}
\subsection{Introduction}
Our aim in this section is to seek a generalization of the main results (Theorems~\ref{main_theorem} and \ref{main_theorem3}) to $\Zp$-extensions $\KK_{\infty}^H/K$ where we allow primes in $S$ to split completely. Working in this more general setting will allow us to construct examples where the $\mathfrak{M}_H(G)$-property for signed Selmer groups fails. In order to achieve this generalization we need to define new Selmer groups, the so-called \emph{Greenberg Selmer groups} (see also \cite{Gb_IPR, Gb_motives}). We will also define doubly signed Greenberg Selmer groups as in \cite{PW}, and we compare the Greenberg and doubly-signed Greenberg Selmer groups with the classical Selmer and doubly-signed Selmer groups (see Proposition~\ref{Selmercomp_prop} below).

We define ${S_p:=\{\fp, \bar{\fp}\}}$. Let $F_{\infty}/K$ be any infinite subextension of $\KK_{\infty}/K$. For any finite extension $L/K$ inside $F_{\infty}$ and any prime $w$ of $L$ let $I_{L_w}$ be the inertia subgroup of $G_{L_w}$. We have the restriction map ${\res_{L_w}: H^1(L_w, E[p^{\infty}]) \to H^1(I_{L_w}, E[p^{\infty}])}$. For any ${v \in S \setminus S_p}$ we define
$$J_v^s(E/F_{\infty}) = \dlim \bigoplus_{w | v} \img \res_{L_w}$$
where the direct limit runs over all finite extensions $L/K$ inside $F_{\infty}$. For any prime ${v \in S_p}$ we define
$$J_v(E/F_{\infty}) = \dlim \bigoplus_{w | v} H^1(L_w, E)[p^{\infty}]$$
where the direct limit runs over all finite extensions $L/K$ inside $F_{\infty}$. We define the Greenberg $p^{\infty}$-Selmer group of $E/F_{\infty}$ as
$$\displaystyle 0 \longrightarrow \Selinfg(E/F_{\infty}) \longrightarrow H^1(G_S(F_{\infty}), E[p^{\infty}]) \longrightarrow \bigoplus_{v \in S_p} J_v(E/F_{\infty}) \times \bigoplus_{v \in S \setminus S_p} J_v^s(E/F_{\infty}). $$
We denote the Pontryagin dual of $\Selinfg(E/F_{\infty})$ by $X^{Gr}(E/F_{\infty})$.

Now let $F_{\infty}/K$ be a $\Zp$-extension and let ${\bullet, \star \in \{+,-\}}$. We define as in \eqref{J_v_def} in Section~\ref{section:mainintroduction}
$$J_p^{\bullet\star}(E/F_{\infty}) = \bigoplus_{w | \fp} \frac{H^1(F_{\infty,w}, E[p^{\infty}])}{\hat{E}^{\bullet}(F_{\infty,w})\otimes \Qp/\Zp} \times \bigoplus_{w | \bar{\fp}} \frac{H^1(F_{\infty,w}, E[p^{\infty}])}{\hat{E}^{\star}(F_{\infty,w})\otimes \Qp/\Zp}. $$

We define the signed Greenberg Selmer group of $E/F_{\infty}$ as

$$\displaystyle 0 \longrightarrow \Selgpm(E/F_{\infty}) \longrightarrow \Selinfg(E/F_{\infty}) \longrightarrow J_p^{\bullet\star}(E/F_\infty). $$\\
We denote the Pontryagin dual of $\Selgpm(E/F_{\infty})$ by $X^{Gr, \bullet\star}(E/F_{\infty})$.

For ${\bullet, \star \in \{+,-\}}$ we define, as we did right after Proposition~\ref{invariants_prop},
$$J_p^{\bullet\star}(E/\KK_{\infty}) = \bigoplus_{w | \fp}\frac{H^1(\KK_{\infty,w}, E[p^{\infty}])}{\hat{E}^{\bullet}(\KK_{\infty,w})\otimes \Qp/\Zp} \times  \bigoplus_{w | \bar{\fp}}\frac{H^1(\KK_{\infty,w}, E[p^{\infty}])}{\hat{E}^{\star}(\KK_{\infty,w})\otimes \Qp/\Zp}, $$

and we define the signed Greenberg Selmer group of $E/\KK_{\infty}$ as

$$\displaystyle 0 \longrightarrow \Selgpm(E/\KK_{\infty}) \longrightarrow \Selinfg(E/\KK_{\infty}) \longrightarrow J_p^{\bullet\star}(E/\KK_{\infty}). $$\\
We denote the Pontryagin dual of $\Selgpm(E/\KK_{\infty})$ by $X^{Gr, \bullet\star}(E/\KK_{\infty})$.

\begin{remark}\label{Seldef_remark}
Let $F_{\infty}/K$ be an infinite subextension of $\KK_{\infty}/K$. For any ${v \in S \setminus S_p}$ we define
$$J_v^t(E/F_{\infty}) = \prod_{w | v} H^1(I_{F_{\infty,w}}, E[p^{\infty}]), $$
where $I_{F_{\infty,w}}$ is the inertia group. The proof of \cite[Lemma~2.1]{KM_Cong} shows that we get the same definition for $\Selinfg(E/F_{\infty})$ if we replace $J_v^s(E/F_{\infty})$ by $J_v^t(E/F_{\infty})$. It follows that this is also true for $\Selgpm(E/F_{\infty})$ for any ${\bullet, \star \in \{+,-\}}$.
\end{remark}

\begin{remark}
Let $F_{\infty}/K$ be an infinite subextension of $\KK_{\infty}/K$ and let $w$ be a prime of $F_{\infty}$ not above $p$. If $F_{\infty,w}/K_w$ contains a $\Zp$-subextension, then the profinite degree of ${\Gal(\overbar{F_{\infty,w}}/F_{\infty,w})/I_{F_{\infty,w}}}$ is prime to $p$. Therefore from Remark~\ref{Seldef_remark} we see that if $F_{\infty}/K$ is a $\Zp$-extension and no prime in $S$ splits completely $F_{\infty}/K$, then ${\Selinfg(E/F_{\infty})=\Selinf(E/F_{\infty})}$ and for any ${\bullet, \star \in \{+,-\}}$ we have ${\Selgpm(E/F_{\infty})=\Selpm(E/F_{\infty})}$. Moreover, we always have ${\Selinfg(E/\KK_{\infty})=\Selinf(E/\KK_{\infty})}$ and ${\Selgpm(E/\KK_{\infty})=\Selpm(E/\KK_{\infty})}$.
\end{remark}

We will work with these Greenberg Selmer groups and Greenberg signed Selmer groups in this section. Our idea to work with the Greenberg Selmer groups and Greenberg signed Selmer groups was inspired by the paper~\cite{PW} of Pollack--Weston. The advantage of working with these Selmer groups is that they behave well with $\Zp$-extensions where primes in $S$ split completely. On the contrary, the Selmer groups defined in Section~\ref{section:mainintroduction} are not well-behaved if primes in $S$ split completely.  For example, Theorem~\ref{SelmerControlThm_prop} shows that for any ${\bullet, \star \in \{+,-\}}$ and any ${H \in \mathcal{H}^{\bullet\star}}$ the map
$$s^{\bullet\star}: \Selpm(E/\KK_{\infty}^H) \longrightarrow \Selpm(E/\KK_{\infty})^H$$
is an isomorphism. It is a key assumption for this result that primes in $S$ do not split completely in $\KK_{\infty}^H/K$. For if a prime ${v \in S}$ splits completely in $\KK_{\infty}^H/K$, then $\coker s^{\bullet\star}$ can be infinite. We shall see that if we work with Greenberg signed Selmer groups instead, then the map $s^{\bullet\star}$ is an isomorphism even if primes in $S$ split completely in $\KK_{\infty}^H/K$. Another nice feature of Greenberg Selmer groups and Greenberg signed Selmer groups is that if $F_{\infty}/K$ is a $\Zp$-extension, then primes in $S$ that split completely in $F_{\infty}/K$ can contribute to the $\mu$-invariants of $X^{Gr, \bullet\star}(E/F_{\infty})$ and $X^{Gr}(E/F_{\infty})$. In the last subsection we will utilize this relationship with $\mu$-invariants to give examples where the $\mathfrak{M}_H(G)$-property for signed Selmer groups fails.

In this section we would like to prove similar equivalences to those in Theorems~\ref{main_theorem} and \ref{main_theorem3} for Greenberg signed Selmer groups and Greenberg Selmer groups where we remove the condition that no prime in $S$ splits completely in $\KK_{\infty}^H/K$. We feel that all the equivalences in Theorems~\ref{main_theorem} and \ref{main_theorem3} can be shown in this setup. However we only focus on proving the equivalence of the $\mathfrak{M}_H(G)$-property with the equality of $\mu$-invariants (Theorem~\ref{main_theorem}, ${(a) \Longleftrightarrow (d)}$ and Theorem~\ref{main_theorem3}, ${(a) \Longleftrightarrow (c)}$).

\begin{definition} \label{def:mathcal{H_s}}
For ${\bullet, \star \in \{+,-\}}$ let ${\mathcal{H}_s^{\bullet\star}}$ be the set of all subgroups ${H = \overbar{\langle \sigma^a\tau^b \rangle}}$ of ${G = \Gal(\KK_{\infty}/K)}$ which are topologically generated by $\sigma^a\tau^b$ for some ${[(a,b)] \in \mathbb{P}^1(\Zp)}$ such that
\begin{enumerate}[(a)]
\item Every prime of $K$ above $p$ ramifies in $\KK_{\infty}^H/K$,
\item $X^{Gr, \bullet\star}(E/\KK_{\infty}^H)$ is a torsion $\Lambda_{G/H}$-module, where ${\Lambda_{G/H} = \Z_p[[G/H]]}$.
\end{enumerate}
By abuse of notation, we say that a $\Z_p$-extension ${L_{\infty} \in \mathcal{E}}$ of $K$ is contained in $\mathcal{H}_s^{\bullet\star}$ iff ${\Gal(\KK_{\infty}/L_{\infty}) \in \mathcal{H}_s^{\bullet\star}}$.
\end{definition}
In the above definition $\mathcal{H}_s^{\bullet\star}$ is a potentially larger set than $\mathcal{H}^{\bullet\star}$ since we allow primes in $S$ to split completely (it is indeed larger if ${S \ne \emptyset}$). Our main aim in this section is to prove the following theorems.

\begin{theorem}\label{splitmain_theorem}
With notation as above, let ${\bullet, \star \in \{+,-\}}$. We assume that $\mathcal{H}_s^{\bullet\star}$ is non-empty and we choose some ${H=\overbar{\langle \sigma^a\tau^b \rangle}\in \mathcal{H}_s^{\bullet\star}}$.

Then $X^{Gr, \bullet\star}(E/\KK_{\infty})$ is $\Lambda_2$-torsion and the following are equivalent:
\begin{enumerate}[(a)]
\item $X^{Gr, \bullet\star}(E/\KK_{\infty})_f$ is finitely generated over $\Lambda(H)$.
\item ${\mu_G(X^{Gr, \bullet\star}(E/\KK_{\infty}))=\mu_{G/H}(X^{Gr, \bullet\star}(E/\KK_{\infty}^H))}$.
\end{enumerate}
\end{theorem}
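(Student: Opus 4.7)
The plan is to mirror the proof of the equivalence $(a) \Longleftrightarrow (d)$ in Theorem~\ref{main_theorem}, with Greenberg signed Selmer groups replacing the classical signed Selmer groups. The crucial advantage of working with $\Selgpm$ rather than $\Selpm$ is that the local conditions at primes $v \in S \setminus S_p$ are defined via inertia subgroups (see Remark~\ref{Seldef_remark}), so the kernels of the natural local restriction maps at such primes are automatically trivial regardless of whether $v$ splits completely in $\KK_\infty^H/K$. This eliminates precisely the obstruction that forced condition~(a) in Definition~\ref{def:mathcal{H}}.

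The first step is to establish a control theorem: for any $H \in \mathcal{H}_s^{\bullet\star}$ and any $n \ge 0$, the restriction map
\[ s_n^{Gr,\bullet\star} \colon \Selgpm(E/\KK_\infty^{H_n}) \longrightarrow \Selgpm(E/\KK_\infty)^{H_n} \]
is an isomorphism. The arguments of Section~\ref{section:control} go through with minimal changes: Lemma~\ref{containment_lemma}, Proposition~\ref{pm_points_structure_prop} and Proposition~\ref{invariants_prop} did not use condition~(a) of Definition~\ref{def:mathcal{H}}, as noted in Remark~\ref{rem:welldefined}, so they remain valid. The global cohomology terms $\ker g_n$ and $\coker g_n$ still vanish by Lemma~\ref{p-torsion_lemma}, while the new local term at $v \in S \setminus S_p$ is a subgroup of $H^1(I_{(F_\infty^n)_w}, E[p^\infty]) \to H^1(I_{\KK_{\infty,w}}, E[p^\infty])^{H_{n,w}}$; its kernel is controlled by $H^1$ of the quotient $I_{(F_\infty^n)_w}/I_{\KK_{\infty,w}}$ with values in $E[p^\infty]$, which is trivial since $H_{n,w}$ acts through a pro-$p$ quotient of the tame inertia and Lemma~\ref{p-torsion_lemma} applies. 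This is exactly where passing to the Greenberg definition pays off.

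With the control theorem in hand, the $\Lambda_2$-torsion assertion follows as in Lemma~\ref{Xinf_torsion_lemma}: $X^{Gr,\bullet\star}(E/\KK_\infty)_{H_n}$ is $\Lambda_{H,n}$-torsion for the chosen $H$, hence $X^{Gr,\bullet\star}(E/\KK_\infty)$ is $\Lambda_2$-torsion. The next step is the cohomological vanishing $H_1(H_n, \Selgpm(E/\KK_\infty)) = 0$, obtained by running the same surjectivity-of-the-global-to-local-map argument from Proposition~\ref{Seln_surjective_prop} and Lemma~\ref{H1_Sel_vanishing_lemma}, where surjectivity of the local map at a tame prime $v$ splitting completely in $\KK_\infty^{H_n}/K$ is immediate because the target $J_v^s$ is built from the image of restriction on inertia.

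The final step is to adapt Theorem~\ref{mu_invariants_theorem}. Combining the above control theorem with the vanishing of $H_1(H, X^{Gr,\bullet\star}(E/\KK_\infty))$ and a Herbrand-quotient (or characteristic-ideal) computation identical to the one used in the proof of \cite[Theorem~5.1]{MHG}, one obtains
\[
\mu_G(X^{Gr,\bullet\star}(E/\KK_\infty)) = \mu_{G/H}(X^{Gr,\bullet\star}(E/\KK_\infty^H)) - \mu_{G/H}\bigl(H_0(H, X^{Gr,\bullet\star}(E/\KK_\infty)_f)\bigr).
\]
Since $\mu_{G/H}(H_0(H, M)) = 0$ for a finitely generated $\Lambda(H)$-module $M$, the implication $(a) \Longrightarrow (b)$ is immediate. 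Conversely, if $(b)$ holds, then $\mu_{G/H}(H_0(H, X^{Gr,\bullet\star}(E/\KK_\infty)_f)) = 0$, which forces $H_0(H, X^{Gr,\bullet\star}(E/\KK_\infty)_f)$ to be finitely generated over $\Z_p$ (using that the module has no $p$-torsion so its $\mu$-invariant vanishes only when it is $\Z_p$-finitely generated), and Nakayama's lemma for $\Lambda(H)$ then yields $(a)$.

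I expect the main technical obstacle to be the control theorem at tame primes that split completely. The delicate point is that although $F_{\infty,w}^n = \KK_{\infty,w}$ (so $H_{n,w}$ is trivial and the naive Shapiro argument collapses), one must still check that the inertia-restricted cohomology groups $H^1(I_{F_{\infty,w}^n}, E[p^\infty])$ pull back correctly to assemble into $J_v^s(E/\KK_\infty^{H_n})$ -- essentially a compatibility statement at the level of the inertia subgroups, which can be verified directly from the definitions but requires careful bookkeeping when $v$ has infinitely many primes above it in $\KK_\infty^{H_n}$.
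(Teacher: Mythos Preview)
Your overall strategy is correct and matches the paper's: prove a control theorem for Greenberg signed Selmer groups (Theorem~\ref{splitSelmerControlThm_prop}), deduce $\Lambda_2$-torsion, establish the vanishing of $H_1(H, X^{Gr,\bullet\star}(E/\KK_\infty))$ (Lemma~\ref{splitH1_Sel_vanishing_lemma}), and then run the argument of Theorem~\ref{mu_invariants_theorem}. Note that only the case $n=0$ is needed for the equivalence $(a)\Leftrightarrow(b)$, so your discussion of general $H_n$ is unnecessary.

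However, your local analysis at tame primes is muddled in two places. First, the injectivity of the local map at $v\in S\setminus S_p$ has nothing to do with Lemma~\ref{p-torsion_lemma} (which concerns primes above $p$); the actual reason is much simpler: since $\KK_\infty/\KK_\infty^H$ is unramified at every $v\nmid p$, one has $I_{F_{\infty,w}}=I_{\KK_{\infty,w}}$ literally, so the map on $H^1$ of inertia is the identity. Second, your ``expected obstacle'' rests on the false premise that $F_{\infty,w}=\KK_{\infty,w}$ when $v$ splits completely in $F_\infty/K$: in that case $F_{\infty,w}=K_v$ while $\KK_{\infty,w}$ is the unramified $\Z_p$-extension of $K_v$, so $H_w\cong\Z_p$ is nontrivial. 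The paper handles this by embedding $J_v^s$ into the product over all primes of $H^1$ of inertia (cf.\ the proof of \cite[Lemma~2.1]{KM_Cong}) and then using the equality of inertia groups directly --- no ``careful bookkeeping'' of infinitely many primes is required beyond this.
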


\begin{theorem}\label{splitmain_theorem3}
Let ${\bullet, \star \in \{+,-\}}$ and ${H=\overbar{\langle \sigma^a\tau^b \rangle}\in \mathcal{H}_s^{\bullet\star}}$. The following statements are equivalent:
\begin{enumerate}[(a)]
\item $X^{Gr}(E/\KK_{\infty})_f$ is finitely generated over $\Lambda(H)$.
\item \resizebox{.92\hsize}{!}{${\mu_G(T_{\Lambda_2}(X^{Gr}(E/\KK_\infty))) = \mu_{G/H}(T_{\Lambda}(X^{Gr}(E/\KK_\infty^H))) - \mu_{G/H}(T_{\Lambda}([F_{\Lambda_2}(X^{Gr}(E/\KK_{\infty}))]_H))}$}.
\end{enumerate}
\end{theorem}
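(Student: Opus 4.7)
The proof strategy mirrors that of the equivalence $(a) \Longleftrightarrow (c)$ in Theorem~\ref{main_theorem3}, replacing the classical Selmer group by the Greenberg Selmer group throughout. The crucial difference is that the local conditions $J_v^s(E/F_\infty)$ at tame primes are defined via restriction to inertia, which allows the control theorems to go through even when primes in $S \setminus S_p$ split completely in $\KK_\infty^H/K$ -- the very source of the failure of the ordinary Selmer-group control theorem in this setting.

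First, I would establish Greenberg analogues of the key control theorems. The analogue of Proposition~\ref{SelmerControlThm_prop2} asserts that
\[ s_n^{Gr} \colon \Selinfg(E/\KK_\infty^{H_n}) \longrightarrow \Selinfg(E/\KK_\infty)^{H_n} \]
is an isomorphism for each $n \geq 0$ and each $H \in \mathcal{H}_s^{\bullet\star}$. For primes $v \in S \setminus S_p$ that split completely in $\KK_\infty^{H_n}/K$, the decomposition group of any prime above $v$ inside $H_n$ is trivial, so the local restriction map at $v$ is the identity on $J_v^s$ (and Remark~\ref{rem:welldefined} ensures the comparison on this source is well defined). For primes $v \in S_p$, which are ramified in $\KK_\infty^H/K$ by condition~(a) of Definition~\ref{def:mathcal{H_s}}, the structure results of Section~\ref{section:control} (Lemma~\ref{freeness_lemma}, Propositions~\ref{pm_points_structure_prop} and \ref{invariants_prop}) carry over since their proofs only rely on ramification above $p$ together with Lemma~\ref{p-torsion_lemma}. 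From this control theorem I then derive, mimicking Theorem~\ref{TorsionControlThm}, an injection
\[ (T_{\Lambda_2}(X^{Gr}(E/\KK_\infty)))_{H_n} \hookrightarrow T_{\Lambda_{H,n}}(X^{Gr}(E/\KK_\infty^{H_n})) \]
with cokernel $\Lambda_{H,n}$-isomorphic to $T_{\Lambda_{H,n}}([F_{\Lambda_2}(X^{Gr}(E/\KK_\infty))]_{H_n})$, after first checking that $(T_{\Lambda_2}(X^{Gr}(E/\KK_\infty)))_{H_n}$ is $\Lambda_{H,n}$-torsion via the Greenberg analogue of Corollary~\ref{cor:HinH}.

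Next I would prove that $X^{Gr,\bullet\star}(E/\KK_\infty)$ is $\Lambda_2$-torsion whenever $\mathcal{H}_s^{\bullet\star}$ is non-empty, together with a comparison injection $T_{\Lambda_2}(X^{Gr}(E/\KK_\infty)) \hookrightarrow X^{Gr,\bullet\star}(E/\KK_\infty)$ in the spirit of Proposition~\ref{comparison_prop}. Combined with the Greenberg version of Theorem~\ref{pseudo-null_theorem}, this implies via Proposition~\ref{cor:hamidi}$(d)$ that $T_{\Lambda_2}(X^{Gr}(E/\KK_\infty))$ has no nontrivial pseudo-null $\Lambda_2$-submodules, and then the $\mathrm{Bourbaki}$ structure theorem yields $H_1(H, T_{\Lambda_2}(X^{Gr}(E/\KK_\infty))) = 0$ exactly as in Lemma~\ref{H1_Sel_vanishing_lemma2}. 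Repeating the proof of Theorem~\ref{mu_invariants_theorem2} with these inputs gives the identity
\[ \mu_G(T_{\Lambda_2}(X^{Gr}(E/\KK_\infty))) = \mu_{G/H}(T_{\Lambda}(X^{Gr}(E/\KK_\infty^H))) - \mu_{G/H}(T_{\Lambda}([F_{\Lambda_2}(X^{Gr}(E/\KK_\infty))]_H)) - \mu_{G/H}(H_0(H, X^{Gr}(E/\KK_\infty)_f)). \]
The equivalence $(a) \Longleftrightarrow (b)$ then drops out: assuming $(a)$, the last term vanishes because $H_0(H, X^{Gr}(E/\KK_\infty)_f)$ is finitely generated over $\Z_p$; conversely, $(b)$ forces its $\mu_{G/H}$-invariant to be zero, and since $X^{Gr}(E/\KK_\infty)_f$ is $p$-torsion-free, a Nakayama-style argument as in \cite[Corollary~5.2]{MHG} upgrades this to the finite generation of $X^{Gr}(E/\KK_\infty)_f$ over $\Lambda(H)$.

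The main technical obstacle will be the local control theorem at tame primes $v \in S \setminus S_p$ that neither split completely nor remain inert in $\KK_\infty^H/K$: here one must verify that the map $J_v^s(E/\KK_\infty^{H_n}) \to J_v^s(E/\KK_\infty)^{H_n}$ has trivial kernel and at most pseudo-null cokernel. The key input is that, with $\ell \neq p$ denoting the residue characteristic of $K_v$, the relevant unramified cohomology is controlled by the Frobenius action on $E[p^\infty]$ over the residue field, and any obstructions are finite and do not affect $\mu$-invariants. A secondary difficulty is establishing the Greenberg analogue of Theorem~\ref{finite_submodules_theorem}: for the classical doubly-signed Selmer group at the cyclotomic level this is due to Kim, and since for $H = H_{cyc}$ no prime in $S$ splits completely, the Greenberg and classical doubly-signed Selmer groups agree over $K_{cyc}$, so the result transfers verbatim; the up-down argument of Theorem~\ref{pseudo-null_theorem} then extends to the Greenberg setting and delivers the required algebraic rigidity.
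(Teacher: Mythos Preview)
Your approach is essentially the same as the paper's: establish a Greenberg control theorem, prove $H_1(H, T_{\Lambda_2}(X^{Gr}(E/\KK_\infty))) = 0$, and then run the $\mu$-invariant computation of Theorem~\ref{mu_invariants_theorem2}. The paper, however, streamlines this considerably via a shortcut you only partially exploit: since no tame prime can split completely in $\KK_\infty/K$, one always has $X^{Gr}(E/\KK_\infty) = X(E/\KK_\infty)$ and $X^{Gr,\bullet\star}(E/\KK_\infty) = X^{\bullet\star}(E/\KK_\infty)$ (see the Remark following Remark~\ref{Seldef_remark}). Hence the comparison injection of Proposition~\ref{comparison_prop} and the absence of pseudo-null submodules from Theorem~\ref{pseudo-null_theorem} are already available over $\KK_\infty$ with no Greenberg-specific rework whatsoever; the paper's Lemma~\ref{splitH1_Sel_vanishing_lemma2} then deduces $H_1$-vanishing directly from Lemma~\ref{splitH1_Sel_vanishing_lemma} by left-exactness of $H_1(H,-)$. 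There is no need to redo the pseudo-null theory or to invoke Kim's theorem separately in the Greenberg setting, and only the $n=0$ case of the control theorems (Proposition~\ref{splitSelmerControlThm_prop2} and Theorem~\ref{splitTorsionControlThm}) is required for the stated equivalence.

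There is also a small error in your local analysis: if a tame prime $v$ splits completely in $\KK_\infty^{H_n}/K$, the decomposition group of a prime above $v$ inside $H_n$ is \emph{not} trivial but isomorphic to $\Z_p$---it equals the full decomposition subgroup $D_w \subseteq G$, which lies in $H_n$ precisely because $v$ is totally split below. The correct reason the local control at $J_v^s$ is harmless is that $\KK_\infty/\KK_\infty^{H_n}$ is unramified away from $p$, so the inertia groups $I_{(\KK_\infty^{H_n})_w}$ and $I_{(\KK_\infty)_w}$ coincide and the restriction map on $J_v^s$ is injective (see the proof of Theorem~\ref{splitSelmerControlThm_prop}). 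This argument works uniformly for all tame primes and dissolves your anticipated ``main technical obstacle''.
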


\subsection{Proofs of Theorems~\ref{splitmain_theorem} and \ref{splitmain_theorem3}}
First, we prove our control theorem for Greenberg signed Selmer groups.

\begin{theorem}\label{splitSelmerControlThm_prop}
Let ${\bullet, \star \in \{+,-\}}$ and ${H \in \mathcal{H}_s^{\bullet\star}}$. The restriction map induces a map
$$s^{\bullet\star}: \Selpm(E/\KK_{\infty}^H) \longrightarrow \Selpm(E/\KK_{\infty})^H$$
which is an isomorphism.
\end{theorem}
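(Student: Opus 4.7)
The plan is to adapt the proof of Theorem~\ref{SelmerControlThm_prop} by systematically replacing the classical local conditions at primes $v \in S \setminus S_p$ by the Greenberg-style inertia conditions $J_v^t$ of Remark~\ref{Seldef_remark}. Since $\Selpm(E/\KK_{\infty})=\Selgpm(E/\KK_{\infty})$ unconditionally, the right-hand side of the claimed isomorphism is untouched by this replacement. I would set up the standard commutative diagram whose rows are the defining sequences of $\Selgpm(E/\KK_{\infty}^H)$ and $\Selgpm(E/\KK_{\infty})^H$, with vertical arrows induced by restriction, and then mimic the snake-lemma argument from the proof of Theorem~\ref{SelmerControlThm_prop}. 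This reduces the claim to two assertions: that the global restriction
$$g\colon H^1(G_S(\KK_{\infty}^H), E[p^{\infty}]) \longrightarrow H^1(G_S(\KK_{\infty}), E[p^{\infty}])^H$$
is an isomorphism, and that the total local map $h^{\bullet\star}$ has trivial kernel. For $g$, Lemma~\ref{p-torsion_lemma} gives $\ker g = H^1(H, E(\KK_{\infty})[p^{\infty}]) = 0$ and $\coker g \hookrightarrow H^2(H, E(\KK_{\infty})[p^{\infty}]) = 0$, exactly as before.

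For the local map at the primes above $p$, I would invoke Propositions~\ref{invariants_prop} and~\ref{localinjection_prop}. Re-reading those proofs, they never used the assumption that no prime of $S$ splits completely in $\KK_{\infty}^H/K$; they relied only on the ramification of $\fp$ and $\bar{\fp}$ in $\KK_\infty^H/K$, which is precisely condition~(a) of Definition~\ref{def:mathcal{H_s}} (compare Remark~\ref{rem:welldefined}). The genuinely new ingredient is the behaviour at $v \in S \setminus S_p$. Here the crucial observation is that, as a $\Z_p^2$-extension of a number field, $\KK_{\infty}/K$ is unramified at every prime of $K$ not lying above $p$. Consequently, for any prime $w$ of $\KK_{\infty}^H$ above such a $v$ and any prime $w'$ of $\KK_{\infty}$ above $w$, one has
$$I_{\KK_{\infty, w'}} \;=\; I_{K_v} \;=\; I_{(\KK_{\infty}^H)_w}.$$
The component of $h^{\bullet\star}$ at the prime $w$ is therefore the natural inclusion
$$H^1(I_{K_v}, E[p^{\infty}]) \longhookrightarrow H^1(I_{K_v}, E[p^{\infty}])^{H_{w'}},$$
which is injective. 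Summing over primes above $v$ yields $\ker h^{\bullet\star} = 0$ on the tame part, and combining with the previous steps gives the control theorem for the Greenberg signed Selmer groups.

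The principal obstacle I anticipate is the reconciliation of notation: the theorem is stated in terms of $\Selpm$, so one must identify the Greenberg and the classical signed Selmer groups over $\KK_{\infty}^H$ in the present setting. For $v \nmid p$, Mattuck's theorem gives $E((\KK_{\infty}^H)_w) \otimes \Q_p/\Z_p = 0$, so that $H^1((\KK_{\infty}^H)_w, E)[p^{\infty}] = H^1((\KK_{\infty}^H)_w, E[p^{\infty}])$, and the comparison between the two local conditions reduces to the analysis of the unramified subgroup $H^1_{\mathrm{ur}}((\KK_{\infty}^H)_w, E[p^{\infty}])$. A case distinction according to whether $v$ splits completely in $\KK_{\infty}^H/K$, combined with the decomposition-group analysis sketched above and the absence of nontrivial pseudo-null submodules from Theorem~\ref{pseudo-null_theorem}, should show that this unramified piece does not interfere with the image of global cohomology for $H \in \mathcal{H}_s^{\bullet\star}$. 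Once this identification is in place, the Greenberg-level control theorem derived above immediately yields the stated isomorphism.
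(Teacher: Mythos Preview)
Your core argument is exactly the paper's proof: set up the commutative diagram with the Greenberg local conditions $J_v^s$, show that $g$ is an isomorphism via Lemma~\ref{p-torsion_lemma}, handle the primes above $p$ via Propositions~\ref{invariants_prop} and~\ref{localinjection_prop} (whose proofs do not use the non-splitting condition), and kill the tame kernel by the observation $I_{\KK_{\infty,w'}} = I_{(\KK_\infty^H)_w}$ since $\KK_\infty/\KK_\infty^H$ is unramified away from $p$. The paper formalises the tame step by first embedding $J_v^s$ into $\prod_{w\mid v} H^1(I_{\cdot,w},E[p^\infty])$ via the proof of \cite[Lemma~2.1]{KM_Cong}, but the substance is identical.

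Your final paragraph, however, is chasing a phantom. The statement as printed contains a typo: it should read $\Selgpm$ on the left (and, equivalently, on the right, since $\Selgpm(E/\KK_\infty)=\Selpm(E/\KK_\infty)$). Indeed the paper's own diagram has $\Selgpm(E/\KK_\infty)^H$ in the top row and uses the Greenberg conditions $J_v^s$ in the bottom row, and every subsequent application (Theorems~\ref{splitmain_theorem} and~\ref{splitmain_theorem3}) concerns $X^{Gr,\bullet\star}$. The identification $\Selpm(E/\KK_\infty^H)=\Selgpm(E/\KK_\infty^H)$ you propose to prove is \emph{false} when a prime of $S$ splits completely in $\KK_\infty^H/K$: Proposition~\ref{Selmercomp_prop} gives
\[
0 \longrightarrow \Selpm(E/\KK_\infty^H) \longrightarrow \Selgpm(E/\KK_\infty^H) \longrightarrow \bigoplus_{v\in S\setminus S_p}\Omega_{H,v} \longrightarrow 0,
\]
and Proposition~\ref{Omega_prop} shows the cokernel can have positive $\mu$-invariant. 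No appeal to Theorem~\ref{pseudo-null_theorem} can repair this; simply discard that paragraph and read the statement with $\Selgpm$.
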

\begin{proof}
As we observed in Remark~\ref{rem:welldefined} the proof of Lemma~\ref{containment_lemma} shows that the restriction map does in fact induce the map $s^{\bullet\star}$. Now we need to show that $s^{\bullet\star}$ is an isomorphism. Let ${F_{\infty}=\KK_{\infty}^H}$ and let ${S_p=\{\fp, \bar{\fp}\}}$.
Now consider the commutative diagram
\begin{equation*}
\begin{tikzcd}[column sep = small, scale cd=0.83]
0 \arrow[r] & \Selgpm(E/\KK_{\infty})^H \arrow[r] & H^1(G_S(\KK_{\infty}), E[p^{\infty}])^H  \arrow[r] & J_p^{\bullet\star}(E/\KK_{\infty})^H \times \bigoplus_{v \in S \setminus S_p} J_v^s(E/\KK_{\infty})^H\\
0 \arrow[r] & \Selpm(E/F_{\infty}) \arrow[r] \arrow[u, "s^{\bullet\star}"] & H^1(G_S(F_{\infty}), E[p^{\infty}]) \arrow[u, "g"] \arrow[r] & J_p^{\bullet\star}(E/F_{\infty}) \times \bigoplus_{v \in S \setminus S_p} J_v^s(E/F_{\infty}) \arrow[u, "h^{\bullet\star}"]
\end{tikzcd}
\end{equation*}
To prove that $s$ is an isomorphism, using the snake lemma applied to the diagram above it will suffice to show that $g$ is an isomorphism and that $h^{\bullet\star}$ is an injection. We have ${\ker g=H^1(H, E(\KK_{\infty})[p^{\infty}])}$ and an injection ${\coker g \hookrightarrow H^2(H, E(\KK_{\infty})[p^{\infty}])}$. So by Lemma~\ref{p-torsion_lemma} we get that $g$ is an isomorphism. We can write ${h^{\bullet\star}=\gamma^{\bullet\star} \times h'}$ where $\gamma^{\bullet\star}$ (resp. $h'$) is the restriction of $h$ to $J_p^{\bullet\star}(E/F_{\infty})$ (resp. ${\bigoplus_{v \in S \setminus S_p} J_v^s(E/F_{\infty})}$).

Proposition~\ref{invariants_prop} holds true for $H$ as the proof does not depend on whether any prime in $S$ splits completely in $\KK_{\infty}^H/K$. Therefore the proof of Proposition~\ref{localinjection_prop} shows that $\gamma^{\bullet\star}$ is an injection. So we see that the proof will be complete if we can show if $h'$ is an injection. We can write ${h'=\bigoplus_{v \in S\setminus S_p} h'_v}$. Let ${v \in S \setminus S_p}$ and let $\Delta_{F_{\infty}}$ (resp. $\Delta_{\KK_{\infty}}$) be the set of all finite extensions $L/K$ inside $F_{\infty}$ (resp. $\KK_{\infty}$). We can write $h'_v$ as
$$h'_v: \dlim_{L \in \Delta_{F_{\infty}}} \bigoplus_{v | w} \img \res_{L_w} \to \left(\dlim_{L' \in \Delta_{\KK_{\infty}}} \bigoplus_{v | w} \img \res_{L'_w}\right)^H, $$
where $\res_{L_w}$ is defined as at the beginning of this section.

For any prime $w$ of $F_{\infty}$ (resp. $\KK_{\infty}$) above $v$ we let $I_{F_{\infty,w}}$ (resp. $I_{\KK_{\infty,w}}$) be the inertia group. The proof of \cite[Lemma~2.1]{KM_Cong} shows that we have injections

$$\dlim_{L \in \Delta_{F_{\infty}}} \bigoplus_{w | v} \img \res_{L_w} \hookrightarrow \prod_{w|v} H^1(I_{F_{\infty,w}}, E[p^{\infty}]), $$

$$\dlim_{L' \in \Delta_{\KK_{\infty}}} \bigoplus_{w | v} \img \res_{L'_w} \hookrightarrow \prod_{w|v} H^1(I_{\KK_{\infty,w}}, E[p^{\infty}]), $$
where the direct products range over all primes $w$ of $F_{\infty}$ (resp. $\KK_{\infty}$) above $v$. Thus to show that $h'_v$ is an injection it will suffice to show that the map
$$\psi_v: \prod_{w|v} H^1(I_{F_{\infty,w}}, E[p^{\infty}]) \to \prod_{w|v} H^1(I_{\KK_{\infty,w}}, E[p^{\infty}])$$
is an injection. For any prime $w$ of $\KK_{\infty}$ above $v$ we have that $\KK_{\infty,w}/F_{\infty,w}$ is unramified. Therefore ${I_{F_{\infty,w}}=I_{\KK_{\infty,w}}}$ and so $\psi_v$ is an injection. Thus $h'_v$ is an injection which in turn shows that $h'$ is an injection. This completes the proof.
\end{proof}

\begin{proposition}\label{splitSelmerControlThm_prop2}
Let ${\bullet, \star \in \{+,-\}}$ and ${H \in \mathcal{H}_s^{\bullet\star}}$. Then the natural map (induced by restriction)
$$s: \Selinfg(E/\KK_{\infty}^H) \to \Selinfg(E/\KK_{\infty})^H$$
is an isomorphism.
\end{proposition}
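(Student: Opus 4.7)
The plan is to mimic the proof of Theorem~\ref{splitSelmerControlThm_prop} for the signed case, but with the local conditions at primes above $p$ simplified because $E$ has good supersingular reduction. Writing $F_{\infty}=\KK_{\infty}^H$ and $S_p=\{\fp,\bar{\fp}\}$, I would set up the commutative diagram
\begin{equation*}
\begin{tikzcd}[column sep = small, scale cd=0.82]
0 \arrow[r] & \Selinfg(E/\KK_{\infty})^H \arrow[r] & H^1(G_S(\KK_{\infty}), E[p^{\infty}])^H  \arrow[r] & \bigoplus_{v \in S_p} J_v(E/\KK_{\infty})^H \times \bigoplus_{v \in S \setminus S_p} J_v^s(E/\KK_{\infty})^H\\
0 \arrow[r] & \Selinfg(E/F_{\infty}) \arrow[r] \arrow[u, "s"] & H^1(G_S(F_{\infty}), E[p^{\infty}]) \arrow[u, "g"] \arrow[r] & \bigoplus_{v \in S_p} J_v(E/F_{\infty}) \times \bigoplus_{v \in S \setminus S_p} J_v^s(E/F_{\infty}) \arrow[u, "h"]
\end{tikzcd}
\end{equation*}
and apply the snake lemma. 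As in Theorem~\ref{splitSelmerControlThm_prop} it will suffice to show that $g$ is an isomorphism and that $h$ is injective.

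For the map $g$, the argument used in Theorem~\ref{splitSelmerControlThm_prop} carries over verbatim: one has $\ker g=H^1(H,E(\KK_{\infty})[p^{\infty}])$ and $\coker g\hookrightarrow H^2(H,E(\KK_{\infty})[p^{\infty}])$, both of which vanish by Lemma~\ref{p-torsion_lemma}.

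For $h$, I would decompose $h=\bigoplus_{v\in S_p} h_v \times \bigoplus_{v\in S\setminus S_p} h_v'$. The tame part $\bigoplus h_v'$ reduces to the map $\prod_{w|v} H^1(I_{F_{\infty,w}},E[p^{\infty}])\to\prod_{w|v} H^1(I_{\KK_{\infty,w}},E[p^{\infty}])$ via the injection from \cite[Lemma~2.1]{KM_Cong}, and this map is injective (in fact an equality) because $\KK_{\infty,w}/F_{\infty,w}$ is unramified at every prime $w\nmid p$, so $I_{F_{\infty,w}}=I_{\KK_{\infty,w}}$. This is exactly the argument from Theorem~\ref{splitSelmerControlThm_prop} and does not use anything about whether primes in $S$ split completely in $F_{\infty}/K$.

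The key observation making the primes above $p$ easy here is that condition~(a) in Definition~\ref{def:mathcal{H_s}} guarantees that every prime of $K$ above $p$ ramifies in $F_{\infty}/K$. Combined with the fact that $E$ has good supersingular reduction at $p$, the argument recalled at the beginning of the proof of Proposition~\ref{SelmerControlThm_prop2} (based on \cite[pg.~70]{Gb_LNM}) yields $H^1(F_{\infty,w},E)[p^{\infty}]=0$ for every prime $w$ of $F_{\infty}$ above $p$, and similarly $H^1(\KK_{\infty,w'},E)[p^{\infty}]=0$ for every prime $w'$ of $\KK_{\infty}$ above $p$. Therefore $J_v(E/F_{\infty})=0=J_v(E/\KK_{\infty})$ for each $v\in S_p$, and each $h_v$ is trivially injective. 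I do not expect any essential obstacle in this proof; the only point requiring care is making sure that the vanishing of $H^1(F_{\infty,w},E)[p^{\infty}]$ really does hold when some primes in $S\setminus S_p$ split completely in $F_{\infty}/K$, which it does since that vanishing statement is local at $w|p$ and depends only on ramification of $p$ (supplied by~(a)) together with the supersingular hypothesis.
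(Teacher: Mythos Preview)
Your proof is correct and follows essentially the same approach as the paper. The only cosmetic difference is that the paper eliminates the $J_v$ for $v\in S_p$ \emph{before} writing down the diagram (citing \cite[Proposition~4.8]{CG} for the vanishing of $H^1(F_{\infty,w},E)[p^{\infty}]$ at supersingular ramified primes), whereas you include them in the diagram and then observe they are zero; your citation of \cite{Gb_LNM} via Proposition~\ref{SelmerControlThm_prop2} works equally well for this local vanishing.
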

\begin{proof}
For simplicity, let ${F_{\infty}=\KK_{\infty}^H}$. For any prime $v$ of $F_{\infty}$ above $p$ we have ${J_p(E/F_{\infty})=0}$. This follows from \cite[Proposition~4.8]{CG} since $E$ has good supersingular reduction at $p$ and every prime of $K$ above $p$ ramifies in $F_{\infty}/K$. Similarly we have ${J_p(E/\KK_{\infty})=0}$. Taking this into account we have a commutative diagram with vertical maps induced by restriction
\begin{equation*}
\begin{tikzcd}[column sep = small, scale cd=0.85]
0 \arrow[r] & \Selinfg(E/\KK_{\infty})^H \arrow[r] &  H^1(G_S(\KK_{\infty}), E[p^{\infty}])^H \arrow[r] & \bigoplus_{v \in S \setminus S_p} J_v^s(E/\KK_{\infty})^H\\
0 \arrow[r] & \Selinfg(E/F_{\infty}) \ar[r] \arrow[u, "s"] &H^1(G_S(F_{\infty}), E[p^{\infty}]) \arrow[u, "g"] \arrow[r, "\theta"] & \bigoplus_{v \in S \setminus S_p} J_v^s(E/F_{\infty})\arrow[u, "h"]
\end{tikzcd}
\end{equation*}
As in the proof of Proposition~\ref{SelmerControlThm_prop2} the map $g$ is an isomorphism and so by the snake lemma to show that $s$ is an isomorphism it will suffice to show that $h$ is injective. The injectivity of the map $h$ is shown in the proof of Theorem~\ref{splitSelmerControlThm_prop}.
\end{proof}

The main control theorem for Greenberg Selmer groups that we need is the following result. For any ${\bullet, \star \in \{+,-\}}$ and ${H \in \mathcal{H}_s^{\bullet\star}}$, let $\hat{s}$ denote the dual of the map $s$ in Proposition~\ref{splitSelmerControlThm_prop2}.

\begin{theorem}\label{splitTorsionControlThm}
Each map $\hat{s}$ induces an injection
$$\hat{s}': (T_{\Lambda_2}(X^{Gr}(E/\KK_{\infty})))_H \hookrightarrow T_{\Lambda}(X^{Gr}(E/\KK_{\infty}^H))$$
such that there exists an $\Lambda$-isomorphism
\[ \phi_H: T_{\Lambda}([F_{\Lambda_2}(X^{Gr}(E/\KK_{\infty}))]_H)  \isomarrow \coker \hat{s}'. \]
\end{theorem}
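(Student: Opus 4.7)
The proof will follow essentially the same architecture as that of Theorem~\ref{TorsionControlThm}, with Proposition~\ref{splitSelmerControlThm_prop2} replacing Proposition~\ref{SelmerControlThm_prop2} so as to accommodate $\Z_p$-extensions in which primes of $S$ are allowed to split completely. The plan is to first set up the three-by-three commutative diagram
\[
\begin{tikzcd}[column sep = small, scale cd = 0.95]
0 \arrow[r] & (T_{\Lambda_2}(X^{Gr}(E/\KK_{\infty})))_{H} \arrow[r] \arrow[d, "\hat{s}'"] & X^{Gr}(E/\KK_{\infty})_{H} \arrow[r] \arrow[d, "\hat{s}"] & (F_{\Lambda_2}(X^{Gr}(E/\KK_{\infty})))_{H} \arrow[r] \arrow[d, "\hat{s}''"] & 0 \\
0 \arrow[r] & T_{\Lambda}(X^{Gr}(E/\KK_{\infty}^H)) \arrow[r] & X^{Gr}(E/\KK_{\infty}^H) \arrow[r] & F_{\Lambda}(X^{Gr}(E/\KK_{\infty}^H)) \arrow[r] & 0
\end{tikzcd}
\]
in which the top row is exact because $F_{\Lambda_2}(X^{Gr}(E/\KK_{\infty}))$ is $\Lambda_2$-torsion-free. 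By Proposition~\ref{splitSelmerControlThm_prop2} the middle vertical arrow $\hat{s}$ is an isomorphism, and the snake lemma will then give ${\ker \hat{s}' = 0 = \coker \hat{s}''}$ together with an isomorphism ${\phi_H \colon \ker \hat{s}'' \isomarrow \coker \hat{s}'}$. The remaining task is to identify $\ker \hat{s}''$ with $T_{\Lambda}([F_{\Lambda_2}(X^{Gr}(E/\KK_{\infty}))]_{H})$.

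The first step is to verify that the map $\hat{s}'$ actually lands inside the torsion submodule, i.e.\ that ${(T_{\Lambda_2}(X^{Gr}(E/\KK_{\infty})))_H}$ is $\Lambda$-torsion. This is the analogue of Corollary~\ref{cor:HinH} in the Greenberg setting and is where the hypothesis ${H \in \mathcal{H}_s^{\bullet\star}}$ is used. The short exact sequence
\[ 0 \longrightarrow \Selgpm(E/\KK_\infty^H) \longrightarrow \Selinfg(E/\KK_\infty^H) \longrightarrow J_p^{\bullet\star}(E/\KK_\infty^H) \]
together with Proposition~\ref{pm_points_structure_prop} (which depends only on local data at primes above $p$ and is insensitive to splitting of tame primes) will yield that $X^{Gr}(E/\KK_\infty^H)$ has $\Lambda$-rank equal to $2$ since $X^{Gr,\bullet\star}(E/\KK_\infty^H)$ is $\Lambda$-torsion by definition of $\mathcal{H}_s^{\bullet\star}$. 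A parallel computation over $\KK_\infty$ (using the analogue of Proposition~\ref{Sel_Lambda2_surjective_prop2} together with Lemma~\ref{freeness_lemma}) shows $\rank_{\Lambda_2}(X^{Gr}(E/\KK_\infty)) = 2$. Combined with Proposition~\ref{splitSelmerControlThm_prop2} and the Harris-type identity $\rank_\Lambda(F_{\Lambda_2}(M)_H) = \rank_{\Lambda_2}(F_{\Lambda_2}(M))$ used in the proof of Lemma~\ref{rankequal_lemma}, this forces $(T_{\Lambda_2}(X^{Gr}(E/\KK_{\infty})))_H$ to be $\Lambda$-torsion.

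Once $\hat{s}'$ is known to map into $T_\Lambda(X^{Gr}(E/\KK_\infty^H))$, the identification of $\ker \hat{s}''$ with $T_\Lambda([F_{\Lambda_2}(X^{Gr}(E/\KK_\infty))]_H)$ proceeds exactly as in Theorem~\ref{TorsionControlThm}. Specifically: since $\coker \hat{s} = 0$ the map $\hat{s}''$ is surjective, and a rank count (the identity
\[ \rank_\Lambda([F_{\Lambda_2}(X^{Gr}(E/\KK_\infty))]_H) = \rank_\Lambda(F_\Lambda(X^{Gr}(E/\KK_\infty^H))) \]
follows from the already-established rank equalities combined with the fact that $(T_{\Lambda_2})_H$ is torsion) implies that $\img \hat{s}''$ has full rank inside $F_\Lambda(X^{Gr}(E/\KK_\infty^H))$, which is torsion-free. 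Hence $\ker \hat{s}''$ is precisely the $\Lambda$-torsion submodule of $[F_{\Lambda_2}(X^{Gr}(E/\KK_\infty))]_H$, and the desired isomorphism $\phi_H$ is the snake-lemma isomorphism.

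The principal obstacle is the first step, namely installing the Greenberg analogues of the rank and control inputs that feed into Corollary~\ref{cor:HinH}, because Propositions~\ref{Selrank_prop} and \ref{Sel_Lambda2_surjective_prop2} were proven under the hypothesis that no prime of $S$ splits completely. The key observation which makes these generalizations go through in the Greenberg setting is that, by Remark~\ref{Seldef_remark}, the local conditions at tame primes involve only the inertia subgroups, and the latter are unchanged on passing from $\KK_\infty^H$ up to $\KK_\infty$ (since the extension $\KK_\infty/\KK_\infty^H$ is unramified at such primes). This is precisely what made the map $h'$ in the proof of Theorem~\ref{splitSelmerControlThm_prop} injective, and the same mechanism will make the relevant global-to-local sequences suitably exact for the rank computations to carry over.
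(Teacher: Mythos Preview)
Your proposal is correct and follows exactly the paper's approach: the paper's proof is the one-line remark that the argument is identical to Theorem~\ref{TorsionControlThm} using Proposition~\ref{splitSelmerControlThm_prop2}, and you have accurately unpacked what this entails, including the need for the Greenberg analogue of Corollary~\ref{cor:HinH} and the reason it goes through (the local conditions at tame primes are inertia-only and hence stable under the unramified extension $\KK_\infty/\KK_\infty^H$). Note also that since $X^{Gr}(E/\KK_\infty) = X(E/\KK_\infty)$, the $\Lambda_2$-rank computation over $\KK_\infty$ is literally Proposition~\ref{Selrank_prop}(b) with no modification needed.
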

\begin{proof}
The proof is identical to that of Theorem~\ref{TorsionControlThm} using Proposition~\ref{splitSelmerControlThm_prop2}.
\end{proof}

Let ${\bullet, \star \in \{+,-\}}$ and ${H \in \mathcal{H}_s^{\bullet\star}}$. Consider the sequence
\begin{align} \begin{tikzcd}[scale cd=0.86] \label{eq:definingsplitSelbulletstar}
\displaystyle 0 \longrightarrow \Selgpm(E/\KK_{\infty}^H) \longrightarrow H^1(G_S(\KK_{\infty}^H), E[p^{\infty}]) \stackrel{\lambda_H^{\bullet\star}}{\longrightarrow} J_p^{\bullet\star}(E/\KK_{\infty}^H) \times \bigoplus_{v \in S, v \nmid p} J_v^s(E/\KK_{\infty}^H) \longrightarrow 0. \end{tikzcd} \end{align}
	
\begin{proposition}\label{splitSeln_surjective_prop} 	
The above sequence~\eqref{eq:definingsplitSelbulletstar} is exact and we have that ${H^2(G_S(\KK_{\infty}^H), E[p^{\infty}])=0}$.
\end{proposition}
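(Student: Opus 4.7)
My plan is to adapt the strategy from the proof of Proposition~\ref{Seln_surjective_prop} to the Greenberg setting, making the necessary modifications to handle the local conditions at primes in $S \setminus S_p$. The key point is that for $H \in \mathcal{H}_s^{\bullet\star}$, by hypothesis $X^{Gr,\bullet\star}(E/\KK_\infty^H)$ is a torsion $\Lambda_{G/H}$-module, and primes of $K$ above $p$ ramify in $\KK_\infty^H/K$, while we now allow primes in $S\setminus S_p$ to split completely.

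First, I would set up a Poitou-Tate exact sequence analogous to \eqref{Poitou-Tate_seq}, but with the local terms at $v \in S\setminus S_p$ replaced by the Pontryagin dual of $J_v^s$. Using Shapiro's Lemma together with the fact that $J_v^s(E/\KK_\infty^H)^\vee$ is finitely generated over the Iwasawa algebra of the decomposition group at $v$, one sees that the dual of the unramified quotient at such primes contributes only torsion to the overall $\Lambda$-rank computation. Then I would compute $\Lambda$-ranks of the global Iwasawa cohomology groups using Proposition~\ref{Euler_char_prop} (taking Lemma~\ref{p-torsion_lemma} into account), and $\Lambda$-ranks of the signed local terms at primes above $p$ using Proposition~\ref{pm_points_structure_prop} and Tate local duality, exactly as in the proof of Proposition~\ref{Seln_surjective_prop}. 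Since $X^{Gr,\bullet\star}(E/\KK_\infty^H)$ is $\Lambda$-torsion by assumption, the rank comparison in the Poitou-Tate sequence forces the kernel of the localization map $\text{loc}^{\bullet\star}$ to be $\Lambda$-torsion.

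Next, I would show that $H^1_{\text{Iw}}(G_S(\KK_\infty^H), T)$ is $\Lambda$-torsion-free using the same argument as in Proposition~\ref{Seln_surjective_prop}: via Lemma~\ref{p-torsion_lemma} we identify the Iwasawa cohomology with $\varprojlim H^1(G_S(\KK_\infty^H), E[p^\infty])[p^k]^{\Gamma_m}$ for appropriate sub-$\Zp$-extensions, and then apply \cite[Prop.~5.5.10]{NSW}. This forces the kernel of $\text{loc}^{\bullet\star}$ to vanish. For the $H^2$ vanishing statement, I would observe that the Greenberg fine Selmer group, defined with local conditions given by $J_v^s$ at $v\in S\setminus S_p$ and by the full $H^1$ at primes above $p$, is contained in $\Selgpm(E/\KK_\infty^H)$; dualizing gives a surjection from $X^{Gr,\bullet\star}(E/\KK_\infty^H)$ onto the dual of this fine Selmer group. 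By a direct analogue of \cite[Theorem~2.2]{Matar_Torsion} (or applying it after verifying that the local conditions in the Greenberg setting give the same cohomological control), the $\Lambda$-torsion property transfers to the Greenberg fine Selmer group, and this implies $H^2(G_S(\KK_\infty^H), E[p^\infty])=0$.

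Finally, combining the vanishing $\ker\,\text{loc}^{\bullet\star}=0$ and $H^2=0$ with the second Poitou-Tate exact sequence \cite[Prop.~A.3.2]{PR} yields an isomorphism $(\coker\lambda_H^{\bullet\star})^\vee \cong \ker\,\text{loc}^{\bullet\star}=0$, so $\lambda_H^{\bullet\star}$ is surjective. The main obstacle I anticipate is the careful treatment of the local terms $J_v^s(E/\KK_\infty^H)^\vee$ at primes that may split completely: their $\Lambda$-module structure is governed by the decomposition group, which can be trivial when $v$ splits completely, and I must verify that the relevant rank contributions and the Tate-duality computations go through uniformly. Once this is settled, the remainder of the argument parallels the proof of Proposition~\ref{Seln_surjective_prop} with only notational changes, and the $H^2$ vanishing follows from the torsion property of the Greenberg fine Selmer group via Matar's criterion.
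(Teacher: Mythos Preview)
Your proposal is correct and follows essentially the same approach as the paper: both set up the Poitou--Tate sequence with the modified local term $A_{\textup{tame}}$ at primes in $S\setminus S_p$, show this term is $\Lambda$-torsion (the paper does this by a case split on whether $v$ splits completely, using Tate duality and coinduction via Lemma~\ref{coind_lemma} in the split case), and then carry over the rest of the argument from Proposition~\ref{Seln_surjective_prop} verbatim. One small simplification: for the $H^2$-vanishing you can dispense with your intermediate ``Greenberg fine Selmer group'' and argue directly that the ordinary fine Selmer group $R_{p^\infty}(E/\KK_\infty^H)$ is contained in $\Selgpm(E/\KK_\infty^H)$ (its local conditions are the strictest), so its dual is a quotient of $X^{Gr,\bullet\star}(E/\KK_\infty^H)$ and hence $\Lambda$-torsion, whence \cite[Theorem~2.2]{Matar_Torsion} applies.
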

\begin{proof}
The proof is very similar to the proof of Proposition~\ref{Seln_surjective_prop}. We just need to make one adjustment. We use the setup and notation in the proof of Proposition~\ref{Seln_surjective_prop}. Let $w$ be a prime of $\KK_{\infty}^H$ above a prime of ${S \setminus \{\fp, \bar{\fp}\}}$.
For any ${m \geq 0}$ we let $I_{(\KK_{(m,\fp)}^H)_w}$ be the inertia group of ${(\KK_{(m,\fp)}^H)_w}$. Then ${H^1(\Gal(\overbar{\KK_{(m,\fp)}^H)_w}/(\KK_{(m,\fp)}^H)_w)/I_{(\KK_{(m,\fp)}^H)_w}, E[p^{\infty}]^{I_{(\KK_{(m,\fp)}^H)_w}})}$ is the kernel of the restriction map
$$\res_{(\KK_{(m,\fp)}^H)_w}: H^1((\KK_{(m,\fp)}^H)_w, E[p^{\infty}]) \to H^1(I_{(\KK_{(m,\fp)}^H)_w}, E[p^{\infty}]). $$
We define $H^1_{nr}((\KK_{(m,\fp)}^H)_w, T)$ as the exact annihilator of ${H^1(\Gal(\overbar{\KK_{(m,\fp)}^H)_w}/(\KK_{(m,\fp)}^H)_w)/I_{(\KK_{(m,\fp)}^{H_n})_w}, E[p^{\infty}]^{I_{(\KK_{(m,\fp)}^H)_w}})}$ with respect to the Tate pairing
$$H^1((\KK_{(m,\fp)}^H)_w, E[p^{\infty}]) \times H^1((\KK_{(m,\fp)}^H)_w, T) \longrightarrow \Qp/\Zp $$
(recall that ${T = T_p(E)}$ denotes the $p$-adic Tate module of $E$).
We define
$$H^1_{\text{Iw},nr}((\KK_{\infty}^H)_w, T)=\ilim H^1_{nr}((\KK_{(m,\fp)}^H)_w, T). $$
Let $S_{\text{tame}}$ be the set of primes of $\KK_{\infty}^H$ above a prime ${v \in S}$ where ${v \nmid p}$.
Then as in the proof of Proposition~\ref{Seln_surjective_prop} we have the Poitou--Tate exact sequence
\begin{align}\label{Poitou-Tate_seq2}
H^1_{\text{Iw}}(G_S(\KK_{\infty}^H), T) \stackrel{\text{loc}^{\bullet\star}}{\longrightarrow} Z \longrightarrow X^{Gr, \bullet\star}(E/\KK_{\infty}^H) \longrightarrow H^2_{\text{Iw}}(G_S(\KK_{\infty}^H), T) \longrightarrow 0
\end{align}
where
$$Z=A_{\fp} \times A_{\bar{\fp}} \times A_{\text{tame}}, $$
$$A_{\fp}:=\bigoplus_{w | \fp} H^1_{\text{Iw}}((\KK_{\infty}^H)_w, T)/H^1_{\text{Iw}, \bullet}((\KK_{\infty}^H)_w, T), $$
$$A_{\bar{\fp}}:=\bigoplus_{w | \bar{\fp}} H^1_{\text{Iw}}((\KK_{\infty}^H)_{\bar{w}}, T)/H^1_{\text{Iw}, \star}((\KK_{\infty}^H)_{\bar{w}}, T), $$
$$A_{\text{tame}}:= \bigoplus_{w \in S_{\text{tame}}} H^1_{\text{Iw}}((\KK_{\infty}^H)_w, T)/H^1_{\text{Iw}, nr}((\KK_{\infty}^H)_w, T). $$

As in the proof of Proposition~\ref{Seln_surjective_prop} we will get our desired result if we can show that ${\rank_{\Lambda}(Z)=[\KK_{(s, \fp)}^H:\Q]}$. In the proof of Proposition~\ref{Seln_surjective_prop} we showed that ${\rank_{\Lambda}(A_{\fp} \times A_{\bar{\fp}})=[\KK_{(s, \fp)}^H:\Q]}$, so we need to show that $A_{\text{tame}}$ is $\Lambda$-torsion.
We can write
$$A_{\text{tame}}=\bigoplus_{v \in S, v \nmid p} A_v$$
where
$$A_v=\bigoplus_{w \mid v} H^1_{\text{Iw}}((\KK_{\infty}^H)_w, T)/H^1_{\text{Iw}, nr}((\KK_{\infty}^H)_w, T). $$
Let ${v \in S}$ where ${v \nmid p}$. We need to show that $A_v$ is $\Lambda$-torsion. We distinguish two cases. The first case is when $v$ does not split completely in $\KK_{\infty}^H/K$. In this case choose a prime $w$ of $\KK_{\infty}^H$ above $v$. We have by Tate local duality that ${H^1_{\text{Iw}}((\KK_{\infty}^H)_w, T)/H^1_{\text{Iw}, nr}((\KK_{\infty}^H)_w, T)}$ is isomorphic to the Pontryagin dual of ${H^1(\Gal(\overbar{\KK^H_{\infty,w}}/\KK^H_{\infty,w})/I_{\KK^H_{\infty,w}}, E[p^{\infty}]^{I_{\KK^H_{\infty,w}}})}$, where $I_{\KK^H_{\infty,w}}$ is the inertia group. Since $v$ does not split completely in $\KK_{\infty}^H/K$, the quotient group ${\Gal(\overbar{\KK^H_{\infty,w}}/\KK^H_{\infty,w})/I_{\KK^H_{\infty,w}}}$ has profinite degree prime to $p$. Thus we see in this case that ${A_v=0}$.

Now consider the case when $v$ splits completely in $\KK_{\infty}^H/K$. Let $w$ be a prime of $\KK_{\infty}^H$ above $v$. Then as above ${H^1_{\text{Iw}}((\KK_{\infty}^H)_w, T)/H^1_{\text{Iw}, nr}((\KK_{\infty}^H)_w, T)}$ is isomorphic to the Pontryagin dual of ${H^1(\Gal(\overbar{\KK^H_{\infty,w}}/\KK^H_{\infty,w})/I_{\KK^H_{\infty,w}}, E[p^{\infty}]^{I_{\KK^H_{\infty,w}}})}$. If $\sigma$ is a topological generator of the $p$-primary subgroup of ${\Gal(\overbar{\KK^H_{\infty,w}}/\KK^H_{\infty,w})/I_{\KK^H_{\infty,w}}}$, then
$$H^1(\Gal(\overbar{\KK^H_{\infty,w}}/\KK^H_{\infty,w})/I_{\KK^H_{\infty,w}}, E[p^{\infty}]^{I_{\KK^H_{\infty,w}}})\cong E[p^{\infty}]^{I_{\KK^H_{\infty,w}}}/(\sigma-1)E[p^{\infty}]^{I_{\KK^H_{\infty,w}}}. $$
Since $\KK^H_{\infty,w}$ is a finite extension of $\Ql$ where $l$ is the prime of $\Q$ below $v$, the kernel of ${\sigma-1}$ acting on ${E[p^{\infty}]^{I_{\KK^H_{\infty,w}}}}$ is finite. It follows from this that ${H^1(\Gal(\overbar{\KK^H_{\infty,w}}/\KK^H_{\infty,w})/I_{\KK^H_{\infty,w}}, E[p^{\infty}]^{I_{\KK^H_{\infty,w}}})}$ is finite.

Now let ${\Gamma:=\Gal(\KK_{\infty}^H/K)}$ and choose a prime $w$ of $\KK_{\infty}^H$ above $v$. Then ${A_v=\coind_{\{1\}}^{\Gamma} H^1_{\text{Iw}}((\KK_{\infty}^H)_w, T)/H^1_{\text{Iw}, nr}((\KK_{\infty}^H)_w, T)}$. Since we have shown that ${H^1_{\text{Iw}}((\KK_{\infty}^H)_w, T)/H^1_{\text{Iw}, nr}((\KK_{\infty}^H)_w, T)}$ is finite, it follows from Lemma~\ref{coind_lemma} that ${\rank_{\Zp[[\Gamma]]}(A_v)=0}$. Therefore by \cite[Corollary~1.5]{Howson} we have ${\rank_{\Lambda}(A_v)=0}$ as desired. This completes the proof.
\end{proof}

\begin{lemma}\label{splitSel_Hn_surjective_lemma}
	Let ${\bullet, \star \in \{+,-\}}$ and ${H \in \mathcal{H}_s^{\bullet\star}}$. We have an exact sequence
	\begin{align}\begin{tikzcd}[scale cd=0.78] 0 \longrightarrow \Selgpm(E/\KK_{\infty})^H\longrightarrow H^1(G_S(\KK_{\infty}), E[p^{\infty}])^H \longrightarrow J_p^{\bullet\star}(E/\KK_{\infty})^H \times \bigoplus_{v \in S, v \nmid p} J_v(E/\KK_{\infty})^H \longrightarrow 0. \end{tikzcd} \end{align}
\end{lemma}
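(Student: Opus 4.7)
The plan is to mirror the argument of Lemma~\ref{Sel_Hn_surjective_lemma}, replacing the input from Proposition~\ref{Seln_surjective_prop} by Proposition~\ref{splitSeln_surjective_prop} at the level of $F_{\infty}=\KK_{\infty}^{H}$, and allowing for primes of $S$ which split completely in $F_{\infty}/K$. I would begin by writing down the commutative diagram with vertical restriction maps:
\[ \begin{tikzcd}
H^1(G_S(\KK_{\infty}),E[p^{\infty}])^{H} \arrow[r, "\rho^{\bullet\star}"] & J_{p}^{\bullet\star}(E/\KK_{\infty})^{H}\times \bigoplus_{v\in S,\, v\nmid p} J_{v}(E/\KK_{\infty})^{H} \\
H^1(G_S(F_{\infty}),E[p^{\infty}]) \arrow[u, "g"] \arrow[r, "\lambda^{\bullet\star}"] & J_{p}^{\bullet\star}(E/F_{\infty})\times \bigoplus_{v\in S,\, v\nmid p} J_{v}^{s}(E/F_{\infty}) \arrow[u, "h^{\bullet\star}"]
\end{tikzcd} \]
The bottom row is surjective by Proposition~\ref{splitSeln_surjective_prop}, and the inflation--restriction sequence together with $E(\KK_{\infty})[p^{\infty}]=0$ (Lemma~\ref{p-torsion_lemma}) forces $g$ to be an isomorphism (both $\ker g$ and the target of a natural injection containing $\coker g$ are cohomology groups of $E(\KK_{\infty})[p^{\infty}]$). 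So once I show that $h^{\bullet\star}$ is surjective, a diagram chase yields surjectivity of $\rho^{\bullet\star}$; combined with the automatic left exactness of the top row obtained by applying $(-)^{H}$ to the defining sequence of $\Selgpm(E/\KK_{\infty})$, this gives the claim.

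The surjectivity of $h^{\bullet\star}$ decomposes into local contributions. At the primes $v\mid p$, the argument of Lemma~\ref{Sel_Hn_surjective_lemma} applies without change: I would combine Proposition~\ref{invariants_prop} (whose proof imposes no hypothesis from Definition~\ref{def:mathcal{H}}), the vanishing $(\hat{E}^{\pm}(\KK_{\infty,w})\otimes\Q_p/\Z_p)_{H_w}=0$ afforded by Lemma~\ref{freeness_lemma}, and the fact that $\mathrm{cd}_p(H_w)\leq 1$, in the local two-row commutative diagram defining $J_{p,w}^{\bullet\star}$, to conclude via the five lemma.

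At the tame primes $v\in S\setminus S_p$, I would use Remark~\ref{Seldef_remark} to replace $J_v^{s}(E/F_{\infty})$ by $J_v^{t}(E/F_{\infty})=\prod_{w\mid v} H^1(I_{F_{\infty,w}}, E[p^{\infty}])$, and exploit the fact that $K_v$ admits a unique $\Z_p$-extension (the unramified one), so that $\KK_{\infty,w}/F_{\infty,w}$ is unramified and $I_{F_{\infty,w}}=I_{\KK_{\infty,w}}$. On the target side, $v\nmid p$ and Mattuck's theorem force $E(\KK_{\infty,w})\otimes\Q_p/\Z_p=0$, which turns the Kummer sequence into an isomorphism $H^1(\KK_{\infty,w},E)[p^{\infty}]\cong H^1(\KK_{\infty,w},E[p^{\infty}])$ and matches $J_v(E/\KK_{\infty})^H$ with the appropriate $H_w$-invariants of $H^1(I_{\KK_{\infty,w}}, E[p^{\infty}])$ via Shapiro's lemma, after running the Hochschild--Serre spectral sequence for $H_w$ acting through its quotient by inertia.

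The main obstacle will be the case where $v$ splits completely in $F_{\infty}/K$, which is precisely what is now allowed since $H\in\mathcal{H}_{s}^{\bullet\star}$ rather than $\mathcal{H}^{\bullet\star}$. In that regime $F_{\infty,w}=K_v$ is a finite local field and the decomposition subgroup $H_w\subseteq H$ may be nontrivial, so the identification of $J_v^{t}(E/F_{\infty})$ with $J_v(E/\KK_{\infty})^{H}$ under the restriction map requires careful bookkeeping between the Greenberg inertia-type condition at $F_{\infty}$ and the classical condition at $\KK_{\infty}$. Once this identification is made, the argument goes through, completing the diagram chase and hence the proof.
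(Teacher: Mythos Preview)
Your overall strategy (the commutative square, surjectivity of $\lambda^{\bullet\star}$ via Proposition~\ref{splitSeln_surjective_prop}, the isomorphism $g$, and reduction to surjectivity of $h^{\bullet\star}$) is exactly right, and your treatment of the primes above $p$ matches the paper's. The gap is entirely at the tame primes.

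The step ``use Remark~\ref{Seldef_remark} to replace $J_v^{s}(E/F_{\infty})$ by $J_v^{t}(E/F_{\infty})$'' is not valid for what you need. Remark~\ref{Seldef_remark} only says that the resulting \emph{Selmer groups} coincide; it does not assert that $J_v^{s}$ and $J_v^{t}$ are equal as modules. When $v$ splits completely in $F_{\infty}/K$ there are infinitely many primes above $v$, so $J_v^{t}(E/F_{\infty})$ is an infinite product while $J_v^{s}(E/F_{\infty})$ is a direct limit that merely injects into it. Enlarging the source to $J_v^{t}$ makes surjectivity \emph{easier} to state but does not imply surjectivity of the map out of the smaller $J_v^{s}$. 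Your final paragraph then defers the real content to ``careful bookkeeping'', which is precisely where the argument lives.

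The paper handles the tame component differently: for each $v\in S\setminus S_p$ it works with the auxiliary square whose top row is $\theta_v\colon\bigl(\varinjlim\bigoplus_{w\mid v}H^1(L'_w,E[p^{\infty}])\bigr)^{H}\to\bigl(\varinjlim\bigoplus_{w\mid v}\operatorname{img}\res_{L'_w}\bigr)^{H}$ and whose left vertical map $g_v$ is the restriction on full local $H^1$ groups. It shows (i) $g_v$ is surjective --- trivially when $v$ does not split completely (then $H_w=0$), and in the split case by a direct-limit argument over the layers $F_n\subset\KK_n$ that identifies $\operatorname{coker} g_v$ with a direct limit of groups $H^2(B_{n,w_i},-)$, which vanishes because $\varprojlim_n B_{n,w_i}\cong\Z_p$ has $\mathrm{cd}_p=1$; and (ii) $\theta_v$ is an isomorphism, by checking that the underlying map $\psi_v$ on $\KK_{\infty}$ is injective (since $\Gal(\overline{\KK_{\infty,w}}/\KK_{\infty,w})/I_{\KK_{\infty,w}}$ has profinite degree prime to $p$). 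Surjectivity of $h_v$ then follows from the square. Your observation that $I_{F_{\infty,w}}=I_{\KK_{\infty,w}}$ is correct but is not by itself enough; the missing ingredient is the $\mathrm{cd}_p=1$ direct-limit argument for $g_v$ in the split case.
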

\begin{proof}
We prove this lemma using the same strategy as in the proof of Lemma~\ref{Sel_Hn_surjective_lemma}. Let ${F_{\infty}=\KK_{\infty}^H}$ and ${S_p=\{\fp, \bar{\fp}\}}$. Taking into account Proposition~\ref{splitSeln_surjective_prop}, the proof of Lemma~
\ref{Sel_Hn_surjective_lemma} shows that we only need to prove that the restriction map
$$h:\bigoplus_{v \in S \setminus S_p} J_v^s(E/F_{\infty}) \longrightarrow \bigoplus_{v \in S \setminus S_p} J_v^s(E/\KK_{\infty})^H$$
is surjective. We can write ${h=\bigoplus_{v \in S\setminus S_p} h_v}$. Let ${v \in S \setminus S_p}$ and let $\Delta_{F_{\infty}}$ (resp. $\Delta_{\KK_{\infty}}$) be the set of all finite extensions $L/K$ inside $F_{\infty}$ (resp. $\KK_{\infty}$). We can write $h_v$ as
$$h_v: \dlim_{L \in \Delta_{F_{\infty}}} \bigoplus_{v | w} \img \res_{L_w} \to \left(\dlim_{L' \in \Delta_{\KK_{\infty}}} \bigoplus_{v | w} \img \res_{L'_w}\right)^H. $$
We now show that $h_v$ is surjective. Consider the commutative diagram
\begin{equation*}
\xymatrix {
\left(\dlim_{L' \in \Delta_{\KK_{\infty}}} \bigoplus_{w | v} H^1(L'_w, E[p^{\infty}])\right)^H  \ar[r]^-{\theta_v} & \left(\dlim_{L' \in \Delta_{\KK_{\infty}}} \bigoplus_{w | v} \img \res_{L'_w}\right)^H\\
\dlim_{L \in \Delta_{F_{\infty}}} \bigoplus_{w | v} H^1(L_w, E[p^{\infty}]) \ar[u]^-{g_v} \ar[r] & \dlim_{L \in \Delta_{F_{\infty}}} \bigoplus_{w | v} \img \res_{L_w}\ar[u]^-{h_v}}
\end{equation*}

We claim that $g_v$ is surjective and $\theta_v$ is an isomorphism. From this it will follow that $h_v$ is surjective. First we show that $g_v$ is surjective. We consider two cases. The first case is when $v$ does not split completely in $F_{\infty}/K$. In this case let ${w_1, w_2, \ldots, w_n}$ be the primes of $F_{\infty}$ above $v$. For each prime $w_i$ we also denote by $w_i$ a fixed prime of $\KK_{\infty}$ above this prime. Taking into account Shapiro's Lemma and the inflation-restriction sequence we have an injection
$$\coker g_v \hookrightarrow \prod_{i=1}^n H^2(H_{w_i}, E(\KK_{w_i})[p^{\infty}])$$
where ${H_{w_i}=\Gal(K_{\infty,w_i}/F_{\infty,w_i})}$. Since $v$ does not split completely in $F_{\infty}/K$ and as $K_v$ does not have a $\Z_p^2$-extension we have ${H_{w_i}=0}$ for all $i$. Thus $g_v$ is surjective.

Now assume that $v$ splits completely in ${F_{\infty}/K}$. In this case our analysis of $\coker g_v$ is a bit more tricky. Let $\KK_n$ be the fixed field of  $G^{p^n}$, so that ${\Gal(\KK_n/K)\cong (\Z/p^n\Z)^2}$. Let $F_n$ be the $n$-th tower field of the $\Zp$-extension $F_{\infty}/K$ so that ${\Gal(F_n/K) \cong \Z/p^n\Z}$. Then we have that ${F_n \subseteq \KK_n}$ for all ${n \geq 0}$. The set $\{\KK_n\}$ (resp. $\{F_n\}$) is a cofinal subset of $\Delta_{\KK_{\infty}}$ (resp. $\Delta_{F_{\infty}}$). Therefore we have isomorphisms

$$\dlim_{L \in \Delta_{F_{\infty}}} \bigoplus_{w | v} H^1(L_w, E[p^{\infty}]) \cong \dlim_{n \in \N} \bigoplus_{w | v} H^1(F_{n, w}, E[p^{\infty}]), $$

$$\left(\dlim_{L' \in \Delta_{\KK_{\infty}}} \bigoplus_{w | v} H^1(L'_w, E[p^{\infty}])\right)^H \cong \left(\dlim_{n \in \N} \bigoplus_{w | v} H^1(\KK_{n,w}, E[p^{\infty}])\right)^H. $$

Furthermore we have
$$\left(\dlim_{n \in \N} \bigoplus_{w | v} H^1(\KK_{n,w}, E[p^{\infty}])\right)^H \cong \dlim_{n \in \N} \left(\bigoplus_{w | v} H^1(\KK_{n,w}, E[p^{\infty}])\right)^H. $$

Therefore by abuse of notation we can write our map $g_v$ as

$$g_v: \dlim_{n \in \N} \bigoplus_{w | v} H^1(F_{n,w}, E[p^{\infty}]) \to \dlim_{n \in \N} \left(\bigoplus_{w | v} H^1(\KK_{n,w}, E[p^{\infty}])\right)^H. $$

Recall that $v$ splits completely in $F_{\infty}/K$. Therefore for each ${n \in \N}$ there exist $p^n$ primes ${w_1, w_2, \ldots, w_{p^n}}$ of $F_n$ above $K$. For each ${n \in \N}$ we choose for each ${1 \leq i \leq p^n}$ a fixed prime of $\KK_n$ above $w_i$ which we also denote by the same letter. For any ${n \in \N}$ and any ${1 \leq i \leq p^n}$ we let ${B_{n, w_i}=\Gal(\KK_{n,w_i}/F_{n,w_i})}$. Then taking into account Shapiro's Lemma and the inflation-restriction sequence, we have an injection
$$\coker g_v \hookrightarrow \dlim_{n \in \N} \; \prod_{i=1}^{p^n} H^2(B_{n,w_i}, E(\KK_{n,w_i})[p^{\infty}]).$$
Let ${v_0=v, v_1, v_2, \ldots}$ be a sequence of primes ${v_i \in F_i}$ with ${v_{i+1} \mid v_i}$. As above, for each prime $v_i$ we also denote by $v_i$ a fixed prime of $\KK_{\infty}$ above this prime. We choose a prime $w$ of $\KK_{\infty}$ lying above all the primes $v_i$. From the fact that $v$ splits completely in $F_{\infty}/K$ it is easy to see that ${\ilim_{n \in \N} B_{n, v_i} \cong \Zp}$. Note that ${\Gal(\KK_{\infty,w}/K_v)\cong \ilim_{n \in \N} B_{n, v_i}}$. Since ${cd_p(\Zp)=1}$, we can conclude that
$$\dlim_{n \in \N} H^2(B_{n, v_i}, E(\KK_{n, v_i})[p^{\infty}]) \cong H^2(\Gal(\KK_{\infty,w}/K_v), E(\KK_{\infty, w})[p^{\infty}])=0. $$
It follows from this that ${\dlim_{n \in \N} \prod_{i=1}^{p^n} H^2(B_{n,w_i}, E(\KK_{n,w_i})[p^{\infty}])=0}$ and therefore $g_v$ is surjective.

Now we prove that $\theta_v$ is an isomorphism. Consider the surjective map that induces $\theta_v$:
$$\psi_v: \dlim_{L' \in \Delta_{\KK_{\infty}}} \bigoplus_{w | v} H^1(L'_w, E[p^{\infty}]) \twoheadrightarrow \dlim_{L' \in \Delta_{\KK_{\infty}}} \bigoplus_{w | v} \img \res_{L'_w}. $$
For any prime $w$ of $\KK_{\infty}$ above $v$ we let $I_{\KK_{\infty,w}}$ be the inertia group. The proof of \cite[Lemma~2.1]{KM_Cong} shows that we have injections

$$\dlim_{L' \in \Delta_{\KK_{\infty}}} \bigoplus_{w | v} H^1(L'_w, E[p^{\infty}]) \hookrightarrow \prod_{w | v} H^1(\KK_{\infty,w}, E[p^{\infty}]), $$
$$\dlim_{L' \in \Delta_{\KK_{\infty}}} \bigoplus_{w | v} \img \res_{L'_w}  \hookrightarrow \prod_{w | v} H^1(I_{\KK_\infty,w}, E[p^{\infty}]), $$
where the products run over all primes of $w$ of $\KK_{\infty}$ above $v$. From this we see that in order to prove that $\psi_v$ is injective it suffices to show that for any prime $w$ of $\KK_{\infty}$ above $v$ that the map
$$\res_w:  H^1(\KK_{\infty,w}, E[p^{\infty}]) \to H^1(I_{\KK_\infty,w}, E[p^{\infty}])$$
is an injection. For any such prime $w$ we have that
$$\ker \res_w=H^1(\Gal(\overbar{\KK_{\infty,w}}/\KK_{\infty,w})/I_{\KK_{\infty,w}}, E[p^{\infty}]^{I_{\KK_{\infty,w}}}). $$
The group ${\Gal(\overbar{\KK_{\infty,w}}/\KK_{\infty,w})/I_{\KK_{\infty,w}}}$ has profinite degree prime to $p$. Therefore we see that ${\ker \res_w=0}$. Thus $\psi_v$ is injective and hence is an isomorphism. It follows that $\theta_v$ is also an isomorphism.  This completes the proof.
\end{proof}

\begin{lemma}\label{splitH1_Sel_vanishing_lemma}
Let ${\bullet, \star \in \{+,-\}}$ and ${H \in \mathcal{H}_s^{\bullet\star}}$. Then ${H_1(H, X^{Gr, \bullet\star}(E/\KK_{\infty}))=0}$.
\end{lemma}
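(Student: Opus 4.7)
The plan is to mirror the structure of the proof of Lemma~\ref{H1_Sel_vanishing_lemma}, replacing the classical signed Selmer group by its Greenberg variant and the set $\mathcal{H}^{\bullet\star}$ by $\mathcal{H}_s^{\bullet\star}$. By Pontryagin duality the assertion ${H_1(H, X^{Gr,\bullet\star}(E/\KK_\infty))=0}$ is equivalent to ${H^1(H, \Selgpm(E/\KK_\infty)) = 0}$, so I would work with cohomology throughout. Two ingredients are required: a vanishing statement for the $H$-cohomology of the ambient global cohomology group, and the surjectivity of the global-to-local map after taking $H$-invariants.

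First I would establish the $\mathcal{H}_s^{\bullet\star}$-analog of Lemma~\ref{H_i_vanishing_lemma}, namely ${H^i(H, H^1(G_S(\KK_\infty), E[p^\infty])) = 0}$ for every ${i \ge 1}$. Since ${H \cong \Z_p}$ has $p$-cohomological dimension one, only the case ${i=1}$ is non-trivial. I would feed this into the Hochschild--Serre spectral sequence
\[ E_2^{p,q} = H^p(H, H^q(G_S(\KK_\infty), E[p^\infty])) \Longrightarrow H^{p+q}(G_S(\KK_\infty^H), E[p^\infty]). \]
Lemma~\ref{p-torsion_lemma} gives ${E_2^{p,0} = 0}$ for ${p \ge 1}$; Lemma~\ref{H2_Kinf_vainishing_lemma} gives ${E_2^{p,2} = 0}$ for every $p$; and $cd_p(H)=1$ forces ${E_2^{p,q}=0}$ for ${p \ge 2}$. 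The only non-vanishing contribution to $H^2(G_S(\KK_\infty^H), E[p^\infty])$ is therefore the $E_2^{1,1}$-term, yielding an isomorphism
\[ H^1(H, H^1(G_S(\KK_\infty), E[p^\infty])) \cong H^2(G_S(\KK_\infty^H), E[p^\infty]). \]
The right-hand side vanishes by Proposition~\ref{splitSeln_surjective_prop}, which is exactly the statement that handles the case ${H \in \mathcal{H}_s^{\bullet\star}}$, including primes in $S$ that may split completely in $\KK_\infty^H/K$.

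Next, I would form the short exact sequence
\[ 0 \longrightarrow \Selgpm(E/\KK_\infty) \longrightarrow H^1(G_S(\KK_\infty), E[p^\infty]) \longrightarrow I \longrightarrow 0, \]
where $I$ denotes the image of the defining global-to-local map inside ${J_p^{\bullet\star}(E/\KK_\infty) \times \bigoplus_{v \in S,\, v \nmid p} J_v^s(E/\KK_\infty)}$. Taking the long exact sequence of $H$-cohomology, the vanishing of $H^1(H, \Selgpm(E/\KK_\infty))$ would follow as soon as the map ${H^1(G_S(\KK_\infty), E[p^\infty])^H \to I^H}$ is surjective, since the next term is the zero group established above. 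To verify this surjectivity I would use Lemma~\ref{splitSel_Hn_surjective_lemma}, which gives surjectivity onto the full target ${J^H}$ with $J$ denoting the codomain of the global-to-local map. Since ${I \subseteq J}$ and the composite ${H^1(G_S(\KK_\infty), E[p^\infty])^H \to I^H \hookrightarrow J^H}$ is surjective, both the inclusion ${I^H \subseteq J^H}$ and the first map must be surjective, providing the required equality ${I^H = J^H}$.

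The main obstacle is ensuring that Proposition~\ref{splitSeln_surjective_prop} is available in the full generality where primes in $S$ split completely in $\KK_\infty^H/K$; once that vanishing of $H^2(G_S(\KK_\infty^H), E[p^\infty])$ is in hand, the Hochschild--Serre calculation and the long exact sequence chase are formal. A secondary point to check is that no subtle issues arise with $J_p^{\bullet\star}$ for primes of $K$ above $p$ (these cannot split completely by hypothesis~$(a)$ of Definition~\ref{def:mathcal{H_s}}) so that the surjectivity statement of Lemma~\ref{splitSel_Hn_surjective_lemma} really applies componentwise.
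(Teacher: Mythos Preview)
Your proposal is correct and follows essentially the same approach as the paper: the paper's one-line proof simply says to mimic Lemma~\ref{H1_Sel_vanishing_lemma} using Lemma~\ref{splitSel_Hn_surjective_lemma}, and unwinding that reference leads precisely to the Hochschild--Serre computation (with Proposition~\ref{splitSeln_surjective_prop} supplying the vanishing of $H^2(G_S(\KK_\infty^H),E[p^\infty])$) together with the long exact sequence argument you describe. Your concern about the availability of Proposition~\ref{splitSeln_surjective_prop} for split primes is unfounded, since that proposition is stated and proved for $H \in \mathcal{H}_s^{\bullet\star}$.
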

\begin{proof}
This can be proven in the same way as Lemma~\ref{H1_Sel_vanishing_lemma}, using Lemma~\ref{splitSel_Hn_surjective_lemma}.
\end{proof}

\begin{lemma}\label{splitH1_Sel_vanishing_lemma2}
Let ${\bullet, \star \in \{+,-\}}$ and ${H \in \mathcal{H}_s^{\bullet\star}}$. Then
\[ H_1(H, T_{\Lambda_2}(X^{Gr}(E/\KK_{\infty})))=0. \]
\end{lemma}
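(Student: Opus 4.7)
The plan is to mimic the proof of Lemma~\ref{H1_Sel_vanishing_lemma2} in the Greenberg setting. As in that proof, it suffices to show two things: first, that $T_{\Lambda_2}(X^{Gr}(E/\KK_{\infty}))$ has no nontrivial pseudo-null $\Lambda_2$-submodule (so that by the structure theorem \cite[Chapt.~VII, \S 4.4 Theorem~5]{Bourbaki} and \cite[Lemma~2.1]{MHG} it injects into an elementary module $W = \bigoplus_{i=1}^s \Lambda_2/p^{m_i} \oplus \bigoplus_{j=1}^t \Lambda_2/f_j^{n_j}$); and second, that the element $\Upsilon = (1+T_1)^a(1+T_2)^b - 1$ attached to $H = \overbar{\langle \sigma^a\tau^b\rangle}$ is coprime to the characteristic power series $f_\infty^{Gr}$ of $T_{\Lambda_2}(X^{Gr}(E/\KK_{\infty}))$. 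Given these, $H_1(H,W) = 0$ (since the cofactors of the $f_j^{n_j}$ and $p^{m_i}$ are $\Upsilon$-regular), and the left-exactness of $H_1(H,-)$ (from $\mathrm{cd}_p(H) = 1$) transfers this vanishing to $T_{\Lambda_2}(X^{Gr}(E/\KK_{\infty}))$.

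For the no-pseudo-null assertion, I would first establish the Greenberg analogue of Proposition~\ref{Sel_Lambda2_surjective_prop2}, giving an exact sequence
\[ 0 \longrightarrow U \longrightarrow X^{Gr}(E/\KK_\infty) \longrightarrow X^{Gr,\bullet\star}(E/\KK_\infty) \longrightarrow 0 \]
where $U = J_p^{\bullet\star}(E/\KK_\infty)^\vee$ is a free $\Lambda_2$-module of rank two (the local computation is identical, since the local conditions at $p$ are unchanged, and we have already proved surjectivity in Lemma~\ref{splitSel_Hn_surjective_lemma}). Next I would prove the Greenberg analogue of Theorem~\ref{pseudo-null_theorem}, namely that $X^{Gr,\bullet\star}(E/\KK_\infty)$ has no nontrivial pseudo-null $\Lambda_2$-submodule. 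This can be done by the same up-down argument as in Theorem~\ref{pseudo-null_theorem}, invoking Lemma~\ref{splitH1_Sel_vanishing_lemma} (in place of Lemma~\ref{H1_Sel_vanishing_lemma}) and the control theorem Theorem~\ref{splitSelmerControlThm_prop} to reduce to the cyclotomic case, where $\KK_\infty^{H_{cyc}} = K_{cyc}$ is the same field as before and Theorem~\ref{finite_submodules_theorem} still applies. Combining these two ingredients as in Proposition~\ref{cor:hamidi}(d) then yields the statement for the torsion submodule of $X^{Gr}(E/\KK_\infty)$.

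For the coprimality of $\Upsilon$ and $f_\infty^{Gr}$, I would use the Greenberg analogues of Proposition~\ref{SelmerControlThm_prop2} and Proposition~\ref{Selrank_prop}. The former is Proposition~\ref{splitSelmerControlThm_prop2}, which has already been proved above. For the rank assertion $\rank_{\Lambda_2}(X^{Gr}(E/\KK_\infty)) = 2$ and its counterpart on $\KK_\infty^H$, the argument of Proposition~\ref{Selrank_prop} carries over once we observe that $H^2(G_S(\KK_\infty), E[p^\infty]) = 0$ (Lemma~\ref{H2_Kinf_vainishing_lemma}), that the local terms at primes not above $p$ become the unramified cohomology and hence still have $\Lambda$-torsion Pontryagin dual, and that $J_v(E/\KK_\infty) = 0$ for $v \mid p$ by \cite[Prop.~4.8]{CG}. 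Then the analogue of Lemma~\ref{rankequal_lemma} and Corollary~\ref{cor:HinH}, adapted to $X^{Gr}$, shows that the $\Lambda$-torsionness of $(T_{\Lambda_2}(X^{Gr}(E/\KK_\infty)))_H$ — which follows from Theorem~\ref{splitTorsionControlThm} and the fact that $T_\Lambda(X^{Gr}(E/\KK_\infty^H))$ is $\Lambda$-torsion by definition since $H \in \mathcal{H}_s^{\bullet\star}$ — is equivalent to the coprimality of $\Upsilon$ with $f_\infty^{Gr}$.

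The main obstacle will be verifying that the Greenberg versions of Proposition~\ref{cor:hamidi}(d) and Proposition~\ref{Selrank_prop} go through without the hypothesis that no prime of $S$ splits completely in $\KK_\infty^H/K$. The local contributions at split primes appear in the exact sequence defining $X^{Gr}$ only through the unramified cohomology $H^1(I_{\KK_{\infty,w}}, E[p^\infty])$, and in Lemma~\ref{splitSel_Hn_surjective_lemma} and the proof of Proposition~\ref{splitSeln_surjective_prop} we already controlled the $\Lambda$-rank of these terms (they are $\Lambda$-torsion, with a nontrivial $\mu$-invariant possible only at split primes). Once this rank bookkeeping is in place, the remaining homological arguments transplant verbatim from Section~\ref{section:pseudo-null}.
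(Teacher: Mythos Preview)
Your approach is correct but considerably more elaborate than the paper's. The key observation you miss is that at the $\KK_\infty$-level the Greenberg and classical objects coincide: $X^{Gr}(E/\KK_\infty) = X(E/\KK_\infty)$ and $X^{Gr,\bullet\star}(E/\KK_\infty) = X^{\bullet\star}(E/\KK_\infty)$ (this is stated in the remark following Remark~\ref{Seldef_remark}, since every prime of $\KK_\infty$ not above $p$ has decomposition group containing a $\Z_p$). Consequently Proposition~\ref{comparison_prop}(a) already furnishes an injection $T_{\Lambda_2}(X^{Gr}(E/\KK_\infty)) \hookrightarrow X^{Gr,\bullet\star}(E/\KK_\infty)$, and the paper simply combines this with the left-exactness of $H_1(H,-)$ (as $cd_p(H)=1$) and the already-proved Lemma~\ref{splitH1_Sel_vanishing_lemma} to conclude in one line. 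Your route---rebuilding the no-pseudo-null result, the exact sequence with $U$, the rank computations, and the coprimality of $\Upsilon$ with $f_\infty^{Gr}$ in the Greenberg setting---would succeed, and indeed many of the ``Greenberg analogues'' you propose to prove are literally the classical statements once the identification at the top is made; but it is a substantial detour. The paper's argument buys a three-line proof; your argument, on the other hand, would yield the Greenberg versions of Corollary~\ref{cor:HinH} and Proposition~\ref{cor:hamidi}(d) as byproducts, which could be of independent use if one wanted to push further equivalences in Theorems~\ref{splitmain_theorem} and \ref{splitmain_theorem3}.
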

\begin{proof}
Noting that ${X^{Gr}(E/\KK_{\infty}))=X(E/\KK_{\infty}))}$ and ${X^{Gr, \bullet\star}(E/\KK_{\infty}))=X^{\bullet\star}(E/\KK_{\infty}))}$, we have by Proposition~\ref{comparison_prop} a $\Lambda_2$-module injection
$$T_{\Lambda_2}(X^{Gr}(E/\K_\infty)) \hookrightarrow X^{Gr, \bullet\star}(E/\KK_\infty). $$
Therefore the result follows from Lemma~\ref{splitH1_Sel_vanishing_lemma}.
\end{proof}

\begin{proof}[Proof of Theorem \ref{splitmain_theorem}]
The proof is identical to the proof of Theorem~\ref{mu_invariants_theorem}, using Theorem~\ref{splitSelmerControlThm_prop} and Lemma~\ref{splitH1_Sel_vanishing_lemma}.
\end{proof}

\begin{proof}[Proof of Theorem \ref{splitmain_theorem3}]
The proof is identical to the proof of Theorem~\ref{mu_invariants_theorem2}, using Theorem~\ref{splitTorsionControlThm} and Lemma~\ref{splitH1_Sel_vanishing_lemma2}.
\end{proof}

\subsection{Results on $\mu$-invariants}
In this subsection we collect some results on $\mu$-invariants that we need for the examples in the last subsection.

\begin{definition}
Let ${\bullet, \star \in \{+,-\}}$ and ${H \in \mathcal{H}_s^{\bullet\star}}$. For simplicity we let ${F_{\infty}=\KK_{\infty}^H}$ and we let $F_n$ be the $n$-th tower field of $F_{\infty}$. For any ${v \in S \setminus S_p}$ we define
\[ \Omega_{H,v} = \dlim_n \bigoplus_{w \mid v} H^1(\Gal(\overbar{F_{n,w}}/F_{n,w})/I_{F_{n,w}}, E[p^{\infty}]^{I_{F_{n,w}}}), \]
where the sum runs over all primes $w$ of $F_n$ above $v$. Here ${I_{F_{n,w}} \subseteq \Gal(\overbar{F_{n,w}}/F_{n,w})}$ denotes the inertia subgroup.
\end{definition}
For any prime $v$ of $K$ let $c_v$ be the Tamagawa number at $v$.

\begin{proposition}\label{Omega_prop}
Let ${\bullet, \star \in \{+,-\}}$, ${H \in \mathcal{H}_s^{\bullet\star}}$ and ${v \in S \setminus S_p}$. Let $l$ be the prime of $\Q$ below $v$. Then  $\Omega_{H,v}$ is a cofinitely generated torsion $\Lambda$-module. Furthermore, the following statements are true.
\begin{enumerate}[(a)]
\item  ${\mu((\Omega_{H,v})^{\vee})=0}$ if any one of the following conditions are true:
\begin{enumerate}[(i)]
\item $v$ does not split completely in $\KK_{\infty}^H/K$.
\item $E$ has good reduction at $l$.
\item $E$ has additive reduction at $l$.
\end{enumerate}
\item If $v$ splits completely in $\KK_{\infty}^H/K$, then ${\mu((\Omega_{H,v})^{\vee})=v_p(c_v)}$ if either
\begin{enumerate}[(i)]
\item $E$ has split multiplicative reduction at $l$, or
\item $l$ is inert in $K/\Q$ and $E$ has non-split multiplicative reduction at $l$.
\end{enumerate}
\end{enumerate}
\end{proposition}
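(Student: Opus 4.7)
My approach is in two steps: first identify $\Omega_{H,v}^\vee$ as a finitely generated torsion $\Lambda$-module, and then compute its $\mu$-invariant by relating the finite-level groups $\Omega_{H,v,n}$ to the $\Gamma_n$-coinvariants of $\Omega_{H,v}^\vee$, where $\Gamma_n = \Gal(F_\infty/F_n)$ and $F_\infty = \KK_\infty^H$. For the torsion-ness, Tate local duality identifies $\Omega_{H,v}^\vee$ with the module $A_v$ that appears in the proof of Proposition~\ref{splitSeln_surjective_prop}, which was shown there to have $\Lambda$-rank zero. For the $\mu$-invariant, an inflation-restriction argument yields a natural identification (up to bounded error) between the coinvariants $(\Omega_{H,v}^\vee)_{\Gamma_n}$ and $\Omega_{H,v,n}^\vee$, so Iwasawa's classical asymptotic $|Z_{\Gamma_n}| = p^{\mu p^n + \lambda n + O(1)}$ reduces the problem to computing the leading order of $\log_p |\Omega_{H,v,n}|$ as $n \to \infty$. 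Setting $M := H^1(G_{K_v}/I_{K_v}, E[p^\infty]^{I_{K_v}})$, when $v$ splits completely in $F_\infty/K$ one has $|\Omega_{H,v,n}| = |M|^{p^n}$ (since $F_{n,w}=K_v$ and there are $p^n$ primes above $v$), so the task is to compute $|M|$ case by case.

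Part~(a) proceeds by showing $|M|=1$ in each listed situation. For~(a)(i), the argument in the proof of Proposition~\ref{splitSeln_surjective_prop} gives $A_v=0$ directly: since $v \nmid p$ the only $\Z_p$-extension of $K_v$ is unramified, so $F_{\infty,w}/K_v$ is unramified and $\Gal(\overline{F_{\infty,w}}/F_{\infty,w})/I_{F_{\infty,w}}$ has profinite degree prime to $p$. For~(a)(ii), N\'eron-Ogg-Shafarevich gives $E[p^\infty]^{I_{K_v}}=E[p^\infty]$, and the Frobenius characteristic polynomial $X^2-a_vX+N_v$ takes the nonzero value $|E(\kappa_v)|$ at $X=1$, so $\sigma_v-1$ is bijective on $V_pE$ and hence surjective on the divisible quotient $E[p^\infty]$, forcing $M=0$. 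For~(a)(iii), the Kodaira-N\'eron classification with $p \geq 5$ yields $p \nmid c_v$, and combined with the pro-$\ell$ formal group and the structure of the special fiber $E_v(\kappa_v)$ this gives $E(K_v)[p^\infty]=0$; whether $E$ has potentially good or potentially multiplicative reduction at $\ell$, the same type of cohomology computation as in~(b)(i) below (with $v_p(c_v)=0$) delivers $|M|=1$ and hence $\mu=0$.

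The essential computation is case~(b)(i). Tate uniformization $E(\overline{K_v}) \cong \overline{K_v}^\times/q_E^\Z$ yields the basic extension of $G_{K_v}$-modules $0 \to \mu_{p^\infty} \to E[p^\infty] \to \Q_p/\Z_p \to 0$ with Kummer class $q_E$, satisfying $v(q_E)=c_v$. Since a $p^n$-torsion class represented by $q_E^{j/p^n}$ is $I_{K_v}$-invariant precisely when $p^n \mid j\cdot v(q_E)$, taking $I_{K_v}$-invariants produces $0 \to \mu_{p^\infty} \to E[p^\infty]^{I_{K_v}} \to \Z/p^{v_p(c_v)} \to 0$, with $\sigma_v$ acting as multiplication by $N_v$ on $\mu_{p^\infty}$ and trivially on the quotient (consistent with the triviality of Frobenius on the $p$-part of the N\'eron component group in the split case). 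The long exact $\langle\sigma_v\rangle$-cohomology sequence, combined with $H^1(\langle\sigma_v\rangle, \mu_{p^\infty}) = \mu_{p^\infty}/(N_v-1)\mu_{p^\infty} = 0$ by $p$-divisibility of $\mu_{p^\infty}$, gives $M \cong \Z/p^{v_p(c_v)}$, and the first paragraph then delivers $\mu = v_p(c_v)$. Case~(b)(ii) reduces immediately: when $\ell$ is inert in $K/\Q$, $K_v$ is the unramified quadratic extension of $\Q_\ell$, over which the trivializing unramified quadratic character turns non-split multiplicative reduction into split multiplicative reduction; hence~(b)(i) applies. The two main obstacles will be rigorously justifying the identification $(\Omega_{H,v}^\vee)_{\Gamma_n} = \Omega_{H,v,n}^\vee$ via inflation-restriction, and carrying out the explicit description of $E[p^\infty]^{I_{K_v}}$ with its Frobenius action in the multiplicative case.
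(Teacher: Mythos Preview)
Your overall plan is sound and closely parallels the paper's argument, with one organizational difference and one genuine gap.

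\textbf{Difference in approach.} You compute $\mu$ by the Iwasawa asymptotic $|(\Omega_{H,v}^\vee)_{\Gamma_n}| = p^{\mu p^n + \lambda n + O(1)}$, reducing to $|\Omega_{H,v,n}| = |M|^{p^n}$ in the split case. The paper is more direct: when $v$ splits completely it observes that $\Omega_{H,v} = \coind_{\{1\}}^{\Gamma} M$, so by \cite[Corollary~A.4]{Kidwell} one has $\Omega_{H,v}^\vee \cong \Lambda/p^t$ (for $M$ cyclic of order $p^t$), from which $\mu = t$ is immediate. Your asymptotic route works and is self-contained, but the coinduced-module identification dispatches the ``control theorem'' obstacle you flagged in one line. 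For part~(b), your explicit computation of $E[p^\infty]^{I_{K_v}}$ via the valuation condition $p^n \mid j\,v(q_E)$ is correct and more hands-on than the paper, which quotes \cite[Theorem~1.2]{Kida1} for $t = v_p(c_v)$.

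\textbf{Gap in (a)(iii).} Your justification ``the same type of cohomology computation as in (b)(i) below (with $v_p(c_v)=0$) delivers $|M|=1$'' does not go through as stated. The computation in (b)(i) rests on the Tate-uniformization exact sequence $0 \to \mu_{p^\infty} \to E[p^\infty] \to \Q_p/\Z_p \to 0$, which is only available in the (split) multiplicative case; for additive reduction there is no such filtration to feed into the long exact sequence. What is actually needed is that $E[p^\infty]^{I_{K_v}} = 0$ in the additive case. The paper obtains this by citing \cite[Theorem~1.2]{Kida1}. Alternatively, your ingredients can be repurposed: since $E[p^\infty]^{I_{K_v}} = E(K_v^{ur})[p^\infty]$, the same filtration argument you used for $E(K_v)[p^\infty]=0$ applies verbatim over $K_v^{ur}$ (formal group still pro-$\ell$, $\tilde{E}_{ns}(\bar\kappa_v) \cong \bar\kappa_v^+$ has no $p$-torsion, and the geometric component group has order at most $4 < p$). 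Either way, once $E[p^\infty]^{I_{K_v}}=0$ you get $M=0$ immediately, without any analogue of the (b)(i) computation.
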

\begin{proof}
Let $w$ be a prime of ${F_{\infty}:=\KK_{\infty}^H}$ above $v$. For simplicity we will denote ${\Gal(\overbar{F_{n,w}}/F_{n,w})/I_{F_{n,w}}}$ by $\mathcal{G}_n$ and $I_{F_{n,w}}$ by $I_n$. Let $\Lambda$ be the Iwasawa algebra of $F_{\infty}/K$. We have that
${H^1(\mathcal{G}_n, E[p^{\infty}]^{I_n})=E[p^{\infty}]^{I_n}/(\sigma_n-1)E[p^{\infty}]^{I_n}}$ where $\sigma_n$ is a topological generator of $\mathcal{G}_n$. The kernel of ${\sigma_n-1}$ acting on $E[p^{\infty}]^{I_n}$ is $E(F_{n,w})[p^{\infty}]$, which is finite. Let $(E[p^{\infty}]^{I_n})_{div}$ be the maximal divisible subgroup of $E[p^{\infty}]^{I_n}$. Since $(E[p^{\infty}]^{I_n})_{div}$ has finite $\Zp$-corank and the kernel of ${\sigma_n-1}$ acting on $(E[p^{\infty}]^{I_n})_{div}$ is finite, it follows that ${(\sigma_n-1)(E[p^{\infty}]^{I_n})_{div}=(E[p^{\infty}]^{I_n})_{div}}$. Therefore we have a surjection ${E[p^{\infty}]^{I_n}/(E[p^{\infty}]^{I_n})_{div} \twoheadrightarrow H^1(\mathcal{G}_n, E[p^{\infty}]^{I_n})}$. Let $I_{\infty}$ be the inertia subgroup of $F_{\infty,w}$. Since $F_{\infty,w}/K_v$ is unramified, we have ${I_n=I_{\infty}}$ for all $n$. Thus we see that ${H^1(\mathcal{G}_n, E[p^{\infty}]^{I_n})}$ is bounded by ${E[p^{\infty}]^{I_{\infty}}/(E[p^{\infty}]^{I_{\infty}})_{div}}$. From this we see that if $v$ does not split completely in $\KK_{\infty}^H/K$, then $\Omega_{H,v}$ is a direct limit of finite groups of bounded order, hence is finite. Therefore in this case $\Omega_{H,v}$ is a cofinitely generated $\Lambda$-torsion module with ${\mu(\Omega_{H,v}^{\vee})=0}$.

If $v$ splits completely in $\KK_{\infty}^H/K$, then we have ${\Omega_{H,v}=\coind_{\{1\}}^{\Gamma} H^1(\mathcal{G}_0, E[p^{\infty}]^{I_0})}$. Since as shown above ${H^1(\mathcal{G}_0, E[p^{\infty}]^{I_0})}$ is finite, it follows from Lemma~\ref{coind_lemma} that $\Omega_{H,v}^{\vee}$ is a finitely generated $\Lambda$-torsion module.

If $E$ has good reduction at $l$, then for all $n$ we have that ${E[p^{\infty}]^{I_n}=E[p^{\infty}]}$ (see \cite[Chapt.~VII, Propostion~4.1]{Silverman1}) is divisible. Since the kernel of ${\sigma_n-1}$ acting on $E[p^{\infty}]^{I_n}$ is finite, we conclude as above that ${H^1(\mathcal{G}_n, E[p^{\infty}]^{I_n})=0}$. Thus ${\Omega_{H,v}=0}$ in this case. If $E$ has additive reduction at $l$ (and hence additive reduction at $v$), then from the proof of \cite[Theorem~{1.2}]{Kida1} we see that ${E[p^{\infty}]^{I_n}=0}$ for all $n$. Therefore we also get that ${\Omega_{H,v}=0}$ in this case. The previous arguments prove statement~$(a)$.

Now we consider the case when $E$ has multiplicative reduction at $l$ and $v$ splits completely in $\KK_{\infty}^H/K$. Since $v$ splits completely in $F_{\infty}/K$, we have that ${F_{\infty,w}=K_v}$. We denote $I_0$ by $I$, $\mathcal{G}_0$ by $\mathcal{G}$ and $\sigma_0$ by $\sigma$. From \cite[Theorem~4.11]{Silverman1} and \cite[Chapt.~V,  Lemma~5.2 and Theorem~5.3]{Silverman2} one can deduce that there is a (unique) unramified extension $L/\Ql$ with ${[L:\Ql] \leq 2}$ such that $E$ is isomorphic to a Tate curve $E_q$ over $L$. If $E$ has split multiplicative reduction at $l$ we can take ${L=\Ql}$. Therefore if $E$ has split multiplicative reduction at $l$ or $E$ has non-split multiplicative reduction at $l$ and $l$ is inert in $K/\Q$, then $E$ is isomorphic to $E_q$ over $K_v$. We assume that we are in this case. Then as shown in \cite[\S~A.1.2]{Serre2} we have an exact sequence of $\Gal(\bar{K_v}/K_v)$-modules
\begin{equation}
0 \to \mu_{p^{\infty}} \to E[p^{\infty}] \to \Qp/\Zp \to 0
\end{equation}
where $\mu_{p^{\infty}}$ is the group of $p$-power roots of unity. Moreover, we have that $\mu_{p^{\infty}}$ is $I$-invariant. From the criterion of N\'{e}on-Ogg-Shafarevich we know that ${E[p^{\infty}]^I \neq E[p^{\infty}]}$. Therefore the above exact sequence induces the exact sequence
\begin{equation}\label{p-torsion_seq1}
0 \to \mu_{p^{\infty}} \to E[p^{\infty}]^I \to \Z/p^t\Z \to 0
\end{equation}
for some ${t \geq 0}$. Note that since $\mu_{p^{\infty}}$ is a divisible abelian group, we have an isomorphism ${E[p^{\infty}] \cong \mu_{p^{\infty}} \times \Z/p^t\Z}$ of abelian groups. Now \cite[Theorem~{1.2}]{Kida1} shows that ${t=v_p(c_v)}$. The exact sequence~\eqref{p-torsion_seq1} induces an exact sequence
\begin{equation}\label{p-torsion_seq2}
\mu_{p^{\infty}}/(\sigma-1)\mu_{p^{\infty}} \to E[p^{\infty}]^I/(\sigma-1)E[p^{\infty}]^I \to \Z/p^t\Z/(\sigma-1)\Z/p^t\Z \to 0.
\end{equation}
Since $\mu_{p^{\infty}}$ is divisible and as its $\mathcal{G}$-invariants are finite, the first term in the exact sequence~\eqref{p-torsion_seq2} is zero. Clearly the last term is $\Z/p^t\Z$. Thus we have ${H^1(\mathcal{G}, E[p^{\infty}]^I) \cong \Z/p^t\Z}$. Since $v$ splits completely in $F_{\infty}/K$ it follows from \cite[Corollary~A.4]{Kidwell} that ${(\Omega_{H,v})^{\vee} \cong \Lambda/p^t}$. This proves statement~$(b)$.
\end{proof}

Let ${\bullet, \star \in \{+,-\}}$ and ${H \in \mathcal{H}_s^{\bullet\star}}$. Recall from the beginning of Section~\ref{section:mainintroduction} that we  defined for any ${v \in S \setminus S_p}$:
$$J_v(E/\KK_{\infty}^H)=\dlim \bigoplus_{w|v}H^1(F_w,E[p^{\infty}]), $$
where the direct limit runs over finite extensions $F$ of $K$ contained in $\KK_{\infty}^H$ (note that ${H^1(F_w,E[p^{\infty}]) = H^1(F_w,E)[p^\infty]}$ for each $w$ by Mattuck's theorem, since ${w \mid v}$ does not divide $p$). If $v$ does not split completely in ${\KK_{\infty}^H/K}$, then $$J_v(E/\KK_{\infty}^H)= \bigoplus_{w|v}H^1((\KK_{\infty}^H)_w,E[p^{\infty}])$$
where the product ranges over all primes $w$ of $\KK_{\infty}^H$ above $v$. Therefore for ${H \in \mathcal{H}^{\bullet\star}}$ our definition above of $J_v(E/\KK_{\infty}^H)$ coincides with the definition used in the previous sections. 

Now we need the following two results that prove the surjectivity of the local-to-global maps defining the signed Selmer group and Selmer group. Let ${\bullet, \star \in \{+,-\}}$ and ${H \in \mathcal{H}_s^{\bullet\star}}$. Consider the sequence
\begin{align} \begin{tikzcd}[scale cd=0.86] \label{eq:definingSelbulletstarHs}
\displaystyle 0 \longrightarrow \Selpm(E/\KK_{\infty}^H) \longrightarrow H^1(G_S(\KK_{\infty}^H), E[p^{\infty}]) \stackrel{\lambda_H^{\bullet\star}}{\longrightarrow} J_p^{\bullet\star}(E/\KK_{\infty}^H) \times \bigoplus_{v \in S \setminus S_p} J_v(E/\KK_{\infty}^H) \longrightarrow 0. \end{tikzcd} \end{align}

\begin{proposition}\label{signedSeln_surjective_propHs}
The above sequence~\eqref{eq:definingSelbulletstarHs} is exact.
\end{proposition}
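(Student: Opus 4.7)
The plan is to adapt the Poitou--Tate argument from the proof of Proposition~\ref{Seln_surjective_prop} (specifically its $n = 0$ case) to the broader setting where primes in $S \setminus S_p$ may split completely in $\KK_\infty^H/K$. The main observation will be that the hypothesis $H \in \mathcal{H}_s^{\bullet\star}$ supplies exactly the ingredients that the earlier argument needed from $H \in \mathcal{H}^{\bullet\star}$, so that the reasoning goes through with only a short preparatory step.

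First I would verify that $X^{\bullet\star}(E/\KK_\infty^H)$ is itself $\Lambda$-torsion. Since the strict local conditions at tame primes defining $\Selpm$ are contained in the unramified conditions defining $\Selgpm$, there is an inclusion $\Selpm(E/\KK_\infty^H) \subseteq \Selgpm(E/\KK_\infty^H)$, and dualising yields a surjection $X^{Gr,\bullet\star}(E/\KK_\infty^H) \twoheadrightarrow X^{\bullet\star}(E/\KK_\infty^H)$. By Definition~\ref{def:mathcal{H_s}} the Greenberg-signed module is $\Lambda$-torsion, hence so is $X^{\bullet\star}(E/\KK_\infty^H)$. The inclusion $R_{p^\infty}(E/\KK_\infty^H) \subseteq \Selpm(E/\KK_\infty^H)$ then shows that $Y(E/\KK_\infty^H)$ is a quotient of $X^{\bullet\star}(E/\KK_\infty^H)$ and is likewise $\Lambda$-torsion, and \cite[Theorem~2.2]{Matar_Torsion} provides the vanishing $H^2(G_S(\KK_\infty^H), E[p^\infty]) = 0$ that is needed to extract surjectivity of $\lambda_H^{\bullet\star}$ from the second Poitou--Tate sequence.

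With these inputs I would replay the argument of Proposition~\ref{Seln_surjective_prop}. Choosing $s \ge 0$ so that $\KK_{(s,\fp)}^H = \KK_{(t,\bar{\fp})}^H$ for some $t$ and setting $\Lambda := \Z_p[[\Gal(\KK_\infty^H / \KK_{(s,\fp)}^H)]]$, the first Poitou--Tate exact sequence takes the form
\[
H^1_{\text{Iw}}(G_S(\KK_\infty^H), T) \stackrel{\text{loc}^{\bullet\star}}{\longrightarrow} Z \longrightarrow X^{\bullet\star}(E/\KK_\infty^H) \longrightarrow H^2_{\text{Iw}}(G_S(\KK_\infty^H), T) \longrightarrow 0,
\]
where $Z$ collects contributions only from the primes above $p$: at $v \in S \setminus S_p$ the Selmer structure for $\Selpm$ uses the strict local condition $L_v = 0$, whose orthogonal complement under Tate duality is all of $H^1((\KK_\infty^H)_w, T)$, so the corresponding quotient vanishes. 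The proof of Proposition~\ref{pm_points_structure_prop} nowhere uses a no-splitting-completely hypothesis, so the computation $\rg_\Lambda(Z) = [\KK_{(s,\fp)}^H : \Q]$ is unchanged, and Proposition~\ref{Euler_char_prop} together with Lemma~\ref{p-torsion_lemma} gives $\rg_\Lambda(H^1_{\text{Iw}}) - \rg_\Lambda(H^2_{\text{Iw}}) = [\KK_{(s,\fp)}^H : \Q]$. The $\Lambda$-torsion-ness of $X^{\bullet\star}(E/\KK_\infty^H)$ then forces $\ker \text{loc}^{\bullet\star}$ to be $\Lambda$-torsion, while the $\Lambda$-torsion-freeness of $H^1_{\text{Iw}}(G_S(\KK_\infty^H), T)$---proved verbatim as in Proposition~\ref{Seln_surjective_prop} using Lemma~\ref{p-torsion_lemma} and \cite[Prop.~5.5.10]{NSW}, a step indifferent to tame splitting---forces $\ker \text{loc}^{\bullet\star} = 0$. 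Dualising the second Poitou--Tate exact sequence produces $(\coker \lambda_H^{\bullet\star})^\vee \cong \ker \text{loc}^{\bullet\star} = 0$, which finishes the proof. The only genuinely new content relative to Proposition~\ref{Seln_surjective_prop} lies in the preparatory bootstrap from the Greenberg-torsion hypothesis to $\Lambda$-torsion of $X^{\bullet\star}$ and $Y$, and that is essentially the only place where the relaxed splitting condition on tame primes has to be accommodated.
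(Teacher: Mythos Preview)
Your proof is correct and takes essentially the same approach as the paper, which simply observes that the proof of Proposition~\ref{Seln_surjective_prop} (for $n=0$) nowhere uses the no-splitting hypothesis on tame primes. Your treatment is in fact more careful than the paper's one-line justification: you explicitly supply the bootstrap from $X^{Gr,\bullet\star}(E/\KK_\infty^H)$ being $\Lambda$-torsion to $X^{\bullet\star}(E/\KK_\infty^H)$ being $\Lambda$-torsion via the inclusion $\Selpm \subseteq \Selgpm$, a step the paper leaves implicit when it passes from the hypothesis in Definition~\ref{def:mathcal{H_s}} (which is about the Greenberg-signed dual) to the torsion input actually used in the argument of Proposition~\ref{Seln_surjective_prop}.
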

\begin{proof}
Proposition~\ref{Seln_surjective_prop} proves the exactness of the sequence for any ${H \in \mathcal{H}^{\bullet\star}}$. Since the proof does not use the fact that no prime in $S$ splits completely in ${\KK_{\infty}^H/K}$, this proof actually holds for any ${H \in \mathcal{H}_s^{\bullet\star}}$.
\end{proof}

Let ${\bullet, \star \in \{+,-\}}$ and ${H \in \mathcal{H}_s^{\bullet\star}}$. Consider the sequence
\begin{align} \begin{tikzcd}[scale cd=0.86] \label{eq:definingSelHs}
\displaystyle 0 \longrightarrow \Selinf(E/\KK_{\infty}^H) \longrightarrow H^1(G_S(\KK_{\infty}^H), E[p^{\infty}]) \stackrel{\lambda_H}{\longrightarrow} \bigoplus_{v \in S \setminus S_p} J_v(E/\KK_{\infty}^H) \longrightarrow 0. \end{tikzcd} \end{align}

\begin{proposition}\label{Seln_surjective_propHs}
The above sequence~\eqref{eq:definingSelHs} is exact.
\end{proposition}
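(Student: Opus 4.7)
The strategy is to deduce Proposition~\ref{Seln_surjective_propHs} directly from the analogous result for signed Selmer groups, Proposition~\ref{signedSeln_surjective_propHs}, which we already have at our disposal. Since $H \in \mathcal{H}_s^{\bullet\star}$, that proposition yields a surjection
$$\lambda_H^{\bullet\star}: H^1(G_S(\KK_\infty^H), E[p^\infty]) \twoheadrightarrow J_p^{\bullet\star}(E/\KK_\infty^H) \times \bigoplus_{v \in S \setminus S_p} J_v(E/\KK_\infty^H).$$
Letting $\pi_2$ denote the projection onto the second factor, the map $\lambda_H$ in \eqref{eq:definingSelHs} factors as $\lambda_H = \pi_2 \circ \lambda_H^{\bullet\star}$. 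Since $\pi_2$ is trivially surjective, so is $\lambda_H$.

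For the identification of $\ker(\lambda_H)$ with $\Selinf(E/\KK_\infty^H)$, I would invoke the original definition of $\Selinf(E/\KK_\infty^H)$ as the kernel of the full global-to-local map into $\bigoplus_{v \in S} J_v(E/\KK_\infty^H)$ from Section~\ref{section:definitions}. For $v \in S_p$, part~(a) of Definition~\ref{def:mathcal{H_s}} ensures that $v$ ramifies in $\KK_\infty^H/K$, and combined with the good supersingular reduction of $E$ at $p$ this forces $H^1((\KK_\infty^H)_w, E)[p^\infty] = 0$ for every prime $w$ of $\KK_\infty^H$ above $v$ (this is the same vanishing invoked at the start of the proof of Proposition~\ref{SelmerControlThm_prop2}, cf. \cite[pg.~70]{Gb_LNM}). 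Hence $J_v(E/\KK_\infty^H) = 0$ for $v \in S_p$, so the local conditions at primes above $p$ are automatic, and $\ker(\lambda_H)$ coincides with $\Selinf(E/\KK_\infty^H)$.

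There is essentially no obstacle beyond what has already been handled upstream: the nontrivial content — the surjectivity of $\lambda_H^{\bullet\star}$ — is established in Proposition~\ref{signedSeln_surjective_propHs} via the Poitou--Tate exact sequence and the torsion property of $X^{\bullet\star}(E/\KK_\infty^H)$ built into the definition of $\mathcal{H}_s^{\bullet\star}$. In particular, unlike Proposition~\ref{Seln_surjective_prop}, no separate analysis at the tame primes (e.g.\ via \cite[Lemma~3.3]{Gb_LNM}) is needed here, because the possibility of complete splitting in $S \setminus S_p$ has already been absorbed into the surjectivity statement for the signed Selmer group.
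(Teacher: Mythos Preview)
Your proposal is correct and follows essentially the same approach as the paper: factor $\lambda_H$ through the signed global-to-local map $\lambda_H^{\bullet\star}$ (surjective by Proposition~\ref{signedSeln_surjective_propHs}) composed with the projection onto the tame factor, and identify $\ker(\lambda_H)$ with $\Selinf(E/\KK_\infty^H)$ via the vanishing of $H^1((\KK_\infty^H)_w,E)[p^\infty]$ for $w \mid p$. The paper cites \cite[Proposition~4.8]{CG} for that vanishing rather than \cite[pg.~70]{Gb_LNM}, but this is the same fact and the paper itself uses your reference elsewhere.
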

\begin{proof}
For any prime $v$ of ${\KK_{\infty}^H}$ above $p$ we have ${H^1(\KK^H_{\infty,v}, E)[p^{\infty}]=0}$. This follows from \cite[Proposition~4.8]{CG} since $E$ has good supersingular reduction at $p$ and every prime of $K$ above $p$ ramifies in ${\KK_{\infty}^H/K}$. Therefore we see that ${\Selinf(E/\KK_{\infty}^H)}$ is the kernel of the map $\lambda_H$. Now consider the localization map

$$\lambda_H^{\bullet\star}: H^1(G_S(\KK_{\infty}^H), E[p^{\infty}]) \to J_p^{\bullet\star}(E/\KK_{\infty}^H) \times \bigoplus_{v \in S \setminus S_p} J_v(E/\KK_{\infty}^H)$$

and the projection map

$$\pi_H: J_p^{\bullet\star}(E/\KK_{\infty}^H) \times \bigoplus_{v \in S, v \nmid p} J_v(E/\KK_{\infty}^H) \to \bigoplus_{v \in S \setminus S_p} J_v(E/\KK_{\infty}^H).$$

By Proposition~\ref{signedSeln_surjective_propHs} the map $\lambda_H^{\bullet\star}$ is surjective. Since $\pi_H$ is also surjective, we can conclude that ${\lambda_H=\pi_H \circ \lambda_H^{\bullet\star}}$ is surjective.
\end{proof}

\begin{proposition}\label{Selmercomp_prop}
Let ${\bullet, \star \in \{+,-\}}$ and ${H \in \mathcal{H}_s^{\bullet\star}}$. Then we have exact sequences
$$0 \to \Selpm(E/\KK_{\infty}^H) \to \Selgpm(E/\KK_{\infty}^H) \to \bigoplus_{v \in S \setminus S_p} \Omega_{H,v} \to 0, $$

$$0 \to \Selinf(E/\KK_{\infty}^H) \to \Selinfg(E/\KK_{\infty}^H) \to \bigoplus_{v \in S \setminus S_p} \Omega_{H,v} \to 0. $$
\end{proposition}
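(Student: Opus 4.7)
The plan is to deduce both exact sequences from a single snake lemma argument comparing the defining sequences of the classical and Greenberg Selmer groups, which differ only in their local conditions at primes $v \in S \setminus S_p$. The key ingredient is a natural surjection $J_v(E/\KK_\infty^H) \twoheadrightarrow J_v^s(E/\KK_\infty^H)$ whose kernel is identified with $\Omega_{H,v}$.

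The surjection arises as follows. Locally at a prime $w \mid v$ with $v \nmid p$, the Kummer sequence identifies $H^1((\KK_\infty^H)_w, E)[p^\infty]$ with the quotient of $H^1((\KK_\infty^H)_w, E[p^\infty])$ by the Kummer image $E((\KK_\infty^H)_w) \otimes \Q_p/\Z_p$, while $\img \res_{(\KK_\infty^H)_w}$ is the quotient by the unramified classes $H^1(G_{(\KK_\infty^H)_w}/I_{(\KK_\infty^H)_w}, E[p^\infty]^{I_{(\KK_\infty^H)_w}})$. Since $v \nmid p$, Kummer classes are automatically unramified, so the Kummer image is contained in the unramified classes, yielding the natural surjection $J_v \twoheadrightarrow J_v^s$. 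A diagram chase combining this with the inflation-restriction sequence, together with commuting the direct limit over finite subfields with the formation of cohomology, identifies the kernel with the direct limit appearing in the definition of $\Omega_{H,v}$.

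Having the surjection and its kernel in hand, I would set up the snake-lemma diagram whose rows are the defining sequences of the two Selmer groups (right-exact by Propositions~\ref{signedSeln_surjective_propHs} and \ref{splitSeln_surjective_prop} for the signed case, and by Proposition~\ref{Seln_surjective_propHs} together with its Greenberg analogue, established in the same manner as Proposition~\ref{splitSeln_surjective_prop}, for the classical case). The middle column is the identity on $H^1(G_S(\KK_\infty^H), E[p^\infty])$, the left column is the induced inclusion $\iota \colon \Selpm(E/\KK_\infty^H) \hookrightarrow \Selgpm(E/\KK_\infty^H)$, and the right column is the identity on the $J_p^{\bullet\star}$-factor together with the surjections $J_v \twoheadrightarrow J_v^s$ on the remaining factors. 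Since the middle vertical is the identity, the snake lemma immediately yields that $\iota$ is injective with cokernel $\bigoplus_{v \in S \setminus S_p} \ker(J_v \to J_v^s) \cong \bigoplus_{v \in S \setminus S_p} \Omega_{H,v}$, as required. The classical-versus-Greenberg-classical case is identical with the $J_p^{\bullet\star}$-factor replaced by the common classical $p$-term.

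The hard part will be the precise local identification of $\ker(J_v \to J_v^s)$ with $\Omega_{H,v}$. This requires controlling how the Kummer image $E(F_{n,w}) \otimes \Q_p/\Z_p = E(F_{n,w})[p^\infty]$ behaves under the transition maps of the direct limit and reconciling it with the definition of $\Omega_{H,v}$ as a direct limit of unramified cohomology groups $H^1(G_{F_{n,w}}/I_{F_{n,w}}, E[p^\infty]^{I_{F_{n,w}}})$. Consistency with the rank-zero conclusion and the $\mu$-invariant formula $v_p(c_v)$ in the split multiplicative case of Proposition~\ref{Omega_prop} provides a valuable sanity check on this identification.
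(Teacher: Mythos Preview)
Your approach is correct and essentially the same as the paper's. The paper frames it slightly more economically: rather than a two-row snake lemma, it uses the ker--coker sequence for a composite,
\[0 \to \ker(\alpha) \to \ker(\beta\circ\alpha) \to \ker(\beta) \to \coker(\alpha),\]
with $\alpha$ the classical global-to-local map and $\beta$ the projection from $J_p^{\bullet\star}\times\bigoplus J_v$ to $J_p^{\bullet\star}\times\bigoplus J_v^s$. This only requires $\coker(\alpha)=0$, i.e.\ Propositions~\ref{signedSeln_surjective_propHs} and \ref{Seln_surjective_propHs}, and never the Greenberg surjectivity you invoke. Also, the local identification you flag as ``the hard part'' is in fact immediate and the paper states it in one line: for $v\nmid p$, Mattuck's theorem gives $E(F_{n,w})\otimes\Q_p/\Z_p=0$, so $H^1(F_{n,w},E)[p^\infty]=H^1(F_{n,w},E[p^\infty])$, and inflation--restriction then identifies the kernel of $\res_{F_{n,w}}$ with $H^1(G_{F_{n,w}}/I_{F_{n,w}}, E[p^\infty]^{I_{F_{n,w}}})$; passing to the direct limit gives $\Omega_{H,v}$ on the nose. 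There is no subtlety to track with the Kummer image under the transition maps.
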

\begin{proof}
Consider the maps
$$\alpha: H^1(G_S(\KK_{\infty}^H), E[p^{\infty}]) \to J_p^{\bullet\star}(E/\KK_{\infty}^H) \times \bigoplus_{v \in S \setminus S_p} J_v(E/\KK_{\infty}^H), $$
$$\beta: J_p^{\bullet\star}(E/\KK_{\infty}^H) \times \bigoplus_{v \in S \setminus S_p} J_v(E/\KK_{\infty}^H) \to J_p^{\bullet\star}(E/\KK_{\infty}^H) \times \bigoplus_{v \in S \setminus S_p} J_v^s(E/\KK_{\infty}^H). $$
Then we have an exact sequence
$$0 \to \ker(\alpha) \to \ker (\beta \circ \alpha) \to \ker (\beta) \to \coker(\alpha).$$
We have that ${\ker(\alpha)=\Selpm(E/\KK_{\infty}^H)}$, ${\ker(\beta \circ \alpha)=\Selgpm(E/\KK_{\infty}^H)}$ and ${\ker \beta = \bigoplus_{v \in S \setminus S_p} \Omega_{H,v}}$. By Proposition~\ref{signedSeln_surjective_propHs} ${\coker(\alpha)=0}$. Therefore we get the first exact sequence. Similarly we get the second exact sequence using Proposition~\ref{Seln_surjective_propHs}.
\end{proof}

We will utilize the following two results for examples in the last subsection.

\begin{corollary}\label{muinequality_corollary}
Let ${\bullet, \star \in \{+,-\}}$ and ${H \in \mathcal{H}_s^{\bullet\star}}$. We have
$$\mu(X^{Gr, \bullet\star}(E/\KK_{\infty}^H)) \geq {\sum_{v \in S  \setminus S_p}} \mu((\Omega_{H,v})^{\vee})$$
and
$$\mu(T_{\Lambda}(X^{Gr}(E/\KK_{\infty}^H))) \geq {\sum_{v \in S  \setminus S_p}} \mu((\Omega_{H,v})^{\vee}). $$
\end{corollary}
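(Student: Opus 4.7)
The plan is to dualize the two exact sequences provided by Proposition~\ref{Selmercomp_prop} and then read off the two $\mu$-inequalities from the resulting submodule inclusions. Concretely, applying Pontryagin duality to the first sequence in Proposition~\ref{Selmercomp_prop} yields the short exact sequence of $\Lambda$-modules
\[ 0 \longrightarrow \bigoplus_{v \in S \setminus S_p} (\Omega_{H,v})^{\vee} \longrightarrow X^{Gr,\bullet\star}(E/\KK_{\infty}^H) \longrightarrow X^{\bullet\star}(E/\KK_{\infty}^H) \longrightarrow 0, \]
and analogously, dualizing the second sequence gives
\[ 0 \longrightarrow \bigoplus_{v \in S \setminus S_p} (\Omega_{H,v})^{\vee} \longrightarrow X^{Gr}(E/\KK_{\infty}^H) \longrightarrow X(E/\KK_{\infty}^H) \longrightarrow 0. \]

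For the first inequality, I would proceed as follows. Since $H \in \mathcal{H}_s^{\bullet\star}$, Definition~\ref{def:mathcal{H_s}} tells us that $X^{Gr,\bullet\star}(E/\KK_{\infty}^H)$ is a finitely generated torsion $\Lambda$-module. By Proposition~\ref{Omega_prop} each $(\Omega_{H,v})^{\vee}$ is also a finitely generated torsion $\Lambda$-module, so the first dualized sequence exhibits $\bigoplus_{v \in S \setminus S_p} (\Omega_{H,v})^{\vee}$ as a $\Lambda$-submodule of the torsion module $X^{Gr,\bullet\star}(E/\KK_{\infty}^H)$. Since $\mu$ is monotone on submodules of a finitely generated torsion $\Lambda$-module (this follows from the structure theorem, as a pseudo-isomorphism restricts to a pseudo-injection of elementary parts), we conclude
\[ \mu\bigl(X^{Gr,\bullet\star}(E/\KK_{\infty}^H)\bigr) \;\ge\; \mu\Bigl(\bigoplus_{v \in S \setminus S_p} (\Omega_{H,v})^{\vee}\Bigr) \;=\; \sum_{v \in S \setminus S_p} \mu\bigl((\Omega_{H,v})^{\vee}\bigr). \]

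For the second inequality, $X^{Gr}(E/\KK_{\infty}^H)$ is not $\Lambda$-torsion in general (cf.\ Proposition~\ref{Selrank_prop}), so the argument needs one extra step. Since $\bigoplus_{v \in S \setminus S_p} (\Omega_{H,v})^{\vee}$ is $\Lambda$-torsion by Proposition~\ref{Omega_prop}, its image in $X^{Gr}(E/\KK_{\infty}^H)$ under the injection from the second dualized sequence lands entirely inside the torsion submodule $T_{\Lambda}(X^{Gr}(E/\KK_{\infty}^H))$. This yields an injection
\[ \bigoplus_{v \in S \setminus S_p} (\Omega_{H,v})^{\vee} \hookrightarrow T_{\Lambda}\bigl(X^{Gr}(E/\KK_{\infty}^H)\bigr), \]
and the same monotonicity of $\mu$ on submodules of a finitely generated torsion $\Lambda$-module gives the desired inequality. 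The only mildly delicate point in the write-up is recording the monotonicity statement for $\mu$ under $\Lambda$-submodules of torsion modules, but this is a standard consequence of the structure theorem and causes no real obstacle.
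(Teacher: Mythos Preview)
Your proposal is correct and is exactly the intended argument: the paper states this as a corollary with no proof, since the two inequalities follow immediately from dualizing the exact sequences of Proposition~\ref{Selmercomp_prop}, noting that $\bigoplus_{v}(\Omega_{H,v})^{\vee}$ is $\Lambda$-torsion by Proposition~\ref{Omega_prop}, and using additivity (hence monotonicity) of $\mu$ in short exact sequences of torsion $\Lambda$-modules.
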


\begin{proposition}\label{mu_vanishing_prop2}
Suppose that we have ${\mu(X^{Gr, \bullet\star}(E/\KK_{\infty}^H))=0}$ for some ${\bullet, \star \in \{+,-\}}$ and ${H \in \mathcal{H}_s^{\bullet\star}}$. Then $X^{Gr, \bullet\star}(E/\KK_{\infty})$ is $\Lambda_2$-torsion and we have
$$\mu(T_{\Lambda_2}(X^{Gr}(E/\KK_{\infty})))=\mu(X^{Gr, \bullet\star}(E/\KK_{\infty}))=0. $$
\end{proposition}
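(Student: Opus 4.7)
The plan is to deduce both equalities from the control theorem combined with Theorem~\ref{splitmain_theorem} and Proposition~\ref{comparison_prop}. First I would apply Theorem~\ref{splitSelmerControlThm_prop} and dualise the isomorphism $s^{\bullet\star}$ to obtain an identification $X^{Gr,\bullet\star}(E/\KK_\infty)_H\cong X^{Gr,\bullet\star}(E/\KK_\infty^H)$. Since $H\in\mathcal{H}_s^{\bullet\star}$, the latter is already a torsion $\Lambda$-module; the hypothesis $\mu(X^{Gr,\bullet\star}(E/\KK_\infty^H))=0$ then upgrades it to being finitely generated as a $\Z_p$-module, and the same is therefore true for $X^{Gr,\bullet\star}(E/\KK_\infty)_H$. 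The ``well-known'' argument invoked at the end of the proof of Lemma~\ref{Xinf_torsion_lemma} (which rests on Howson's growth formula, Theorem~\ref{HowsonTheorem}) then gives that $X^{Gr,\bullet\star}(E/\KK_\infty)$ is a torsion $\Lambda_2$-module, establishing the first assertion in the proposition.

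Next, to deduce $\mu_G(X^{Gr,\bullet\star}(E/\KK_\infty))=0$, I would note that $(X^{Gr,\bullet\star}(E/\KK_\infty)_f)_H$ is a quotient of $X^{Gr,\bullet\star}(E/\KK_\infty)_H$ and hence finitely generated over $\Z_p$. Applying Nakayama's Lemma to the compact $\Lambda(H)$-module $X^{Gr,\bullet\star}(E/\KK_\infty)_f$ then gives that it is itself finitely generated over $\Lambda(H)$, which is precisely condition~(a) of Theorem~\ref{splitmain_theorem}. The equivalence $(a)\Longleftrightarrow(b)$ of that theorem yields
$$\mu_G(X^{Gr,\bullet\star}(E/\KK_\infty))=\mu_{G/H}(X^{Gr,\bullet\star}(E/\KK_\infty^H))=0,$$
as required.

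Finally, for $\mu_G(T_{\Lambda_2}(X^{Gr}(E/\KK_\infty)))=0$ I would invoke the remark made just before Definition~\ref{def:mathcal{H_s}}, according to which $X^{Gr}(E/\KK_\infty)=X(E/\KK_\infty)$ and $X^{Gr,\bullet\star}(E/\KK_\infty)=X^{\bullet\star}(E/\KK_\infty)$ on the full $\Z_p^2$-extension. Since $X^{\bullet\star}(E/\KK_\infty)$ is $\Lambda_2$-torsion by the first step, Proposition~\ref{comparison_prop}(a) provides a $\Lambda_2$-module injection $T_{\Lambda_2}(X(E/\KK_\infty))\hookrightarrow X^{\bullet\star}(E/\KK_\infty)$, and the monotonicity of $\mu$-invariants under injections of finitely generated torsion $\Lambda_2$-modules then forces $\mu_G(T_{\Lambda_2}(X^{Gr}(E/\KK_\infty)))\le\mu_G(X^{\bullet\star}(E/\KK_\infty))=0$.

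Because every ingredient has already been established in earlier sections, no serious obstacle is expected. The only delicate point is the passage ``$M_H$ is $\Lambda$-torsion $\Longrightarrow$ $M$ is $\Lambda_2$-torsion'' in the first step, but this is exactly the fact used at the end of the proof of Lemma~\ref{Xinf_torsion_lemma} and can be justified directly via Howson's formula applied to the coinvariants along the lower central $p$-series of $G$.
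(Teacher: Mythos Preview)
Your proof is correct and parallels the paper's closely: both dualise Theorem~\ref{splitSelmerControlThm_prop}, use Nakayama to obtain finite generation over $\Lambda(H)$, and conclude via the injection of Proposition~\ref{comparison_prop}(a). The one difference is in deducing $\mu_G(X^{Gr,\bullet\star}(E/\KK_\infty))=0$: you invoke Theorem~\ref{splitmain_theorem}, whereas the paper argues directly by contradiction (if $\mu_G>0$, the structure theorem yields an embedding $\Lambda_2/p^m \hookrightarrow X^{Gr,\bullet\star}(E/\KK_\infty)$, impossible since the target is finitely generated over the Noetherian ring $\Lambda(H)$). The paper's route is slightly more self-contained, but yours is equally valid.
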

\begin{proof}
By Proposition~\ref{splitSelmerControlThm_prop} we have an isomorphism
$$(X^{Gr, \bullet\star}(E/\KK_{\infty}))_H \isomarrow X^{Gr, \bullet\star}(E/\KK_{\infty}^H). $$
Since ${\mu(X^{Gr, \bullet\star}(E/\KK_{\infty}^H))=0}$, it follows that ${X^{Gr, \bullet\star}(E/\KK_{\infty}^H)_H}$ is finitely generated over $\Zp$. Therefore the same arguments as in the proof of Lemma~\ref{Xinf_torsion_lemma} show that $X^{Gr, \bullet\star}(E/\KK_{\infty})$ is a finitely generated torsion $\Lambda_2$-module.

Suppose that ${\mu(X^{Gr, \bullet\star}(E/\KK_{\infty})) \neq 0}$. Then taking into account \cite[Lemma~2.1]{MHG}, the structure theorem implies that for some ${m >0}$ we have an injection
$$\Lambda_2/p^m \hookrightarrow X^{Gr, \bullet\star}(E/\KK_{\infty}). $$
Since $X^{Gr, \bullet\star}(E/\KK_{\infty})$ is finitely generated over the Noetherian ring $\Lambda(H)$, it follows that ${\Lambda_2/p^m}$ is finitely generated over $\Lambda(H)$. This is clearly not true and hence we get a contradiction. Therefore ${\mu(X^{Gr, \bullet\star}(E/\KK_{\infty}))=0}$. Noting that ${X^{Gr}(E/\KK_{\infty})=X(E/\KK_{\infty})}$ and ${X^{Gr, \bullet\star}(E/\KK_{\infty})=X^{\bullet\star}(E/\KK_{\infty})}$, we have by Proposition~\ref{comparison_prop} a $\Lambda_2$-module injection
$$T_{\Lambda_2}(X^{Gr}(E/\K_\infty)) \hookrightarrow X^{Gr, \bullet\star}(E/\KK_\infty). $$
Therefore we also have ${\mu(T_{\Lambda_2}(X^{Gr}(E/\KK_{\infty})))=0}$.
\end{proof}

\subsection{Examples}\label{section:examples}
In this subsection we give examples where the $\mathfrak{M}_H(G)$-property for signed Selmer groups fails. Recall from Section~\ref{section:mainintroduction} the notation of the two special $\Z_p$-extensions $L^{(\fp)}_{\infty}$ and $L^{(\bar{\fp})}_{\infty}$ of $K$.
\begin{lemma}\label{extensionsplit_lemma}
No prime of $K$ splits completely in $L^{(\fp)}_{\infty}$ or $L^{(\bar{\fp})}_{\infty}$.
\end{lemma}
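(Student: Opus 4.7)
The plan is to split into two cases depending on whether the prime $v$ of $K$ lies above $p$ or not, treating only $L^{(\fp)}_\infty$ since $L^{(\bar{\fp})}_\infty$ is handled by swapping the roles of $\fp$ and $\bar{\fp}$. Throughout I would write $G = \Gal(\KK_\infty/K) \cong \Z_p^2$ and use that $L^{(\fp)}_\infty = \KK_\infty^{I_\fp}$, the fixed field of the inertia subgroup at $\fp$.

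For primes above $p$ I would work locally. A preliminary step is to observe that $I_\fp \cap I_{\bar{\fp}} = 0$: the fixed field $\KK_\infty^{I_\fp I_{\bar{\fp}}}$ is the maximal subextension of $\KK_\infty/K$ unramified at every place, hence contained in the Hilbert class field of $K$, which is finite over $K$; so $I_\fp I_{\bar{\fp}}$ has finite index in $G = \Z_p^2$ and the intersection must be trivial since $\Z_p^2$ is torsion-free. It follows that the image of $I_{\bar{\fp}}$ in $\Gal(L^{(\fp)}_\infty/K) = G/I_\fp$ is a non-trivial (in fact open) $\Z_p$-subgroup, so $\bar{\fp}$ is ramified in $L^{(\fp)}_\infty/K$ and does not split completely. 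For $v = \fp$, I would instead use that the local completion $\KK_{\infty,w}/K_\fp = \Q_p$ realises the full maximal $\Z_p^2$-extension of $\Q_p$, whose inertia and unramified parts each have $\Z_p$-rank one; hence $L^{(\fp)}_{\infty,w}/K_\fp$ is the unramified $\Z_p$-extension of $\Q_p$, in which $\fp$ is inert with residue degree $p^\infty$, so its decomposition group is all of $\Z_p = \Gal(L^{(\fp)}_\infty/K)$ and $\fp$ does not split.

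For $v \nmid p$ I would argue by contradiction using global class field theory. Suppose that $v$ splits completely in $L^{(\fp)}_\infty/K$, so that $\Frob_v$ lies in $I_\fp$ inside $G$. Let $h = h_K$ and choose $\alpha \in K^\times$ with $(\alpha) = v^h$, so that $v(\alpha) = h > 0$. Under the global Artin map, the idele $(\ldots, 1, \pi_v^h, 1, \ldots)$ representing $\Frob_v^h$ is equivalent modulo $K^\times$ to the unit idele obtained after multiplying by $\alpha^{-1}$, whose only contributions to the pro-$p$ quotient come from the $\fp$- and $\bar{\fp}$-components $\alpha^{-1}|_\fp$ and $\alpha^{-1}|_{\bar{\fp}}$. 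Under the reciprocity identification $G \cong (1+p\Z_p)_\fp \times (1+p\Z_p)_{\bar{\fp}}$, legitimate since $p \ge 5$ makes both $\mu_{p-1}$ at each prime and the diagonal image of $\cO_K^\times$ prime-to-$p$, the subgroup $I_\fp$ is precisely the first factor. Thus $\Frob_v^h \in I_\fp$ forces $\alpha|_{\bar{\fp}} \in \mu_{p-1} \subset \Z_p^\times = K_{\bar{\fp}}^\times$, i.e.\ $\alpha^{p-1} = 1$ in $K_{\bar{\fp}}$. Since the embedding $K \hookrightarrow K_{\bar{\fp}}$ is injective, $\alpha^{p-1} = 1$ holds already in $K$, so $\alpha \in \mu(K) \subseteq \cO_K^\times$; this contradicts $v(\alpha) = h > 0$.

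The step I expect to require the most care is the identification $G \cong (1+p\Z_p)_\fp \times (1+p\Z_p)_{\bar{\fp}}$ with $I_\fp$ as the first factor. Concretely, I would verify that the pro-$p$ completion of $\cO_{K,p}^\times = \cO_{K_\fp}^\times \times \cO_{K_{\bar{\fp}}}^\times$ is exactly $(1+p\Z_p)^2$ and that the closure of the diagonal image of $\cO_K^\times$ dies in this quotient, both of which follow from $p \ge 5$ (so that the finite cyclic groups $\mu_{p-1}$ and $\cO_K^\times$ have order prime to $p$, even for $K = \Q(\sqrt{-1})$ or $\Q(\sqrt{-3})$). The passage from $\Frob_v \in I_\fp$ to $\Frob_v^h \in I_\fp$ is immediate, and once $\alpha^{p-1} = 1$ is obtained in $K_{\bar{\fp}}$, the final contradiction is automatic since every root of unity in $K$ lies in $\cO_K^\times$ and has $v$-adic valuation zero.
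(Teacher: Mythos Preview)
Your proof is correct and takes a genuinely different route from the paper's. The paper argues via ray class fields: it places $L^{(\fp)}_\infty$ inside a ray class tower $K(\fq^\infty)$ (there is a harmless $\fp \leftrightarrow \bar{\fp}$ mix-up in the paper's write-up), then uses the ideal-theoretic Artin symbol and a pigeonhole on the finitely many roots of unity in $K$ to derive a contradiction. Your argument instead separates primes above $p$ (handled purely locally via the structure of $D_\fp, I_\fp, I_{\bar{\fp}}$ inside $G$) from primes $v \nmid p$ (handled via an idelic computation of $\Frob_v^h$ that forces $\alpha$ to be a root of unity in $K$). Your approach has the virtue of treating the case of the prime $\fp$ itself explicitly, which the paper's argument skates over; conversely, the paper's ray class field argument is slightly more elementary and treats all primes $\fq$ coprime to the modulus in one stroke.

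One point to tighten: the identification $G \cong (1+p\Z_p)_\fp \times (1+p\Z_p)_{\bar{\fp}}$ is equivalent to $I_\fp \cdot I_{\bar{\fp}} = G$, and this can fail when $p \mid h_K$ (the cokernel is $\Gal(\KK_\infty \cap H_K/K)$). Fortunately your argument does not actually need this. What you use is only that the idelic representative of $\Frob_v^h$ has image $\mathrm{Art}_\fp(\alpha^{-1}|_\fp)\cdot \mathrm{Art}_{\bar{\fp}}(\alpha^{-1}|_{\bar{\fp}}) \in I_\fp \cdot I_{\bar{\fp}}$, together with $I_\fp \cap I_{\bar{\fp}} = 0$ (which you proved in the first part): from $\Frob_v^h \in I_\fp$ one then gets $\mathrm{Art}_{\bar{\fp}}(\alpha^{-1}|_{\bar{\fp}}) \in I_\fp \cap I_{\bar{\fp}} = 0$, hence $\alpha|_{\bar{\fp}} \in \ker(\mathrm{Art}_{\bar{\fp}}\colon \Z_p^\times \twoheadrightarrow I_{\bar{\fp}}) = \mu_{p-1}$, and the rest goes through verbatim. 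A second, smaller point: for $v = \fp$ you assert $D_\fp = G$; in general one only knows $D_\fp$ is \emph{open} in $G$ (since $\KK_{\infty,w} = \Q_{p,\mathrm{cyc}}^{ur}$), but open is all you need for the image in $G/I_\fp$ to be nontrivial.
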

\begin{proof}
We prove this for $L^{(\fp)}_{\infty}$; the proof for $L^{(\bar{\fp})}_{\infty}$ will be similar. For any ${n \geq 0}$, let $K(\fp^n)$ (resp. $K(p^n)$) be the ray class field modulo $\fp^n$ (resp. $p^n$). Let ${K(\fp^{\infty})=\bigcup_{n=0}^{\infty} K(\fp^n)}$ and ${K(p^{\infty})=\bigcup_{n=0}^{\infty} K(p^n)}$. Then $L^{(\fp)}$ is the unique $\Zp$-extension inside $K(\fp^{\infty})$. By \cite[Main Lemma~1]{Brink} ${\Gal(K(p^{\infty})/K) \cong \Zp \times \Zp \times T}$ where $T$ is a finite group. Since ${K(\fp^{\infty}) \subseteq K(p^{\infty})}$, we see that ${\Gal(K(\fp^{\infty})/K) \cong \Zp \times S}$ where $S$ is a finite group. Let $I_{\fp}$ be the group of fractional ideals of $K$ that are relatively prime to $\fp$ and for any ${n \geq 0}$ let $P_{\fp^n}$ be the group of principal ideals $(\alpha)$ with integral ${\alpha \equiv 1 \pmod{\fp^n}}$. By class field theory, the Artin symbol is a surjective homomorphism
$$\Big(\frac{K(\fp^n)/K}{}\Big): I_{\fp^n} \twoheadrightarrow \Gal(K(\fp^n)/K)$$
with kernel $P_{\fp^n}$.

Now assume that $\fq$ is a prime of $K$ that splits completely in ${L^{(\fp)}_{\infty}/K}$. Then ${\fq \in I_{\fp}}$. Let $m$ be the order of the group $S$. Then it follows that for any ${n \geq 0}$ we have ${\Big(\frac{K(\fp^n)/K}{\fq^m}\Big)=1}$. Therefore ${\fq^m \in P_{\fp^n}}$ for all ${n \geq 0}$. In particular, ${\fq^m=(\alpha)}$ for some ${\alpha \in \mathcal{O}_K}$. Let $\mu$ be the finite group of roots of unity in $K$. Now let ${n \geq 0}$. Since ${\fq^m \in P_{\fp^n}}$, there exists ${\zeta_{n} \in \mu}$ with ${\zeta_{n} \alpha \equiv 1 \pmod{\fp^n}}$. Now let $t$ be the maximum value of ${\ord_{\fp}(\zeta \alpha -1)}$ as $\zeta$ ranges over $\mu$. Then for any ${n > t}$ we have ${\fq^m \not\in P_{\fp^{n}}}$. This contradicts the above.
\end{proof}

Now let ${L^{\pm}_p(E), L^{\pm}_p(E^{(K)}) \in \Zp[[T]]}$ be the plus/minus $p$-adic $L$-functions of Pollack \cite{Pollack} attached to $E/\Q$ and ${E^{(K)}/\Q}$. Our examples will make use of the following

\begin{proposition}\label{muzero_prop}
Let ${\bullet \in \{+,-\}}$. Assume that ${\mu(L^{\bullet}_p(E))=\mu(L^{\bullet}_p(E^{(K)}))=0}$ and let ${t:=\lambda(L^{\bullet}_p(E))+\lambda(L^{\bullet}_p(E^{(K)}))}$. Then
\begin{enumerate}[(a)]
\item ${\mu(T_{\Lambda_2}(X^{Gr}(E/\KK_{\infty})))=\mu(X^{Gr, \bullet\bullet}(E/\KK_{\infty}))=0}$.
\item The number of $\Zp$-extensions of $L_{\infty}/K$, where $X^{\bullet\bullet}(E/L_{\infty})$ is not a torsion $\Lambda$-module is at most $t$.
\end{enumerate}
\end{proposition}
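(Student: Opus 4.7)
The strategy is to pick $H_{cyc}$ as our test subgroup and exploit the fact that the $p$-adic $L$-function inputs force Iwasawa invariants to vanish on the cyclotomic $\Z_p$-extension. By Proposition~\ref{Hcyc_prop} we have $H_{cyc} \in \mathcal{H}^{\bullet\bullet} \subseteq \mathcal{H}_s^{\bullet\bullet}$, and since no prime of $K$ splits completely in $K_{cyc}/K$ the discussion preceding Definition~\ref{def:mathcal{H}_s} (together with the remark after Definition~\ref{def:modifiedSelmer} in spirit) tells us that $\Selgpm(E/K_{cyc}) = \Selpm(E/K_{cyc})$, so ${X^{Gr,\bullet\bullet}(E/K_{cyc}) = X^{\bullet\bullet}(E/K_{cyc})}$.

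First I would use the quadratic twist decomposition \eqref{quad_twist_isom+}/\eqref{quad_twist_isom-} to obtain an isomorphism of $\Gamma_{cyc}$-modules
\[ X^{\bullet\bullet}(E/K_{cyc}) \cong X^{\bullet}(E/\Q_{cyc}) \oplus X^{\bullet}(E^{(K)}/\Q_{cyc}). \]
Combining this with Kobayashi's inequalities from \cite[Theorem~1.3]{Kob} (as already used in the proof of Proposition~\ref{Rankbound_prop}), the hypothesis $\mu(L^{\bullet}_p(E)) = \mu(L^{\bullet}_p(E^{(K)})) = 0$ yields ${\mu(X^{\bullet\bullet}(E/K_{cyc})) = 0}$, while $\lambda(X^{\bullet\bullet}(E/K_{cyc})) \le \lambda(L^{\bullet}_p(E)) + \lambda(L^{\bullet}_p(E^{(K)})) = t$.

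For assertion~(a), since ${X^{Gr,\bullet\bullet}(E/K_{cyc}) = X^{\bullet\bullet}(E/\KK_{\infty}^{H_{cyc}})}$ has vanishing $\mu$-invariant, Proposition~\ref{mu_vanishing_prop2} applies directly and gives both ${\mu(X^{Gr,\bullet\bullet}(E/\KK_\infty)) = 0}$ and ${\mu(T_{\Lambda_2}(X^{Gr}(E/\KK_\infty))) = 0}$.

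For assertion~(b), the vanishing $\mu(X^{\bullet\bullet}(E/K_{cyc})) = 0$ implies that $X^{\bullet\bullet}(E/K_{cyc})$ is finitely generated over $\Z_p$. Applying the control theorem (Theorem~\ref{SelmerControlThm_prop}) we see that $X^{\bullet\bullet}(E/\KK_\infty)_{H_{cyc}}$ is finitely generated over $\Z_p$, hence by Nakayama $X^{\bullet\bullet}(E/\KK_\infty)$ is finitely generated over $\Lambda(H_{cyc})$; in particular $X^{\bullet\bullet}(E/\KK_\infty)_f$ is finitely generated over $\Lambda(H_{cyc})$, so $H_{cyc} \in \Omega^{\bullet\bullet}$. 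Applying Theorem~\ref{rankbound_theorem2} with this choice of $H$ then bounds the count by $\lambda^{\bullet\bullet}_{H_{cyc}} = \lambda(X^{\bullet\bullet}(E/K_{cyc})) \le t$, as desired. No serious obstacle is anticipated; the one subtlety worth spelling out is the identification of the Greenberg and ordinary signed Selmer groups on $K_{cyc}$, which is what lets us feed the cyclotomic $\mu$-invariant data into Proposition~\ref{mu_vanishing_prop2}.
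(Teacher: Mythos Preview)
Your proposal is correct and follows essentially the same route as the paper: both reduce to the cyclotomic $\Z_p$-extension via Proposition~\ref{Hcyc_prop}, use the quadratic twist decomposition together with Kobayashi's inequalities to get ${\mu(X^{\bullet\bullet}(E/K_{cyc}))=0}$ and ${\lambda(X^{\bullet\bullet}(E/K_{cyc})) \le t}$, then invoke Proposition~\ref{mu_vanishing_prop2} for~(a) and the control theorem plus Nakayama plus Theorem~\ref{rankbound_theorem2} for~(b). Your explicit mention of the identification ${X^{Gr,\bullet\bullet}(E/K_{cyc}) = X^{\bullet\bullet}(E/K_{cyc})}$ is the only difference, and it is a useful clarification that the paper leaves implicit.
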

\begin{proof}
Let $\Q_{cyc}$ be the cyclotomic $\Zp$-extension of $\Q$. Let ${\Selpmsingle(E/K_{cyc})}$ be the plus and minus Selmer groups over $\Q_{cyc}$ as in Kobayashi's paper~\cite{Kob}. Let ${X^{\pm}(E/\Q_{cyc})}$ denote the Pontryagin dual of $\Selpmsingle(E/\Q_{cyc})$. By \cite[Theorem~1.3]{Kob} we have that
\begin{equation}\label{Grlambda_relation}
\lambda(X^{\bullet}(E/\Q_{cyc})) \leq \lambda(L^{\bullet}_p(E)),
\end{equation}
\begin{equation}\label{Grmu_relation}
\mu(X^{\bullet}(E/\Q_{cyc})) \leq \mu(L^{\bullet}_p(E)),
\end{equation}
and similarly for $E^{(K)}$. Since $p$ is odd, we can identify $\Gal(\Q_{cyc}/\Q)$ and $\Gal(K_{cyc}/K)$. Let ${\Lambda_{cyc}=\Zp[[\Gal(K_{cyc}/K)]]}$. As in Proposition~\ref{Hcyc_prop}, we have an isomorphism of $\Lambda_{cyc}$-modules

$$X^{\bullet\bullet}(E/K_{cyc})\cong X^{\bullet}(E/\Q_{cyc}) \oplus X^{\bullet}(E^{(K)}/\Q_{cyc}). $$

Therefore from \eqref{Grlambda_relation} and \eqref{Grmu_relation} we have that
\begin{equation}\label{Grmu_rel}
\mu(X^{\bullet\bullet}(E/K_{cyc}))=0,
\end{equation}
\begin{equation}\label{Grlambda_rel}
\lambda(X^{\bullet\bullet}(E/K_{cyc})) \leq t.
\end{equation}
From Proposition~\ref{Hcyc_prop} we have that ${H_{cyc} \in \mathcal{H}_s^{\bullet\bullet}}$. Therefore from \eqref{Grmu_rel} and Proposition~\ref{mu_vanishing_prop2} we get $(a)$. Since ${\mu(X^{\bullet\bullet}(E/K_{cyc}))=0}$, we have that $X^{\bullet\bullet}(E/K_{cyc})$ is a finitely generated $\Zp$-module. By Theorem~\ref{SelmerControlThm_prop} we have an isomorphism ${X^{\bullet\bullet}(E/\KK_{\infty})_{H_{cyc}}\cong X^{\bullet\bullet}(E/K_{cyc})}$. Therefore by Nakayama's Lemma $X^{\bullet\bullet}(E/\KK_{\infty})$ is finitely generated over $\Lambda(H)$. It follows that $X^{\bullet\bullet}(E/\KK_{\infty})_f$ is also finitely generated over $\Lambda(H)$. Therefore $(b)$ follows from \eqref{Grlambda_rel} and Theorem~\ref{rankbound_theorem2}.
\end{proof}

Let $N$ be the conductor of $E$. For the next result we assume that no prime dividing $N$ ramifies in $K/\Q$. Write ${N=N^+N^-}$ where all the prime divisors of $N^+$ (respectively $N^-$) are split (respectively inert) in $K/\Q$. Let $K_{ac}/K$ be the anticyclotomic $\Zp$-extension of $K$ and ${H_{ac}:=\Gal(\KK_{\infty}/K_{ac})}$. We now need the following important theorem due to Pollack and Weston.
\begin{theorem}\label{PW_theorem}
Let ${\bullet \in \{+,-\}}$. Assume the following
\begin{enumerate}[(a)]
\item $N^-$ is a squarefree product of an odd number of primes,
\item both primes of $K$ above $p$ are totally ramified in $K_{ac}/K$,
\item ${\Gal(\Q(E[p])/\Q)=GL_2(\Fp)}$,
\item if ${q \mid N^-}$ and ${q \equiv \pm 1 \pmod{p}}$, then $q$ ramifies in ${\Q(E[p])/\Q}$.
\end{enumerate}
Then $X^{Gr, \bullet\bullet}(E/K_{ac})$ is a torsion $\Lambda$-module.
\end{theorem}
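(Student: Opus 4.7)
The plan is to reduce the statement about $X^{Gr,\bullet\bullet}(E/K_{ac})$ to the analogous one for the classical signed Selmer group $X^{\bullet\bullet}(E/K_{ac})$, and then to appeal to the anticyclotomic Euler system machinery coming from Gross points on a definite quaternion algebra. Concretely, I would first apply the first exact sequence in Proposition~\ref{Selmercomp_prop} with $H = H_{ac}$; taking Pontryagin duals yields
\begin{equation*}
0 \longrightarrow \bigoplus_{v \in S \setminus S_p} \Omega_{H_{ac},v}^{\vee} \longrightarrow X^{Gr,\bullet\bullet}(E/K_{ac}) \longrightarrow X^{\bullet\bullet}(E/K_{ac}) \longrightarrow 0.
\end{equation*}
Since each $\Omega_{H_{ac},v}^{\vee}$ is a finitely generated torsion $\Lambda$-module by Proposition~\ref{Omega_prop}, it suffices to prove that $X^{\bullet\bullet}(E/K_{ac})$ is $\Lambda$-torsion. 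Hypothesis~(b), the total ramification of both primes above $p$ in $K_{ac}/K$, ensures that this is the classical plus/minus Selmer group of Iovita-Pollack \cite{IP}.

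Next I would appeal to the existence, guaranteed by hypothesis~(a), of a definite quaternion algebra $B/\Q$ ramified exactly at the primes dividing $N^-$ and at $\infty$. Via the Jacquet-Langlands transfer of $f_E$ to $B$, the Bertolini-Darmon construction produces a system of cohomology classes derived from Gross points on Shimura sets attached to $B$, compatible with corestriction along the tower $K_{ac}/K$. The supersingular signed local theory of Kobayashi, adapted to the imaginary quadratic setting as in \cite{IP}, should show that these classes satisfy the plus/minus local conditions at the primes above $p$. Hypothesis~(c), the surjectivity of the mod-$p$ Galois representation, provides the big-image input needed for Chebotarev-type arguments, while hypothesis~(d) avoids the exceptional primes where the Gross points could become trivial modulo $p$, in the sense of Vatsal's non-triviality theorems.

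The concluding step would be a Kolyvagin-style derivative argument on these classes, adapted to the signed anticyclotomic setting (as developed by Darmon-Iovita, Longo-Vigni, and the original Pollack-Weston paper \cite{PW} suitably reinterpreted), producing an upper bound on the $\Lambda$-corank of $\Selm^{\bullet\bullet}_{p^{\infty}}(E/K_{ac})$; in the definite case this bound is zero, giving the desired cotorsionness. The hard part, I expect, will be the verification that the Kolyvagin derivative classes continue to satisfy the plus/minus local conditions at the two primes above $p$ (rather than merely lying in the classical $p^\infty$-Selmer group). This is the essential subtlety that separates the supersingular signed case from the ordinary setting of \cite{PW}, and it is where the recent advances in supersingular anticyclotomic Iwasawa theory, together with the signed local formalism exploited throughout the present paper, enter decisively.
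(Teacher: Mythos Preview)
Your reduction step is exactly what the paper does: use Proposition~\ref{Selmercomp_prop} together with Proposition~\ref{Omega_prop} to pass from $X^{Gr,\bullet\bullet}(E/K_{ac})$ to $X^{\bullet\bullet}(E/K_{ac})$, so that it suffices to show the latter is $\Lambda$-torsion.

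For that last step, however, the paper does not redevelop the Euler system argument you sketch; it simply invokes \cite[Theorem~4.1]{PW}, which already establishes the $\Lambda$-torsionness of $X^{\bullet\bullet}(E/K_{ac})$ under hypotheses (a)--(d). Your outline of the Bertolini--Darmon/Gross-point construction and Kolyvagin-derivative argument is a fair summary of what underlies \cite{PW}, but it is not needed here and the ``hard part'' you anticipate (checking the signed local conditions for the derivative classes) is precisely what Pollack--Weston handle. One technical point you do not mention, and which the paper flags explicitly: the results of \cite{PW} implicitly assume $N^+$-minimality, i.e.\ that every prime $q\mid N^+$ ramifies in $\Q(E[p])/\Q$. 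The paper removes this extra assumption by appealing to the level-lowering trick of \cite[Corollary~2.3]{KPW}. Without noting this, your citation of \cite{PW} alone would leave a gap in the general case.
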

\begin{proof}
By Proposition~\ref{Omega_prop} we have that ${\bigoplus_{v \in S \setminus S_p} \Omega_{H_{ac},v}}$ is a cotorsion $\Lambda$-module. Therefore from Proposition~\ref{Selmercomp_prop} we have that $X^{Gr, \bullet\bullet}(E/K_{ac})$ is a torsion $\Lambda$-module if and only if $X^{\bullet\bullet}(E/K_{ac})$ is a torsion $\Lambda$-module. This latter group is shown to be $\Lambda$-torsion in \cite[Theorem~4.1]{PW}; however, we should note that the results of loc. cit. implicitly assume that every prime $q$ dividing $N^+$ ramifies in $\Q(E[p])/\Q$. This latter assumption of $N^+$-minimality can be removed by the level-lowering trick in \cite[Corollary~2.3]{KPW}.
\end{proof}

\begin{lemma}\label{Kida_lemma}
Let $q$ be a rational prime dividing $N$ that is unramified in $K/\Q$. The following are equivalent:
	\begin{enumerate}[(a)]
		\item The Kodaira type of $E$ at $q$ is $I_n$ with ${p \mid n}$.
		\item $q$ does not ramify in ${\Q(E[p])/\Q}$.
		\item ${p \mid c_v}$ for any prime $v$ of $K$ above $q$.
	\end{enumerate}
\end{lemma}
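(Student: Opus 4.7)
The plan is to split the proof according to the reduction type of $E$ at $q$. Since $q \mid N$, the reduction is either multiplicative or additive, and I would treat these two cases in turn.

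For additive reduction (Kodaira types $II$, $III$, $IV$, $I_n^*$, $II^*$, $III^*$, $IV^*$), all three conditions should fail, which makes the equivalence trivially hold in this case. Condition (a) fails by definition. The Tamagawa numbers associated with these types lie in the bounded set $\{1,2,3,4\}$ (with the small exception of $I_n^*$ for large $n$, which still yields small values bounded by $4$), so they are automatically coprime to $p \geq 5$; hence (c) fails. For (b), I would invoke the classical fact that for $p \geq 5$ and additive reduction at $q \neq p$, the inertia subgroup at $q$ acts non-trivially on $E[p]$: indeed, $E$ acquires good reduction only after a ramified extension whose degree divides $12$, which is coprime to $p$, and one checks directly (via the action on a Néron differential or via Serre's classification of the image of inertia) that the inertia action on $E[p]$ cannot be trivial.

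The substantive case is multiplicative reduction $I_n$, where I would invoke the Tate curve uniformisation. Over $\Q_q^{ur}$, the curve $E$ becomes isomorphic to a Tate curve $E_{q_E}$ with $v_q(q_E) = n$, so the $p$-torsion sits in a short exact sequence of inertia modules
\begin{equation*}
0 \longrightarrow \mu_p \longrightarrow E[p] \longrightarrow \Z/p\Z \longrightarrow 0,
\end{equation*}
with extension class represented by $q_E^{1/p}$. Inertia at $q$ acts trivially on $E[p]$ if and only if $q_E^{1/p} \in \Q_q^{ur}$, equivalently if and only if $p \mid v_q(q_E) = n$; since the unramified quadratic twist (needed for non-split reduction) does not affect ramification, this proves $(a) \Longleftrightarrow (b)$. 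For $(a) \Longleftrightarrow (c)$, I would apply Tate's algorithm to compute $c_v$ over each completion $K_v$ with $v \mid q$, and compare with the divisibility $p \mid n$, using that $q$ unramified in $K/\Q$ implies that the Kodaira type is preserved under the base change $K_v/\Q_q$.

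The main obstacle will be the careful case analysis in the multiplicative case: one must keep track of whether the local extension $K_v/\Q_q$ coincides with, or contains, the unramified quadratic extension over which $E$ acquires split multiplicative reduction, since this determines whether the Tamagawa number at $v$ equals $n$ (when $E/K_v$ is split multiplicative) or divides $2$ (when $E/K_v$ is non-split multiplicative). The combinatorics of the split/inert behaviour of $q$ in $K/\Q$ crossed with the split/non-split reduction type of $E/\Q_q$ produces the four cases to verify, and the equivalence $(a) \Longleftrightarrow (c)$ will emerge from a careful bookkeeping of these possibilities together with the fact that $p \geq 5$ forces $p \nmid \gcd(2,n)$.
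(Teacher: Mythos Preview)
The paper does not prove this here; it simply invokes \cite[Lemma~10.5]{MHG}. Your outline is the natural direct route and is presumably how the cited result is established.

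However, one of your four cases will not close as you predict. Suppose $E$ has non-split multiplicative reduction of type $I_n$ at $q$ with $q$ \emph{split} in $K/\Q$. Then $K_v \cong \Q_q$ for each $v \mid q$, so $E/K_v$ is still non-split multiplicative and $c_v = \gcd(2,n) \le 2$; since $p \ge 5$, condition~(c) fails regardless of $n$. On the other hand, the unramified quadratic twist does not alter the inertia action on $E[p]$, so (a) and (b) still hold precisely when $p \mid n$. Thus if $p \mid n$ in this configuration, (a) and (b) hold while (c) fails. You correctly noted that $c_v \mid 2$ here, but your claim that the equivalence ``will emerge from careful bookkeeping'' does not survive this case. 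You should check whether the formulation in \cite{MHG} carries an additional hypothesis, or observe that the only application in the present paper (the proof of Theorem~\ref{example_theorem1}) concerns inert primes $q \mid N^{-}$, where this problematic case cannot arise.
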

\begin{proof}
	This is \cite[Lemma~10.5]{MHG}.
\end{proof}
Let $h_K$ be the class number of $K$.

\begin{theorem}\label{example_theorem1}
Let ${\bullet \in \{+,-\}}$. Assume the following
\begin{enumerate}[(a)]
\item $N^-$ is a squarefree product of an odd number of primes.
\item ${p \nmid h_K}$.
\item ${\Gal(\Q(E[p])/\Q)=GL_2(\Fp)}$.
\item If ${q \mid N^-}$ and ${q \equiv \pm 1 \pmod{p}}$, then ${p \nmid c_v}$ for the prime $v$ of $K$ above $q$.
\item For some prime $q$ dividing $N^-$, we have that $E$ has multiplicative reduction at $q$ and ${p \mid c_v}$ for the prime $v$ of $K$ above $q$.
\item ${\mu(L^{\bullet}_p(E))=\mu(L^{\bullet}_p(E^{(K)}))=0}$.
\end{enumerate}
Then ${H_{ac} \subseteq \mathcal{H}_s^{\bullet\star}}$ and furthermore we have
\begin{enumerate}
\item $X^{Gr, \bullet\bullet}(E/\KK_{\infty})_f$ is not finitely generated over $\Lambda(H_{ac})$.
\item If $X^{Gr}(E/\KK_{\infty})_f$ is finitely generated over $\Lambda(H_{ac})$, then ${\mu_{G/H_{ac}}(T_{\Lambda}([F_{\Lambda_2}(X^{Gr}(E/\KK_{\infty}))]_{H_{ac}})) >0}$.
\end{enumerate}
\end{theorem}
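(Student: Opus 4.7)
The plan is to first establish that $H_{ac} \in \mathcal{H}_s^{\bullet\bullet}$, then deduce both parts by comparing $\mu$-invariants at the top ($\KK_\infty$) and intermediate ($K_{ac}$) levels. For the membership in $\mathcal{H}_s^{\bullet\bullet}$, assumption~(b) ensures that $p \nmid h_K$, so both primes of $K$ above $p$ are totally ramified in $K_{ac}/K$, giving condition~(a) of Definition~\ref{def:mathcal{H_s}}. For condition~(b), we plan to invoke Theorem~\ref{PW_theorem}: hypotheses (a)--(c) of that theorem are immediate from assumptions~(a)--(c) here, and for hypothesis~(d) we use Lemma~\ref{Kida_lemma}, which shows that $p \nmid c_v$ (our assumption (d)) rules out the Kodaira type $I_n$ with $p\mid n$, equivalently that $q$ ramifies in $\Q(E[p])/\Q$. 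This gives that $X^{Gr,\bullet\bullet}(E/K_{ac})$ is $\Lambda$-torsion, and thus $H_{ac}\in \mathcal{H}_s^{\bullet\bullet}$.

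For part~(1), the strategy is to show that the two $\mu$-invariants in Theorem~\ref{splitmain_theorem}(b) differ. By Proposition~\ref{muzero_prop}(a) applied to assumption~(f), we have $\mu_G(X^{Gr,\bullet\bullet}(E/\KK_\infty)) = 0$. On the other hand, let $q$ be the prime from assumption~(e), and let $v$ be the (unique) prime of $K$ above $q$. Since $q\mid N^-$, $q$ is inert in $K/\Q$. The first key step is to verify that $v$ splits completely in $K_{ac}/K$: the Frobenius $\Frob_v$ in the abelian group $\Gal(K_{ac}/K)\cong \Z_p$ is fixed by complex conjugation (because $v=\bar v$), but complex conjugation acts as $-1$ on $\Gal(K_{ac}/K)$, forcing $\Frob_v^2 = 1$; since $p$ is odd, $\Frob_v$ is trivial, and $v$ is unramified in $K_{ac}/K$, so it splits completely. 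Then Proposition~\ref{Omega_prop}(b) applies (case (i) if $E$ has split multiplicative reduction at $q$, case (ii) otherwise, since $q$ is inert) to give $\mu((\Omega_{H_{ac},v})^\vee)=v_p(c_v)\ge 1$. Corollary~\ref{muinequality_corollary} then yields
\[ \mu_{G/H_{ac}}(X^{Gr,\bullet\bullet}(E/K_{ac})) \;\ge\; v_p(c_v) \;>\; 0 \;=\; \mu_G(X^{Gr,\bullet\bullet}(E/\KK_\infty)). \]
The contrapositive of the implication $(a)\Longrightarrow(b)$ in Theorem~\ref{splitmain_theorem} then yields that $X^{Gr,\bullet\bullet}(E/\KK_\infty)_f$ is not finitely generated over $\Lambda(H_{ac})$.

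For part~(2), assume that $X^{Gr}(E/\KK_\infty)_f$ is finitely generated over $\Lambda(H_{ac})$. Then Theorem~\ref{splitmain_theorem3} gives the identity
\[ \mu_G(T_{\Lambda_2}(X^{Gr}(E/\KK_\infty))) = \mu_{G/H_{ac}}(T_{\Lambda}(X^{Gr}(E/K_{ac}))) - \mu_{G/H_{ac}}(T_{\Lambda}([F_{\Lambda_2}(X^{Gr}(E/\KK_\infty))]_{H_{ac}})). \]
The left-hand side vanishes by Proposition~\ref{muzero_prop}(a), while the first term on the right is $\ge v_p(c_v)>0$ by the same Corollary~\ref{muinequality_corollary} argument used above (now applied to the Selmer-group version). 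Rearranging forces $\mu_{G/H_{ac}}(T_{\Lambda}([F_{\Lambda_2}(X^{Gr}(E/\KK_\infty))]_{H_{ac}})) > 0$, as required.

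The main obstacle will be ensuring that the splitting behaviour of $v$ in $K_{ac}/K$ and the local contribution to $\Omega_{H_{ac},v}$ are handled correctly: in particular, we must confirm that the Pollack--Weston $\mu=0$ result for $X^{Gr,\bullet\bullet}$ over $\KK_\infty$ (via Proposition~\ref{muzero_prop}) does not conflict with the positive $\mu$-contribution at $K_{ac}$---but this is precisely the phenomenon we want, as the mismatch between ``$\mu=0$ up top'' and ``$\mu>0$ at the anticyclotomic layer'' is what forces the $\mathfrak{M}_H(G)$-property to fail. Everything else is essentially an application of the tools already built in Sections~\ref{section:counterexample} and earlier.
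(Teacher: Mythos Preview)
Your proof is correct and follows essentially the same approach as the paper: verify $H_{ac}\in\mathcal{H}_s^{\bullet\bullet}$ via Theorem~\ref{PW_theorem} (using Lemma~\ref{Kida_lemma} to translate hypothesis~(d)), then compare the vanishing of $\mu_G$ from Proposition~\ref{muzero_prop} against the positivity of $\mu_{G/H_{ac}}$ coming from Proposition~\ref{Omega_prop} and Corollary~\ref{muinequality_corollary}, and conclude via Theorems~\ref{splitmain_theorem} and \ref{splitmain_theorem3}. Your explicit verification that the inert prime $v$ splits completely in $K_{ac}/K$ (via the action of complex conjugation on $\Gal(K_{ac}/K)$) is a helpful detail that the paper leaves implicit.
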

\begin{proof}
By Lemma~\ref{Kida_lemma} we have that $(d)$ of this theorem is equivalent to Theorem~\ref{PW_theorem}$(d)$. Also $(b)$ of this theorem implies Theorem~\ref{PW_theorem}$(b)$. Therefore by Theorem~\ref{PW_theorem} $X^{Gr, \bullet\bullet}(E/K_{ac})$ is a torsion $\Lambda$-module. Since both $\fp$ and $\bar{\fp}$ ramify in $K_{ac}/K$, it follows that ${H_{ac} \subseteq \mathcal{H}_s^{\bullet\star}}$. By $(f)$ and Proposition~\ref{muzero_prop} we have that
\begin{equation}\label{muzero_equality}
\mu(T_{\Lambda_2}(X^{Gr}(E/\KK_{\infty})))=\mu(X^{Gr, \bullet\star}(E/\KK_{\infty}))=0.
\end{equation}
From $(e)$, Proposition~\ref{Omega_prop} and Corollary~\ref{muinequality_corollary} we have that
\begin{equation}\label{mupositive_equality1}
\mu(X^{Gr, \bullet\bullet}(E/K_{ac}))>0,
\end{equation}

\begin{equation}\label{mupositive_equality2}
\mu(T_{\Lambda}(X^{Gr}(E/K_{ac})))>0.
\end{equation}
Then (1) follows from \eqref{muzero_equality}, \eqref{mupositive_equality1} and  Theorem~\ref{splitmain_theorem}. (2) follows from \eqref{muzero_equality}, \eqref{mupositive_equality2} and  Theorem~\ref{splitmain_theorem3}.
\end{proof}

\begin{theorem}\label{example_theorem2}
Let ${\bullet \in \{+,-\}}$. Assume the following
\begin{enumerate}[(a)]
\item $N^-$ is a squarefree product of an even number of primes.
\item ${p \nmid h_K}$.
\item ${\Gal(\Q(E[p])/\Q)=GL_2(\Fp)}$.
\item For some prime $q$ dividing $N^+$, we have that $E$ has split multiplicative reduction at $q$ and ${p \mid c_v}$ for both primes $v$ of $K$ above $q$.
\item ${\mu(L^{\bullet}_p(E))=\mu(L^{\bullet}_p(E^{(K)}))=0}$.
\item ${\lambda(L^{\bullet}_p(E))+\lambda(L^{\bullet}_p(E^{(K)})) \leq 2}$.
\end{enumerate}
Then there exist distinct groups ${H_1, H_2 \subseteq \mathcal{H}_s^{\bullet\star}}$ such that for ${i=1,2}$ we have
\begin{enumerate}
\item $X^{Gr, \bullet\bullet}(E/\KK_{\infty})_f$ is not finitely generated over $\Lambda(H_i)$.
\item If $X^{Gr}(E/\KK_{\infty})_f$ is finitely generated over $\Lambda(H_i)$, then ${\mu_{G/H_i}(T_{\Lambda}([F_{\Lambda_2}(X^{Gr}(E/\KK_{\infty}))]_{H_i})) >0}$.
\end{enumerate}
\end{theorem}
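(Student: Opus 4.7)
The plan is to adapt the strategy of Theorem~\ref{example_theorem1} to the even $N^-$ setting by replacing the single anticyclotomic direction by two $\Zp$-extensions arising from the two primes $v_1, v_2$ of $K$ above the split prime $q \mid N^+$ from hypothesis~(d). First I would apply Proposition~\ref{muzero_prop} to condition~(e), obtaining that $X^{Gr,\bullet\bullet}(E/\KK_\infty)$ is $\Lambda_2$-torsion with
\[
\mu_G\bigl(T_{\Lambda_2}(X^{Gr}(E/\KK_\infty))\bigr) = \mu_G\bigl(X^{Gr,\bullet\bullet}(E/\KK_\infty)\bigr) = 0,
\]
while condition~(f) ensures that at most two $\Zp$-extensions $L_\infty$ of $K$ fail to have $X^{\bullet\bullet}(E/L_\infty)$ torsion over $\Lambda$; by Proposition~\ref{Selmercomp_prop} the same bound controls the failure of $\Lambda$-torsion for $X^{Gr,\bullet\bullet}(E/L_\infty)$.

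Next I would construct the candidates. Let $\Frob_{v_i} \in G = \Gal(\KK_\infty/K)$ be the Frobenius at $v_i$, well defined since $v_i \nmid p$ and hence $v_i$ is unramified in $\KK_\infty/K$. Because $\KK_\infty/\Q$ is Galois with $\Gal(\KK_\infty/\Q) = \langle c \rangle \ltimes G$, complex conjugation $c$ acts on $G$ by conjugation; write $G = G_+ \oplus G_-$ for the $\pm 1$-eigenspace decomposition, where $G_+ = \Gal(\KK_\infty/K_{ac})$ and $G_- = \Gal(\KK_\infty/K_{cyc})$. Since $c$ interchanges $v_1, v_2$, decomposing $\Frob_{v_1} = \alpha + \beta$ with $\alpha \in G_+, \beta \in G_-$ yields $\Frob_{v_2} = \alpha - \beta$. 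As no prime of $K$ splits completely in $K_{cyc}/K$, the image of $\Frob_{v_i}$ in $G/G_- = \Gal(K_{cyc}/K)$ is non-trivial, so $\alpha \ne 0$; hence $\Frob_{v_1} \ne \Frob_{v_2}$. Setting $H_i := \overline{\langle \Frob_{v_i}\rangle}$ produces two distinct closed rank-one subgroups of $G$, and by construction $v_i$ splits completely in $\KK_\infty^{H_i}/K$.

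Then I would verify $H_1, H_2 \in \mathcal{H}_s^{\bullet\bullet}$. The ramification condition follows from Lemma~\ref{extensionsplit_lemma}: since $v_i$ splits completely in $\KK_\infty^{H_i}/K$ but no prime of $K$ splits completely in $L_\infty^{(\fp)}$ or $L_\infty^{(\bar\fp)}$, we must have $\KK_\infty^{H_i} \notin \{L_\infty^{(\fp)}, L_\infty^{(\bar\fp)}\}$, so both primes of $K$ above $p$ ramify in $\KK_\infty^{H_i}/K$. For the torsion condition, Theorem~\ref{splitSelmerControlThm_prop} gives the isomorphism $X^{Gr,\bullet\bullet}(E/\KK_\infty^{H_i}) \cong X^{Gr,\bullet\bullet}(E/\KK_\infty)_{H_i}$, so the torsion is equivalent (via a Lemma~\ref{rankequal_lemma}-type argument) to coprimality in $\Lambda_2$ of the element $\Upsilon_{H_i}$ generating $\Lambda(H_i)$ with the characteristic power series of $X^{Gr,\bullet\bullet}(E/\KK_\infty)$. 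The hard part is ruling out the a priori possibility that $H_1$ or $H_2$ coincides with one of the at most two exceptional directions of Proposition~\ref{muzero_prop}(b); I would do this by showing that those exceptional directions arise from prime factors of the characteristic power series mod $p$ which, via the signed main conjecture in the cyclotomic direction, reduce to divisors of $L_p^\bullet(E) \cdot L_p^\bullet(E^{(K)})$ and so lie in the ``cyclotomic-like'' directions of $G \otimes \Fp$, while each $\Frob_{v_i}$ has non-vanishing $G_+$-component and hence $\Upsilon_{H_i}$ is transverse to these exceptional directions.

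Granted $H_1, H_2 \in \mathcal{H}_s^{\bullet\bullet}$, the remainder is mechanical. Hypothesis~(d) says $E$ has split multiplicative reduction at $q$ with $p \mid c_{v_i}$, and by construction $v_i$ splits completely in $\KK_\infty^{H_i}/K$, so Proposition~\ref{Omega_prop}(b)(i) yields $\mu_{G/H_i}\bigl((\Omega_{H_i,v_i})^\vee\bigr) = v_p(c_{v_i}) \ge 1$. Corollary~\ref{muinequality_corollary} then gives
\[
\mu_{G/H_i}\bigl(X^{Gr,\bullet\bullet}(E/\KK_\infty^{H_i})\bigr) \ge v_p(c_{v_i}) > 0 = \mu_G\bigl(X^{Gr,\bullet\bullet}(E/\KK_\infty)\bigr),
\]
contradicting the equality in Theorem~\ref{splitmain_theorem}(b) and forcing claim~(1). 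Analogously $\mu_{G/H_i}\bigl(T_\Lambda(X^{Gr}(E/\KK_\infty^{H_i}))\bigr) \ge v_p(c_{v_i}) > 0$; if $X^{Gr}(E/\KK_\infty)_f$ were finitely generated over $\Lambda(H_i)$, Theorem~\ref{splitmain_theorem3}(b) would give
\[
\mu_{G/H_i}\bigl(T_\Lambda([F_{\Lambda_2}(X^{Gr}(E/\KK_\infty))]_{H_i})\bigr) = \mu_{G/H_i}\bigl(T_\Lambda(X^{Gr}(E/\KK_\infty^{H_i}))\bigr) - \mu_G\bigl(T_{\Lambda_2}(X^{Gr}(E/\KK_\infty))\bigr) > 0,
\]
yielding claim~(2).
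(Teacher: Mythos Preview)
Your construction of $H_i = \overline{\langle \Frob_{v_i}\rangle}$ and the endgame via Theorems~\ref{splitmain_theorem} and \ref{splitmain_theorem3} match the paper. There are, however, two genuine gaps in the middle. First, in the distinctness step: with $\Frob_{v_1} = \alpha + \beta$ and $\Frob_{v_2} = \alpha - \beta$ you have $\Frob_{v_1} - \Frob_{v_2} = 2\beta$, so the relevant nonvanishing is $\beta \ne 0$, not $\alpha \ne 0$; and for $H_1 \ne H_2$ (equality of the $\Zp$-lines, not just of the elements) you actually need \emph{both}, since $\alpha = 0$ would give $\Frob_{v_2} = -\Frob_{v_1}$ and hence the same line. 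The missing input is that a prime of $K$ above a rational prime $q$ split in $K/\Q$ does not split completely in $K_{ac}$ (\cite[Corollary~1]{Brink}), which yields $\beta \ne 0$. Second, and more seriously, your proposed argument for the torsion condition does not work. The exceptional directions are those $H$ for which $\Upsilon_H$ divides the characteristic power series $f_\infty^{\bullet\bullet}$ in $\Lambda_2$ itself (not its reduction mod $p$), and the cyclotomic $\lambda$-invariant only bounds their \emph{number}; it carries no information about \emph{which} directions they are. There is no sense in which they are forced to be ``cyclotomic-like'', and a nonzero $G_+$-component of $\Frob_{v_i}$ says nothing about transversality to an unknown pair of directions in $G$.

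The paper's argument for torsion is quite different and is where hypothesis~(a), which you never invoke, enters. By (a)--(c) and \cite[Theorem~1.4]{LV} one has $\rank_\Lambda X^{\bullet\bullet}(E/K_{ac}) = 1$, so $K_{ac}$ already occupies one of the at most two exceptional directions coming from Proposition~\ref{muzero_prop}(b). Complex conjugation swaps $L_{1,\infty} = \KK_\infty^{H_1}$ and $L_{2,\infty} = \KK_\infty^{H_2}$, so $X^{\bullet\bullet}(E/L_{1,\infty})$ and $X^{\bullet\bullet}(E/L_{2,\infty})$ are simultaneously $\Lambda$-torsion or simultaneously not. If both failed to be torsion, then together with $K_{ac}$ --- which is distinct from each $L_{i,\infty}$ since $v_i$ splits completely in $L_{i,\infty}$ but not in $K_{ac}$ --- that would give three exceptional directions, contradicting the bound of two. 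Hence both $X^{\bullet\bullet}(E/L_{i,\infty})$ are $\Lambda$-torsion, and via Proposition~\ref{Selmercomp_prop} so are the $X^{Gr,\bullet\bullet}(E/L_{i,\infty})$, giving $H_i \in \mathcal{H}_s^{\bullet\bullet}$.
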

\begin{proof}
By $(a)$, $(b)$, $(c)$ and \cite[Theorem~1.4]{LV} we have that ${\rank_{\Lambda}(X^{\bullet\bullet}(E/K_{ac}))=1}$. By $(e)$ and Proposition~\ref{muzero_prop} we have that
\begin{equation}\label{muzero_equality2}
\mu(T_{\Lambda_2}(X^{Gr}(E/\KK_{\infty})))=\mu(X^{Gr, \bullet\star}(E/\KK_{\infty}))=0.
\end{equation}
Furthermore, we have that the number of $\Zp$-extensions of ${L_{\infty}/K}$ where $X^{\bullet\star}(E/L_{\infty})$ is not a torsion $\Lambda$-module is at most 2. Let $q$ be a prime as in $(d)$. Then $q$ splits in ${K/\Q}$: ${q\cO_k=\fq\bar{\fq}}$. Since ${q \neq p}$, the prime $\fq$ (resp. $\bar{\fq}$) splits completely in a $\Zp$-extension $L_{1,\infty}/K$ (resp. $L_{2,\infty}/K$). We claim that ${L_{1,\infty} \neq L_{2,\infty}}$. Let $\tau$ be the complex conjugation automorphism of $\bar{\Q}$. Since ${\tau(\fp)=\bar{\fp}}$ it follows that ${\tau(L_{1,\infty})=L_{2,\infty}}$. So if ${L_{1,\infty}=L_{2,\infty}}$ then $L_{1,\infty}/\Q$ would be a Galois extension. However the only $\Zp$-extensions of $K$ that are Galois over $\Q$ are $K_{cyc}$ and $K_{ac}$. No primes of $K$ split completely $K_{cyc}/K$ and as $q$ splits in $K/\Q$ neither $\fp$ nor $\bar{\fp}$ split completely in $K_{ac}/K$ (see \cite[Corollary~1]{Brink}). Therefore as claimed ${L_{1,\infty} \neq L_{2,\infty}}$.

Since ${\tau(L_{1,\infty})=L_{2,\infty}}$, it is easy to see that $X^{\bullet\star}(E/L_{1,\infty})$ is not a torsion $\Lambda$-module if and only if $X^{\bullet\star}(E/L_{2,\infty})$ is not a torsion $\Lambda$-module (here $\Lambda$ refers to the corresponding Iwasawa algebra). Furthermore, we have that the number of $\Zp$-extensions of $L_{\infty}/K$ where $X^{\bullet\star}(E/L_{\infty})$ is not a torsion $\Lambda$-module is at most two and ${\rank_{\Lambda}(X^{\bullet\bullet}(E/K_{ac}))=1}$. Therefore we see from this that both $X^{\bullet\star}(E/L_{1,\infty})$ and $X^{\bullet\star}(E/L_{2,\infty})$ are $\Lambda$-torsion. By Lemma~\ref{extensionsplit_lemma} we have that ${L_{1,\infty}, L_{2,\infty} \not\in \{L^{(\fp)}_{\infty}, L^{(\bar{\fp})}_{\infty}\}}$. Thus all primes of $K$ above $p$ ramify in $L_{1,\infty}/K$ and $L_{2,\infty}/K$. Now let ${H_i=\Gal(\KK_{\infty}/L_{i,\infty})}$. Then ${H_1, H_2 \subseteq \mathcal{H}_s^{\bullet\star}}$ and ${H_1 \neq H_2}$.

From $(d)$, Proposition~\ref{Omega_prop} and Corollary~\ref{muinequality_corollary} we have that for ${i=1,2}$
\begin{equation}\label{mupositive_equality3}
\mu(X^{Gr, \bullet\bullet}(E/\KK_{\infty}^{H_i}))>0
\end{equation}
and
\begin{equation}\label{mupositive_equality4}
\mu(T_{\Lambda}(X^{Gr}(E/\KK_{\infty}^{H_i})))>0.
\end{equation}
Then (1) follows from \eqref{muzero_equality2}, \eqref{mupositive_equality3} and  Theorem~\ref{splitmain_theorem}. Moreover, (2) follows from \eqref{muzero_equality2}, \eqref{mupositive_equality4} and  Theorem~\ref{splitmain_theorem3}.
\end{proof}

Now we provide four concrete numerical examples: three of these examples illustrate Theorem~\ref{example_theorem1} and one illustrates Theorem~\ref{example_theorem2}. \\
\textbf{Example 1 for Theorem~\ref{example_theorem1}}: Let ${E=522g1}$ (Cremona labelling~\cite{Cremona}), ${K=\Q(\sqrt{-35})}$ and ${p=11}$. The conditions~${(a)-(e)}$ of Theorem~\ref{example_theorem1} are verified by SAGE~\cite{Sage}. The LMFDB database~\cite{LMFDB} shows that ${\mu(L^{\bullet}_p(E))=0}$ for both ${\bullet \in \{+,-\}}$. The quadratic twist $E^{(K)}$ is not in the LMFDB database. Computations done by Robert Pollack verify that ${\mu(L^{\bullet}_p(E^{(K)}))=0}$ for both ${\bullet \in \{+,-\}}$ as well\footnote{We are grateful to Robert Pollack for his help with the computations.}. Thus condition $(f)$ is true for both ${\bullet=+}$ and ${\bullet=-}$.

Note that the computations done by Robert Pollack also show that ${\lambda(L^{\bullet}_p(E^{(K)}))=0}$. By \cite[Theorem~3.14]{KimFiniteSubmodules} $X^{\bullet}(E^{(K)}/\Q_{cyc})$ does not contain any non-trivial finite $\Lambda$-submodules. It follows that ${X^{\bullet}(E^{(K)}/\Q_{cyc})=0}$. On the other hand, $X^{Gr, \bullet}(E/\Q_{cyc})$ and $X^{Gr, \bullet\bullet}(E/\KK_{\infty})$ are non-trivial. In fact, the analytic $\lambda$-invariant of $X^{Gr, \bullet}(E/\Q_{cyc})$ is equal to 1 by the LMFDB database, and $X^{Gr, \bullet\bullet}(E/\KK_{\infty})$ cannot be trivial as $X^{Gr, \bullet\bullet}(E/\KK_{\infty})_f$ is not finitely generated over $\Lambda(H_{ac})$ by Theorem~\ref{example_theorem1}.

\textbf{Example 2 for Theorem~\ref{example_theorem1}}: Let ${E=522g1}$, ${K=\Q(\sqrt{-83})}$ and ${p=11}$. The conditions~${(a)-(e)}$ of Theorem~\ref{example_theorem1} are verified by SAGE. The LMFDB database shows that ${\mu(L^{\bullet}_p(E))=0}$ for both ${\bullet \in \{+,-\}}$. The quadratic twist $E^{(K)}$ is not in the LMFDB database. Computations done by Robert Pollack verify that ${\mu(L^{\bullet}_p(E^{(K)}))=0}$ for both ${\bullet \in \{+,-\}}$ as well. Thus condition $(f)$ is true for both ${\bullet=+}$ and ${\bullet=-}$.

\textbf{Example 3 for Theorem~\ref{example_theorem1}}: Let ${E=542a1}$, ${K=\Q(\sqrt{-3})}$ and ${p=7}$. The conditions~${(a)-(e)}$ of Theorem~\ref{example_theorem1} are verified by SAGE. The LMFDB database shows that ${\mu(L^{\bullet}_p(E))=\mu(L^{\bullet}_p(E^{(K)}))=0}$ for both ${\bullet \in \{+,-\}}$. Thus condition~$(f)$ is true for both ${\bullet=+}$ and ${\bullet=-}$.

\textbf{Example for Theorem~\ref{example_theorem2}}: Let ${E=615b1}$, ${K=\Q(\sqrt{-17})}$ and ${p=7}$. The conditions~${(a)-(d)}$ of Theorem~\ref{example_theorem2} are verified by SAGE. The LMFDB database shows that ${\mu(L^{\bullet}_p(E))=0}$ for both ${\bullet \in \{+,-\}}$ and ${\lambda(L^{\bullet}_p(E))=1}$ for both ${\bullet \in \{+,-\}}$. The quadratic twist $E^{(K)}$ is not in the LMFDB database. Computations done by Robert Pollack verify that ${\mu(L^{\bullet}_p(E))=\lambda(L^{\bullet}_p(E))=0}$ for both ${\bullet \in \{+,-\}}$. Thus conditions~$(e)$ and $(f)$ are true for both ${\bullet=+}$ and ${\bullet=-}$.

\end{document}